\renewcommand\section{\@startsection{section}{1}%
	\z@{3\linespacing\@plus1.5\linespacing}{2.5\linespacing}%
	{\large\normalfont\scshape\centering}}
\renewcommand\subsection{\@startsection{subsection}{2}%
	\z@{.5\linespacing\@plus.7\linespacing}{0.5\linespacing}%
	{\normalfont\bfseries}}
\renewcommand\subsubsection{\@startsection{subsubsection}{3}%
	\z@{.5\linespacing\@plus.7\linespacing}{-.5em}%
	{\normalfont\bfseries}}
\renewcommand{\emph}[1]{\textsf{\textit{#1}}}
\let\oldtocsection=\tocsection
\let\oldtocsubsection=\tocsubsection
\renewcommand{\tocsection}[2]{\hspace{0em}\oldtocsection{#1}{#2}}
\renewcommand{\tocsubsection}[2]{\hspace{2em}\oldtocsubsection{#1}{#2}}
\begin{document}
\fontdimen8\textfont3=0.5pt  

\def \Ai {{\rm Ai}}
\def \Pf {{\rm Pf}}
\def \sgn {{\rm sgn}}
\def \SS {\mathcal{S}}
\newcommand{\e}{\varepsilon}
\newcommand{\ve}{\mathcal{E}}
\newcommand{\EE}{\ensuremath{\mathbb{E}}}
\newcommand{\qq}[1]{(q;q)_{#1}}
\newcommand{\PP}{\ensuremath{\mathbb{P}}}
\newcommand{\R}{\ensuremath{\mathbb{R}}}
\newcommand{\C}{\ensuremath{\mathbb{C}}}
\newcommand{\Z}{\ensuremath{\mathbb{Z}}}
\newcommand{\N}{\ensuremath{\mathbb{N}}}
\newcommand{\Q}{\ensuremath{\mathbb{Q}}}
\newcommand{\T}{\ensuremath{\mathbb{T}}}
\newcommand{\Y}{\ensuremath{\mathbb{Y}}}
\newcommand{\I}{\ensuremath{\mathbf{i}}}
\newcommand{\Real}{\ensuremath{\mathfrak{Re}}}
\renewcommand{\Re}{\ensuremath{\mathfrak{Re}}}
\newcommand{\Imag}{\ensuremath{\mathfrak{Im}}}
\newcommand{\re}{\ensuremath{\mathfrak{Re}}}
\newcommand{\subs}{\ensuremath{\mathbf{Subs}}}
\newcommand{\Sym}{\ensuremath{\mathsf{Sym}}}
\newcommand{\phidist}{\ensuremath{\boldsymbol{\varphi}}}
\newcommand{\mskyl}{\ensuremath{\mathfrak{m}}}
\newcommand{\prech}{\ensuremath{\prec}}
\newcommand{\precv}{\ensuremath{\prec_{\mathrm{v}}}}
\newcommand{\gap}{\ensuremath{\mathrm{gap}}}
\newcommand{\diagq}{\ensuremath{a_{\circ}}}
\newcommand{\diag}{\ensuremath{\alpha_{\circ}}}
\newcommand{\whitenoise}{\ensuremath{\mathscr{\dot{W}}}}
\newcommand{\alphaW}[1]{\ensuremath{\mathbf{\alpha W}}_{(#1)}}
\newcommand{\alphaWM}[1]{\ensuremath{\mathbf{\alpha WM}}_{(#1)}}
\newcommand{\edge}{\textrm{edge}}
\newcommand{\dist}{\textrm{dist}}
\newcommand{\A}{\mathcal{A}}
\newcommand{\var}{{\rm var}}
\newcommand{\U}{\ensuremath{\mathcal{U}^{\mathlarger{\llcorner}}}}
\newcommand{\Udiag}{\ensuremath{\mathcal{U}^\angle}}
\newcommand{\rhodiag}{\ensuremath{\rho^{\swarrow}}}
\newcommand{\rholeft}{\ensuremath{\rho^{\leftarrow}}}
\newcommand{\rhoup}{\ensuremath{\rho^{\uparrow}}}
\newcommand{\Proj}{\ensuremath{\overrightarrow{\mathrm{Proj}}}}
\newcommand{\Projtilde}{\ensuremath{\overleftarrow{\mathrm{Proj}}}}
\newcommand{\PMM}{\ensuremath{\mathbb{P}^{q,t}}}
\newcommand{\PMP}{\ensuremath{\mathbb{P}^{q,t}}}
\newcommand{\EPMM}{\ensuremath{\mathbb{E}^{q,t}}}
\newcommand{\EPMP}{\ensuremath{\mathbb{E}^{q,t}}}
\newcommand{\PQWM}{\ensuremath{\mathbb{P}^{q}}}
\newcommand{\PQWP}{\ensuremath{\mathbb{P}^{q}}}
\newcommand{\EQWM}{\ensuremath{\mathbb{E}^{q}}}
\newcommand{\EQWP}{\ensuremath{\mathbb{E}^{q}}}
\newcommand{\PWM}{\ensuremath{\mathbb{P}}}
\newcommand{\PWP}{\ensuremath{\mathbb{P}}}
\newcommand{\EWM}{\ensuremath{\mathbb{E}}}
\newcommand{\EWP}{\ensuremath{\mathbb{E}}}
\newcommand{\PHL}{\ensuremath{\mathbb{P}^{t}}}
\newcommand{\EHL}{\ensuremath{\mathbb{E}^{t}}}
\newcommand{\PSM}{\ensuremath{\mathbb{P}^{q=t}}}
\newcommand{\EPSM}{\ensuremath{\mathbb{E}^{q=t}}}
\newcommand{\Res}[1]{\underset{{#1}}{\mathbf{Res}}}
\newcommand{\Resfrac}[1]{\mathbf{Res}_{{#1}}}
\newcommand{\Sub}[1]{\underset{{#1}}{\mathbf{Sub}}}
\newcommand{\la}{\lambda}
\newcommand{\ta}{\theta}
\newcommand{\Noumi}{\mathbf{N}}
\newcommand{\Moumi}{\overline{\Noumi}}
\newcommand{\MoumiA}{\mathbf{M}}
\newcommand{\Tshift}{\mathbf{T}}
\newcommand{\DD}{\mathbf{D}}
\newcommand{\disk}{\mathbb{D}}
\newcommand{\spaceanalytic}{\mathcal{A}^{\SS_n}(\disk^n)}
\newcommand{\bp}{b_{\circ}}
\newcommand{\BBone}{\overline{\mathbf{B}}}
\newcommand{\BBn}{\mathbf{B}}
\newcommand{\kernel}{\mathsf{K}}
\newcommand{\bel}[1]{b_{#1}^{\mathrm{el}}}
\newcommand{\fkernel}{\mathsf{f}}
\newcommand{\contourv}{\mathcal{C}^{\rm{steep}}}
\newcommand{\gqwhittn}{\mathcal{G}^{q}}
\newcommand{\gqwhittun}{\overline{\mathcal{G}}^q}
\newcommand{\admpath}{\Omega}
\newcommand{\pathh}{\omega}
\newcommand{\binomt}[2]{\left(\begin{matrix}
		#1 \\#2
	\end{matrix}\right)_{\!\!t}}
\newcommand{\p}{\mathsf{p}}
\newcommand{\q}{\mathsf{q}}
\newcommand{\eps}{\epsilon}
\newcommand{\ratealpha}{\upalpha}
\newcommand{\rategamma}{\upgamma}
\newcommand{\NN}{N}
\newcommand{\rrangle}{\rangle\!\rangle}
\newcommand{\llangle}{\langle\!\langle}
\newcommand{\labold}{\boldsymbol{\uplambda}}

\def\note#1{\textup{\textsf{\color{blue}(Guillaume: #1)}}}


\usetikzlibrary{shapes.multipart}
\usetikzlibrary{patterns}
\usetikzlibrary{shapes.multipart}
\usetikzlibrary{arrows}
\usetikzlibrary{decorations.markings}
\tikzstyle{fleche}=[>=stealth', postaction={decorate}, thick]
\tikzstyle{axis}=[->, >=stealth', thick, gray]
\tikzstyle{grille}=[dotted, gray]
\tikzstyle{path}=[->, >=stealth', thick]

\newcommand{\pathrr}{\raisebox{-6pt}{\begin{tikzpicture}[scale=0.3]
		\draw[thick] (-1,0) -- (1,0);
		\draw[dotted] (0,-1) -- (0,1);
		\end{tikzpicture}}}
\newcommand{\pathru}{\raisebox{-6pt}{\begin{tikzpicture}[scale=0.3]
		\draw[thick] (-1,0) -- (0,0) -- (0,1);
		\draw[dotted] (0,-1) -- (0,0) -- (1,0);
		\end{tikzpicture}}}
\newcommand{\pathuu}{\raisebox{-6pt}{\begin{tikzpicture}[scale=0.3]
		\draw[dotted] (-1,0) -- (1,0);
		\draw[thick] (0,-1) -- (0,1);
		\end{tikzpicture}}}
\newcommand{\pathur}{\raisebox{-6pt}{\begin{tikzpicture}[scale=0.3]
		\draw[dotted] (-1,0) -- (0,0) -- (0,1);
		\draw[thick] (0,-1) -- (0,0) -- (1,0);
		\end{tikzpicture}}}
\newcommand{\pathbrr}{\raisebox{2pt}{\begin{tikzpicture}[scale=0.4]
		\draw[thick, ->, >=latex] (-1,0) -- (0,0);
		\draw[dotted] (0,0) -- (0,1);
		\end{tikzpicture}}}
\newcommand{\pathbuu}{\raisebox{2pt}{\begin{tikzpicture}[scale=0.4]
		\draw[->, thick, >=latex] (0,0) -- (0,0.1);
		\draw[thick] (0,0) -- (0,1);
		\draw[dotted] (-1,0) -- (0,0);
		\end{tikzpicture}}}
\newcommand{\pathbru}{\raisebox{2pt}{\begin{tikzpicture}[scale=0.4]
		\draw[thick] (-1,0) -- (0,0);
		\draw[thick] (0,0) -- (0,1);
		\end{tikzpicture}}}
\newcommand{\pathbur}{\raisebox{2pt}{\begin{tikzpicture}[scale=0.4]
		\draw[dotted] (0,0) -- (0,1);
		\draw[dotted] (-1,0) -- (0,0);
		\end{tikzpicture}}}


\newtheorem{theorem}{Theorem}[section]
\newtheorem{conjecture}[theorem]{Conjecture}
\newtheorem{lemma}[theorem]{Lemma}
\newtheorem{proposition}[theorem]{Proposition}
\newtheorem{corollary}[theorem]{Corollary}
\newtheorem{prop}{Proposition}
\newtheorem{formal}{Formal asymptotics}
\newtheorem*{formal*}{Formal asymptotics}
\newtheorem{theoremintro}{Theorem}
\renewcommand*{\thetheoremintro}{\Alph{theoremintro}}

\theoremstyle{definition}
\newtheorem{remark}[theorem]{Remark}

\theoremstyle{definition}
\newtheorem{example}[theorem]{Example}
\theoremstyle{definition}
\newtheorem{definition}[theorem]{Definition}
\theoremstyle{definition}
\newtheorem{definitions}[theorem]{Definitions}


\title{\large Half-space Macdonald processes}

\author[G. Barraquand]{Guillaume Barraquand}
\address{G. Barraquand,
Laboratoire de Physique de l'\'Ecole Normale Supérieure, ENS, Universit\'e PSL, CNRS, Sorbonne Universit\'e, Universit\'e de Paris, Paris, France.}
\email{barraquand@math.cnrs.fr}
\author[A. Borodin]{Alexei Borodin}
\address{A. Borodin, Department of Mathematics, MIT, Cambridge, USA, and
	Institute for Information Transmission Problems, Moscow, Russia.}
\email{borodin@math.mit.edu}
\author[I. Corwin]{Ivan Corwin}
\address{I. Corwin, Columbia University,
	Department of Mathematics,
	2990 Broadway,
	New York, NY 10027, USA.}
\email{ivan.corwin@gmail.com}

\begin{abstract}
Macdonald processes are measures on sequences of integer partitions built using the Cauchy summation identity for Macdonald symmetric functions. These measures are a useful tool to uncover the integrability of  many probabilistic systems, including the Kardar-Parisi-Zhang (KPZ) equation and a number of other models in its universality class. In this paper we develop the structural theory behind half-space variants of these models and the corresponding half-space Macdonald processes. These processes are built using a Littlewood summation identity instead of the Cauchy identity, and their analysis is considerably harder than their full-space counterparts.

We compute moments and Laplace transforms of observables for general half-space Macdonald measures. Introducing new dynamics preserving this class of measures, we relate them to various stochastic processes, in particular the log-gamma polymer in a half-quadrant (they are also related to the stochastic six-vertex model in a half-quadrant and the half-space ASEP). For the polymer model, we provide explicit integral formulas for the Laplace transform of the partition function. Non-rigorous saddle point asymptotics yield convergence of the directed polymer free energy to either the Tracy-Widom GOE, GSE or the Gaussian distribution depending on the average size of weights on the boundary.
\end{abstract}

\maketitle

\setcounter{tocdepth}{2}

\newpage 
 \thispagestyle{plain}
\tableofcontents

\thispagestyle{plain}
\section{Introduction}
\label{sec:intro}

In commencing the investigation that resulted in this paper, our goal was to prove limit theorems for the Kardar-Parisi-Zhang (KPZ) stochastic PDE \cite{kardar1986dynamic,corwin2012kardar,quastel2012lectures} in a half-space \cite{corwin2016open}, as well as for the log-gamma directed polymer \cite{seppalainen2012scaling,corwin2014tropical} in a half-quadrant \cite{o2014geometric}. Half-space systems are considerably more complicated to analyze than their full-space counterparts, and to us, the proper framework in which to initiate our study seemed to be that of half-space Macdonald processes (which we introduce here). Based on results from earlier analysis of zero-temperature models like TASEP and last passage percolation in a half-space \cite{baik2001algebraic, baik2001asymptotics, baik2001symmetrized, sasamoto2004fluctuations, baik2018pfaffian, betea2018free} (solvable via methods of Pfaffian point processes \cite{borodin2005eynard}), one may predict a rich phase diagram detailing the effect of the boundary strength on the fluctuation scalings and statistics. Despite previous efforts \cite{tracy2013bose, tracy2013asymmetric, o2014geometric}, there were no  limit results known prior to our investigation (besides \cite{barraquand2018stochastic} which we developed with M. Wheeler in parallel to the present work). Even in the physics literature, the non-rigorous replica Bethe ansatz has proved difficult to apply, with results limited to two special boundary conditions for the KPZ equation (pure reflection \cite{borodin2016directed} or pure absorption \cite{gueudre2012directed}).

In this paper we develop the structural theory of half-space Macdonald processes and explore some of the rich hierarchy of limits and specializations. 
The theory of half-space Macdonald processes builts on the case of full-space 
Macdonald processes \cite{borodin2014macdonald} but also employs a number of novel ideas.

\medskip
Before highlighting the new ideas and challenges which arise in this half-space setting, we briefly recall the major developments in the full-space theory of Macdonald process (see also the reviews \cite{borodin2012lectures, borodin2014integrableICM, borodin2014integrable, corwin2014macdonald}).
\begin{description}
	\item[Using operators to compute expectations] Applying operators which act diagonally on Macdonald polynomials to the normalizing constant for the measure yields a general mechanism to compute expectations of observables related to the operators' eigenvalues. This idea was introduced in \cite{borodin2014macdonald}, wherein Macdonald difference operators were used extensively, and it is  developed further in \cite{borodin2016observables, borodin2015general, dimitrov2016kpz, gorin2016interlacing}.  
	\vspace{.2cm}
	
	\item[$(2+1)$-dimensional Markovian dynamics] A general scheme to build Markovian dynamics on two dimensional triangular arrays preserving the class of Schur processes (introduced in \cite{okounkov2001infinite, okounkov2003correlation}) was proposed in \cite{borodin2008anisotropic, borodin2011schur}. These push-block dynamics were studied in \cite{borodin2014macdonald} in the Macdonald case, especially at the $q$-Whittaker level. Other dynamics preserving Macdonald processes connected to the RSK algorithm were studied  at the Whittaker level in continuous time in \cite{borodin2013nearest, o2013q} and  in discrete time  \cite{matveev2015q}. These RSK type dynamics were also studied earlier at the Whittaker level in  \cite{o2012directed, corwin2014tropical} and later at the Hall-Littlewood level \cite{borodin2016between,bufetov2015law,  bufetov2017hall}. 
	\vspace{.2cm}
	
	\item[Marginal Markov processes and their limits] 
	Some marginals of these $(2+1)$-dimensional dynamics are themselves Markov processes. Some of these processes were new, while others have been introduced earlier. This relation has provided some new tools in their studies. Let us mention the $q$-TASEP \cite{borodin2014macdonald, borodin2012duality, borodin2013discrete}, $q$-push(T)ASEP \cite{corwin2013q}, log-gamma directed polymer \cite{borodin2013log,  corwin2014tropical, seppalainen2012scaling}, strict-weak polymer \cite{corwin2014strict, o2014tracy}, O'Connell-Yor polymer \cite{ borodin2014macdonald, oconnell2001brownian, o2012directed}, KPZ equation \cite{alberts2014intermediate, bertini1997stochastic, borodin2012free, borodin2015height, kardar1986dynamic}, Stochastic six-vertex model \cite{ borodin2016stochasticsix, borodin2016between, gwa1992six}, Hall-Littlewood-pushTASEP \cite{ghosal2017hall}, ASEP \cite{borodin2012duality, borodin2016asep, bufetov2017hall}.  
	\vspace{.2cm}
	
	\item[Connections to random matrix theory] Relations between the coordinates of a random partition under the Macdonald measures (in particular Hall-Littlewood) and random matrices were explored in \cite{borodin1995limit, borodin1999law, borodin2015general, bufetov2015law, fulman2002random, gorin2015multilevel, gorin2014finite}. 
	\vspace{.2cm}
	
	\item[Gibbsian line ensembles] 
	After taking certain scaling limits, the algebra disappears but the integrability remains in the form of a Gibbs property; this is useful in extending one-point to process level asymptotics \cite{borodin2017stochastic, corwin2018transversal, corwin2014brownian, corwin2016kpz, corwin2016stationary}. 
	\vspace{.2cm}
	
	\item[Curious determinantal identities] In a few specific  cases, curious determinantal identities allow to relate certain functionals of the Macdonald measure with the Schur measure or other determinantal point processes \cite{aggarwal2018dynamical, borodin2016stochastic, aggarwal2019phase, barraquand2018stochastic, borodin2016asep, orr2017stochastic}. This typically relates non-free-fermionic models to fermionic ones, and greatly simplifies the asymptotic analysis. 
	\vspace{.2cm}
	
	\item[KPZ universality class asymptotics] For all the above mentioned models, the Laplace transform of observables of interest can be expressed as a Fredholm determinant whose asymptotic analysis leads to  KPZ type  limit theorems \cite{aggarwal2019phase, barraquand2014phase, borodin2014macdonald, borodin2012free,   borodin2013log, borodin2015height,  borodin2016asep, corwin2014strict, ferrari2013tracy, ghosal2017hall, krishnan2018tracy, o2014tracy}.
\end{description}
\medskip 
The story of (full-space) Macdonald processes is far from complete. Many challenges remain such as computing the asymptotic behavior for the entire measure, multipoint fluctuations (i.e. convergence to Airy type processes and line ensembles), asymptotics away from the edge (corresponding to bulk eigenvalue statistics).

\medskip
Given the success of Macdonald processes in studying systems like the KPZ equation, log-gamma polymer model, and ASEP, it is only natural to seek an appropriate half-space version of the measures and associated theory. There is a natural starting point based on the Macdonald polynomial version of the Littlewood identities. However, there are difficulties -- algebraic (we need new operators to compute expectations  of certain observables of interest), probabilistic/combinatorial (we need new dynamics to deal with the boundary), and analytic (our formulas do not organize themselves into Fredholm determinants or Pfaffians). We overcome all of these, except for the analytic ones where we still manage to obtain the expected phase diagram for fluctuations via formal steepest descent analysis. Despite the lack of rigor in this last step, it is the first time that this full phase diagram has been accessed for these models (even in the physics literature).
\medskip 

We now provide a few details on each of these novelties. The reader not familiar with Macdonald processes may skip this part on a first reading and jump to Section \ref{sec:beginning} below. 
\medskip 
\begin{description}
	\item[Using Littlewood type identities] The definition of half-space Macdonald process\footnote{The term ``process'' versus ``measure'' in ``Macdonald process" and ``Macdonald measure" distinguishes between measures on interlacing sequences of partitions and just on single partitions.} (Section \ref{sec:defhalfspaceMacdonald}) relies upon a Macdonald analog of the Littlewood symmetric function summation identity \eqref{eq:Littlewoodidentity} from \cite{macdonald1995symmetric}. 
	The ($q=t$) half-space (or Pfaffian) Schur process was defined much earlier in \cite{borodin2005eynard, sasamoto2004fluctuations} and studied at length in \cite{baik2018pfaffian} in connection to stochastic processes like half-space TASEP and last passage percolation. The half-space Whittaker measure was introduced in \cite{o2014geometric} (the corresponding Littlewood identity is due to \cite{stade2001mellin}). 
	\vspace{.2cm}
	
	\item[Markovian ``boundary'' dynamics] In order to relate our half-space Macdonald process to interesting stochastic processes, we construct local Markovian dynamics on interlacing sequences of partitions which preserve the class of \footnote{We will sometimes use ``Macdonald processes'' as short for `` half-space Macdonald processes''. To avoid any ambiguity, we will always precise ``full-space Macdonald processes'' when referring to the original Macdonald processes from \cite{borodin2014macdonald}.}Macdonald processes (i.e. applying the dynamics to a sequence distributed according to one Macdonald process yields, at a later time, another sequence distributed according to a Macdonald process with modified parameters) and which have Markovian projections when restricted to a few first or last parts of the partitions. 
	The existence of such dynamics is far from evident. The novelty here (explained in Section \ref{sec:Macdyn}) is finding appropriate dynamics at the boundary of the half-space.
	\vspace{.2cm}
	
	\item[Operators] In developing methods to compute distributional information about marginals of these measures, we construct a new operator (denoted $\MoumiA_n^z$ in the text, see Section \ref{sec:proofmaintheo}) which is an analytic continuation of a $q$-integral operator introduced by Noumi \cite{noumi2012direct} (denoted $\Noumi_n^z$ or $\Moumi_n^z$ in the text, see Section \ref{sec:Noumi}), and which acts diagonally on Macdonald polynomials. By applying this operator to the normalizing function for the measure, we are able to prove a $(q,t)$-Laplace transform formula for $\lambda_1$ and $\lambda_n$ (where $\lambda_1$ and $\lambda_n$ are the first and last parts of the partition under the half-space Macdonald measure). The original Noumi operator cannot be used in computing the Laplace transform of $\la_1$ since in doing so we must interchange an infinite summation in the definition of the operator with an infinite summation in the normalizing function. This interchange is not justifiable, and, in fact, leads to the wrong answer (see Remark \ref{rem:whymoredifficult}). By working with the analytic continuation operator $\MoumiA_n^z$ (which is encoded in terms of Mellin-Barnes type integrals with nice convergence properties), we may justifiably perform such an interchange. When the parameter $t=0$ (the case of $q$-Whittaker process), this yields a $q$-Laplace transform formula for $q^{-\lambda_1}$ which cannot be derived from moment formulas due to the ill-posedness of the moment problem for that random variable. 
	We additionally prove many other moment formulas as well as a Laplace transform formulas  using ideas that were present in \cite{borodin2014macdonald, borodin2016observables}.
	\vspace{.2cm}
	 
	\item[Use of Plancherel specialization and a proof of \cite{o2014geometric} conjectural formulas] Taking the Macdonald parameters $t=0$ and $q\to 1$, and performing appropriate scaling on partitions leads to the half-space Whittaker process (this limit procedure is similar to that used in the full-space case in \cite{borodin2014macdonald}). The dynamics we constructed at the top of the Macdonald hierarchy, when restricted to $\lambda_1$, converge to the recursion for partition functions satisfied by the half-space log-gamma polymer model. This model was studied previously in \cite{o2014geometric} by way of applying the geometric RSK correspondence to a symmetric weight matrix. That approach only related the polymer partition function on the diagonal (i.e. at the boundary of the domain) to the Whittaker measure. Our dynamic approach readily relates the partition function in the entire half-space to an appropriate Whittaker measure. This ability to work off the diagonal is, in fact, key -- it allows us to introduce a ``Plancherel'' specialization into our Whittaker measure which drastically improves the decay properties of the Whittaker measure density and ultimately allows us to rigorously derive Laplace transform formulas for the associated polymer model. The polymer model which comes from this Plancherel component is a mix of the log-gamma and O'Connell-Yor polymers (as also considered in \cite{borodin2015height} in the full-space setting). Having established formulas for the mixed model, we can then shrink the Plancherel component to zero and by continuity we arrive at formulas for the log-gamma polymer alone. Without the inclusion of a Plancherel component, the relevant  Whittaker measure formulas do not have sufficient decay to rigorously justify the derivation of the Laplace transform. In fact, in \cite{o2014geometric}, the authors performed formal (i.e. neglecting issues of convergence and the applicability of the Whittaker Plancherel theory) calculations to derive a Laplace transform formula (either formulas (5.15) and (5.16) in \cite{o2014geometric}). They remarked ``It seems reasonable to expect the integral formulas (5.15) and (5.16) to be valid, at least in some suitably regularized sense.'' We prove these formulas as Corollary \ref{cor:OSZproof2}.  
	\vspace{.2cm}
	
	\item[Two types of Laplace transform formulas] We prove two types of half-space log-gamma polymer Laplace transform formulas. The first type (Corollaries \ref{cor:OSZproof} and \ref{cor:OSZproof2}, coming from Theorem \ref{th:proofOSZ}), is in terms of a single $n$-fold contour integral (and is in the spirit of the speculative formulas from \cite{o2014geometric}, as well as formulas proved in the full-space case in \cite{corwin2014tropical}). The second type, Theorems \ref{theo:NoumiLaplace} and \ref{theo:MoumiLaplace}, is in terms of a (finite) series of increasing dimensional integrals. Though we are unable to write this as a Fredholm Pfaffian, it is the half-space version of the Fredholm determinant expansion formulas which arise in \cite{borodin2013log} and proved, therein, quite useful for asymptotics. We attempted to perform asymptotics (taking the system size $n\to \infty$) using these formulas. At the level of studying the term-by-term limit around the critical points of the integrals, we demonstrate convergence to the expected Fredholm Pfaffian expansions which govern the half-space KPZ universality class one-point fluctuation phase diagram. Unfortunately, the presence of certain gamma function ratios preclude establishing sufficient control over the tails of the integrals as well as the series, as would be necessary to rigorously prove our convergence results. Recently, asymptotics of flat initial data ASEP  \cite{ortmann2015pfaffian}  and multi-point formulas for polymers \cite{nguyen2016variants} have likewise been stymied by similar considerations which have prevented rigorous asymptotics, despite formal critical point results agreeing with predictions.
	\vspace{.2cm}
	
	\item[Pfaffian identities] Thus, we fall short of our initial goal of proving asymptotic limit theorems. There is, however, one exception. In joint work \cite{barraquand2018stochastic} with Wheeler -- which came as an outgrowth of the present project and \cite{borodin2016between, wheeler2016refined} --  we found that for a special case of the half-space Hall-Littlewood process (itself, a special case of the Macdonald process when $q=0$, which relates to the half-quadrant stochastic six-vertex model) the Laplace transform has an alternative expression in terms of a related Pfaffian point process (the half-space or Pfaffian Schur process \cite{borodin2005eynard}). Through this identity and known asymptotic techniques for Pfaffian point processes, we were able to rigorously prove the desired type of KPZ universality class asymptotics. The presence of Pfaffians in this Laplace transform representation encapsulates some crucial  cancellations  which are not apparent in our series expression. Unfortunately, this relationship to a Pfaffian point process is presently mysterious and it is unclear if it generalizes beyond the one special case (see Section \ref{sec:ASEP} for more details).
\end{description}

\definecolor{niceblue}{RGB}{50,50, 121}
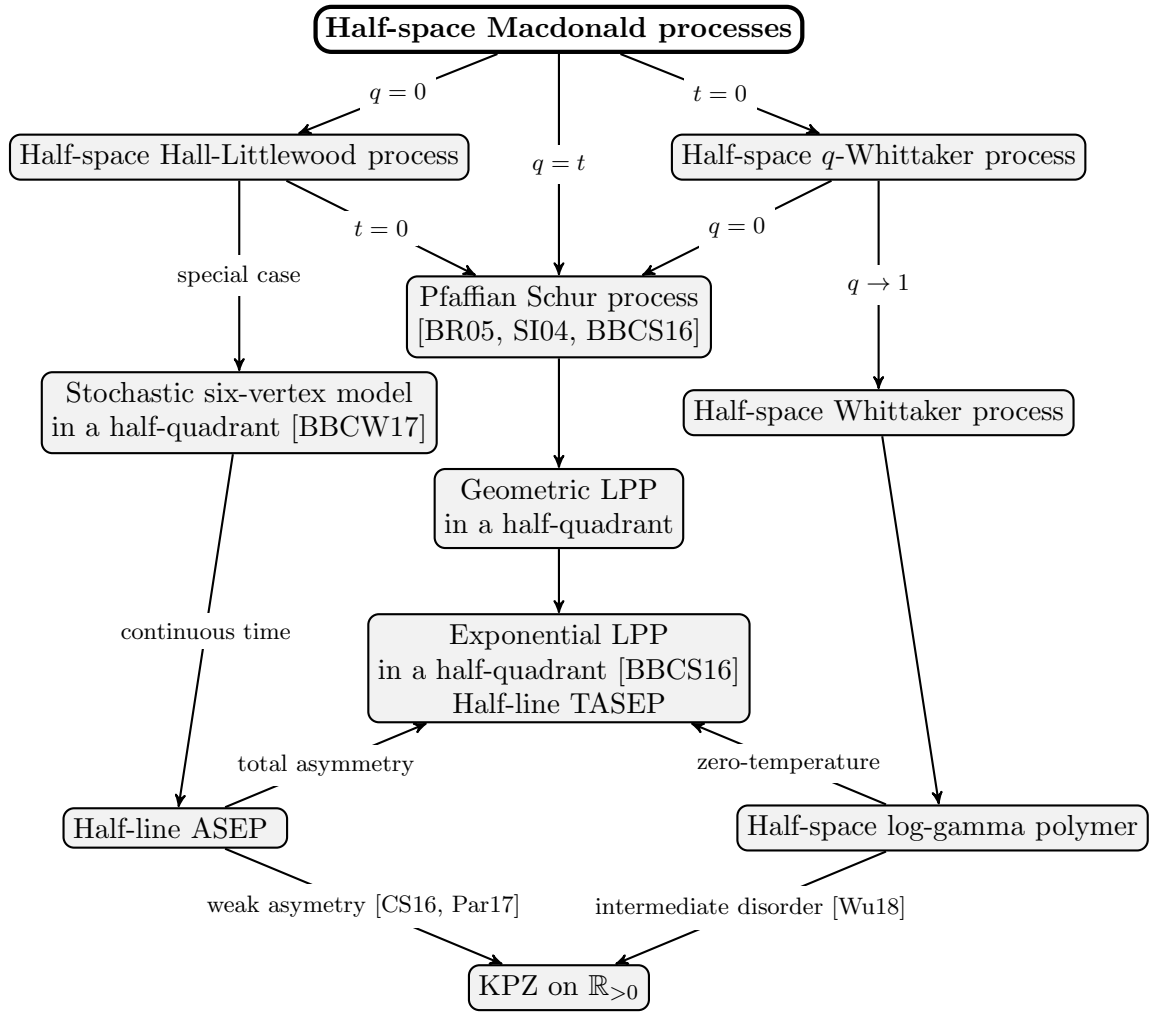
\begin{figure}
	\begin{center}
		\begin{tikzpicture}[scale=0.85, every text node part/.style={align=center}]
		\usetikzlibrary{arrows}
		\usetikzlibrary{shapes}
		\usetikzlibrary{shapes.multipart}
		\tikzstyle{structure}=[rectangle ,draw,thick ,rounded corners=4pt,fill=black!5 ]
		\tikzstyle{process}=[structure]
		\tikzstyle{main}=[rectangle, draw, ultra thick, rounded corners=6pt ]
		\tikzstyle{limit}=[->,>=stealth',thick]
		\tikzstyle{limitconj}=[->,>=stealth',thick,dashed]
		\tikzstyle{equiv}=[<->,>=stealth',thick]

		\node[main] (Macdonald) at (0,11) {\bf{Half-space Macdonald processes}};
		\node[structure]  (Schur) at (0,6.5) {Pfaffian Schur process \\ \cite{borodin2005eynard, sasamoto2004fluctuations, baik2018pfaffian}};
		\node[structure]  (HL) at (-5,9) {Half-space Hall-Littlewood process};
		\node[process]  (SV) at (-5,5) {Stochastic six-vertex model\\  in a half-quadrant \cite{barraquand2018stochastic}};
		\node[structure] (qW) at (5,9) {Half-space $q$-Whittaker process};
		\node[structure] (W) at (5,5) {Half-space Whittaker process};
		\node[process] (gLPP) at (0,3.5) {Geometric LPP\\in a half-quadrant};
		\node[process]  (LPP) at (0,1) {Exponential LPP\\ in a half-quadrant \cite{baik2018pfaffian}\\ Half-line TASEP};
		\node[process]  (ASEP) at (-6,-1.5) {Half-line  ASEP\  };
		\node[process]  (KPZ) at (0,-4) {KPZ on $\R_{>0}$};
		\node[process] (polymer) at (6,-1.5) {Half-space log-gamma polymer};	
		%
		\draw[limit] (Macdonald) -- (Schur) node[midway,fill=white]{{\footnotesize $q=t$}};
		\draw[limit] (Macdonald)-- (HL) node[midway,fill=white]{\footnotesize {$q=0$}};
		\draw[limit] (Macdonald) -- (qW) node[midway,fill=white]{{\footnotesize $t=0$}};
		\draw[limit] (qW) -- (W) node[midway,fill=white]{{\footnotesize $q\to 1$}};
		\draw[limit] (qW) -- (Schur) node[midway,fill=white]{{\footnotesize $q=0$}};
		\draw[limit] (HL) -- (SV) node[midway,fill=white]{{\footnotesize special case}};
		\draw[limit] (HL) -- (Schur) node[midway,fill=white]{{\footnotesize $t=0$}};
		\draw[limit] (gLPP) -- (LPP);
		\draw[limit] (Schur) -- (gLPP);
		\draw[limit] (SV) -- (ASEP) node[midway,fill=white]{{\footnotesize continuous time}};
		\draw[limit] (ASEP) -- (KPZ) node[midway,fill=white]{{\footnotesize weak asymetry \cite{corwin2016open, parekh2017kpz}}};;
		\draw[limit] (ASEP) -- (LPP) node[midway,fill=white]{{\footnotesize total asymmetry}};
		\draw[limit] (W) -- (polymer);
		\draw[limit] (polymer) -- (KPZ) node[midway,fill=white]{{\footnotesize intermediate disorder \cite{wu2018intermediate}}};
		\draw[limit] (polymer) -- (LPP) node[midway,fill=white]{{\footnotesize zero-temperature}};
		\end{tikzpicture}
	\end{center}
	\caption{Hierarchy of half-space Macdonald processes and their degenerations. The arrows mean that one has to take a specialization of parameters or a scaling limit.}
	\label{fig:hierarchy}
\end{figure}

\subsection{Half-space Macdonald measures and processes}
\label{sec:beginning}

\emph{Half-space Macdonald measures}, defined more precisely in Section \ref{sec:defhalfspaceMacdonald}, are probability distributions on integer partitions $\la=(\la_1\geqslant  \la_2\geqslant \dots \geqslant 0)$ for which (the notation will be explained below)
$$ \PMM(\la) = \frac{1}{Z(\rhoup,\rhodiag)} P_{\la}(\rhoup ) \ve_{\la}(\rhodiag). $$
Here $P_{\la}$ are the Macdonald symmetric functions \cite[Chapter VI]{macdonald1995symmetric} depending on two parameters $q,t$ (we assume that these parameters take values in $[0,1)$ throughout) and $\ve_{\la}$ is another symmetric function defined  by
$$ \ve_{\la} = \sum_{\mu' \text{ even}} \bel{_{\mu}} Q_{\la/\mu},$$
where $P_{\la/\mu}, Q_{\la/\mu}$ are skew Macdonald symmetric functions, $\bel{_{\mu}}$ are explicit $(q,t)$-dependent coefficients and the sum is over all partitions which are dual even (meaning that $\mu_{2i-1}=\mu_{2i}$ for all $i$). The symbols $\rhodiag$ and $\rhoup$ represent specializations of the algebra of symmetric functions that can depend on many parameters, and $Z(\rhoup,\rhodiag)$ is the normalizing constant which is necessary to make $\PMM$ a probability measure. Section \ref{sec:backgroundSym} provides more details for all of these objects.

\emph{Half-space Macdonald processes} (Definition \ref{def:halfspaceMacdonaldprocess}) are probability measures on sequences of partitions whose marginals are half-space Macdonald measures. Macdonald symmetric functions usually take a different name when their parameters $q,t$ are specialized to certain values. We will name our half-space processes accordingly. The chart in Figure \ref{fig:hierarchy} depicts the hierarchy of these degenerations and the relations between most integrable half-space systems discussed in this paper. We will use $\PMP, \EPMP$ to denote the probability measure and expectation operator for the Macdonald process, and $\PMM, \EPMM$ for the Macdonald measure. Setting $t=0$ and $q$ general results in the \emph{$q$-Whittaker} case and we write $\PQWP, \EQWP$, where as setting $q=0$ and $t$ general results in the \emph{Hall-Littlewood} case and we likewise write $\PHL, \EHL$.  When we consider further degenerations (e.g. the Whittaker case, ASEP, etc) we will denote the probability measure and expectation by $\PP, \EE$.

\subsection{Computing expectations of observables}
At the Macdonald level we are able to compute integral formulas for various moments and  Laplace-type transforms. A general scheme for computing expectations of certain observables of Macdonald measures was introduced in \cite[Section 2.2.3]{borodin2014macdonald} and we develop this into the half-space setting. As alluded to earlier, in order to compute one of our $(q,t)$-Laplace transform formulas -- see Theorem \ref{theo:MoumiLaplaceintro} -- we need to introduce a new operator $\MoumiA_n^z$ which extends the action of $\Moumi^z$ from polynomials, but has better analytic properties. We briefly review our main Laplace transform results (we leave the moment formulas, which are in the spirit of the earlier full-space work of \cite{borodin2014macdonald}  to the main text  -- see Section \ref{sec:observables}).

Consider real variables $x_1,\ldots, x_n \in (0,1)$ and a specialization $\rho$. We have the (Littlewood) identity
$$
\sum_{\lambda} P_{\lambda}(x_1,\ldots, x_n) \ve_{\lambda}(\rho) = Z(x_1,\ldots x_n;\rho)
$$
where the function  $Z(x_1,\ldots x_n;\rho)$ has an explicit form (see Section \ref{sec:backgroundSym}). We recognize on the L.H.S. the unnormalized density of the half-space Macdonald measure.

Assume that we have a linear operator $\mathbf A_n$ acting on functions in variables $x_1, \dots, x_n$ which is diagonal in the basis of symmetric polynomials $\lbrace P_{\lambda}\rbrace$ with eigenvalues $\lbrace d_{\lambda} \rbrace$. Applying $\mathbf A_n$ to both sides of the above identity and subsequently dividing both sides by the normalizing constant $Z$,  we find that for the half-space Macdonald measure with $\rhoup=(x_1,\ldots,x_n)$ and $\rhodiag=\rho$,
\begin{equation}
\EPMM\big[d_{\lambda}]= \sum_{\lambda\in \Y} d_{\lambda} \frac{P_{\lambda}(x_1,\ldots, x_n) \ve_{\lambda}(\rho)}{Z(x_1,\ldots x_n;\rho) }  = \frac{ \mathbf A_n \ Z(x_1,\ldots x_n;\rho) }{ Z(x_1,\ldots x_n;\rho) },
\label{eq:computationobservables}
\end{equation}
though one needs to justify that the infinite summation commutes with $\mathbf A_n$.
We may also iterate the procedure to compute more complicated observables.

Such operators diagonalized by Macdonald polynomials are available. In particular, we will use Macdonald difference operators $\DD_n^r$,  and a variant of them that we denote  $\overline{\DD}_n^r$,
for which
$$ \DD_n^r P_{\la} = e_r(q^{\lambda_1}t^{n-1}, \dots, q^{\la_n}t^0) P_{\la}, \ \ \ \  \overline{\DD}_n^r P_{\la} = e_r(q^{-\lambda_1}t^{1-n}, \dots, q^{-\lambda_n}t^0)P_{\la},$$
where $e_r$ is the $r$th elementary symmetric function. 	We will also use Noumi's $q$-integral operator $\Noumi_n^z$ (see  Section \ref{sec:Noumi}), and a variant of it denoted  $\Moumi_n^z$, for which
$$ \Noumi_n^z P_{\la} =   \prod_{i=1}^n \frac{(q^{\la_i}t^{n-i+1}z; q)_{\infty}}{(q^{\la_i}t^{n-i}z; q)_{\infty}} P_{\la}, \ \ \ \ \Moumi_n^z P_{\la}  =   \prod_{i=1}^n \frac{(q^{-\la_i}t^{i}z; q)_{\infty}}{(q^{-\la_i}t^{i-1}z; q)_{\infty}} P_{\la}, $$
where $(x)_{\infty} = (x; q)_{\infty} = \prod_{i\geqslant 0} (1-q^ix)$.
Following arguments similar to \cite{borodin2014macdonald, borodin2016observables}, we can make the above approach entirely rigorous for operators $\DD_n^r$, $\overline{\DD}_n^r$ and $\Noumi_n^z $, and the action of these operators on the normalizing constant can be expressed in terms of contour integrals (see Sections \ref{sec:generalmomentsformulas} and \ref{sec:generalLaplace}). Regarding the  $q$-integral operator $\Moumi_n^z$, equation \eqref{eq:computationobservables} is not true since we cannot justify moving the operator inside the sum which defines $Z$ (in fact, this interchange is not true due to a lack of convergence when one tries to use Fubini). This is unfortunate, because the quantity
$$\EPMM\left[ \prod_{i=1}^n \frac{(q^{-\la_i}t^{i}z)_{\infty}}{(q^{-\la_i}t^{i-1}z)_{\infty}} \right]$$
is exactly what we need to compute in order to study the partition function of the log-gamma polymer.
This is why we introduce a different integral operator $\MoumiA_n^z$ which coincides with $\Moumi_n^z$ on polynomials,  but not on $Z(x_1, \dots, x_n; \rho)$, and has better analytic properties. This is the main technical novelty of the present paper regarding the computation of observables.

\medskip

Let $ a_1, \dots, a_n$ be parameters in  $(0,1)$.
Consider the half-space Macdonald measure with specializations $\rhoup=(a_1, \dots, a_n)$ and $\rhodiag=\rho$. We further assume that the parameters $ a_1, \dots, a_n$ are close enough to each other (see the statements of Theorem \ref{theo:NoumiLaplace} and Theorem \ref{theo:MoumiLaplace} for precise statements).
\begin{theoremintro}[Theorem \ref{theo:NoumiLaplace}]
	Let $z\in \C\setminus \R_{>0}$.  We have
	\begin{multline}
	\EPMM\left[ \prod_{i=1}^n \frac{(q^{\la_i}t^{n-i+1}z)_{\infty}}{(q^{\la_i}t^{n-i}z)_{\infty}} \right]= \sum_{k=0}^{n} \frac{1}{k!} \int_{R-\I\infty}^{R+\I\infty} \frac{\mathrm{d}s_1}{2\I\pi} \dots \int_{R-\I\infty}^{R+\I\infty} \frac{\mathrm{d}s_k}{2\I\pi}\
	\oint  \frac{\mathrm{d}w_1}{2\I\pi}\dots \oint \frac{\mathrm{d}w_k}{2\I\pi}  \mathcal{A}^{q,t}_{\vec s}(\vec w) \\ \times	\prod_{i=1}^k\Gamma(-s_i)\Gamma(1+s_i)  \prod_{i=1}^k \frac{\mathcal{G}^{q,t}(w_i)}{\mathcal{G}^{q,t}(q^{s_i}w_i)}\frac{\phi(w_i^2)(-z)^{s_i}}{\phi(q^{s_i}w_i^2) (q^{s_i}-1)w_i},
	\label{eq:LaplaceintroNoumi}
	\end{multline}
	where $R\in (0,1)$ is chosen such that $0<q^R<a_i/a_j$ for all $i,j$,  the positively oriented integration contours for the variables $w_j$ enclose all the $a_i$'s and no other singularity, and we have used the shorthand notations
	\begin{equation*}
	\mathcal{A}^{q,t}_{\vec s}(\vec w) := \prod_{1\leqslant i<j\leqslant k} \frac{ (q^{s_j}w_j-q^{s_i}w_i)(w_i-w_j)\phi(q^{s_i+s_j}w_i w_j)\phi(w_iw_j)}{(q^{s_i}w_i-w_j)(q^{s_j}w_j-w_i)\phi(q^{s_i}w_iw_j)\phi(q^{s_j}w_jw_i)}
	\end{equation*}
	and
	$$
	\mathcal{G}^{q,t}(w) = \prod_{j=1}^n\frac{\phi(w/a_j)}{\phi(wa_j)} \frac{1}{ \Pi(w; \rho)}, \ \ \ \phi(z) = \frac{(tz)_{\infty}}{(z)_{\infty}},
	$$
	with $\Pi(w; \rho)= \sum_{\la}P_{\la}(w)Q_{\la}(\rho)$ (see \eqref{eq:Pigeneral} for an explicit expression).
	\label{theo:NoumiLaplaceintro}
\end{theoremintro}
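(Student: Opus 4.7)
The natural starting point is the scheme \eqref{eq:computationobservables} applied with $\mathbf{A}_n = \Noumi_n^z$, whose eigenvalue on $P_\lambda$ is exactly the quantity inside the expectation. The claim then reduces to computing
$$\frac{\Noumi_n^z\, Z(a_1,\dots,a_n;\rho)}{Z(a_1,\dots,a_n;\rho)}$$
and identifying the result with the right-hand side of \eqref{eq:LaplaceintroNoumi}. The underlying interchange of $\Noumi_n^z$ with the Littlewood sum defining $Z$ is stated in the excerpt to be valid for $\Noumi_n^z$ (unlike its analytic continuation $\Moumi_n^z$), with justification modeled on \cite{borodin2014macdonald, borodin2016observables}. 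I would carry out this justification first, using the Littlewood product form of $Z$ as a Cauchy-type kernel together with decay estimates on the eigenvalue $\prod_i(q^{\lambda_i}t^{n-i+1}z)_\infty/(q^{\lambda_i}t^{n-i}z)_\infty$ as a function of $\lambda$.

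\textbf{From $q$-shifts to Mellin--Barnes integrals.} The operator $\Noumi_n^z$ of Section \ref{sec:Noumi} is a $q$-Jackson integral, i.e.\ a weighted sum of shift operators $\prod_i T_{q,a_i}^{k_i}$ over tuples $(k_1,\dots,k_n)\in\Z_{\geqslant 0}^n$ with explicit rational coefficients in $a_i,q,t,z$. I would group terms by the support $J=\{i:k_i>0\}$ and symmetrize, producing the outer sum over $k=|J|\leqslant n$ with the prefactor $1/k!$. For each active coordinate, the remaining infinite sum over $k_i\geqslant 1$ is converted into a Mellin--Barnes integral via the identity
$$\sum_{k\geqslant 1}(-z)^{k}g(q^{k})=\int_{R-\I\infty}^{R+\I\infty}\Gamma(-s)\Gamma(1+s)\,(-z)^{s}g(q^{s})\,\frac{\mathrm ds}{2\I\pi},$$
valid for $R\in(0,1)$ when $g$ is sufficiently analytic. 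This is precisely the origin of the $s_i$-integrals and of the factor $\Gamma(-s_i)\Gamma(1+s_i)(-z)^{s_i}$ in the statement; the hypothesis $0<q^R<a_i/a_j$ is what makes these integrals absolutely convergent once $g$ is specialized to the rational functions coming from the shift action on $Z$.

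\textbf{Residues and identification of the integrand.} Using the product form of $Z(a_1,\dots,a_n;\rho)$ from the Littlewood identity (a product of $\phi(a_ia_j)$ over pairs times $\prod_i\Pi(a_i;\rho)$), each shift $a_i\mapsto q^{s_i}a_i$ extracts from $Z$ a multiplicative ratio that, after packaging, is exactly $\mathcal G^{q,t}(a_i)/\mathcal G^{q,t}(q^{s_i}a_i)$. Encoding the choice of the shifted index as the residue at $w_i=a_i$ of $\mathcal G^{q,t}(w_i)/\mathcal G^{q,t}(q^{s_i}w_i)$ (the pole being supplied by $\phi(w_i/a_i)$ in the definition of $\mathcal G^{q,t}$) turns the sum over $J$ into the $w_j$-contour integrals enclosing all $a_i$'s, and the extra $\phi(w_i^2)/\phi(q^{s_i}w_i^2)/((q^{s_i}-1)w_i)$ factor tracks the ``diagonal'' Littlewood term $\phi(a_i^2)$ and the residue normalization. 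When several coordinates are shifted simultaneously, the off-diagonal Littlewood contributions $\phi(a_ia_j)\to\phi(q^{s_i+s_j}a_ia_j)$ together with the antisymmetric Vandermonde-type coefficients in $\Noumi_n^z$ assemble precisely into the cross factor $\mathcal A^{q,t}_{\vec s}(\vec w)$, whose numerator factor $(w_i-w_j)$ automatically suppresses diagonal contributions when two of the $w$-contours collide on the same $a_i$.

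\textbf{Main obstacle.} The principal difficulty is the combinatorial/algebraic reorganization of the last step: the $q$-Jackson-integral form of $\Noumi_n^z$ applied to a product breaks into very many pieces which must be reassembled into the compact antisymmetric cross function $\mathcal A^{q,t}_{\vec s}(\vec w)$. My plan would be to first verify the case $n=1$ (where $\mathcal A\equiv 1$ and the formula collapses to a single Mellin--Barnes integral, directly checkable from the explicit eigenvalue), and then proceed by induction on $n$, exploiting the determinantal/product structure of Noumi's coefficients to identify the cross term. A secondary but essential technical point is that the $w_j$-contours must separate the poles at $w_j=a_i$ from the auxiliary poles at $w_j=q^{-s_j}a_i$ and from the singularities of $\Pi(w;\rho)$; this separation is exactly what is enforced by the assumption that the $a_i$'s lie sufficiently close together and the compatible choice of $R$.
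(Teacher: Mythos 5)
Your high-level strategy coincides with the paper's: the proof runs through Proposition~\ref{prop:NoumiLaplace} (apply $\Noumi_n^z$ to the Littlewood identity and justify the interchange with the sum over $\lambda$, which works for $\Noumi$ precisely because the eigenvalue $\prod_i (q^{\lambda_i}t^{n-i+1}z)_\infty/(q^{\lambda_i}t^{n-i}z)_\infty$ is bounded in $\lambda$ for $|z|<1$), then Proposition~\ref{prop:actionNoumi} and Lemma~\ref{lem:actionNouminu} (symmetrization over supports and conversion to $w$-contour integrals), then Lemma~\ref{lem:MellinBarnes} (Mellin--Barnes), and finally analytic continuation from $|z|<1$ to $\C\setminus\R_{>0}$. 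So the ingredients you identify — the $\mathcal G^{q,t}$-ratios coming from the shift action on $\Pi(w;\rho)$, the role of $\phi(w_i/a_j)$ as the pole-supplier, the Vandermonde factor killing non-injective residue configurations — are exactly right.

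Two genuine differences. First, you swap the order of the two conversions: you propose Mellin--Barnes on each active coordinate \emph{before} packaging the sum over supports into a $w$-contour integral, whereas the paper does the residue/contour argument first with discrete exponents $\nu_i\geqslant 1$ (Lemma~\ref{lem:actionNouminu}), and only afterwards applies Lemma~\ref{lem:MellinBarnes} to replace $\sum_{\nu_i\geqslant 1}$ by $\int_{\mathcal D_R}$. The paper's order is cleaner because the key residue identity
$\subs_{w_i=a_{\sigma(i)}}\bigl\{\prod_{i\leqslant k}\prod_{j>k}\frac{a_{\sigma(j)}-q^{\nu_i}w_i}{a_{\sigma(j)}-w_i}\bigr\}
=\Res{w_i=a_{\sigma(i)}}\bigl\{\cdots\bigr\}$
is manifestly correct with integer $\nu_i$, and the Mellin--Barnes step is then a one-variable-at-a-time afterthought justified by the decay of the integrand. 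Your ordering still works but forces you to manipulate complex $s_i$ throughout the residue step, which is what the paper reserves for the harder dual operator $\MoumiA_n^z$ in Theorem~\ref{theo:MoumiLaplace}. Second, and more substantively, you propose to establish the combinatorial identity for $\mathcal A^{q,t}_{\vec s}(\vec w)$ by induction on $n$. The paper instead proves Lemma~\ref{lem:actionNouminu} by a direct sum-of-residues argument: the symmetrized action of $\Noumi^z_{n,\sigma(\nu)}$ is written as a sum over $\sigma\in\SS_n^k$ of evaluations at $w_i=a_{\sigma(i)}$, recognized as residues, and assembled into a single $k$-fold integral. An induction on $n$ would require decomposing Noumi's $h_\eta(x;q,t)$ coefficients — which couple all $n$ variables through the product $\prod_{i<j}$ — into an $(n-1)$-variable piece plus a correction; the cross factor $\mathcal A^{q,t}$ does not decompose in such a way, so it is unclear the induction closes. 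I would therefore replace the induction by the paper's direct residue computation, which the rest of your proposal already anticipates.
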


\begin{theoremintro}[Theorem \ref{theo:MoumiLaplace}]
	Let $z\in \C\setminus \R_{>0}$. Under mild assumptions on the specialization $\rho$, we have
	\begin{multline}
	\EPMM_{(a_1, \dots, a_n), \rho}\left[ \prod_{i=1}^n \frac{(q^{-\la_i}t^{i}z)_{\infty}}{(q^{-\la_i}t^{i-1}z)_{\infty}} \right] =  \sum_{k=0}^{n} \ \frac{1}{k!}\ \int_{R-\I\infty}^{R+\I\infty}\frac{\mathrm{d}s_1}{2\I\pi} \dots \int_{R-\I\infty}^{R+\I\infty}\frac{\mathrm{d}s_k}{2\I\pi}\   \oint\frac{\mathrm{d}w_1}{2\I\pi} \dots \oint\frac{\mathrm{d}w_k}{2\I\pi}    \mathcal{A}^{q,t}_{-\vec s}(\vec w) \\ \times
	\prod_{i=1}^k\Gamma(-s_i)\Gamma(1+s_i)  \frac{	\overline{\mathcal{G}}^{q,t}(w_i)}{	\overline{\mathcal{G}}^{q,t}(q^{-s_i}w_i)}  \frac{\phi(w_i^2) (-z)^{s_i}}{\phi(q^{-s_i}w_i^2) (1-q^{s_i})w_i} ,
	\label{eq:LaplaceintroMoumi}
	\end{multline}
	where $R\in (0,1)$ is chosen such that $a_i<q^R<a_i/a_j$ for all $i,j$, the positively oriented contours for the variables $w_j$ enclose all the $a_i$'s and no other singularity, and we have used the shorthand notation
	\begin{equation*}
	\overline{\mathcal{G}}^{q,t}(w) = \prod_{j=1}^n\frac{\phi(a_j/w)}{\phi(wa_j)}\frac{1}{\Pi(w; \rho)}.
	\label{eq:defGintro}
	\end{equation*}
	\label{theo:MoumiLaplaceintro}
\end{theoremintro}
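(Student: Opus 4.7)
The approach mirrors the proof of Theorem \ref{theo:NoumiLaplaceintro}: apply the operator $\MoumiA_n^z$ (defined in Section \ref{sec:proofmaintheo}) to both sides of the Littlewood-type identity
\begin{equation*}
Z(a_1,\ldots,a_n;\rho) \;=\; \sum_{\lambda} P_\lambda(a_1,\ldots,a_n)\, \ve_\lambda(\rho),
\end{equation*}
and divide by $Z$. Because $\MoumiA_n^z$ coincides with $\Moumi_n^z$ on every Macdonald polynomial $P_\lambda$, a term-by-term action produces
\begin{equation*}
Z(a_1,\ldots,a_n;\rho)\cdot \EPMM_{(a_1,\ldots,a_n),\rho}\Biggl[\prod_{i=1}^n \frac{(q^{-\lambda_i}t^{i}z)_\infty}{(q^{-\lambda_i}t^{i-1}z)_\infty}\Biggr],
\end{equation*}
so the theorem reduces to two sub-steps: (i) justifying the interchange of $\MoumiA_n^z$ with the infinite $\lambda$-sum, and (ii) computing $\MoumiA_n^z Z(a_1,\ldots,a_n;\rho)$ in closed form and checking that it matches the right-hand side of \eqref{eq:LaplaceintroMoumi}.

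Step (i) is the entire reason for working with $\MoumiA_n^z$ rather than $\Moumi_n^z$. The Mellin-Barnes representation of $\MoumiA_n^z$ carries Gamma factors $\Gamma(-s_i)\Gamma(1+s_i)$ which decay exponentially as $|\Imag(s_i)|\to\infty$ along the vertical contour $\Real(s_i)=R$; the $w_j$-contours are compact, and the mild assumptions on $\rho$ control the growth of $\ve_\lambda(\rho)$. Together these produce absolute convergence of the double sum-integral and let Fubini apply. This is precisely the step that fails with $\Moumi_n^z$, whose $q$-integral representation lacks the required decay and, if naively interchanged with the $\lambda$-sum, produces the wrong answer (see Remark \ref{rem:whymoredifficult}). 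The hypothesis that the $a_i$ are close to each other is used to select a single common contour for all $w_j$ enclosing every $a_i$ while avoiding the poles of $\overline{\mathcal{G}}^{q,t}$ and of the interaction factor $\mathcal{A}^{q,t}_{-\vec s}(\vec w)$.

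Step (ii) is a direct computation using the Mellin-Barnes form of $\MoumiA_n^z$ and the explicit product form of $Z(a_1,\ldots,a_n;\rho)$ coming from the Littlewood summation formula. The $k$-fold structure $\sum_{k=0}^{n}\frac{1}{k!}\int\cdots\int$ in \eqref{eq:LaplaceintroMoumi} reflects the decomposition of $\MoumiA_n^z$ as an analytic continuation of a sum indexed by subsets of $\{1,\ldots,n\}$: each pair $(w_j,s_j)$ corresponds to the continuous shift of one among the variables $(a_1,\ldots,a_n)$, the $w_j$-contour selecting which variable via residues at $w_j = a_i$, and the $s_j$-contour interpolating the integer shifts originally present in $\Moumi_n^z$. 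The ratio $\overline{\mathcal{G}}^{q,t}(w_i)/\overline{\mathcal{G}}^{q,t}(q^{-s_i}w_i)$ is the quotient $Z(\mathrm{shifted})/Z$ after the shift $a_{i}\mapsto q^{-s_j}a_{i}$ of a single variable, and $\mathcal{A}^{q,t}_{-\vec s}(\vec w)$ accounts for the pairwise interactions when $k\geqslant 2$ coordinates are simultaneously shifted.

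The main obstacle is constructing and verifying the Mellin-Barnes representation of $\MoumiA_n^z$ itself. One must show $\MoumiA_n^z P_\lambda = \Moumi_n^z P_\lambda$ for every $\lambda$, achieved by closing the $s$-contours to the right and summing residues at $s_i\in\Z_{\geqslant 0}$ to reproduce the series expansion underlying the $q$-integral operator $\Moumi_n^z$, while simultaneously checking that on $Z(a_1,\ldots,a_n;\rho)$ this contour closure is unavailable due to lack of decay at infinity. This absence of closure is what both delivers the stated formula \eqref{eq:LaplaceintroMoumi} and explains the phenomenon $\MoumiA_n^z Z \neq \Moumi_n^z Z$ despite their agreement on polynomials. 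Once these two properties of $\MoumiA_n^z$ are in place, the theorem follows by assembling Steps (i) and (ii).
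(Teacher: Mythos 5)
Your proposal takes essentially the same route as the paper's proof in Section \ref{sec:proofmaintheo}: construct the Mellin-Barnes operator $\MoumiA_n^z$, establish its eigenrelation on Macdonald polynomials by analytic continuation of the $q$-integral operator (Proposition \ref{prop:analyticeigenrelation}), transfer that term by term to the Littlewood sum using the exponential decay of the $\Gamma$-factors to justify the interchange (Proposition \ref{prop:generalactionO} and Corollary \ref{cor:analyticoperatorandLaplace}), and then compute $\MoumiA_n^z(\Pi\Phi)$ explicitly by contour deformation and residue extraction (Proposition \ref{prop:actionMoumiAonZ}). One clarification: the $k$-fold structure in \eqref{eq:LaplaceintroMoumi} is not a subset decomposition built into the operator's definition, which is a single $n$-fold $s$-integral over $\mathcal{D}_{-\e}$; it arises from deforming each $s_i$-contour across $s_i=0$ and collecting the residue of $\Gamma(-s_i)$ there, after which the $\SS_n$-symmetrization of the remaining $k$ active variables collapses to the $w$-contour integrals via Lemma \ref{lem:actionMoumiAnu}.
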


The observables appearing in \eqref{eq:LaplaceintroNoumi} and \eqref{eq:LaplaceintroMoumi} above should be thought of as Laplace transforms. When $t=0$ and $q\to 1$ these will become exactly Laplace transforms of the random variables that we want to study. To be more precise, take $t=0$ for simplicity, using the $q$-binomial theorem \eqref{eq:qbinomial}, the L.H.S. of \eqref{eq:LaplaceintroNoumi} becomes
$$  \EQWM\left[ \frac{1}{(q^{\la_n}z)_{\infty}}  \right]  = \sum_{k=0}^{\infty}   \frac{z^k\EQWM[q^{k\la_n}]}{k!_q},$$
where $k!_q$ denotes the $q$-deformed factorial (see \eqref{eq:qfactorial}).

The proof of Theorem \ref{theo:MoumiLaplaceintro} is significantly more delicate than the proof of Theorem \ref{theo:NoumiLaplaceintro}. This is because, the L.H.S. of \eqref{eq:LaplaceintroMoumi} does not expand as a power series in $z$. Indeed, (we take again the case $t=0$ for simplicity of the exposition),
$$ \EQWM\left[ \frac{1}{(q^{-\la_1}z)_{\infty}}  \right]  \neq \sum_{k=0}^{\infty}   \frac{z^k\EQWM[q^{-k\la_1}]}{k!_q}.$$
Actually, the moments of $q^{-\la_1}$ grow too fast to determine  the distribution uniquely (and sometimes they do not even exist). Formally taking a moment generating series of moments would not yield the correct result (see Remarks \ref{rem:whymoredifficult} and  \ref{rem:subtleties}). This is why we need to work with the integral operator  $\MoumiA^z$ instead of the $q$-integral operator  $\Moumi^z$ in the proof of Theorem \ref{theo:MoumiLaplaceintro} in Section \ref{sec:generalLaplace}.

A similar moment problem issue came up in the study of full-space Macdonald processes and  \cite{borodin2015height} developed an involved argument (using formal power series in the variables of the $Q$ Macdonald polynomial) to prove the $q$-Laplace transform formula. That argument, however, cannot be applied here as there is no $Q$ polynomial or extra set of variables in which to expand. Thus, our new operator $\MoumiA_n^z$ provides the only apparent route to prove Theorem \ref{theo:MoumiLaplaceintro}. It also provides an alternative to the approach of \cite{borodin2015height} in the full-space case. 

\subsection{Models related to half-space Macdonald processes}
\subsubsection{Log-gamma polymer in a half-quadrant}

The \emph{log-gamma directed polymer} model was introduced in \cite{seppalainen2012scaling} and further studied in \cite{borodin2013log, corwin2014tropical, georgiou2013ratios, georgiou2013large, georgiou2016variational,  grange2017log, o2014geometric, nguyen2016variants, thiery2014log}. We consider a variant living in a half-quadrant of $\Z^2$.
\begin{definition}[Half-space log-gamma polymer] Let  $\alpha_1, \alpha_2, \dots $ be positive parameters and $\diag\in \R$ be such that $\alpha_i+\diag>0$ for all $i \geqslant 1$. The half-space log-gamma polymer is a probability measure on up-right paths confined in the half-quadrant $ \lbrace (i,j)\in \Z_{>0}^2 : i\geqslant j \rbrace $ (see Figure \ref{halfspaceloggamma}), where the probability of an admissible path $\pi$ between $(1,1)$ and $(n,m)$ is given by
	$$ \frac{1}{Z_{n,m}} \ \ \prod_{(i,j)\in \pi} w_{i,j},$$
	and where $\big(w_{i,j}\big)_{i\geqslant j}$ is a family of independent random variables such that for $i>j, w_{i,j}\sim \mathrm{Gamma}^{-1}(\alpha_i + \alpha_j)$ and $w_{i,i}\sim \mathrm{Gamma}^{-1}(\diag + \alpha_i)$. The notation  $\mathrm{Gamma}^{-1}(\theta)$ denotes the inverse of a Gamma distributed random variable with shape parameter $\theta$ (see Definition \ref{def:Gammadist}).
	The partition function $Z(n,m)$ is given by
	$$Z(n,m) = \sum_{\pi: (1,1) \to (n,m)} \prod_{(i,j)\in \pi} w_{i,j}. $$
	\label{def:LogGammapolymer}
\end{definition}
\begin{figure}
	\begin{tikzpicture}[scale=0.8]
	\draw[->, thick, >=stealth', gray] (0,0) -- (10, 0);
	\draw[->, thick, >=stealth', gray] (0,0) -- (0, 5.5);
	\draw[thick, gray]  (0,0) -- (5.5,5.5);
	\draw (-.5, -.2) node{{\footnotesize $(1,1)$}};
	\draw (9.6, 5.3) node{{\footnotesize $(n,m)$}};
	\draw[->, >=stealth'] (10,2.5) node[anchor=west]{{\footnotesize $w_{9,3} \sim \mathrm{Gamma}^{-1}(\alpha_9 + \alpha_3) $}} to[bend right] (8,2);
	\draw[->, >=stealth'] (1,4) node[anchor=south]{{\footnotesize $w_{4,4} \sim \mathrm{Gamma}^{-1}(\diag + \alpha_4) $}} to[bend right] (3,3);
	\clip (0,0) -- (9.5,0) -- (9.5,5.5) -- (5.5, 5.5) -- (0,0);
	\draw[dotted, gray] (0,0) grid (10,6);
	
	\draw[ultra thick] (0,0) -- (1,0) -- (2,0) -- (2,1) -- (2,2) -- (3,2) -- (4,2) -- (4,3) -- (5,3) -- (6,3) -- (6,4) -- (7,4) -- (8,4) -- (8,5) -- (9,5);
	\end{tikzpicture}
	\caption{An admissible path in the half space log-gamma polymer.}
	\label{halfspaceloggamma}
\end{figure}
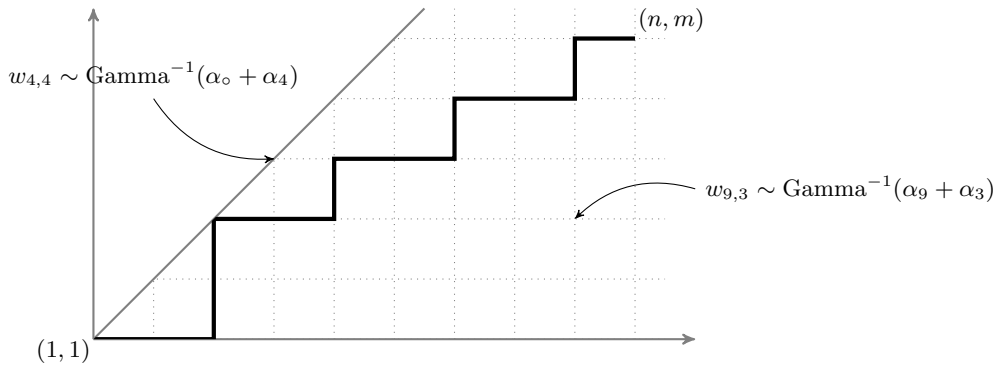

We show  (see Proposition \ref{prop:equalityZwhittaker} below) that the observable $q^{-\lambda_1}$ admits a limit to $Z(t,n)$ when the parameters of the  half-space  Macdonald measure are scaled correctly, $t=0$ and $q$ goes to $1$ in an appropriate manner. This limit corresponds to the half-space  Whittaker process  discussed in Section \ref{sec:Whittaker}. To prove this result, we study Markovian dynamics preserving half-space Macdonald processes, (actually,  $q$-Whittaker processes), and interpret them in terms of new integrable particle systems (see Section \ref{sec:newprocesses}). We then show (following arguments from \cite{matveev2015q}) that under these dynamics, $q^{-\lambda_1}$ satisfies a recurrence relation which, in the $q \to 1$ limit,  relates it to the half-space log-gamma polymer partition function.

Taking degenerations of integral formulas obtained for general Macdonald measures, we obtain the following moment formula (see Corollary \ref{cor:momentsZ} below). For $t\geqslant n$ and  $k\in\Z_{>0}$ such that $k<  \min\lbrace 2\alpha_i, \alpha_i+\diag \rbrace $,
\begin{multline}
\EWM[Z(t,n)^k] =\oint\frac{\mathrm{d}w_1}{2\I\pi}\cdots \oint\frac{\mathrm{d}w_k}{2\I\pi} \prod_{1\leqslant a<b\leqslant k} \frac{w_a-w_b}{w_a-w_b-1}\, \frac{1+w_a+w_b}{2+ w_a+w_b}\\ \times
\prod_{m=1}^{k}  \frac{1+2w_m}{1+w_m-\diag}\prod_{i=1}^t \left( \frac{1}{\alpha_i-w_m-1} \right) \prod_{j=1}^{n} \left(\frac{1}{w_m + \alpha_j}\right),
\label{eq:momentsZintro}
\end{multline}
where the contours are such that for all $1\leqslant c\leqslant k$, the contour for $w_c$ encloses $\lbrace - \alpha_j\rbrace_{1\leqslant j\leqslant n}$  and $\lbrace w_{c+1}+1, \dots, w_k+1\rbrace$, and excludes the poles of the integrand at $ \diag- 1$ and $ \alpha_j-1$ (for $1\leqslant j\leqslant t$).
Note that if $k>\alpha_i+\alpha_j$ or $k>\alpha_i+\diag$ for some $i<j$, 
the $k$-th moment of $Z(t,n)$ fails to exist.

In order to go around certain technical issues, we also define a hybrid polymer model corresponding to a sort of convolution of the half-space log-gamma polymer and the O'Connell-Yor semi-discrete Brownian polymer. We will not give its exact definition for the moment but refer to Definition \ref{def:hybridpolymer} below for the details.  We denote its partition function by $Z(t,n, \tau)$, where $\tau$ is a positive parameter (which correspond to the time in the O'Connell-Yor polymer). The random variable $Z(t,n, \tau)$ weakly converges to the log-gamma partition function $Z(t,n)$ as $\tau$ goes to zero, so that $Z(t,n, \tau)$ can be thought of as a regularization of $Z(t,n)$.

The moments of the partition function $Z(t,n)$ (and $Z(t,n, \tau)$ as well) grow too fast to determine its distribution uniquely. Nonetheless, by taking appropriate  degenerations  of Theorem \ref{theo:MoumiLaplaceintro}, we are able to characterize the distribution of $Z(t,n)$ via the following Laplace transform formula, which is proved as Corollary \ref{cor:LaplaceWhittaker} in the text (or more precisely a consequence of it stated as \eqref{eq:LaplaceZ2}).

If the parameters $\alpha_i>0$ are sufficiently close to each other, for any $t\geqslant n\geqslant 1$, $\tau>0$ and  $u>0$,
\begin{multline}
\EWM[e^{ -u Z(t,n, \tau)}] = \sum_{k=0}^{n} \ \frac{1}{k!}\ \int_{R-\I\infty}^{R+\I \infty}\frac{\mathrm{d}z_1}{2\I\pi} \dots \int_{R-\I\infty}^{R+\I \infty} \frac{\mathrm{d}z_k}{2\I\pi}\   \oint\frac{\mathrm{d}v_1}{2\I\pi} \dots \oint\frac{\mathrm{d}v_k}{2\I\pi}   \\  \times
\prod_{1\leqslant i<j\leqslant k} \frac{(z_i-z_j)(v_i-v_j)\Gamma(v_i+v_j)\Gamma(-z_i-z_j)}{(z_j+v_i)(z_i+v_j)\Gamma(v_j-z_i)\Gamma(v_i-z_j)}\\ \times
\prod_{i=1}^k \left[\frac{\pi}{\sin(\pi(v_i+z_i))} \frac{\overline{\mathcal{G}}(v_i)}{\overline{\mathcal{G}}(-z_i)}  \frac{\Gamma(2v_i)}{\Gamma(v_i-z_i)} \frac{u^{z_i+v_i}}{z_i+v_i} \right],
\label{eq:LaplaceWhittakerintro}
\end{multline}
where $R$ is chosen so that for all $i$,  $-\alpha_i<R<\min\lbrace  0 , \diag, 1-\alpha_i \rbrace$, the contours for each variable $v_i$ are positively oriented circles enclosing the poles $\lbrace \alpha_j\rbrace_{1\leqslant j\leqslant n}$ and no other singularity of the integrand, and
$$ \overline{\mathcal{G}}(v) = \frac{e^{-\tau v^2/2}}{\Gamma(\diag +v)}  \dfrac{\prod_{j=1}^{n} \Gamma(\alpha_j-v)   }{\prod_{j=1}^t\Gamma(\alpha_j+v)}  .$$

An important feature of \eqref{eq:LaplaceWhittakerintro} is the cross product
$$\prod_{1\leqslant i<j\leqslant k} \frac{\Gamma(v_i+v_j)\Gamma(-z_i-z_j)}{\Gamma(v_j-z_i)\Gamma(v_i-z_j)}.$$
If the Gamma functions were replaced by their rational approximation around zero, that is $\Gamma(z)\sim 1/z$, we could recast the right hand-side of \eqref{eq:LaplaceWhittakerintro} as the Fredholm Pfaffian of an explicit kernel and the asymptotic analysis would become much easier. Unfortunately, this cross products grows with $k$ as $e^{ck^2}$, which makes it difficult to control  the series \eqref{eq:LaplaceWhittakerintro} as the number of terms $n$ goes to infinity. Similar issues involving a cross product with Gamma factors have been encountered several times in exact formulas for models in the KPZ universality class, in particular in \cite{ortmann2015pfaffian, nguyen2016variants}.

\subsubsection{Relation to the work of O'Connell-Sepp\"al\"ainen-Zygouras}
\label{sec:OSZ}
A model equivalent to the half-space log-gamma polymer model was considered in \cite{o2014geometric}. It corresponds to a log-gamma polymer model where paths live in the first quadrant, as in the usual log-gamma polymer, but the weights $\tilde w_{i,j}$ are symmetric with respect to the first diagonal ($\tilde w_{i,j} = \tilde w_{j,i} $). Off diagonal weights are distributed as $w_{i,j}\sim\mathrm{Gamma}^{-1}(\alpha_i + \alpha_j)$ while the diagonal weights are distributed as $w_{i,i}\sim\frac 1 2 \mathrm{Gamma}^{-1}(\alpha_i + \diag)$. One can identify the weight of a path in this model with the weight of a path in the half-space log-gamma polymer from Definition \ref{def:LogGammapolymer} up to a factor $(1/2)^{k}$ where $k$ is the number of times the path hits the diagonal. Since there are $2^{k-1}$ path in the symmetrized model which correspond to the same path in the half-space model, the partition function  $\tilde Z(t,n)$ of the symmetrized model is such that  $\widetilde Z(t,n) = \frac 1 2 Z(t,n)$. When $t=n$,  the law of  $\widetilde Z(n,n)$ is a marginal of the push-forward of a symmetric matrix with inverse Gamma random variables by the geometric RSK algorithm.  \cite{o2014geometric} computed this pushforward (and hence the distribution of $\widetilde Z(n,n)$) as the Whittaker measure (with slightly different notations than in the present paper). By a formal (see Section \ref{sec:relationwithOSZ})  application of the Plancherel theorem for Whittaker functions, they derived a conjectural formula for the Laplace transform of $\widetilde Z(n,n)$ \cite[(5.15), (5.16)]{o2014geometric}. 
Though \cite{o2014geometric} was unable to prove this formula, they suggested that ``it seems reasonable to expect the integral formulas (5.15) and (5.16) to be valid, at least in some suitably regularized sense.'' In our present work, we show that our hybrid polymer provides such an appropriate regularization. The reason why this was inaccessible to \cite{o2014geometric} was that their results (and connection to Whittaker measures) were restricted to the diagonal and the hybrid polymer requires working off-diagonal,  hence the interest of our study of Markov dynamics on half-space Macdonald processes. Using Whittaker Plancherel theory for our hybrid model and letting $\tau$ go to zero, we obtain the following formula. 

For $t\geqslant n$ and any  $u>0$,  we have (see Corollary \ref{cor:OSZproof} below)
\begin{multline}
\EWM[e^{-u Z(t,n)}] = \frac{1}{n!} \int_{r-\I\infty}^{r+\I\infty}\frac{\mathrm{d}z_1}{2\I\pi} \dots \int_{r-\I\infty}^{r+\I\infty}\frac{\mathrm{d}z_n}{2\I\pi}  \prod_{i\neq j} \frac{1}{\Gamma(z_i-z_j)} \prod_{1\leqslant i<j\leqslant n} \frac{\Gamma(z_i+z_j)}{\Gamma(\alpha_i+\alpha_j)}
\prod_{i,j=1}^{n} \Gamma(z_i - \alpha_j)\\ \times
\prod_{i=1}^{n}\left(u^{ \alpha_i -z_i} \frac{\Gamma(\diag+z_i)}{\Gamma(\diag + \alpha_i)} \prod_{j=n+1}^t \frac{\Gamma(\alpha_j+z_i)}{\Gamma(\alpha_j+ \alpha_i)}\right)
\label{eq:OSZintro}
\end{multline}
where $r>0$ is such that  $r+\diag >0$ and  $r>\alpha_i$ for all $1\leqslant i \leqslant n$. We show in Corollary \ref{cor:OSZproof2} how to deduce rigorously \cite[(5.15), (5.16)]{o2014geometric} from the above formula. Note that \eqref{eq:OSZintro} is more general since we consider the partition function at any point $(t,n)$, not only when $t=n$. 

Applying the geometric RSK algorithm to inverse Gamma distributed matrices with other types of symmetries was further considered in \cite{bisi2019point} but the corresponding polymer models do not seem to be related to the present paper. 
Dynamics on Gelfand-Tsetlin patterns restricted by a wall were studied in Nteka's PhD thesis \cite{nteka2016positive}, but it is not clear if this is related to our present paper. 

\subsubsection{Half-space stochastic six vertex model}

The \emph{stochastic six-vertex model} was introduced in \cite{gwa1992six} and further studied in \cite{borodin2016stochasticsix}. It was related to (full-space) Hall-Littlewood processes in \cite{borodin2016stochastic} (see also \cite{borodin2016between, bufetov2017hall}). Half-space variants of the stochastic six-vertex model and half-space Hall-Littlewood processes were discussed in \cite{barraquand2018stochastic}.

\begin{figure}
	\begin{center}
		\begin{tikzpicture}[scale=0.7]
		\foreach \x in {1, ..., 7} {
			\draw[dotted] (0,\x) -- (\x,\x) -- (\x, 8);
			\draw[path] (0,\x) --(0.1,\x);
		}
		\draw[path] (0,1) --(1,1);
		\draw[path] (0,2) --(2,2);
		\draw[path] (0,3) --(3,3);
		\draw[path] (0,4) -- (1,4) -- (2,4) -- (2,5) -- (3,5) -- (3,6) -- (4,6) -- (4,7)--(7,7);
		\draw[path] (4,4) -- (4,6) -- (6,6);
		\draw[path] (5,5) -- (5,8);
		\draw[path] (0,5) --(1,5) -- (1,6) -- (2,6) -- (2,7) -- (3,7) -- (3,8);
		\draw[path] (0,6) --(1,6) -- (1,7) -- (2,7) -- (2,8) ;
		\draw[path] (0,7) --(1,7) -- (1,8)  ;
		\end{tikzpicture}
	\end{center}
	\caption{Sample configuration of the stochastic six-vertex model in a half-quadrant.}
	\label{fig:example6v}
\end{figure}
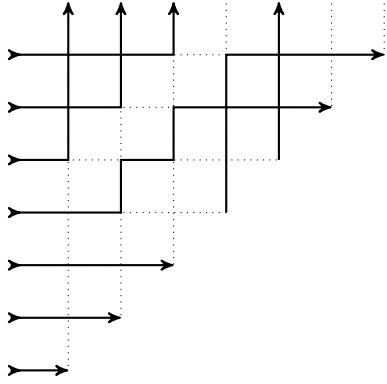
\begin{definition}
	Consider the half-quadrant $ \lbrace (x,y)\in \Z_{>0}^2: x\leqslant y \rbrace $.
	The stochastic six-vertex model in the half-quadrant is a probability measure on collections of up-right paths (see Figure \ref{fig:example6v}). We associate to each vertex a Boltzmann weight determined by the local configuration of adjacent paths. In the bulk, for a vertex $(x,y)$ with $x>y$,  there are six possible configurations and we choose the Boltzmann weights as
	$$ \PP\left(\pathrr\right) = \frac{1-a_xa_y}{1-ta_xa_y}, \ \ \PP\left(\pathru\right) = \frac{(1-t)a_xa_y}{1-ta_xa_y},\ \ \PP\left(\pathuu\right) = \frac{t(1-a_xa_y)}{1-ta_xa_y}, \ \ \PP\left(\pathur\right) =\frac{1-t}{1-ta_xa_y}. $$
	For a corner vertex of the form $(x,x)$,  we choose weights as 
	$$ \PP\left(\pathbrr\hspace{0.2cm}\right) =  \PP\left(\pathbuu\hspace{0.2cm}\right) = 1,\ \ \ \PP\left(\pathbru\hspace{0.2cm}\right) =  \PP\left(\pathbur\hspace{0.2cm}\right) =0.$$
    These weights are stochastic in the sense that
	\begin{align*}
	\PP\left(\pathrr\right) + \PP\left(\pathru\right) = \PP\left(\pathur\right)+\PP\left(\pathuu\right)=1,\qquad
	\PP\left(\pathbrr\right) + \PP\left(\pathbru\right) = \PP\left(\pathbuu\right)+\PP\left(\pathbur\right)=1.
	\end{align*}
	We define a probability measure on configurations of up-right paths as follows. We assume that there is an incoming   horizontal edge to each vertex $(1,y)$ on the left boundary. Assume that for some $n\geqslant 2$, the incoming edge states of the set of vertices $\lbrace (x,y)\rbrace_{x+y=n}$	are
	all determined. Choose the outgoing edge states of these vertices by sampling from the Bernoulli distribution imposed by the vertex weights above. This determines the incoming states of the set of vertices $\lbrace (x,y)\rbrace_{x+y=n+1}$, and  iterating this procedure defines the probability distribution of configurations on the whole half-quadrant. This implies that the probability distribution of the restriction of the configuration to a finite set of vertices near the origin such as in Figure \ref{fig:example6v} is given by the product of Boltzmann weights.  
	 We refer to  \cite[Section 3]{barraquand2018stochastic} for a more precise definition. 
	\label{def:sixvertex}
\end{definition}

Using the relation between half-space Hall-Littlewood measures and the stochastic six-vertex model established in \cite{barraquand2018stochastic} (see Theorem \ref{th:HL6V} below), we obtain moment formulas for the height function, stated as Corollary \ref{cor:moments6v} in the text:
\begin{multline*}
\EHL\left[ t^{-k\mathfrak{h}(x,y)} \right] =   t^{\frac{k(k-1)}{2}} \oint_{C_1} \frac{\mathrm{d}z_1}{2\I\pi} \dots  \oint_{C_k} \frac{\mathrm{d}z_k}{2\I\pi}  \  \prod_{1\leqslant i<j\leqslant k}\frac{z_i-z_j}{z_i- tz_j} \frac{1-tz_iz_j}{1-z_iz_j}  \\ \times 	\prod_{j=1}^k\left(  \frac{1}{z_j} \frac{1-tz_j^2}{1-z_j^2} \prod_{i=1}^{y} \frac{1-a_iz_j}{1-ta_iz_j} \prod_{i=1}^x \frac{z_j-a_i/t}{z_j-a_i} \right),
\end{multline*}
where the contours $C_1, \dots, C_m$ all enclose $0$ and the $a_i$ are contained in the open disk of radius $1$ around zero, and the contours are nested in such a way that for $i<j$ the contour $C_i$ does not include any part of $t C_j$.

The half-space six vertex model is a discrete time version of the half-line ASEP (see Definition \ref{def:halflineASEP}).  The formula above is similar to nested contours integral formulas obtained in \cite{borodin2012duality} for the full-space ASEP using coordinate Bethe ansatz. It is likely that the formula above can be obtained through coordinate Bethe ansatz as well, and we plan to study this further in future work.

Owing to a refined Littlewood identity originally conjectured in \cite{betea2015refined} and later proved in \cite{rains2014multivariate}, \cite{barraquand2018stochastic} determined -- for a certain initial data and a specific boundary condition -- the distribution of the height function at the boundary for ASEP and the KPZ equation in a half-space using a limit of the half-space stochastic six-vertex model. Note that with the techniques of \cite{barraquand2018stochastic} is was possible to characterize the distribution of $\mathfrak{h}(x,y)$ only when $x=y$.

\subsection{Asymptotics}

We turn to the asymptotic results that can be derived (at least formally) from our formulas.

\subsubsection{Log-gamma polymer}

In Section \ref{sec:asymptotics}, we perform an asymptotic analysis of our Laplace transform formula \eqref{eq:LaplaceWhittakerintro} as $n$ goes to infinity. We assume that the parameters $\alpha_i$ of the log-gamma polymer are all equal to some $\alpha>0$, and we keep the boundary parameter $\diag$ arbitrary (thus we have weights distributed as $\mathrm{Gamma}^{-1}(\diag+\alpha)$ on the boundary and $\mathrm{Gamma}^{-1}(2\alpha)$ in the bulk).  A non-rigorous application of Laplace's method yields the following limit laws\footnote{The digamma and polygamma functions are defined as $\Psi(z) = \frac{\rm d}{\mathrm{d} z} \log(\Gamma(z))$ and $\Psi_n(z) = \frac{\mathrm{d}^n}{\mathrm{d}z^n} \Psi(z)$.}.
When $\diag>0$,
\begin{equation}
\lim_{n\to \infty} \PP\left(\frac{\log(Z(n,n)) -f n}{\sigma n^{1/3}} \leqslant x\right)  = F_{\rm GSE}(x),
\label{eq:GSElimit}
\end{equation}
where the quenched free energy $f=-2\Psi(\alpha)$ and $\sigma = \sqrt[3]{\Psi_2(\alpha)}$.
When $\diag =0$,
\begin{equation}
\lim_{n\to \infty} \PP\left(\frac{\log(Z(n,n)) -f n}{\sigma n^{1/3}} \leqslant x\right)  = F_{\rm GOE}(x),
\label{eq:GOElimit}
\end{equation}
with the same free energy $f=-2\Psi(\alpha)$ and $\sigma = \sqrt[3]{\Psi_2(\alpha)}$.
When $\diag<0$,
\begin{equation}
\lim_{n\to \infty} \PP\left(\frac{\log(Z(n,n)) -f_{\diag} n}{\sigma_{\diag} n^{1/2}} \leqslant x\right)  = \int_{-\infty}^x \frac{e^{-t^2/2}}{\sqrt{2\pi}}\mathrm{d}t,
\label{eq:Gaussianlimit}
\end{equation}
where the free energy becomes  $f_{\diag} = -\Psi(\alpha-\diag) -\Psi(\alpha+\diag)$ and $\sigma_{\alpha} = \sqrt{\Psi_1(\alpha+\diag) -\Psi_1(\alpha-\diag)}$. Furthermore, if we scale $\diag$ close to the critical point as $\diag = n^{-1/3}\sigma^{-1} \varpi$, $F_{\rm GOE}$ would be replaced in \eqref{eq:GOElimit} by a crossover distribution $F(x; \varpi)$  such that  $F(x; 0) = F_{\rm GOE}(x)$ and $\lim_{\varpi\to\infty} F(x; \varpi) = F_{\rm GSE}(x)$. It was introduced in \cite[Definition 4]{baik2001asymptotics} in relation with asymptotics of half-space last passage percolation with geometric weights (see also \cite{forrester2006correlation, baik2018pfaffian, baik2018facilitated, betea2018free}).

Let us make clear that unlike all results stated previously, Equations \eqref{eq:GSElimit}, \eqref{eq:GOElimit} and \eqref{eq:Gaussianlimit} are not completely proved, our asympotics are non-rigorous at the level of neglecting convergence of tails of series and only focusing on critical points. Making these rigorous constitutes a significant challenge.

As $\alpha$ goes to zero, the free energy of the (half-space) log-gamma polymer converges to the last passage time in a model of last passage percolation with exponential weights in a half-quadrant. This model was considered in \cite{baik2018pfaffian}, where the analogues (as $\diag, \alpha\to 0$) of the limit laws \eqref{eq:GSElimit}, \eqref{eq:GOElimit} and \eqref{eq:Gaussianlimit} were proved.

It is reasonable to expect that when $\diag+\alpha$ is  close to zero, the boundary weights will be so large that their contribution to the free energy will dominate and fluctuations will be Gaussian on the $n^{1/2}$ scale. On the contrary, if $\diag \gtrsim \alpha$ we expect that the effect of the boundary should be limited, and fluctuations should occur on the scale $n^{1/3}$ by KPZ universality. We explain in Section \ref{sec:BaikRainsexplained} how to predict the critical $\diag$ between Gaussian and KPZ behaviour. We also provide heuristic arguments to explain the expression of the constants $f,f_{\diag}$ and  $\sigma_{\diag}$ above  in \eqref{eq:GSElimit}, \eqref{eq:GOElimit} and \eqref{eq:Gaussianlimit}.

\subsubsection{KPZ equation limit regime}

Among models in the KPZ universality class, a central object is the \emph{KPZ equation} -- a stochastic partial differential equation which reads
\begin{equation}
\partial_t h(t,x) = \tfrac 1 2 \Delta h(t,x) + \tfrac 1 2 \big[ \partial_x h(t,x) \big]^2 + \xi \ \  \ \ t>0,\  x\in \R,
\label{eq:KPZintro}
\end{equation}
where the $\xi$ is a Gaussian space-time white noise. This stochastic PDE plays an important role because
many models with a parameter controlling the asymmetry or the temperature converge to the KPZ equation under a certain  scaling \cite{alberts2014intermediate, dembo2016weakly, hairer2015class, diehl2017kardar}. This fact is generally referred to as weak universality. Exact formulae characterizing the distribution of ASEP or directed polymers yield, after appropriate scaling, information about the distribution of the solutions to \eqref{eq:KPZintro}. This approach have been successfully implemented in \cite{amir2011probability, sasamoto2010exact, calabrese2010free, dotsenko2010replica, borodin2015height} to determine the one-point distribution of the KPZ equation on the line $\R$, starting from several types of initial data. Since we know that statistics in the KPZ class usually depend on the geometry, it is natural to ask how the distribution would change for the KPZ equation on another spatial domain, for instance a circle, a segment, or a half-line. A partial answer is provided in \cite{barraquand2018stochastic} for the KPZ equation on $\R_{\geqslant 0}$ with Neumann type boundary condition $\partial_x h(t,0)=-1/2$.

As in the full-space case  \cite{borodin2014macdonald, borodin2015height}, our formulas for the half-space log-gamma should give, in the appropriate scaling regime, distributional information about the KPZ equation on $\R_{\geqslant 0}$ with Neumann type boundary condition $\partial_x h(t,0)=A$ (or, in other terms, the free energy of the continuous directed polymer model with a pinning at the boundary). It is not clear how to rigorously  take asymptotics  of  our Laplace transform formulas \eqref{eq:LaplaceWhittakerintro} or \eqref{eq:OSZintro} in the appropriate scaling regime. However, we consider in Section \ref{sec:KPZ} the limit of our moment formula \eqref{eq:momentsZintro}  and we do recover moment formulas obtained\footnote{The approach in \cite{borodin2016directed} requires  uniqueness of the system of ODEs  defining the delta Bose gas in a half-space \cite[Definition 3.2]{borodin2016directed}, which has not been proved.} in \cite{borodin2016directed} for the partition function of the half-space continuous directed polymer (see Corollary \ref{cor:momentsKPZ}). In particular, we relate the parameter $\diag$ of the half-space log-gamma polymer with the parameter $A$ involved in the boundary condition for the KPZ equation (we simply have $\diag=A+1/2$ with our scalings).

\subsection{Outline of the paper}

\begin{description}	
	\item[Section \ref{sec:Half-spaceMacdonaldprocess}] After providing some background on Macdonald symmetric functions, we define half-space Macdonald measures and processes. We also provide a general scheme to build dynamics on sequences of partitions preserving the class of half-space Macdonald processes.
	\item[Section \ref{sec:observables}] We use operators, in particular Macdonald difference operators and Noumi's $q$-integral operator, diagonalized by Macdonald symmetric functions, to produce integral formulas for moments and Laplace transforms of Macdonald measures for general $q,t$ parameters.
	\item[Section \ref{sec:qWhittaker}] We study in more detail the case $t=0$, $q\in (0,1)$, called half-space $q$-Whittaker process. We consider the degeneration of general formulas and study a particular class of dynamics related to $q$-deformations of the RSK algorithm, introduced in \cite{matveev2015q}. This allows us to relate the half-space $q$-Whittaker process to the distribution of certain $q$-deformed particle systems (Section \ref{sec:newprocesses}).
	\item[Section \ref{sec:HL}]  We study in more details the case $q=0$, $t\in (0,1)$, called Hall-Littlewood process. We provide moment formulas, which, using results from \cite{barraquand2018stochastic}, relate to the stochastic six-vertex model in a half-quadrant.
	\item[Section \ref{sec:Whittaker}] We consider the $q\to1$ degeneration of the half-space $q$-Whittaker process, called half-space Whittaker process. The $q\to 1$ degeneration of the dynamics studied in Section \ref{sec:qWhittaker} gives rise to the half-space log-gamma polymer. We also consider the degeneration of our integral formulas and relate them to the log-gamma directed polymer partition function.
	\item[Section \ref{sec:KPZ}] We define the KPZ equation on the positive reals and consider the scaling of the log-gamma directed polymer at high temperature which should lead to the continuous directed polymer, whose free energy solves the KPZ equation. We show that under these scalings, our moment formulas coincide with  moment formulas previously obtained for the continuous directed polymer in a half-space in \cite{borodin2016directed}.
	\item[Section \ref{sec:asymptotics}] We consider asymptotics of the free energy $\log\big(Z(n,n)\big)$ of the half-space log-gamma polymer. We first provide probabilistic heuristics to predict the constants arising in limit theorems, for different ranges of $\diag$.  Then, we explain how, for each possible range of $\diag$,  the Tracy-Widom GSE or GOE  or the Gaussian CLT arise from the Laplace transform formula \eqref{eq:LaplaceWhittakerintro}. These asymptotics are, however, non rigorous.
\end{description}

\subsection*{Acknowledgements}

We are grateful to the anonymous referees for their valuable suggestions. A. B. was partially supported by the National Science Foundation grants DMS-1056390, DMS-1607901 and DMS-1664619 and by Fellowships of the Radcliffe Institute for Advanced Study and the Simons
Foundation. I.C. was partially supported by the NSF grants DMS-1208998,  DMS-1664650 and DMS:1811143, the Clay Mathematics
Institute through a Clay Research Fellowship, the Poincar\'e Institute through the Poincar\'e chair,
and the Packard Foundation through a Packard Fellowship for
Science and Engineering. G.B. was partially supported by the Packard Foundation through I.C.’s fellowship.

 \thispagestyle{plain}
 \section{Half-space Macdonald processes}

\label{sec:Half-spaceMacdonaldprocess}
After fixing some useful notations and providing background on symmetric functions, we define in this Section  half-space Macdonald processes and explain a general scheme to build Markov dynamics preserving the set of such measures.  

\subsection{$q$-analogues}
\label{sec:qanalogues}
Throughout the paper, we assume that $0\leqslant q<1$. Recall the definition of the $q$-Pochhammer symbol 
$$(a; q)_n = (1-a)(1-aq) \dots (1-aq^{n-1}) \text{ and } (a; q)_{\infty} = \prod_{i=0}^{\infty}(1-aq^i).$$ 
When there is no ambiguity possible, we may write simply $(a)_{\infty}$ instead of $(a; q)_{\infty}$. Since 
$\frac{1-q^n}{1-q} $ goes to $n$ as $q$ goes to $1$, it is natural to define the $q$-integer $[n]_q$,  the $q$-factorial $k!_{q}$, and $q$-binomial coefficients $\binom{n}{k}_{\!\!q}$ as 
\begin{equation}
  [n]_q=\frac{1-q^n}{1-q}, \ \  k!_{q} = \frac{(q;q)_k}{(1-q)^k} = [1]_q \dots [k]_q, \ \ \binom{n}{k}_{\!\!q} = \frac{(q;q)_n}{(q; q)_k (q;q)_{n-k}}.
\label{eq:qfactorial}
\end{equation}
The $q$-binomial theorem states that for $\vert z \vert <1$, 
\begin{equation}
\sum_{k=0}^{\infty} \frac{z^k(a; q)_{k}}{(q;q)_k}  = \frac{(az; q)_{\infty}}{(z; q)_{\infty}}.
\label{eq:qbinomial}
\end{equation}
The $q$-exponential function is defined as 
\begin{equation}
e_q(z) = \frac{1}{(z(1-q); q)_{\infty}}.
\label{eq:qexponential}
\end{equation}
The $q$-binomial theorem shows that $e_q(z)$ converges as $q$ goes to $1$ to the usual exponential $e^x$ uniformly on any compact set in the complex plane. 
The $q$-Gamma function is defined by 
\begin{equation}
\Gamma_q(z) =  \frac{(q; q)_{\infty}(1-q)^{1-z}}{(q^z; q)_{\infty}}.
\label{eq:qGamma}  
\end{equation} 
When $z$ is not a negative integer, $\Gamma_q(z)$ converges to $\Gamma(z)$ as $q$ goes to $1$. We refer to \cite{andrews1999special} for more details.

\subsection{Background on Macdonald symmetric functions}
\label{sec:backgroundSym}
For a more comprehensive overview on (Macdonald) symmetric functions, see \cite{macdonald1995symmetric} or \cite[Section 2.1]{borodin2014macdonald}.

\subsubsection{Partitions and Gelfand-Tsetlin patterns}
A \emph{partition} $\lambda$ is a non-increasing sequence of non-negative integers $\lambda_1\geqslant  \lambda_2\geqslant \cdots$. The \emph{length} of $\lambda$ is the number of its non-zero parts and is denoted by $\ell(\lambda)$. The \emph{weight} of $\lambda$ is denoted as $|\lambda| := \sum_i \lambda_i$. If $|\lambda|=n$, one says that $\lambda$ partitions $n$, notation  $\lambda \vdash n$. The transpose $\lambda'$ of a partition is defined by $\lambda'_i = |\{j:\lambda_j\geqslant i\}|$. Let $\Y$ be the set of all partitions and let $\Y_k$ be its subset of partitions of length $k$. We will generally use Greek letters like $\lambda,\mu,\kappa,\nu,\pi,\tau$ to represent partitions. The empty partition (such that $\la_1=0$) is denoted by $\varnothing$. We will denote by $m_i(\la)$ the multiplicity of the integer $i$ in the sequence $\la$, and sometimes use the notation $\la=1^{m_1}2^{m_2}\dots$. 

A partition can be identified with a \emph{Young diagram} or with a particle configuration in which for each $i$, there is a corresponding particle at position $\lambda_i$ (see Figure \ref{fig:partitions}). For a box $\Box$ in a Young diagram, $\mathrm{leg}(\Box)$ is equal to the number of boxes in the diagram below it (the leg length) and $\mathrm{arm}(\Box)$ is equal to the number of boxes in the diagram to the right of it (the arm length). A partition is \emph{even} if all $\lambda_i$ are even. We write $\mu\subseteq \lambda$ if $\mu_i\leqslant \lambda_i$ for all $i$ and call $\lambda/\mu$ a \emph{skew Young diagram}.

A partition $\mu$ \emph{interlaces} with $\lambda$ if for all $i$, $\lambda_i \geqslant \mu_i \geqslant \lambda_{i+1}$. In the language of Young diagrams, this means that $\lambda$ can be obtained from $\mu$ by adding a \emph{horizontal strip} in which at most one box is added per column. We denote interlacing by $\mu \prech \lambda$.  
In terms of the particle representation, interlacing refers to the interlacing of the locations of the two sets of particles. See Figure \ref{fig:partitions} for illustrations of some of these definitions. 
\begin{figure}
	\begin{center}
		\begin{tikzpicture}[scale=0.52]
		\draw (3, 1) node{(i)};
		\foreach \y [count=\xi] in {5,3,1,1,0}
		\foreach \x in {0, ..., \y} 
		\draw (\x, -\xi) -- ++(1,0) -- ++(0,1) -- ++(-1,0)--cycle;
		\fill[black] (1, -1) -- ++(1,0) -- ++(0,1) -- ++(-1,0)--cycle;
		\draw[<->, >=stealth'] (2, -0.5) -- (6, -0.5) node[midway, fill=white, fill opacity=0.4, text opacity=1, anchor=north]{arm}; 
		\draw[<->, >=stealth'] (1.5, -1) -- (1.5, -4) node[midway, fill=white, fill opacity=0.4, text opacity=1, anchor=west]{leg}; 
		
		\begin{scope}[xshift=7cm]
		\draw (2.5, 1) node{(ii)};
		\foreach \y [count=\xi] in {4,3,1,1,0,0}
		\foreach \x in {0, ..., \y} 
		\draw (\x, -\xi) -- ++(1,0) -- ++(0,1) -- ++(-1,0)--cycle;
		\end{scope}
		
		\begin{scope}[xshift=13cm]
		\draw (3, 1) node{(iii)};
		\fill[gray] (6, -1) -- ++(1,0) -- ++(0,1) -- ++(-1,0)--cycle;
		\fill[gray] (7, -1) -- ++(1,0) -- ++(0,1) -- ++(-1,0)--cycle;
		\fill[gray] (3, -2) -- ++(1,0) -- ++(0,1) -- ++(-1,0)--cycle;
		\fill[gray] (4, -2) -- ++(1,0) -- ++(0,1) -- ++(-1,0)--cycle;
		\fill[gray] (1, -3) -- ++(1,0) -- ++(0,1) -- ++(-1,0)--cycle;
		\fill[gray] (0, -5) -- ++(1,0) -- ++(0,1) -- ++(-1,0)--cycle;
		
		\foreach \y [count=\xi] in {7,4,1,0,0}
		\foreach \x in {0, ..., \y} 
		\draw (\x, -\xi) -- ++(1,0) -- ++(0,1) -- ++(-1,0)--cycle;
		\end{scope}

		\begin{scope}[xshift=10cm, yshift=-17cm]
		\draw (-7,7) node{$ \mu_{n-1}$};
		\draw (-3,7) node{$  \mu_{n-2}$};
		\draw (0,7)  node{$\ldots$};
		\draw (3,7) node{$  \mu_{2}$};
		\draw (7,7) node{$  \mu_{1}$};
		\draw (-8,8) node[rotate=-45]{$ \leqslant$};
		\draw (-4,8) node[rotate=-45]{$ \leqslant$};
		\draw (-6,8) node[rotate=45]{$ \leqslant$};
		\draw (-2,8) node[rotate=45]{$ \leqslant$};
		\draw (2,8) node[rotate=-45]{$ \leqslant$};
		\draw (6,8) node[rotate=-45]{$ \leqslant$};
		\draw (4,8) node[rotate=45]{$ \leqslant$};
		\draw (8,8) node[rotate=45]{$ \leqslant$};
		\draw (-9,9) node{$ \lambda_{n}$};
		\draw (-5,9) node{$  \lambda_{n-1}$};
		\draw (0,9)  node{$\ldots$};
		\draw (5,9) node{$  \lambda_{2}$};
		\draw (9,9) node{$  \lambda_{1}$};
		\end{scope}
		\end{tikzpicture}
	\end{center}
	\caption{ (i) Young diagram corresponding to the partition $\lambda = (6,4,2,2,1)$;  
		the black box has arm length $ a(\blacksquare)=4 $ and leg length $\ell(\blacksquare)=3$. (ii) Young diagram corresponding to  $\lambda$'s transpose $\lambda' = (5,4,2,2,1,1)$. (iii) The diagram contains a horizontal strip in grey added to the diagram $\mu=(6,3,1,1)$; the grey boxes are also the skew diagram $\kappa/\mu$ where $\kappa=(8,5,2,1,1)$.}
	\label{fig:partitions}
\end{figure}
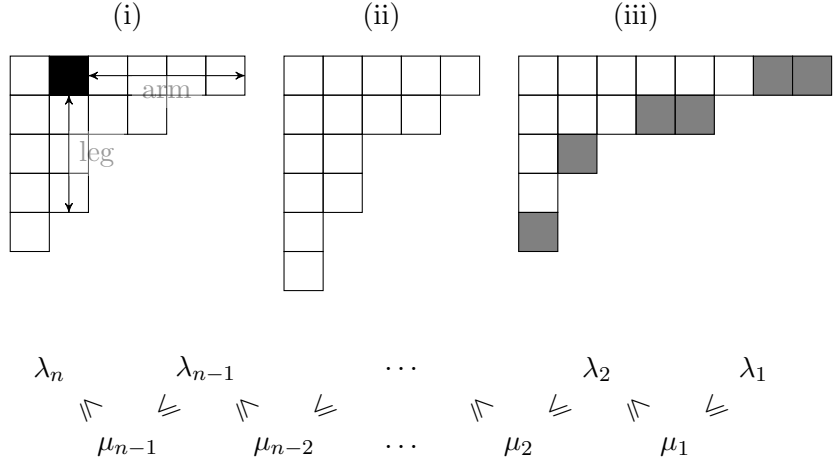

\subsubsection{Symmetric functions}
 \emph{Symmetric functions} are defined with respect to an infinite number of formal variables (we will generally use arguments like $x=(x_1,x_2,\ldots)$ or $y=(y_1,y_2,\ldots)$ although the order of variables does not matter, or simply leave off the argument of a symmetric function when it is not important). We denote the algebra of symmetric functions by $\Sym$.  It can be seen as a commutative algebra $\R[p_1, p_2, \dots]$ where $p_k(x) = x_1^k+x_2^k+\cdots$ are the Newton power sum symmetric functions,  and we refer to \cite[I]{macdonald1995symmetric} or \cite[Section 2.1.2]{borodin2014macdonald} for more details. 

The \emph{skew Macdonald $P$ ($Q$) functions} $P_{\lambda/\mu}$ ($Q_{\lambda/\mu}$) (see \cite[Chapter VI]{macdonald1995symmetric}) are symmetric functions indexed by skew partitions $\lambda/\mu$ that have coefficients in $\Q(q, t)$, which is the space of rational functions in  two auxiliary parameters $q,t$ (we will assume them to be in $[0,1)$). For $\lambda \in \Y$, define symmetric functions
\begin{equation}
\ve_{\lambda} = \sum_{\mu'\in \Y \textrm{ even}} b^{\textrm{el}}_{\mu} Q_{\lambda/\mu}
\end{equation}
where ``el'' stands for ``even leg'', and $b^{\textrm{el}}_{\mu}\in \mathbb{Q}(q,t)$ is given by
\begin{equation}
b^{\textrm{el}}_{\mu} = \prod_{\substack{\Box\in \mu\\ \mathrm{leg}(\Box)\textrm{ even}}} b_\mu(\Box), \qquad b_\mu(\Box) = \begin{cases} \dfrac{1-q^a t^{\ell+1}}{1-q^{a+1}t^{\ell}} &\Box\in \mu\\ 1&\Box\notin \mu \end{cases}
\end{equation}
with $\ell=\mathrm{leg}(\Box)$ and $a=\mathrm{arm}(\Box)$ in the definition of $b_{\mu}(\Box)$ (see Figure \ref{fig:partitions} for the definitions of $\mathrm{leg}(\Box)$ and $\mathrm{arm}(\Box)$). 

Macdonald symmetric functions satisfy the following combinatorial formula \cite[VI, (7.13)]{macdonald1995symmetric}. For two partitions $\lambda, \mu$ such that $\la/\mu$ is a horizontal strip,  define coefficients 
\begin{eqnarray}
\varphi_{\lambda/\mu} &= \prod_{1\leqslant i\leqslant j\leqslant \ell(\la)} \dfrac{f(q^{\la_i-\la_j}t^{j-i})f(q^{\mu_i-\mu_{j+1}}t^{j-i})}{f(q^{\la_i-\mu_j}t^{j-i})f(q^{\mu_i-\la_{j+1}}t^{j-i})},\\
\psi_{\lambda/\mu} &= \prod_{1\leqslant i\leqslant j\leqslant \ell(\mu)} \dfrac{f(q^{\mu_i-\mu_j}t^{j-i})f(q^{\la_i-\la_{j+1}}t^{j-i})}{f(q^{\la_i-\mu_j}t^{j-i})f(q^{\mu_i-\la_{j+1}}t^{j-i})},
\label{eq:defphipsi}
\end{eqnarray}
where $f(u) = (tu; q)_{\infty}/(qu; q)_{\infty}$. Then, we have that 
\begin{equation}
P_{\la/\mu}(x_1, \dots, x_n)  = \sum_{\lambda^{(1)}, \dots,\lambda^{(n-1)} }  \prod_{i=1}^n \psi_{\la^{(i)}/\la^{(i-1)}} \ x_i^{\vert \la^{(i)} \vert - \vert \la^{(i-1)}\vert},
\label{eq:combinatorialP}
\end{equation}
where the sum runs over sequences of partitions such that  
$$ \mu =\lambda^{(0)}  \subset \lambda^{(1)} \subset \dots \subset \lambda^{(n)} = \lambda,$$
where for all $1\leqslant i \leqslant n$,  $\la^{(i)}/\la^{(i-1)}$ is a horizontal strip. Similarly, 
\begin{equation} Q_{\la/\mu}(x_1, \dots, x_n)  = \sum_{\lambda^{(1)}, \dots,\lambda^{(n-1)} }  \prod_{i=1}^n \varphi_{\la^{(i)}/\la^{(i-1)}} \ x_i^{\vert \la^{(i)} \vert - \vert \la^{(i-1)}\vert}.
\label{eq:combinatorialQ}
\end{equation}

\subsubsection{Identities}
\label{sec:identities}
We recall certain identities involving symmetric functions which will be utilized in the remainder of the paper. In this section, all summations run over the set $\Y$ of all partitions, unless otherwise specified. 

The skew Cauchy identity \cite[VI.7]{macdonald1995symmetric} holds for two sets of formal variables $x$ and $y$:
\begin{equation}
\label{eq:skewCauchy}
\sum_{\kappa} P_{\kappa/\nu}(x)Q_{\kappa/\lambda}(y) =\Pi(x;y)\sum_{\tau}Q_{\nu/\tau}(y)P_{\lambda/\tau}(x),
\end{equation}
where $\Pi(x;y)$ is given by \cite[VI, (2.5)]{macdonald1995symmetric}
\begin{equation}
\label{eq:PI}
\Pi(x;y) := \sum_{\kappa} P_{\kappa}(x)Q_{\kappa}(y) = \prod_{i,j\geqslant 1} \phi(x_iy_j) \qquad\text{where}\qquad
\phi(x) = \frac{(tx;q)_{\infty}}{(x;q)_{\infty}}.
\end{equation}

Macdonald $P$ and $Q$ functions also satisfy a sort of semi-group property called branching rule whereby \cite[VI.7]{macdonald1995symmetric}
\begin{equation}
\label{eq:branchingrule}
\sum_{\mu} P_{\nu/\mu}(x) P_{\mu/\lambda}(y) = P_{\nu/\lambda}(x,y) \qquad\text{and} \qquad \sum_{\mu} Q_{\nu/\mu}(x) Q_{\mu/\lambda}(y) = Q_{\nu/\lambda}(x,y).
\end{equation}

Turning to the $\ve_{\lambda}$ function, from \cite[VI.7, Ex. 4(i)]{macdonald1995symmetric} we have
\begin{equation}\label{eq:Littlewoodidentity}
\Phi(x) := \sum_{\nu'\in \Y \textrm{ even}} b^{\textrm{el}}_{\nu}\ P_{\nu}(x) = \prod_{i<j} \phi(x_ix_j). 
\end{equation}
It follows from the definition of $\ve_{\mu}$ along with \eqref{eq:branchingrule} that
\begin{equation}
\label{eq:branchingve}
\sum_{\mu} Q_{\lambda/\mu}(x) \ve_{\mu}(y) = \ve_{\lambda}(x,y).
\end{equation}
From there, one can show (see e.g. \cite[Proposition 2.1]{barraquand2018stochastic}) that 
\begin{equation}
\ve_{\mu}(x) = \Phi(x)^{-1} \sum_{\nu'\in\Y \textrm{ even}} b^{\textrm{el}}_{\nu} P_{\nu/\mu}(x).
\label{eq:skewLittlewood}
\end{equation}
Combining \eqref{eq:skewCauchy} with \eqref{eq:skewLittlewood} yields
\begin{equation}
\label{eq:iii}
\sum_{\mu} \ve_{\mu}(x) P_{\mu/\lambda}(y) = \Pi(x; y) \, \Phi(y)\, \ve_{\lambda}(x,y),
\end{equation}
 and in particular 
\begin{equation}
\label{eq:CauchyLittlewood}
\sum_{\mu} \ve_{\mu}(x) P_{\mu}(y) = \Pi(x; y) \, \Phi(y).
\end{equation}
In the following, we will refer to \eqref{eq:CauchyLittlewood} as generalized Littlewood identity.

\subsubsection{Specializations}
\label{sec:specializations}
A \emph{specialization} $\rho$ of $\Sym$ is an algebra homomorphism of $\Sym$ to $\C$ -- see \cite[Section 2.2.1]{borodin2014macdonald} for a more involved discussion. We denote the application of $\rho$ to $f\in \Sym$ as $f(\rho)$. The \emph{trivial} specialization $\rho=\varnothing$ takes the value 1 for the constant function $1\in \Sym$ and $0$ for all homogeneous functions $f\in\Sym$ of higher degree. The \emph{union} of two specializations $\rho_1,\rho_2$ is defined via the relation
$$
p_k(\rho_1,\rho_2) = p_{k}(\rho_1)+p_{k}(\rho_2).
$$
Since the power sums $p_k$ span $\Sym$ (algebraically), one can extend the definition of the union to any symmetric function. Notationally, we will write the union of $\rho_1,\rho_2$ by putting a comma between them. 

We say a specialization $\rho$ is \emph{Macdonald nonnegative} if for every skew diagram $\lambda/\mu$, $P_{\lambda/\mu}(\rho)\geqslant 0$. For nonnegative numbers $\alpha=\{\alpha\}_{i\geqslant 1}$, $\beta=\{\beta_i\}_{i\geqslant 1}$ and $\gamma$ such that $\sum_i \alpha_i+\beta_i <\infty$, we define the specialization $\rho=\rho(\alpha,\beta,\gamma)$ by
\begin{equation}
\Pi(u;\rho)= \sum_{n\geqslant 0} u^n Q_{(n)}(\rho) = \exp(\gamma u) \prod_{i\geqslant 1} \frac{(t\alpha_i u;q)_{\infty}}{(\alpha_i u;q)_{\infty}} (1+\beta_i u).
\label{eq:Pigeneral}
\end{equation}
It has recently been proved in \cite[Theorem 1.4]{matveev2019macdonald} that a specialization $\rho$ is  Macdonald nonnegative if and only if $\rho=\rho(\alpha, \beta, \gamma)$ for some $\alpha, \beta,  \gamma$ as above (the \textit{if} part is not hard to show, the \textit{only if} part was conjectured by Kerov in 1992).  Notice that we have used the same notation as $\Pi(x;y)$ where $x$ is specialized into a single variable $u$ and $y$ is specialized into $\rho$. The $\alpha_i$ variables are called \emph{usual} (because they correspond to the usual notion of evaluation of a polynomial into some variables), the $\beta_i$ are called \emph{dual}, and the $\gamma$ is called \emph{Plancherel}. When $\rho$ only involves usual variables $\alpha_1,\ldots, \alpha_k$ (and all other $\alpha,\beta,\gamma$ are zero), $P_{\lambda}$ is supported on partitions $\lambda$ of $\ell(\lambda)\leqslant k$ and is a polynomial in the variables $\alpha_i$; and when $\rho$ only involves dual variables $\beta_1,\ldots, \beta_k$ (and all other $\alpha,\beta,\gamma$ are zero), $P_{\lambda}$ is supported on partitions $\lambda$ of $\ell(\lambda')\leqslant k$ (in other words, all $\lambda_i\leqslant k$).

Specializations of $\Sym$ allow to turn the formal summation identities of Section \ref{sec:identities} into analytic ones. In particular, if there exists $0<R<1$ and specializations $\rho_1, \rho_2$ such that for all $k\geqslant 1$,
$$ \vert p_k(\rho_1)\vert <R^k \text{ and }  \vert p_k(\rho_1)p_k(\rho_2)\vert <R^k, $$
then the formal identity \eqref{eq:CauchyLittlewood} becomes, after specializing $x$ into $\rho_2$ and $y$ into $\rho_1$, 
\begin{equation}
 \sum_{\la\in \Y} \ve_{\la}(\rho_2) P_{\la}(\rho_1)  = \Pi(\rho_1, \rho_2)\Phi(\rho_1),
 \label{eq:CauchyLittlewoodspecialized}
\end{equation}
where the sum is absolutely convergent.   

\subsubsection{Orthogonality}
\label{sec:orthogonality} 

Macdonald symmetric functions $P_{\la}$ and $Q_{\la}$ form a basis of 
$\Sym$, and they are orthogonal with respect to the scalar product $\langle \cdot, \cdot \rangle_{q,t}$ defined by 
$$ \langle p_{\mu}, p_{\la}\rangle_{q,t} = \mathds{1}_{\la=\mu} \prod_i i^{m_i(\la)} m_i(\la)! \prod_{i=1}^{\ell(\la)} \frac{1-q^{\la_i}}{1-t^{\la_i}}.$$

When specialized into $n$ usual variables, Macdonald symmetric functions are polynomials in these variables, and they are orthogonal with respect to another scalar product, introduced in \cite[VI, (9.10)]{macdonald1995symmetric}. 
In \cite[Section 2.1.5]{borodin2014macdonald}, it is written as 
$$ \langle f,g \rangle'  = 
\int_{\mathbb{T}^n} f(z)\overline{g(z)} \mskyl_{n}^{q,t}(z)\prod_{i=1}^n\frac{\mathrm{d}z_i}{z_i},  \ \ \ \ \ \mskyl_{n}^{q,t}(z):= \frac{1}{(2\I \pi)^n n!}  \prod_{i\neq j=1}^{n} \frac{(z_i/z_j; q)_{\infty}}{(tz_i/z_j; q)_{\infty}},$$
where $\mathbb{T}^n$ is the n-fold torus $ \left( \lbrace e^{2\I \pi \theta} \rbrace_{\theta\in [0, 1)}\right)^n$. Using the identity \eqref{eq:PI}, we may write 
$$ Q_{\la}(x) = \frac{\langle \Pi(\cdot, x), P_{\la}(\cdot) \rangle'}{\langle P_{\la}, P_{\la}\rangle'}.$$
Similarly \eqref{eq:CauchyLittlewood} would suggest  
$$ \ve_{\la}(x) \overset{?}{=} \frac{\langle \Pi(\cdot, x) \Phi(\cdot), P_{\la}(\cdot) \rangle'}{\langle P_{\la}, P_{\la}\rangle'},$$
but this does not make sense because $\Phi$ has singularities on the torus, and \eqref{eq:CauchyLittlewood} is not valid with such arguments. 
Thus, we rewrite the scalar product as\footnote{This is actually the original definition in \cite[VI.9]{macdonald1995symmetric}.}
\begin{equation}
\llangle f,g \rrangle   = 
\int_{\mathbb{T}^n} f(z)g(z^{-1}) \mskyl_{n}^{q,t}(z)\prod_{i=1}^n\frac{\mathrm{d}z_i}{z_i}
\label{eq:innerproduct}
\end{equation}
where $z^{-1}=(1/z_1, \dots, 1/z_n)$. Since for $w\in \mathbb{T}$, $\overline{w}=1/w $, the scalar products $\langle \cdot, \cdot\rangle'$ and $\llangle \cdot , \cdot \rrangle$ coincide on polynomials, and Macdonald symmetric polynomials are orthogonal with respect to $\llangle \cdot , \cdot \rrangle$ as well, with the same norm. In \eqref{eq:innerproduct}, the integrand is analytic as long as $f$ and $g$ are, so that we may use Cauchy's theorem and deform the contour. It is particularly convenient for us to take the contour as $c\mathbb{T}$ where $0<c<1$. We obtain using \eqref{eq:CauchyLittlewood} that
\begin{equation}
 \ve_{\la}(x) = \frac{1}{\llangle P_{\la}, P_{\la}\rrangle} \int_{(c\mathbb{T})^n} P_{\la}(z^{-1})\Pi(z, x)\Phi(z) \mskyl_{n}^{q,t}(z)\prod_{i=1}^n\frac{\mathrm{d}z_i}{z_i}.
 \label{eq:veintegral}
\end{equation} 
\begin{remark}
A very similar scalar product $\llangle \cdot , \cdot \rrangle$ appears in  \cite{borodin2014spectral, borodin2015spectral, borodin2017family, borodin2016higher}. More precisely,  taking $s=0$ in \cite[Theorem 7.2]{borodin2017family}, we recover the orthogonality of  Hall-Littlewood polynomials with respect to $\llangle \cdot , \cdot \rrangle$. 
\end{remark}

\subsection{Definition of half-space Macdonald processes} 
\label{sec:defhalfspaceMacdonald}
We define half-space Macdonald processes in terms of a certain type of paths in the sector $\{(i,j)\in \Z^2: 0\leqslant j\leqslant i\}$ decorated with Macdonald nonnegative specializations. An analogous definition of Pfaffian Schur processes -- which are a particular case $q=t$ of the following  -- was described in \cite[Section 3.2]{baik2018pfaffian}. An alternative but equivalent definition of half-space Macdonald processes was provided in in \cite[Definition 2.3]{barraquand2018stochastic}. The paths we consider are half-infinite and oriented, starting at $(+\infty,0)$ and proceeding to some $(i,0)$ before proceeding by unit steps along upward and leftward edges until the diagonal, at which point there is a final edge connecting that point to $(0,0)$ where the path terminates. We will call such a path $\pathh$ and denote its set of vertices as $V(\pathh)$ and edges as $E(\pathh)$. Denote the set of vertical edges by $E^{\uparrow}(\pathh)$, of horizontal edges by $E^{\leftarrow}(\pathh)$ (we do not include edges along the $x$ axis) and the singleton containing the diagonal edge by $E^{\swarrow}(\pathh)$. The set of all such paths will be denoted by $\admpath$. Note that the last diagonal edge is a single edge, not a union of all of the $\sqrt{2}$ length edges between consecutive diagonal lattice points. Likewise, $V(\pathh)$ does not include these intermediate diagonal points. We introduce a natural ordering on vertices: $v<v'$ if $v$ comes before $v'$ in $\pathh$; likewise define similar precedence ordering on edges as well as between vertices and edges.

We label each edge $e\in E(\pathh)$ with a Macdonald specialization $\rho_e$. We label each vertex $v\in V(\pathh)$ by a partition $\lambda^v$ with the convention that $\lambda^{v}\equiv \varnothing$ for all $v$ with $y$-coordinate equal to 0. Figure \ref{path} provides an example of one such path (the $\varnothing$ vertices have been labeled, but all other vertices and edge labels are not present in the figure).

\begin{figure}
\begin{center}
\begin{tikzpicture}[scale=0.7]
\draw[axis] (0,0) -- (0,5.6) ;
\draw[axis] (0,0) -- (12.6, 0) ;
\begin{scope}[decoration={
    markings,
    mark=at position 0.5 with {\arrow{<}}}]
\draw[grille] (0,0) grid(12.5, 5.5);
\draw[fleche] (0,0) -- (4,4);
\draw[fleche] (4,4) -- (5,4);
\draw[fleche] (5,4) -- (5,3);
\draw[fleche] (5,3) -- (6,3);
\draw[fleche] (6,3) -- (7,3);
\draw[fleche] (7,3) -- (7,2);
\draw[fleche] (7,2) -- (7,1);
\draw[fleche] (7,1) -- (8,1);
\draw[fleche] (8,1) -- (9,1);
\draw[fleche] (9,1) -- (9,0);
\draw[fleche] (9,0) -- (10,0);
\draw[fleche] (10,0) -- (11,0);
\draw[fleche] (11,0) -- (12,0);
\fill (0,0) circle(0.1);
\fill (4,4) circle(0.1);
\fill (5,4) circle(0.1);
\fill (5,3) circle(0.1);
\fill (6,3) circle(0.1);
\fill (7,3) circle(0.1);
\fill (7,2) circle(0.1);
\fill (7,1) circle(0.1);
\fill (8,1) circle(0.1);
\fill (9,1) circle(0.1);
\fill (9,0) circle(0.1);
\fill (10,0) circle(0.1);
\fill (11,0) circle(0.1);
\fill (12,0) circle(0.1);
\draw (-0.2, -0.4) node{$\varnothing$};
\draw (9, -0.4) node{$\varnothing$};
\draw (10, -0.4) node{$\varnothing$};
\draw (11, -0.4) node{$\varnothing$};
\draw (12, -0.4) node{$\varnothing$};
\end{scope}
\end{tikzpicture}
\end{center}
\caption{A possible path $\pathh\in\admpath$. The vertices with $y$-coordinate 0 are labeled with trivial partitions, while all others are (not shown) labeled with partitions and all edges are labeled with Macdonald nonnegative specializations.}\label{path}
\end{figure}
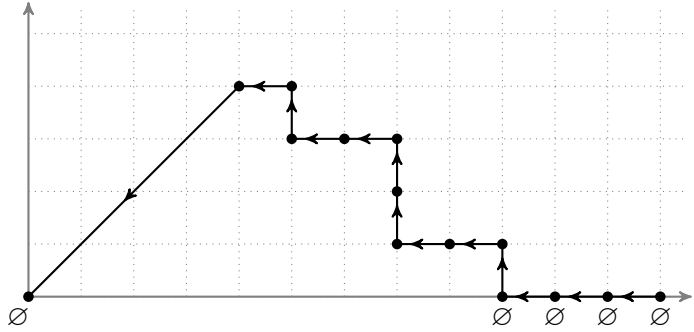

For a given path $\pathh\in \admpath$ and set of specializations $\bm\uprho = \{\rho_{e}\}_{e\in E(\pathh)}$, we associate a weight to the sequence of partitions $\labold = \{\lambda^{v}\}_{v\in V(\pathh)}$:
\begin{equation}\label{eq:Weight}
\mathcal{W}\left(\labold\right) := \prod_{e\in E(\pathh)} \mathcal{W}(e),
\end{equation}
where the weight of an edge $e$ is given as follows. Let $\kappa$ denote the partition at the start of $e$ and $\mu$ the partition at the end of $e$. Then, 
$$ \mathcal{W}(e) = \begin{cases}
\ve_{\kappa}(\rho_e) &\text{ if }e\in E^{\swarrow}(\pathh),\\
Q_{\kappa/\mu}(\rho_e) &\text{ if }e\in E^{\leftarrow}(\pathh),\\
P_{\mu/\kappa}(\rho_e) &\text{ if }e\in E^{\uparrow}(\pathh).
\end{cases} $$
We use the convention $P_{\varnothing/\varnothing}=1$, so that  if $e$ is a leftward edge with $y$-coordinate $0$, then $\mathcal{W}(e)=1$. 
\begin{proposition} Assuming each term on the right-hand side is finite,
\begin{equation}
\label{eq:Z}
\sum_{\labold}\  \mathcal{W}\left(\labold\right)\ \   = \!\!\!\prod_{\substack{e<e':\\ e\in E^{\uparrow}(\pathh),\\ e'\in E^{\leftarrow}(\pathh)\cup E^{\swarrow}(\pathh)}}\!\!\!  \Pi(\rho_e;\rho_{e'})  \Phi(\cup_{e\in E^{\uparrow}(\pathh)} \rho_e).
\end{equation}
\end{proposition}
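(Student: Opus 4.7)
The plan is to evaluate the sum in \eqref{eq:Z} by iteratively summing over the partitions $\lambda^v$ one vertex at a time, sweeping along $\pathh$ from the diagonal endpoint $(0,0)$ back toward $(+\infty,0)$. Label the vertices of $V(\pathh)$ in path order as $v_0,v_1,\ldots,v_N$ with $v_N=(0,0)$, and write $e_k$ for the edge from $v_{k-1}$ to $v_k$, so that $e_N$ is the diagonal edge. Along the $x$-axis every partition is forced to be $\varnothing$ and the adjacent left edges contribute $Q_{\varnothing/\varnothing}=1$; these edges play no role either in the summation or on the right-hand side, since no up edge can precede them in path order.

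I would then maintain the invariant that, after summing out $\lambda^{v_{N-1}},\ldots,\lambda^{v_{k+1}}$, the remaining expression equals a scalar prefactor times
\[
\ve_{\lambda^{v_k}}(\sigma_k)\cdot\prod_{j\leqslant k}\mathcal{W}(e_j),\qquad \sigma_k\;=\;\rho_{e_N}\cup\rho_{e_{k+1}}\cup\cdots\cup\rho_{e_{N-1}}.
\]
The invariant holds initially (at $k=N-1$ the $\ve$ factor comes directly from the diagonal edge), and the step from $k+1$ to $k$ is carried out by applying \eqref{eq:iii} when $e_{k+1}$ is an upward edge (extracting the factor $\Pi(\sigma_{k+1};\rho_{e_{k+1}})\Phi(\rho_{e_{k+1}})$ and enlarging $\sigma$ by $\rho_{e_{k+1}}$) and by applying \eqref{eq:branchingve} when $e_{k+1}$ is a leftward edge (no scalar is extracted, and $\sigma$ is enlarged by $\rho_{e_{k+1}}$). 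The iteration stops when $v_k$ lies on the $x$-axis, where $\lambda^{v_k}=\varnothing$; the last step, removing the up edge that joins the $x$-axis to the staircase, is an instance of \eqref{eq:iii} with $\lambda=\varnothing$, equivalently the generalized Littlewood identity \eqref{eq:CauchyLittlewood}, and disposes of the remaining $\ve_\varnothing=1$.

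Collecting the extracted prefactors and using the multiplicativity $\Pi(\rho_1,\rho_2;\tau)=\Pi(\rho_1;\tau)\Pi(\rho_2;\tau)$, the total becomes
\[
\prod_{e\in E^{\uparrow}(\pathh)}\Phi(\rho_e)\cdot\prod_{e\in E^{\uparrow}(\pathh)}\;\prod_{e'>e}\Pi(\rho_e;\rho_{e'}),
\]
where $e'$ runs over all edges strictly later in path order (regardless of type). Splitting the inner product into up--up pairs and up--(left or diagonal) pairs, and using the factorization
\[
\Phi\Big(\bigcup_{e\in E^{\uparrow}(\pathh)}\rho_e\Big)=\prod_{e\in E^{\uparrow}(\pathh)}\Phi(\rho_e)\cdot\prod_{\substack{e<e'\\ e,e'\in E^{\uparrow}(\pathh)}}\Pi(\rho_e;\rho_{e'}),
\]
which is immediate from the definitions $\Phi(x)=\prod_{i<j}\phi(x_ix_j)$ and $\Pi(x;y)=\prod_{i,j}\phi(x_iy_j)$ recorded in \eqref{eq:Littlewoodidentity} and \eqref{eq:PI}, one recovers precisely the right-hand side of \eqref{eq:Z}. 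The only real difficulty is careful bookkeeping of which specializations lie in each $\sigma_k$ and matching the resulting double product to the prescribed ordering $e<e'$ on the RHS; no new analytic input is required beyond the finiteness assumption in the statement, which justifies all interchanges of summation by Fubini along the sweep.
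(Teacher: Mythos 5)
Your proof is correct, and it takes a cleaner and more systematic route than the one in the paper. The paper's argument is a pictorial sketch: it invokes all five graphical identities (a)--(e) from Figure~\ref{interpretation} (including the branching rules~\eqref{eq:branchingrule} and the skew Cauchy corner-flip~\eqref{eq:skewCauchy}) and, as illustrated in Figure~\ref{deconstruct}, reduces the path to the trivial one starting from the $x$-axis end, repeatedly flipping staircase corners with~\eqref{eq:skewCauchy} before finally contracting into the diagonal. Your argument sweeps in the opposite direction, from the diagonal end toward the $x$-axis, and needs only two of the identities: \eqref{eq:iii} at upward edges and \eqref{eq:branchingve} at leftward edges, with \eqref{eq:CauchyLittlewood} as the terminal special case. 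What this buys you is a clean invariant --- after integrating out the later vertices, the state of the computation is recorded entirely in a single $\ve_{\lambda^{v_k}}(\sigma_k)$ factor whose specialization $\sigma_k$ is just the union of the already-consumed edges --- together with a straightforward bookkeeping step matching the extracted $\Pi$ and $\Phi$ factors to the right-hand side via the factorization $\Phi(\cup_e\rho_e)=\prod_e\Phi(\rho_e)\prod_{e<e'}\Pi(\rho_e;\rho_{e'})$. The paper's version makes explicit the flexibility of the graphical calculus (any reduction order works), but leaves the tracking of the $\Pi$ and $\Phi$ factors implicit; your version makes that accounting explicit at the cost of fixing a reduction order. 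Two small remarks: the interchange of sums is justified directly by Tonelli (all terms are nonnegative for Macdonald nonnegative specializations), so you do not really need the finiteness hypothesis for Fubini but only to ensure both sides are finite; and it is worth noting that no edge $e'$ appearing in your double product $\prod_{e<e'}\Pi(\rho_e;\rho_{e'})$ with $e\in E^{\uparrow}(\pathh)$ can lie on the $x$-axis, since the path never returns to the axis after its first upward step, so the split into up--up pairs and up--(left or diagonal) pairs is exhaustive.
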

\begin{proof}
This can be proved through applying (specializations of) identities \eqref{eq:skewCauchy}, \eqref{eq:branchingrule}, \eqref{eq:branchingve}, and \eqref{eq:iii}. We provide a pictorial proof which explains in which order these identities must be used. Figure \ref{interpretation} provides a graphical representation for the meaning of each identity. 
\begin{figure}
	\begin{center}
		\begin{tikzpicture}[scale=0.8]
		\begin{scope}[decoration={
			markings,
			mark=at position 0.5 with {\arrow{<}}}]
		
		\draw (-3,0) node{(a)};
		\fill (0,0) circle(0.1);
		\draw[fleche] (-1,0) -- (0,0) node[midway, anchor=south]{$\rho_2$};
		\draw[fleche] (0,0) -- (1,0) node[midway, anchor=south]{$\rho_1$};
		\draw[ultra thick, gray, dotted] (-1.5,0) -- (-1,0);
		\draw[ultra thick, gray, dotted] (1.5,0) -- (1,0);
		\draw[thick, gray] (-1.5,0) ellipse (.5 and .3);
		\draw[thick, gray] (1.5,0) ellipse (.5 and .3);
		\draw (-.2,-.2)  -- ++(0.4,0) -- ++(0,0.4) -- ++(-0.4,0) -- cycle; 
		\draw (3,0) node{$=$}; 
		\draw[fleche] (5,0) -- (6,0) node[midway, anchor=south]{$\rho_1,\rho_2$};
		\draw[ultra thick, gray, dotted] (4.5,0) -- (5,0);
		\draw[ultra thick, gray, dotted] (6.5,0) -- (6,0);
		\draw[thick, gray] (4.5,0) ellipse (.5 and .3);
		\draw[thick, gray] (6.5,0) ellipse (.5 and .3);
		
		\begin{scope}[yshift=-2cm]
		\draw (-3,0) node{(b)};
		\fill (0,0) circle(0.1);
		\draw[fleche] (0,0) -- (0,-1) node[midway, anchor=west]{$\rho_2$};
		\draw[fleche] (0,1) -- (0,0) node[midway, anchor=west]{$\rho_1$};
		\draw[ultra thick, gray, dotted] (0.5,-1) -- (0,-1);
		\draw[ultra thick, gray, dotted] (-0.5,1) -- (0,1);
		\draw[thick, gray] (-.5,1) ellipse (.5 and .3);
		\draw[thick, gray] (.5,-1) ellipse (.5 and .3);
		\draw (-.2,-.2)  -- ++(0.4,0) -- ++(0,0.4) -- ++(-0.4,0) -- cycle; 
		\draw (3,0) node{$=$}; 
		\draw[fleche] (5,.5) -- (5,-.5) node[midway, anchor=west]{$\rho_1,\rho_2$};
		\draw[ultra thick, gray, dotted] (4.5,.5) -- (5,.5);
		\draw[ultra thick, gray, dotted] (5.5,-.5) -- (5,-.5);
		\draw[thick, gray] (4.5,.5) ellipse (.5 and .3);
		\draw[thick, gray] (5.5,-.5) ellipse (.5 and .3);
		\end{scope}
		
		\begin{scope}[yshift=-4cm]
		\draw (-3,-.5) node{(c)};
		\fill (0,0) circle(0.1);
		\draw[fleche] (-1,0) -- (0,0) node[midway, anchor=south]{$\rho_2$};
		\draw[fleche] (0,0) -- (0,-1) node[midway, anchor=west]{$\rho_1$};
		\draw[ultra thick, gray, dotted] (-1.5,0) -- (-1,0);
		\draw[ultra thick, gray, dotted] (0,-1) -- (.5,-1);
		\draw[thick, gray] (-1.5,0) ellipse (.5 and .3);
		\draw[thick, gray] (.5,-1) ellipse (.5 and .3);
		\draw (-.2,-.2)  -- ++(0.4,0) -- ++(0,0.4) -- ++(-0.4,0) -- cycle; 
		\draw (3,-.5) node{$=$}; 
		\fill (5,-1) circle(0.1);
		\draw[fleche] (5,-1) -- (6,-1) node[midway, anchor=north]{$\rho_2$};
		\draw[fleche] (5,0) -- (5,-1) node[midway, anchor=east]{$\rho_1$};
		\draw[ultra thick, gray, dotted] (4.5,0) -- (5,0);
		\draw[ultra thick, gray, dotted] (6,-1) -- (6.5,-1);
		\draw[thick, gray] (4.5,0) ellipse (.5 and .3);
		\draw[thick, gray] (6.5,-1) ellipse (.5 and .3);
		\draw (4.8,-1.2)  -- ++(0.4,0) -- ++(0,0.4) -- ++(-0.4,0) -- cycle; 
		\draw (8.5,-.5) node{$\times\ \Pi(\rho_1, \rho_2)$};
		\end{scope}
		
		\begin{scope}[yshift=-6cm]
		\draw (-3,-.5) node{(d)};
		\fill (0,0) circle(0.1);
		\draw[fleche] (-1,-1) -- (0,0) node[midway, anchor=south east]{$\rho_2$};
		\draw[fleche] (0,0) -- (1,0) node[midway, anchor=south]{$\rho_1$};
		\draw[ultra thick, gray, dotted] (1.5,0) -- (1,0);
		\draw[thick, gray] (1.5,0) ellipse (.5 and .3);
		\draw (-.2,-.2)  -- ++(0.4,0) -- ++(0,0.4) -- ++(-0.4,0) -- cycle; 
		\draw (3,-.5) node{$=$}; 
		\draw[fleche] (5, -1) -- (6,0) node[midway, anchor=south east]{$\rho_1,\rho_2$};
		\draw[ultra thick, gray, dotted] (6.5,0) -- (6,0);
		\draw[thick, gray] (6.5,0) ellipse (.5 and .3);
		\end{scope}
		
		\begin{scope}[yshift=-8cm]
		\draw (-3,-.5) node{(e)};
		\fill (0,0) circle(0.1);
		\draw[fleche] (-1,-1) -- (0,0) node[midway, anchor=south east]{$\rho_2$};
		\draw[fleche] (0,0) -- (0,-1) node[midway, anchor=west]{$\rho_1$};
		\draw[ultra thick, gray, dotted] (.5,-1) -- (0,-1);
		\draw[thick, gray] (.5,-1) ellipse (.5 and .3);
		\draw (-.2,-.2)  -- ++(0.4,0) -- ++(0,0.4) -- ++(-0.4,0) -- cycle; 
		\draw (3,-.5) node{$=$}; 
		\draw[fleche] (5, -1) -- (6,0) node[midway, anchor=south east]{$\rho_1,\rho_2$};
		\draw[ultra thick, gray, dotted] (6.5,0) -- (6,0);
		\draw[thick, gray] (6.5,0) ellipse (.5 and .3);
		\draw (9,-.5) node{$\times\ \Pi(\rho_1, \rho_2) \Phi(\rho_1)$};
		\end{scope}
		\end{scope}
		\end{tikzpicture}
		
	\end{center}
	\caption{Graphical representations of summation identities. The boxes represent vertices whose partitions are being summed over; the directed edges are labeled by Macdonald nonnegative specializations; the blobs represent other terms which may arise in the weight of a path $\pathh$ which are not involved in these identities. Graphics (a) represents the branching rule \eqref{eq:branchingrule} for Macdonald polynomials $Q$, (b) represents the branching rule for polynomials $P$, (c) represents the skew Cauchy identity \eqref{eq:skewCauchy}, (d) represents \eqref{eq:branchingve}, and (e) represents \eqref{eq:iii}.}
	\label{interpretation}
\end{figure}
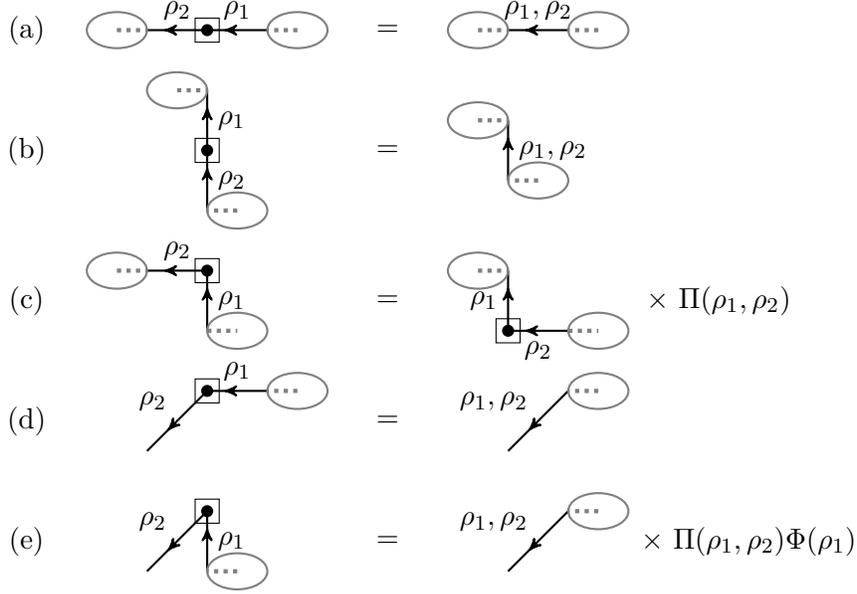

Starting from any path $\pathh$, one may apply these elementary moves until $\pathh$ is reduced to the trivial path (with $x$-coordinate always equal to zero) which assigns weight 1 to trivial partitions and 0 otherwise. Figure \ref{deconstruct} provides a step-by-step illustration of this reduction process. Keeping track of the products of $\Pi$ and $\Phi$ terms yields the desired formula.
\begin{figure}
\begin{center}
\begin{tikzpicture}[scale=0.7]
\draw[axis] (0,0) -- (0,3.5) ;
\draw[axis] (0,0) -- (5.5, 0) ;
\begin{scope}[decoration={
    markings,
    mark=at position 0.5 with {\arrow{<}}}]
\draw[grille] (0,0) grid(5.5, 3.5);
\draw[fleche] (0,0) -- (2,2);
\draw[fleche] (2,2) -- (3,2);
\draw[fleche] (3,2) -- (3,1);
\draw[fleche] (3,1) -- (4,1);
\draw[fleche] (4,1) -- (4,0);
\fill (2,2) circle(0.1);
\fill (3,2) circle(0.1);
\fill (3,1) circle(0.1);
\fill (4,1) circle(0.1);
\fill (4,0) circle(0.1);
\draw (1.8,1.8)  -- ++(0.4,0) -- ++(0,0.4) -- ++(-0.4,0) -- cycle;
\draw (2.8,1.8)  -- ++(0.4,0) -- ++(0,0.4) -- ++(-0.4,0) -- cycle;
\draw (2.8,0.8)  -- ++(0.4,0) -- ++(0,0.4) -- ++(-0.4,0) -- cycle;
\draw (3.8,0.8)  -- ++(0.4,0) -- ++(0,0.4) -- ++(-0.4,0) -- cycle;
\end{scope}

\begin{scope}[yshift=0cm, xshift=7cm]
\draw[axis] (0,0) -- (0,3.5) ;
\draw[axis] (0,0) -- (4.5, 0) ;
\begin{scope}[decoration={
    markings,
    mark=at position 0.5 with {\arrow{<}}}]
\draw[grille] (0,0) grid(4.5, 3.5);
\draw[fleche] (0,0) -- (2,2);
\draw[fleche] (2,2) -- (3,2);
\draw[fleche] (3,2) -- (3,1);
\draw[fleche] (3,1) -- (3,0);
\fill (2,2) circle(0.1);
\fill (3,2) circle(0.1);
\fill (3,1) circle(0.1);
\fill (3,0) circle(0.1);
\draw (1.8,1.8)  -- ++(0.4,0) -- ++(0,0.4) -- ++(-0.4,0) -- cycle;
\draw (2.8,1.8)  -- ++(0.4,0) -- ++(0,0.4) -- ++(-0.4,0) -- cycle;
\draw (2.8,0.8)  -- ++(0.4,0) -- ++(0,0.4) -- ++(-0.4,0) -- cycle;
\end{scope}
\end{scope}

\begin{scope}[yshift=0cm, xshift=13cm]
\draw[axis] (0,0) -- (0,3.5) ;
\draw[axis] (0,0) -- (4.5, 0) ;
\begin{scope}[decoration={
    markings,
    mark=at position 0.5 with {\arrow{<}}}]
\draw[grille] (0,0) grid(4.5, 3.5);
\draw[fleche] (0,0) -- (2,2);
\draw[fleche] (2,2) -- (2,1);
\draw[fleche] (2,1) -- (3,1);
\draw[fleche] (3,1) -- (3,0);
\fill (2,2) circle(0.1);
\fill (2,1) circle(0.1);
\fill (3,1) circle(0.1);
\fill (3,0) circle(0.1);
\draw (1.8,1.8)  -- ++(0.4,0) -- ++(0,0.4) -- ++(-0.4,0) -- cycle;
\draw (1.8,0.8)  -- ++(0.4,0) -- ++(0,0.4) -- ++(-0.4,0) -- cycle;
\draw (2.8,0.8)  -- ++(0.4,0) -- ++(0,0.4) -- ++(-0.4,0) -- cycle;
\end{scope}
\end{scope}

\begin{scope}[yshift=-5cm, xshift=0cm]
\draw[axis] (0,0) -- (0,3.5) ;
\draw[axis] (0,0) -- (4.5, 0) ;
\begin{scope}[decoration={
    markings,
    mark=at position 0.5 with {\arrow{<}}}]
\draw[grille] (0,0) grid(4.5, 3.5);
\draw[fleche] (0,0) -- (2,2);
\draw[fleche] (2,2) -- (2,1);
\draw[fleche] (2,1) -- (2,0);
\fill (2,2) circle(0.1);
\fill (2,1) circle(0.1);
\fill (2,0) circle(0.1);
\draw (1.8,1.8)  -- ++(0.4,0) -- ++(0,0.4) -- ++(-0.4,0) -- cycle;
\draw (1.8,0.8)  -- ++(0.4,0) -- ++(0,0.4) -- ++(-0.4,0) -- cycle;
\end{scope}
\end{scope}

\begin{scope}[yshift=-5cm, xshift=5.5cm]
\draw[axis] (0,0) -- (0,3.5) ;
\draw[axis] (0,0) -- (3.5, 0) ;
\begin{scope}[decoration={
    markings,
    mark=at position 0.5 with {\arrow{<}}}]
\draw[grille] (0,0) grid(3.5, 3.5);
\draw[fleche] (0,0) -- (1,1);
\draw[fleche] (1,1) -- (2,1);
\draw[fleche] (2,1) -- (2,0);
\fill (1,1) circle(0.1);
\fill (2,1) circle(0.1);
\fill (2,0) circle(0.1);
\draw (0.8,0.8)  -- ++(0.4,0) -- ++(0,0.4) -- ++(-0.4,0) -- cycle;
\draw (1.8,0.8)  -- ++(0.4,0) -- ++(0,0.4) -- ++(-0.4,0) -- cycle;
\end{scope}
\end{scope}

\begin{scope}[yshift=-5cm, xshift=10cm]
\draw[axis] (0,0) -- (0,3.5) ;
\draw[axis] (0,0) -- (3.5, 0) ;
\begin{scope}[decoration={
    markings,
    mark=at position 0.5 with {\arrow{<}}}]
\draw[grille] (0,0) grid(3.5, 3.5);
\draw[fleche] (0,0) -- (1,1);
\draw[fleche] (1,1) -- (1,0);
\fill (1,1) circle(0.1);
\fill (1,0) circle(0.1);
\draw (0.8,0.8)  -- ++(0.4,0) -- ++(0,0.4) -- ++(-0.4,0) -- cycle;
\end{scope}
\end{scope}

\begin{scope}[yshift=-5cm, xshift=14.5cm]
\draw[axis] (0,0) -- (0,3.5) ;
\draw[axis] (0,0) -- (3.5, 0) ;
\begin{scope}[decoration={
    markings,
    mark=at position 0.5 with {\arrow{<}}}]
\draw[grille] (0,0) grid(3.5, 3.5);
\end{scope}
\end{scope}

\end{tikzpicture}
\end{center}
\caption{To compute the normalizing constant for a path $\pathh$, one sums over all non-trivial partitions labeled by vertices. The boxes represent these summations and the figure shows the sequential application of the identities in Figure \ref{interpretation}. The multiplicative factors of $\Pi$ and $\Phi$ which arise from each summation are not shown, nor are the specializations.}
\label{deconstruct}
\end{figure}
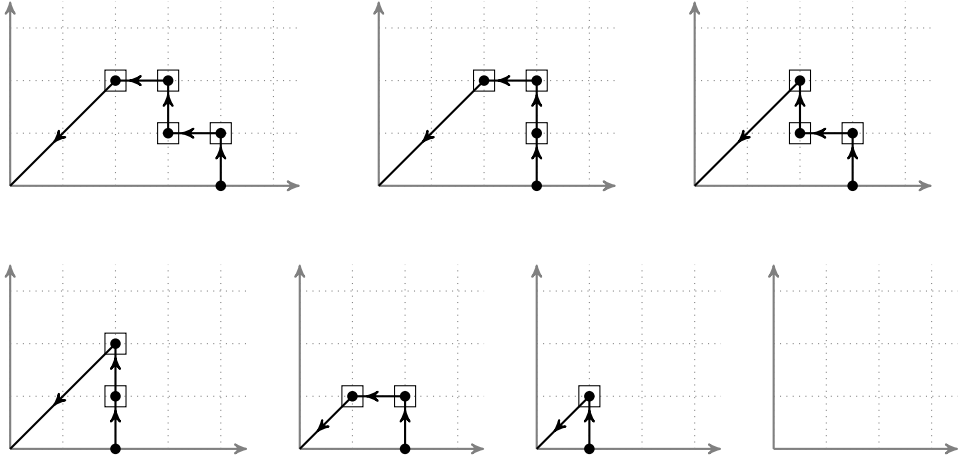
\end{proof}

\begin{definition}
For a given path $\pathh\in \admpath$ and a set of Macdonald nonnegative specializations $\bm\uprho = \{\rho_{e}\}_{e\in E(\pathh)}$ such that \eqref{eq:Z} is finite, the \emph{half-space Macdonald process} $\PMP_{\pathh;\rho}$ is a measure on the sequence of partitions $\labold = \{\lambda^{v}\}_{v\in V(\pathh)}$ given by
$$
\PMP_{\pathh;\bm\uprho}\left(\labold\right) \,:=\, \frac{\mathcal{W}\left(\labold\right)}{\!\!\!\!\prod\limits_{\substack{e<e':\\ e\in E^{\uparrow}(\pathh),\\ e' \in E^{\leftarrow}(\pathh)\cup E^{\swarrow}(\pathh)}}\!\!\!\!\!\!\!\!\!\!\!\!\ \Pi(\rho_e;\rho_{e'})  \Phi(\cup_{e\in E^{\uparrow}(\pathh)} \rho_e)}.
$$

The \emph{half-space Macdonald measure} is a measure on a single partition. It is a special case  of the process  when the path  $\pathh= \tikz[baseline, scale=0.3]{\draw[thick] (0,0) -- (1,1) -- (1,0);}$\  , i.e. the path with only one vertex above the $x$-axis at position $(1,1)$. In that case, the only non-trivial partition is $\lambda^{(1,1)}$ (which we will just write as $\lambda$) and the only specializations that matter are those for the upward edge into $(1,1)$ which we denote $\rhoup$, and that of the diagonal edge out of $(1,1)$ which we denote $\rhodiag$. The half-space Macdonald measure is then simply written as $\PMM_{\rhoup,\rhodiag}$ and is explicitly given by the formula
$$
\PMM_{\rhoup,\rhodiag}(\lambda) = \frac{P_{\lambda}(\rhoup) \ve_{\lambda}(\rhodiag)}{\Pi(\rhoup;\rhodiag)\Phi(\rhoup)}.
$$
It is convenient to also introduce an expectation operator, which for the half-space Macdonald process is denoted  by $\EPMP_{\pathh;\bm\uprho}$ and for the measure is denoted by $\EPMM_{\rhoup,\rhodiag}$.
\label{def:halfspaceMacdonaldprocess}
\end{definition}

It is easy to show using the relations in Figure \ref{interpretation} that various marginals of half-space Macdonald processes to subsequences of $\lambda$ remain half-space Macdonald processes. In particular, for a particular vertex $v\in \pathh$, the marginal distribution of $\lambda^v$ under the half-space Macdonald process $\PMP_{\pathh;\rho}$ is given by the half-space Macdonald measure $\PMM_{\rhoup,\rhodiag}(\lambda)$ where
$$
\rhoup = \!\!\!\!\!\bigcup_{e<v: e\in E^{\uparrow}(\pathh)} \rho_e \qquad \text{and} \qquad \rhodiag = \!\!\!\!\!\bigcup_{e>v: e\in E(\pathh)}\rho_e.
$$
\begin{remark}
(Full space) Macdonald processes \cite{borodin2014macdonald} can be defined in a similar manner. The difference is that the paths $\pathh$ which index Macdonald processes start at $(+\infty,0)$ and end at $(0,+\infty)$. The weight of a collection of partitions on vertices of the path is still given by the product of specialized skew Macdonald $P$ or $Q$ functions along the edges.
\end{remark}
\begin{remark}
Macdonald (and in particular Schur) processes are commonly defined as measures on
sequences $\labold = (\lambda^{(1)},\ldots, \lambda^{(N)})$ and $\upmu= (\mu^{(1)},\ldots, \mu^{(N-1)})$ of partitions satisfying the interlacing condition
\begin{equation*}
\varnothing\subset \la^{(1)}\supset \mu^{(1)}\subset
\la^{(2)}\supset \mu^{(2)} \subset \dots \supset \mu^{(N-1)}\subset
\la^{(N)}\supset \varnothing.
\end{equation*}
In the context of half-space Macdonald processes, we could similarly define our measure on such a set of partitions by fixing Macdonald nonnegative specializations $\rho^+_0,\ldots \rho^+_{N-1},\rho^-_1,\ldots, \rho^-_N$ and defining a weight
\begin{equation*}
\mathcal{W}(\labold,\boldsymbol{\upmu}):=P_{\la^{(1)}}(\rho_0^+)\,Q_{\la^{(1)}/\mu^{(1)}}(\rho_1^-)
P_{\la^{(2)}/\mu^{(1)}}(\rho_1^+)\,\cdots
P_{\la^{(N)}/\mu^{(N-1)}}(\rho_{N-1}^+)\, \ve_{\la^{(N)}}(\rho_N^-).
\end{equation*}
This was the definition employed in \cite[Definition 3.2]{barraquand2018stochastic}  and \cite{borodin2005eynard} in the Schur case. It is easy to match this measure to a half-space Macdonald process indexed by a  particular choice of path $\pathh$ (which maximally zig-zags from the $x$-axis to the diagonal). More general choices of $\pathh$ come from choosing trivial specializations (which force equality of consecutive partitions). Thus Definition \ref{def:halfspaceMacdonaldprocess} is equivalent to \cite[Definition 3.2]{barraquand2018stochastic} -- see also \cite[Remark 3.6]{baik2018pfaffian} about the equivalence between both formulations.
\end{remark}

The next proposition is a useful identity in law valid only when the diagonal specialization is the evaluation into a single variable (see  Section \ref{sec:BaikRainsexplained} for an application). In the Schur degeneration, one recovers Corollary 7.6 in \cite{baik2001algebraic} (see also Proposition 3.4 in \cite{baik2018pfaffian}).
\begin{proposition}
Let $\mu$ be distributed according to the Macdonald measure $\PMM_{\rho, \alpha}$ where $\alpha$ is a single variable specialization,  and let $\lambda$ be distributed according to the Macdonald measure $\PMM_{\rho', 0}$ with $\rho'=(\rho, \alpha)$. Then $(\lambda_1, \lambda_3, \dots)$ and  $(\mu_1, \mu_3, \dots)$ have the same distribution. 
\label{prop:identityinlaw}
\end{proposition}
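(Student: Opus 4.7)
The plan is to identify term by term the joint probability that $(\mu_1,\mu_3,\dots)=(a_1,a_3,\dots)$ under $\PMM_{\rho,\alpha}$ with the probability that $\lambda$ equals the unique even-dual extension $\lambda=(a_1,a_1,a_3,a_3,\dots)$ under $\PMM_{\rho',\varnothing}$. First I would write out both densities using Definition \ref{def:halfspaceMacdonaldprocess}. Since $\alpha$ is a single variable, $\Phi(\alpha)=\prod_{i<j}\phi(\alpha_i\alpha_j)=1$ (empty product) and $\Pi(\cdot\,;\varnothing)=1$, while $\ve_\lambda(\varnothing)=b^{\rm el}_\lambda\mathds{1}[\lambda'\text{ even}]$. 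Running the generalized Littlewood identity \eqref{eq:CauchyLittlewood} together with the branching rule applied to $P_\nu(\rho,\alpha)$ yields $\Phi(\rho,\alpha)=\Phi(\alpha)\,\Pi(\rho;\alpha)\,\Phi(\rho)=\Pi(\rho;\alpha)\Phi(\rho)$, so both measures share the same normalization. Because $\lambda'$ is forced to be even in the second measure, $\lambda_{2i-1}=\lambda_{2i}$, and hence $\lambda$ is completely determined by the subsequence $(\lambda_1,\lambda_3,\dots)$.

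The crucial step is to re-expand $\ve_\mu(\alpha)$ using \eqref{eq:skewLittlewood} rather than its defining series. With $\Phi(\alpha)=1$ and $P_{\nu/\mu}(\alpha)=\psi_{\nu/\mu}\alpha^{|\nu|-|\mu|}\mathds{1}[\mu\prec\nu]$ for a single-variable specialization, this becomes
\begin{equation*}
\ve_\mu(\alpha)=\sum_{\substack{\nu'\text{ even}\\\mu\prec\nu}}b^{\rm el}_\nu\,\psi_{\nu/\mu}\,\alpha^{|\nu|-|\mu|}.
\end{equation*}
Now fix $\mu$ with $\mu_{2i-1}=a_{2i-1}$. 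For any $\nu$ contributing, the interlacing constraints $\nu_{2i-1}\geqslant\mu_{2i-1}=a_{2i-1}\geqslant\nu_{2i}$, combined with the pairing $\nu_{2i-1}=\nu_{2i}$, force $\nu_{2i-1}=\nu_{2i}=a_{2i-1}$. Thus the sum collapses to the unique term $\nu=\lambda$, producing the identity $\ve_\mu(\alpha)=b^{\rm el}_\lambda\,\psi_{\lambda/\mu}\,\alpha^{|\lambda|-|\mu|}$.

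To conclude, I sum the first measure over all $\mu$ with $\mu_{2i-1}=a_{2i-1}$. This index set coincides with $\{\mu:\mu\prec\lambda\}$, since the interlacing $\lambda_{2i-1}\geqslant\mu_{2i-1}\geqslant\lambda_{2i}$ with $\lambda_{2i-1}=\lambda_{2i}=a_{2i-1}$ pins each odd part while leaving $\mu_{2i}$ free in $[a_{2i+1},a_{2i-1}]$. Substituting the reduced expression for $\ve_\mu(\alpha)$ and applying the branching rule \eqref{eq:branchingrule} in the form $P_\lambda(\rho,\alpha)=\sum_{\mu\prec\lambda}P_\mu(\rho)\,\psi_{\lambda/\mu}\,\alpha^{|\lambda|-|\mu|}$ gives
\begin{equation*}
\PMM_{\rho,\alpha}\bigl(\mu_{2i-1}=a_{2i-1}\text{ for all }i\bigr)=\frac{b^{\rm el}_\lambda\,P_\lambda(\rho,\alpha)}{\Pi(\rho;\alpha)\,\Phi(\rho)}=\PMM_{(\rho,\alpha),\varnothing}(\lambda),
\end{equation*}
which is exactly the claimed equality in law.

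I do not anticipate a serious obstacle: once one recognizes that the $P$-skew expansion \eqref{eq:skewLittlewood} of $\ve_\mu(\alpha)$ (rather than the defining $Q$-skew expansion) collapses to a single term due to the rigidity of even-dual interlacing under a single-variable specialization, the rest is a clean application of the branching rule. The only subtlety is choosing the correct expansion of $\ve_\mu$ and noting the bijection between the two index sets $\{\mu:\mu_{2i-1}=a_{2i-1}\}$ and $\{\mu:\mu\prec\lambda\}$.
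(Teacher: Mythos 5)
Your proof is correct, and it takes a genuinely different route from the paper's. The paper starts from the tail event under $\PMM_{\rho',0}$, expands $P_\lambda(\rho,\alpha)$ via the branching rule (producing $\psi_{\lambda/\mu}$ factors), and then must invoke the nontrivial Macdonald combinatorial identity $b^{\rm el}_{\lambda}\psi_{\lambda/\mu}=b^{\rm el}_{\nu}\varphi_{\mu/\nu}$ from \cite[VI.7 Ex.\ 4 (4)]{macdonald1995symmetric} (where $\nu$ is the unique even-dual $\prec\mu$) together with $|\lambda/\mu|=|\mu/\nu|$ in order to recognize the single surviving term of the defining $Q$-skew expansion of $\ve_{\mu}(\alpha)$. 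You instead work with the atom $\{\mu_{2i-1}=a_{2i-1}\}$ under $\PMM_{\rho,\alpha}$, expand $\ve_{\mu}(\alpha)$ through the derived $P$-skew formula \eqref{eq:skewLittlewood}, and isolate the collapse directly: because $\mu\prech\nu$ with $\nu'$ even forces $\nu_{2i-1}=\nu_{2i}=\mu_{2i-1}$ (interlacing pins the odd parts of $\mu$ between the doubled entries of $\nu$), the sum has exactly one term $\nu=\lambda$. The leftover $b^{\rm el}_{\lambda}\psi_{\lambda/\mu}\alpha^{|\lambda|-|\mu|}$ then matches the branching rule for $P_{\lambda}(\rho,\alpha)$ with no further identity needed. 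In effect you absorb Macdonald's identity into the citation of \eqref{eq:skewLittlewood} and replace it with a transparent rigidity observation, at the cost of working with the derived rather than the defining expansion of $\ve$; you also make explicit the bijection between the two index sets $\{\mu:\mu_{2i-1}=a_{2i-1}\}$ and $\{\mu:\mu\prech\lambda\}$ and the matching of normalization constants $\Phi(\rho,\alpha)=\Pi(\rho;\alpha)\Phi(\rho)$, both of which the paper leaves implicit.
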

\begin{proof}
Since $\ve_{\lambda}(0)$ is supported on partitions with even dual, 
$$ \PMM_{\rho', 0}\big( \lambda_1\leqslant \ell_1, \lambda_3\leqslant \ell_3, \dots\big) =  \sum_{\lambda' \textrm{even}} b^{\textrm{el}}_{\lambda} P_{\lambda}(\rho'),$$
where the sum runs over partitions $\lambda$ such that $ \lambda_i\leqslant \ell_i$ for all odd $i$. Using \eqref{eq:branchingrule}, the sum can be rewritten as 
\begin{equation}
\label{eq:sumexchange}
\sum_{\lambda' \textrm{even}}\sum_{\mu\subset\lambda} b^{\textrm{el}}_{\lambda} P_{\lambda/\mu}(\alpha)P_{\mu}(\rho) = \sum_{\mu\prec\lambda} \sum_{\lambda' \textrm{even}} b^{\textrm{el}}_{\lambda} \psi_{\lambda/\mu}\alpha^{\vert \lambda -\mu\vert } P_{\mu}(\rho).
\end{equation}
where in the R.H.S.,  the first sum runs over partitions $\mu$ such that  $ \mu_i\leqslant \ell_i$ for all odd $i$. Now we will use the fact that if $\mu\prec\la $ and $\la'$ is even, then we have (see \cite[VI.7 Ex. 4, Eq. (4)]{macdonald1995symmetric})
$$b^{\textrm{el}}_{\lambda} \psi_{\lambda/\mu} =  b^{\textrm{el}}_{\nu} \varphi_{\mu/\nu},$$
where $\nu$ is the only partition such that $\nu\prec\mu$ and $\nu'$ is even, and the coefficients $ \psi_{\lambda/\mu},  \varphi_{\mu/\nu}$ are defined in \eqref{eq:defphipsi}. 
Since we also have $ \vert \la-\mu\vert = \vert\mu-\nu\vert$, \eqref{eq:sumexchange} equals 
$$ \sum_{\mu}  b^{\textrm{el}}_{\nu} \varphi_{\mu/\nu}\alpha^{\vert \mu -\lambda\vert } P_{\mu}(\rho) .$$
We recognize $ \ve_{\mu}(\alpha) = b^{\textrm{el}}_{\nu} \varphi_{\mu/\nu}\alpha^{\vert \mu -\lambda\vert } $ and conclude that 
$$ \PMM_{\rho', 0}\big( \lambda_1\leqslant \ell_1, \lambda_3\leqslant \ell_3, \dots\big) = \PMM_{\rho, \alpha}\big( \mu_1\leqslant \ell_1, \mu_3\leqslant \ell_3, \dots\big). $$
\end{proof}

\subsection{Markov dynamics on half-space Macdonald processes}
\label{sec:Macdyn} 
\subsubsection{Bulk and boundary transition operators} We consider here Markov transition operators which map half-space Macdonald processes with one set of parameters to those processes with an updated set of parameters.
We will leverage the graphical representation of the half-space Macdonald process so as to describe a general mechanism through which to `grow' such a measure. We refer to \cite[Section 3.3]{baik2018pfaffian} where such a procedure is explained in the case of the Pfaffian Schur processes. In terms of the path $\pathh$, there are two elementary moves (see Figure \ref{elemoves}) which can be used to transition between any $\pathh$ and $\pathh'$ where $\pathh'$ contains $\pathh$ (in the sense that $\pathh$ sits entirely to the bottom left of $\pathh'$). The first move -- bulk growth -- takes a piece $\mathlarger{\llcorner}$ in $\pathh$ with corner coordinates $(i,j)$ with $i>j$ and inverts it into piece $_\mathlarger{\urcorner}$ with corner coordinates $(i+1,j+1)$. The second move -- boundary growth -- takes the piece composed of the diagonal from $(i,i)$ to $(0,0)$ and the leftward edge immediately preceding it and replaces it with the diagonal from $(i+1,i+1)$ to $(0,0)$ and an upward edge immediately preceding it. To each type of moves, we associate an operator. For the bulk growth, out of corner $(i,j)$ with $i>j$, we associate the operator $\U_{i,j}$ and for the boundary growth, out of corner $(i,i)$ we associate the operator $\Udiag_{i,i}$. 

The specializations will be carried out from the initial path to the new one as follows. The operator $\U_{i,j}$ will encode a Markov transition from $\lambda^{(i,j)}$ to $\lambda^{(i+1,j+1)}$ and will be chosen so as to map the half-space Macdonald process to a new half-space Macdonald process on the new path $\pathh'$ where the specializations all remain the same, except the leftward one into $(i,j)$ becomes the leftward one out of $(i+1,j+1)$ and likewise the upward one out of $(i,j)$ becomes the upward one into $(i+1,j+1)$. The operator $\Udiag_{i,i}$ will encode a Markov transition from $\lambda^{(i,i)}$ to $\lambda^{(i+1,i+1)}$ and should map the half-space Macdonald process to a new half-space Macdonald process on the new path $\pathh'$ where the specializations all remain the same (including the diagonal edge from $(i+1, i+1)$ to $(0,0)$), except the leftward one into $(i,i)$ becomes the upward one into $(i+1,i+1)$.

Our use of the operator $\U$ implies that we use the same specializations on  all vertical edges at the same ordinate, and on all horizontal edges at the same abscisse. Let us denote by $\rho_i^{\mathrm{h}}$ the specialization carried by any horizontal edge between a point $(i-1, j)$ to a point $ (i,j)$; and $\rho_j^{\mathrm{v}}$ the specialization carried by any vertical edge between a point $(i, j-1)$ to a point $ (i,j)$. Moreover, our use of the operator $\Udiag$ implies that for every $i\in \Z_{\geqslant 0}$, $ \rho_i^{\mathrm{h}} = \rho_i^{\mathrm{v}}$. We will henceforth use the notation 
$ \rho_i := \rho_i^{\mathrm{h}} = \rho_i^{\mathrm{v}}$. We will also denote by $\rho_{\circ}$ the specialization on  the diagonal edge.  

Note that despite this seemingly restrictive choice of specializations, any half-space Macdonald process -- defined by any admissible path $ \pathh\in \admpath$ and arbitrary specializations on $ E(\pathh)$ -- can be realized as the output of the above transition operators along a sequence of elementary moves from the empty path to the path $\pathh$.

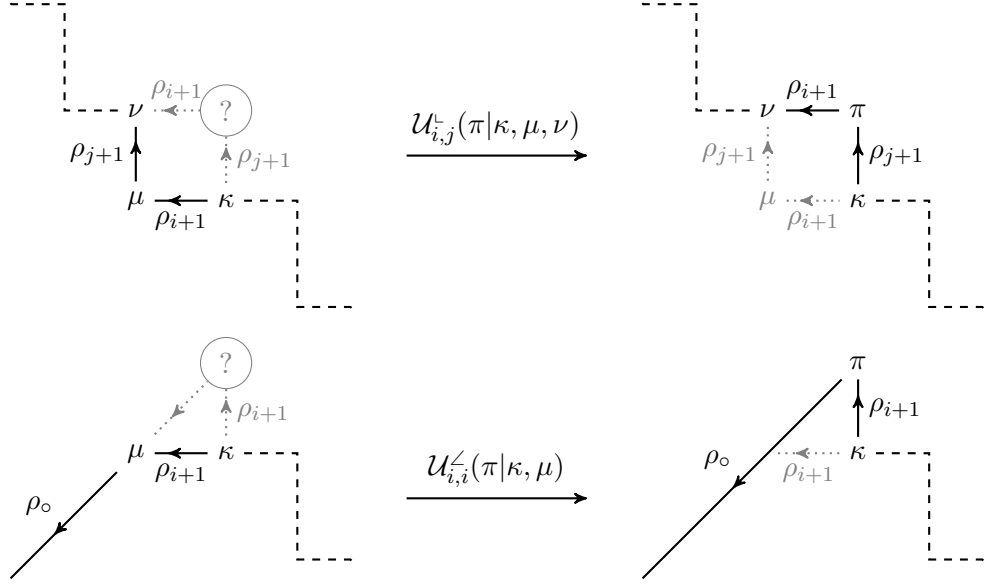
\begin{figure}
\begin{center}
\vspace{1cm}
\begin{tikzpicture}[scale=1.2]
\usetikzlibrary{shapes}
\usetikzlibrary{snakes}
\begin{scope}[decoration={
	markings,
	mark=at position 0.5 with {\arrow{<}}}]
\node (v1) at (0,1) {$\nu$};
\node (m) at (0,0) {$\mu$};
\node (k1) at (1,0) {$\kappa$};
\node[gray, circle, draw] (p1) at (1,1) {$?$};
\draw[fleche] (v1) -- (m) node[midway, anchor=east]{$\rho_{j+1}$};
\draw[fleche] (m) -- (k1) node[midway, anchor=north]{$\rho_{i+1}$};
\draw[fleche, dotted, gray] (v1) -- (p1) node[midway, anchor=south]{$\rho_{i+1}$};
\draw[fleche, dotted, gray] (p1) -- (k1) node[midway, anchor=west]{$\rho_{j+1}$};

\draw[thick, ->, >=stealth'] (3,0.5) --(5,0.5) node[midway, anchor=south] {$\mathcal{U}^{\llcorner}_{i,j}(\pi \vert \kappa, \mu, \nu)$};

\draw[thick, dashed, snake=zigzag, segment amplitude=0.5cm, segment length=2cm] (1.2,0) -- ++(1.2, -1.2);
\draw[thick, dashed,snake=zigzag,  segment amplitude=0.5cm, segment length=2cm] (-0.2,1) -- ++(-1.2, 1.2);
\draw[thick,dashed, snake=zigzag,  segment amplitude=0.5cm, segment length=2cm] (8.2,0) -- ++(1.2, -1.2);
\draw[thick, dashed,snake=zigzag,  segment amplitude=0.5cm, segment length=2cm] (6.8,1) -- ++(-1.2, 1.2);

\node (v2) at (7,1) {$\nu$};
\node (p) at (8,1) {$\pi$};
\node (k2) at (8,0) {$\kappa$};
\node[gray] (m2) at (7,0) {$\mu$};
\draw[fleche] (v2) -- (p) node[midway, anchor=south]{$\rho_{i+1}$};
\draw[fleche] (p) -- (k2) node[midway, anchor=west]{$\rho_{j+1}$};
\draw[fleche, dotted, gray] (v2) -- (m2) node[midway, anchor=east]{$\rho_{j+1}$};
\draw[fleche, dotted, gray] (m2) -- (k2) node[midway, anchor=north]{$\rho_{i+1}$};

\begin{scope}[yshift=-2.8cm]
\node (m) at (0,0) {$\mu$};
\node (bas) at (-1.5,-1.5){};
\node (k1) at (1,0) {$\kappa$};
\node[gray, circle, draw] (p) at (1,1) {$?$};
\draw[fleche] (bas) -- (m) node[midway, anchor=south east]{$\rho_{\circ}$};
\draw[fleche] (m) -- (k1) node[midway, anchor=north]{$\rho_{i+1}$};
\draw[fleche, dotted, gray] (m) -- (p) node[midway, anchor=south east]{};
\draw[fleche, dotted, gray] (p) -- (k1) node[midway, anchor=west]{$\rho_{i+1}$};

\draw[thick, ->, >=stealth'] (3,-0.5) --(5,-0.5) node[midway, anchor=south] {$\mathcal{U}^{\angle}_{i,i}(\pi \vert \kappa, \mu)$};

\node (p) at (8,1) {$\pi$};
\node (bas) at (5.5,-1.5){};
\node (k1) at (8,0) {$\kappa$};
\node[gray] (m) at (7,0) {};
\draw[fleche] (bas) -- (p) node[midway, anchor=south east]{$\rho_{\circ}$};
\draw[fleche] (p) -- (k1) node[midway, anchor=west]{$\rho_{i+1}$};
\draw[fleche, dotted, gray] (m) -- (k1) node[midway, anchor=north]{$\rho_{i+1}$};

\draw[thick, dashed,snake=zigzag, segment amplitude=0.5cm, segment length=2cm] (1.2,-0) -- ++(1.2, -1.2);
\draw[thick, dashed,snake=zigzag, segment amplitude=0.5cm, segment length=2cm] (8.2,0) -- ++(1.2, -1.2);
\end{scope}
\end{scope} 
\end{tikzpicture}
\end{center}
\caption{Operators $\U_{(i,j)}$ and $\Udiag_{(i,i)}$ map a half-space Macdonald process to a new half-space Macdonald process indexed by a new path with specializations chosen as above. The dashed part of the path does not play any role. }\label{elemoves}
\end{figure}

There may be many ways to choose the operators $\U$ and $\Udiag$. We will restrict our attention to those $\U_{(i,j)}$ which only depend on the partitions at the three vertices $(i,j+1),(i,j),(i+1,j)$ as well as the two specializations between these vertices; and those $\Udiag_i$ which only depend on the two vertices at $(i,i)$ and $(i+1,i)$ as well as the specialization between these vertices and $(0,0$).
\begin{lemma} 
Let $i,j$ be nonnegative  integers. The transition operators $\U_{i,j}$ and $\Udiag_{i,i}$ map a half-space Macdonald process to another half-space Macdonald process as specified above if and only if they satisfy the following two equations: For all partitions $\kappa,\mu,\nu,\pi$ and all specializations $\rho_{\circ}, \rho_{i+1}, \rho_{j+1}$ for which normalizations remain finite,
\begin{align}\label{eq:definingeq}
\sum_{\mu} Q_{\kappa/\mu}(\rho_{i+1}) P_{\nu/\mu}(\rho_{j+1}) \U(\pi|\kappa,\mu,\nu)
&=
 \frac{ P_{\pi/\kappa}(\rho_{j+1})Q_{\pi/\nu}(\rho_{i+1})}{\Pi(\rho_{j+1},\rho_{i+1})}, \\ 
\sum_{\mu} \frac{Q_{\kappa/\mu}(\rho_{i+1})  \ve_{\mu}(\rho_{\circ})}{\Pi(\bigcup_{e\in E^{\uparrow}(\pathh)} \rho_e; \rho_{i+1})} \Udiag(\pi|\kappa,\mu)
&= 
\frac{P_{\pi/\kappa}(\rho_{i+1})\ve_{\pi}(\rho_{\circ})}{\Phi(\bigcup_{e\in E^{\uparrow}} \rho_e)^{-1}\Phi(\bigcup_{e\in E^{\uparrow}} \rho_e, \rho_{i+1})}. \label{eq:definingeq2}
\end{align}
\end{lemma}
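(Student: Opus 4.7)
My approach is to reduce the global Markov transport condition to a local identity by isolating the corner being updated. Since $\pathh$ and the new path $\pathh'$ agree outside a small neighborhood of the corner, the weight $\mathcal{W}(\labold)$ defined in \eqref{eq:Weight} factors as (local piece involving $\mu$ or $\pi$) times a remainder $\mathcal{W}_{\mathrm{rest}}$ common to both configurations. The preservation condition
\[
\sum_\mu \PMP_{\pathh;\bm\uprho}(\labold)\,\U(\pi\mid\kappa,\mu,\nu) = \PMP_{\pathh';\bm\uprho'}(\labold')
\]
then collapses to a purely local identity after cancelling $\mathcal{W}_{\mathrm{rest}}$ and evaluating the ratio of normalizers $Z_{\pathh'}/Z_{\pathh}$ using the formula of \eqref{eq:Z}.

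In the bulk case, the $\mu$-dependent local weight is $Q_{\kappa/\mu}(\rho_{i+1})\,P_{\nu/\mu}(\rho_{j+1})$ and the $\pi$-dependent local weight is $Q_{\pi/\nu}(\rho_{i+1})\,P_{\pi/\kappa}(\rho_{j+1})$. The multiset of vertical-edge specializations $\bigcup_{e\in E^{\uparrow}}\rho_e$ is the same for $\pathh$ and $\pathh'$, so $\Phi$ is unchanged; the only modification to $\prod\Pi(\rho_e,\rho_{e'})$ is the new pair $(\rho_{j+1},\rho_{i+1})$ created by the adjacency of the new vertical edge and the new horizontal edge that immediately follows it in $\pathh'$. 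This gives $Z_{\pathh'}/Z_{\pathh} = \Pi(\rho_{j+1},\rho_{i+1})$, and \eqref{eq:definingeq} follows immediately.

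In the boundary case, the specialization $\rho_{i+1}$ migrates from a horizontal onto a vertical edge, so $V := \bigcup_{e\in E^{\uparrow}(\pathh)}\rho_e$ gains $\rho_{i+1}$; the identity $\Phi(V\cup\{\rho_{i+1}\}) = \Phi(V)\,\Phi(\rho_{i+1})\,\Pi(V,\rho_{i+1})$ governs the change in $\Phi$, while the $\Pi$-pair product loses the factor $\Pi(V,\rho_{i+1})$ (the old horizontal $\rho_{i+1}$ is deleted) and gains $\Pi(\rho_{i+1},\rho_\circ)$ (the new vertical $\rho_{i+1}$ pairs with the diagonal edge). Combining these normalization changes with the local weights $Q_{\kappa/\mu}(\rho_{i+1})\,\ve_\mu(\rho_\circ)$ and $P_{\pi/\kappa}(\rho_{i+1})\,\ve_\pi(\rho_\circ)$ produces \eqref{eq:definingeq2}. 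Since the whole argument is a chain of equivalences, both directions of the ``if and only if'' follow from the same computation; as a sanity check, summing \eqref{eq:definingeq} over $\pi$ and applying the skew Cauchy identity \eqref{eq:skewCauchy} (respectively, summing \eqref{eq:definingeq2} over $\pi$ and using \eqref{eq:iii} together with \eqref{eq:branchingve}) returns $1$, confirming consistency with $\U$ and $\Udiag$ being genuine probability kernels. The main obstacle is the careful bookkeeping of which $\Pi$ and $\Phi$ factors change between $Z_{\pathh}$ and $Z_{\pathh'}$---a miscount would shift either identity by a nontrivial multiplicative factor.
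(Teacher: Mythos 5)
Your approach is exactly the paper's: reduce the preservation condition to a local identity by factoring $\mathcal{W}(\labold)$ into a corner piece and a shared remainder, cancel the remainder, and absorb the ratio of normalizers $Z_{\pathh'}/Z_{\pathh}$ coming from \eqref{eq:Z}. Your bulk computation (the only change is the new pair $(\rho_{j+1},\rho_{i+1})$, so $Z_{\pathh'}/Z_{\pathh}=\Pi(\rho_{j+1};\rho_{i+1})$ with $\Phi$ unchanged) is correct and reproduces \eqref{eq:definingeq}.

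Your boundary bookkeeping is also internally correct. Writing $V=\bigcup_{e\in E^{\uparrow}(\pathh)}\rho_e$, the $\Pi$-pair product loses $\Pi(V;\rho_{i+1})$, gains $\Pi(\rho_{i+1};\rho_\circ)$ from the new vertical edge pairing with the diagonal, and $\Phi$ changes by $\Phi(V,\rho_{i+1})/\Phi(V)=\Phi(\rho_{i+1})\Pi(V;\rho_{i+1})$, giving $Z_{\pathh'}/Z_{\pathh}=\Pi(\rho_{i+1};\rho_\circ)\,\Phi(\rho_{i+1})$. However, carrying this ratio through the preservation condition yields
\begin{equation*}
\sum_\mu Q_{\kappa/\mu}(\rho_{i+1})\,\ve_\mu(\rho_\circ)\,\Udiag(\pi|\kappa,\mu)=\frac{P_{\pi/\kappa}(\rho_{i+1})\,\ve_\pi(\rho_\circ)}{\Pi(\rho_{i+1};\rho_\circ)\,\Phi(\rho_{i+1})},
\end{equation*}
whereas \eqref{eq:definingeq2} as printed simplifies to the same statement without the $\Pi(\rho_{i+1};\rho_\circ)$ (multiply both sides of \eqref{eq:definingeq2} by $\Pi(V;\rho_{i+1})$ and use $\Phi(V)^{-1}\Phi(V,\rho_{i+1})=\Phi(\rho_{i+1})\Pi(V;\rho_{i+1})$). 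So your derivation does not reproduce \eqref{eq:definingeq2} exactly as printed. The sanity check you propose would also have flagged this: summing over $\pi$ and using \eqref{eq:branchingve} on the left and \eqref{eq:iii} on the right leaves a spurious $\Pi(\rho_\circ;\rho_{i+1})$ in the printed \eqref{eq:definingeq2}, so it does not return $1$ as you assert. The push-block formula for $\Udiag$ given a few lines later in Section \ref{sec:pushblock}, which does have total mass one over $\pi$, contains the extra $\Pi(\rho_{i+1},\rho_\circ)$ in its denominator and is consistent with your version, not the printed one; \eqref{eq:definingeq2} evidently carries a misprint. In short, your reasoning and bookkeeping are right and match the paper's own method; the one thing you omitted was actually comparing your output with the printed identity, which would have surfaced precisely the kind of multiplicative shift you warned against at the end of your proposal.
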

\begin{proof} 
Let us explain how these equations are derived. It will be clear from the proof why they are sufficient to define bona fide bulk and boundary operators preserving the half-space Macdonald process. 
We focus only on  $\Udiag$. The analogous result for $\U$ can be proved similarly and was essentially already obtained in the literature (see \cite[Eq. (2.24)]{matveev2015q} and references therein, in the context of dynamics preserving the full-space Macdonald process).

 Let $\pathh\in \admpath$ such that $(i,i)\in\pathh$ and the edge $(i+1,i) \rightarrow (i,i)\in E(\pathh)$ and let $\pathh'$ be the path obtained from $\pathh$ by changing the leftward edge into $(i,i)$ to an upward edge into $(i+1,i+1)$. The fact that 
$\Udiag_{i,i}$
maps a half-space Macdonald process on $\pathh$ to a half-space Macdonald process on $\pathh'$ with specializations chosen according  Figure \ref{elemoves}, is equivalent to 
\begin{equation}
 \sum_{\labold} \PMP_{\pathh, \bm\uprho}\left(\labold\right)\ \ \Udiag_{i,i}(\la^{(i+1,i+1)}|\la^{(i+1,i)},\la^{(i,i)}) = \PMP_{\pathh', \bm\uprho'}(\labold'),
 \label{eq:definingeqtrue}
\end{equation}
where $\labold$ and  $\labold'$ (resp. $\bm\uprho$ and $\bm\uprho'$) are the sequences of partitions  (resp. specializations) along $\pathh$ and $\pathh'$.
Removing on both sides of \eqref{eq:definingeqtrue} the weights corresponding to the edges shared by $\pathh$ and $\pathh'$, one is left with
\begin{multline*}
\sum_{\la^{(i,i)}} \frac{Q_{\la^{(i+1,i)}/\la^{(i,i)}}(\rho_{i+1})  \ve_{\la^{(i,i)}}(\rho_{\circ})}{\Pi(\bigcup_{e\in E^{\uparrow}(\pathh)} \rho_e; \rho_{i+1})}\ \  \Udiag_{i,i}(\la^{(i+1,i+1)}|\la^{(i+1,i)},\la^{(i,i)})
= 
\frac{P_{\la^{(i+1,i+1)}/\la^{(i+1,i)}}(\rho_{i+1})\ve_{\la^{(i+1,i+1)}}(\rho_{\circ})}{\Phi(\bigcup_{e\in E^{\uparrow}} \rho_e)^{-1}\Phi(\bigcup_{e\in E^{\uparrow}} \rho_e, \rho_{i+1})},
\end{multline*}
which must  hold for any partitions $\la^{(i+1,i)}$ and $\la^{(i+1,i+1)}$.
\end{proof}

\subsubsection{Push-block dynamics}
\label{sec:pushblock}
It is non-trivial to solve equations \eqref{eq:definingeq} and \eqref{eq:definingeq2} in general. It simplifies things considerably if we assume that $\U(\pi|\kappa,\mu,\nu) = \U(\pi|\kappa,\nu)$
 and $\Udiag(\pi|\kappa,\mu)=\Udiag(\pi|\kappa)$ (i.e., they do not depend on the partition which is being summed over in \eqref{eq:definingeq} and \eqref{eq:definingeq2}). 
In such a case, $\U(\pi|\kappa,\nu)$ and  $\Udiag(\pi|\kappa)$ factor out of the left-hand side of these equations and we can compute the summation over $\mu$ as
\begin{align}
\sum_{\mu} Q_{\kappa/\mu}(\rho_{i+1}) P_{\nu/\mu}(\rho_{j+1})
&=
\sum_{\pi} \frac{ P_{\pi/\kappa}(\rho_{j+1})Q_{\pi/\nu}(\rho_{i+1})}{\Pi(\rho_{j+1},\rho_{i+1})}, \\
\sum_{\mu} \frac{Q_{\kappa/\mu}(\rho_{i+1})  \ve_{\mu}(\rho_{\circ})}{\Pi(\bigcup_{e\in E^{\uparrow}(\pathh)} \rho_e; \rho_{i+1})}
&=\frac{\ve_{\kappa}(\rho_{\circ},\rho_{i+1})}{\Pi(\bigcup_{e\in E^{\uparrow}(\pathh)} \rho_e; \rho_{i+1})}.\label{eq:Udiagsimplif}
\end{align}
Thus, we may choose $\U_{i,j}(\pi|\kappa,\mu, \nu)$ as 
$$ \U_{i,j}(\pi|\kappa, \mu, \nu) = \U_{i,j}(\pi|\kappa,\nu)  =    \frac{ P_{\pi/\kappa}(\rho_{j+1})Q_{\pi/\nu}(\rho_{i+1})}{\sum_{\pi}  P_{\pi/\kappa}(\rho_{j+1})Q_{\pi/\nu}(\rho_{i+1}) }.$$
This transition operator was introduced in \cite[Section 2.3]{borodin2014macdonald} following an approach introduced in the work of Diaconis-Fill \cite{diaconis1990strong} for general Markov chains and developed by Borodin-Ferrari \cite{borodin2014anisotropic} in the Schur process case. The term \emph{push-block dynamics} comes from  \cite{borodin2014anisotropic} where the dynamics on Gelfand-Tsetlin patterns (that give a way of encoding a sequence of interlacing partitions), interpreted as particle system dynamics, are described using nearest neighbour interaction of particles with pushing and blocking mechanisms. At the Macdonald process level of generality, these push-block dynamics do not admit a particularly nice interpretation in terms of local moves on a Gelfand-Tsetlin pattern. In the special case of $t=0$ (Section \ref{sec:qWhittaker}), we will recall two other choices of dynamics from \cite{matveev2015q} which have nicer marginals. 

Using \eqref{eq:Udiagsimplif}, the operator $\Udiag_{i,i}$  may be chosen as 
$$ \Udiag_{i,i}(\pi\vert \kappa, \mu)  = \Udiag_{i,i}(\pi\vert \kappa) = \frac{P_{\pi/\kappa}(\rho_{i+1})\ve_{\pi}(\rho_{\circ})}{\ve_\kappa(\rho_{i+1}, \rho_{\circ})\Pi(\rho_{i+1}, \rho_{\circ})\Phi(\rho_{i+1})}.$$

\begin{remark}
The above construction also provides a way to grow usual Macdonald processes \cite{borodin2014macdonald}. In that case we would consider path starting from the horizontal axis and ending on the vertical axis, the growth mechanism would involve only the bulk transition operator $\U$, and the set of specializations $\left\lbrace \rho_i^{\mathrm h}\right\rbrace_{i\geqslant 1} $ on horizontal edges and $\left\lbrace \rho_i^{\mathrm v}\right\rbrace_{i\geqslant 1} $ on vertical edges can be different. 
\end{remark}
\begin{remark}
The degeneration when $q=t$ of these dynamics preserve the Pfaffian Schur process. They are studied in \cite{baik2018pfaffian} to justify that the first coordinates $ \la_1$ of random partitions in a Pfaffian Schur process have the same law as last passage times along a down-right path in a model of last passage percolation in a half quadrant. 
\end{remark}

\thispagestyle{plain}
 \section{Observables of half-space Macdonald processes}
\label{sec:observables}
 Consider positive variables $x_1,\ldots, x_n$ and a Macdonald nonnegative specialization $\rho$. The generalized Littlewood  summation identity \eqref{eq:CauchyLittlewoodspecialized}
$$
\sum_{\lambda\in \Y} P_{\lambda}(x_1,\ldots, x_n) \ve_{\lambda}(\rho) = \Pi(x_1,\ldots x_n;\rho) \Phi(x_1,\ldots, x_n)
$$
holds as a numeric identity under some assumptions on $x_1, \dots, x_n$ and $\rho$. 
Let us assume that we have a linear operator $\mathbf A_n$ which acts on $n$-variable symmetric functions and which is diagonal in the basis $\lbrace P_{\lambda}\rbrace$ of symmetric polynomials, with eigenvalues $\lbrace d_{\lambda}\rbrace$. Then, applying $\mathbf A_n$ to both sides of the above equality in the variables $x_1, \dots, x_n$ and subsequently dividing both sides by $\Pi(x_1,\ldots x_n;\rho) \Phi(x_1,\ldots, x_n)$ we find that
$$
\EPMM_{(x_1,\ldots,x_n),\rho}\big[d_{\lambda}]= \sum_{\lambda\in \Y} d_{\lambda} \frac{P_{\lambda}(x_1,\ldots, x_n) \ve_{\lambda}(\rho)}{\Pi(x_1,\ldots x_n;\rho) \Phi(x_1,\ldots, x_n)}  = \frac{ \mathbf A_n \ \Pi(x_1,\ldots x_n;\rho) \Phi(x_1,\ldots, x_n)}{ \Pi(x_1,\ldots x_n;\rho) \Phi(x_1,\ldots, x_n)}
$$
where the numerator on the right-hand side $\mathbf A_n \ \Pi(x_1,\ldots x_n;\rho) \Phi(x_1,\ldots, x_n)$ stands for the application of  $\mathbf A_n$ to the function $(x_1,\ldots,x_n)\mapsto \Pi(x_1,\ldots x_n;\rho) \Phi(x_1,\ldots, x_n)$ and then subsequent evaluation at the point $(x_1,\ldots,x_n)$. If there are many operators which are all mutually diagonalized by the $P_{\lambda}$ then applying them sequentially yields formulas for moments involving the products of their eigenfunctions, cf \cite[Section 2.2.3]{borodin2014macdonald} where this scheme was first realized in a similar context. Note that one must check the validity of exchanging the action of $\mathbf A_n$ with the summation over $\lambda$. 

\subsection{Macdonald difference operators}
\label{sec:differenceoperators}
\begin{definition}
	For any $u\in\R$ and $1\leqslant i\leqslant n$, define the \emph{shift operator} $\Tshift_{u,x_i}$ by
	\begin{equation*}
	(\Tshift_{u,x_i}F)(x_1,\dots,x_n)=F(x_1,\dots,ux_i,\dots,x_n),
	\end{equation*}
	and for any subset $I\subset\{1,\dots,n\}$ with $r$ elements, define
	\begin{equation*}
	A_I(x;t)=t^{\frac{r(r-1)}2}\prod_{i\in I,\,j\notin I}\frac{tx_i-x_j}{x_i-x_j}\,.
	\end{equation*}
	For $r=1,2,\ldots,n$, define the \emph{$r^{th}$ Macdonald difference operator}
	\begin{equation*}
	\DD_n^r=\sum_{\substack{I\subset\{1,\ldots,n\}\\|I|=r}} A_I(x;t)\prod_{i\in I} \Tshift_{q,x_i}.
	\end{equation*}
	Also, define a variant of the Macdonald difference operator
	\begin{equation*}
	\overline{\DD}_n^r = t^{-\frac{n(n-1)}{2}} \DD_n^{n-r} \Tshift_{q^{-1}},
	\end{equation*}
	where the operator $\Tshift_{q^{-1}}$ multiplies all variables by $q^{-1}$, $T_{q^{-1}} = T_{q^{-1}, x_1}, \dots, T_{q^{-1}, x_n}$.
\end{definition}

\begin{proposition}
	For any partition $\lambda$ with $\ell(\lambda)\leqslant  n$
	\begin{align}
	\DD_n^r P_\lambda(x_1,\dots,x_n)&=e_r(q^{\lambda_1}t^{n-1},q^{\lambda_2}t^{n-2},\dots,q^{\lambda_n}t^0) P_\lambda(x_1,\dots,x_n),\\
	\overline{\DD}_n^r P_{\lambda}(x_1,\ldots,x_n) &= t^{-\frac{n(n-1)}{2}} q^{-|\lambda|} e_{n-r}(q^{\lambda_1}t^{n-1},\ldots, q^{\lambda_n}t^0)P_{\lambda}(x_1,\ldots,x_n)\\
	&= e_r(q^{-\lambda_1}t^{1-n},q^{-\lambda_2}t^{2-n},\ldots,q^{-\lambda_n}t^0)P_{\lambda}(x_1,\ldots, x_n).
	\label{eq:eigenrelationD}
	\end{align}
	Here $e_r$ is the elementary symmetric function, $e_r(x_1,\ldots,x_n) = \sum\limits_{1\leqslant i_1<\cdots<i_r\leqslant n} x_{i_1}\cdots x_{i_r}$.
	\label{prop:Dreigenrelation}
\end{proposition}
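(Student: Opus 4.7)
The first eigenrelation for $\DD_n^r$ is a classical result of Macdonald (VI, (4.15) in \cite{macdonald1995symmetric}), so my plan is to cite it rather than reprove it; for completeness I would indicate the standard strategy, namely that one checks the operators $\DD_n^1,\ldots,\DD_n^n$ pairwise commute, that they preserve the finite-dimensional space of symmetric polynomials of a given total degree, that they are triangular with respect to the dominance order on partitions when applied to monomial symmetric functions $m_\lambda$, and that the diagonal entries are precisely the claimed eigenvalues $e_r(q^{\lambda_1}t^{n-1},\dots,q^{\lambda_n}t^0)$; since these eigenvalues separate distinct partitions of length $\leqslant n$, the joint eigenbasis characterization of the $P_\lambda$ forces the desired identity.

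For the second eigenrelation, the plan is to derive it directly from the first together with the definition $\overline{\DD}_n^r = t^{-n(n-1)/2}\, \DD_n^{n-r}\, \Tshift_{q^{-1}}$. First I would use homogeneity: since $P_\lambda$ is homogeneous of degree $|\lambda|$, the global rescaling operator $\Tshift_{q^{-1}}$ acts on $P_\lambda(x_1,\ldots,x_n)$ as multiplication by $q^{-|\lambda|}$. Applying the first eigenrelation with $r$ replaced by $n-r$ then gives
\begin{equation*}
\overline{\DD}_n^r P_\lambda \;=\; t^{-\frac{n(n-1)}{2}}\, q^{-|\lambda|}\, e_{n-r}\bigl(q^{\lambda_1}t^{n-1},\ldots,q^{\lambda_n}t^0\bigr)\, P_\lambda,
\end{equation*}
which is already the first of the two claimed forms.

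To match the second form, I would apply the elementary identity
\begin{equation*}
e_{n-r}(y_1,\ldots,y_n) \;=\; \Bigl(\prod_{i=1}^n y_i\Bigr)\, e_r\bigl(y_1^{-1},\ldots,y_n^{-1}\bigr),
\end{equation*}
with $y_i = q^{\lambda_i}t^{n-i}$. The product is $\prod_i y_i = q^{|\lambda|} t^{n(n-1)/2}$, so this pulls out exactly the factor $t^{n(n-1)/2} q^{|\lambda|}$ that cancels the prefactor $t^{-n(n-1)/2} q^{-|\lambda|}$, leaving $e_r(q^{-\lambda_1}t^{1-n},\ldots,q^{-\lambda_n}t^0)$ as required.

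The main obstacle, if any, is conceptual rather than computational: one has to make sure that $\Tshift_{q^{-1}}$ commutes appropriately with $\DD_n^{n-r}$ for the composite operator to be well-defined on $P_\lambda$ and yield a scalar multiple of $P_\lambda$. This is automatic here because $P_\lambda$ is an eigenvector of $\Tshift_{q^{-1}}$ (by homogeneity), so the scalar pops out cleanly before $\DD_n^{n-r}$ acts; no subtler argument is needed. The classical part of the proof is the only nontrivial step, and it is handled by the citation to Macdonald.
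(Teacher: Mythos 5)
Your proposal is correct and follows the same approach as the paper, which simply cites Macdonald's book (VI, (4.15)) for the first eigenrelation and then cites Remark 2.2.12 of Borodin--Corwin for the claim that the other two ``follow easily.'' Your write-up merely makes explicit what that remark has in mind: homogeneity gives $\Tshift_{q^{-1}} P_\lambda = q^{-|\lambda|}P_\lambda$, one applies the $\DD_n^{n-r}$ eigenrelation, and then the identity $e_{n-r}(y_1,\dots,y_n)=\bigl(\prod_i y_i\bigr)e_r(y_1^{-1},\dots,y_n^{-1})$ with $y_i=q^{\lambda_i}t^{n-i}$ (so $\prod_i y_i = q^{|\lambda|}t^{n(n-1)/2}$) converts between the two stated forms; all the bookkeeping checks out.
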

\begin{proof}
	The first identity is from \cite[VI(4.15)]{macdonald1995symmetric} and the next two follow easily, as explained in \cite[Remark 2.2.12]{borodin2014macdonald}.
\end{proof}

\begin{proposition}\label{prop:intfor}
	Fix $k\geqslant 1$ and consider functions $H(u_1,\ldots, u_n) = \prod_{i=1}^{n} h(u_i)$ and $\Phi(u_1,\ldots, u_n) = \prod_{i<j} \phi(u_iu_j)$, where $h(\cdot)$ and $\phi(\cdot)$ are univariate locally holomorphic functions as specified below. Then
	\begin{align*}
	\frac{\big((\DD^1_n)^k (H\Phi)\big)(x_1,\ldots, x_n)}{(H\Phi)(x_1,\ldots, x_n)} &= \frac{(t-1)^{-k}}{(2\pi \I)^k} \oint \cdots \oint \prod_{1\leqslant a<b\leqslant k} \frac{(tw_a-qw_b)(w_a-w_b)}{(w_a-qw_b)(tw_a-w_b)}\, \frac{\phi(q^2 w_aw_b)\phi(w_a w_b)}{\phi(qw_a w_b)^2} \\ 
	&\hspace {2.6cm} \times \prod_{m=1}^{k} \prod_{j=1}^{n}\bigg( \frac{tw_m-x_j}{w_m -x_j}\, \frac{\phi(q x_jw_m)}{\phi(x_j w_m)}\bigg) \, \frac{h(q w_m) \phi(w_m^2)}{h(w_m) \phi(q w_m^2)} \, \frac{dw_m}{w_m},
	\end{align*}
	and
	\begin{align*}
	\frac{\big((t^{n-1} \overline{\DD}^1_n)^k (H\Phi)\big)(x_1,\ldots, x_n)}{(H\Phi)(x_1,\ldots, x_n)} &= \frac{(t-1)^{-k}}{(2\pi \I)^k} \oint \cdots \oint \prod_{1\leqslant a<b\leqslant k} \frac{(tw_a-qw_b)(w_a-w_b)}{(w_a-qw_b)(tw_a-w_b)}\, \frac{\phi\big(\frac{1}{q^2 w_aw_b}\big)\phi\big(\frac{1}{w_a w_b}\big)}{\phi\big(\frac{1}{qw_a w_b}\big)^2} \\
	&\hspace {2.6cm}\times \prod_{m=1}^{k} \prod_{j=1}^{n}\bigg( \frac{1-tw_m x_j}{1 -w_m x_j}\, \frac{\phi\big(\frac{x_j}{q w_m}\big)}{\phi\big(\frac{x_j}{w_m}\big)}\bigg) \, \frac{h\big(\frac{1}{q w_m}\big) \phi\big(\frac{1}{w_m^2}\big)}{h\big(\frac{1}{w_m}\big) \phi\big(\frac{1}{q w_m^{2}}\big)} \, \frac{dw_m}{w_m}.
	\end{align*}
	There are some assumptions we must make on the contours and functions $h, \phi$ for the above equalities to hold. In the first formula, we assume that the (positively oriented) contour for $w_c$, $1\leqslant c\leqslant k$, contains $\{x_1,\ldots, x_n\}$ and the image of the $w_{c+1},\ldots, w_{k}$ contours multiplied by $q$, and does not contain any other singularities of the integrand. In the second formula, we assume that the (positively oriented) contour for $w_c$, $1\leqslant c\leqslant k$, contains $\{x_1^{-1},\ldots, x_n^{-1}\}$ and the image of the $w_{c+1},\ldots, w_{k}$ contours multiplied by $q$, and does not contain any other singularities of the integrand. In both cases, we also assume that $h$ and $\phi$ are holomorphic and nonzero on a suitably large complex neighborhood so as not to yield any singularities when the integrals are evaluated through residues by  shrinking the contours.
\end{proposition}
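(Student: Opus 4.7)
My plan is to prove both formulas by induction on $k$, exploiting the fact that $\DD^1_n$ (and its cousin $t^{n-1}\overline{\DD}^1_n$) is a finite sum of shift operators, and that the product structure $H(x)\Phi(x)=\prod_j h(x_j)\prod_{i<j}\phi(x_ix_j)$ behaves simply under single-variable shifts. The base case $k=1$ is established directly by residue calculus, and the inductive step proceeds by observing that the integrand of the $k$-fold formula, viewed as a function of $x$, retains the multiplicative shape $\tilde H(x;\vec w)\Phi(x)$ with $\Phi$ unchanged. Applying $\DD^1_n$ once more then invokes the $k=1$ identity again, and the extra cross-factors $\frac{(tw_a-qw_b)(w_a-w_b)}{(w_a-qw_b)(tw_a-w_b)}\,\frac{\phi(q^2w_aw_b)\phi(w_aw_b)}{\phi(qw_aw_b)^2}$ are precisely what is produced when the ratio $\tilde h(qw;\vec w)/\tilde h(w;\vec w)$ is formed.

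For the base case I would simply expand $\DD^1_n(H\Phi)/(H\Phi) = \sum_{i=1}^n \prod_{j\neq i}\frac{tx_i-x_j}{x_i-x_j}\cdot\frac{h(qx_i)}{h(x_i)}\prod_{j\neq i}\frac{\phi(qx_ix_j)}{\phi(x_ix_j)}$ and verify that the $k=1$ contour integral in the proposition recovers this sum via residues at $w=x_i$. The residue at $w=x_i$ of the factor $\frac{1}{(t-1)w}\cdot\frac{tw-x_i}{w-x_i}$ contributes exactly $1$, the remaining $\prod_{j\neq i}\frac{tw-x_j}{w-x_j}$ evaluates to the required Macdonald coefficient, and the internal $\phi(w^2)/\phi(qw^2)$ factor is arranged so that the $j=i$ terms of $\prod_j \phi(qwx_j)/\phi(wx_j)$ cancel cleanly, leaving exactly $\prod_{j\neq i}\phi(qx_ix_j)/\phi(x_ix_j)$. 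The hypothesis that $h,\phi$ are holomorphic on a large enough neighborhood ensures these are the only enclosed poles.

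For the inductive step, suppose the formula holds with $k$ integration variables $w_1,\dots,w_k$. Write the integrand as $\frac{1}{(t-1)^k(2\pi\I)^k}\prod_m dw_m/w_m\cdot \tilde H(x;\vec w)\cdot C(\vec w)$ where $\tilde h(x_j;\vec w)=\prod_m \frac{tw_m-x_j}{w_m-x_j}\frac{\phi(qx_jw_m)}{\phi(x_jw_m)}$ absorbs the $x$-dependence into a product $\tilde H=\prod_j\tilde h(x_j;\vec w)$, and $C(\vec w)$ contains the $x$-independent cross-terms and $\frac{h(qw_m)\phi(w_m^2)}{h(w_m)\phi(qw_m^2)}$ factors. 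Then $(\DD^1_n)^{k+1}(H\Phi)=\DD^1_n\big[(H\Phi)\cdot(k\text{-fold integrand})\big]$ may be computed by exchanging the order with $\DD^1_n$ (legitimate under the stated holomorphy/decay) and applying the $k=1$ formula with $h$ replaced by $\tilde h(\cdot;\vec w)$. The new variable, call it $w_{k+1}$, generates the factor $\tilde h(qw_{k+1};\vec w)/\tilde h(w_{k+1};\vec w)$, and a direct calculation shows this equals $\frac{h(qw_{k+1})}{h(w_{k+1})}\prod_{m=1}^k\frac{(tw_m-qw_{k+1})(w_m-w_{k+1})}{(w_m-qw_{k+1})(tw_m-w_{k+1})}\cdot\frac{\phi(q^2w_mw_{k+1})\phi(w_mw_{k+1})}{\phi(qw_mw_{k+1})^2}$, which after a sign rearrangement matches the $(k+1)$-th cross-term $\prod_{a<b=k+1}$ of the proposition. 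The nested contour condition (new contour enclosing the $x_j$'s, old contours now required to enclose $qw_{k+1}$ as well) follows from tracking which residues are being summed at each stage, which is the main technical subtlety.

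The second formula for $t^{n-1}\overline{\DD}^1_n$ is proved by the same scheme, with the analog of the base case being the explicit action $t^{n-1}\overline{\DD}^1_n(H\Phi)/(H\Phi)=\sum_k \prod_{i\neq k}\frac{tx_i-x_k}{x_i-x_k}\cdot\frac{h(q^{-1}x_k)}{h(x_k)}\prod_{i\neq k}\frac{\phi(q^{-1}x_kx_i)}{\phi(x_kx_i)}$, obtained from the identity $\overline{\DD}^1_n=t^{-n(n-1)/2}\DD^{n-1}_n\Tshift_{q^{-1}}$ after a direct simplification. This sum is then recognized as the residue sum of the proposed integral under the substitution that effectively implements $w\mapsto 1/w$, with poles now at $w=1/x_j$ instead of $w=x_j$; the inductive step proceeds identically with the roles of $q$ and $q^{-1}$ exchanged. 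The main obstacle in both cases is bookkeeping the cross-term poles and confirming that the contour-nesting condition exactly reflects the order in which shifts were applied, but no deep new idea is needed beyond the residue calculus and the observation that $\Phi$-structure is preserved under the induction.
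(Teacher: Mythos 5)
Your proposal is correct and follows essentially the same route as the paper: the paper proves the $k=1$ case by residue calculus using the multiplicative structure of $\Phi$, then iterates by observing that the $x$-dependent part of the $k$-fold integrand again has the form $H_1\Phi$ with $H_1=\prod_j h_1(x_j)$, which is precisely your $\tilde H\Phi$ observation. Your identification of the cross-factors arising from $\tilde h(qw_{k+1};\vec w)/\tilde h(w_{k+1};\vec w)$, the cancellation of the $j=i$ diagonal term against $\phi(w^2)/\phi(qw^2)$, and the analogous base case for $t^{n-1}\overline{\DD}^1_n$ (via $\overline{\DD}^1_n=t^{-n(n-1)/2}\DD^{n-1}_n\Tshift_{q^{-1}}$ with poles at $1/x_j$) all match the paper's argument.
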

\begin{proof}
	We prove the first equality (the second follows similarly) via the approach used in the proofs of \cite[Propositions 2.2.11, 2.2.14]{borodin2014macdonald} (which correspond to the special case when $\phi$ is assumed to be constant). Let us consider the effect of the $\phi$ terms. Observe that due to the multiplicative structure of $\Phi$, 
	$$
	\frac{(\Tshift_{q,x_j} \Phi)(x_1,\ldots, x_n)}{ \Phi(x_1,\ldots, x_n)} = \subs_{z=x_j} \frac{\phi(z^2)}{\phi(q z^2)} \prod_{i=1}^{n} \frac{\phi(q x_i z)}{\phi(x_i z)}
	$$
	where $\subs_{z=x}$ means the substitution of  $z=x$ into the expression that follows. Combining with the proof of \cite[Propositions 2.2.11]{borodin2014macdonald} we arrive at 
	$$\big(\DD^1_n (H\Phi)\big)(x_1,\ldots, x_n)  =  \frac{(t-1)^{-1}}{2\pi \I} \oint \prod_{j=1}^{n} \left(  \frac{tw_1-x_j}{w_1 -x_j}\, \frac{\phi(q x_jw_1)}{\phi(x_j w_1)} \right) \frac{h(q w_1) \phi(w_1^2)}{h(w_1) \phi(q w_1^2)} \, \frac{dw_1}{w_1}(H\Phi)(x_1,\ldots, x_n),  
	$$
	which is 
	the $k=1$ case of the proposition we seek to prove.
	
	Observe that inside the integral formula for $\big(\DD^1_n (H\Phi)\big)(x_1,\ldots, x_n)$, the variables $x_1,\ldots, x_n$ arise in the terms
	$$
	(H\Phi)(x_1,\ldots, x_n) \prod_{j=1}^{n} \frac{tw_1-x_j}{w_1-x_j}\, \frac{\phi(q x_j w_1)}{\phi(x_jw_1)},
	$$
	which is equal to $H_1 \Phi$ where $H_1(x_1,\ldots, x_n) := \prod_{i=1}^{N} h_1(x_i)$ with
	$$
	h_1(x) := h(x)\, \frac{tw_1-x}{w_1-x}\, \frac{\phi(q x w_1)}{\phi(x w_1)}.
	$$
	Applying $\DD^1_n$, we find that by linearity it can be taken into the integrand and it applies just to $H_1\Phi$ which introduces a second integral (appealing to the $k=1$ case we have already proved). We obtain 
	\begin{multline*}
\big((\DD^1_n)^2 (H\Phi)\big)(x_1,\ldots, x_n) = 
\frac{(t-1)^{-2}}{(2\pi \I)^2} \oint \oint  \frac{(tw_1-qw_2)(w_1-w_2)}{(w_1-qw_2)(tw_1-w_2)}   \frac{\phi(q^2 w_1w_2)\phi(w_1 w_2)}{\phi(qw_1 w_2)^2}  \\ \times 
\prod_{j=1}^{n} \left(\frac{tw_2-x_j}{w_2 -x_j}\, \frac{\phi(q x_jw_2)}{\phi(x_j w_2)}\right) 
\frac{h(q w_2) \phi(w_2^2)}{h(w_2) \phi(q w_2^2)} \, \frac{dw_2}{w_2} \\ \times 
	\prod_{j=1}^{n}\left( \frac{tw_1-x_j}{w_1 -x_j}\, \frac{\phi(q x_jw_1)}{\phi(x_j w_1)}\right)
	 \frac{h(q w_1) \phi(w_1^2)}{h(w_1) \phi(q w_1^2)} \, \frac{dw_1}{w_1} (H\Phi)(x_1,\ldots, x_n), 
	\end{multline*}
	where the contour for $w_1$ encircles the $x_i$'s and $q w_2$, and the contour for $w_2$ encircles  the $x_i$'s. 
	Now the variables $x_1, \dots, x_n$ arise inside the double integral  as $H_2 \Phi$, where $H_2(x_1, \dots, x_n) = \prod_{i=1}^n h_2(x_i)$ with 
	$$h_2(x)  = h(x) \frac{tw_2 - x}{w_2-x}\frac{tw_1 - x}{w_1-x}\frac{\phi(q x w_1)}{\phi(x w_1)}\frac{\phi(q x w_2)}{\phi(x w_2)}.$$ 
	Repeating $k$ times leads to the claimed formula.
\end{proof}
\begin{proposition}
	Fix $r\geqslant 1$, $H(u_1, \dots, u_n)= \prod_{i=1}^{n} h(u_i)$ and $\Phi(u_1,\ldots, u_n) = \prod_{i<j} \phi(u_iu_j)$, where $h$ and $\phi$ are holomorphic and nonzero on a suitably large complex neighborhood so as not to yield any singularities when both integrals below are evaluated through residues.  Then
	\begin{multline}
	\frac{\DD_n^r(H\Phi)(x_1, \dots, x_n)}{(H\Phi)(x_1, \dots, x_n)}= \frac{1}{ r!} \oint\frac{\mathrm{d}z_1}{2\I\pi}\cdots \oint\frac{\mathrm{d}z_r}{2\I\pi} \det\left[ \frac{1}{tz_k-z_l}\right]_{k,l=1}^r \prod_{j=1}^{r} \left( \frac{h(qz_j)}{h(z_j)} \prod_{m=1}^n \frac{tz_j-x_m}{z_j-x_m}\right)\\ \times 
	\prod_{1\leqslant i<j\leqslant r} \frac{\phi(q^2z_iz_j)}{\phi(z_iz_j)} \prod_{i=1}^n\prod_{j=1}^{r} \frac{\phi(qz_j x_i)}{\phi(z_j x_i)} \prod_{i,j=1}^r \frac{\phi(z_iz_j)}{\phi(qz_iz_j)}.
	\end{multline}
	where the contours are positively oriented contours encircling $\lbrace x_1, \dots, x_n\rbrace$ and no other singularity of the integrand. 
	
	We also have 
	\begin{multline}
	\frac{\overline{\DD}_n^r(H\Phi)(x_1, \dots, x_n)}{(H\Phi)(x_1, \dots, x_n)}= \frac{1}{ r!} \oint\frac{\mathrm{d}w_1}{2\I\pi}\cdots \oint\frac{\mathrm{d}w_r}{2\I\pi} \det\left[ \frac{1}{tw_k-w_l}\right]_{k,l=1}^r \prod_{j=1}^{r} \left( \frac{h((qw_j)^{-1})}{h(w_j^{-1})} \prod_{m=1}^n \frac{tw_j-x_m^{-1}}{w_j-x_m^{-1}}\right)\\ \times 
	\prod_{1\leqslant i<j\leqslant r} \frac{\phi((q^2w_iw_j)^{-1})}{\phi((w_iw_j)^{-1})} \prod_{i=1}^n\prod_{j=1}^{r} \frac{\phi(x_i/(qw_j))}{\phi( x_i/w_j)} \prod_{i,j=1}^r \frac{\phi((w_iw_j)^{-1})}{\phi((qw_iw_j)^{-1})}.
	\end{multline}
     where the contours are positively oriented circles encircling $\lbrace 1/x_1, \dots, 1/x_n\rbrace$ and no other singularity.
	\label{prop:highermacdo}
\end{proposition}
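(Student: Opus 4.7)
The strategy is to reduce both formulas to a single residue identity that converts a sum over $r$-element subsets of $\{1,\dots,n\}$ into an $r$-fold contour integral, generalizing the $r=1$ computation from the proof of Proposition \ref{prop:intfor}. I treat $\DD_n^r$ in detail; $\overline{\DD}_n^r$ follows by the same method together with a change of variables $z_j\mapsto 1/w_j$.

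First, I expand $\DD_n^r(H\Phi)/(H\Phi)$ via the definition $\DD_n^r=\sum_{|I|=r}A_I(x;t)\prod_{i\in I}\Tshift_{q,x_i}$. Because $H(u)=\prod_i h(u_i)$ is multiplicative, $\prod_{i\in I}\Tshift_{q,x_i}H/H=\prod_{i\in I}h(qx_i)/h(x_i)$. For $\Phi(u)=\prod_{i<j}\phi(u_iu_j)$, each pair $(i,j)$ contributes $\phi(q^2x_ix_j)/\phi(x_ix_j)$ when $\{i,j\}\subset I$, $\phi(qx_ix_j)/\phi(x_ix_j)$ when exactly one index lies in $I$, and $1$ otherwise. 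Rewriting the ``exactly one'' cross-terms as
$$\prod_{i\in I,\,j\notin I}\frac{\phi(qx_ix_j)}{\phi(x_ix_j)}=\prod_{i\in I}\prod_{m=1}^n\frac{\phi(qx_ix_m)}{\phi(x_ix_m)}\cdot\prod_{i,j\in I}\frac{\phi(x_ix_j)}{\phi(qx_ix_j)}$$
packages everything depending only on $\{x_i\}_{i\in I}$ symmetrically, giving $\DD_n^r(H\Phi)/(H\Phi)=\sum_{|I|=r}A_I(x;t)\,G(\{x_i\}_{i\in I})$ for an explicit symmetric function $G$ in $r$ variables, holomorphic near $\{x_1,\dots,x_n\}$ by the hypotheses on $h$ and $\phi$.

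Second, I invoke the residue identity
$$\sum_{|I|=r}A_I(x;t)\,G(\{x_i\}_{i\in I})=\frac{1}{r!}\oint\!\cdots\!\oint\det\!\Big[\tfrac{1}{tz_k-z_l}\Big]_{k,l=1}^r\prod_{j=1}^r\prod_{m=1}^n\frac{tz_j-x_m}{z_j-x_m}\,G(z_1,\dots,z_r)\prod_{j=1}^r\frac{dz_j}{2\pi\I},$$
valid for any symmetric $G$ holomorphic near $\{x_1,\dots,x_n\}$, with positively oriented contours enclosing all $x_m$ and no other singularities. To prove it, shrink the contours to residues at $z_j=x_m$: by the Cauchy determinant formula $\det[1/(tz_k-z_l)]=(-1)^{\binom{r}{2}}t^{\binom{r}{2}}\prod_{k<l}(z_k-z_l)^2/\prod_{k,l}(tz_k-z_l)$, the squared Vandermonde in the numerator kills every configuration in which two $z_j$'s coincide, so only injective assignments $z_j\mapsto x_{\sigma(j)}$ contribute; the $r!$ permutations of each $r$-element subset absorb the prefactor $1/r!$, and a direct computation shows that the pole residues $(t-1)x_{\sigma(j)}$ cancel the diagonal factors $(tz_k-z_k)=(t-1)z_k$ in the Cauchy-determinant denominator, while the remaining off-diagonal terms recombine with $\prod_{i\in I,j\notin I}(tx_i-x_j)/(x_i-x_j)\cdot t^{\binom{r}{2}}=A_I(x;t)$ to produce $A_I(x;t)G(x_I)$. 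Substituting the explicit $G$ from the first step and using the symmetrization identity
$$\prod_{i,j=1}^r\frac{\phi(z_iz_j)}{\phi(qz_iz_j)}=\prod_{i=1}^r\frac{\phi(z_i^2)}{\phi(qz_i^2)}\cdot\prod_{1\leqslant i<j\leqslant r}\Big(\frac{\phi(z_iz_j)}{\phi(qz_iz_j)}\Big)^{\!2}$$
to recombine the $\phi$-factors into the three products $\prod_{i<j}\phi(q^2z_iz_j)/\phi(z_iz_j)$, $\prod_{i,j}\phi(z_iz_j)/\phi(qz_iz_j)$, and $\prod_{i,j}\phi(qz_jx_i)/\phi(z_jx_i)$ yields the first asserted formula.

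For $\overline{\DD}_n^r$, the same expansion and residue argument apply with $\Tshift_{q,x_i}$ replaced by $\Tshift_{q^{-1},x_i}$, using the eigenvalue relation \eqref{eq:eigenrelationD} of Proposition \ref{prop:Dreigenrelation}; a final substitution $z_j\mapsto 1/w_j$ sends residues at $x_m$ to residues at $1/x_m$ and converts the factors $\phi(qx_ix_j)$ into $\phi(x_i/(qw_j))$ and the $\phi(z_iz_j)$ into $\phi(1/(w_iw_j))$, reproducing the stated integrand. The main obstacle is purely combinatorial: once the residue identity of step two is established, the remaining work is the careful bookkeeping needed to match the product of $\phi$-ratios emerging from the expansion against the specific arrangement in the statement.
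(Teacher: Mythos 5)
Your proof is correct and follows essentially the same route as the paper. Your Step~1 expansion of $\prod_{i\in I}\Tshift_{q,x_i}(H\Phi)/(H\Phi)$ is literally the substitution observation the paper makes (the single displayed equation in its proof), and your Step~2 residue identity is precisely the content of \cite[Proposition~2.2.11]{borodin2014macdonald} that the paper appeals to, evaluated via shrinking the contours to the residues at $x_1,\dots,x_n$. You supply the details of the Cauchy-determinant bookkeeping that the paper leaves implicit, which is helpful but does not change the argument. One small inaccuracy: for the $\overline{\DD}_n^r$ formula, the relevant fact is not the eigenvalue relation \eqref{eq:eigenrelationD} of Proposition~\ref{prop:Dreigenrelation} (that concerns the action on Macdonald polynomials), but rather the algebraic identity $\overline{\DD}_n^r=t^{-n(n-1)/2}\DD_n^{n-r}\Tshift_{q^{-1}}$ together with the observation that re-indexing by the complement subset re-expresses $\overline{\DD}_n^r$ as a sum over $r$-subsets $J$ with coefficients $A_J(1/x;t)$ times $\prod_{j\in J}\Tshift_{q^{-1},x_j}$ (the $t$-powers rearrange after passing to $y_i=1/x_i$); this is what licenses applying the same residue identity in the inverted variables and taking residues at $1/x_m$.
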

\begin{proof}
	Notice that for $\vert I\vert =r$, one has that 
	$$ \prod_{i\in I} \frac{\Tshift_{q, x_i} \Phi}{\Phi}(x_1, \dots, x_n) = \underset{\substack{
		z_1 = x_{i_1}\\ \dots \\ z_r = x_{i_r}} }{\subs} \left[ \prod_{1\leqslant <i<j\leqslant r} \frac{\phi(q^2z_iz_j)}{\phi(z_iz_j)} \prod_{i=1}^n\prod_{j=1}^{r} \frac{\phi(qz_j x_i)}{\phi(z_j x_i)} \prod_{i,j=1}^r \frac{\phi(z_iz_j)}{\phi(qz_iz_j)}\right].
	$$
	Then, one proceeds as in Proposition 2.2.11 in \cite{borodin2014macdonald} evaluating the  integral through the residues at $ x_1, \dots, x_n $. 
\end{proof}

\subsection{Moment integral formulas}
\label{sec:generalmomentsformulas}
In this section we consider the half-space Macdonald measure with specializations $\rhoup = (a_1, \dots, a_n)$, where $a_i\in (0,1)$ and $\rhodiag = \rho$ where $\rho$ is a Macdonald non-negative specialization such that  $\Pi\big( \vec a ;\rho\big)\Phi(\vec{a})$ can be expanded via the generalized Littlewood identity \eqref{eq:CauchyLittlewoodspecialized}.
\begin{proposition} For any positive integer $k$, 
		\begin{multline}
		\EPMM_{(a_1,\ldots, a_n),\rho}\Big[\big(q^{\lambda_1}t^{n-1} + q^{\lambda_2} t^{n-2} + \cdots + q^{\lambda_n}\big)^k\Big]  \\  = 
 \frac{1}{(t-1)^k} \oint\frac{\mathrm{d}w_1}{2\I\pi} \cdots \oint\frac{\mathrm{d}w_k}{2\I\pi} \prod_{1\leqslant a<b\leqslant k} \frac{(tw_a-qw_b)(w_a-w_b)}{(w_a-qw_b)(tw_a-w_b)}\, \frac{(1-q w_aw_b)(1-t w_a w_b)}{(1-q t w_a w_b)(1-w_aw_b)} \\ \times
		\prod_{m=1}^{k} \bigg(\prod_{j=1}^{n} \frac{tw_m-a_j}{w_m -a_j}\, \frac{1- a_jw_m}{1-t a_j w_m}\bigg) \, \frac{\Pi(q w_m;\rho) (1-tw_m^2)}{\Pi(w_m;\rho) (1-w_m^2)} \, \frac{1}{w_m},
		\label{eq:momentsMacdonald1}
		\end{multline}
		where the contour for $w_c$, $1\leqslant c\leqslant k$, contains $\{a_1,\ldots, a_n\}$ and the image of the $w_{c+1},\ldots, w_{k}$ contours multiplied by $q$, and does not contain any other singularities of the integrand (this may restrict the choice of admissible specializations $\rho$). 

Similarly, assume that $\rho$ and $k$ are chosen so that $\Pi\big(q^{-k} \vec a;\rho\big)\Phi(q^{-k} \vec{a})$ can be expanded via the generalized Littlewood identity \eqref{eq:CauchyLittlewoodspecialized}. Then we also have 
		\begin{multline}
		\EPMM_{(a_1,\ldots, a_n),\rho}\Big[\big(q^{-\lambda_1} + q^{-\lambda_2} t+ \cdots + q^{-\lambda_n} t^{n-1}\big)^k\Big]   \\ =
\frac{1}{(t-1)^k} \oint\frac{\mathrm{d}w_1}{2\I\pi} \cdots \oint\frac{\mathrm{d}w_k}{2\I\pi} \prod_{1\leqslant a<b\leqslant k} \frac{(tw_a-qw_b)(w_a-w_b)}{(w_a-qw_b)(tw_a-w_b)}\, \frac{(q^2 w_aw_b-t)(qw_a w_b-1)}{(q^2 w_a w_b-1)(qw_aw_b-t)}\\
	\times \prod_{m=1}^{k} \bigg(\prod_{j=1}^{n} \frac{1-tw_m a_j}{1 -w_m a_j}\, \frac{qw_m - t a_j}{q w_m - a_j}\bigg) \, \frac{\Pi\big((q w_m)^{-1};\rho\big) (q w_m^2-1)}{\Pi\big((w_m)^{-1};\rho\big) (qw_m^2-t)} \, \frac{1}{w_m},
		\label{eq:momentsMacdonald2}
		\end{multline}
		where the contour for $w_c$, $1\leqslant c\leqslant k$, contains $\{1/a_1,\ldots, 1/a_n\}$ and the image of the $w_{c+1},\ldots, w_{k}$ contours multiplied by $q$, and does not contain any other singularities of the integrand  (this may restrict the choice of admissible specializations $\rho$). All contours above are positively oriented. 
	\label{prop:momentsMacdonald}
\end{proposition}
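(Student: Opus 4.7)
The strategy is the operator scheme \eqref{eq:computationobservables} applied to the two Macdonald difference operators from Section \ref{sec:differenceoperators}. The key observation is that with
$$h(u) = \Pi(u;\rho), \qquad \phi(u)=\frac{(tu;q)_\infty}{(u;q)_\infty},$$
the right-hand side of the generalized Littlewood identity \eqref{eq:CauchyLittlewoodspecialized} is exactly $H(x_1,\ldots,x_n)\Phi(x_1,\ldots,x_n)$ in the notation of Proposition \ref{prop:intfor}, so Proposition \ref{prop:intfor} produces a $k$-fold contour integral representing the action of $(\DD^1_n)^k$ (respectively $(t^{n-1}\overline{\DD}^1_n)^k$) on that right-hand side.

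For \eqref{eq:momentsMacdonald1}: apply $(\DD^1_n)^k$ to both sides of \eqref{eq:CauchyLittlewoodspecialized} specialized at $\vec{a}=(a_1,\dots,a_n)$ and $\rho$. On the left, exchanging the operator with the sum over $\lambda$ (see below for justification) and invoking the eigenrelation $\DD_n^1 P_\lambda=\big(\sum_i q^{\lambda_i}t^{n-i}\big)P_\lambda$ from Proposition \ref{prop:Dreigenrelation} gives the unnormalized expectation; dividing by $\Pi(\vec a;\rho)\Phi(\vec a)$ yields the moment on the LHS of \eqref{eq:momentsMacdonald1}. On the right, the integrand of Proposition \ref{prop:intfor} is rewritten using the elementary telescoping identity
$$\frac{\phi(qu)}{\phi(u)}=\frac{1-u}{1-tu},$$
which immediately gives
$$\frac{\phi(qx_jw_m)}{\phi(x_jw_m)}=\frac{1-x_jw_m}{1-tx_jw_m},\qquad \frac{\phi(w_m^2)}{\phi(qw_m^2)}=\frac{1-tw_m^2}{1-w_m^2},$$
and, applying the identity twice,
$$\frac{\phi(q^2w_aw_b)\phi(w_aw_b)}{\phi(qw_aw_b)^2}=\frac{(1-qw_aw_b)(1-tw_aw_b)}{(1-qtw_aw_b)(1-w_aw_b)}.$$
Combined with $h(qw)/h(w)=\Pi(qw;\rho)/\Pi(w;\rho)$ this is exactly the integrand of \eqref{eq:momentsMacdonald1}.

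For \eqref{eq:momentsMacdonald2}: the argument is identical, using $(t^{n-1}\overline{\DD}^1_n)^k$ and the second formula of Proposition \ref{prop:intfor}. The eigenrelation from Proposition \ref{prop:Dreigenrelation}, multiplied by $t^{n-1}$, produces precisely the observable $\big(q^{-\lambda_1}+q^{-\lambda_2}t+\cdots+q^{-\lambda_n}t^{n-1}\big)^k$. The $\phi$-ratios at reciprocal arguments are simplified using the companion identity
$$\frac{\phi(u/q)}{\phi(u)}=\frac{q-tu}{q-u},$$
after which a direct calculation matches the integrand of \eqref{eq:momentsMacdonald2}.

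The main (and only) genuine obstacle is justifying the interchange of the operator with the infinite sum $\sum_\lambda P_\lambda(\vec a)\,\ve_\lambda(\rho)$. Because $\DD_n^1$ and $\overline{\DD}_n^1$ are built from the shift operators $\Tshift_{q,x_i}$ and $\Tshift_{q^{-1},x_i}$ together with rational multipliers, termwise application is legitimate as soon as the summation in \eqref{eq:CauchyLittlewoodspecialized} converges absolutely at the shifted arguments. For $\DD_n^1$, the shifts contract the $a_i$'s toward $0$ and this is automatic from the assumption that $\Pi(\vec a;\rho)\Phi(\vec a)<\infty$. For $\overline{\DD}_n^1$, after $k$ iterations the arguments become $q^{-k}\vec a$, and absolute convergence at that point is precisely the hypothesis built into the statement that $\Pi(q^{-k}\vec a;\rho)\Phi(q^{-k}\vec a)$ can still be expanded via \eqref{eq:CauchyLittlewoodspecialized}. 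These conditions mirror the arguments used in the full-space case in \cite{borodin2014macdonald} and suffice to make the exchange rigorous.
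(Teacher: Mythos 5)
Your proof is correct and follows essentially the same route as the paper: plug $h=\Pi(\cdot;\rho)$ and $\phi=(t\cdot;q)_\infty/(\cdot;q)_\infty$ into Proposition~\ref{prop:intfor}, relate the left-hand side to the desired expectations via the eigenrelations of Proposition~\ref{prop:Dreigenrelation}, and control the interchange of operator and sum by the stated convergence hypotheses (with $\Pi(q^{-k}\vec a;\rho)\Phi(q^{-k}\vec a)<\infty$ needed because $\overline{\DD}^1_n$ moves an argument outward by $q^{-1}$ at each step). You spell out the elementary $\phi$-ratio telescoping identities and the Fubini justification, which the paper leaves terse, but there is no difference in method.
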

\begin{proof}
	Plug in $h(u) = \Pi(u;\rho)$ and $\phi(u) = \frac{(tu;q)_{\infty}}{(u;q)_{\infty}}$ in Proposition \ref{prop:intfor}. This yields integral formulas for $\frac{\big( \DD_n^1 \big)^k \ \Pi(\vec a, \rho)\Phi(\vec a) }{\Pi(\vec a, \rho)\Phi(\vec a)}$ and $\frac{\big( \overline{\DD}_n^1 \big)^k \ \Pi(\vec a, \rho)\Phi(\vec a) }{\Pi(\vec a, \rho)\Phi(\vec a)}$, where the operators act on the variables $a_i$. These quantities are related to the desired expectations by Proposition \ref{prop:Dreigenrelation}. In \eqref{eq:momentsMacdonald2}, we need the extra assumptions on $\rho$ that $\Pi\big(q^{-k} \vec a;\rho\big)\Phi(q^{-k} \vec{a})$ can be expanded via the generalized Littlewood identity \eqref{eq:CauchyLittlewoodspecialized} to ensure that one can commute the action of the operator $\big(\overline{\DD}_n^1 \big)^k$ with the expectation $\EPMM_{(a_1,\ldots, a_n),\rho}$ and one obtains using Fubini theorem that  
	$$ \frac{\big( \overline{\DD}_n^1 \big)^k \ \Pi(\vec a, \rho)\Phi(\vec a) }{\Pi(\vec a, \rho)\Phi(\vec a)} = \EPMM_{(a_1,\ldots, a_n),\rho}\Big[\big(q^{-\lambda_1} + q^{-\lambda_2} t+ \cdots + q^{-\lambda_n} t^{n-1}\big)^k\Big] .$$
\end{proof}
\begin{remark}
	If $\rho$ is of the form $\rho=\rho(\alpha, \beta, \gamma)$ as in Section \ref{sec:specializations}, the hypothesis that $\Pi\big(q^{-k} \vec a);\rho\big)\Phi(q^{-k} \vec{a})$ can be expanded via the generalized Littlewood identity \eqref{eq:CauchyLittlewoodspecialized} is equivalent to the fact  that $ \max_i\lbrace \alpha_i \rbrace \max_i\lbrace a_i \rbrace <q^k$ and $ \big( \max_i\lbrace a_i \rbrace  \big)^2<q^k$. Otherwise, the expectation in the left-hand-side in \eqref{eq:momentsMacdonald2} would fail to exist (see Section \ref{sec:specializations}). 
\end{remark}

\begin{proposition}
	Let $r$ be a positive integer.  We have 
			\begin{multline} 
		\EPMM_{(a_1,\ldots, a_n),\rho}\Big[e_r\big(q^{\lambda_1}t^{n-1} ,  q^{\lambda_2} t^{n-2} ,  \cdots,  q^{\lambda_n}\big)\Big]   \\ =
		\frac{1}{r!} \oint\frac{\mathrm{d}z_1}{2\I\pi}\cdots \oint\frac{\mathrm{d}z_r}{2\I\pi} \det\left[ \frac{1}{tz_k-z_l}\right]_{k,l=1}^r \prod_{j=1}^{r} \left( \frac{\Pi(qz_j ; \rho)}{\Pi(z_j ; \rho)} \prod_{i=1}^n \left( \frac{tz_j-a_i}{z_j-a_i}  \frac{1-z_ja_i}{1-tz_ja_i}\right)\right)\\ \times  
		\prod_{i=1}^r \frac{1-tz_i^2}{1-z_i^2}
		\prod_{1\leqslant i<j\leqslant r} \frac{1-qz_iz_j}{1-tq z_iz_j}\frac{1-tz_iz_j}{1-z_iz_j},
		\end{multline}
		where the contours are positively oriented closed curves around $\lbrace a_1, \dots, a_n\rbrace$ and no other singularity of the integrand. 

Moreover, assume that $\rho$  and $r$ are chosen so that $\Pi\big(q^{-r} \vec{a};\rho\big)\Phi(q^{-r} \vec{a})$ can be expanded via the generalized Littlewood identity \eqref{eq:CauchyLittlewoodspecialized}. Then,
		\begin{multline}
		\EPMM_{(a_1,\ldots, a_n),\rho}\Big[e_r\big(q^{-\lambda_1}t^{1-n} ,  q^{-\lambda_2} t^{2-n} ,  \cdots,  q^{-\lambda_n}\big)\Big]   \\ =
		\frac{1}{r!} \oint\frac{\mathrm{d}w_1}{2\I\pi}\cdots \oint\frac{\mathrm{d}w_r}{2\I\pi} \det\left[ \frac{1}{tw_k-w_l}\right]_{k,l=1}^r \prod_{j=1}^{r} \left( \frac{\Pi((qw_j)^{-1}; \rho)}{\Pi(w_j^{-1}; \rho)} \prod_{i=1}^n\left(  \frac{1-a_itw_j}{1-a_i w_j}\frac{qw_j-ta_i}{qw_j-a_i}\right)\right)\\ \times
		\prod_{i=1}^r \frac{qw_i^2-1}{qw_i^2-t}   \prod_{1\leqslant i<j\leqslant r}
		\frac{(q^2w_iw_j-t)(qw_iw_j-1)}{(q^2w_iw_j-1)(qw_iw_j-t)},
		\end{multline}
		where the contours are positively oriented closed curves around $\lbrace 1/a_1, \dots, 1/a_n\rbrace$ and no other singularity.
	\label{prop:Dr}
\end{proposition}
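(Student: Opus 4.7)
The plan is to apply the Macdonald difference operators $\DD_n^r$ and $\overline{\DD}_n^r$ (acting in the variables $a_1,\dots,a_n$) to the generalized Littlewood identity
\begin{equation*}
\sum_{\lambda\in\Y} P_\lambda(a_1,\dots,a_n)\,\ve_\lambda(\rho) \;=\; \Pi(\vec a;\rho)\,\Phi(\vec a),
\end{equation*}
and invoke Proposition \ref{prop:highermacdo} with the specific choice $h(u)=\Pi(u;\rho)$ and $\phi(u)=(tu;q)_\infty/(u;q)_\infty$, so that $H(\vec a)=\Pi(\vec a;\rho)$ and the $\Phi$ of that proposition coincides with ours. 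By the eigenvalue relations from Proposition \ref{prop:Dreigenrelation}, the left-hand side of the identity, after termwise application of $\DD_n^r$, equals $\sum_\lambda e_r(q^{\lambda_1}t^{n-1},\dots,q^{\lambda_n})\,P_\lambda(\vec a)\,\ve_\lambda(\rho)$, and similarly for $\overline{\DD}_n^r$ with eigenvalue $e_r(q^{-\lambda_1}t^{1-n},\dots,q^{-\lambda_n})$. Dividing both sides by the normalizing constant $\Pi(\vec a;\rho)\Phi(\vec a)$ converts the weighted sums into $\EPMM_{(a_1,\dots,a_n),\rho}$-expectations, giving the two formulas of the statement with the right-hand sides read directly off Proposition \ref{prop:highermacdo}.

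The critical step is justifying that the difference operator may be commuted past the infinite sum over $\lambda$. For $\DD_n^r=\sum_{|I|=r} A_I(\vec a;t)\prod_{i\in I}\Tshift_{q,a_i}$, this is a \emph{finite} linear combination of multiplicative shifts that send $a_i\in(0,1)$ to $qa_i\in(0,1)$, so absolute convergence of the Littlewood series is preserved and Fubini applies without additional assumptions. For $\overline{\DD}_n^r$, the extra factor $\Tshift_{q^{-1}}$ scales all variables by $q^{-1}$, so after $r$ compositions one is effectively evaluating the series at $q^{-r}\vec a$; this is exactly why the hypothesis that $\Pi(q^{-r}\vec a;\rho)\Phi(q^{-r}\vec a)$ admits a Littlewood expansion is imposed, and under this assumption the same interchange is legitimate.

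What remains is purely bookkeeping to match the integrand of Proposition \ref{prop:highermacdo} to the one stated. Substituting $h(u)=\Pi(u;\rho)$ produces the single-variable factor $\Pi(qw_m;\rho)/\Pi(w_m;\rho)$ (resp.\ with $w_m\mapsto 1/w_m$ in the $\overline{\DD}$-case), the product $\prod_{i,j}\phi(qx_iw_j)/\phi(x_iw_j)$ gives the factors $(1-a_iw_j)/(1-ta_iw_j)$ etc., and the interaction factors $\phi(q^2w_aw_b)\phi(w_aw_b)/\phi(qw_aw_b)^2$ combine with the rational prefactors $(tw_a-qw_b)(w_a-w_b)/((w_a-qw_b)(tw_a-w_b))$ from Proposition \ref{prop:highermacdo} to yield, after antisymmetrization over the $r$ integration variables, the determinant $\det[1/(tz_k-z_l)]_{k,l=1}^r$ times the symmetric cross terms written in the claim. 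The algebraic identity needed here is the Cauchy-type determinantal evaluation
\begin{equation*}
\det\!\left[\frac{1}{tz_k-z_l}\right]_{k,l=1}^r \;=\; \frac{1}{\prod_k(t-1)z_k}\prod_{k<l}\frac{(z_k-z_l)(tz_k-tz_l)}{(tz_k-z_l)(tz_l-z_k)},
\end{equation*}
which absorbs the Vandermonde-like prefactor in Proposition \ref{prop:highermacdo} after symmetrization.

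The main obstacle is entirely the bookkeeping in the $\overline{\DD}_n^r$ case: one has to track the substitution $w_j\leftrightarrow w_j^{-1}$ inherent to Proposition \ref{prop:highermacdo}'s second formula so that contours around $\{1/a_i\}$ match the claimed configuration, and verify that the $t^{-n(n-1)/2}$ prefactor in the definition of $\overline{\DD}_n^r$ is correctly absorbed into the eigenvalue $e_r(q^{-\lambda_1}t^{1-n},\dots,q^{-\lambda_n})$ via the identity displayed in \eqref{eq:eigenrelationD}. Once these identifications are in place, no further analysis is required beyond the contour-deformation hypotheses of Proposition \ref{prop:highermacdo}, which amount to requiring that $\rho$ is such that $\Pi$ does not introduce singularities between the $a_i$'s (resp.\ $1/a_i$'s) and infinity in the relevant direction.
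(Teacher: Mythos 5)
Your proposal takes exactly the same route as the paper: apply $\DD_n^r$ and $\overline{\DD}_n^r$ to the generalized Littlewood identity, invoke Proposition~\ref{prop:Dreigenrelation} for the eigenvalues, and read the contour-integral representation off Proposition~\ref{prop:highermacdo} with $h(u)=\Pi(u;\rho)$, $\phi(u)=(tu;q)_\infty/(u;q)_\infty$ (the paper merely says ``similar to Proposition~\ref{prop:momentsMacdonald}, using Proposition~\ref{prop:highermacdo}''). One small inaccuracy: the determinant $\det[1/(tz_k-z_l)]$ already appears as such in the statement of Proposition~\ref{prop:highermacdo}, so no separate Cauchy determinant evaluation or symmetrization step is needed at that point; otherwise the argument is correct.
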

\begin{proof}
	The proof is similar as the proof of Proposition \ref{prop:momentsMacdonald}, using Proposition \ref{prop:highermacdo} instead of Proposition \ref{prop:intfor}.
\end{proof}

\subsection{Noumi's $q$-integral operator and variants}
\label{sec:Noumi}
In order to lighten some notations, we set in this Section $ (a)_n=(a;q)_{n} $ for any $n\in \Z_{\geqslant 0}\cup\lbrace \infty\rbrace$,  and we will use the function $\phi(x) = (tx)_{\infty}/(x)_{\infty}$ from \eqref{eq:PI} and the function $f(x) = (tx)_{\infty}/(qx)_{\infty}$ from \eqref{eq:defphipsi}.

Define the operator $\Noumi_n^z$ acting on analytic functions in $x_1, \dots, x_n$ by 
$$ \Noumi_n^z = \sum_{\eta_1, \dots, \eta_n=0}^{\infty} z^{\vert \eta\vert } h_{\eta}(x;q,t) \prod_{i=1}^n \big(\Tshift_{q, x_i}\big)^{\eta_i},$$
where $\vert \eta\vert  = \eta_1+\dots+\eta_n$ and 
$$ h_{\eta}(x;q,t) = \prod_{i=1}^n t^{(i-1)\eta_i}\frac{(t)_{\infty}(q^{\eta_i+1})_{\infty}}{(tq^{\eta_i})_{\infty}(q)_{\infty}}\prod_{i<j} \frac{(q^{\eta_i-\eta_j}x_i/x_j)_{\infty}(q^{1-\eta_j}t^{-1}x_i/x_j)_{\infty}(tx_i/x_j)_{\infty}(q^{1+\eta_i}x_i/x_j)_{\infty}}{(q^{-\eta_j}x_i/x_j)_{\infty}(qt^{-1}x_i/x_j)_{\infty}(tq^{\eta_i}x_i/x_j)_{\infty}(q^{1+\eta_i-\eta_j}x_i/x_j)_{\infty}}. 
$$ 
Proposition 2.2.17 in \cite{borodin2014macdonald} states\footnote{More precisely, \cite[Proposition 2.2.17]{borodin2014macdonald} states the identity as a formal power series in $z$ and one can see that the R.H.S. is a convergent series when $\vert z\vert <1$ using the $q$-Binomial theorem \eqref{eq:qbinomial}.} that for $\vert z\vert <1$, 
\begin{equation}
\Noumi_n^z P_{\la}(x)=\prod_{i=1}^n\frac{(q^{\la_i}t^{n-i+1}z)_{\infty}}{(q^{\la_i}t^{n-i}z)_{\infty}}P_{\la}(x).
\label{eq:Noumieigenrelation}
\end{equation}

\begin{proposition}
	The operator $\Noumi^z$ can be rewritten as 
	$$\Noumi_n^z =\sum_{\eta_1, \dots, \eta_n=0}^{\infty} z^{\vert \eta\vert }\prod_{i<j} \frac{q^{\eta_j}x_j - q^{\eta_i}x_i}{x_j-x_i} \prod_{i,j} \frac{(tx_i/x_j)_{\eta_i}}{(qx_i/x_j)_{\eta_i}} \prod_{i=1}^n \big(\Tshift_{q, x_i}\big)^{\eta_i}.$$
	\label{prop:equivalencewithNoumi}
\end{proposition}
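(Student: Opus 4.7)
Since the shift operators $(T_{q,x_1})^{\eta_1}\cdots(T_{q,x_n})^{\eta_n}$ are linearly independent (they act distinguishably on $\prod x_i^{\alpha_i}$), it suffices to verify that the coefficients of $z^{|\eta|}\prod_i (T_{q,x_i})^{\eta_i}$ agree in the two formulas for every multi-index $\eta = (\eta_1,\ldots,\eta_n)\in \Z_{\geqslant 0}^n$. That is, I will show the purely algebraic identity
\begin{equation*}
t^{(i-1)\eta_i}\,\frac{(t)_\infty (q^{\eta_i+1})_\infty}{(tq^{\eta_i})_\infty (q)_\infty}\Big|_{\text{product over }i} \cdot \prod_{i<j}H_{ij}
\;=\; \prod_{i<j}\frac{q^{\eta_j}x_j-q^{\eta_i}x_i}{x_j-x_i}\,\prod_{i,j}\frac{(tx_i/x_j)_{\eta_i}}{(qx_i/x_j)_{\eta_i}},
\end{equation*}
where $H_{ij}$ denotes the $(i,j)$ factor in the definition of $h_\eta$. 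The diagonal $i=j$ terms of $\prod_{i,j}$ are $(t)_{\eta_i}/(q)_{\eta_i}$, which matches the single-index factor from $h_\eta$ after applying the standard identity $(a;q)_\infty/(aq^k;q)_\infty=(a;q)_k$. What remains is therefore the pairwise comparison: for each $i<j$, one must verify that $t^{(i-1)\eta_i-(j-1)\eta_j}H_{ij}$ equals the $(i,j)$ contribution on the right, so that the leftover $t$-powers telescope to $t^0$ after multiplying over all pairs $i<j$.

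The plan for the pairwise check is a direct simplification, writing $x:=x_i/x_j$ and reducing each of the four $(a)_\infty/(b)_\infty$ ratios in $H_{ij}$ to a finite $q$-Pochhammer. Term~1, $(q^{\eta_i-\eta_j}x)_\infty/(q^{1+\eta_i-\eta_j}x)_\infty$, is just $1-q^{\eta_i-\eta_j}x=(q^{\eta_j}x_j-q^{\eta_i}x_i)/(q^{\eta_j}x_j)$. Term~3 becomes $(tx)_{\eta_i}$ immediately. Term~2 rewrites as $(q^{1-\eta_j}t^{-1}x;q)_{\eta_j}$; pulling out a factor $-q^{-\ell}t^{-1}x$ from each of the $\eta_j$ factors reverses it into $(tx^{-1})_{\eta_j}$ times an explicit monomial $(-x)^{\eta_j}t^{-\eta_j}q^{-\binom{\eta_j}{2}}$. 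Term~4 is $1/(q^{-\eta_j}x;q)_{\eta_i+\eta_j+1}$; splitting this $(\eta_i+\eta_j+1)$-fold product at the exponent $0$ separates out $(qx)_{\eta_i}$, a lone factor $(1-x)$, and $\prod_{l=1}^{\eta_j}(1-q^{-l}x)$, the latter producing $(qx^{-1})_{\eta_j}$ times $(-x)^{\eta_j}q^{-\binom{\eta_j+1}{2}}$ by the same sign-reversal trick.

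Multiplying these four simplifications, the $(-x)^{\eta_j}$ prefactors from Terms~2 and~4 cancel, the $q$-exponents combine as $\binom{\eta_j+1}{2}-\binom{\eta_j}{2}=\eta_j$, and the ratio $(1-q^{\eta_i-\eta_j}x)\cdot q^{\eta_j}/((1-x)\cdot q^{\eta_j}x_j)\cdot x_j$ collapses to $(q^{\eta_j}x_j-q^{\eta_i}x_i)/(x_j-x_i)$. The remaining Pochhammer factors assemble exactly into $(tx_i/x_j)_{\eta_i}(tx_j/x_i)_{\eta_j}/\bigl((qx_i/x_j)_{\eta_i}(qx_j/x_i)_{\eta_j}\bigr)$, while the only loose scalar is $t^{-\eta_j}$. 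Thus $H_{ij}=t^{-\eta_j}\cdot(\text{target}_{ij})$; taking the product over $i<j$ gives an overall factor $t^{-\sum_j(j-1)\eta_j}$, which is precisely compensated by the prefactor $\prod_i t^{(i-1)\eta_i}$ in $h_\eta$.

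The only real obstacle is bookkeeping: one must carefully handle the $q$-Pochhammer symbol with \emph{negative} second argument (needed for Terms~2 and~4, where the shift index $\eta_j$ appears with a minus sign), converting them via the reflection identity $(aq^{-k};q)_k = (-a)^k q^{-\binom{k+1}{2}}(q/a;q)_k$. Once this manipulation is done consistently, the two rewritings of the Noumi operator are seen to coincide term by term.
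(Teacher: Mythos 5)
Your proof is correct and takes essentially the same route as the paper: both reduce the operator identity to the coefficient-wise $q$-Pochhammer identity $h_\eta=\prod_{i<j}\frac{q^{\eta_j}x_j-q^{\eta_i}x_i}{x_j-x_i}\prod_{i,j}\frac{(tx_i/x_j)_{\eta_i}}{(qx_i/x_j)_{\eta_i}}$ and verify it pairwise by a sign-reversal identity for Pochhammers with negative $q$-shifts. The paper compresses the bookkeeping by passing through the auxiliary function $f(u)=(tu)_\infty/(qu)_\infty$, whereas you re-pair the infinite Pochhammer ratios into finite ones directly, but the underlying computation is the same.
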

\begin{remark}
	Proposition \ref{prop:equivalencewithNoumi} shows that $\Noumi_n^z$ coincides with an operator known as \emph{Noumi's q-integral operator}. The eigenrelation \eqref{eq:Noumieigenrelation} first appeared in \cite{feigin2009commutative} where it  is attributed to \cite{noumiinfinite}. Additional properties of $\Noumi_n^z$ can be found in \cite{noumi2012direct}, which in particular gives a proof of \eqref{eq:Noumieigenrelation} (see around equation  (5.6)).
	Note that Section 5 of  \cite{borodin2016observables} provides yet another proof of \eqref{eq:Noumieigenrelation} by E. Rains.
\end{remark}
\begin{proof}
	We need to show that 
	\begin{equation}
	h_{\eta}(x;q,t) = \prod_{i<j} \frac{q^{\eta_j}x_j - q^{\eta_i}x_i}{x_j-x_i} \prod_{i,j} \frac{(tx_i/x_j)_{\eta_i}}{(qx_i/x_j)_{\eta_i}}.
	\label{eq:equalityh}
	\end{equation}
 On the L.H.S., we have 
	\begin{align*} h_{\eta}(x;q,t) &= \prod_{i=1}^n \frac{f(1)}{f(q^{\eta_i})} \prod_{i<j} \frac{t^{\eta_j}(1-q^{\eta_i-\eta_j}x_i/x_j)(tx_i/x_j)_{\infty}}{f(q^{-\eta_j}t^{-1}x_i/x_j)f(q^{\eta_i}x_i/x_j)(qt^{-1}x_i/x_j)_{\infty}}\\
	&=  \prod_{i=1}^n \frac{f(1)}{f(q^{\eta_i})} \prod_{i<j} \frac{t^{\eta_j}(1-q^{\eta_i-\eta_j}x_i/x_j)f(x_i/x_j)f(t^{-1}x_i/x_j)}{(1-x_i/x_j)f(q^{-\eta_j}t^{-1}x_i/x_j)f(q^{\eta_i}x_i/x_j)}.
	\end{align*}
	On the other hand, $$\frac{(tx_i/x_j)_{\eta_i}}{(qx_i/x_j)_{\eta_i}} = \frac{f(x_i/x_j)}{f(q^{\eta_i}x_i/x_j)},$$
	so that 
	$$ R.H.S. \eqref{eq:equalityh} = \prod_{i=1}^n \frac{f(1)}{f(q^{\eta_i})}\prod_{i<j}\frac{q^{\eta_j}(1-q^{\eta_i-\eta_j}x_i/x_j)}{1-x_i/x_j}  \frac{f(x_i/x_j)}{f(q^{\eta_i}x_i/x_j)}\frac{f(x_j/x_i)}{f(q^{\eta_j}x_j/x_i)}.$$
	Thus, it is sufficient to show that 
	$$
	\frac{t^{\eta_j}f(t^{-1}x_i/x_j)}{f(q^{-\eta_j}t^{-1}x_i/x_j)} = \frac{q^{\eta_j}f(x_j/x_i)}{f(q^{\eta_j}x_j/x_i)}.
	$$
	which is equivalent to 
	\begin{equation}
	\frac{t^{\eta_j}(q^{1-\eta_j}t^{-1}x_i/x_j)_{\eta_j}}{(q^{-\eta_j}x_i/x_j)_{\eta_j}} = \frac{q^{\eta_j}(tx_j/x_i)_{\eta_j}}{(q x_j/x_i)_{\eta_j}}.
	\label{eq:passepasse}
	\end{equation} 
	It is easy to check that 
	$$ \frac{t^N(q^{1-N}t^{-1}X)_N}{q^N(q^{-N}X)_N} = \frac{(tX^{-1})_N}{(qX^{-1})_N},$$
	so that \eqref{eq:passepasse} is established by setting $X=x_i/x_j, N=\eta_j$.
\end{proof}

In Section \ref{sec:differenceoperators} we discussed two types of Macdonald difference operators $\DD^r$ and $\overline{\DD}^r$.   We define now an operator $\Moumi_n^z$, similar to $\Noumi_n^z$, but having eigenvalue $\prod_{i=1}^n \frac{(q^{-\la_i}t^{i}z)_{\infty}}{(q^{-\la_i}t^{i-1}z)_{\infty}}$
  instead of 
  $\prod_{i=1}^n\frac{(q^{\la_i}t^{n-i+1}z)_{\infty}}{(q^{\la_i}t^{n-i}z)_{\infty}}$. 
  Let 
$$ \Moumi_n^z = \sum_{\eta_1, \dots, \eta_n=0}^{\infty} z^{\vert \eta\vert } h_{\eta}(x_1^{-1}, \dots, x_n^{-1};q,t) \prod_{i=1}^n \big(\Tshift_{q^{-1}, x_i}\big)^{\eta_i}.$$
Equivalently, using Proposition \ref{prop:equivalencewithNoumi}, 
$$ \Moumi_n^z  =\sum_{\eta_1, \dots, \eta_n=0}^{\infty} z^{\vert \eta\vert }\prod_{i<j} \frac{q^{\eta_j}x_i - q^{\eta_i}x_j}{x_i-x_j} \prod_{i,j} \frac{(tx_i/x_j)_{\eta_j}}{(qx_i/x_j)_{\eta_j}} \prod_{i=1}^n \big(\Tshift_{q^{-1}, x_i}\big)^{\eta_i}.$$

\begin{proposition}[\cite{noumi2012direct}]We have the following formal power series identity in the variable $z$:   
	\begin{equation}
	\Moumi_n^z P_{\la}(x)=\prod_{i=1}^n\frac{(q^{-\la_i}t^{i}z)_{\infty}}{(q^{-\la_i}t^{i-1}z)_{\infty}}P_{\la}(x).
	\label{eq:othereigenrelation}
	\end{equation}
	Moreover, when $z$ is such that for all $i=1, \dots n$, $\vert zq^{-\lambda_i}t^{i-1}\vert <1$, the identity above is an equality of absolutely convergent series. 
	\label{prop:othereigenrelation} 
\end{proposition}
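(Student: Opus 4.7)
The plan is to deduce this eigenrelation from Noumi's eigenrelation \eqref{eq:Noumieigenrelation} via an involution of variables. Define the involution $\iota$ on Laurent polynomials in $x_1,\dots,x_n$ by $(\iota f)(x_1,\dots,x_n) := f(x_1^{-1},\dots,x_n^{-1})$. Since $\iota\, \Tshift_{q,x_i}\, \iota = \Tshift_{q^{-1},x_i}$, and since the coefficient of $\prod_i \Tshift_{q^{-1},x_i}^{\eta_i}$ in $\Moumi_n^z$ is by construction $h_\eta(x_1^{-1},\dots,x_n^{-1}; q,t)$, a direct comparison of the two defining sums yields the operator identity
\begin{equation*}
\Moumi_n^z \,=\, \iota\, \Noumi_n^z\, \iota.
\end{equation*}

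To compute $\Moumi_n^z P_\la = \iota\bigl(\Noumi_n^z(\iota P_\la)\bigr)$, I would next invoke the reflection symmetry for Macdonald polynomials in $n$ variables: for a partition $\la$ with $\ell(\la)\leqslant n$, setting $\hat\la := (\la_1-\la_n,\la_1-\la_{n-1},\dots,\la_1-\la_2,0)$, one has
\begin{equation*}
P_\la(x_1^{-1},\dots,x_n^{-1}; q,t) \,=\, (x_1\cdots x_n)^{-\la_1}\, P_{\hat\la}(x_1,\dots,x_n; q,t),
\end{equation*}
a standard consequence of the triangularity-plus-orthogonality characterization of $P_\la$ in \cite[Chapter VI]{macdonald1995symmetric}. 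The monomial factor can then be pulled through $\Noumi_n^z$ using
\begin{equation*}
\Noumi_n^z\bigl[(x_1\cdots x_n)^M f\bigr] \,=\, (x_1\cdots x_n)^M\, \Noumi_n^{z q^M}\,f,
\end{equation*}
which follows instantly from $\Tshift_{q,x_i}^{\eta_i}(x_1\cdots x_n)^M = q^{M\eta_i}(x_1\cdots x_n)^M \Tshift_{q,x_i}^{\eta_i}$. Applying \eqref{eq:Noumieigenrelation} to $P_{\hat\la}$ at the shifted spectral value $z q^{-\la_1}$ produces the scalar $\prod_i (q^{\hat\la_i-\la_1} t^{n-i+1}z)_\infty/(q^{\hat\la_i-\la_1} t^{n-i}z)_\infty$; substituting $\hat\la_i-\la_1=-\la_{n+1-i}$ and reindexing $j=n+1-i$ collapses this to $\prod_j (q^{-\la_j}t^j z)_\infty/(q^{-\la_j}t^{j-1}z)_\infty$, which is exactly the claimed eigenvalue. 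Since this scalar passes through $\iota$ unchanged, applying $\iota$ back yields \eqref{eq:othereigenrelation} as a formal power series identity in $z$.

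For the convergence statement, observe that the condition $|zq^{-\la_i}t^{i-1}|<1$ for all $i$ is equivalent to its $i=1$ special case $|zq^{-\la_1}|<1$, since $\la_i\leqslant\la_1$ and $t<1$ make $i=1$ the most restrictive. Under this condition the Pochhammer factors in the eigenvalue are non-vanishing, and the series for $\Noumi_n^{zq^{-\la_1}}P_{\hat\la}$ converges absolutely, as noted in the footnote to \eqref{eq:Noumieigenrelation}; since $\iota$ and multiplication by $(x_1\cdots x_n)^{\pm\la_1}$ preserve absolute convergence of Laurent series in $z$, so does $\Moumi_n^z P_\la$. I expect the main obstacle to be pinning down a clean reference or short self-contained proof of the reflection identity $P_\la(x^{-1}) = (x_1\cdots x_n)^{-\la_1}P_{\hat\la}(x)$ at the Macdonald level of generality; once that is in hand, the remainder is a direct algebraic reduction to \eqref{eq:Noumieigenrelation}.
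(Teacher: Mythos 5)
Your proof is correct, and it takes a genuinely different route from the paper's. The paper re-derives the eigenrelation from scratch by verifying it on the dense family of geometric specializations $\bar u_\mu$ (using index-variable duality $\bar u_\mu(P_\la)\,\bar u_0(P_\mu)=\bar u_0(P_\la)\,\bar u_\la(P_\mu)$, the combinatorial formula, and a specialized Pieri rule) and then extending to all $x$ by rationality of both sides; this mirrors the proof of \eqref{eq:Noumieigenrelation} in \cite{borodin2014macdonald}. You instead reduce \eqref{eq:othereigenrelation} to \eqref{eq:Noumieigenrelation} via a conjugation by the inversion involution $\iota$, the box-complement symmetry $P_\la(x^{-1})=(x_1\cdots x_n)^{-\la_1}P_{\hat\la}(x)$, the commutation $\Noumi_n^z\,(x_1\cdots x_n)^M=(x_1\cdots x_n)^M\,\Noumi_n^{zq^M}$, and a reindexing $j=n+1-i$. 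I have checked all steps: the operator identity $\Moumi_n^z=\iota\,\Noumi_n^z\,\iota$ follows directly from the definitions; the monomial pull-through is immediate; the substitution $\hat\la_i-\la_1=-\la_{n+1-i}$ and the reindexing recover the claimed eigenvalue; and applying $\iota$ back just undoes the first $\iota$, since $\iota^2=\mathrm{id}$. The convergence remark is also right, with $|zq^{-\la_1}|<1$ being the binding constraint. What you gain is a short conceptual reduction that explains \emph{why} the $\Moumi$-eigenvalue is a reflected/reindexed version of the $\Noumi$-eigenvalue, at the cost of invoking the reflection symmetry; what the paper's route gains is self-containedness (it sidesteps the reflection identity, essentially using only the same inputs as the original proof of \eqref{eq:Noumieigenrelation}). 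The reflection symmetry you use is indeed correct and standard: it follows from the $-w_0$ symmetry $P_\la(x^{-1};q,t)=P_{-w_0\la}(x;q,t)$ for dominant $GL_n$ weights together with the translation property $P_{\la+m\mathbf 1}=(x_1\cdots x_n)^mP_\la$, or equivalently from the fact that $\mu\leqslant\la$ in dominance implies $\mu^\vee\leqslant\la^\vee$ for the $n\times\la_1$ box complement combined with the invariance of the scalar product $\llangle\cdot,\cdot\rrangle$ under $f\mapsto (x_1\cdots x_n)^M f(x^{-1})$.
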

Although this eigenrelation can be deduced from (5.13) in \cite{noumi2012direct}, we give another proof in the spirit of \cite[Proposition 2.2.17]{borodin2014macdonald}.
\begin{proof}
	\textbf{Step 1:} Let us show that identity \eqref{eq:othereigenrelation} holds for a certain set of specializations of the $x_i$. For the moment we work with formal power series in the variable $z$. 
	Let $u_{\mu}$ be the specialization (of the ring of rational functions) that substitutes $q^{\mu_i}t^{n-i}$ in place of $x_i$ for all $i$. Let $\bar u_{\mu}$ be the specialization that substitutes $q^{-\mu_i}t^{i-n}$ in place of the variable $x_i$. The eigenvalue appearing in \eqref{eq:othereigenrelation} can be rewritten as
	\begin{equation}
	\prod_{i=1}^n\frac{(q^{-\la_i}t^{i}z)_{\infty}}{(q^{-\la_i}t^{i-1}z)_{\infty}} = \bar u_{\la}\Big( \Pi(zt^{n-1}, x)\Big) =  \bar u_{\la}\Big( \sum_{m\geqslant 0} g_m(x; q,t) z^m t^{(n-1)m}\Big),
	\label{eq:decompositioneigenvalue}
	\end{equation} 
	where $g_m$ is the $q,t$ analogue of complete homogeneous symmetric functions, that is $g_m=Q_{(m)}$. 
	
	We now show that the equality \eqref{eq:othereigenrelation} holds under specialization $\bar u_{\mu}$ for any $\mu$.  Macdonald symmetric polynomials satisfy an index-variable duality relation 
	$$ u_{\mu}(P_{\la}) = \frac{u_0(P_{\la})}{u_0(P_{\mu})}u_{\la}(P_{\mu}).$$
	Since it is true for any $q,t$, we also have that 
	$$\bar u_{\mu}(P_{\la}(x; q^{-1}, t^{-1}))\bar u_0(P_{\mu}(x; q^{-1}, t^{-1})) = \bar u_0(P_{\la}(x; q^{-1}, t^{-1}))\bar u_{\la}(P_{\mu}(x; q^{-1}, t^{-1}))$$
	and since $P_{\la}(x; q^{-1}, t^{-1})=P_{\la}(x; q, t)$ (cf. \cite[p.324]{macdonald1995symmetric}), we have another index-variable duality involving the specialization $\bar{u}$, which reads
	$$ \bar u_{\mu}(P_{\la}) = \frac{\bar u_0(P_{\la})}{\bar u_0(P_{\mu})}\bar u_{\la}(P_{\mu}).$$
	Moreover, using the combinatorial formula for Macdonald polynomials $P$ (see \eqref{eq:combinatorialP}), 
	$$ \frac{\bar u_0(P_{\la})}{\bar u_0(P_{\mu})} = t^{(1-n)\vert\la/\mu\vert} \frac{u_0(P_{\la})}{ u_0(P_{\mu})}.$$
	As in the proof of Proposition 2.2.17 in \cite{borodin2014macdonald}, we can specialize the Pieri rule (for $P$) as 
	\begin{equation}
	\bar{u}_{\la}(g_m(x; q,t))\bar u_{\mu}(P_{\la}) = \sum_{\nu \succ \mu\ :\ \vert \nu/\mu\vert =m } \frac{\bar u_0(P_{\nu})}{\bar  u_0(P_{\mu})}\varphi_{\nu/\mu} \bar u_{\nu}(P_{\la}).
	\label{eq:Pierispecialized}
	\end{equation}
	We need to show that the application of $\bar{u}_{\mu}$ to the R.H.S of \eqref{eq:othereigenrelation} equals the application of $\bar{u}_{\mu}$ to the L.H.S. Using \eqref{eq:decompositioneigenvalue} and  \eqref{eq:Pierispecialized}, applying $\bar{u}_{\mu}$ to the R.H.S of \eqref{eq:othereigenrelation} yields
	$$ \sum_{\nu \succ \mu } z^{\vert \nu/\mu\vert }t^{(n-1)\vert \nu/\mu\vert}\frac{\bar u_0(P_{\nu})}{\bar  u_0(P_{\mu})}\varphi_{\nu/\mu} \bar u_{\nu}(P_{\la}).$$
	On the other hand, the application of $\bar{u}_{\mu}$ to the L.H.S of \eqref{eq:othereigenrelation} is
	$$ \bar{u}_{\mu}\left( \sum_{\eta} z^{\vert \eta \vert} h_{\eta}(x_1^{-1}, \dots, x_n^{-1};q,t) \big(T_{q^{-1}, x_i}\big)^{\eta_i} P_{\lambda}(x) \right).$$
	
	If $\eta_i=\nu_i-\mu_i$ for all $i$, then  $ \bar{u}_{\mu}\left(  \big(T_{q^{-1}, x_i}\big)^{\eta_i} P_{\lambda} \right) = \bar u_{\nu}(P_{\la}) $. So, it is enough to show that for $\eta_i=\nu_i-\mu_i$, 
	$$ \bar u_{\mu}\Big(  h_{\eta}(x_1^{-1}, \dots, x_n^{-1};q,t) \Big)  = \frac{\bar u_0(P_{\nu})}{\bar  u_0(P_{\mu})}\varphi_{\nu/\mu}t^{(n-1)\vert \nu/\mu\vert}.$$
	This holds true since it is shown in the proof of Proposition 2.2.17 in \cite{borodin2014macdonald} that 
	$$  u_{\mu}\Big(  h_{\eta}(x_1, \dots, x_n;q,t) \Big)  = \frac{ u_0(P_{\nu})}{ u_0(P_{\mu})}\varphi_{\nu/\mu}.$$
	
		\textbf{Step 2:} Let us extend the result to any $x$. Let us denote by $[z^m]\ \Moumi_n^z$ the operator corresponding to the $z^m$ coefficient, that is 
 $$ [z^m]\ \Moumi_n^z = \sum_{\eta_1, \dots, \eta_n\geqslant 0, \vert \eta\vert =m}  h_{\eta}(x_1^{-1}, \dots, x_n^{-1};q,t) \prod_{i=1}^n \big(\Tshift_{q^{-1}, x_i}\big)^{\eta_i}.$$
We have shown that for any partition $\mu$, 
$$  \bar u_{\mu} \left( [z^m]\ \Moumi_n^z P_{\la}(x) \right)  =  g_m(q^{-\lambda_1}t^{0}, \dots, q^{-\lambda_n}t^{n-1}; q,t)  \bar u_{\mu} \left(P_{\la}(x)\right).$$
By definition $[z^m]\ \Moumi_n^z P_{\la}(x)$ is a finite sum of rational functions, so it is a rational function. Let us fix  $x_2= q^{-\mu_2}t^{2-n}, \dots, x_n= q^{-\mu_n}t^{0}$ for some integers $\mu_2\geqslant  \dots \geqslant \mu_n$. The following equality of rational functions in the variable $X$,
\begin{equation}
[z^m]\ \Moumi_n^z P_{\la}(X, q^{-\mu_2}t^{2-n}, \dots, q^{-\mu_n}t^{0})  =  g_m(q^{-\lambda_1}t^{0}, \dots, q^{-\lambda_n}t^{n-1}; q,t)   P_{\la}(X, q^{-\mu_2}t^{2-n},\dots , q^{-\mu_n}t^{0} )
\label{eq:rationalfunctionequality}
\end{equation}
is satisfied for any $X$ of the form $q^{-\mu_1}t^{1-n}$ where $\mu_1$ is a nonnegative integer such that $\mu_1\geqslant \mu_2$, hence \eqref{eq:rationalfunctionequality} is true as an equality between rational functions in the variable $X$. We may iterate this procedure  for each variable $x_2, \dots, x_n$ in this order,  and we obtain that 
$$  [z^m]\ \Moumi_n^z P_{\la}(x)  =  g_m(q^{-\lambda_1}t^{0}, \dots, q^{-\lambda_n}t^{n-1}; q,t)  P_{\la}(x).$$
holds as an identity of rational functions in the variables $x_1, \dots, x_n$. Thus, we have established \eqref{eq:decompositioneigenvalue} as a formal power series in the variable $z$. 

\textbf{Step 3:} When $z$ is such that for all $i=1, \dots n$, $\vert zq^{-\lambda_i}t^{i-1}\vert <1$, the expansion \eqref{eq:Pierispecialized} is absolutely convergent. Alternatively, this can also be seen by expanding $\prod_{i=1}^n\frac{(q^{-\la_i}t^{i}z)_{\infty}}{(q^{-\la_i}t^{i-1}z)_{\infty}}$ using the $q$-binomial theorem \eqref{eq:qbinomial}. Since we have already established \eqref{eq:decompositioneigenvalue} as a formal power series, the identity holds as a numeric identity for any  $z$ such that for all $i=1, \dots n$, $\vert zq^{-\lambda_i}t^{i-1}\vert <1$.
\end{proof}

\subsection{$(q,t)$-Laplace transforms}
\label{sec:generalLaplace}
In this section we consider the half-space Macdonald measure with specializations $\rhoup = (a_1, \dots, a_n)$ for $a_i\in (0,1)$ and $\rhodiag = \rho$ where $\rho$ is a Macdonald non-negative specialization of the form $\rho=\rho(\alpha, \beta, \gamma)$ as in Section \ref{sec:specializations}. We also assume that  $\Pi\big( \vec a;\rho\big)\Phi(\vec{a})$ can be expanded via the generalized Littlewood identity \eqref{eq:CauchyLittlewoodspecialized}. The observable of the half-space Macdonald measure appearing in Theorem \ref{theo:NoumiLaplace} can be understood as a $q,t$ analogue of a Laplace transform formula, coming from the properties of the Noumi operator $\Noumi_n^z$. Actually, in the limit $t=0$, $q\to 1$, this becomes exactly a Laplace transform of the rescaled smallest coordinate of the half-space Macdonald random partition (see Section \ref{sec:rigorousconvwithplancherel}). 
\begin{definition}
	For $r\in \R$, we define the contour $\mathcal{D}_r$ to be the vertical line  $r+\I\R$, oriented from bottom to top. 
\end{definition}
\begin{theorem}
	Let $z\in \C\setminus \R_{>0}$. Assume that:
	\begin{enumerate}
		\item[(i)] The parameters  $ a_1, \dots, a_n\in(0,1) $ are chosen such that for all $i,j$,  $\vert t a_i/a_j\vert <1$ and $\vert q a_i/a_j\vert <1$.  
		\item[(ii)] $R\in (0,1)$ is chosen such that $0<q^R<a_i/a_j$ for all $i,j$. 
	\end{enumerate}
	Then we have 
	\begin{multline}
	\EPMM_{(a_1, \dots, a_n), \rho}\left[ \prod_{i=1}^n \frac{(q^{\la_i}t^{n-i+1}z)_{\infty}}{(q^{\la_i}t^{n-i}z)_{\infty}} \right]= \sum_{k=0}^{n} \frac{1}{k!} \int_{\mathcal{D}_{R}}\frac{\mathrm{d}s_1}{2\I\pi} \dots \int_{\mathcal{D}_{R}} \frac{\mathrm{d}s_k}{2\I\pi}\  
\oint  \frac{\mathrm{d}w_1}{2\I\pi}\dots \oint \frac{\mathrm{d}w_k}{2\I\pi}  \mathcal{A}^{q,t}_{\vec s}(\vec w) \\ \times	\prod_{i=1}^k\Gamma(-s_i)\Gamma(1+s_i)  \prod_{i=1}^k \frac{\mathcal{G}^{q,t}(w_i)}{\mathcal{G}^{q,t}(q^{s_i}w_i)}\frac{\phi(w_i^2)(-z)^{s_i}}{\phi(q^{s_i}w_i^2) (q^{s_i}-1)w_i}, 
	 \label{eq:expansionNoumi}
	\end{multline}
	where the integration contours for the variables $w_i$ enclose all the $a_i$ and no other singularity; and we have used the shorthand notations
	\begin{equation}
	\mathcal{A}^{q,t}_{\vec s}(\vec w) := \prod_{1\leqslant i<j\leqslant k} \frac{ (q^{s_j}w_j-q^{s_i}w_i)(w_i-w_j)\phi(q^{s_i+s_j}w_i w_j)\phi(w_iw_j)}{(q^{s_i}w_i-w_j)(q^{s_j}w_j-w_i)\phi(q^{s_i}w_iw_j)\phi(q^{s_j}w_jw_i)}
	\label{eq:defA}
	\end{equation}  
	and 
	\begin{equation}
	\mathcal{G}^{q,t}(w) = \prod_{j=1}^n\frac{\phi(w/a_j)}{\phi(wa_j)} \frac{1}{ \Pi(w; \rho)}.
	\label{eq:defH}
	\end{equation}  
	\label{theo:NoumiLaplace}
\end{theorem}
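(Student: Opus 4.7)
The starting point is the Noumi eigenrelation \eqref{eq:Noumieigenrelation} combined with the generalized Littlewood identity \eqref{eq:CauchyLittlewoodspecialized}. Applying the operator $\Noumi_n^z$ (in the variables $\vec x$) to both sides of $\sum_\lambda P_\lambda(\vec x)\ve_\lambda(\rho)=\Pi(\vec x;\rho)\Phi(\vec x)$ and specializing $\vec x=\vec a$ gives
\begin{equation*}
\EPMM_{\vec a,\rho}\!\left[\prod_{i=1}^n\frac{(q^{\lambda_i}t^{n-i+1}z)_\infty}{(q^{\lambda_i}t^{n-i}z)_\infty}\right]=\frac{\Noumi_n^z\bigl(\Pi(\cdot;\rho)\Phi(\cdot)\bigr)(\vec a)}{\Pi(\vec a;\rho)\Phi(\vec a)}.
\end{equation*}
The interchange $\Noumi_n^z\sum_\lambda=\sum_\lambda\Noumi_n^z$ must be justified: under assumption (i) the Pochhammer and $\phi$-factors in the representation from Proposition~\ref{prop:equivalencewithNoumi} are uniformly controlled in $\eta$, and the Littlewood normalization makes the joint series $\sum_{\eta,\lambda}$ absolutely convergent, so Fubini applies. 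The problem then reduces to computing the numerator on the right.

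Using Proposition~\ref{prop:equivalencewithNoumi} and $\Pi(\vec x;\rho)\Phi(\vec x)=\prod_i\Pi(x_i;\rho)\prod_{i<j}\phi(x_ix_j)$, this ratio becomes an absolutely convergent sum over $\eta\in\Z_{\geqslant 0}^n$ of an expression rational in the $q^{\eta_i}$. I would split this sum by the support $I=\{i:\eta_i>0\}$:
\begin{equation*}
\sum_{\eta\in\Z_{\geqslant 0}^n}\,\cdots\,=\,\sum_{k=0}^n\frac{1}{k!}\sum_{\substack{(i_1,\ldots,i_k)\in\{1,\ldots,n\}^k\\ \text{pairwise distinct}}}\ \sum_{\eta_{i_1},\ldots,\eta_{i_k}\geqslant 1}\,\cdots.
\end{equation*}
For $j\notin I$ every factor collapses to $1$, while after the relabeling $(w_j,s_j)\leftrightarrow(a_{i_j},\eta_{i_j})$ the surviving factors reassemble, term by term, into the three building blocks of the target integrand: the Vandermonde ratio together with the mixed off-diagonal $\phi$-terms produces $\mathcal{A}^{q,t}_{\vec s}(\vec w)$; the off-diagonal pieces of $\prod_{i,j}(ta_i/a_j)_{\eta_i}/(qa_i/a_j)_{\eta_i}$ combined with $\Pi(q^{\eta_{i_j}}a_{i_j};\rho)/\Pi(a_{i_j};\rho)$ and the cross $\phi$-terms yield $\mathcal{G}^{q,t}(w_j)/\mathcal{G}^{q,t}(q^{s_j}w_j)$; the diagonal $\phi$-factor gives $\phi(w_j^2)/\phi(q^{s_j}w_j^2)$; and the $i=j$ slot of the Pochhammer product, which carries the factor $(1-q^{\eta_j})$ in $(q;q)_{\eta_j}$, is the source of the $1/((q^{s_j}-1)w_j)$ piece.

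Next, each inner sum $\sum_{\eta_{i_j}\geqslant 1}$ is converted into a Mellin-Barnes integral along $\mathcal{D}_R$ via the classical identity (valid for $R\in(0,1)$ and $g$ with suitable decay in a right half-plane)
\begin{equation*}
\sum_{\eta\geqslant 1}z^\eta g(q^\eta)=\int_{\mathcal{D}_R}\Gamma(-s)\Gamma(1+s)(-z)^s g(q^s)\,\frac{ds}{2\pi\I},
\end{equation*}
which follows by closing the contour to the right: $\Gamma(-s)\Gamma(1+s)$ has residue $(-1)^{\eta+1}$ at each positive integer $s=\eta$, and combined with $(-z)^\eta=(-1)^\eta z^\eta$ and the sign from clockwise orientation the sum of residues returns $\sum_{\eta\geqslant 1}z^\eta g(q^\eta)$. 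The restriction $z\notin\R_{>0}$ makes $(-z)^s$ decay as $|\Imag s|\to\infty$, guaranteeing convergence of the vertical integral, and the choice of $R$ in (ii) ensures that the contour closure crosses no other singularity of the integrand (the remaining $\phi$-ratios and $\mathcal{G}^{q,t}$ have all their poles in $s$ to the right of $\mathcal{D}_R$, precisely because $q^R<a_i/a_j$).

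Finally, the outer sum over ordered $k$-tuples of distinct indices $(i_1,\ldots,i_k)$ is encoded as a $k$-fold contour integral in the $w_j$'s, each contour a positively oriented loop enclosing $\{a_1,\ldots,a_n\}$ and no other singularity (granted by (ii)): the iterated residues at $w_j=a_{i_j}$, coming from the poles of $\mathcal{G}^{q,t}(w_j)$ generated by $\phi(w_j/a_{i_j})$, reproduce the summand, while the $(w_j-w_l)$ factor in $\mathcal{A}^{q,t}_{\vec s}(\vec w)$ kills contributions from coincident residues $w_j=w_l=a_r$, so only the distinct-index tuples survive, and the $1/k!$ absorbs the symmetric-group overcounting. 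Collecting everything yields \eqref{eq:expansionNoumi}. The main obstacle in this scheme is the analytic bookkeeping rather than the algebra: (a) the Fubini step at the beginning, (b) controlling the decay of the integrand along $\mathcal{D}_R$ in each $s_j$ (driven by the $\Gamma$-factors and by $(-z)^{s_j}$ away from $\R_{>0}$), and (c) identifying the complete singularity structure of the integrand in each $w_j$ so that only the $a_r$'s lie inside the chosen contours. Assumptions (i), (ii) and $z\notin\R_{>0}$ are calibrated precisely so that all three steps go through.
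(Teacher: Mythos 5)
Your proposal matches the paper's proof in all the key steps: applying $\Noumi_n^z$ to the Littlewood identity and reducing to $\Noumi_n^z(\Pi\Phi)/(\Pi\Phi)$ (Proposition~\ref{prop:NoumiLaplace}), decomposing the sum over $\eta\in\Z_{\geqslant 0}^n$ by its nonzero support and symmetrizing (the paper's decomposition~\eqref{eq:decompositionNoumi} and Lemma~\ref{lem:actionNouminu}), packaging the sum over distinct-index tuples as $w$-contour integrals by residue bookkeeping, and converting the remaining discrete sums to $\mathcal{D}_R$-integrals via Lemma~\ref{lem:MellinBarnes}. The order in which you do the last two operations is reversed relative to the paper (Mellin--Barnes before versus after the $w$-encoding), and your Fubini justification for the initial interchange proceeds by directly bounding the joint $(\eta,\lambda)$-series rather than via the paper's route of working with the formal power series in $z$ and observing that $g_k(q^{\la_1}t^{n-1},\ldots,q^{\la_n})\leqslant g_k(1,\ldots,1)$; both variants accomplish the same thing and are calibrated the same way by assumptions (i)--(ii).

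One step is genuinely glossed over. The chain of manipulations you describe --- pushing $\Noumi_n^z$ through the $\lambda$-sum, splitting by support, then Mellin--Barnes --- only makes sense as written for $|z|<1$, since the Noumi operator's defining sum over $\eta$ diverges for $|z|\geqslant 1$. The right-hand side of \eqref{eq:expansionNoumi} is meaningful for all $z\in\C\setminus\R_{>0}$ because of the $\Gamma$-decay and the $(-z)^{s_i}$ factors, but to extend the identity beyond $|z|<1$ you must argue, as the paper does at the start of its proof, that both sides of \eqref{eq:expansionNoumi} are analytic on $\C\setminus\R_{>0}$ and invoke uniqueness of analytic continuation. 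Your remark that ``$z\notin\R_{>0}$'' makes $(-z)^s$ decay addresses only the convergence of the vertical contours, not the validity of the eigenrelation/Fubini step for such $z$, so the analytic continuation is the one link missing from the chain.
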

This theorem is the half-space analogue of \cite[Theorem 4.8]{borodin2016observables} which deals with full-space Macdonald processes. 
\begin{remark}
	We can use  variables $(u_1, \dots, u_{2k}) := (w_1^{-1}, q^{\nu_1}w_1, \dots , w_k^{-1}, q^{\nu_k}w_k)$, so that 
	\begin{align}
	\mathcal{A}^{q,t}_{\nu}(\vec w) &= \prod_{1\leqslant i<j\leqslant 2k} \frac{u_j-u_i}{1-u_iu_j} \prod_{1\leqslant i<j\leqslant k} \frac{ f(q^{\nu_i+\nu_j}w_i w_j)f(w_iw_j)}{f(q^{\nu_i}w_iw_j)f(q^{\nu_j}w_jw_i)}\prod_{i=1}^k\frac{1-q^{\nu_i}}{q^{\nu_i} w_i-w_i^{-1}}\\
	&= \Pf\left[ \frac{u_j-u_i}{1-u_iu_j}\right] \prod_{1\leqslant i<j\leqslant k} \frac{ f(q^{\nu_i+\nu_j}w_i w_j)f(w_iw_j)}{f(q^{\nu_i}w_iw_j)f(q^{\nu_j}w_jw_i)}\prod_{i=1}^k \frac{1-q^{\nu_i}}{q^{\nu_i} w_i-w_i^{-1}}
	\end{align}
	where $\phi(u)=(tu)_{\infty}/(u)_{\infty}$ and $f(u)=(tu)_{\infty}/(qu)_{\infty}$ as before, and we have used Schur's Pfaffian identity \eqref{eq:SchurPfaffian}. If $q=t$, the function $f$ is constant, and the whole integrand in \eqref{eq:actionNouminu} can be written as a Pfaffian. This is coherent with the fact that the Pfaffian Schur process determines a Pfaffian point process -- see \cite{borodin2005eynard, ghosal2017correlation}. 
\end{remark} 
\begin{proof}[Proof of Theorem \ref{theo:NoumiLaplace}]
	For $\vert z\vert <1$, the result follows from the combination of Proposition \ref{prop:NoumiLaplace}, Proposition \ref{prop:actionNoumi} and Lemma \ref{lem:MellinBarnes} below. 
	
	Once the result is established for $\vert z\vert <1$, one can analytically continue to any $z\in \C\setminus \R_{>0}$ (the reason why both sides are analytic is similar to the proof of \cite[Theorem 3.2.11]{borodin2014macdonald}).  
\begin{proposition}
	As a formal series in $z$, and as a numeric equality for $\vert z \vert <1$, we have 
	\begin{align}
	\EPMM_{(a_1, \dots, a_n), \rho}\left[ \prod_{i=1}^n \frac{(q^{\la_i}t^{n-i+1}z)_{\infty}}{(q^{\la_i}t^{n-i}z)_{\infty}} \right]&=\frac{\Noumi^z_{n}\Pi(x_1, \dots, x_n; \rho)\Phi(x_1, \dots, x_n) }{\Pi(x_1, \dots, x_n; \rho)\Phi(x_1, \dots, x_n)} \Bigg\vert_{x_1=a_1, \dots, x_n=a_n} \label{eq:Laplacelan}
	\end{align}
	where $\la$ is distributed according to the half-space  Macdonald measure with specializations 
	$(a_1, \dots, a_n)$ and $\rho$, and $\Noumi^z_{n}$ acts on $(x_1, \dots, x_n)\mapsto \Pi(x_1, \dots, x_n; \rho)\Phi(x_1, \dots, x_n)$. 
	\label{prop:NoumiLaplace}
\end{proposition}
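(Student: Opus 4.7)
The natural strategy is to apply the operator $\Noumi_n^z$ to both sides of the generalized Littlewood identity
\begin{equation*}
\sum_{\la \in \Y} P_\la(x_1,\dots,x_n)\, \ve_\la(\rho) \;=\; \Pi(x_1,\dots,x_n;\rho)\,\Phi(x_1,\dots,x_n),
\end{equation*}
interchange $\Noumi_n^z$ with the sum over $\la$, use the eigenrelation \eqref{eq:Noumieigenrelation} to pull out the factor $\prod_{i=1}^n(q^{\la_i}t^{n-i+1}z)_\infty/(q^{\la_i}t^{n-i}z)_\infty$, and finally divide by $\Pi(x;\rho)\Phi(x)$ and evaluate at $x_i=a_i$. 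The entire question is whether the interchange of $\Noumi_n^z$ with the infinite sum is legitimate.

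First I would treat the identity as a statement of formal power series in $z$. Writing $\Noumi_n^z = \sum_{\eta \geqslant 0} z^{|\eta|}\, h_\eta(x;q,t)\prod_i(\Tshift_{q,x_i})^{\eta_i}$, each coefficient of $z^m$ in $\Noumi_n^z$ is a \emph{finite} sum of shift operators with rational coefficients. Applying this degree-$m$ piece term-by-term to the Littlewood identity is legitimate (it is just a linear combination of evaluations of the identity at shifted variables), and the eigenrelation \eqref{eq:Noumieigenrelation}, which was established in \cite{borodin2014macdonald} at the level of formal power series in $z$, then yields the desired equality coefficient-by-coefficient in $z$. This gives \eqref{eq:Laplacelan} as a formal identity.

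Next I would upgrade this to a genuine numerical identity for $|z|<1$. The plan is to verify that, under assumption (i) on the $a_i$'s and for $|z|$ small, the double sum
\begin{equation*}
\sum_{\la \in \Y}\, \sum_{\eta \in \Z_{\geqslant 0}^n} z^{|\eta|}\, h_\eta(a;q,t)\, \Big(\prod_i \Tshift_{q,x_i}^{\eta_i} P_\la(x)\Big)\Big|_{x=a}\, \ve_\la(\rho)
\end{equation*}
is absolutely convergent, which by Fubini permits exchanging the two summations. The shift $\prod_i \Tshift_{q,x_i}^{\eta_i} P_\la(a) = P_\la(q^{\eta_1}a_1,\dots,q^{\eta_n}a_n)$ is uniformly bounded (as $\eta$ varies) by $P_\la(a_1,\dots,a_n)$ up to a constant, because $0<q<1$ shrinks the arguments and $P_\la$ has nonnegative coefficients in the monomial basis. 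The coefficients $h_\eta(a;q,t)$ admit the explicit product form of Proposition \ref{prop:equivalencewithNoumi}, and hypothesis (i) ensures $|qa_i/a_j|<1$ and $|ta_i/a_j|<1$, so each Pochhammer factor is uniformly bounded in $\eta$. Hence $\sum_\eta |z|^{|\eta|} |h_\eta(a;q,t)| < \infty$ whenever $|z|$ is small, and then
\begin{equation*}
\sum_{\la,\eta} |z|^{|\eta|}\,|h_\eta(a;q,t)|\, P_\la(a)\, \ve_\la(\rho) \;\leqslant\; C\, \Pi(a;\rho)\,\Phi(a) < \infty,
\end{equation*}
using the assumed convergence of the Littlewood identity at $(a,\rho)$. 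Fubini then gives the numeric identity \eqref{eq:Laplacelan} for $|z|$ sufficiently small, and the two sides being analytic in $z$ on the disk $|z|<1$ (by the $q$-binomial theorem on the LHS and a similar uniform bound on the RHS) extends it to the full disk.

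The main obstacle I anticipate is the absolute convergence bound in the second step — more precisely, getting the bounds on $h_\eta$ to be uniform in $\eta$, since the product defining $h_\eta$ mixes numerator and denominator Pochhammers involving $q^{\eta_i}$. Assumption (i) was placed precisely to make these ratios well-behaved, and a careful inspection (much as in the analogous full-space argument of \cite{borodin2016observables, borodin2014macdonald}) should yield a geometric bound $|h_\eta(a;q,t)| \leqslant C r^{|\eta|}$ for some $r>0$, which is what is needed. Everything else is then an application of Fubini plus the eigenrelation \eqref{eq:Noumieigenrelation} and the Littlewood identity.
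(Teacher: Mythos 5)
Your proposal is correct and, at the top level, is the same strategy as the paper's: apply $\Noumi_n^z$ to the specialized Littlewood identity, swap the operator with the sum over $\la$ via the eigenrelation, and upgrade the resulting formal-power-series identity to a numeric one on $|z|<1$. Where you diverge is in \emph{which} expansion you use to justify the interchange and the convergence.

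You expand the \emph{operator} in $z$ through the coefficients $h_\eta(a;q,t)$ and control the double sum over $(\la,\eta)$ by bounding $P_\la(q^\eta a)\leqslant P_\la(a)$ (from nonnegativity of $P_\la$'s monomial coefficients) and showing $|h_\eta(a)|$ is uniformly bounded in $\eta$, which uses the hypotheses $|qa_i/a_j|<1$, $|ta_i/a_j|<1$. The paper instead expands the \emph{eigenvalue}: it writes
\begin{equation*}
\prod_{i=1}^n \frac{(q^{\la_i}t^{n-i+1}z)_{\infty}}{(q^{\la_i}t^{n-i}z)_{\infty}}  = \sum_{k\geqslant 0} z^k\, g_{k}(q^{\la_1}t^{n-1}, \dots, q^{\la_n}),\qquad g_k=Q_{(k)},
\end{equation*}
observes that $0\leqslant g_k(q^{\la_1}t^{n-1},\dots)\leqslant g_k(1,\dots,1)$ because the arguments lie in $(0,1]$, so each coefficient $\EPMM[g_k(\cdot)]$ is finite and the series $\sum_k z^k g_k(1,\dots,1)=\Pi(z;1,\dots,1)$ converges for $|z|<1$. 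The formal identity then follows from multiplying the eigenrelation by $\ve_\la(\rho)$ and summing. This route buys you two things: it never needs the explicit product form of $h_\eta$ (and hence no bookkeeping about uniform bounds in $\eta$), and it gets absolute convergence directly on the whole disk $|z|<1$ without a separate analytic-continuation step. It also sidesteps the hypotheses on the $a_i$'s at the level of this proposition (they only enter later, when passing to the contour-integral form). Two small imprecisions in your write-up worth noting: the bound $P_\la(q^\eta a)\leqslant P_\la(a)$ holds with no constant, and since $|h_\eta(a)|$ is uniformly bounded, your estimate in fact gives convergence on all of $|z|<1$, so the final ``analytic continuation'' step is not actually needed.
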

\begin{proof}
	We have 
	$$  \prod_{i=1}^n \frac{(q^{\la_i}t^{n-i+1}z)_{\infty}}{(q^{\la_i}t^{n-i}z)_{\infty}}  = \sum_{k=0}^{+\infty} z^k g_{k}(q^{\la_1}t^{n-1}, \dots, q^{\la_n}),$$
	where we recall that $g_k = Q_{(k)}$. 
	Thus the formal series on the L.H.S. of \eqref{eq:Laplacelan} is by definition such that the coefficient of $z^k$ is  $\EPMM[g_k(q^{\la_1}t^{n-1}, \dots, q^{\la_n})]$. Since $g_k(q^{\la_1}t^{n-1}, \dots, q^{\la_n})$ is bounded by $g_k(1, \dots, 1)$, it is absolutely integrable with respect to the half-space Macdonald measure. One obtains \eqref{eq:Laplacelan} by multiplying both sides of Noumi's eigenrelation \eqref{eq:Noumieigenrelation} by $\ve_{\la}(\rho)$ and summing over $\la$. Moreover, the series expansion is absolutely convergent when $\vert z\vert <1$.
\end{proof}
Let us now examine the quantity in the R.H.S. of \eqref{eq:Laplacelan}.
\begin{proposition}
	Consider a formal variable $z$, a function $H(u_1, \dots, u_n)= \prod_{i=1}^{n} h(u_i)$ with  $h$ being a meromorphic function whose poles are away from  the $a_i$'s and $\Phi(u_1,\ldots, u_n) = \prod_{i<j} \phi(u_iu_j)$ with $\phi(u) = (tu)_{\infty}/(u)_{\infty}$. Then
	\begin{multline}\frac{\Noumi^z_{n}H\Phi(x_1, \dots, x_n) }{H\Phi(x_1, \dots, x_n)} \Bigg\vert_{x_1=a_1, \dots, x_n=a_n}    \\ = \sum_{k=0}^{n} \frac{1}{k!} \sum_{\nu_1, \dots, \nu_k=1}^{\infty} \oint\frac{\mathrm{d}w_1}{2\I\pi} \dots \oint\frac{\mathrm{d}w_k}{2\I\pi}  \mathcal{A}^{q,t}_{\nu}(\vec w) \prod_{i=1}^k \frac{\mathcal{H}^{q,t}_n(w_i)}{\mathcal{H}^{q,t}_n(q^{\nu_i}w_i)}\frac{\phi(w_i^2)z^{\nu_i}}{\phi(q^{\nu_i}w_i^2) (q^{\nu_i}-1)w_i}, 
	\label{eq:Noumiexpansiondiscrete}
	\end{multline}
	where the (positively oriented) integration contours enclose all the $a_i$'s and no other singularity, and 
	$$  
	\mathcal{H}^{q,t}(w) = \prod_{j=1}^n\frac{\phi(w/a_j)}{\phi(wa_j)} \frac{1}{ h(w)}. $$
	\label{prop:actionNoumi}
\end{proposition}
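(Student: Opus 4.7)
The strategy is to expand $\Noumi_n^z$ via its explicit form from Proposition \ref{prop:equivalencewithNoumi}, apply it to the multiplicative function $H\Phi$, and recognize the resulting multi-index series as a sum of residues of the contour integrand. Since $\Tshift_{q,x_i}^{\eta_i}$ acts diagonally on each monomial factor, one obtains
\begin{equation*}
\frac{\Noumi_n^z(H\Phi)(x)}{(H\Phi)(x)}\Bigg|_{x=a} = \sum_{\eta\in\Z_{\geqslant 0}^n} z^{|\eta|}\prod_{i<j}\frac{q^{\eta_j}a_j-q^{\eta_i}a_i}{a_j-a_i}\prod_{i,j}\frac{(ta_i/a_j;q)_{\eta_i}}{(qa_i/a_j;q)_{\eta_i}}\prod_i\frac{h(q^{\eta_i}a_i)}{h(a_i)}\prod_{i<j}\frac{\phi(q^{\eta_i+\eta_j}a_ia_j)}{\phi(a_ia_j)}.
\end{equation*}
I would then split this sum according to the support $I=\{i:\eta_i>0\}=\{i_1<\cdots<i_k\}$, writing $\eta_{i_j}=\nu_j\geqslant 1$. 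The $k=0$ term is $1$, and for $k\geqslant 1$ the $(I,\nu)$-summand factors into contributions from pairs with both indices in $I$, pairs with exactly one index in $I$, and singleton factors attached to the indices of $I$ alone.

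Next I would interpret the right-hand side of \eqref{eq:Noumiexpansiondiscrete} as a sum of residues. The only singularities of the integrand inside the contours come from the simple poles of $\mathcal{H}^{q,t}(w_j)$ at $w_j=a_\ell$, arising from the factor $1/(1-w_j/a_\ell)\subset\phi(w_j/a_\ell)$. Evaluating the $k$-fold integral by residues produces a sum over ordered tuples $(l_1,\dots,l_k)\in\{1,\dots,n\}^k$; the factor $\prod_{i<j}(w_i-w_j)$ in $\mathcal{A}^{q,t}_\nu(\vec w)$ kills all contributions with two coincident $l_j$'s, while the joint $(\nu_j,w_j)$-symmetry of the integrand combines with the $k!^{-1}$ prefactor to convert the sum over ordered tuples with distinct entries into a sum over $k$-subsets, exactly matching the partition of the $\eta$-sum by support.

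The main obstacle is the combinatorial bookkeeping: verifying term by term that the product of residues equals the corresponding Noumi summand. The key identity needed is $\phi(q^\nu u)/\phi(u)=(u;q)_\nu/(tu;q)_\nu$, which converts the $\phi$-ratios in $\mathcal{H}^{q,t}(w_j)/\mathcal{H}^{q,t}(q^{\nu_j}w_j)$ into the $q$-Pochhammer ratios appearing in the Noumi expansion. At $w_j=a_{i_j}$, the residue $-a_{i_j}f(1)$ with $f(u)=(tu;q)_\infty/(qu;q)_\infty$ merges with the $1/[(q^{\nu_j}-1)w_j]$ and $\phi(w_j^2)/\phi(q^{\nu_j}w_j^2)$ factors to produce the diagonal $\nu_j$-contribution. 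The factor $\mathcal{A}^{q,t}_\nu(\vec w)$ evaluated at $w_a=a_{i_a}$, $w_b=a_{i_b}$ reproduces the pairs-both-in-$I$ contributions, including the cross terms $\phi(q^{\nu_a+\nu_b}a_{i_a}a_{i_b})/\phi(a_{i_a}a_{i_b})$, while the $\ell\notin I$ terms of $\prod_j\mathcal{H}^{q,t}(w_j)/\mathcal{H}^{q,t}(q^{\nu_j}w_j)$ reproduce the one-in-$I$ contributions. Finally, for $|z|<1$ geometric decay of the summand together with uniform bounds on the integrand along a fixed contour allow Fubini to upgrade the formal power series identity to a numerical equality.
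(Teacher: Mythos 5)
Your proposal is correct and follows essentially the same route as the paper's proof: expand $\Noumi_n^z$ via its explicit form, exploit the multiplicative structure of $H\Phi$, partition the $\eta$-sum by its support (the paper phrases this via the decomposition $\Noumi_n^z = \sum_k \sum_\nu \tfrac{1}{(n-k)!k!}\sum_{\sigma\in\SS_n}\Noumi^z_{n,\sigma(\nu)}$ and a lemma identifying the inner sum over $\SS_n^k$ with a $k$-fold residue sum), and recognize the result as the evaluation by residues of the contour integral, using the identity $\phi(u)/\phi(q^\nu u) = (tu;q)_\nu/(u;q)_\nu$ to match factors. The only differences are notational (support set $I$ versus injective $\sigma \in \SS_n^k$) and that the paper carries out the residue bookkeeping in full.
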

\begin{proof}
	Let us denote 
	$$\Noumi_{n, \eta}^z = z^{\vert \eta\vert }\prod_{i<j} \frac{q^{\eta_j}x_j - q^{\eta_i}x_i}{x_j-x_i} \prod_{i,j} \frac{(tx_i/x_j)_{\eta_i}}{(qx_i/x_j)_{\eta_i}} \prod_{i=1}^n \big(\Tshift_{q, x_i}\big)^{\eta_i}.$$
	Observe that 
	\begin{equation}
	\Noumi^z_n = \sum_{k=0}^n \sum_{\nu_1, \dots, \nu_k=1}^{\infty} \frac{n!}{(n-k)!k!}\frac{1}{n!} \sum_{\sigma\in \SS_n} \Noumi^u_{n, \sigma(\nu)},
	\label{eq:decompositionNoumi}
	\end{equation} 
	where in the R.H.S. $\nu=(\nu_1, \dots, \nu_k, 0\dots, 0)$ and $\sigma\in \SS_n$ acts by permuting  the coordinates of $\nu$. Hence the proof 
	\begin{lemma} Assume $\nu_1, \dots, \nu_k\geqslant 1$ and $\nu_{k+1} = \dots = \nu_n=0$. Under the hypotheses of Proposition \ref{prop:actionNoumi}, we have 
		\begin{multline}
		\frac{1}{(n-k)!} \sum_{\sigma \in \SS_n}  
		\frac{\Noumi^z_{n, \sigma(\nu)}H\Phi(x_1, \dots, x_n) }{H\Phi(x_1, \dots, x_n)} \Bigg\vert_{x_1=a_1, \dots, x_n=a_n} \\  = 
	\oint\frac{\mathrm{d}w_1}{2\I\pi} \dots \oint\frac{\mathrm{d}w_k}{2\I\pi} \prod_{1\leqslant i<j\leqslant k} \frac{ (q^{\nu_j}w_j-q^{\nu_i}w_i)(w_i-w_j)\phi(q^{\nu_i+\nu_j}w_i w_j)\phi(w_iw_j)}{(q^{\nu_i}w_i-w_j)(q^{\nu_j}w_j-w_i)\phi(q^{\nu_i}w_iw_j)\phi(q^{\nu_j}w_jw_i)} \\ \times 
		\prod_{i=1}^k\left(\prod_{j=1}^n\frac{\phi(w_i/a_j)\phi(q^{\nu_i }w_i a_j)}{\phi(q^{\nu_i} w_i/a_j)\phi(w_i a_j)}\right) \frac{\phi(w_i^2)h(q^{\nu_1}w_i)z^{\nu_i}}{\phi(q^{\nu_i}w_i^2)h(w_i) (q^{\nu_i}-1)w_i},
		\label{eq:actionNouminu}
		\end{multline}
		where the contours are small positively oriented circles enclosing the $a_i$'s and no other singularity. 
		\label{lem:actionNouminu}
	\end{lemma}
	\begin{proof}
		The proof is modelled after Lemma 4.12 in \cite{borodin2016observables}. We have 
		$$  \prod_{i=1}^n\left(T_{q, x_i} \right)^{\nu_i}\ H\Phi(\vec x) = \prod_{i=1}^n h(q^{\nu_i}x_i)\prod_{i<j}\phi(q^{\nu_i+\nu_j}x_ix_j).$$
		The L.H.S in \eqref{eq:actionNouminu} equals 
		$$ \frac{1}{(n-k)!} \sum_{\sigma \in \SS_n} z^{\vert \nu\vert} \prod_{i<j} \frac{q^{\nu_j}a_{\sigma(j)}-q^{\nu_i}a_{\sigma(i)}}{a_{\sigma(j)}-a_{\sigma(i)}} \prod_{i,j=1}^n \frac{(ta_{\sigma(i)}/a_{\sigma(j)})_{\nu_i}}{(qa_{\sigma(i)}/a_{\sigma(j)})_{\nu_i}}  \prod_{i=1}^n \frac{h(q^{\nu_i}a_{\sigma(i)})}{h(a_{\sigma(i)})}\prod_{i<j}\frac{\phi(q^{\nu_i+\nu_j}a_{\sigma(i)}a_{\sigma(j)})}{\phi(a_{\sigma(i)}a_{\sigma(j)})},$$
		because permuting the $\nu_i$ is equivalent to permuting the $a_i$. Since $\nu_{k+1}=\dots=\nu_n=0$, the summand is invariant with respect to permutation of the $\lbrace \nu_i\rbrace_{i>k}$. Hence one can absorb the factor $1/(n-k)!$ and sum on permutations in 
		$\SS_n^k := \left\lbrace \sigma\in \SS_n \ : \  \sigma(1+k)<\dots <\sigma(n)\right\rbrace.$ 
		Thus, L.H.S in \eqref{eq:actionNouminu} equals 
		\begin{multline}\sum_{\sigma\in\SS_n^k} \underset{\underset{\forall 1\leqslant i\leqslant k}{w_i=a_{\sigma(i)}}}{\subs} \left\lbrace 
		\prod_{i<j} \frac{q^{\nu_j}w_j-q^{\nu_i}w_i}{w_j-w_i} \prod_{i=1}^k \prod_{j=k+1}^n \frac{a_{\sigma(j)}-q^{\nu_i}w_i}{a_{\sigma(j)}-w_i}
		\prod_{i=1}^k\prod_{j=1}^n \frac{(tw_i/a_j)_{\nu_i}}{(qw_i/a_j)_{\nu_i}}  \prod_{i=1}^n \frac{h(q^{\nu_i}w_i)}{h(w_i)}\right.\\ \left. \times 
		\prod_{1\leqslant i<j\leqslant k }\frac{\phi(q^{\nu_i+\nu_j}w_iw_j)}{\phi(w_iw_j)}
		\prod_{i=1}^k \prod_{j=1}^n \frac{\phi(q^{\nu_i}w_ia_j)}{\phi(w_ia_j)}
		\prod_{i=1}^k\prod_{j=1}^k \frac{\phi(w_iw_j)}{\phi(q^{\nu_i}w_iw_j)}
		\right\rbrace.
		\label{eq:sumresidues}
		\end{multline}
		Notice that we have  
		$$ \underset{\underset{\forall 1\leqslant i\leqslant k}{w_i=a_{\sigma(i)}}}{\subs} \left\lbrace    \prod_{i=1}^k \prod_{j=k+1}^n \frac{a_{\sigma(j)}-q^{\nu_i}w_i}{a_{\sigma(j)}-w_i} \right\rbrace  = \Res{\underset{\forall 1\leqslant i\leqslant k}{w_i=a_{\sigma(i)}}} \left\lbrace  \prod_{i=1}^k \prod_{j=1}^n \frac{q^{\nu_i}w_i-a_j}{w_i-a_j} \prod_{i,j=1}^k \frac{1}{q^{\nu_i}w_i-w_j}\prod_{1\leqslant i\neq j \leqslant k} (w_i-w_j)\right\rbrace,$$
		where the notation $\Res{\underset{\forall 1\leqslant i\leqslant k}{w_i=x_i}} \left\lbrace \cdot  \right\rbrace  $ denotes the residue of the function inside brackets at $w_1=x_1, \dots , w_k=x_k$.  
		This  implies that 
		\begin{multline} \eqref{eq:sumresidues} = \oint\frac{\mathrm{d}w_1}{2\I\pi} \dots \oint\frac{\mathrm{d}w_k}{2\I\pi} \prod_{i<j} \frac{q^{\nu_j}w_j-q^{\nu_i}w_i}{w_j-w_i} \prod_{i=1}^k \prod_{j=1}^n \frac{q^{\nu_i}w_i-a_j}{w_i-a_j} \prod_{i,j=1}^k \frac{1}{q^{\nu_i}w_i-w_j}\prod_{i\neq j=1}^k (w_i-w_j)\\ \times \prod_{i=1}^k\prod_{j=1}^n \frac{(tw_i/a_j)_{\nu_i}}{(qw_i/a_j)_{\nu_i}}  \prod_{i=1}^n \frac{h(q^{\nu_i}w_i)}{h(w_i)} 
		\prod_{1\leqslant i<j\leqslant k }\frac{\phi(q^{\nu_i+\nu_j}w_iw_j)}{\phi(w_iw_j)}
		\prod_{i=1}^k \prod_{j=1}^n \frac{\phi(q^{\nu_i}w_ia_j)}{\phi(w_ia_j)}
		\prod_{i=1}^k\prod_{j=1}^k \frac{\phi(w_iw_j)}{\phi(q^{\nu_i}w_iw_j)},
		\end{multline}
		where the contours enclose the $a_i$'s and no other singularity. Indeed, the integral of the right-hand side can be evaluated by summing all residues at $(w_i)_{1\leqslant i\leqslant k}=(a_{p(i)})_{1\leqslant i\leqslant k}$, for all functions $p:\lbrace 1, \dots, k \rbrace\to \lbrace 1, \dots, n \rbrace$, and it is clear that the residue is zero when $p$ is not injective because of the product of $(w_i-w_j)$. Hence, the integral is a sum of residues at $w_i=a_{\sigma(i)}$ over all $\sigma\in \SS_n^k$. Finally,  one has 
		$$  \frac{q^{\nu_i}w_i-a_j}{w_i-a_j} \frac{(tw_i/a_j)_{\nu_i}}{(qw_i/a_j)_{\nu_i}} = \frac{\phi(w_i/a_j)}{\phi(q^{\nu_i}w_i/a_j)},$$
		so that the integrand can be arranged to match with the R.H.S of \eqref{eq:actionNouminu}. 
	\end{proof}
	Let us finish the proof of Proposition \ref{prop:actionNoumi}.  Recalling the notations in \eqref{eq:defA} and \eqref{eq:defH}, the application of  Lemma \ref{lem:actionNouminu} in each term of the sum in \eqref{eq:decompositionNoumi} yields the desired result. Furthermore, if $\vert z\vert <1$, all sums are absolutely convergent so that \eqref{eq:Noumiexpansiondiscrete} holds as a numeric equality. 
\end{proof}
The expansion on the right of \eqref{eq:Noumiexpansiondiscrete} is not exactly as in Theorem \ref{theo:NoumiLaplace}. In order to replace the discrete sums over the $\nu_i$ in Proposition \ref{prop:actionNoumi} by contour integrals over vertical contours, we use the following Lemma. 
\begin{lemma}
Let $a\in (0,1)$ and  $h$ be a  holomorphic function on $ \lbrace z\in \C : \Real[z]\geqslant a \rbrace $, such that there exist a constant $C>0$ with 
$$ \vert h(z) \vert \leqslant C/\vert z\vert^2,  $$
for any $z$ in $ \lbrace z\in \C : \Real[z]\geqslant a \rbrace $. 
 Then, for $\zeta\in \C\setminus\R_{>0}$ we have 
$$ \sum_{k=1}^{\infty}  h(k) \zeta^k = \int_{\mathcal{D}_{a}}\frac{\mathrm{d}s}{2\I\pi}\ (-\zeta)^s\Gamma(-s)\Gamma(1+s) h(s),$$
whenever both sides are absolutely convergent (recall that $\mathcal{D}_a$ is oriented upwards). 
\label{lem:MellinBarnes}
\end{lemma}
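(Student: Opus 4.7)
The plan is to recognize this as a standard Mellin-Barnes contour integral representation of a power series, and to establish it by the residue theorem applied to a rectangular contour, letting the right side of the rectangle tend to $+\infty$.

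First I would simplify the kernel using Euler's reflection formula: $\Gamma(-s)\Gamma(1+s) = -\pi/\sin(\pi s)$. This is a meromorphic function with simple poles precisely at $s \in \Z$. For each integer $n\geqslant 0$, the residue is easily computed from $\Gamma(-s) \sim (-1)^n/(n!(n-s))$ together with $\Gamma(1+s)|_{s=n}=n!$, giving $\mathrm{Res}_{s=n}[\Gamma(-s)\Gamma(1+s)]=(-1)^{n+1}$. Consequently, the residue of the full integrand at $s=n$ equals $(-\zeta)^n(-1)^{n+1}h(n) = -\zeta^n h(n)$.

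Next, for a large integer $N$ and $T>0$, consider the rectangular contour $\mathcal{R}_{N,T}$ with vertices $a\pm\I T$ and $(N+\tfrac12)\pm\I T$, traversed \emph{clockwise}. Inside $\mathcal{R}_{N,T}$ lie exactly the poles $s=1,2,\dots,N$ of the integrand (the hypotheses $a\in(0,1)$ and holomorphy of $h$ on $\{\Real s\geqslant a\}$ ensure no other singularities intervene), so the residue theorem gives
\begin{equation*}
\frac{1}{2\pi\I}\oint_{\mathcal{R}_{N,T}} (-\zeta)^s\Gamma(-s)\Gamma(1+s)h(s)\,ds \;=\; -\sum_{n=1}^{N}(-\zeta^n h(n)) \;=\; \sum_{n=1}^{N}\zeta^n h(n).
\end{equation*}
The left side decomposes into the four sides of the rectangle: the left side (upward) is the piece of $\mathcal{D}_a$ between $a-\I T$ and $a+\I T$, while the right side and the two horizontal pieces are the ``error'' terms. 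I would show the latter three tend to $0$ by taking $T\to\infty$ first and then $N\to\infty$.

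The main obstacle -- and the heart of the matter -- is the decay estimate. Write $-\zeta=re^{\I\theta}$ with $\theta\in(-\pi,\pi)$, which is possible precisely because $\zeta\notin\R_{>0}$. Then $|(-\zeta)^{\sigma+\I t}|=r^\sigma e^{-\theta t}$. The standard bound $|\sin(\pi s)|\geqslant \tfrac12 e^{\pi|t|}$ for $|t|\geqslant 1$ gives $|\Gamma(-s)\Gamma(1+s)|=\pi/|\sin(\pi s)|\leqslant 2\pi e^{-\pi|t|}$. Combined with $|h(s)|\leqslant C/|s|^2$, the integrand on the horizontal pieces ($\Imag s=\pm T$) is bounded by $2\pi C r^\sigma e^{-(\pi\pm\theta)T}/T^2$; since $\pi\pm\theta>0$, integrating over $\sigma\in[a,N+\tfrac12]$ and sending $T\to\infty$ makes these sides vanish. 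On the right side ($\Real s=N+\tfrac12$), using $|\sin(\pi s)|=\cosh(\pi t)\geqslant 1$, the integrand is bounded in modulus by $2\pi C r^{N+1/2}e^{-\theta t}/((\tfrac{1}{2}+N)^2\cosh(\pi t))$, and integrating $t$ over $\R$ gives a finite constant (again using $|\theta|<\pi$), so the total contribution is $O(r^{N+1/2}/N^2)$. This tends to $0$ as $N\to\infty$ under the standing assumption that both sides of the target identity converge absolutely, which, via the bound $|h(k)|\leqslant C/k^2$, forces the regime $r\leqslant 1$. Assembling the pieces yields the claimed identity; if needed, the identity then extends by analytic continuation in $\zeta$ to the full set $\C\setminus\R_{>0}$ where both sides remain analytic.
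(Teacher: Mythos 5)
Your proof is correct and follows essentially the same route as the paper: the residue theorem identifies $\sum_{k\geqslant 1}h(k)\zeta^k$ with a contour integral enclosing the positive integers, and the decay of $\pi/\sin(\pi s)$ off the real axis together with the assumed bound on $h$ justifies deforming to $\mathcal{D}_a$. Your version is simply more explicit than the paper's two-line sketch (truncated rectangle, $T\to\infty$ then $N\to\infty$, spelled-out estimates on each side); the only loose phrasing is the claim that absolute convergence of the sum together with $|h(k)|\leqslant C/k^2$ \emph{forces} $r\leqslant 1$ -- it does not, since $h$ could decay faster -- but your closing analytic-continuation remark already handles the case $r>1$, so this does not affect the conclusion.
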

\begin{proof}
	It is clear by the residue theorem that 
	$$ \sum_{k=1}^{\infty}  h(k) \zeta^k = \int \frac{\mathrm{d}s}{2\I\pi}\ (-\zeta)^s\Gamma(-s)\Gamma(1+s) h(s),$$
	where the contour is chosen so as to enclose all the positive integers and no other singularity, e.g.,  the union of small positively oriented  circles around each positive integer. Using the fact that $h$ is holomorphic, and using the decay bound, one can deform the integration contour to be  $\mathcal{D}_{a}$ without changing the value of the integral. 
\end{proof}
To conclude the proof of Theorem \ref{theo:NoumiLaplace}, we need to check that the hypotheses of Lemma \ref{lem:MellinBarnes} are satisfied for its $k$-fold application in the $k$th term of \eqref{eq:Noumiexpansiondiscrete} for all $k\geqslant 1$. In the R.H.S. of \eqref{eq:expansionNoumi}, the integrand in each of the variables $s_i$ can be represented as $\Gamma(-s_i)\Gamma(1+s_i)(-\zeta)^{s_i}g(q^{s_i})$, where the function $g$ is an analytic function with isolated singularities. One may inspect all of these singularities one by one and check that for a specialization $\rho=\rho(\alpha, \beta, \gamma)$ with $\alpha_i\in (0,1)$ and assuming that the variables $w_i$ are integrated along contours very close to the $a_i$, there are no singularities in $s_i$ lying on the right of $\mathcal{D}_{R}$.
Then, since $q^s$ stays bounded for $s$ on the right of $\mathcal{D}_{R}$, $g(q^s)$ stays bounded as well. For $z$ such that $\vert z \vert <1$,  the integrand satisfies the decay bound of Lemma \ref{lem:MellinBarnes}. Thus, we have established Theorem \ref{theo:NoumiLaplace} for $\vert z \vert <1$ and it can be then analytically continued to $z \in \C\setminus\R_{>0}$. This completes the proof of Theorem \ref{theo:NoumiLaplace}. 
\end{proof}

We now seek to prove an analogous result using the operator $\Moumi_z^n$ instead of $\Noumi_z^n$. 
\begin{definition}
	We define for $z\in \C\setminus \R_{>0}$
	\begin{multline}
	E_R^z(a_1, \dots, a_n; \rho):=  \sum_{k=0}^{n} \ \frac{1}{k!}\ \int_{\mathcal{D}_R}\frac{\mathrm{d}s_1}{2\I\pi} \dots \int_{\mathcal{D}_R}\frac{\mathrm{d}s_k}{2\I\pi}\   \oint\frac{\mathrm{d}w_1}{2\I\pi} \dots \oint\frac{\mathrm{d}w_k}{2\I\pi}     \mathcal{B}_{\vec s}^{q,t}(\vec w) \\ \times
	\prod_{i=1}^k\Gamma(-s_i)\Gamma(1+s_i)  \frac{	\overline{\mathcal{G}}^{q,t}(w_i)}{	\overline{\mathcal{G}}^{q,t}(q^{-s_i}w_i)}  \frac{\phi(w_i^2) (-z)^{s_i}}{\phi(q^{-s_i}w_i^2) (1-q^{s_i})w_i} ,
	\label{eq:MoumiMellinBarnes}
	\end{multline}
	where the $w$ contours are small positively oriented  circles enclosing the $a_i$ and no other singularity, 
	\begin{equation*}
	\mathcal{B}_{\vec s}^{q,t}(\vec w): = \prod_{1\leqslant i<j\leqslant k} \frac{ (q^{s_j}w_i-q^{s_i}w_j)(w_j-w_i)\phi(q^{-s_i-s_j}w_i w_j)\phi(w_iw_j)}{(q^{s_i}w_j-w_i)(q^{s_j}w_i-w_j)\phi(q^{-s_i}w_iw_j)\phi(q^{-s_j}w_jw_i)},
	\end{equation*}
	and 
	\begin{equation}
	\overline{\mathcal{G}}^{q,t}(w) = \prod_{j=1}^n\frac{\phi(a_j/w)}{\phi(wa_j)}\frac{1}{\Pi(w; \rho)}.
	\label{eq:defG}
	\end{equation}  
	\label{def:Moumiexpansion}
\end{definition}
\begin{remark}
	\label{rem:presencepoles}
	Unlike the expansion \eqref{eq:expansionNoumi} occurring in Theorem \ref{theo:NoumiLaplace}, in \eqref{eq:MoumiMellinBarnes} the integrand has many poles in the variables $s_i$ lying on the right of the contour $\mathcal{D}_{R}$. This means that the integral cannot be turned into a discrete sum using Lemma \ref{lem:MellinBarnes}. 
\end{remark}

\begin{theorem}  Let $z\in \C\setminus \R_{>0}$. Assume that:
	\begin{enumerate}
		\item[(i)] The parameters  $ a_1, \dots, a_n\in(0,1) $ are chosen such that for all $i,j$, $\vert t a_i/a_j\vert <1$, $\vert q a_i/a_j\vert <1$ and $\max\lbrace a_i\rbrace < a_i/a_j$.
		\item[(ii)]  $R\in (0,1)$ is chosen so that for all $i,j$,  $a_i<q^R<a_i/a_j$. 
		\item[(iii)] The specialization $\rho=\rho(\alpha, \beta, \gamma)$ is such that for all $i,j$,  $q^R>a_i\alpha_j$. 
	\end{enumerate}
 Then, 
	\begin{equation}
	\EPMM_{(a_1, \dots, a_n), \rho}\left[ \prod_{i=1}^n \frac{(q^{-\la_i}t^{i}z)_{\infty}}{(q^{-\la_i}t^{i-1}z)_{\infty}} \right] =E_R^z(a_1, \dots, a_n; \rho). \label{eq:Laplacela1}
	\end{equation}
	\label{theo:MoumiLaplace}
\end{theorem}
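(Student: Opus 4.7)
The strategy mirrors that of Theorem~\ref{theo:NoumiLaplace}, with one essential modification: as pointed out in Remark~\ref{rem:whymoredifficult}, the direct analog of Proposition~\ref{prop:NoumiLaplace} fails because $\Moumi_n^z$ shifts variables by $q^{-1}$, and the eigenvalues $\prod_{i=1}^n (q^{-\lambda_i}t^iz)_\infty/(q^{-\lambda_i}t^{i-1}z)_\infty$ grow without control as $|\lambda|\to\infty$, obstructing any Fubini-type interchange of $\Moumi_n^z$ with the Littlewood summation $\sum_\lambda P_\lambda\, \ve_\lambda$. The cure is to work throughout with an operator $\MoumiA_n^z$ defined directly via Mellin-Barnes contour integrals, which agrees with $\Moumi_n^z$ on polynomials but has better analytic behavior on $\Pi(\vec x;\rho)\Phi(\vec x)$.

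\emph{Step 1: construction.} For $H(\vec x)=\prod_i h(x_i)$ with $h$ analytic near the $a_j$'s, first establish a discrete expansion of $\Moumi_n^z(H\Phi)(\vec a)/(H\Phi)(\vec a)$ as a sum over $k\in\{0,\ldots,n\}$ of $k$-fold sums over $\vec\nu\in\Z_{\geqslant 1}^k$ with $k$-fold contour integrals in $w_i$, by a symmetrization-and-residue computation modelled on Lemma~\ref{lem:actionNouminu}. Then apply Lemma~\ref{lem:MellinBarnes} coordinatewise to convert each discrete $\sum_{\nu_i\geqslant 1}$ into a Mellin-Barnes integral along $\mathcal{D}_R$. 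Hypotheses (i)--(iii) are tailored so this conversion is valid: $a_i<q^R<a_i/a_j$ places $R$ so that small $w$-contours around the $a_j$'s are admissible, while $q^R>a_i\alpha_j$ ensures no unwanted poles lie to the right of $\mathcal{D}_R$ and yields the decay hypothesis for Lemma~\ref{lem:MellinBarnes}. \emph{Define} $\MoumiA_n^z$ to be the operator whose action on an analytic $H\Phi$ is given by the resulting Mellin-Barnes expression; specializing $h(u)=\Pi(u;\rho)$ recovers $\overline{\mathcal{G}}^{q,t}$ from \eqref{eq:defG} and identifies $E_R^z(\vec a;\rho)=\MoumiA_n^z(\Pi(\cdot;\rho)\Phi)(\vec a)/(\Pi(\vec a;\rho)\Phi(\vec a))$.

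\emph{Step 2: eigenrelation and assembly.} Applied to the polynomial $P_\lambda$, the Mellin-Barnes expression defining $\MoumiA_n^z P_\lambda$ agrees termwise (through Lemma~\ref{lem:MellinBarnes}) with the discrete expansion of $\Moumi_n^z P_\lambda$ whenever $|z|$ is small enough for Proposition~\ref{prop:othereigenrelation} to apply. Hence $\MoumiA_n^z P_\lambda = \prod_i\frac{(q^{-\lambda_i}t^iz)_\infty}{(q^{-\lambda_i}t^{i-1}z)_\infty}\cdot P_\lambda$ for $|z|$ small; both sides are analytic in $z\in\C\setminus\R_{>0}$, so the eigenrelation persists there. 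Finally, expand $\Pi(\vec x;\rho)\Phi(\vec x)=\sum_\lambda \ve_\lambda(\rho)P_\lambda(\vec x)$ via the generalized Littlewood identity~\eqref{eq:CauchyLittlewoodspecialized}, apply $\MoumiA_n^z$ and interchange with the $\lambda$-sum, and invoke the eigenrelation termwise: dividing by $\Pi(\vec a;\rho)\Phi(\vec a)$ produces the left-hand side of \eqref{eq:Laplacela1}, while Step~1 identifies the result as $E_R^z(\vec a;\rho)$.

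\emph{Main obstacle.} The delicate point is justifying this Fubini interchange. The virtue of the Mellin-Barnes formulation is that, along the bounded $(s,w)$-contours, the integrand applied to $P_\lambda(\vec a)$ is uniformly bounded in $\lambda$: the ratio $\overline{\mathcal{G}}^{q,t}(w)/\overline{\mathcal{G}}^{q,t}(q^{-s}w)$ depends only on the fixed analytic factors in \eqref{eq:defG} (none of which involves $\lambda$), the $\lambda$-dependence enters only through $P_\lambda(\vec a)$, and $\Gamma(-s)\Gamma(1+s)(-z)^s$ provides exponential decay in the imaginary directions. Condition (iii) together with the ``mild assumptions on $\rho$'' in the theorem statement are precisely what make $\sum_\lambda \ve_\lambda(\rho)\, |P_\lambda(\vec a)|$ converge absolutely with a bound independent of $(s,w)$, so Fubini applies. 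This is where $\MoumiA_n^z$ decisively succeeds where $\Moumi_n^z$ fails: replacing a discrete $\sum_\nu$ inside the operator by $\int_{\mathcal{D}_R}$ removes the obstruction to exchanging operator and Littlewood summation.
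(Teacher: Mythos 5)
Your Step~2 (the eigenrelation $\MoumiA_n^zP_\lambda=\prod_i\frac{(q^{-\lambda_i}t^iz)_\infty}{(q^{-\lambda_i}t^{i-1}z)_\infty}P_\lambda$, extended from small $|z|$ by analyticity, then interchanged with the absolutely convergent Littlewood expansion $\Pi\Phi=\sum_\lambda\ve_\lambda(\rho)P_\lambda$) lines up with Propositions~\ref{prop:analyticeigenrelation}, \ref{prop:generalactionO} and Corollary~\ref{cor:analyticoperatorandLaplace}. Step~1, however, has a genuine gap, and it is exactly the obstruction this theorem is designed to get around. You propose to obtain $E_R^z$ by first writing a discrete expansion of $\Moumi_n^z(H\Phi)/(H\Phi)$ and then converting each $\sum_{\nu_i\geqslant1}$ to a Mellin--Barnes integral via Lemma~\ref{lem:MellinBarnes}, claiming hypothesis~(iii) "ensures no unwanted poles lie to the right of $\mathcal{D}_R$." It does the opposite. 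The factor $\Pi(q^{-s_i}w_i;\rho)$ inside $\overline{\mathcal{G}}^{q,t}(w_i)/\overline{\mathcal{G}}^{q,t}(q^{-s_i}w_i)$ has poles at $q^{s_i}=q^k\alpha_\ell w_i$ ($k\geqslant0$), which on $w_i\approx a_m$ satisfy $|q^{s_i}|=q^k\alpha_\ell a_m<q^R$ by~(iii), i.e.\ $\Real[s_i]>R$, strictly to the \emph{right} of $\mathcal{D}_R$; the cross factors contribute poles at $q^{s_i+s_j}=q^ka_ia_j$, also to the right by~(ii). Remark~\ref{rem:presencepoles} states this explicitly: the integrand of $E_R^z$ has many poles to the right of $\mathcal{D}_R$, so Lemma~\ref{lem:MellinBarnes} cannot be used to pass between $E_R^z$ and any discrete $\nu$-sum. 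Worse, even a formally regularized $\Moumi_n^z(\Pi\Phi)/(\Pi\Phi)$ would not equal the Laplace transform: Remarks~\ref{rem:whymoredifficult} and~\ref{rem:subtleties} show $\EPMM[\cdots]\neq\Moumi_n^z(\Pi\Phi)/(\Pi\Phi)$ in general, with an explicit counterexample.

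The paper's actual construction avoids $\Moumi_n^z(\Pi\Phi)$ entirely. It defines $\MoumiA_n^z$ intrinsically as an $n$-fold Mellin--Barnes integral over $\mathcal{D}_{-\e}^n$ acting by $f\mapsto\int (-z)^{\sum s_i}\cdots f(q^{-s_1}x_1,\ldots,q^{-s_n}x_n)\,\mathrm{d}s$. The eigenrelation on $P_\lambda$ is proved for small $|z|$ (where, with $x\in\disk^n$, no poles obstruct the conversion to the discrete expansion of $\Moumi_n^z P_\lambda$) and then continued analytically. The identity $\MoumiA_n^z(\Pi\Phi)/(\Pi\Phi)=E_R^z$ is then obtained in Proposition~\ref{prop:actionMoumiAonZ} by deforming the $s$-contours from $\mathcal{D}_{-\e}$ to $\mathcal{D}_\e$ (the residues at $s_i=0$ give the finite $\sum_{k=0}^n$), then to $\mathcal{D}_R$ (where (i)--(iii) guarantee no pole lies \emph{between} $\mathcal{D}_0$ and $\mathcal{D}_R$, while the poles to the right of $\mathcal{D}_R$ are simply not touched), and finally a symmetrization/residue computation (Lemma~\ref{lem:actionMoumiAnu}) produces the $w$-contour integrals. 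Your proposal needs to be revised so that $\MoumiA_n^z$ is constructed independently of $\Moumi_n^z(\Pi\Phi)$, and so that $E_R^z$ is identified with $\MoumiA_n^z(\Pi\Phi)/(\Pi\Phi)$ by contour deformation rather than by a (forbidden) Mellin--Barnes-to-sum conversion.
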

The expectation in the L.H.S. of \eqref{eq:Laplacela1} should again be thought of as a Laplace transform. A full-space analogue of this result could be proved by adapting the proof that we present below to the case of the full-space Macdonald processes. Note that in the $q$-Whittaker case ($t=0$), a full-space analogue is already available \cite[Theorem 3.3]{borodin2015height}. 

\begin{remark}
It will be useful to note that for fixed $n\geqslant 1$ and $z\in \C\setminus \R_{>0}$ the observable  $\prod_{i=1}^n \frac{(q^{-\la_i}t^{i}z)_{\infty}}{(q^{-\la_i}t^{i-1}z)_{\infty}}$ is bounded as a function of $\lambda$. Indeed, as long as $t\in (0,1)$, the ratio  $\frac{(q^{-m}tx)_{\infty}}{(q^{-m}x)_{\infty}}$ goes to zero as $m$ goes to $+\infty$ for any fixed $x\in \C \setminus \R_{>0}$. \label{rem:observablebounded}
\end{remark}
\begin{remark}
Since the L.H.S in \eqref{eq:Laplacela1} is analytic in the parameters $a_1, \dots, a_n$ and the parameters of the specialization $\rho$, the formula can be extended to a range of parameters forbidden by the hypotheses of Theorem \ref{theo:MoumiLaplace}, at the expense of choosing more complicated contours for the variables $s_i$. 
\end{remark}
\begin{remark} 
	The proof of Theorem \ref{theo:MoumiLaplace} is substantially different -- and more difficult -- than that of Theorem \ref{theo:NoumiLaplace}. The L.H.S of \eqref{eq:Laplacela1} is by definition 
	$$ \frac{1}{\Pi(a; \rho)\Phi(a)} \sum_{\la} \ve_{\la}(\rho) P_{\la}(a) \prod_{i=1}^n \frac{(q^{-\la_i}t^{i}z)_{\infty}}{(q^{-\la_i}t^{i-1}z)_{\infty}}.$$
	It is tempting to replace the quantity $P_{\la}(a) \prod_{i=1}^n \frac{(q^{-\la_i}t^{i}z)_{\infty}}{(q^{-\la_i}t^{i-1}z)_{\infty}}$ above by 
	$\Moumi_n^z P_{\la}(a)$ using Proposition \ref{prop:othereigenrelation}, and exchange the action of the operator $\Moumi_n^z$ with the summation over $\la$, to arrive at an analogue of Proposition \ref{prop:NoumiLaplace}. But this exchange of summations is forbidden because Proposition \ref{prop:othereigenrelation} requires $\vert zq^{-\lambda_i}t^{i-1}\vert <1$, which can never be true simultaneously for all $\la$.

	A natural roundabout way would be to work with formal power series. It is reasonable to expect both sides of \eqref{eq:Laplacela1} to be analytic in the variables $a_1, \dots, a_n$. For a function $F$
	analytic and symmetric in the variables $(a_1, \dots, a_n)=:\vec a$, let us denote by $[P_{\lambda}(\vec a)]\left\lbrace F\right\rbrace$ the coefficient of $P_{\la}(\vec a)$ when the quantity $F$ is expanded in $a_1, \dots, a_n$ using the basis of  Macdonald polynomials $P_{\la}$. Similarly, for a formal power series $F$ in the variable $z$, we likewise  denote by  $[z^k]\left\lbrace F\right\rbrace$ the coefficient of $z^k$. 
	
	Then, we clearly have 
	$$ [z^k]\left\lbrace [P_{\la}(\vec a)] \left\lbrace   \EPMM_{(a_1, \dots, a_n), \rho }\left[ \prod_{i=1}^n \frac{(q^{-\la_i}t^{i}z)_{\infty}}{(q^{-\la_i}t^{i-1}z)_{\infty}} \right]  \right\rbrace \right\rbrace  = \frac{\ve_{\la}(\rho) g_k(q^{-\la_1}t^0, \dots,q^{-\la_n}t^{n-1})}{ \Pi(\vec a; \rho)\Phi(\vec a)}.$$
	On the other hand, 
	$$ [P_{\la}(\vec a)] \left\lbrace [z^k] \Big\lbrace   \Moumi_n^z  \Pi(\vec a; \rho)\Phi(\vec a) \Big\rbrace \right\rbrace  = \ve_{\la}(\rho) g_k(q^{-\la_1}t^0, \dots,q^{-\la_n}t^{n-1}).$$
	Unlike the proof of Proposition \ref{prop:NoumiLaplace}, it turns out to be  impossible to justify the exchange of  summations over $k$ and $\lambda$, and we actually have that in general 
	$$ \EPMM_{(a_1, \dots, a_n), \rho}\left[ \prod_{i=1}^n \frac{(q^{-\la_i}t^{i}z)_{\infty}}{(q^{-\la_i}t^{i-1}z)_{\infty}} \right]  \neq \frac{ \Moumi_n^z  \Pi(\vec a; \rho)\Phi(\vec a)}{\Pi(\vec a; \rho)\Phi(\vec a)},$$
even for $z$ and $\vec a$ very close to $0$	(this can be checked explicitly when, e.g., $n=1$ and $\rho$ is a single variable pure alpha specialization, see also Remark \ref{rem:subtleties}). To resolve this issue, we will construct another operator $\MoumiA_n^z$, such that 
	$$ [z^k] \left\lbrace  [P_{\la}(\vec a)] \Big\lbrace   \MoumiA_n^z  \Pi(\vec a; \rho)\Phi(\vec a) \Big\rbrace \right\rbrace  = \ve_{\la}(\rho) g_k(q^{-\la_1}t^0, \dots,q^{-\la_n}t^{n-1}).$$
	The definition of  $\MoumiA_n^z$ will be suggested by the analytic continuation of \eqref{eq:othereigenrelation} to $z\in \C\setminus \R_{>0}$. 
	\label{rem:whymoredifficult}
\end{remark} 

\begin{remark}
There is another approach to obtain formulas for the observable appearing in \eqref{eq:Laplacela1}. We thank an anonymous referee for this suggestion. Acting on the generalized Littlewood identity with the operator $\Noumi^z_n \prod_{i=1}^n \Tshift_{u,x_i}$ instead of $\Noumi^z_n$ as in the proof of Theorem \ref{theo:NoumiLaplace}, we can compute the observable 
\begin{equation}
\mathbb E\left[  \prod_{i=1}^n u^{\lambda_i}\frac{(q^{\lambda_i}t^{n-i+1}z)_{\infty}}{(q^{\lambda_i}t^{n-i}z)_{\infty}}   \right].
\label{eq:moregeneralobservable}
\end{equation} 
Replacing $(z,u)$ by $(\frac{q}{t^n z}, tu)$, we obtain, after appropriate renormalization, the observable  $$ \mathbb E\left[  \prod_{i=1}^n u^{\lambda_i}\frac{(q^{-\lambda_i}t^{i}z)_{\infty}}{(q^{-\lambda_i}t^{i-1}z)_{\infty}}   \right].$$ 
This approach has the advantage of avoiding analytic continuations and the introduction of the operator $\MoumiA_n^z$. However, the formula produced by this method is different from the result of Theorem \ref{theo:MoumiLaplace}, and it is not clear how to show the equivalence of both formulas. Because of the substitution $(z,u)\to (\frac{q}{t^n z}, tu)$, the formula obtained appears singular at $t=0$. It would be interesting to manipulate the formula so as to remove all the singularities at $t=0$. We leave this for future consideration.  
\end{remark}

\subsection{Proof of Theorem \ref{theo:MoumiLaplace}}
\label{sec:proofmaintheo}
\begin{definition}
Let $\disk$ be the open unit disk $\disk := \lbrace z\in \C: \vert z\vert <1\rbrace$ and  $\spaceanalytic$ be the space of analytic  symmetric functions\footnote{Here we do not mean elements of $\Sym$ but analytic functions in $n$ variables that are symmetric in these variables.} $f(x_1, \dots, x_n)$ on $\mathbb{D}^n$.  Such a function $f$ admits an absolutely convergent expansion in Macdonald symmetric polynomials on $\disk^n$. More precisely, for all $x=( x_1, \dots, x_n) \in \disk^n$,
	$$ f(x_1, \dots, x_n)  = \sum_{\la} c_{\la}(f)  P_{\la}(x), \text{ with } \sum_{\la} \big\vert c_{\la}(f)  P_{\la}(x)\big\vert <\infty .$$
\end{definition}
\begin{definition}
	We define an operator $\MoumiA_n^z : \spaceanalytic \to \spaceanalytic$ by 
	\begin{multline}
	\MoumiA_n^z f(x_1, \dots, x_n) =   \int_{\mathcal{D}_{-\e}} \frac{\mathrm{d}s_1}{2\I\pi}\dots \int_{\mathcal{D}_{-\e}}\frac{\mathrm{d}s_n}{2\I\pi} (-z)^{s_1 + \dots + s_n }\prod_{i<j} \frac{q^{s_j}x_i - q^{s_i}x_j}{x_i-x_j} \\ \times  \prod_{i,j} \frac{(tx_i/x_j)_{\infty}}{(qx_i/x_j)_{\infty}}\frac{(q^{s_j+1}x_i/x_j)_{\infty}}{(tq^{s_j}x_i/x_j)_{\infty}} \prod_{i=1}^n \Gamma(-s_i)\Gamma(1+s_i) f(q^{-s_1}x_1, \dots, q^{-s_n}x_n), 
	\end{multline}
	where $\e(x_1, \dots, x_n)$ is chosen small enough so that there are no poles of the integrand with real part  between $-\e$ and $0$.  
\end{definition}
It is not yet clear why $\MoumiA_n^z f$ must belong to $\spaceanalytic$. The next Proposition can be interpreted as an analytic continuation of Proposition \ref{prop:othereigenrelation}. It implies, in particular, that  $\MoumiA_n^z$ preserves $\spaceanalytic$.
\begin{proposition} For $z\in \C \setminus \R_{>0}$ and $ x_1, \dots, x_n\in\disk $ such that for all $i,j$, $\vert t x_i/x_j\vert <1$, 
	\begin{equation}
\MoumiA_n^z P_{\la}(x_1, \dots, x_n)  = \prod_{i=1}^n\frac{(q^{-\la_i}t^{i}z)_{\infty}}{(q^{-\la_i}t^{i-1}z)_{\infty}}P_{\la}(x). 
\label{eq:analyticeigenrelation}
	\end{equation}
	\label{prop:analyticeigenrelation}
\end{proposition}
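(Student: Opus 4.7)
The plan is to show that the Mellin–Barnes integral defining $\MoumiA_n^z$ reduces, via contour deformation, to the discrete sum defining $\Moumi_n^z$ whenever the latter converges, and then invoke Proposition~\ref{prop:othereigenrelation} together with analytic continuation in $z$.

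First, I will fix $\la$, fix $x_1,\dots,x_n$ satisfying the hypotheses, and temporarily restrict $z$ to a small punctured disk $\{0<|z|<r\}$ where $r>0$ is chosen smaller than $\min_i |q^{\la_i} t^{1-i}|$. For such $z$, the multi-dimensional integral $\MoumiA_n^z P_\la(x)$ is absolutely convergent: the factor $\Gamma(-s_i)\Gamma(1+s_i)=-\pi/\sin(\pi s_i)$ decays like $e^{-\pi|\Imag(s_i)|}$ along each vertical contour $\mathcal{D}_{-\e}$, the factor $(-z)^{s_i}$ contributes $|z|^{-\e}e^{-\arg(-z)\Imag(s_i)}$, and the remaining $q$-Pochhammer and rational factors are uniformly bounded in $\Imag(s_i)$ for $\Real(s_i)$ in a compact set (since the variables are on compact subsets of $\disk$). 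These estimates also verify that $\MoumiA_n^z f \in \spaceanalytic$ whenever $f\in\spaceanalytic$.

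Second, I will deform each contour $\mathcal{D}_{-\e}$ one by one to the right, collecting residues at the poles of $\Gamma(-s_i)$, which lie precisely at $s_i\in\Z_{\geqslant 0}$. The key verification is that the rest of the integrand has no poles in the right half-plane $\Real(s_i)>-\e$ under the assumption $|tx_i/x_j|<1$: indeed, the denominators $(qx_i/x_j)_\infty$ and $(tq^{s_j}x_i/x_j)_\infty$ stay nonzero there, and the prefactor $q^{s_j}x_i-q^{s_i}x_j$ has zeros, not poles. Closing to the right is justified because $(-z)^{s_i}$ decays as $\Real(s_i)\to+\infty$ for $|z|<1$, while $\Gamma(-s_i)\Gamma(1+s_i)$ grows only polynomially along the moved contours. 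At $s_i=k_i\in\Z_{\geqslant 0}$, the residue identity $\Res_{s=k}\Gamma(-s)\Gamma(1+s)(-z)^s = (-1)^{k+1} z^k$ combined with the identity
\[
\frac{(q^{s_j+1}x_i/x_j)_\infty}{(tq^{s_j}x_i/x_j)_\infty}\bigg|_{s_j=k_j}
\cdot \frac{(tx_i/x_j)_\infty}{(qx_i/x_j)_\infty}
= \frac{(tx_i/x_j)_{k_j}}{(qx_i/x_j)_{k_j}}
\]
converts the integral into exactly the series expansion of $\Moumi_n^z P_\la(x)$ given by Proposition~\ref{prop:equivalencewithNoumi} (applied after $x_i\mapsto x_i^{-1}$).

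Third, once the equality $\MoumiA_n^z P_\la(x)=\Moumi_n^z P_\la(x)$ is established for $|z|<r$, Proposition~\ref{prop:othereigenrelation} gives $\Moumi_n^z P_\la(x)=\prod_{i=1}^n \frac{(q^{-\la_i}t^i z)_\infty}{(q^{-\la_i}t^{i-1}z)_\infty}P_\la(x)$ as a numeric identity on that disk. Both sides of \eqref{eq:analyticeigenrelation} are analytic functions of $z$ on $\C\setminus\R_{>0}$: the right-hand side manifestly, and the left-hand side by the absolute-convergence estimates above (allowing one to differentiate under the integral), since $\mathcal{D}_{-\e}$ avoids the positive real axis. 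Hence the identity extends to all $z\in\C\setminus\R_{>0}$.

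The main obstacle I anticipate is the rigorous justification of the contour-deformation step: one must shift $n$ nested Mellin–Barnes contours past infinitely many poles and show that the arcs at infinity contribute nothing at each step. This requires sufficiently sharp decay estimates, uniform in the other variables $s_j$, on the combination of $\Gamma$-ratios, the $q$-Pochhammer factors (in particular $(tq^{s_j}x_i/x_j)_\infty^{-1}$, which behaves wildly if $\Real(s_j)\to-\infty$), and the Vandermonde-like prefactor $\prod_{i<j}(q^{s_j}x_i-q^{s_i}x_j)$. A safe route is to deform the contours to large semicircles of radius $N+\tfrac12$ avoiding all integer poles, use the reflection identity $\Gamma(-s)\Gamma(1+s)=-\pi/\csc(\pi s)$ to exhibit exponential decay off the real axis, and let $N\to\infty$, controlling each contour successively.
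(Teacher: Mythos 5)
Your proposal is correct and follows essentially the paper's own argument: first establish the identity for $z$ small enough that the discrete operator $\Moumi_n^z$ converges (the paper delegates the equivalence between the series and the Mellin--Barnes integral to Lemma~\ref{lem:MellinBarnes}, whereas you re-derive it by explicit contour shifting), then invoke Proposition~\ref{prop:othereigenrelation}, and finish by analytic continuation in $z$, using the exponential decay of $\Gamma(-s_i)\Gamma(1+s_i)$ along $\mathcal{D}_{-\e}$. One small slip: $\Res_{s=k}\Gamma(-s)\Gamma(1+s)(-z)^s = (-1)^{k+1}(-z)^k = -z^k$, not $(-1)^{k+1}z^k$; the correct sign, combined with the clockwise orientation factor from closing to the right, is what yields the $z^{|\eta|}$ coefficients of $\Moumi_n^z$.
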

\begin{proof}
	When $z$ is such that for all $i=1, \dots n$, $\vert zq^{-\lambda_i}t^{i-1}\vert <1$  and $ x_1, \dots, x_n\in\disk$, \eqref{eq:analyticeigenrelation} is a reformulation of Proposition \ref{prop:othereigenrelation} using Mellin-Barnes integrals via Lemma \ref{lem:MellinBarnes} (The condition $\vert t x_i/x_j\vert <1$ ensures that there are no unwanted singularities to the right of the contour $\mathcal{D}_{-\e}$). 
	
	Both sides of \eqref{eq:analyticeigenrelation} are analytic in $z$, so that  one can analytically extend the identity to any $z\in \C \setminus \R_{>0}$.  Analyticity is clear for the right-hand-side. For the left-hand-side, it follows from the exponential decay of $ \Gamma(-s_i)\Gamma(1+s_i)$ on the contour $\mathcal{D}_{-\e}$ so that all integrals are absolutely convergent.
\end{proof}
\begin{proposition}
	Let $f \in \spaceanalytic$ with $ f(x_1, \dots, x_n)  = \sum_{\la} c_{\la}(f) P_{\la}(x)$. Then for  $ x_1, \dots, x_n\in(0,1) $ such  that for all $i,j$, $\vert t x_i/x_j\vert <1$, we have 
	$$  \MoumiA_n^z f(x_1, \dots, x_n)  = \sum_{\la} c_{\la}(f) P_{\la}(x) \prod_{i=1}^n\frac{(q^{-\la_i}t^{i}z)_{\infty}}{(q^{-\la_i}t^{i-1}z)_{\infty}} . $$
	\label{prop:generalactionO}
\end{proposition}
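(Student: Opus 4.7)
The plan is to prove Proposition \ref{prop:generalactionO} by inserting the Macdonald expansion of $f$ inside the integral defining $\MoumiA_n^z$, using Fubini to interchange the summation with the $s$-integrations, and then applying the eigenrelation of Proposition \ref{prop:analyticeigenrelation} to each $P_\lambda$ separately. The main substance is therefore to justify the interchange of sum and integral.

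First, I would choose $\varepsilon>0$ small enough that $(q^{-\varepsilon}x_1,\ldots,q^{-\varepsilon}x_n)\in \disk^n$ (possible since $x_i\in(0,1)$) and that the constraints  $\vert t\,q^{-\varepsilon}x_i/(q^{-\varepsilon}x_j)\vert=\vert t x_i/x_j\vert<1$ still hold, so that Proposition \ref{prop:analyticeigenrelation} applies to each $P_\lambda$ evaluated at $(q^{-s_1}x_1,\ldots,q^{-s_n}x_n)$ for $s\in\mathcal{D}_{-\varepsilon}^n$. Since $f\in\spaceanalytic$, the expansion
\[
f(q^{-s_1}x_1,\ldots,q^{-s_n}x_n)=\sum_{\lambda} c_\lambda(f)\, P_\lambda(q^{-s_1}x_1,\ldots,q^{-s_n}x_n)
\]
converges absolutely and uniformly for $s\in\mathcal{D}_{-\varepsilon}^n$, because $\vert q^{-s_i}x_i\vert=q^\varepsilon x_i$ is independent of $\mathrm{Im}(s_i)$ and lies in a compact subset of $(0,1)$.

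Next, I would substitute this expansion into the definition of $\MoumiA_n^z f$. The prefactor
\[
\prod_{i<j}\frac{q^{s_j}x_i-q^{s_i}x_j}{x_i-x_j}\prod_{i,j}\frac{(tx_i/x_j)_\infty}{(qx_i/x_j)_\infty}\frac{(q^{s_j+1}x_i/x_j)_\infty}{(tq^{s_j}x_i/x_j)_\infty}
\]
is bounded on $\mathcal{D}_{-\varepsilon}^n$ (the $q$-Pochhammer ratios are uniformly bounded because $\vert q^{s_j}x_i/x_j\vert=q^{-\varepsilon}x_i/x_j$ stays bounded away from $1$ under hypothesis (i) of Theorem \ref{theo:MoumiLaplace}, and the polynomial factor $q^{s_j}x_i-q^{s_i}x_j$ grows only polynomially in $\mathrm{Im}(s)$). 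The Gamma factors $\prod_i\Gamma(-s_i)\Gamma(1+s_i)$ provide exponential decay in each $\mathrm{Im}(s_i)$ via the Euler reflection bound $|\Gamma(-s)\Gamma(1+s)|=\pi/|\sin(\pi s)|\lesssim e^{-\pi|\mathrm{Im}(s)|}$. Consequently the full integrand, bounded in modulus by a uniform integrable envelope times $|c_\lambda(f)|\,P_\lambda(q^{-\varepsilon}x_1,\ldots,q^{-\varepsilon}x_n)$, is summable-integrable, so Fubini's theorem applies.

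After interchanging, for each fixed $\lambda$ the remaining integral is precisely $\MoumiA_n^z P_\lambda(x_1,\ldots,x_n)$, which by Proposition \ref{prop:analyticeigenrelation} equals $P_\lambda(x)\prod_{i=1}^n\frac{(q^{-\lambda_i}t^i z)_\infty}{(q^{-\lambda_i}t^{i-1}z)_\infty}$. Summing over $\lambda$ yields the claimed formula, and the resulting series converges absolutely because the eigenvalue prefactor is uniformly bounded in $\lambda$ (cf.\ Remark \ref{rem:observablebounded}), so $\MoumiA_n^z f\in \spaceanalytic$ as well.

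The main obstacle is establishing the uniform integrable bound on the $\lambda$-summand needed to apply Fubini: one must check carefully that, in the region swept out by the contours $\mathcal{D}_{-\varepsilon}$, the prefactor involving the $q$-Pochhammer ratios indexed by pairs $(i,j)$ does not blow up on approach to hyperplanes $q^{s_j}x_i/x_j=1$ or $tq^{s_j}x_i/x_j=1$. Hypothesis (i) was designed precisely to keep these singularities strictly to the right of $\mathcal{D}_{-\varepsilon}$, so shrinking $\varepsilon$ further if necessary provides a uniform positive distance and thus the required bound.
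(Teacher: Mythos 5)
Your proof takes essentially the same route as the paper's: the paper simply invokes that $\MoumiA_n^z$ is "linear on $\spaceanalytic$" (i.e., commutes with the absolutely convergent Macdonald expansion of $f$) and then applies Proposition \ref{prop:analyticeigenrelation} term by term; you have supplied the Fubini justification that the paper leaves implicit, and the dominating envelope you build (uniform boundedness of the $s$-dependent $q$-Pochhammer and polynomial prefactors plus the exponential Gamma decay that beats the at-most-$e^{|\arg(-z)||\Im s|}$ growth of $(-z)^s$) is exactly what is needed. Two small remarks: the modulus of the shifted argument is $|q^{-s_i}x_i|=q^{\varepsilon}x_i$, which is automatically $<x_i<1$, so the constraint you state in terms of $q^{-\varepsilon}x_i<1$ is unnecessarily strong (though harmless, and the same slip appears in the paper); and the $s$-dependent singularities are kept to the left of $\mathcal{D}_{-\varepsilon}$ already by the proposition's own hypothesis $|tx_i/x_j|<1$, so invoking hypothesis (i) of Theorem \ref{theo:MoumiLaplace} is not needed.
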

\begin{proof}
	The operator $\MoumiA_n^z$ is linear on $\spaceanalytic$, so that 
	$$  \MoumiA_n^z f(x_1, \dots, x_n) = \sum_{\la} c_{\la}(f) \MoumiA_n^z  P_{\la}(x),$$
	and one can apply Proposition \ref{prop:analyticeigenrelation} on each summand. When applying the operator $\MoumiA_n^z$, one has to choose the parameter $\e$ involved in the integration contour in \eqref{eq:analyticeigenrelation} in such a way that for all $i$, $\vert q^{-\e}x_i \vert <1$. 
\end{proof}
\begin{corollary} For  $ x_1, \dots, x_n\in(0,1) $ such  that for all $i,j$, $\vert t x_i/x_j\vert <1$,
\begin{equation}
	\frac{\MoumiA_n^z \left(\Pi(x, \rho) \Phi(x)\right)}{\Pi(x, \rho) \Phi(x)}  = \EPMM_{(a_1, \dots, a_n), \rho}\left[ \prod_{i=1}^n \frac{(q^{-\la_i}t^{i}z)_{\infty}}{(q^{-\la_i}t^{i-1}z)_{\infty}} \right].
	\label{eq:LaplacewithMoumiA}
\end{equation}  
	\label{cor:analyticoperatorandLaplace}
\end{corollary}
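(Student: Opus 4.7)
The plan is to read off Corollary \ref{cor:analyticoperatorandLaplace} directly from Proposition \ref{prop:generalactionO} applied to the specific function $f(x) = \Pi(x;\rho)\Phi(x)$, after identifying its Macdonald expansion via the generalized Littlewood identity. The chain of reasoning is short: express $\Pi(x;\rho)\Phi(x)$ as an absolutely convergent sum in the $P_\lambda$'s, apply $\MoumiA_n^z$ termwise, and recognize the resulting normalized sum as the desired expectation under the half-space Macdonald measure.

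First, under the standing hypotheses on $(a_1,\dots,a_n)$ and $\rho=\rho(\alpha,\beta,\gamma)$ assumed throughout Section \ref{sec:generalLaplace}, the generalized Littlewood identity \eqref{eq:CauchyLittlewoodspecialized} gives
\[
\Pi(x;\rho)\,\Phi(x) \;=\; \sum_{\lambda \in \Y} \ve_{\lambda}(\rho)\,P_{\lambda}(x),
\]
and the series converges absolutely for $x=(x_1,\dots,x_n)$ in a polydisk $\mathbb{D}^n$ (in fact on a slightly larger neighborhood of the point $(a_1,\dots,a_n)$, by the bounds imposed on the $a_i$ and on the Macdonald parameters $\alpha_j$ of $\rho$). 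Hence the symmetric function $f(x):=\Pi(x;\rho)\Phi(x)$ lies in $\spaceanalytic$ with Macdonald coefficients $c_{\lambda}(f)=\ve_{\lambda}(\rho)$.

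Next I would invoke Proposition \ref{prop:generalactionO}, which applies precisely to functions in $\spaceanalytic$, to conclude that
\[
\MoumiA_n^z\bigl(\Pi(x;\rho)\Phi(x)\bigr) \;=\; \sum_{\lambda} \ve_{\lambda}(\rho)\,P_{\lambda}(x)\,\prod_{i=1}^{n}\frac{(q^{-\lambda_i}t^{i}z)_{\infty}}{(q^{-\lambda_i}t^{i-1}z)_{\infty}},
\]
as an absolutely convergent sum on the prescribed domain. Dividing both sides by $\Pi(x;\rho)\Phi(x)$ and recognizing
\[
\frac{P_{\lambda}(x)\,\ve_{\lambda}(\rho)}{\Pi(x;\rho)\Phi(x)} \;=\; \PMM_{(x_1,\dots,x_n),\rho}(\lambda)
\]
from Definition \ref{def:halfspaceMacdonaldprocess}, we obtain exactly \eqref{eq:LaplacewithMoumiA} after specializing $x_i=a_i$. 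The observable inside the expectation is bounded in $\lambda$ (Remark \ref{rem:observablebounded}), so the resulting sum is indeed the expectation written on the right-hand side and no further integrability check is needed.

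The only mildly delicate point is ensuring compatibility between the hypothesis $\vert t x_i/x_j\vert<1$ needed in Proposition \ref{prop:generalactionO} and the bounds on $(a_1,\dots,a_n)$ imposed in Theorem \ref{theo:MoumiLaplace}; however this is built into hypothesis (i) of the theorem. Thus Corollary \ref{cor:analyticoperatorandLaplace} is immediate from the two preceding results plus the Littlewood expansion, and it furnishes the bridge needed to complete the proof of Theorem \ref{theo:MoumiLaplace}: the right-hand side of \eqref{eq:LaplacewithMoumiA} is shown, by unfolding the definition of $\MoumiA_n^z$ and evaluating the residues that collapse the $n$ Mellin--Barnes integrals into a finite sum indexed by $k\le n$ of $k$-fold Mellin--Barnes integrals, to equal the expression $E_R^z(a_1,\dots,a_n;\rho)$ of Definition \ref{def:Moumiexpansion}.
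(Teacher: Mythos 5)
Your proof is correct and follows exactly the paper's argument: the paper's own proof is the one-line observation that this is Proposition~\ref{prop:generalactionO} applied to $f(x)=\Pi(x,\rho)\Phi(x)$, and what you have written is simply that observation with the details (Littlewood expansion, identification of the coefficients $c_\lambda(f)=\ve_\lambda(\rho)$, division by the normalization) spelled out.
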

\begin{proof}
	This is a direct application of Proposition \ref{prop:generalactionO} with $f(x) = \Pi(x, \rho) \Phi(x)$.
\end{proof}
To conclude the proof of Theorem \ref{theo:MoumiLaplace}, we need to compute the L.H.S in \eqref{eq:LaplacewithMoumiA}.
\begin{proposition}	Under the hypotheses of Theorem \ref{theo:MoumiLaplace}, 
	$$\frac{\MoumiA_n^z \left(\Pi(x, \rho) \Phi(x)\right)}{\Pi(x, \rho) \Phi(x)}  =   E_R^z(a_1, \dots, a_n; \rho) . $$
	\label{prop:actionMoumiAonZ}
\end{proposition}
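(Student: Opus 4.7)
The proof will closely mirror the structure of Proposition \ref{prop:actionNoumi}, with the essential new ingredient that since $\MoumiA_n^z$ is defined by a Mellin--Barnes integral rather than a discrete sum, the decomposition into ``$k$ active variables'' is produced by residue calculus during contour deformation, rather than by selecting which $\eta_i$ are nonzero. Starting from the explicit $n$-fold integral obtained by substituting $f=\Pi(x;\rho)\Phi(x)$ into the definition of $\MoumiA_n^z f$ and specializing $\vec x=\vec a$, one divides through by $\Pi(\vec a;\rho)\Phi(\vec a)$. The integrand then factorizes into single-variable pieces $\Gamma(-s_i)\Gamma(1+s_i)(-z)^{s_i}$, pairwise cross factors in $(s_i,s_j)$ arising both from the explicit kernel of $\MoumiA_n^z$ and from $\Phi(q^{-\vec s}\vec a)/\Phi(\vec a)$, together with single-variable $\Pi$-ratios.

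The next step is to deform each contour $\mathcal{D}_{-\varepsilon}$ to $\mathcal{D}_R$. Under the hypotheses on $R$, $\vec a$, and $\rho$ stated in Theorem \ref{theo:MoumiLaplace} -- in particular $a_i<q^R<a_i/a_j$ and $q^R>a_i\alpha_j$ -- the only pole of the integrand crossed inside the strip $-\varepsilon\leqslant\Real(s_i)\leqslant R$ is the simple pole of $\Gamma(-s_i)$ at $s_i=0$, whose residue equals $-G(0)$ where $G$ is the remainder of the integrand. Iterating this variable by variable decomposes the integral as a sum over subsets $I\subset\{1,\dots,n\}$: for $i\in I$ the contour remains $\mathcal{D}_R$, while for $j\notin I$ one substitutes $s_j=0$, which trivializes the shift on $a_j$. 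A direct check shows that the term $I=\varnothing$ produces $1$, matching the $k=0$ term of $E_R^z$.

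Because the integrand is symmetric under $(s_i,a_i)\leftrightarrow(s_j,a_j)$, the $\binom{n}{k}$ subsets of size $k$ give contributions which are mutual relabelings. Writing
$$\sum_{|I|=k}\mathcal{I}_I \;=\; \frac{1}{k!}\sum_{\sigma:\{1,\dots,k\}\hookrightarrow\{1,\dots,n\}}\!\!\!\mathcal{I}_{\{\sigma(1),\dots,\sigma(k)\}},$$
the sum over injective $\sigma$ is now precisely of the form handled in the proof of Lemma \ref{lem:actionNouminu}: each summand coincides with the residue at $w_1=a_{\sigma(1)},\dots,w_k=a_{\sigma(k)}$ of a certain $(s,w)$-integrand. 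Interpreting the sum over $\sigma$ as such residues via the residue theorem rewrites it as a $k$-fold integral over $w_1,\dots,w_k$ along small positively oriented contours enclosing $\{a_1,\dots,a_n\}$ and no other singularity. The relevant simple poles of the $w$-integrand at $w_i=a_j$ are produced by the factor $1/\phi(w_ia_j)$ present in $\overline{\mathcal{G}}^{q,t}(w_i)$.

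What remains is purely algebraic: one verifies that the bookkeeping above reproduces exactly the integrand of $E_R^z$. The kernel $\mathcal{B}^{q,t}_{\vec s}(\vec w)$ assembles from three sources -- the pairwise factors $(q^{s_j}a_i-q^{s_i}a_j)/(a_i-a_j)$ inside $\MoumiA_n^z$, the pairwise ratios $\phi(q^{-s_i-s_j}a_ia_j)/\phi(a_ia_j)$ from $\Phi(q^{-\vec s}\vec a)/\Phi(\vec a)$, and a Vandermonde-type contribution generated by evaluating the residues at $w_i=a_{\sigma(i)}$. The single-variable factor $\overline{\mathcal{G}}^{q,t}(w_i)/\overline{\mathcal{G}}^{q,t}(q^{-s_i}w_i)$ packages the $\Pi$-ratio together with the $\phi(a_j/w_i)$-ratios, and the factor $1/[(1-q^{s_i})w_i]$ emerges from the explicit form of the $w_i$-residue. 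The main technical obstacles are two: first, tracking all $q$-Pochhammer poles of $\Pi(q^{-\vec s}\vec a;\rho)$ and $\Phi(q^{-\vec s}\vec a)$ to ensure that only the pole at $s_i=0$ is crossed -- this is precisely where the tight hypotheses on $R$, $\vec a$ and $\rho$ are used -- and second, the lengthy but mechanical algebraic verification that the produced integrand coincides with the one in \eqref{eq:MoumiMellinBarnes}.
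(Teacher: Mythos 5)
Your proposal follows the paper's proof almost exactly: deform each $\mathcal{D}_{-\varepsilon}$ to $\mathcal{D}_R$ and collect the residues at $s_i=0$ to get a sum over subsets of size $k$, then convert the resulting symmetrized sum over injections into a $k$-fold $w$-contour integral by the residue theorem, exactly as the paper's Lemma~\ref{lem:actionMoumiAnu} does (you cite Lemma~\ref{lem:actionNouminu}, but the relevant $\Moumi$-version is Lemma~\ref{lem:actionMoumiAnu}). One small factual slip: the pole of the $w$-integrand at $w_i=a_j$ comes from the numerator factor $\phi(a_j/w_i)$ in $\overline{\mathcal{G}}^{q,t}(w_i)$ (since $\phi(u)=(tu)_\infty/(u)_\infty$ blows up at $u=1$), not from $1/\phi(w_ia_j)$, which is regular there.
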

\begin{proof}By definition, 
\begin{multline}
\MoumiA_n^z \left(\Pi(x, \rho) \Phi(x)\right) = \int_{\mathcal{D}_{-\e}} \frac{\mathrm{d}s_1}{2\I\pi}\dots \int_{\mathcal{D}_{-\e}}\frac{\mathrm{d}s_n}{2\I\pi} (-z)^{s_1 + \dots + s_n }\prod_{i<j} \frac{q^{s_j}x_i - q^{s_i}x_j}{x_i-x_j} \\ \times   \prod_{i,j} \frac{(tx_i/x_j)_{\infty}}{(qx_i/x_j)_{\infty}}\frac{(q^{s_j+1}x_i/x_j)_{\infty}}{(tq^{s_j}x_i/x_j)_{\infty}} \prod_{i=1}^n \Gamma(-s_i)\Gamma(1+s_i)  \Pi(q^{-s_1}x_1, \dots, q^{-s_n}x_n, \rho) \Phi(q^{-s_1}x_1, \dots, q^{-s_n}x_n).
\label{eq:MoumiAonZ}
\end{multline}
The first step is to transform this $n$-fold integral into a sum of $k$-fold integrals for $k=1, \dots, n$ by taking the residues when some of the $s_i$ are zero. 
Let us denote the integrand in \eqref{eq:MoumiAonZ} by $I_{\vec s}^z$, where $\vec s=(s_1, \dots, s_n)$. Deforming the contours $\mathcal{D}_{-\e}$ to $\mathcal{D}_{\e}$ yields
	\begin{equation}
	\MoumiA_n^z \left(\Pi(x, \rho) \Phi(x)\right) = \sum_{k=0}^n  \int_{\mathcal{D}_{\e}} \frac{\mathrm{d}s_1}{2\I\pi}\dots \int_{\mathcal{D}_{\e}}\frac{\mathrm{d}s_k}{2\I\pi}  \frac{n!}{(n-k)!k!}  \frac{1}{n!} \sum_{\sigma\in \SS_n} I_{\sigma(\vec s)}^z,
	\label{eq:decompositionMoumiA}
	\end{equation} 
	where in the $k$-th summand, $\vec s = (s_1, \dots, s_k, 0, \dots, 0)$ and $\sigma$ acts by permuting the coordinates of $\vec s$. The only singularity that we cross from $\mathcal{D}_{-\e}$ to $\mathcal{D}_{\e}$ is  at $s=0$ (it comes from the factor $\Gamma(-s)$). 
	
        We further deform the integration contour from $\mathcal{D}_{\e}$ to $\mathcal{D}_{R}$. The next Lemma ensures that when the $x_i$'s satisfy the same hypotheses as the $a_i$ in the statement of Theorem \ref{theo:MoumiLaplace},  we do not cross any singularity. 
		\begin{lemma}
			Under the assumptions (i), (ii) and (iii) of Theorem \ref{theo:MoumiLaplace}, 
			the poles in variables $s_1, \dots, s_n$ in 
			\begin{equation} 
			\prod_{i,j}\frac{(q^{s_j+1}a_i/a_j)_{\infty}}{(tq^{s_j}a_i/a_j)_{\infty}} \Pi(q^{-s_1}a_1, \dots, q^{-s_n}a_n, \rho) \Phi(q^{-s_1}a_1, \dots, q^{-s_n}a_n)
			\label{eq:termwithpoles}
			\end{equation}
			do not lie between $\mathcal{D}_{0}$ and $\mathcal{D}_R$. 
		\end{lemma}
		\begin{proof}
When $\rho$ is of the form $\rho=\rho(\alpha, \beta, \gamma)$ as in Section \ref{sec:specializations}, the poles of $\Pi(q^{-s_j}a_j; \rho)$ in the variable $s_j$ are the same as those of 
			$$ \prod_{i}\frac{1}{(\alpha_i q^{-s_j}a_j)_{\infty}}.$$
			Hence, the poles in \eqref{eq:termwithpoles} correspond to the following cases: 
			\begin{enumerate}
				\item $q^{s_i+s_j} = q^k a_ia_j$ for some integer $k\geqslant 0$ and $1\leqslant i\neq j\leqslant n$.  
				\item $q^{s_j} = q^{-k} a_j  t^{-1}a_i^{-1}$ for some integer $k\geqslant 0$ and $1\leqslant i,j\leqslant n$.  
				\item $q^{s_j} = \alpha_i a_j q^k$ for some integer $k\geqslant 0$, $1\leqslant  j\leqslant n$ and any $i$. 
			\end{enumerate}
Since $q^R>a_i$ for all $1\leqslant  j\leqslant n$, the poles in the case (1) all lie to the right of $\mathcal{D}_R$. Since $\vert ta_i/a_j\vert <1$, we have $1< \vert a_j  t^{-1}a_i^{-1}\vert < t^{-2}$ and the poles in the case (2) all have negative real part. Since $q^R>a_j\alpha_i$ for $1\leqslant  j\leqslant n$ and any $i$, the poles in the case (3) all lie to the right of $\mathcal{D}_R$.
		\end{proof}
We have arrived at 
	\begin{equation}
	\frac{\MoumiA_n^z \left(\Pi(x, \rho) \Phi(x)\right)}{\Pi(x, \rho) \Phi(x)} = \sum_{k=0}^n  \int_{\mathcal{D}_{R}} \frac{\mathrm{d}s_1}{2\I\pi}\dots \int_{\mathcal{D}_{R}}\frac{\mathrm{d}s_k}{2\I\pi}  \frac{1}{k!} \frac{1}{(n-k)!}   \sum_{\sigma\in \SS_n} \frac{ I_{\sigma(\vec s)}^z }{\Pi(x, \rho) \Phi(x)},
	\label{eq:decompositionMoumiA2}
	\end{equation} 
	where again $\vec s = (s_1, \dots, s_k, 0, \dots, 0)$.
	
The last step is to rewrite the sum over permutations in the R.H.S. of \eqref{eq:decompositionMoumiA2} as some contour integral.  
\begin{lemma} Assume $\Real[s_1], \dots, \Real[s_k]>0$ and $s_{k+1} = \dots = s_n=0$. Then
		\begin{equation}
		\frac{1}{(n-k)!} \sum_{\sigma \in \SS_n}  
		\frac{ I_{\sigma(\vec s)}^z }{   \Pi(x, \rho) \Phi(x)  } =
	\oint\frac{\mathrm{d}w_1}{2\I\pi} \dots \oint\frac{\mathrm{d}w_k}{2\I\pi} \mathcal{B}_{\vec s}^{q,t}(\vec w)
		\prod_{i=1}^k\frac{\mathcal{G}^{q,t}_n(w_i)}{\mathcal{G}^{q,t}_n(q^{-s_i}w_i)}  \frac{\phi(w_i^2)\Pi(q^{-s_i}w_i; \rho)(-z)^{s_i}\mathrm{d}w_i}{\phi(q^{-s_i}w_i^2)\Pi(w_i; \rho) (1-q^{s_i})w_i},
		\label{eq:actionMoumiAnu}
		\end{equation}
		where the contours are small positively oriented  circles enclosing the $x_i$ and no other singularity. 
		\label{lem:actionMoumiAnu}
	\end{lemma}
	\begin{proof}
		We adapt Lemma \ref{lem:actionNouminu}. 
		The L.H.S in \eqref{eq:actionMoumiAnu} equals 
		\begin{multline} \frac{1}{(n-k)!} \sum_{\sigma \in \SS_n} z^{s_1+\dots+s_k} \prod_{i<j} \frac{q^{s_j}x_{\sigma(i)}-q^{s_i}x_{\sigma(j)}}{x_{\sigma(i)}-x_{\sigma(j)}} \prod_{i,j=1}^n \frac{(tx_{\sigma(j)}/x_{\sigma(i)})_{\infty}}{(qx_{\sigma(j)}/x_{\sigma(i)})_{\infty}}\frac{(q^{s_i+1}x_{\sigma(j)}/x_{\sigma(i)})_{\infty}}{(tq^{s_i}x_{\sigma(j)}/x_{\sigma(i)})_{\infty}}  \\ \times 
		\prod_{i=1}^n \frac{\Pi(q^{-s_i}x_{\sigma(i)}; \rho)}{\Pi(x_{\sigma(i)}; \rho)}\prod_{i<j}\frac{\phi(q^{-s_i-s_j}x_{\sigma(i)}x_{\sigma(j)})}{\phi(x_{\sigma(i)}x_{\sigma(j)})}.\end{multline}
		Since $s_{k+1}=\dots=s_n=0$, the summand is invariant with respect to permutation of the $\lbrace s_i\rbrace_{i>k}$. Hence one can absorb the factor $1/(n-k)!$ and sum over permutations in 
		$$\SS_n^k := \left\lbrace \sigma\in \SS_n \ : \  \sigma(1+k)<\dots <\sigma(n)\right\rbrace.$$ 
		Thus, the L.H.S in \eqref{eq:actionMoumiAnu} equals 
		\begin{multline}\sum_{\sigma\in\SS_n^k} \underset{\underset{\forall 1\leqslant i\leqslant k}{w_i=x_{\sigma(i)}}}{\subs} \left\lbrace 
		z^{s_1+\dots+s_k} \prod_{i<j} \frac{q^{s_j}w_i-q^{s_i}w_j}{w_i-w_j} \prod_{i=1}^k \prod_{j=k+1}^n \frac{w_i-q^{s_i}x_{\sigma(j)}}{w_i-x_{\sigma(j)}}
		\right.\\ \left. \times 
		\prod_{i=1}^k\prod_{j=1}^n 
		\frac{(tx_{\sigma(j)}/w_i)_{\infty}}{(qx_{\sigma(j)}/w_i)_{\infty}}\frac{(q^{s_i+1}x_{\sigma(j)}/w_i)_{\infty}}{(tq^{s_i}x_{\sigma(j)}/w_i)_{\infty}}
		 \prod_{i=1}^k \frac{\Pi(q^{-s_i}w_i;\rho)}{\Pi(w_i;\rho)}
		 \right.\\ \left.  \times 
		\prod_{1\leqslant i<j\leqslant k }\frac{\phi(q^{-s_i-s_j}w_iw_j)}{\phi(w_iw_j)}
		\prod_{i=1}^k \prod_{j=1}^n \frac{\phi(q^{-s_i}w_i x_j)}{\phi(w_i x_j)}
		\prod_{i=1}^k\prod_{j=1}^k \frac{\phi(w_iw_j)}{\phi(q^{-s_i}w_iw_j)}
		\right\rbrace.
		\label{eq:sumresiduesM}
		\end{multline}
		Notice that we have  
		$$ \underset{\underset{\forall 1\leqslant i\leqslant k}{w_i=x_{\sigma(i)}}}{\subs} \left\lbrace    \prod_{i=1}^k \prod_{j=k+1}^n \frac{w_i-q^{s_i}x_{\sigma(j)}}{w_i-x_{\sigma(j)}}\right\rbrace  = \Res{\underset{\forall 1\leqslant i\leqslant k}{w_i=x_{\sigma(i)}}} \left\lbrace  \prod_{i=1}^k \prod_{j=1}^n \frac{w_i-q^{s_i}x_j}{w_i-x_j} \prod_{i,j=1}^k \frac{1}{w_i-q^{s_i} w_j}\prod_{i\neq j=1}^k (w_i-w_j)\right\rbrace.$$
		It implies that 
		\begin{multline} \eqref{eq:sumresiduesM} = \oint\frac{\mathrm{d}w_1}{2\I\pi} \dots \oint\frac{\mathrm{d}w_k}{2\I\pi} \prod_{i<j} \frac{q^{s_j}w_i-q^{s_i}w_j}{w_i-w_j} \prod_{i=1}^k \prod_{j=1}^n \frac{w_i-q^{s_i}x_j}{w_i-x_j} \prod_{i,j=1}^k \frac{1}{w_i-q^{s_i} w_j}\prod_{i\neq j=1}^k (w_i-w_j)\\  \times 
\prod_{i=1}^k\prod_{j=1}^n 
\frac{(tx_j/w_i)_{\infty}}{(qx_j/w_i)_{\infty}}\frac{(q^{s_i+1}x_j/w_i)_{\infty}}{(tq^{s_i}x_j/w_i)_{\infty}}
	\prod_{i=1}^k \frac{\Pi(q^{-s_i}w_i;\rho)}{\Pi(w_i;\rho)} \\ \times 
		\prod_{1\leqslant i<j\leqslant k }\frac{\phi(q^{-s_i-s_j}w_iw_j)}{\phi(w_iw_j)}
		\prod_{i=1}^k \prod_{j=1}^n \frac{\phi(q^{-s_i}w_i x_j)}{\phi(w_i x_j)}
		\prod_{i=1}^k\prod_{j=1}^k \frac{\phi(w_iw_j)}{\phi(q^{-s_i}w_iw_j)},
		\end{multline}
		where the contours enclose the $x_i$ and no other singularity. The assumption (cf. assumption (ii) in Theorem \ref{theo:MoumiLaplace}) that the real part of the variables $s_i$ is such that $\vert q^{s_i}\vert <x_j/x_i$ for all $i,j$ ensures that one can choose these contours as small circles enclosing the $x_i$.   
		Finally, using the fact that $\phi(u) = (tu)_{\infty}/(u)_{\infty}$, one has 
		$$ \frac{w_i-q^{s_i}x_j}{w_i-x_j} \frac{(tx_j/w_i)_{\infty}}{(qx_j/w_i)_{\infty}}\frac{(q^{s_i+1}x_j/w_i)_{\infty}}{(tq^{s_i}x_j/w_i)_{\infty}} = \frac{\phi(x_j/w_i)}{\phi(q^{s_i}x_j/w_i)},$$
		so that the integrand can be arranged to match with the R.H.S of \eqref{eq:actionMoumiAnu}. 
	\end{proof}
	The application of  Lemma \ref{lem:actionMoumiAnu} to each term of the sum in \eqref{eq:decompositionMoumiA2} yields 
the desired formula, which concludes the proof of Proposition \ref{prop:actionMoumiAonZ} and Theorem \ref{theo:MoumiLaplace}.
\end{proof}

 \thispagestyle{plain}\section{Half-space $q$-Whittaker processes}
\label{sec:qWhittaker}
We assume now that $t=0$ and $q\in (0,1)$. We will, however, use the same notations $P,Q,\ve$ as before. These ($P$ and $Q$) are now called \emph{$q$-Whittaker functions} \cite{gerasimov2009q}.
We define the \emph{half-space $q$-Whittaker measure} $\PQWM_{\rhoup, \rhodiag}$ as the measure on partitions $\la\in \Y$ such that 
$$ \PQWM_{\rhoup, \rhodiag}(\lambda) = \frac{P_{\lambda}(\rhoup) \ve_{\lambda}(\rhodiag)}{\Pi(\rhoup;\rhodiag)\Phi(\rhoup)},$$
and denote by $\PQWP_{\pathh, \bm\uprho}$, for a sequence of specializations $\bm\uprho$, the \emph{half-space $q$-Whittaker process}, i.e., the $t=0$ degeneration of the half-space Macdonald process $\PMP_{\pathh, \bm\uprho}$.

\subsection{Observables and integral formulas}

Consider a $q$-Whittaker measure where $\rhoup=(a_1, \dots, a_n) \in( 0,1)^n$, and $\rhodiag= \rho(\alpha, \beta, \gamma)$ as defined in Section \ref{sec:specializations}. We will now degenerate the results from Sections \ref{sec:generalmomentsformulas} and \ref{sec:generalLaplace} to the $q$-Whittaker case.  We further assume that the beta component of the specialization $\rhodiag$ is trivial (that is $\beta_i\equiv 0$), and all the parameters $\alpha_i$ are such that $\max\lbrace a_i\rbrace \max\lbrace \alpha_j\rbrace <1$, since this is the case which matters in our applications.

It is convenient to define functions 
$$ \gqwhittn(w)  = e^{-\gamma w} \prod_{j=1}^n\frac{(wa_j)_{\infty}}{(w/a_j)_{\infty}} \prod_{j=1}^{\ell} (w \alpha_j)_{\infty} \ \ \text{ and } \ \  \gqwhittun(w) = e^{-\gamma w} \prod_{j=1}^n\frac{(wa_j)_{\infty}}{(a_j/w)_{\infty}} \prod_{j=1}^{\ell} (w \alpha_j)_{\infty}.$$

\subsubsection{Moment formulas}

Let us write explicitly  the $q$-Whittaker degeneration of moment formulas from Section \ref{sec:generalmomentsformulas}. The moments  appearing in the following Corollary \ref{cor:momentsqWhittaker} can also be written in a different form as in Corollary \ref{cor:smallcontoursmoments1} and Corollary \ref{cor:smallcontoursmomentsn}. While both statements could in principle be deduced from Corollary \ref{cor:momentsqWhittaker}, it is much simpler to deduce them from the $q$-Laplace transform formulas, hence they will appear in the next section.
\begin{corollary}  Under the $q$-Whittaker measure $\PQWM_{(a_1,\ldots, a_n), ((\alpha_1, \dots, \alpha_{\ell}), \gamma)}$, the two following moment formulas hold.   	For any $k\in\Z_{>0}$, 
		\begin{equation}
		\EQWM[q^{k\lambda_N}] = (-1)^k \oint\frac{\mathrm{d}w_1}{2\I\pi}\cdots \oint\frac{\mathrm{d}w_k}{2\I\pi} \prod_{1\leqslant a<b\leqslant k}\frac{w_a-w_b}{q^{-1} w_a-w_b}\,  \frac{1 - q w_aw_b}{1 - w_aw_b}
		\prod_{m=1}^{k} \frac{1}{1-w^2_m} \frac{\gqwhittn(w_m)}{\gqwhittn(qw_m)} \frac{1}{w_m},
		\label{eq:qmomentslambdan}
		\end{equation}
		where the positively oriented contours are such that for all $1\leqslant c\leqslant k$, the contour for $w_c$ encloses $\lbrace a_j\rbrace_{1\leqslant j\leqslant n}$  and $q \lbrace w_{c+1}, \dots, w_k\rbrace$, and excludes the poles of the integrand at $1$ and $0$.

For all $k\in\Z_{>0}$ such that $q^{k}> \big(\max \lbrace a_j \rbrace\big)^2$ and $q^{k}> \max \lbrace \alpha_j \rbrace \max \lbrace a_j \rbrace$, 
		\begin{equation}
		\EQWM[q^{-k\lambda_1}] = (-1)^k\oint\frac{\mathrm{d}w_1}{2\I\pi}\cdots \oint\frac{\mathrm{d}w_k}{2\I\pi} \prod_{1\leqslant a<b\leqslant k} \frac{w_a-w_b}{q^{-1} w_a-w_b}\, \frac{q^{-1} - w_aw_b}{q^{-2} - w_aw_b}
		\prod_{m=1}^{k}\frac{q w_m^2-1}{q w_m^2}  \frac{\gqwhittun(w_m^{-1})}{\gqwhittun(q^{-1}w_m^{-1})}\frac{1}{w_m},
		\label{eq:qmomentslambda1}
		\end{equation}
		where the positively oriented contours are such that for all $1\leqslant c\leqslant k$, the contour for $w_c$ encloses $\lbrace 1/a_j\rbrace_{1\leqslant j\leqslant n}$  and $q\lbrace w_{c+1}, \dots, w_k\rbrace$, and excludes the poles of the integrand at $0$, $\alpha_i/q$ (for $1\leqslant i\leqslant \ell$) and $a_j/q$ (for $1\leqslant j\leqslant n$). 
\label{cor:momentsqWhittaker}
\end{corollary}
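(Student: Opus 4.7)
The corollary is obtained by directly specializing the general Macdonald moment formulas of Proposition \ref{prop:momentsMacdonald} to $t=0$; no new idea is required, but one must carefully track how each factor in the integrand collapses.

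The first step is to observe how the observables simplify at $t=0$. Since $e_1(q^{\lambda_1}t^{n-1},\ldots,q^{\lambda_n}) = \sum_{i=1}^n q^{\lambda_i}t^{n-i}$, every term except $i=n$ carries a strictly positive power of $t$ and dies at $t=0$; thus the left-hand side of \eqref{eq:momentsMacdonald1} becomes $\EQWM[q^{k\lambda_n}]$. Similarly $e_1(q^{-\lambda_1},q^{-\lambda_2}t,\ldots,q^{-\lambda_n}t^{n-1})\big|_{t=0} = q^{-\lambda_1}$, so the left-hand side of \eqref{eq:momentsMacdonald2} becomes $\EQWM[q^{-k\lambda_1}]$. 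For the second formula one must also check that the hypothesis ``$\Pi(q^{-k}\vec a;\rho)\Phi(q^{-k}\vec a)$ can be expanded via the generalized Littlewood identity'' translates at $t=0$ (with $\beta\equiv 0$) into the stated convergence conditions $q^k>(\max a_j)^2$ and $q^k>\max\alpha_j\max a_j$; this is a direct calculation from the definitions in Section \ref{sec:specializations}.

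The second step is to simplify the integrand. At $t=0$, the prefactor $(t-1)^{-k}=(-1)^k$, and the function $\phi(u)=(tu)_\infty/(u)_\infty$ becomes $1/(u)_\infty$. The cross-ratios degenerate as
\[
\frac{(tw_a-qw_b)(w_a-w_b)}{(w_a-qw_b)(tw_a-w_b)}\bigg|_{t=0} = \frac{q(w_a-w_b)}{w_a-qw_b} = \frac{w_a-w_b}{q^{-1}w_a-w_b},
\]
while the $\phi$-products telescope: in \eqref{eq:momentsMacdonald1}, $\phi(q^2 w_aw_b)\phi(w_aw_b)/\phi(qw_aw_b)^2 \to (1-qw_aw_b)/(1-w_aw_b)$; and in \eqref{eq:momentsMacdonald2}, the analogous factor reduces to $(q^{-1}-w_aw_b)/(q^{-2}-w_aw_b)$ after multiplying numerator and denominator by $q^{-2}$. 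The single-variable prefactors $(1-tw_m^2)/(1-w_m^2)$ and $(qw_m^2-1)/(qw_m^2-t)$ simplify to $1/(1-w_m^2)$ and $(qw_m^2-1)/(qw_m^2)$, respectively.

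The third step is to recognize the remaining single-variable factors as the announced ratios of $\gqwhittn$ and $\gqwhittun$. For \eqref{eq:qmomentslambdan}, combining $\prod_j (tw_m-a_j)/(w_m-a_j) \to \prod_j -a_j/(w_m-a_j)$, $\prod_j (1-a_jw_m)/(1-ta_jw_m) \to \prod_j(1-a_jw_m)$, and the ratio $\Pi(qw_m;\rho)/\Pi(w_m;\rho) = e^{\gamma(q-1)w_m}\prod_i(1-\alpha_iw_m)$, and comparing to
\[
\frac{\gqwhittn(w)}{\gqwhittn(qw)} = e^{\gamma(q-1)w}\prod_j\frac{-a_j(1-wa_j)}{w-a_j}\prod_i(1-w\alpha_i),
\]
one sees that the two expressions agree. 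An entirely analogous bookkeeping for \eqref{eq:momentsMacdonald2} identifies the combined single-variable factors with $\gqwhittun(w_m^{-1})/\gqwhittun((qw_m)^{-1})$, after replacing $(\alpha_j/w)_\infty$-type factors using $(x;q)_\infty = (1-x)(qx;q)_\infty$.

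Finally one checks that the contour conditions transfer without change: the prescription from Proposition \ref{prop:momentsMacdonald} (that the $w_c$-contour encloses the $\{a_j\}$ and the image of the later contours times $q$, and excludes all other singularities of the integrand) is precisely what is stated in the corollary. At $t=0$ the only additional poles of the degenerated integrands to watch are those introduced by $1/(1-w_m^2)$ (giving the excluded points $\pm 1$ and $0$ in the first formula) and those at $w_m = \alpha_i/q,\, a_j/q,\, 0$ arising from the zeros of $\gqwhittun(q^{-1}w^{-1})$ in the second formula; these are exactly the excluded singularities listed in the statement. No step presents a substantive obstacle --- the only mild care is in the sign bookkeeping (the $(-1)^k$ prefactor) and in verifying that the convergence hypothesis on $k$ matches the stated bounds; both reduce to transparent calculations.
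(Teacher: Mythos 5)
Your proposal is correct and follows exactly the route taken in the paper: specialize Proposition~\ref{prop:momentsMacdonald} at $t=0$, observe that the observables collapse to $q^{k\lambda_n}$ and $q^{-k\lambda_1}$ respectively, simplify the cross-terms and single-variable factors, and match them to the ratios $\gqwhittn(w)/\gqwhittn(qw)$ and $\gqwhittun(w^{-1})/\gqwhittun(q^{-1}w^{-1})$. The paper states these ratio identities and leaves the bookkeeping implicit, while you carry it out step by step; the substance is identical.
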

\begin{proof}
This is the $t=0$ degeneration of Proposition \ref{prop:momentsMacdonald}. Note that 
$$ \frac{\gqwhittn(w)}{\gqwhittn(qw)} =  \exp\big((q-1)\gamma w\big) \prod_{j=1}^{n}\left(\frac{1- w a_j}{1-w/a_j}\right) \prod_{i=1}^{\ell}\left(1-\alpha_i w\right) $$ 
and 
$$ \frac{\gqwhittun(w^{-1})}{\gqwhittun(q^{-1}w^{-1})} =   \exp\big((q^{-1}-1)\gamma w^{-1}\big) \prod_{j=1}^{n} \left(\frac{q w}{(1-w a_j)(q w - a_j)} \right) \prod_{i=1}^{\ell} \left(\frac{q w}{qw - \alpha_i} \right).$$
\end{proof}

\begin{corollary}
Under the $q$-Whittaker measure $\PQWM_{(a_1,\ldots, a_n), ((\alpha_1, \dots, \alpha_{\ell}), \gamma)}$, the two following formulas hold.  For $r\geqslant 1$,
	\begin{multline*} 
	\EQWM\Big[q^{\lambda_N+\ \ldots\ +\lambda_{N-r+1}}\Big] \\  = 
	\frac{(-1)^{\frac{r(r+1)}{2}}}{ r!} \oint\frac{\mathrm{d}z_1}{2\I\pi}\cdots \oint\frac{\mathrm{d}z_r}{2\I\pi} \prod_{j=1}^{r} \left( \frac{1}{(z_j)^r} \frac{1}{1-z^2_j} \frac{\gqwhittn(z_j)}{\gqwhittn(q z_j)} \right)
		\prod_{1\leqslant i<j\leqslant r} \left((z_i-z_j)^2 \  \frac{1-qz_iz_j}{1-z_iz_j}\right),
	\end{multline*}
	where the positively oriented contours encircle $\lbrace a_1, \dots, a_N\rbrace$ and no other singularity of the integrand. 

 For $r\geqslant 1$, and assuming that  $q^r> \big(\max \lbrace a_j \rbrace\big)^2$ and $q^r> \max \lbrace \alpha_j \rbrace \max \lbrace a_j \rbrace$, 
	\begin{multline*}
	\EQWM\Big[q^{-\lambda_1-\ \ldots\ -\lambda_r}\Big]  \\ = 
	\frac{(-1)^{\frac{r(r+1)}{2}}}{ r!} \oint\frac{\mathrm{d}w_1}{2\I\pi}\cdots \oint\frac{\mathrm{d}w_r}{2\I\pi}   \prod_{j=1}^{r} \left( \frac{1}{(w_i)^r}\frac{qw_j^2-1}{qw_j^2}  \frac{\gqwhittun(w_j^{-1})}{\gqwhittun(q^{-1}w_j^{-1})} \right) 
	\prod_{1\leqslant i<j\leqslant r}\left( (w_i-w_j)^2\ 
	\frac{q^2w_iw_j-q}{q^2w_iw_j-1} \right),
	\end{multline*}
	where the positively oriented contours encircle $\lbrace 1/a_1, \dots, 1/a_N\rbrace$ and no other singularity. 

\end{corollary}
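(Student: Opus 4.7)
The proof is a direct $t\to 0$ degeneration of Proposition \ref{prop:Dr}. I will handle the two formulas in parallel; the crucial observation is the $t$-order of the relevant elementary symmetric function. For the first formula, $e_r(q^{\lambda_1}t^{n-1},\ldots,q^{\lambda_n}t^0)=\sum_{|I|=r}\prod_{i\in I}q^{\lambda_i}t^{n-i}$ has lowest $t$-exponent $r(r-1)/2$, attained uniquely at $I=\{n-r+1,\ldots,n\}$, so
\[
e_r(q^{\lambda_1}t^{n-1},\ldots,q^{\lambda_n}t^0) = t^{r(r-1)/2}\,q^{\lambda_n+\cdots+\lambda_{n-r+1}}+O(t^{r(r-1)/2+1}).
\]
For the second formula, $e_r(q^{-\lambda_1}t^{1-n},\ldots,q^{-\lambda_n}t^0)$ has lowest $t$-exponent $r(r+1)/2-rn$, attained uniquely at $I=\{1,\ldots,r\}$, with leading coefficient $q^{-\lambda_1-\cdots-\lambda_r}$.

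Thus I rescale both sides of the first (resp.\ second) formula of Proposition \ref{prop:Dr} by the appropriate power of $t$ and pass to the limit $t\to 0$. On the LHS this produces the desired $q$-Whittaker expectation, the hypothesis $q^{r}>(\max a_j)^2$ and $q^r>\max\alpha_j\max a_j$ in the second case ensuring finiteness via the Littlewood identity \eqref{eq:CauchyLittlewoodspecialized}. On the RHS, Cauchy's determinant identity
\[
\det\!\left[\frac{1}{tz_k-z_l}\right]_{k,l=1}^{r}=\frac{t^{r(r-1)/2}(-1)^{r(r-1)/2}\prod_{k<l}(z_k-z_l)^{2}}{\prod_{k,l}(tz_k-z_l)}
\]
exposes the matching power $t^{r(r-1)/2}$; after cancellation, the $t\to 0$ denominator becomes $\prod_{k,l}(-z_l)=(-1)^{r^{2}}\prod_l z_l^{r}$, yielding $(-1)^{r(r+1)/2}\prod_{k<l}(z_k-z_l)^{2}/\prod_l z_l^{r}$, which accounts for both the overall sign $(-1)^{r(r+1)/2}$ and the $1/z_j^r$ (resp.\ $1/w_j^r$) factors of the claim.

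The remaining $t$-dependent factors in the integrand degenerate termwise. For the first formula one uses $\tfrac{tz_j-a_i}{z_j-a_i}\to \tfrac{1}{1-z_j/a_i}$, $\tfrac{1-z_ja_i}{1-tz_ja_i}\to 1-z_ja_i$, and $\tfrac{\Pi(qz_j;\rho)}{\Pi(z_j;\rho)}\to e^{\gamma(q-1)z_j}\prod_i(1-\alpha_i z_j)$ via \eqref{eq:Pigeneral}; assembling these reproduces exactly $\gqwhittn(z_j)/\gqwhittn(qz_j)$, while $\tfrac{1-tz_j^2}{1-z_j^2}\to \tfrac{1}{1-z_j^2}$ and the $i<j$ product tends to $\prod_{i<j}\tfrac{1-qz_iz_j}{1-z_iz_j}$. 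For the second formula the analogous limits $\tfrac{qw_j-ta_i}{qw_j-a_i}\to \tfrac{qw_j}{qw_j-a_i}$, $\tfrac{1-a_itw_j}{1-a_iw_j}\to \tfrac{1}{1-a_iw_j}$, combined with the $t=0$ limit of $\tfrac{\Pi((qw_j)^{-1};\rho)}{\Pi(w_j^{-1};\rho)}$, collapse into $\gqwhittun(w_j^{-1})/\gqwhittun(q^{-1}w_j^{-1})$, while $\tfrac{qw_j^2-1}{qw_j^2-t}\to \tfrac{qw_j^2-1}{qw_j^2}$ and the $i<j$ product tends to $\prod_{i<j}\tfrac{q^2w_iw_j-q}{q^2w_iw_j-1}$.

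The main care lies in justifying that the $t\to 0$ limit may be interchanged with the contour integral and with the expectation, and in confirming that no additional singularities cross the $z_j$ (resp.\ $w_j$) contour as $t$ decreases to $0$. For the second formula, the hypotheses $q^r>(\max a_j)^2$ and $q^r>\max_j\alpha_j\max_j a_j$ are precisely what guarantee these continuity properties and the convergence of the underlying Littlewood expansion, and correspondingly ensure that the $r$th moment $\EQWM[q^{-\lambda_1-\cdots-\lambda_r}]$ is finite.
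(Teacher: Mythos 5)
Your derivation of the first formula is fine and is exactly the $t\to 0$ degeneration the paper intends. For the second formula, however, your $t$-power bookkeeping is internally inconsistent: you compute the lowest $t$-exponent of $e_r(q^{-\lambda_1}t^{1-n},\ldots,q^{-\lambda_n}t^0)$ on the left to be $r(r+1)/2-rn$, and you then describe $t^{r(r-1)/2}$ (coming from the Cauchy determinant) as \emph{the matching power} on the right. These two exponents differ by $r(n-1)$, so for every $n\geqslant 2$ dividing both sides by the same power of $t$ cannot yield a finite nonzero limit on both sides simultaneously -- rescaled to make the right side finite, the left side diverges like $t^{-r(n-1)}$. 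A quick sanity check confirms the discrepancy: for $r=1$, $n=2$, $\rho=\varnothing$, the measure is supported on $\lambda=(m,m)$ with an explicit $q$-binomial weight, and one computes that the left side of Proposition \ref{prop:Dr}'s second formula equals $\frac{1+t}{t}\,\frac{q-ta_1a_2}{q-a_1a_2}$ whereas the stated contour integral evaluates to $(1+t)\,\frac{q-ta_1a_2}{q-a_1a_2}$; they differ by exactly the factor $t^{-1}$.

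The resolution is that the right-hand side of Proposition \ref{prop:highermacdo}, and hence of Proposition \ref{prop:Dr}, is really computing the action of the \emph{normalized} operator $t^{r(n-1)}\overline{\DD}_n^r$ rather than $\overline{\DD}_n^r$ -- this is the higher-$r$ analogue of the explicit $t^{n-1}\overline{\DD}_n^1$ normalization used in Propositions \ref{prop:intfor} and \ref{prop:momentsMacdonald}. The observable whose $t$-expansion you should therefore be tracking is $e_r\bigl(q^{-\lambda_1},q^{-\lambda_2}t,\ldots,q^{-\lambda_n}t^{n-1}\bigr)$, whose lowest $t$-exponent is $r(r-1)/2$ with leading coefficient $q^{-\lambda_1-\cdots-\lambda_r}$, matching the determinant's $t^{r(r-1)/2}$. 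Once this normalization is corrected, the rest of your calculation is right: the Cauchy determinant expansion does produce $(-1)^{r(r+1)/2}\prod_{k<l}(z_k-z_l)^2\big/\prod_l z_l^r$ after $\prod_{k,l}(tz_k-z_l)\to(-1)^r\prod_l z_l^r$, and the termwise $t\to 0$ limits of the $\Pi$, $\phi$ and rational factors assemble into $\gqwhittn(z_j)/\gqwhittn(qz_j)$ and $\gqwhittun(w_j^{-1})/\gqwhittun(q^{-1}w_j^{-1})$ exactly as you wrote. But you should have noticed that your two claimed $t$-orders for the second formula disagree and resolved that before invoking the limit.
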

\begin{proof}
	This is the $t=0$ degeneration of Proposition \ref{prop:Dr}. 
\end{proof}

\subsubsection{$q$-Laplace transform formulas}

We consider first the $q$-Laplace transform of $q^{\la_n}$. 
\begin{corollary}
	Let $z\in \C\setminus \R_{>0}$. Assume that the parameters  $ a_1, \dots, a_n\in(0,1) $ are chosen so that for all $i,j$,  $\vert q a_i/a_j\vert <1$,  and let $0<R<1$ be such that $0<q^R<a_i/a_j$ for all $i,j$. Under the $q$-Whittaker measure $\PQWM_{(a_1,\ldots, a_n), ((\alpha_1, \dots, \alpha_{\ell}), \gamma)}$, we have 
	\begin{multline}
	\EQWM\left[  \frac{1}{(q^{\la_n}z)_{\infty}} \right]= \sum_{k=0}^{n} \frac{1}{k!} \int_{\mathcal{D}_{R}}\frac{\mathrm{d}s_1}{2\I\pi} \dots \int_{\mathcal{D}_{R}} \frac{\mathrm{d}s_k}{2\I\pi}\  
\oint\frac{\mathrm{d}w_1}{2\I\pi} \dots \oint\frac{\mathrm{d}w_k}{2\I\pi}   \\ \times 
	\prod_{1\leqslant i<j\leqslant k} \frac{ (q^{s_j}w_j-q^{s_i}w_i)(w_i-w_j)(q^{s_i}w_iw_j)_{\infty}(q^{s_j}w_jw_i)_{\infty}}{(q^{s_i}w_i-w_j)(q^{s_j}w_j-w_i)(q^{s_i+s_j}w_i w_j)_{\infty}(w_iw_j)_{\infty}} \\
	\times	\prod_{i=1}^k \left[  \Gamma(-s_i)\Gamma(1+s_i)   \frac{\gqwhittn(w_i)}{\gqwhittn(q^{s_i}w_i)}  
	\frac{ (q^{s_i}w_i^2)_{\infty} (-z)^{s_i}}{(w_i^2)_{\infty} (q^{s_i}-1)w_i}
	\right], 
	\label{eq:expansionNoumiqWhittaker}
	\end{multline}
	where the positively oriented integration contours for the variables $w_i$ enclose all the $a_i$ and no other singularity.
	\label{cor:LaplaceqWhittakerlambdan}
\end{corollary}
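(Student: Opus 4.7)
The plan is to obtain this corollary as the direct $t=0$ specialization of Theorem \ref{theo:NoumiLaplace}, so the task is essentially bookkeeping: identify how the observable, the kernels $\mathcal{G}^{q,t}$, $\mathcal{A}^{q,t}_{\vec s}$ and the scalar factor $\phi(w_i^2)/\phi(q^{s_i}w_i^2)$ reduce when $t=0$, and check that the hypotheses (i)--(ii) of Theorem \ref{theo:NoumiLaplace} become the hypotheses stated here.

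First I would simplify the left-hand-side observable. Since $(0;q)_\infty = 1$, at $t=0$ we have $(q^{\la_i} t^{n-i+1} z)_\infty = 1$ for every $i\leqslant n-1$ and similarly $(q^{\la_i}t^{n-i}z)_\infty=1$ for $i\leqslant n-1$. Only the factor $i=n$ survives and yields
\begin{equation*}
\prod_{i=1}^n \frac{(q^{\la_i}t^{n-i+1}z)_{\infty}}{(q^{\la_i}t^{n-i}z)_{\infty}}\Bigg|_{t=0} = \frac{1}{(q^{\la_n}z)_\infty},
\end{equation*}
which is exactly the expectand in \eqref{eq:expansionNoumiqWhittaker}. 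Next I would specialize $\phi(x) = (tx)_\infty/(x)_\infty$ to $\phi(x)=1/(x)_\infty$ inside the right-hand side of \eqref{eq:expansionNoumi}. For the cross ratios this gives
\begin{equation*}
\frac{\phi(q^{s_i+s_j}w_iw_j)\phi(w_iw_j)}{\phi(q^{s_i}w_iw_j)\phi(q^{s_j}w_jw_i)}
= \frac{(q^{s_i}w_iw_j)_\infty(q^{s_j}w_jw_i)_\infty}{(q^{s_i+s_j}w_iw_j)_\infty(w_iw_j)_\infty},
\end{equation*}
and similarly $\phi(w_i^2)/\phi(q^{s_i}w_i^2) = (q^{s_i}w_i^2)_\infty/(w_i^2)_\infty$, matching the corresponding factors in \eqref{eq:expansionNoumiqWhittaker}.

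For the single-variable kernel, I would plug the specialization $\rho=\rho(\alpha,\gamma)$ (with trivial $\beta$) into \eqref{eq:Pigeneral}: at $t=0$, the factor $(t\alpha_i u;q)_\infty = 1$, so $\Pi(w;\rho)=e^{\gamma w}\prod_i 1/(w\alpha_i)_\infty$, which means $1/\Pi(w;\rho) = e^{-\gamma w}\prod_i(w\alpha_i)_\infty$. Combined with $\prod_j \phi(w/a_j)/\phi(wa_j) = \prod_j (wa_j)_\infty/(w/a_j)_\infty$ at $t=0$, one obtains
\begin{equation*}
\mathcal{G}^{q,0}(w) = e^{-\gamma w}\prod_{j=1}^n \frac{(wa_j)_\infty}{(w/a_j)_\infty}\prod_{i=1}^\ell (w\alpha_i)_\infty = \gqwhittn(w),
\end{equation*}
exactly as in the corollary.

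Finally I would verify the hypotheses. Condition (i) of Theorem \ref{theo:NoumiLaplace} demands $|ta_i/a_j|<1$ and $|qa_i/a_j|<1$; the first is automatic at $t=0$, the second is precisely the hypothesis assumed here. Condition (ii) on $R$ is identical in both statements, and the $w$-contours are chosen to enclose the $a_i$'s and no other singularity in both formulations. Since all remaining factors in \eqref{eq:expansionNoumi} match term by term with those in \eqref{eq:expansionNoumiqWhittaker} under the substitutions above, the equality claimed in the corollary follows. I do not anticipate any obstacle in this argument: the content is entirely in Theorem \ref{theo:NoumiLaplace}, and the $t=0$ degeneration is straightforward once the vanishing of the higher $\phi$-factors and the telescoping of the $q$-Pochhammer product are noted.
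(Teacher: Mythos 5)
Your proposal is correct and follows exactly the approach the paper itself uses: the paper's proof of this corollary is the single sentence ``This is the $t=0$ degeneration of Theorem \ref{theo:NoumiLaplace}.'' You have simply written out the bookkeeping that the paper leaves implicit, and all the specializations (collapse of the observable to $1/(q^{\lambda_n}z)_\infty$, reduction of $\phi$ to $1/(\cdot)_\infty$, identification of $\mathcal{G}^{q,0}=\gqwhittn$, and the automatic satisfaction of $|ta_i/a_j|<1$) check out.
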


\begin{proof}
	This is the $t=0$ degeneration of Theorem \ref{theo:NoumiLaplace}. 
\end{proof}
It is possible to extract the coefficient of $z^k$ in the above formula, which yields the following alternative moment formula. 
\begin{corollary} For any $r\in \Z_{>0}$, under the $q$-Whittaker measure $\PQWM_{(a_1,\ldots, a_n), ((\alpha_1, \dots, \alpha_{\ell}), \gamma)}$,
\begin{multline*}
\EQWM[q^{r\lambda_n}] = (q;q)_{r}  \sum_{{\tiny \begin{matrix} \mu\vdash r \\ \mu=1^{m_1}2^{m_2}\dots \end{matrix}}} \frac{1}{m_1! m_2! \dots }    \oint\frac{\mathrm{d}w_1}{2\I\pi} \dots  \oint\frac{\mathrm{d}w_{\ell(\mu)}}{2\I\pi} \det\left[ \frac{1}{w_iq^{\mu_i } - w_j}  \right]_{i,j=1}^{\ell(\mu)} \\ \times 
 \prod_{1\leqslant a<b\leqslant \ell(\mu)} \frac{(q^{\mu_a}w_a w_b)_{\infty}(q^{\mu_b}w_a w_b)_{\infty}}{(w_a w_b)_{\infty}(q^{\mu_a+ \mu_b}w_a w_b)_{\infty}} \prod_{j=1}^{\ell(\mu)} \frac{(q^{\mu_m}w_j^2)_{\infty}}{(w_j^2)_{\infty}}\frac{ \gqwhittn(w_j)}{\gqwhittn(q^{\mu_j}w_j)}   \mathrm{d}w_j,
\end{multline*}	
where the positively oriented contours enclose the $a_j$ and no other singularity. 
\label{cor:smallcontoursmomentsn}
\end{corollary}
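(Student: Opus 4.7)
The plan is to extract the coefficient of $z^r$ on both sides of Corollary \ref{cor:LaplaceqWhittakerlambdan}. On the left-hand side, the $q$-binomial theorem \eqref{eq:qbinomial} (with $a=0$) gives the power series expansion
$$\frac{1}{(q^{\lambda_n}z)_\infty} = \sum_{r\geqslant 0} \frac{q^{r\lambda_n}}{(q;q)_r}\, z^r,$$
so that, for $|z|$ sufficiently small, the coefficient of $z^r$ in $\EQWM\!\left[\frac{1}{(q^{\lambda_n}z)_\infty}\right]$ equals $\EQWM[q^{r\lambda_n}]/(q;q)_r$.

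On the right-hand side, rather than working with the Mellin--Barnes form, I would go one step up in the proof tree and use the $t=0$ specialization of the discrete-sum formula from Proposition \ref{prop:actionNoumi} (which is what Lemma \ref{lem:MellinBarnes} converts into the Mellin--Barnes form appearing in Corollary \ref{cor:LaplaceqWhittakerlambdan}). This gives, again for $|z|<1$,
$$\EQWM\!\left[\frac{1}{(q^{\lambda_n}z)_\infty}\right] = \sum_{k=0}^n \frac{1}{k!} \sum_{\nu_1,\dots,\nu_k \geqslant 1} z^{\nu_1+\cdots+\nu_k} \oint\!\cdots\!\oint \mathcal{I}(\nu,w)\, \prod_{i=1}^k \frac{\mathrm{d}w_i}{2\I\pi},$$
where $\mathcal{I}(\nu,w)$ is the $t=0$ specialization of the integrand in \eqref{eq:Noumiexpansiondiscrete}, and is manifestly symmetric under simultaneous permutation of the pairs $(\nu_i,w_i)$.

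Extracting the coefficient of $z^r$ restricts the inner sum to compositions $(\nu_1,\dots,\nu_k)$ of $r$ into $k$ positive parts. I would then reorganize this by sorting each composition into a partition: every partition $\mu = 1^{m_1}2^{m_2}\cdots \vdash r$ with $\ell(\mu)=k$ comes from exactly $k!/(m_1!m_2!\cdots)$ compositions, and by symmetry of $\mathcal{I}$ each contributes the same integral value (after relabeling the $w_i$). This converts the double sum into a single sum over partitions $\mu\vdash r$ with $\ell(\mu)\leqslant n$, producing the combinatorial weight $\frac{1}{m_1!m_2!\cdots}$ stated in the corollary.

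The final step is to identify the rational part of $\mathcal{I}(\mu,w)$ with the Cauchy determinant $\det\!\left[\frac{1}{w_iq^{\mu_i}-w_j}\right]_{i,j=1}^{\ell(\mu)}$. Applying the classical Cauchy determinant identity with $x_i = w_iq^{\mu_i}$, $y_j=w_j$ yields
$$\det\!\left[\frac{1}{w_iq^{\mu_i}-w_j}\right] = \frac{\prod_{i<j}(w_iq^{\mu_i}-w_jq^{\mu_j})(w_j-w_i)}{\prod_i w_i(q^{\mu_i}-1)\,\prod_{i<j}(w_iq^{\mu_i}-w_j)(w_jq^{\mu_j}-w_i)},$$
which, after a sign check, matches the rational factor $\prod_{i<j}\frac{(q^{\mu_j}w_j-q^{\mu_i}w_i)(w_i-w_j)}{(q^{\mu_i}w_i-w_j)(q^{\mu_j}w_j-w_i)}\,\prod_i\frac{1}{(q^{\mu_i}-1)w_i}$ in $\mathcal{I}(\mu,w)$. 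The $q$-Pochhammer factors $\frac{(q^{\mu_i}w_iw_j)_\infty(q^{\mu_j}w_iw_j)_\infty}{(w_iw_j)_\infty(q^{\mu_i+\mu_j}w_iw_j)_\infty}$, $\frac{(q^{\mu_i}w_i^2)_\infty}{(w_i^2)_\infty}$, and $\frac{\gqwhittn(w_i)}{\gqwhittn(q^{\mu_i}w_i)}$ already match. Multiplying through by $(q;q)_r$ gives the claimed formula. The main (minor) obstacle is the sign bookkeeping in the Cauchy determinant step; everything else is a routine termwise comparison.
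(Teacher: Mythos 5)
Your argument is correct and is essentially the paper's Proof 1 of this corollary: the paper also extracts the $z^r$ coefficient, converts the Mellin--Barnes integral back to the discrete sum of Proposition~\ref{prop:actionNoumi} via Lemma~\ref{lem:MellinBarnes}, and reorganizes compositions of $r$ into partitions to produce the $1/(m_1!m_2!\cdots)$ weight. The only difference is presentational — you spell out the Cauchy determinant identification that the paper leaves implicit, and your sign check is in fact an exact match since $(q^{\mu_j}w_j - q^{\mu_i}w_i)(w_i-w_j) = (w_iq^{\mu_i}-w_jq^{\mu_j})(w_j-w_i)$.
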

\begin{proof}[Proof 1]
To extract the coefficient of $z^r$ in \eqref{eq:expansionNoumiqWhittaker}, we use the $q$-binomial theorem \ref{eq:qbinomial} on the left-hand-side. On the right-hand-side, we may transform the integration over $s_i$ into a discrete sum using Lemma \ref{lem:MellinBarnes} and  collect the terms of degree $k$ in $z$. We obtain 
\begin{multline*}
\frac{1}{(q;q)_{r}} \EQWM[q^{r\lambda_n}] =  \sum_{k=0}^{\infty} \frac{1}{k!} \sum_{\substack{\nu_1, \dots, \nu_k \geqslant 1\\ \nu_1+\dots+\nu_k=r}} \oint\frac{\mathrm{d}w_1}{2\I\pi} \dots  \oint\frac{\mathrm{d}w_{k}}{2\I\pi} \det\left[ \frac{1}{w_iq^{\nu_i } - w_j}  \right]_{i,j=1}^{k} \\ \times 
\prod_{1\leqslant a<b\leqslant k} \frac{(q^{\nu_a}w_a w_b)_{\infty}(q^{\nu_b}w_a w_b)_{\infty}}{(w_a w_b)_{\infty}(q^{\nu_a+ \nu_b}w_a w_b)_{\infty}} \prod_{j=1}^{k} \frac{(q^{\nu_m}w_j^2)_{\infty}}{(w_j^2)_{\infty}}\frac{ \gqwhittn(w_j)}{\gqwhittn(q^{\nu_j}w_j)}   \mathrm{d}w_j.
\end{multline*}
We may relabel the $w_j$'s since they  are integrated on the same contour, or equivalently we may relabel the $\nu_j$'s. Hence, the sum over compositions $\nu$ can be rewritten as a sum over partitions $\mu$ (the factor $k!/(m_1! m_2! \dots)$ corresponds to the number of compositions corresponding to the same partition).  
\end{proof}

\begin{proof}[Proof 2]
We present a second proof deriving Corollary \ref{cor:smallcontoursmomentsn} directly from Corollary \ref{cor:momentsqWhittaker}. Although this proof is longer, it is useful to compare our present results with \cite{borodin2014macdonald} and \cite{borodin2016directed}, and a similar approach will be useful in Section \ref{sec:HL}. We first recall a useful result in the theory of Macdonald processes. It was first stated in \cite{borodin2014macdonald} as Proposition 3.2.1, but we use a slightly more general version from \cite{borodin2015spectral}. 
\begin{proposition}[{\cite[Proposition 7.4]{borodin2015spectral}}]
 Let $\gamma_1, \dots, \gamma_k$ be positively oriented closed contours and a function $F(z_1, \dots, z_k)$ be such that 
 \begin{itemize}
 	\item The contour $\gamma_k$ is a small circle around $1$, small enough so as to not contain $q$. 
 	\item For all $A<B$, $\gamma_A$ enclose $q\gamma_B$. 
 	\item For all $1\leqslant j\leqslant k$, one can deform $\gamma_j$ to $\gamma_k$ in such a way that the function 
 	$$ \frac{F(z_1, \dots, z_k)}{z_1 \dots z_k}\prod_{i<j}(z_i-z_j) $$
 	is analytic in $z_j$ (when all variables lie on their respective contours) in a neighbourhood of the area swept by the deformation.  
 \end{itemize}
 Then we have 
 \begin{equation}
\oint\frac{\mathrm{d}z_1}{2\I\pi} \dots \oint\frac{\mathrm{d}z_k}{2\I\pi} \prod_{A<B} \frac{z_A-z_B}{z_A-qz_B} \frac{F(z_1, \dots, z_k)}{z_1 \dots z_k} = \sum_{\lambda\vdash k}  \oint\frac{\mathrm{d}w_1}{2\I\pi} \dots \oint\frac{\mathrm{d}w_{\ell(\la)}}{2\I\pi} \mathrm{d}\mu_{\lambda}(\vec{w}) E^q(\vec{w}\circ \lambda),  
 \label{eq:residuebookkeeping}
 \end{equation}
 where
 \begin{equation}
 \vec{w}\circ \lambda = (w_1, qw_1, \dots, q^{\lambda_1-1}w_1, w_2, \dots, q^{\la_2-1}w_2, \dots, q^{\la_{\ell(\la)}-1}w_{\ell(\la)}),
 \label{eq:defwcirclambda}
 \end{equation}
 $$E^q(\vec z)  = \sum_{\sigma\in \mathcal{S}_k}\sigma\left( \prod_{A<B} \frac{z_A-qz_B}{z_A-z_B}F(z_1, \dots, z_k) \right),$$
 and 
 $$\mathrm{d}\mu_{\lambda}(\vec w) = \frac{(q-1)^{k}q^{-\frac{k(k-1)}{2}}}{m_1!m_2!\dots} \det\left[\frac{1}{w_iq^{\la_i}-w_j} \right]_{i,j=1}^{\ell(\la)} 
  \prod_{j=1}^{\ell(\la)}\mathrm{d}w_j.$$ 
 \label{prop:contourshift}
\end{proposition}
We may  apply Proposition \ref{prop:contourshift} to \eqref{eq:qmomentslambdan} with the benign modification that the contour $\gamma_k$ is around the $a_i$ instead of around $1$. Note that the factor 
$$ \prod_{1\leqslant a<b\leqslant k}\frac{1 - q z_az_b}{1 - z_az_b} \prod_{i=1}^k \frac{1}{1-z_i^2}$$
is symmetric and can be taken outside of the sum over permutations in $E^q$. The evaluation into $\vec w\circ \lambda$ is given by the following.

\begin{lemma} For a partition $\la\vdash k$ with $m=\ell(\la)$, if  $\vec z = \vec w \circ \lambda$ (the notation was introduced in \eqref{eq:defwcirclambda}) then we have 
	\begin{equation}
	\prod_{1\leqslant a<b\leqslant k}\frac{1 - q z_az_b}{1 - z_az_b} \prod_{i=1}^k \frac{1}{1-z_i^2} =   \prod_{1\leqslant a<b\leqslant m} \frac{(q^{\la_a}w_a w_b)_{\infty}(q^{\la_b}w_a w_b)_{\infty}}{(w_a w_b)_{\infty}(q^{\la_a+ \la_b}w_a w_b)_{\infty}} \prod_{j=1}^{m} \frac{(q^{\lambda_j}w_j^2)_{\infty}}{(w_j^2)_{\infty}}.
	\label{eq:simplification}
	\end{equation}
	\label{lem:evaluationstring}
\end{lemma}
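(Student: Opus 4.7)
The plan is to decompose the products on the left-hand side according to the block structure of $\vec z = \vec w \circ \lambda$, and then evaluate each block (and each pair of blocks) separately by telescoping. Label the coordinates of $\vec z$ by pairs $(j,u)$ with $1\leqslant j\leqslant m$ and $0\leqslant u\leqslant \la_j-1$, so that $z_{(j,u)}=q^u w_j$. Rewriting
$$
\prod_{1\leqslant a<b\leqslant k}\frac{1-qz_az_b}{1-z_az_b}\,\prod_{i=1}^k\frac{1}{1-z_i^2}
\;=\;\prod_{j=1}^m W_j\;\cdot\!\!\prod_{1\leqslant i<j\leqslant m}\!\! C_{ij},
$$
the within-block factor $W_j$ collects the pairs $(j,u)<(j,v)$ together with the diagonal contributions at block $j$, while the between-block factor $C_{ij}$ collects all pairs with one coordinate in block $i$ and one in block $j$.

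For the between-block factor, writing $x=w_iw_j$,
$$
C_{ij}=\prod_{u=0}^{\la_i-1}\prod_{v=0}^{\la_j-1}\frac{1-q^{u+v+1}x}{1-q^{u+v}x}.
$$
First I would telescope in $v$: for each fixed $u$,
$\prod_{v=0}^{\la_j-1}\frac{1-q^{u+v+1}x}{1-q^{u+v}x}=\frac{1-q^{u+\la_j}x}{1-q^{u}x}.$
Then
$$
C_{ij}=\prod_{u=0}^{\la_i-1}\frac{1-q^{u+\la_j}x}{1-q^u x}=\frac{(q^{\la_j}x;q)_{\la_i}}{(x;q)_{\la_i}}=\frac{(q^{\la_i}x)_\infty(q^{\la_j}x)_\infty}{(x)_\infty(q^{\la_i+\la_j}x)_\infty},
$$
which matches the cross term on the right-hand side of \eqref{eq:simplification}.

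For the within-block factor, setting $y=w_j^2$ and $a=\la_j$, I would again telescope in the inner variable:
$$
\prod_{0\leqslant u<v\leqslant a-1}\frac{1-q^{u+v+1}y}{1-q^{u+v}y}=\prod_{u=0}^{a-1}\prod_{v=u+1}^{a-1}\frac{1-q^{u+v+1}y}{1-q^{u+v}y}=\prod_{u=0}^{a-1}\frac{1-q^{u+a}y}{1-q^{2u+1}y},
$$
where the $u=a-1$ factor is $1$. Multiplying by $\prod_{u=0}^{a-1}(1-q^{2u}y)^{-1}$ merges the odd- and even-indexed factors in the denominator into a single $q$-Pochhammer,
$$
W_j=\frac{\prod_{u=0}^{a-1}(1-q^{u+a}y)}{\prod_{u=0}^{a-1}(1-q^{2u}y)(1-q^{2u+1}y)}=\frac{(q^a y;q)_a}{(y;q)_{2a}}=\frac{1}{(y;q)_a}=\frac{(q^{\la_j}w_j^2)_\infty}{(w_j^2)_\infty}.
$$
Multiplying all the $W_j$ and $C_{ij}$ gives exactly the right-hand side of \eqref{eq:simplification}.

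There is no real obstacle here; the only care required is the bookkeeping in the within-block case, where one must check that the diagonal factors $(1-z_i^2)^{-1}$ coming from block $j$ combine with the half of the off-diagonal factors indexed by $u<v$ in the same block to produce a complete $q$-Pochhammer $(y;q)_{2a}$ in the denominator. Once both telescopings are set up correctly, the proof is essentially a one-line computation in each case.
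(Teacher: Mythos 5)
Your proof is correct and is essentially the same as the paper's: the same decomposition into within-block and between-block products, the same telescoping in the inner index, and the same Pochhammer merge $(y;q)_a(q^ay;q)_a=(y;q)_{2a}$ to simplify the diagonal block. The paper presents it slightly more compactly (it does not introduce the auxiliary names $W_j$, $C_{ij}$ and performs both telescopings in one displayed chain), but the argument is identical in substance.
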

\begin{proof} For $\vec z = \vec w \circ \lambda$, 
\begin{align*}
	L.H.S \ \eqref{eq:simplification} &= \prod_{1\leqslant i<j\leqslant m} \prod_{r=0}^{\la_i-1}\prod_{s=0}^{\la_j-1} \frac{1-q^{1+r+s}w_iw_j}{1-q^{r+s}w_iw_j}  \prod_{i=1}^m \prod_{0\leqslant r<s\leqslant \la_i-1} \frac{1-q^{1+r+s}w_i^2}{1-q^{r+s}w_i^2} \prod_{t=0}^{\lambda_i-1} \frac{1}{1-q^{2t}w_i^2} \\ 
&= \prod_{1\leqslant i<j\leqslant m} \prod_{r=0}^{\la_i-1} \frac{1-q^{r+\lambda_j}w_iw_j}{1-q^{r}w_iw_j} 
\prod_{i=1}^m \prod_{r=0}^{\la_i-1} \frac{1-q^{r+\la_i}w_i^2}{1-q^{2r+1}w_i^2} \frac{1}{1-q^{2r}w_i^2}\\ 
&= \prod_{1\leqslant i<j\leqslant m} \frac{(q^{\la_j}w_iw_j)_{\infty}/(q^{\la_i+\la_j}w_iw_j)_{\infty}}{(w_iw_j)_{\infty}/(q^{\la_i}w_iw_j)_{\infty}} 
\prod_{i=1}^m \prod_{r=0}^{\la_i-1}  \frac{1}{1-q^{r}w_i^2} 
=  R.H.S\  \eqref{eq:simplification}.
\end{align*}
\end{proof}
$E^q$ can then be computed using the symmetrization identity \cite[VII, (1.4)]{macdonald1995symmetric}
\begin{equation}
\sum_{\sigma\in \mathcal{S}_k} \sigma\left( \prod_{1\leqslant i<j\leqslant k} \frac{u_i-q u_j}{u_i-u_j} \right) = \frac{(q;q)_k}{(1-q)^k}. 
\label{eq:symidentity}
\end{equation}
Thus, Proposition \ref{prop:contourshift} applied  to Corollary \ref{cor:momentsqWhittaker} using Lemma \ref{lem:evaluationstring}  yields Corollary \ref{cor:smallcontoursmomentsn}. 
\end{proof}

Now we provide formulas characterizing the distribution of $q^{-\la_1}$. 
\begin{corollary} 
	Let $z\in \C\setminus \R_{>0}$. Assume that
	\begin{enumerate}
		\item[(i)] The parameters  $ a_1, \dots, a_n\in(0,1) $ are chosen such that for all $i,j$, $\max\lbrace a_i\rbrace< a_i/a_j$ and $q a_i/a_j <1$.
		\item[(ii)] $R\in (0,1)$, $q^{R}> \max\lbrace a_i\rbrace $, and for all $i,j$ $q^R< a_i/a_j$.
		\item[(iii)]  The $\alpha_i$'s are chosen so that $q^{R}> \max\lbrace a_i\rbrace \max\lbrace \alpha_j\rbrace$. 
	\end{enumerate}
	  Then, under the $q$-Whittaker measure $\PQWM_{(a_1,\ldots, a_n), ((\alpha_1, \dots, \alpha_{\ell}), \gamma)}$, we have 
	\begin{multline}
	\EQWM\left[ \frac{1}{(z q^{-\la_1})_{\infty}}\right] =  \sum_{k=0}^{n} \ \frac{1}{k!}\ \int_{\mathcal{D}_R}\frac{\mathrm{d}s_1}{2\I\pi} \dots \int_{\mathcal{D}_R} \frac{\mathrm{d}s_k}{2\I\pi}\   \oint\frac{\mathrm{d}w_1}{2\I\pi} \dots \oint\frac{\mathrm{d}w_k}{2\I\pi}    \\  \times 
	\prod_{1\leqslant i<j\leqslant k} \frac{(q^{s_j}w_i-q^{s_i}w_j)(w_j-w_i)(q^{-s_i}w_iw_j)_{\infty}(q^{-s_j}w_iw_j)_{\infty}}{(q^{s_j}w_i-w_j)(q^{s_i}w_j-w_i)(q^{-s_i-s_j}w_iw_j)_{\infty}(w_iw_j)_{\infty}}\\ \times
	\prod_{i=1}^k \left[  \Gamma(-s_i)\Gamma(1+s_i)   \frac{\gqwhittun(w_i)}{\gqwhittun(q^{-s_i}w_i)}  
	 \frac{ (q^{-s_i}w_i^2)_{\infty} (-z)^{s_i}}{(w_i^2)_{\infty} (1-q^{s_i})w_i}\right],
	\label{eq:LaplaceqWhittaker}
	\end{multline}
	where the $w$ contours are small positively oriented circles enclosing the $a_i$ and no other singularity, and $\mathcal{D}_R = R+\I\R$ oriented upwards, as before.
	\label{cor:LaplaceqWhittaker}
\end{corollary}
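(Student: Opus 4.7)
The plan is to deduce Corollary \ref{cor:LaplaceqWhittaker} as the $t=0$ specialization of Theorem \ref{theo:MoumiLaplace}, with essentially no new input. First, I would verify that the left-hand side of \eqref{eq:Laplacela1} collapses to $\EQWM\big[1/(zq^{-\lambda_1})_\infty\big]$. At $t=0$, the factor $(q^{-\lambda_i}t^i z)_\infty/(q^{-\lambda_i}t^{i-1}z)_\infty$ equals $1/1=1$ for every $i\geqslant 2$ (both numerator and denominator evaluate to $(0;q)_\infty = 1$), while for $i=1$ we have $t^{i-1}=1$ and $t^i=0$, so the factor is exactly $1/(zq^{-\lambda_1})_\infty$. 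Hence the product reduces to the target observable.

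Second, I would translate the three hypotheses of Theorem \ref{theo:MoumiLaplace} to their $t=0$ analogues and check they are equivalent to hypotheses (i)--(iii) of the corollary. The constraint $|ta_i/a_j|<1$ is automatic; $|qa_i/a_j|<1$ together with $\max\{a_i\}<a_i/a_j$ is hypothesis (i); the contour condition $a_i<q^R<a_i/a_j$ (equivalently $q^R>\max\{a_i\}$ and $q^R<a_i/a_j$) is hypothesis (ii); and since $\rho=((\alpha_1,\dots,\alpha_\ell),\gamma)$ has trivial $\beta$-component, the only pole condition of the form $q^R>a_i\alpha_j$ is hypothesis (iii). The Plancherel parameter $\gamma$ does not introduce additional singularities.

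Third, I would carry out the elementary bookkeeping on the right-hand side of \eqref{eq:Laplacela1}. Recall $\phi(u)=(tu)_\infty/(u)_\infty$, which at $t=0$ becomes $\phi(u)=1/(u)_\infty$. Consequently $\phi(w_i^2)/\phi(q^{-s_i}w_i^2)=(q^{-s_i}w_i^2)_\infty/(w_i^2)_\infty$, and the cross-product $\mathcal{B}^{q,t}_{\vec s}(\vec w)$ of Definition \ref{def:Moumiexpansion} reduces term by term to the $(q)_\infty$-ratio appearing in \eqref{eq:LaplaceqWhittaker}. Similarly, substituting the explicit form $\Pi(w;\rho)=e^{\gamma w}\prod_i(w\alpha_i)_\infty^{-1}$ (valid at $t=0$) into $\overline{\mathcal{G}}^{q,t}(w)=\prod_j \phi(a_j/w)/\phi(wa_j)\,\Pi(w;\rho)^{-1}$ produces exactly $\gqwhittun(w)$ as defined at the start of the section.

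Since every ingredient on both sides of \eqref{eq:Laplacela1} degenerates cleanly to the corresponding object in \eqref{eq:LaplaceqWhittaker}, the corollary follows without further work. There is no substantive obstacle here; the only caveat is to be slightly careful that the $t=0$ limit does not create $0/0$ indeterminacies in the observable, which is precisely the observation made in the first paragraph.
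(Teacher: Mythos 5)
Your proposal is correct and follows exactly the same route as the paper, whose entire proof reads ``This is the $t=0$ degeneration of Theorem~\ref{theo:MoumiLaplace}.'' The additional bookkeeping you carry out (collapse of the observable, translation of hypotheses, simplification of $\phi$, $\mathcal{B}_{\vec s}^{q,t}$, and $\overline{\mathcal{G}}^{q,t}$) is accurate and simply makes explicit what the paper leaves to the reader.
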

\begin{proof}
	This is the $t=0$ degeneration of Theorem \ref{theo:MoumiLaplace}.
\end{proof}

\begin{remark}
	\label{rem:finitevsinfinitecontours}
Note that in light of similarities with $q$-Laplace transform formulas for the full-space $q$-Whittaker process -- see in particular \cite[Theorem 3.3]{borodin2015height} -- it is conceivable that \eqref{eq:LaplaceqWhittaker} holds as well if the contours for the $w_i$ variables contain not only the singularities $a_i$ but also $q^ja_i$ for all $1\leqslant j\leqslant k$ for an arbitrary integer $k$ (In \cite[Theorem 3.3]{borodin2015height}, the contour encloses all the singularities $1/(q^ja_i)$, but the formula would be valid as well if the contour would contain only the singularities $1/(q^ja_i)$ for $1\leqslant j\leqslant k$ where $k$ is an arbitrary positive integer). Transforming the contours in such a way may be convenient for later asymptotic analysis of the formula, so as to work with infinite contours in Section \ref{sec:Whittaker}. Presently, it is not clear how to justify such a contour deformation.
 \end{remark}

Both sides of \eqref{eq:LaplaceqWhittaker} are analytic in $z\in \C\setminus \R_{\geqslant 0}$ but not analytic at $z=0$, so one cannot extract coefficients as in the proof of Corollary \ref{cor:smallcontoursmomentsn}. Nevertheless, we have the following: 
\begin{corollary}
	For all $r\in\Z_{>0}$ such that $q^{r}> \big(\max \lbrace a_j \rbrace\big)^2$ and $q^{r}> \max \lbrace \alpha_j \rbrace \max \lbrace a_j \rbrace$, under the $q$-Whittaker measure $\PQWM_{(a_1,\ldots, a_n), ((\alpha_1, \dots, \alpha_{\ell}), \gamma)}$,
\begin{multline}
\EQWM[q^{-r\lambda_1}] = (q;q)_{r}  \sum_{{\tiny \begin{matrix} \mu\vdash r \\ \mu=1^{m_1}2^{m_2}\dots \end{matrix}}} \frac{1}{m_1! m_2! \dots }    \oint\frac{\mathrm{d}w_1}{2\I\pi} \dots  \oint\frac{\mathrm{d}w_{\ell(\mu)}}{2\I\pi} 
 \det\left[ \frac{1}{q^{\mu_i }w_i^{-1} - w_j^{-1}}  \right]_{i,j=1}^{\ell(\mu)} \\ \times 
 \prod_{1\leqslant a<b\leqslant \ell(\mu)} \frac{(q^{-\mu_a}w_a w_b)_{\infty}(q^{-\mu_b}w_a w_b)_{\infty}}{(w_a w_b)_{\infty}(q^{-\mu_a- \mu_b}w_a w_b)_{\infty}} \prod_{j=1}^{\ell(\mu)} \frac{(q^{-\mu_j}w_j^2)_{\infty}}{(w_j^2)_{\infty}}\frac{ \gqwhittun(w_j)}{\gqwhittun(q^{-\mu_j}w_j)}   \frac{\mathrm{d}w_j}{w_j^2},
\label{eq:smallcontourlambda1}
\end{multline}	
where the positively oriented contours enclose the $a_j$ and no other singularity. 
\label{cor:smallcontoursmoments1}
\end{corollary}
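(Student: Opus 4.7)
The strategy mirrors the second proof of Corollary \ref{cor:smallcontoursmomentsn}. The first-proof approach (extracting the coefficient of $z^r$ from a Laplace transform) is unavailable here because the right-hand side of Corollary \ref{cor:LaplaceqWhittaker} is not analytic at $z=0$ (cf.\ Remark \ref{rem:finitevsinfinitecontours}), so one must work directly with the nested-contour moment formula \eqref{eq:qmomentslambda1}.

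Setting $k=r$ in \eqref{eq:qmomentslambda1} and rewriting
$$\prod_{a<b} \frac{w_a - w_b}{q^{-1}w_a - w_b} = q^{k(k-1)/2} \prod_{a<b}\frac{w_a - w_b}{w_a - qw_b},$$
the integrand takes exactly the form required by Proposition \ref{prop:contourshift} (with the benign modification that the reference contour sits around the points $1/a_j$ rather than $1$), with
$$F(w_1, \ldots, w_k) = q^{k(k-1)/2}\prod_{a<b}\frac{q^{-1} - w_a w_b}{q^{-2} - w_a w_b}\prod_{m=1}^k\frac{q w_m^2 - 1}{q w_m^2}\frac{\gqwhittun(w_m^{-1})}{\gqwhittun(q^{-1}w_m^{-1})}.$$
Since $F$ is symmetric in $(w_1,\ldots,w_k)$, only the Cauchy tail is permuted inside $E^q$, and by the symmetrization identity \eqref{eq:symidentity} one has $E^q(\vec z) = F(\vec z)\,(q;q)_k/(1-q)^k$. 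The proposition then decomposes the $k$-fold nested integral into a sum over partitions $\mu \vdash k$ of $\ell(\mu)$-fold small-contour integrals against the measure $\mathrm{d}\mu_\mu$, with integrand $F(\vec w \circ \mu)$.

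Next, I evaluate $F$ on the $q$-string $\vec w \circ \mu = (w_1, qw_1, \ldots, q^{\mu_1-1}w_1, w_2, \ldots)$. The $\gqwhittun$-ratio telescopes immediately to $\prod_j \gqwhittun(w_j^{-1})/\gqwhittun(q^{-\mu_j}w_j^{-1})$. For the symmetric tail, I prove an analog of Lemma \ref{lem:evaluationstring} by essentially the same double-product manipulation as there, though now with several sign flips reflecting that we are dealing with $q^{-\la_1}$ rather than $q^{\la_n}$:
$$q^{k(k-1)/2}\prod_{a<b}\frac{q^{-1}-z_a z_b}{q^{-2}-z_a z_b}\prod_m\frac{qz_m^2-1}{qz_m^2}\bigg|_{\vec z=\vec w\circ\mu} = \prod_{a<b}\frac{(q^{-\mu_a}w_a^{-1}w_b^{-1})_\infty(q^{-\mu_b}w_a^{-1}w_b^{-1})_\infty}{(w_a^{-1}w_b^{-1})_\infty(q^{-\mu_a-\mu_b}w_a^{-1}w_b^{-1})_\infty}\prod_j\frac{(q^{-\mu_j}w_j^{-2})_\infty}{(w_j^{-2})_\infty}.$$
The cases $\mu=(1)$ and $\mu=(2)$ serve as easy sanity checks; the general case follows by the same telescoping pattern as in Lemma \ref{lem:evaluationstring}.

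At this stage the identity holds with small contours around $1/a_j$ and with the determinant $\det\bigl[1/(w_i q^{\mu_i} - w_j)\bigr]$ coming from $\mathrm{d}\mu_\mu$. The final step is the substitution $w_i \mapsto 1/w_i$: a positively oriented small loop around $1/a_j$ transports to a positively oriented small loop around $a_j$, the Jacobian $\mathrm{d}w_i = -\mathrm{d}u_i/u_i^2$ supplies the factor $1/w_j^2$ appearing in the statement together with a sign $(-1)^k$, and a direct calculation yields $\det\bigl[1/(w_i q^{\mu_i} - w_j)\bigr]\big|_{w=1/u} = \det\bigl[1/(q^{\mu_i} u_i^{-1} - u_j^{-1})\bigr]$. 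Collecting all prefactors --- the $(-1)^k$ in \eqref{eq:qmomentslambda1}, the $q^{k(k-1)/2}$ factored out when reshaping $F$, the $(q-1)^k q^{-k(k-1)/2}/\prod m_i!$ from $\mathrm{d}\mu_\mu$, the $(q;q)_k/(1-q)^k$ from the symmetrization, and the Jacobian signs from $w \mapsto 1/w$ --- one arrives at the prefactor $(q;q)_r/\prod m_i!$ stated in the corollary, completing the proof. The main obstacle will be verifying the evaluation lemma for the symmetric tail and tracking the signs and Jacobian factors through the final change of variables; the remaining steps are a direct adaptation of the second proof of Corollary \ref{cor:smallcontoursmomentsn}.
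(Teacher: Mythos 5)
Your route --- collapse the nested contours of \eqref{eq:qmomentslambda1} via Proposition~\ref{prop:contourshift}, evaluate the resulting kernel on $q$-strings by an analogue of Lemma~\ref{lem:evaluationstring}, and then invert $w\mapsto 1/w$ --- is the alternative that the paper acknowledges in the remark immediately following the corollary; it is \emph{not} the paper's own proof. The paper instead applies the $q$-derivative $D_q$ repeatedly to the Laplace-type identity of Corollary~\ref{cor:LaplaceqWhittaker}, shifts the vertical contour $\mathcal{D}_R$ past the poles of $\Gamma(-s)$ at $s=1,\dots,r$ so that the right-hand side splits into a polynomial in $z$ plus a remainder that vanishes after applying $(D_q)^r$ and sending $z\to 0$, and reads off the relevant coefficient. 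That device is what extracts moments from a $q$-Laplace transform that is not analytic at $z=0$; the relevant discussion is Remark~\ref{rem:subtleties}, not Remark~\ref{rem:finitevsinfinitecontours}. What you gain by going through Proposition~\ref{prop:contourshift} is independence from Corollary~\ref{cor:LaplaceqWhittaker}: granting \eqref{eq:qmomentslambda1}, the argument directly parallels Proof~2 of Corollary~\ref{cor:smallcontoursmomentsn}.

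Two computational points deserve repair before this qualifies as a proof. Your evaluation lemma is stated with a $q^{k(k-1)/2}$ on its left-hand side, but you also list $q^{k(k-1)/2}$ among the prefactors extracted when reshaping the Cauchy kernel, so it is double-counted; the correct evaluation identity carries no such factor, and the $\mu=(2)$ sanity check you propose will catch this (the identity as written is off by a power of $q$ there). Second, the inversion $w\mapsto 1/w$: your treatment of the Jacobians is consistent --- a positively oriented small loop around $1/a_j$ transports to a positively oriented small loop around $a_j$, and $\mathrm{d}w_j=-u_j^{-2}\,\mathrm{d}u_j$ supplies both the $1/w_j^2$ and a factor $(-1)^{\ell(\mu)}$. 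But then your listed prefactors already multiply out to $(q;q)_k/\prod m_i!$ \emph{before} the inversion, so the Jacobian signs do not cancel against anything in your list, and they must be reconciled with the determinant as it appears in the statement. The quickest way to pin down the overall sign is the base case $r=1$, $n=1$, $\rhodiag=\varnothing$, where the half-space $q$-Whittaker measure is concentrated on $\la=\varnothing$ and the left side equals $1$; I encourage you to carry that computation through and adjust the determinant or overall sign in your final formula accordingly.
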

\begin{proof}
Let $D_q$ be the $q$-derivative operator defined by
$$ D_qf(z) = \frac{f(qz)-f(z)}{qz - z}.$$	
We have 
$$ D_q\left( \frac{1}{(z q^{-\la_1})_{\infty}}  \right) = \frac{q^{-\la_1}}{(1-q)(z q^{-\la_1})_{\infty}}.$$ 
For $z\in \C\setminus \R_{\geqslant 0}$, we can apply $D_q$ to the L.H.S of \eqref{eq:LaplaceqWhittaker}, and the $q$ derivative clearly commutes with the expectation, so that if we iterate the procedure, we get 
\begin{equation}
 (D_q)^r \  \EQWM\left[ \frac{1}{(z q^{-\la_1})_{\infty}}\right]  = \EQWM\left[ \frac{q^{-r\la_1}}{(1-q)^r(z q^{-\la_1})_{\infty}}\right].\label{eq:Dqr}
\end{equation}
If $\EQWM[q^{-r\la_1}]$ is finite, we may let $z$ tend to $0$ in \eqref{eq:Dqr} to obtain 
$$ \EQWM[q^{-r\la_1}] = (1-q)^r \lim_{z\to 0} \left\lbrace (D_q)^r \  \EQWM\left[ \frac{1}{(z q^{-\la_1})_{\infty}}\right]\right\rbrace.$$

Now we can apply $D_q$ to the R.H.S of \eqref{eq:LaplaceqWhittaker}. If $q^r> \big(\max \lbrace a_j \rbrace\big)^2$ and $q^r> \max \lbrace \alpha_j \rbrace \max \lbrace a_j \rbrace$, we may shift the contour $\mathcal{D}_{R}$ to $\mathcal{D}_{R+r}$ and we will encounter only the poles at $s=1, \dots, r$ during the contour deformation. The sum of the residues is a polynomial in the variable $z$.  We may rewrite the application of $(D_q)^r$ to \eqref{eq:LaplaceqWhittaker} as the sum of $(D_q)^r$ applied to this polynomial and $(D_q)^r$ applied to the integral remainder. We have $D_q (-z)^s = \frac{1-q^s}{1-q} (-z)^{s-1}$, so after applying $(D_q)^r$ to the integrand in \eqref{eq:LaplaceqWhittaker}, it gets multiplied by 
$$ \frac{1-q^{s_1+\dots + s_k}}{1-q}\frac{1-q^{s_1+\dots + s_k-1}}{1-q} \dots \frac{1-q^{s_1+\dots + s_k-r+1}}{1-q} (-z)^{-r}.$$
Hence, after shifting the contours and applying $(D_q)^r$,  the integral term goes to zero as $z\to 0$ because the factor $z^{-r}\prod_{i=1}^k(-z)^{s_i}$ with $\Real[s_i] = R+r$ goes to zero. For a polynomial $P(X) = \sum_{i=1}^N c_iX^i$, 
$$\lim_{z\to 0}\left\lbrace (D_q)^r P(z) \right\rbrace  = c_r \prod_{i=1}^r\frac{1-q^i}{1-q}.$$
In our context, $c_r$ corresponds to the contribution of residues at $s_1=\mu_1, \dots, s_k=\mu_k$ for some $\mu_1, \dots, \mu_k\in \Z_{>0}$ such that $\mu_1+\dots+\mu_k=r$. These residues are easy to compute and they can be written as in \eqref{eq:smallcontourlambda1}. 
\end{proof}
\begin{remark} Corollary \ref{cor:smallcontoursmoments1} could also be proved from \eqref{eq:qmomentslambda1} by shrinking the nested contours to a small contour around the $a_i$ (i.e., applying Proposition \ref{prop:contourshift}) as in the proof of  Corollary \ref{cor:smallcontoursmomentsn}. 
\end{remark}

\begin{remark}
Regarding the distribution of $q^{\la_n}$, the moment formula in Corollary \ref{cor:smallcontoursmomentsn} and the Laplace transform formula from Corollary \ref{cor:LaplaceqWhittakerlambdan} are essentially equivalent (for sufficiently small parameters $a_1, \dots, a_n$). Indeed, we have deduced the moment formulas from the Laplace transform but one can go backwards summing the $q$-moment generating series. 

However, turning to the distribution of  $q^{-\la_1}$, the moment formula from Corollary \ref{cor:smallcontoursmoments1} can be deduced from the Laplace transform formula from Corollary \ref{cor:LaplaceqWhittaker}, but one cannot go backwards (otherwise this would have been a much easier route to prove Corollary \ref{cor:LaplaceqWhittaker}). Let us see why.  Denoting the L.H.S in \eqref{eq:LaplaceqWhittaker} by $L(z)$ and the R.H.S by $R(z)$, Corollary \ref{cor:smallcontoursmoments1} implies that for all $n\geqslant 0$, 
$$  \lim_{z\to 0} \ (D_q)^n R(z) =\lim_{z\to 0} \ (D_q)^n L(z). $$
This is not sufficient to deduce   Corollary \ref{cor:LaplaceqWhittaker}. Indeed, as for the usual differential operator, a function $f(z)$ is in general not determined in a neighborhood around $0$ by the knowledge of $\lim_{z\to 0} \ (D_q)^n f(z)$ for all $n\in \Z_{\geqslant 0}$, unless it is analytic at $0$. As a counterexample, one may consider 
$$f(z) = \sum_{k=0}^{\infty} \frac{\theta^k (\theta)_{\infty}}{(z q^{-k})_{\infty} (q;q)_k}\ \ \text{ and }\ \  g(z) = \sum_{k=0}^{\infty} \frac{z^k (z)_{\infty}}{(\theta q^{-k})_{\infty} (q;q)_k},$$
and check that 
$$ \lim_{z\to 0} \ (D_q)^n f(z) = \frac{1}{(\theta q^{-n})_{\infty}(1-q)^n}  =\lim_{z\to 0} \ (D_q)^n g(z).$$
In the above counterexample, $f(z)$ should be thought of as the correct expression for the Laplace transform\footnote{We have that  $f(z)=\EQWM[\frac{1}{(zq^{-\lambda_1})_{\infty}}]$ where $\la_1$ is distributed according to the half-space $q$-Whittaker measure with specializations $\rhoup=(a_1)$ and $\rhodiag = (\alpha_1)$ with $a_1\alpha_1=\theta$.} while $g(z)$ is the wrong expression that one would obtain by summing moment formulas (or applying the operator $\Moumi^z$ instead of $\MoumiA^z$). 
\label{rem:subtleties}
\end{remark}

\subsection{Matveev-Petrov RSK-type dynamics}
\label{sec:rskdynamics}
Section \ref{sec:Macdyn} introduced a general scheme through which one can  grow half-space Macdonald processes using two types of operators $\U$ and $\Udiag$. The main condition required of these operators was that they satisfied the defining relations \eqref{eq:definingeq}, \eqref{eq:definingeq2}. Matveev-Petrov \cite{matveev2015q} provide four different choices for the bulk operator $\U$ which solve \eqref{eq:definingeq}, provided the partitions satisfy certain interlacing conditions and the specializations are chosen appropriately. The description of these operators is quite involved and overall unnecessary for our purposes. Instead, we will recall the relevant properties of these operators one by one.

We will use only two of the bulk $\U$ operators introduced in \cite{matveev2015q}, and  denote them as $\U_{\textrm{row}}[\alpha]$ and $\U_{\textrm{col}}[\alpha]$
where $\alpha$ is a generic positive real number. These transition operators correspond, in the Schur process degeneration, to dynamics on Gelfand-Tsetlin patterns induced by  the row  or column RSK insertion respectively (see \cite{knuth1970permutations} or \cite[Section 4.3]{matveev2015q} and references therein). 
\begin{remark}
It would be tempting to consider the two other possibilities  $\U_{\textrm{row}}[\hat\beta], \U_{\textrm{col}}[\hat\beta]$, which are dual analogues. However, this is not presently accessible due to our choice of $\Udiag$ which enforces the use of the same set of specializations on horizontal and vertical edges. 
\end{remark}
$\U_{\textrm{row}}[\alpha]$ and $ \U_{\textrm{col}}[\alpha]$ act from the subspace of $\kappa\otimes\mu\otimes \nu\in \Y_k\otimes \Y_k\otimes \Y_{k+1}$ such that 
$\mu\prech \nu$ and $\mu\prech \kappa$ to the subspace of $\kappa\otimes\pi\otimes \nu\in \Y_k\otimes \Y_{k+1}\otimes \Y_{k+1}$ such that $\kappa\prech\pi$ and $\nu\prech\pi$. 
For such partitions $\kappa,\mu,\nu,\pi$, we will use the notation  $\U(\pi|\kappa,\mu,\nu)$ as in Section \ref{sec:Macdyn}. 
For a partition $\lambda\in \Y_k$ and $1\leqslant j\leqslant k$, let $\Proj_j(\lambda)=(\lambda_1,\ldots,\lambda_j)$ be the projection of $\lambda$ onto its first\footnote{The orientation of arrows in $\Proj$ and $\Projtilde$ is chosen to be consistent with Figure \ref{fig:partitions} and interlacing arrays in Section \ref{sec:Whittakerdynamics}.} $j$ parts (now a partition in $\Y_j$). Similarly, let $\Projtilde_j(\lambda) =(\lambda_{k-j+1},\ldots, \lambda_k)$ be the projection of $\lambda$ onto its last $j$ parts. 
The main fact we use from \cite{matveev2015q} is that $\U_{\textrm{row}}[\alpha]$ and $ \U_{\textrm{col}}[\alpha]$  satisfy the defining relation \eqref{eq:definingeq}, and their projections under $\Proj_j$ and $\Projtilde_j$ are marginally Markov. 

We will continue to use the boundary transition operator $\Udiag$ of push-block type defined in Section \ref{sec:pushblock}, although other choices might be interesting to study and could lead to different dynamics. Recall that in the setting of Section \ref{sec:Macdyn}, the operators $\U$ and $\Udiag$ depend on specializations $\rho_{\circ}$ and $\rho_i$ for $i\geqslant 1$. 

\subsubsection{Boundary transition operator}

We recall that for $k\geqslant 0$ and partitions $\mu\in\Y_k$, $\kappa, \pi\in\Y_{k+1}$, 
$$ \Udiag_{k,k}(\pi\vert \kappa, \mu)  = \Udiag_{k,k}(\pi\vert \kappa) = \frac{P_{\pi/\kappa}(\rho_{k+1})\ve_{\pi}(\rho_{\circ})}{\ve_\kappa(\rho_{k+1}, \rho_{\circ})\Pi(\rho_{k+1}, \rho_{\circ})\Phi(\rho_{k+1})}.$$
The R.H.S. above can be computed more explicitly when the specializations are simply the evaluation into single variables. 
\begin{lemma} For $a,b\in \C$, we have 
	\begin{equation}
	\ve_{\mu}(a)P_{\mu/\lambda}(b) = a^{\sum_i \mu_{2i-1}-\mu_{2i}}b^{\sum_i \mu_i-\lambda_i} \prod_{i=1}^{\ell(\mu)} \frac{(q^{\mu_i-\lambda_i+1};q)_{\infty}(q^{\lambda_i-\mu_{i+1}+1};q)_{\infty}}{(q;q)_{\infty}^2}.
	\label{eq:transitiondiago}
	\end{equation}
	\label{lem:computeboundaryoperator}
\end{lemma}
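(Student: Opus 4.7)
The plan is to compute both factors $\ve_\mu(a)$ and $P_{\mu/\lambda}(b)$ explicitly at $t=0$ using the combinatorial formulas and then multiply. The crucial observation will be that when the diagonal specialization is a single variable, the sum defining $\ve_\mu$ collapses to a single term.

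First I would use the identity \eqref{eq:skewLittlewood}, which for a single variable $a$ reads $\ve_\mu(a) = \Phi(a)^{-1}\sum_{\nu' \text{ even}} b^{\rm el}_\nu\, P_{\nu/\mu}(a) = \sum_{\nu' \text{ even}} b^{\rm el}_\nu\, P_{\nu/\mu}(a)$ since $\Phi(a)=1$ (empty product). Since $P_{\nu/\mu}(a)$ vanishes unless $\mu \prec \nu$ and $\nu'$ even forces $\nu_{2i-1}=\nu_{2i}$ for every $i$, the combined constraints $\nu_{2i-1}\geqslant \mu_{2i-1} \geqslant \nu_{2i} = \nu_{2i-1}$ pin down $\nu_{2i-1}=\nu_{2i}=\mu_{2i-1}$ uniquely. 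In particular, $|\nu-\mu|=\sum_i(\mu_{2i-1}-\mu_{2i})$, which explains the power of $a$ in the statement.

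Next I would specialize $b^{\rm el}_\nu$ and the $\psi, \varphi$ coefficients to $t=0$. At $t=0$ we have $f(u)=1/(qu;q)_\infty$, so $f(0)=1$; in the products \eqref{eq:defphipsi} only the diagonal $i=j$ terms survive, giving the single-variable formula
\[
P_{\tau/\sigma}(x) = x^{|\tau-\sigma|}\prod_{i=1}^{\ell(\tau)}\frac{(q^{\tau_i-\sigma_i+1};q)_\infty (q^{\sigma_i-\tau_{i+1}+1};q)_\infty}{(q;q)_\infty (q^{\tau_i-\tau_{i+1}+1};q)_\infty}
\]
whenever $\sigma \prec \tau$. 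Likewise at $t=0$ the factor $b_\nu(\Box)$ equals $1$ whenever $\mathrm{leg}(\Box)\geqslant 1$, so $b^{\rm el}_\nu$ is a product over the boxes of $\nu$ with $\mathrm{leg}=0$ only. Grouping these boxes by row and using that row $i$ contains $\nu_i-\nu_{i+1}$ such boxes with arms $0,1,\dots,\nu_i-\nu_{i+1}-1$ gives
\[
b^{\rm el}_\nu \Big|_{t=0} = \prod_i \frac{1}{(q;q)_{\nu_i-\nu_{i+1}}}.
\]

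Plugging in $\nu$ determined above (so $\nu_{2j-1}-\nu_{2j}=0$ and $\nu_{2j}-\nu_{2j+1}=\mu_{2j-1}-\mu_{2j+1}$), the expression $b^{\rm el}_\nu\, P_{\nu/\mu}(a)$ telescopes when combined: the factors involving $\mu_{2j-1}-\mu_{2j+1}$ cancel against $(q;q)_{\mu_{2j-1}-\mu_{2j+1}}^{-1}$, and what remains is
\[
\ve_\mu(a) = a^{\sum_i(\mu_{2i-1}-\mu_{2i})} \prod_{i=1}^{\ell(\mu)} \frac{(q^{\mu_i-\mu_{i+1}+1};q)_\infty}{(q;q)_\infty}.
\]
Multiplying by the single-variable formula for $P_{\mu/\lambda}(b)$ displayed above, the $(q^{\mu_i-\mu_{i+1}+1};q)_\infty$ factors exactly cancel between numerator and denominator, yielding the claimed identity.

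The main obstacle is really just bookkeeping: one must be careful that the parity argument forcing the uniqueness of $\nu$ works at the boundary (both $\ell(\mu)$ even and odd), and that the $t=0$ degeneration of $\psi_{\nu/\mu}$ and of $b^{\rm el}_\nu$ correctly accounts for all boxes---including those with $\mathrm{leg}=0$ that lie above the support of $\mu$. No new technique beyond specializing known combinatorial formulas is needed.
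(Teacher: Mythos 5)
Your proof is correct, and it takes a genuinely dual route to the one in the paper. The paper evaluates $\ve_\mu(a)$ directly from the definition $\ve_\mu = \sum_{\eta'\text{ even}} b^{\mathrm{el}}_\eta\, Q_{\mu/\eta}$: at a single variable, $Q_{\mu/\eta}(a)$ forces $\eta\prec\mu$, which together with $\eta'$ even pins down the unique \emph{inner} partition $\eta^*=(\mu_2,\mu_2,\mu_4,\mu_4,\ldots)$, and then $b^{\mathrm{el}}_{\eta^*}\varphi_{\mu/\eta^*}$ is computed and telescopes. You instead use the reflected identity \eqref{eq:skewLittlewood}, $\ve_\mu(a)=\Phi(a)^{-1}\sum_{\nu'\text{ even}} b^{\mathrm{el}}_\nu\, P_{\nu/\mu}(a)$, note $\Phi(a)=1$ for a single variable, and see that $\mu\prec\nu$ together with $\nu'$ even pins down the unique \emph{outer} partition $\nu=(\mu_1,\mu_1,\mu_3,\mu_3,\ldots)$; then $b^{\mathrm{el}}_\nu\, \psi_{\nu/\mu}$ telescopes the same way. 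Both arrive at $\ve_\mu(a)= a^{\sum_i(\mu_{2i-1}-\mu_{2i})}\prod_{i=1}^{\ell(\mu)}(q^{\mu_i-\mu_{i+1}+1};q)_\infty/(q;q)_\infty$, after which the final step (multiplying by $P_{\mu/\lambda}(b)$ and cancelling the $(q^{\mu_i-\mu_{i+1}+1};q)_\infty$ factors) is identical. Two small remarks: the upper index of the product in your displayed single-variable formula for $P_{\tau/\sigma}$ can be taken as $\ell(\tau)$ or $\ell(\sigma)$ interchangeably, since the extra rows contribute trivial factors equal to $1$ — you implicitly use this, and it is fine. Also, your box-counting derivation of $b^{\mathrm{el}}_\nu\big|_{t=0}=\prod_i (q;q)_{\nu_i-\nu_{i+1}}^{-1}$ is correct and recovers precisely the intermediate expression the paper writes for $b^{\mathrm{el}}_{\eta^*}$. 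Nothing essential is gained or lost by choosing the $P$ route over the $Q$ route; both require the same $t=0$ box analysis and the same telescoping, so it is a matter of taste.
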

\begin{proof} Using the combinatorial formula \eqref{eq:combinatorialQ}, 
	$Q_{\mu/\eta}(u) = u^{|\mu|-|\eta|} \varphi_{\mu/\eta} \mathds{1}_{\eta\prech \mu}$
	where
	$$
	\varphi_{\mu/\eta} = \prod_{i=1}^{\ell(\mu)} \frac{(q^{\mu_i-\eta_i+1};q)_{\infty} (q^{\eta_i-\mu_{i+1}+1};q)_{\infty}}{(q;q)_{\infty}(q^{\eta_i-\eta_{i+1}+1};q)_{\infty}}.
	$$
	
	We would like to evaluate $\ve_{\mu}(u)$. Since $u$ is a single usual specialization, $Q_{\mu/\eta}(u)$ is only nonzero when $\mu/\eta$ is a horizontal strip. Among those $\eta$, there is only one which has $\eta'$ even, and it  is given by $\eta^* = (\mu_2,\mu_2,\mu_4,\mu_4,\ldots)$. For that $\eta^*$,
	$$
	\varphi_{\mu/\eta^*} = \prod_{i=1}^{\ell(\mu)} \frac{(q^{\mu_i-\mu_{i+1}+1};q)_{\infty}}{(q;q)_{\infty}} \, \prod_{j=1}^{\lfloor \ell(\mu)/2\rfloor} \frac{(q;q)_{\infty}}{(q^{\mu_{2j}-\mu_{2j+2}+1};q)_{\infty}}.
	$$
	We also readily see that for $\eta^*$, the only boxes for which $b_{\eta^*}(\square)$ does not equal $1$ are those which have leg length 0 (otherwise the factor of $t^{\mathrm{leg}}$ in the denominator is zero). Thus, we find
	$$
	b^{\textrm{el}}_{\eta^*} = \prod_{j=1}^{\lfloor \ell(\mu)/2\rfloor} \frac{(q^{\mu_{2j}-\mu_{2j+2}+1};q)_{\infty}}{(q;q)_{\infty}}
	$$
	so that
	$$
	\ve_{\mu}(u) = b^{\textrm{el}}_{\eta^*} u^{\sum_i \mu_{2i-1}-\mu_{2i}} \varphi_{\mu/\eta^*} = u^{\sum_i \mu_{2i-1}-\mu_{2i}}  \prod_{i=1}^{\ell(\mu)} \frac{(q^{\mu_i-\mu_{i+1}+1};q)_{\infty}}{(q;q)_{\infty}}.
	$$
	
	By the combinatorial formula \eqref{eq:combinatorialP}, 
	$$
	P_{\mu/\lambda}(u) = u^{|\mu|-|\lambda|} \psi_{\mu/\lambda}
	$$
	where
	$$
	\psi_{\mu/\lambda} = \prod_{i=1}^{\ell(\lambda)} \frac{(q^{\mu_i-\lambda_i+1};q)_{\infty} (q^{\lambda_i-\mu_{i+1}+1};q)_{\infty}}{(q;q)_{\infty}(q^{\mu_i-\mu_{i+1}+1};q)_{\infty}}.
	$$
	
	Putting this all together and noting that $\ell(\lambda)+1=\ell(\mu)$ we arrive at \eqref{eq:transitiondiago}.
\end{proof}

\begin{definition}
A \emph{$q$-geometric} random variable with parameter $\theta\in (0,1)$, denoted $\mathrm{qGeom}(\theta)$, has distribution
$$
\PP(X=k)   = \frac{\theta^k}{(q;q)_{k}}(\theta;q)_{\infty} , \ \ \  k\in\Z_{\geqslant 0}.
$$
\label{def:qGeom}
\end{definition}
\begin{lemma} 
\label{lem:firstpartdynamics}
Assume that $\rho_{\circ}$ and $\rho_i$ are specializations into single variables $\diagq$ and $a_i$ respectively for all $i\geqslant 1$. 
\begin{enumerate}
\item For any  partitions $\mu\prech\kappa\in \Y_k$, the probability kernel $\Udiag_{k,k}(\pi\vert \kappa, \mu)$ is supported on $ \pi\in \Y_{k+1}$. Moreover, for any $j\leqslant k$, the push-forward of $\Udiag_{k,k}(\pi\vert \kappa, \mu)$ with respect to $\Proj_j$ (the marginal distribution of $\pi$ corresponding to its first $j$ parts) depends only on $\Proj_j(\kappa)$. 
\item The projection of the dynamics $\Udiag_{k,k}(\pi\vert \kappa, \mu)$ to the first part is given by 
$$ \pi_1 = \kappa_1 + \mathrm{qGeom}(\diagq a_{k+1}).$$
\end{enumerate}
\end{lemma}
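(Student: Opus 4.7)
The plan is to substitute $a=\diagq$ and $b=a_{k+1}$ into Lemma~\ref{lem:computeboundaryoperator}, yielding
\[
\ve_\pi(\diagq)\,P_{\pi/\kappa}(a_{k+1}) \;=\; \diagq^{\sum_i(\pi_{2i-1}-\pi_{2i})}\,a_{k+1}^{|\pi|-|\kappa|}\prod_{i}\frac{(q^{\pi_i-\kappa_i+1};q)_\infty(q^{\kappa_i-\pi_{i+1}+1};q)_\infty}{(q;q)_\infty^2},
\]
which is supported on pairs with $\pi/\kappa$ a horizontal strip. The support assertion in (1) is then immediate: the single-variable specialization $P_{\pi/\kappa}(a_{k+1})$ vanishes off horizontal strips, and $\ell(\kappa)\leqslant k$ combined with the horizontal-strip condition forces $\ell(\pi)\leqslant k+1$.

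The crucial point I would then extract is a product factorization of this weight. Collecting the $a_{k+1}$ and $\diagq$ powers per coordinate $\pi_i$, and noting that every $q$-Pochhammer factor above couples $\pi_i$ either to $\kappa_i$ or to $\kappa_{i-1}$, the weight rearranges (with the conventions $\kappa_0:=+\infty$, $\kappa_{k+1}:=0$, $\pi_{k+2}:=0$) into
\[
\ve_\pi(\diagq)\,P_{\pi/\kappa}(a_{k+1}) \;=\; C(\kappa)\,\prod_{i=1}^{k+1} h_i(\pi_i;\kappa_{i-1},\kappa_i),
\]
where
\[
h_i(\pi_i;\kappa_{i-1},\kappa_i) \;=\; \bigl(a_{k+1}\diagq^{(-1)^{i-1}}\bigr)^{\pi_i}\,\frac{(q^{\pi_i-\kappa_i+1};q)_\infty(q^{\kappa_{i-1}-\pi_i+1};q)_\infty}{(q;q)_\infty^2}\,\mathbf{1}[\kappa_{i-1}\geqslant\pi_i\geqslant\kappa_i].
\]
Since the normalizing denominator of $\Udiag_{k,k}(\,\cdot\,|\kappa)$ inherits the same product structure, under $\Udiag_{k,k}(\,\cdot\,|\kappa)$ the parts $\pi_1,\dots,\pi_{k+1}$ are conditionally independent, and each $\pi_i$ depends on $\kappa$ only through $(\kappa_{i-1},\kappa_i)$. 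Marginalizing over $\pi_{j+1},\dots,\pi_{k+1}$ therefore gives a law involving only $\kappa_1,\dots,\kappa_j$, i.e. only $\Proj_j(\kappa)$, which completes (1).

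For (2), the convention $\kappa_0=+\infty$ collapses the factor $(q^{\kappa_0-\pi_1+1};q)_\infty$ to $1$, so the marginal of $\pi_1$ on $\{\pi_1\geqslant\kappa_1\}$ is proportional to $(a_{k+1}\diagq)^{\pi_1}/(q;q)_{\pi_1-\kappa_1}$. Setting $m=\pi_1-\kappa_1\in\Z_{\geqslant 0}$ and summing via the $q$-binomial theorem \eqref{eq:qbinomial} at $a=0$ gives normalization $1/(a_{k+1}\diagq;q)_\infty$, so $\PP(\pi_1-\kappa_1=m)=(a_{k+1}\diagq;q)_\infty(a_{k+1}\diagq)^m/(q;q)_m$, which by Definition~\ref{def:qGeom} is precisely $\mathrm{qGeom}(\diagq a_{k+1})$.

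The only step that requires genuine insight is spotting the product factorization and the resulting conditional independence; everything else---support, marginal projection, and identification of the $q$-geometric---is then routine bookkeeping. A minor care point is the correct handling of trailing zero parts and of the boundary indices $i=1$ and $i=k+1$, which is absorbed into the conventions $\kappa_0=+\infty$, $\kappa_{k+1}=\pi_{k+2}=0$ and into the $\kappa$-dependent prefactor $C(\kappa)$.
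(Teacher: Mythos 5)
Your proposal is correct and takes essentially the same route as the paper: both rest on Lemma \ref{lem:computeboundaryoperator}, and the paper's terse "by summing over $\pi_{j+1},\pi_{j+2},\ldots$" is exactly the conditional independence you make explicit by rewriting the weight as $C(\kappa)\prod_i h_i(\pi_i;\kappa_{i-1},\kappa_i)$. Your spelled-out factorization is a slightly more careful presentation of the same argument (the stray constant $(q;q)_\infty$ coming from the $\kappa_0=+\infty$ convention is harmless, being absorbed into the normalization), and the identification of the $q$-geometric marginal via the $q$-binomial theorem matches the paper.
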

\begin{proof}
For fixed $\mu\prech\kappa\in \Y_k$, $\Udiag(\pi\vert \kappa, \mu)$ is proportional to 
$$ P_{\pi/\kappa}(a_{k+1})\ve_{\pi}(\diagq)=\diagq^{\sum_i\pi_{2i-1}-\pi_{2i}} a_{k+1}^{\sum_i \pi_i-\kappa_i} \prod_{i=1}^{\ell(\pi)} \frac{1}{\qq{\pi_i-\kappa_i}\qq{\kappa_i-\pi_{i+1}}}\mathds{1}_{\kappa\prec\pi},$$
as follows from  Lemma \ref{lem:computeboundaryoperator}. By summing over $\pi_{j+1}, \pi_{j+2}, \dots$, we see that the distribution of $\Proj_j(\pi)$ depends only on $\Proj_j(\kappa)$. 
In particular, summing over $\pi_{2}, \dots ,\pi_{k+1}$, we find that the weight of $\pi_1$ is proportional to $$ \frac{\big(\diagq a_{k+1}\big)^{\pi_1}}{\qq{\pi_1-\kappa_1}} \mathds{1}_{\pi_1\geqslant \kappa_1},$$
which shows that $\pi_1-\kappa_1$ has the  $\mathrm{qGeom}(\diagq  a_{k+1})$ distribution. 
\end{proof}

\begin{definition}
We introduce the  \emph{$q$-inverse Gaussian} distribution (See Lemma \ref{lem:convqinversegaussian} for a justification of the name) with parameters $m\in \Z_{\geqslant 0}\cup\lbrace +\infty\rbrace$ and $\theta\in (0,1)$. A $q$-inverse Gaussian random variable $X$ is such that 
$$\PP(X=k) = \theta^k \frac{\qq{m}}{\qq{k}\qq{m-k}}\frac{1}{Z_m(\theta)},\ \ \ k\in \lbrace 0,  \dots, m\rbrace,$$
where $Z_m(\theta)$ is the $m$th Rogers-Szeg\H o polynomial
\begin{equation}
 Z_m(\theta) = \sum_{k=0}^{m} \theta^k \frac{\qq{m}}{\qq{k}\qq{m-k}}.
 \label{eq:RogersSzego}
\end{equation}
When $m=\infty$, the $q$-inverse Gaussian distribution degenerates to the $q$-geometric one. 
\label{def:qinverseGaussian}
\end{definition}

\begin{lemma}
\label{lem:lastpartdynamics}
Assume that $\rho_{\circ}$ and $\rho_i$ are specializations into single variables $\diagq$ and $a_i$, respectively, for all $i\geqslant 1$. 
\begin{enumerate}
\item For any $j\leqslant k$, and partitions $\mu\prech\kappa\in \Y_k$, the push-forward of  $\Udiag_{k,k}(\pi\vert \kappa, \mu)$ with respect to $\Projtilde_j$ (last $j$ parts of the partitions) depends only on $\Projtilde_j(\kappa)$. 
\item The projection of the dynamics $\Udiag_{k,k}(\pi\vert \kappa, \mu)$ on the last part is such that $\pi_{k+1}$ is a $q$-inverse Gaussian random variable with parameters $\kappa_{k}$ and $ \diagq^{(-1)^k}a_{k+1}$.  
\end{enumerate}
\end{lemma}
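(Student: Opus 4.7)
The plan is to work directly from the explicit formula
\[
\Udiag_{k,k}(\pi|\kappa,\mu) \;=\; \Udiag_{k,k}(\pi|\kappa)\;\propto\; \ve_\pi(\diagq)\,P_{\pi/\kappa}(a_{k+1})
\]
provided by Lemma \ref{lem:computeboundaryoperator}, exploiting the ``nearest-neighbour'' structure of the weight. Setting $A_i := \diagq^{(-1)^{i+1}}$ to absorb the alternating $\diagq^{\pm 1}$ coming from $\ve_\pi(\diagq)$, and using the conventions $\kappa_{k+1}=\pi_{k+2}=0$, I would rewrite the formula (up to a $\kappa$-dependent constant independent of $\pi$) as
\[
\Udiag_{k,k}(\pi|\kappa)\;\propto\; \mathds{1}_{\kappa\prec\pi}\,\prod_{i=1}^{k+1}(A_i\,a_{k+1})^{\pi_i}\,\prod_{i=1}^{k+1}\frac{1}{\qq{\pi_i-\kappa_i}\,\qq{\kappa_i-\pi_{i+1}}}.
\]

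The key observation is that the Pochhammer factors couple only nearest neighbours in the alternating chain $\pi_1,\kappa_1,\pi_2,\kappa_2,\ldots,\pi_{k+1}$. Hence for every $m\in\{0,\ldots,k\}$ the weight factors as $W_1(\pi_1,\ldots,\pi_m;\kappa_1,\ldots,\kappa_m)\cdot W_2(\pi_{m+1},\ldots,\pi_{k+1};\kappa_m,\ldots,\kappa_k)$, where I assign the bridging factor $\qq{\kappa_m-\pi_{m+1}}^{-1}$ to $W_2$. The interlacing constraints split analogously: the restrictions $\pi_1\geqslant\kappa_1\geqslant\cdots\geqslant\pi_m\geqslant\kappa_m$ do not involve $\pi_{m+1}$, and the restrictions $\kappa_m\geqslant\pi_{m+1}\geqslant\cdots\geqslant\kappa_k\geqslant\pi_{k+1}\geqslant 0$ do not involve $\pi_m$. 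Consequently,
\[
\sum_{\pi_1,\ldots,\pi_m} W_1\cdot W_2 \;=\; S(\kappa_1,\ldots,\kappa_m)\cdot W_2(\pi_{m+1},\ldots,\pi_{k+1};\kappa_m,\ldots,\kappa_k).
\]

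To prove (1), I would take $m=k-j+1$ so that $\Projtilde_j(\pi)=(\pi_{m+1},\ldots,\pi_{k+1})$ and $\Projtilde_j(\kappa)=(\kappa_m,\ldots,\kappa_k)$. After summing over $\pi_1,\ldots,\pi_m$, the prefactor $S(\kappa_1,\ldots,\kappa_m)$ is independent of the remaining variables and cancels against the normalisation, so the push-forward depends on $\kappa$ only through $\Projtilde_j(\kappa)$. To prove (2), I would specialise to $m=k$: the factor $W_2$ becomes
\[
\frac{(A_{k+1}\,a_{k+1})^{\pi_{k+1}}}{\qq{\pi_{k+1}}\,\qq{\kappa_k-\pi_{k+1}}}\,,\qquad 0\leqslant\pi_{k+1}\leqslant\kappa_k,
\]
and since $A_{k+1}=\diagq^{(-1)^k}$, multiplying numerator and denominator by $\qq{\kappa_k}$ identifies this immediately as the $q$-inverse Gaussian distribution of Definition \ref{def:qinverseGaussian} with parameters $\kappa_k$ and $\diagq^{(-1)^k}a_{k+1}$. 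The only mildly delicate point is tracking the alternating sign in $A_i$ coming from $\ve_\pi(\diagq)$; once the factorisation is in place, both assertions follow without any nontrivial $q$-series summation, paralleling the argument for Lemma \ref{lem:firstpartdynamics} but with the roles of the two ends of the partition reversed.
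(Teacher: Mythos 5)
Your proof is correct and follows essentially the same route as the paper, which also works directly from the product formula of Lemma \ref{lem:computeboundaryoperator} and sums out $\pi_1,\ldots,\pi_k$. The paper's proof is extremely terse (one sentence for each claim); you have simply made explicit the nearest-neighbour chain factorisation that the paper takes for granted, and your bookkeeping of the alternating exponent $A_{k+1}=\diagq^{(-1)^k}$ matches the paper's $\diagq^{(-1)^k\pi_{k+1}}$.
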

\begin{proof}
One proves (1) similarly to Lemma \ref{lem:firstpartdynamics}. Summing over $ \pi_1, \dots, \pi_k $ for a fixed $\kappa\in \Y_k$, the distribution of $\pi_{k+1}$ is proportional to
$$  \frac{\diagq^{(-1)^k\pi_{k+1}}a_{k+1}^{\pi_{k+1}}}{\qq{\pi_{k+1}} \qq{\kappa_k - \pi_{k+1}}},$$
which proves (2).
\end{proof}

\subsubsection{Bulk dynamics based on RSK row insertion}
\label{sec:rowinsertion}
For $j\geqslant 1$, consider four partitions $\kappa,\mu\in \Y_{j-1}$ and $\pi,\nu\in \Y_{j}$ such that
\begin{center}
\begin{tikzpicture}[scale=1.2]
\node (mu) at (0,0) {$\mu$};
\node (kappa) at (1,0) {$\kappa$};
\node (nu) at (0,1) {$\nu$};
\node (pi) at (1,1) {$\pi$};
\draw (0.5,0) node{$\prech$};
\draw (0.5,1) node{$\prech$};
\draw (0,0.5) node[rotate=90]{$\prech$};
\draw (1,0.5) node[rotate=90]{$\prech$};
\draw (1.5,0)node[gray]{$\ni$};
\draw (1.5,1)node[gray]{$\ni$};
\draw (2.2,0)node[gray]{$ \Y_{j-1} $};
\draw (2,1)node[gray]{$ \Y_{j} $};
\end{tikzpicture} 
\end{center}
Assume that these partitions appear on a section of a path $\pathh$ as in Figure \ref{URSK} so that a specialization $\rho_{i}$ lies on the horizontal edge and $\rho_j$ lies on the vertical edge. 
\begin{figure}
\begin{tikzpicture}[scale=0.8]
\begin{scope}[decoration={
	markings,
	mark=at position 0.5 with {\arrow{<}}}]
\draw[->, >=stealth', gray] (-1,0) -- (7, 0);
\draw[->, >=stealth', gray] (-1,0) -- (-1,4);
\fill (4,2) circle(0.05);
\fill (5,2) circle(0.05);
\fill (4,3) circle(0.05);
\fill (5,3) circle(0.05);
\draw (4,2) node[anchor = north]{$\mu$};
\draw (5,2) node[anchor = west]{$\kappa$};
\draw (4,3) node[anchor = south]{$\nu$};
\draw (5,3) node[anchor = south west]{$\pi$};
\draw[fleche]  (4,3) -- (4,2);
\draw[fleche] (4,2) -- (5,2) ;
\draw[thick, dashed] (5, 2) -- (5,1) -- (5.5,1);
\draw[thick, dashed] (4, 3) -- (3, 3) -- (3,3.5);
\draw[dotted, thick,->, >=stealth'] (4,2) to[bend left] (5,3);
\draw[gray, dotted] (5, 3.5)  -- (5,0) node[anchor = north]{$i$};
\draw[gray, dotted] (-1,3) node[anchor= east]{$j$} -- (5.5,3) ;
\draw (4.5,2) node[anchor=north]{$ a_i$};
\draw (4,2.5) node[anchor=east]{$ a_j$};
\end{scope} 
\end{tikzpicture}
\caption{Action of the transition operators $\U_{\textrm{row}}(\pi|\kappa,\mu,\nu)$ and $\U_{\textrm{col}}(\pi|\kappa,\mu,\nu)$ on a corner formed by partitions $\kappa, \mu, \nu$ at positions $(i, j-1), (i-1, j-1), (i-1, j)$. The dashed part of the path has no influence on the distribution of $\pi$.}
\label{URSK}
\end{figure}
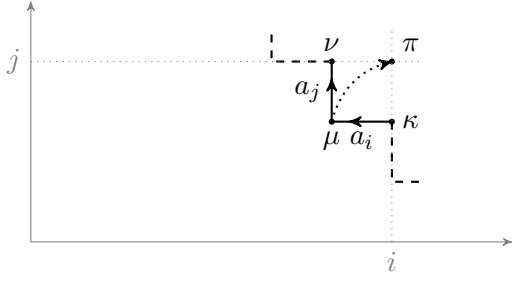
Assume that $\rho_i$ and $\rho_j$ are specializations into single variables $a_i$ and $a_j$. Then, \cite{matveev2015q} defines an operator that we will call
$\U_{\textrm{row}}(\pi|\kappa,\mu,\nu)$ which satisfies the relation in (\ref{eq:definingeq}).  We collect the following properties of $\U_{\textrm{row}}$ from \cite{matveev2015q}, wherein 
 $\U_{\textrm{row}}(\pi|\kappa,\mu,\nu)$ is denoted $\mathcal{U}_j(\nu\to \pi\vert \mu\to\kappa)$ when the sequence $\big(\mathcal{U}_j\big)$ defines the multivariate dynamics  $\mathcal{Q}^q_{\rm row}[\alpha] $, cf \cite[Section 6.2]{matveev2015q}.

\begin{proposition} With the above notation, 
\begin{enumerate}
\item $\U_{\textrm{row}}(\pi|\kappa,\mu,\nu)$ is supported on $\pi$ such that $\nu \prech \pi$ and $ \kappa\prec\pi $ \cite[Lemma 6.2]{matveev2015q}. 
\item $\U_{\textrm{row}}(\pi|\kappa,\mu,\nu)$ preserves the $q$-Whittaker process structure, in the sense of \eqref{eq:definingeq} \cite[Theorem 6.4]{matveev2015q}. 
\item The projection of the dynamics on the first $j$ parts of each partition is marginally Markov \cite[Section 6.3]{matveev2015q}. 
\end{enumerate}
\end{proposition}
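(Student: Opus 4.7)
The plan is to invoke the corresponding results from \cite{matveev2015q}, and to explain why they apply verbatim in our half-space setting. The first step is to recall the explicit formula for $\U_{\textrm{row}}(\pi|\kappa,\mu,\nu)$ given in Section 6.2 of \cite{matveev2015q} when the specializations are single-variable evaluations $\rho_i = a_i$ and $\rho_j = a_j$. That formula is a product of explicit $q$-Beta-type local weights indexed by the differences $\pi_k-\kappa_k$ (equivalently by $\pi_k-\nu_k$), and is nonzero only when $\nu \prech \pi$ and $\kappa \prech \pi$. This yields (1) directly from the definition.

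For (2), the identity \eqref{eq:definingeq} is a statement purely about the algebra of (skew) Macdonald $P$ and $Q$ functions at $t=0$; the half-space nature of our construction only affects the diagonal edge of $\pathh$ and therefore enters through $\Udiag$, not through any bulk operator. Thus \eqref{eq:definingeq} reduces to \cite[Theorem 6.4]{matveev2015q} applied at the corner $(\kappa,\mu,\nu)$. Matveev-Petrov verify it by expanding both sides using the combinatorial formulas \eqref{eq:combinatorialP}--\eqref{eq:combinatorialQ} for $P_{\pi/\kappa}(a_j)$ and $Q_{\pi/\nu}(a_i)$ (and the analogous expansions of $P_{\nu/\mu}, Q_{\kappa/\mu}$), so that the claim becomes a $q$-hypergeometric identity between finite sums of products of the $\psi$ and $\varphi$ coefficients defined in \eqref{eq:defphipsi}. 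This identity is proved by induction on the length of the involved partitions, at each step reducing to an instance of the skew Cauchy identity \eqref{eq:skewCauchy}.

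For (3), marginal Markovianity of the projection onto the first $j$ parts follows from the sequential/triangular structure of the formula in \cite[Section 6.2]{matveev2015q}: the partition $\pi$ is built one coordinate at a time starting from $\pi_1$, and the conditional distribution of $\pi_k$ depends only on the local data $(\kappa_{k-1},\kappa_k,\nu_{k-1},\nu_k,\pi_{k-1})$ (with the convention that $\mu$ enters only through the interlacing constraints already encoded in $\kappa$ and $\nu$). Summing out $\pi_{j+1}, \pi_{j+2}, \dots$ therefore produces a transition kernel on $\Proj_j(\pi)$ depending only on $\Proj_j(\kappa)$ and $\Proj_j(\nu)$, as required.

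The main obstacle is really (2), and all the substantive work is done in \cite{matveev2015q}; our task is limited to checking that the bulk identity there is unaffected by the presence of the diagonal edge in our path $\pathh$ — which is automatic, because the bulk move acts on a $2 \times 2$ square strictly above the diagonal, with the rest of $\pathh$ (in particular the specialization $\rho_{\circ}$ at the diagonal) factoring out of both sides of \eqref{eq:definingeq}. Once this is noted, properties (1)--(3) follow by direct quotation of the corresponding statements of \cite{matveev2015q}.
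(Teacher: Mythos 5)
Your approach matches the paper's own, which likewise treats this proposition as a collection of cited facts from \cite{matveev2015q} and gives no independent proof. Your added observation that $\U_{\textrm{row}}$ acts on a local $2\times 2$ square of partitions strictly above the diagonal, with the diagonal specialization $\rho_{\circ}$ factoring out of both sides of \eqref{eq:definingeq}, is exactly the right justification for why the bulk results of \cite{matveev2015q} transfer verbatim to the half-space setting; the paper relies on this tacitly (via the setup of Section \ref{sec:Macdyn}) but does not spell it out, so spelling it out is a genuine improvement in exposition.

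One detail in your discussion of (3) is wrong, though it does not undermine the conclusion. You assert that $\mu$ enters $\U_{\textrm{row}}(\pi|\kappa,\mu,\nu)$ only through the interlacing constraints already encoded in $\kappa$ and $\nu$. In fact the transition probabilities depend on $\mu$ explicitly: as Lemma \ref{lem:bulkparticledynamicslambda1} records, under $\U_{\textrm{row}}$ the increment $\pi_1-\nu_1$ is $V+W$ where $W$ has law $\phidist_{q^{-1},q^{\nu_1-\mu_1},0}(\,\cdot\,|\kappa_1-\mu_1)$, so both $\nu_1-\mu_1$ and $\kappa_1-\mu_1$ appear as parameters. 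This is unlike the push-block dynamics of Section \ref{sec:pushblock}, which satisfy $\U(\pi|\kappa,\mu,\nu)=\U(\pi|\kappa,\nu)$. Marginal Markovianity of the projection onto the first $j$ parts still holds because the dependence is on $\Proj_j(\kappa),\Proj_j(\mu),\Proj_j(\nu)$ rather than on deeper coordinates, but the claimed dispensability of $\mu$ should be retracted.
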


\begin{definition}
For parameters $q,\xi,\eta \in \R_{>0}$, $y\in \{0,1,\ldots\}\cup \{+\infty\}$ and $s\in \{0,\ldots, y\}$, define the \emph{$q$-BetaBinomial} distribution, denoted $\mathrm{qBetaBin}(q, \xi, \eta, y)$,  by the weights
$$
\phidist_{q,\xi,\eta}(s|y) = \xi^s \frac{(\eta/\xi;q)_{s}(\xi;q)_{y-s}}{(\eta;q)_y}\frac{(q;q)_y}{(q;q)_s(q;q)_{y-s}}.
$$
\label{def:qbetabinomial}
\end{definition}
These  weights were introduced in \cite{gnedin2009analogue, povolotsky2013integrability}. There are several ranges of parameters for which the weights $\phidist_{q,\xi,\eta}(s|y)$ define a probability distribution on $\lbrace 0, \dots, y\rbrace$. The simplest choice is $0\leqslant q <1$ and $ 0\leqslant \eta<\xi<1 $, and the corresponding distribution is a natural $q$-analogue of the Beta-Binomial distribution \cite{gnedin2009analogue}. Another possibility, considered in \cite{matveev2015q}, is to use the weights $\phidist_{q^{-1},q^a,q^{a+b}}(s|y)$ where $a,b$ are non-negative integers such that $y\leqslant a+b$. We may consider also the degeneration when $b$ goes to infinity and denote the corresponding weights $\phidist_{q^{-1},q^a,0}(s|y)$. Note that $\phidist_{q,\xi,\eta}(s|y)$ degenerates to the $q$-Geometric distribution when $\eta=0$ and $y=+\infty$. 

The law of the first part marginals under the dynamics $\U_{\textrm{row}}$ can be described explicitly as follows. 
\begin{lemma}[{\cite[Section 6.3]{matveev2015q}}]
	Under the transition operator $\U_{\textrm{row}}(\pi|\kappa,\mu,\nu)$, 
	$$
	\pi_1 = \nu_1 + V + W
	$$
	where $V$ is distributed as a $q$-geometric random variable with parameter $a_i a_j$ and $W$ is distributed according to
	$$
	\PP(W=k) = \phidist_{q^{-1},q^{\nu_1-\mu_1},0}(k|\kappa_1-\mu_1);
	$$
$V$ and $W$ are independent. 
	\label{lem:bulkparticledynamicslambda1}
\end{lemma}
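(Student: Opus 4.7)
The plan is to derive this marginal directly from the hierarchical construction of $\U_{\textrm{row}}$ given in \cite[Section 6]{matveev2015q}. Those dynamics are built by sequentially sampling $\pi_k$ for $k=1,2,\ldots, j$, in such a way that the projection of the full kernel onto the first $m$ parts is itself a Markov kernel that depends only on the first $m$ parts of $\kappa, \mu, \nu$ (this is the marginal Markovianity property already stated). Specializing $m=1$, the distribution of $\pi_1$ given $(\kappa, \mu, \nu)$ depends only on $(\kappa_1, \mu_1, \nu_1)$, so we may isolate the top-row update by summing the defining relation \eqref{eq:definingeq} over all deeper entries.

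Next I would make this sum concrete. Since $\rho_i = a_i$ and $\rho_j = a_j$ are single-variable specializations, the Pieri rule \eqref{eq:combinatorialP} gives $P_{\nu/\mu}(a_j) = a_j^{|\nu|-|\mu|}\psi_{\nu/\mu}$ and $Q_{\kappa/\mu}(a_i) = a_i^{|\kappa|-|\mu|}\varphi_{\kappa/\mu}$ (supported on horizontal strips), and similarly for the target partition $\pi$. Inverting \eqref{eq:definingeq} to solve for $\U_{\textrm{row}}(\pi|\kappa,\mu,\nu)$ and then summing out $\pi_2,\pi_3,\ldots$ against these explicit Pieri weights reduces the computation to a sum involving only the first-part coefficients $\psi_{(\pi_1)/(\nu_1)}$ and $\varphi_{(\pi_1)/(\kappa_1)}$ (all evaluated at $a_j$ and $a_i$ respectively) together with the normalizer $\Pi(a_j, a_i) = 1/(a_ia_j;q)_\infty$. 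After the dust settles the weight of $\pi_1$, conditioned on $(\kappa_1,\mu_1,\nu_1)$, factors as
\[
\PP(\pi_1 - \nu_1 = n) \;=\; \sum_{s+v = n,\, 0\leqslant s\leqslant \kappa_1-\mu_1} \underbrace{(a_ia_j;q)_\infty \frac{(a_ia_j)^v}{(q;q)_v}}_{\text{candidate }V\text{ weight}} \cdot \underbrace{(q^{-1})^s\frac{(q;q)_{\kappa_1-\mu_1}}{(q;q)_s(q;q)_{\kappa_1-\mu_1-s}}\,q^{s(\nu_1-\mu_1)}}_{\text{candidate }W\text{ weight}},
\]
where the support condition $s\leqslant \kappa_1-\mu_1$ comes from the interlacing $\mu\prech\kappa$ propagating to $\nu\prech\pi$, capping the contribution from the RSK-column-like part.

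The two factors are then identified directly with the stated distributions: the first is the weight of $V\sim\mathrm{qGeom}(a_ia_j)$ of Definition \ref{def:qGeom}, and the second, after rearranging $(q^{-1})^s$-factors, matches $\phidist_{q^{-1},q^{\nu_1-\mu_1},0}(s|\kappa_1-\mu_1)$ from Definition \ref{def:qbetabinomial} with $\xi = q^{\nu_1-\mu_1}$ and $\eta = 0$. Independence of $V$ and $W$ is then immediate from the fact that the weight is a product and each factor sums to one.

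The main obstacle is the last identification: verifying, after summing out the lower-row partitions, that the ratios of $\psi$- and $\varphi$-coefficients appearing at the first-part level collapse exactly into the $q$-BetaBinomial weights, and checking that the resulting weights sum to one. This is precisely the computation carried out in \cite[Section 6.3]{matveev2015q}, where it relies on the $q$-Chu-Vandermonde identity to verify normalization; once that is granted, the convolution structure established above completes the proof.
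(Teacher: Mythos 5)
The paper does not actually prove this lemma: it is stated as a recall of \cite[Section 6.3]{matveev2015q} and the authors explicitly say they will not describe the operators but only ``recall the relevant properties.'' So there is no paper-internal proof to compare against; the content of the lemma is essentially read off the explicit level-by-level definition of the row-insertion kernel $\U_{\rm row}$ given in Matveev--Petrov.

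Your proposed route has a genuine gap in its starting move. You propose to ``invert'' the defining relation \eqref{eq:definingeq} and then sum out the deeper rows. But \eqref{eq:definingeq} is an intertwining \emph{constraint} on $\U$, not a formula for it, and it does not uniquely determine the kernel: the push-block operator of Section~\ref{sec:pushblock}, $\U_{\rm row}$, and $\U_{\rm col}$ all satisfy \eqref{eq:definingeq}. If the marginal law of $\pi_1$ were determined by summing \eqref{eq:definingeq} alone, it would be the same for all three families, and it is not (the push-block kernel does not depend on $\mu$ at all). Thus the marginal law of $\pi_1$ cannot be deduced from \eqref{eq:definingeq}; it must be extracted from the explicit hierarchical construction of $\U_{\rm row}$, which is exactly what \cite[Section 6.3]{matveev2015q} provides and what this lemma restates at level one.

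Separately, the explicit ``candidate $W$ weight'' in your convolution display does not match the $q$-BetaBinomial weight. Substituting $q\mapsto q^{-1}$, $\xi=q^{\nu_1-\mu_1}$, $\eta=0$, $y=\kappa_1-\mu_1$ in Definition~\ref{def:qbetabinomial}, using $(0;q^{-1})_m=1$ and $(q^{-1};q^{-1})_m=(-1)^m q^{-m(m+1)/2}(q;q)_m$, gives
\[
\phidist_{q^{-1},q^{\nu_1-\mu_1},0}(s\,|\,y)
= q^{s(\nu_1-\mu_1)}\,(q^{\nu_1-\mu_1};q^{-1})_{y-s}\;q^{-s(y-s)}\,\frac{(q;q)_y}{(q;q)_s (q;q)_{y-s}},
\]
whereas you have $q^{-s}$ in place of $q^{-s(y-s)}(q^{\nu_1-\mu_1};q^{-1})_{y-s}$. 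The $s$-dependence is qualitatively different, not just a normalization, so the identification ``after the dust settles'' is not established. To repair the argument you would need to start from the explicit sequential sampling rule defining $\U_{\rm row}$ in Matveev--Petrov and read the top-row marginal off that definition, rather than trying to derive it from the preservation identity.
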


\subsubsection{Bulk dynamics based on RSK column insertion}
\label{sec:columninsertion}
We will also use another type  of dynamics  that we will denote 
$\U_{\textrm{col}}(\pi|\kappa,\mu,\nu)$, introduced in \cite{matveev2015q} wherein  it is denoted $\mathcal{U}_j(\nu\to \pi\vert \mu\to\kappa)$, when the sequence $\big(\mathcal{U}_j\big)$ defines the multivariate dynamics  $\mathcal{Q}^q_{\rm col}[\alpha] $, cf \cite[Section 6.4]{matveev2015q}. As for the dynamics $\U_{\textrm{row}}$, we will not provide the complete definition of $\U_{\textrm{col}}$ since we do not need it, but only list the following properties proved in \cite{matveev2015q}. 
\begin{proposition} Under the same assumptions as in Section \ref{sec:rowinsertion},
	\begin{enumerate}
		\item $\U_{\textrm{col}}(\pi|\kappa,\mu,\nu)$ is supported on $\pi$ such that $\nu \prech \pi$ and $ \kappa\prec\pi $ \cite[Lemma 6.6]{matveev2015q}. 
		\item $\U_{\textrm{col}}(\pi|\kappa,\mu,\nu)$ satisfies \eqref{eq:definingeq} \cite[Theorem 6.10]{matveev2015q}. 
		\item The projection of the dynamics on the last $j$ parts of each partition is marginally Markov \cite[Section 6.6]{matveev2015q}.
	\end{enumerate}
\end{proposition}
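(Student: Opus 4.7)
The three assertions of this proposition are essentially direct citations from \cite{matveev2015q}, where the operator $\U_{\textrm{col}}$ was constructed for full-space Macdonald/$q$-Whittaker processes. My plan is therefore to verify that these results transfer to the half-space setting, which amounts to observing that the defining equation \eqref{eq:definingeq} is a purely \emph{local} identity involving only four partitions $\kappa,\mu,\nu,\pi$ and the two single-variable specializations $a_i,a_j$ on the two edges meeting at the corner. Consequently, whether the corner sits inside a full-space path, inside the bulk of a half-space path, or against the diagonal boundary plays no role: a solution of \eqref{eq:definingeq} in the full-space setting of \cite{matveev2015q} is automatically a solution here.

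With that observation in place, the plan for part (1) is simply to recall the explicit construction of $\U_{\textrm{col}}(\pi|\kappa,\mu,\nu)$ from \cite[Section~6.4]{matveev2015q}: starting from $\kappa,\mu,\nu$ satisfying $\mu\prech\kappa$, $\mu\prech\nu$, the rule adds boxes to $\nu$ in a column-by-column fashion to produce $\pi$, and the deterministic interlacing constraints $\nu\prech\pi$ and $\kappa\prech\pi$ are built into the rule. Thus $\U_{\textrm{col}}(\cdot|\kappa,\mu,\nu)$ is supported on the required set of $\pi$'s, as asserted in \cite[Lemma~6.6]{matveev2015q}. For part (2), I would invoke \cite[Theorem~6.10]{matveev2015q}, which verifies \eqref{eq:definingeq} in the form written as $\mathcal{Q}^q_{\textrm{col}}[\alpha]$-preservation of $q$-Whittaker processes; since that theorem is proved by a direct computation with the local weights of the $\U_{\textrm{col}}$ rule and the branching coefficients $\psi_{\lambda/\mu}, \varphi_{\lambda/\mu}$, it applies verbatim here.

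For part (3), the goal is to show that when one projects each partition onto its last $j$ parts, the joint update of these projections is Markov: i.e.\ the conditional distribution of $\Projtilde_j(\pi)$ given $\kappa,\mu,\nu$ depends only on $\Projtilde_j(\kappa)$ and $\Projtilde_j(\nu)$. The key feature of the column-insertion rule is that the update for $\pi_k$ is a \emph{local} function of $\nu_k,\kappa_k,\kappa_{k-1}$ and an auxiliary variable $W_k$ which itself is determined by $\nu_{k-1},\mu_{k-1},\kappa_{k-1}$ and propagates only in the direction of smaller indices (i.e.\ toward larger parts of the partition when indexed from the end). This is exactly the dual of the row-insertion case treated in Section~\ref{sec:rowinsertion}, where the propagation of auxiliary variables went in the opposite direction, yielding Markovianity of the first-part projections. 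Concretely, I would reproduce the argument of \cite[Section~6.6]{matveev2015q}, verifying the propagation direction and checking that summing out all $\pi_1,\dots,\pi_{j-k}$ leaves a distribution on $(\pi_{j-k+1},\dots,\pi_j)$ that depends only on the corresponding tail entries of $\kappa$ and $\nu$.

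The only place where a genuine issue could arise is the interaction of this bulk operator with the boundary operator $\Udiag$ chosen in Section~\ref{sec:pushblock}, since our half-space setup forces the same specialization $\rho_{k+1}$ on both the horizontal and vertical edge entering $(k+1,k+1)$. However, part (3) concerns only one application of $\U_{\textrm{col}}$ in the bulk and does not involve $\Udiag$, so no such compatibility check is needed for the proposition itself. The main obstacle, insofar as there is one, is purely bookkeeping: tracking that the Matveev-Petrov notation $\mathcal{U}_j(\nu\to\pi\mid\mu\to\kappa)$ matches our $\U_{\textrm{col}}(\pi|\kappa,\mu,\nu)$ with $(a_i,a_j)$ playing the role of their specialization parameters, so that each of the three cited results translates cleanly into the statement of the proposition.
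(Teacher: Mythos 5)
Your proposal is correct and matches the paper's approach: the paper gives no proof beyond the citations embedded in the proposition itself, and your observation that the defining relation \eqref{eq:definingeq} is purely local (so the full-space results of Matveev--Petrov carry over verbatim to bulk corners in the half-space setting) is exactly the implicit justification. The extra elaboration on the propagation direction in part (3) and the remark that $\Udiag$ plays no role here are both accurate and harmless additions.
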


The dynamics on the last part of each partition under $\U_{\mathrm{col}}(\pi|\kappa,\mu,\nu)$ are explicit:
\begin{lemma}[{\cite[Section 6.6]{matveev2015q}}]
	Under the transition operator  $\U_{\mathrm{col}}(\pi|\kappa,\mu,\nu)$, 
	$$
	\pi_{k+1} = \nu_{k+1} +W
	$$
	where
	$$
	\PP(W= j) = \phidist_{q,a_i a_j,0}(j | \mu_{k}-\nu_{k+1}).
	$$
	\label{lem:bulkparticledynamicslambdan}
\end{lemma}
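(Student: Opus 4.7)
The strategy is to reduce the identification of the marginal on $\pi_{k+1}$ to the simplest nontrivial case, and then compute the distribution of the last coordinate explicitly from the defining equation~\eqref{eq:definingeq}. By the third property of $\U_{\mathrm{col}}$ recalled just above the lemma, the projection onto the last $j$ parts of each partition is marginally Markov. Applied with $j=1$, this tells us that the conditional distribution of $\pi_{k+1}$ given $(\kappa,\mu,\nu)$ depends only on $\mu_k$ and $\nu_{k+1}$, so it suffices to identify this dependence in any convenient family of examples.

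The plan is to take $k=1$ and specialize $\kappa=(\kappa_1)$, $\mu=(\mu_1)$, $\nu=(\nu_1,\nu_2)$ with the interlacing $\mu_1 \preceq \nu_1$ and $\mu_1 \preceq \kappa_1$, and $\pi = (\pi_1,\pi_2)$ with $\kappa_1,\nu_1 \preceq \pi_1$ and $\nu_2 \preceq \pi_2 \preceq \min(\kappa_1,\nu_1)$. Summing \eqref{eq:definingeq} over $\pi_1$ gives the marginal distribution of $\pi_2$. More concretely, the left-hand side of \eqref{eq:definingeq} becomes a single term since $\mu$ has length one. On the right-hand side, one uses the $t=0$ combinatorial expressions $Q_{\pi/\nu}(a_i) = a_i^{|\pi|-|\nu|}\varphi_{\pi/\nu}$ and $P_{\pi/\kappa}(a_j) = a_j^{|\pi|-|\kappa|}\psi_{\pi/\kappa}$ with the explicit $q$-products in \eqref{eq:defphipsi}, together with the evaluation $\Pi(a_j,a_i)=1/(a_ia_j;q)_\infty$. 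Dividing by the single term on the left recovers a closed-form expression for $\U_{\mathrm{col}}(\pi|\kappa,\mu,\nu)$ as a ratio of $q$-Pochhammer symbols times $(a_i a_j)^{|\pi|-|\nu|-|\kappa|+|\mu|}$.

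Once this closed form is in hand, the distribution of $\pi_2$ is obtained by summing over the single variable $\pi_1$ in the allowed range. The sum can be evaluated via the $q$-Vandermonde summation
\[
\sum_{m=0}^{y-s}\,\frac{(q;q)_y}{(q;q)_m(q;q)_{y-s-m}}\,a_ia_j\cdot(\cdots)
\]
whose residual factor in $\pi_2-\nu_2$ reproduces the density $\phidist_{q, a_i a_j, 0}(j \mid \mu_1 - \nu_2)$ of Definition~\ref{def:qbetabinomial}. Once the identification is established in this reduced situation, the Markov projection property immediately upgrades it to the general statement of the lemma.

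The main obstacle will be bookkeeping: the $q$-product factors $\varphi_{\pi/\nu}$ and $\psi_{\pi/\kappa}$ telescope only after the right reindexing, and one must be careful that the summation range for $\pi_1$ (controlled by the interlacing constraints with both $\kappa_1$ and $\nu_1$) matches the support of the $q$-Vandermonde sum. A parallel computation for the row insertion dynamics $\U_{\mathrm{row}}$ was performed in \cite{matveev2015q} (producing Lemma~\ref{lem:bulkparticledynamicslambda1}) and the present case proceeds along completely analogous lines, so an alternative is simply to invoke \cite[Section~6.6]{matveev2015q} directly, where this formula is derived in the same spirit.
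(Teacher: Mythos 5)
The paper offers no proof of this lemma; it simply imports the statement from \cite[Section~6.6]{matveev2015q}, where the explicit definition of $\U_{\mathrm{col}}$ is given and the marginal law of the last coordinate is read off directly from that definition. Your proposal, by contrast, tries to reconstruct the distribution of $\pi_{k+1}$ from the defining relation \eqref{eq:definingeq} alone, and this cannot succeed. That relation reads
$\sum_{\mu} Q_{\kappa/\mu}(\rho_{i+1}) P_{\nu/\mu}(\rho_{j+1}) \U(\pi|\kappa,\mu,\nu) = P_{\pi/\kappa}(\rho_{j+1})Q_{\pi/\nu}(\rho_{i+1})/\Pi(\rho_{j+1};\rho_{i+1})$,
and the left side genuinely \emph{sums} over all $\mu$ interlacing below $\kappa$ and $\nu$ --- even in your reduced setting $k=1$ there is a whole range of admissible $\mu_1\in[\nu_2,\min(\kappa_1,\nu_1)]$, not a single term. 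So \eqref{eq:definingeq} is one linear equation in many unknowns $\U(\pi|\kappa,\mu,\nu)$ and cannot be "divided through" to yield a closed form for the kernel at a fixed $\mu$.

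There is also a structural obstruction: \eqref{eq:definingeq} is satisfied simultaneously by $\U_{\mathrm{row}}$, $\U_{\mathrm{col}}$, and the push-block kernel of Section~\ref{sec:pushblock} (among others), so it cannot by itself single out $\U_{\mathrm{col}}$; the only case where it pins down the kernel is under the extra ansatz $\U(\pi|\kappa,\mu,\nu)=\U(\pi|\kappa,\nu)$ used in push-block, which gives a different answer. What your argument is actually missing is the explicit $\phidist$-based definition of the column-insertion kernel (see \cite[Section~6.4]{matveev2015q}), after which the claim about $\pi_{k+1}$ is essentially a matter of reading off the bottom-row update rule --- the $q$-Vandermonde and bookkeeping considerations you anticipate do appear there, but only once the kernel's formula is on the table. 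Your closing remark that one could simply cite \cite[Section~6.6]{matveev2015q} is in fact exactly what the paper does, and is the only route available without reproducing the Matveev--Petrov construction.
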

The distribution $\phidist_{q,\theta,0}(\cdot\vert m)$ can be seen as a truncation of the $q$-geometric distribution. It is a different truncation than the $q$-inverse Gaussian distribution (see Definition \ref{def:qinverseGaussian}).

\subsection{New exactly-solvable particle systems}
\label{sec:newprocesses}
The dynamics studied in Section \ref{sec:rskdynamics} suggest the definition of new exactly-solvable particle systems, which are ``half-space'' variants of the Geometric $q$-PushTASEP (introduced in \cite{matveev2015q}) and the discrete Geometric $q$-TASEP (introduced in \cite{borodin2015discrete}). For both particle systems, we can use the moment  formulas from Corollaries  \ref{cor:momentsqWhittaker}, \ref{cor:smallcontoursmoments1} and  \ref{cor:smallcontoursmomentsn} and the  Laplace transform formulas from Corollaries  \ref{cor:LaplaceqWhittaker} and \ref{cor:LaplaceqWhittakerlambdan}. This provides moment and Laplace transform formulas for the random variable $q^{\pm x_n(t)}$ where $x_n(t)$ will denote the position of an arbitrary  particle at any time $t$ in the models that we define now.

 We start with the particle system corresponding to the row insertion dynamics (Section \ref{sec:rowinsertion}). 
\begin{definition} The \emph{Geometric $q$-PushTASEP with particle creation} is a discrete-time Markov process on configurations of particles 
$$0=x_0(t)< x_1(t) <x_2(t) < \dots < x_t(t) < \infty.$$
At time $0$, there is only one particle at $x_0(0)=0$. From time $t$ to $ t+1$, particles' positions $x_n(t)$ for $n=1$ to $t$ are sequentially updated so that the particle at $ x_n(t)$ jumps to its new location $x_n(t+1)=x_n(t)+V_n(t)+ W_n(t)$ where $V_n(t)$ and $W_n(t)$ are independent,  $V_n(t)$ is a $q$-geometric random variable with parameter $a_n a_t$ (see Definition \ref{def:qGeom}), and $W_n(t)$ is distributed according to
$$
\PP(W_n(t)=k) = \phidist_{q^{-1},q^{\gap_n(t)},0}(k|x_{n-1}(t+1)- x_{n-1}(t)).
$$
where $\gap_n(t) = x_n(t)-x_{n-1}(t)-1$. Additionally, a new particle is created at location $x_{t+1}(t+1) = x_t(t+1) + V_{t+1}(t)+1$  where $V_{t+1}(t)$ is an independent $q$-geometric random variable with parameter $\diagq a_t$ (see Figure \ref{fig:GeometricPushTASEP}). Note that the strict ordering of particle locations is preserved by the dynamics.
\label{def:qPushTASEPwithcreation}
\end{definition}
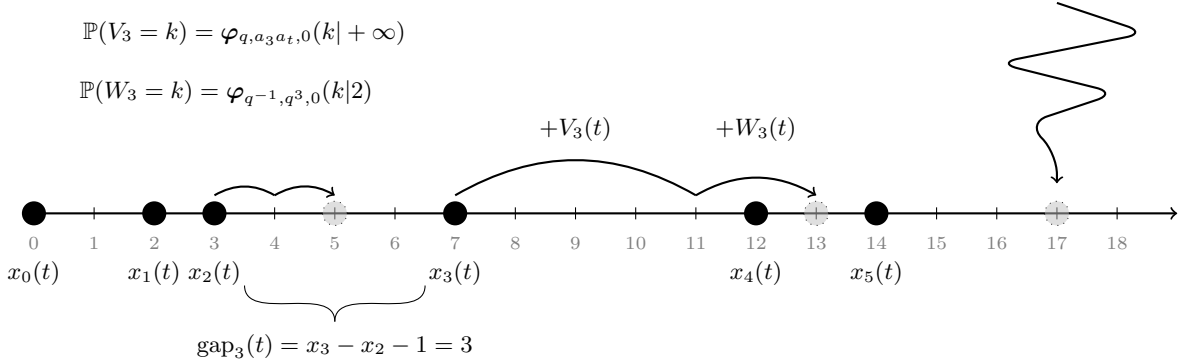
\begin{figure}
	\begin{center}
		\begin{tikzpicture}[scale=0.8]
		\draw[thick, ->] (0,0) -- (19,0);
		\foreach \k in {0, ..., 18}
		{\draw (\k, -0.1) -- (\k,0.1);
		\draw[gray] (\k, -0.5) node{{\tiny $\k$}};}
		\fill (0,0) circle(0.2);
		\fill (2,0) circle(0.2);
	    \fill (3,0) circle(0.2);	
		\fill (7,0) circle(0.2);
		\fill (12,0) circle(0.2);
		\fill (14,0) circle(0.2);
		
		\draw[dotted] (5,0) circle(0.2);
		\fill[black!25, opacity=0.5] (5,0) circle(0.2);
		
		\draw[dotted] (13,0) circle(0.2);
		\fill[black!25, opacity=0.5] (13,0) circle(0.2);
		
		\draw[dotted] (17,0) circle(0.2);
		\fill[black!25, opacity=0.5] (17,0) circle(0.2);
		
		\draw[thick] node{} (7,0.3) to[bend left] node{} (11,0.3);
		\draw[thick, ->] node{} (11,0.3) to[bend left] node{} (13,0.3);
		
		\draw[thick] node{} (3,0.3) to[bend left] node{} (4,0.3);
		\draw[thick, ->] node{} (4,0.3) to[bend left] node{} (5,0.3);
		
		\draw (9,1.4) node{ \footnotesize{$+V_3(t)$}};
		\draw (12,1.4) node{ \footnotesize{$+W_3(t)$}};
		\draw (3.5, -1.2) to[in=90, out=-90] (5,-1.8);
		\draw (6.5, -1.2) to[in=90, out=-90] (5,-1.8);
		
		\draw (5, -2.2) node{\footnotesize{$\mathrm{gap}_3(t)=x_3-x_2-1= 3$}};
		
		\draw (0,-1) node {\footnotesize{$ x_0(t)$}};
		\draw (2,-1) node {\footnotesize{$ x_{1}(t)$}};
		\draw (3,-1) node {\footnotesize{$ x_{2}(t)$}};
		\draw (7,-1) node {\footnotesize{$ x_{3}(t)$}};
		\draw (12,-1) node {\footnotesize{$ x_{4}(t)$}};
		\draw (14,-1) node {\footnotesize{$ x_{5}(t)$}};
		
		\draw (3.2,2) node{{\footnotesize $\PP(W_3=k) = \phidist_{q^{-1}, q^3, 0}(k\vert 2)$}};
		\draw (3.5,3) node{{\footnotesize $\PP(V_3=k) = \phidist_{q, a_3a_t, 0}(k \vert +\infty)$}};
		\draw[thick, ->, rounded corners=8pt] (17,3.5) -- (18.5,3) -- (16,2.5) -- (18,2) -- (16.5, 1.5) -- (17,1) -- (17,0.5);
		\end{tikzpicture}
	\end{center}
	\caption{Configuration of particles in the Geometric $q$-PushTASEP with particle creation, at time $t=5$. We illustrate possible jumps of the particles sitting at $x_2(t)$ and $x_3(t)$. Note that the particle sitting at $x_4(t)$ will necessary jump to a position on the right of $x_3(t+1)$ (by the construction of $W_4(t)$). A new particle will be created at $x_{t+1}(t+1)$($=17$ on the figure).}
\label{fig:GeometricPushTASEP}
\end{figure}

\begin{proposition}
	Let $\big(x_n(t)\big)_{1\leqslant n\leqslant t}$ denote the positions of particles in the Geometric $q$-PushTASEP with particle creation (Definition \ref{def:qPushTASEPwithcreation}). Let $\big(\lambda^{(t,n)}\big)_{1\leqslant n\leqslant t}$ be a sequence of random partitions distributed according to the Markovian growth  procedure described in Section \ref{sec:Macdyn}, using transition operators $\U_{\textrm{row}}$ and $\Udiag$ , and specializations $\rho_{\circ}$ and $\rho_i$ into single variables $\diagq$ and $a_i$, respectively, for all $i\geqslant 1$. In particular, $\lambda^{(t,n)}$ is distributed according to the half-space $q$-Whittaker measure $\PQWM_{(a_1, \dots, a_n),(\diagq, a_{n+1}, \dots, a_t)}$. Then, for any admissible path $\pathh\in \admpath$, we have 
		$$\big(x_n(t)\big)_{(t,n)\in \pathh}  \overset{(d)}{=}\big(\lambda_1^{(t,n)}+n\big)_{(t,n)\in \pathh}.$$ 
	\label{prop:qPushTASEPwithcreation}
\end{proposition}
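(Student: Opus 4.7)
The plan is to identify both sides of the claimed equality in distribution as Markov chains on the interlacing array indexed by $(t,n)$, $1\leqslant n\leqslant t$, with matching one-step transitions and matching initial conditions. Once the joint distribution of $\big(\lambda_1^{(t,n)}+n\big)$ over the full triangle is shown to coincide with that of $\big(x_n(t)\big)$, the equality restricted to any $\pathh\in\admpath$ follows immediately.

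First I would verify that the projected process $\big(\lambda_1^{(t,n)}\big)$ is itself Markov. The multivariate Markov growth of Section~\ref{sec:Macdyn}, combined with the sequential structure (apply $\U_{\textrm{row}}$ at corners $(t,0),(t,1),\dots,(t,t-1)$ and then $\Udiag$ at $(t,t)$ to go from time $t$ to time $t+1$), generates $\lambda^{(t+1,n)}$ from $\lambda^{(t,n-1)},\lambda^{(t,n)},\lambda^{(t+1,n-1)}$ (for $n<t+1$) and from $\lambda^{(t+1,t)},\lambda^{(t,t)}$ at the diagonal. The marginal Markov property of $\U_{\textrm{row}}$ restricted to first parts (property (3) in Section~\ref{sec:rowinsertion}) and the analogous property of $\Udiag$ (Lemma~\ref{lem:firstpartdynamics}(1)) ensure that the update of $\lambda_1^{(t+1,n)}$ only depends on the first parts $\lambda_1^{(t,n-1)},\lambda_1^{(t,n)},\lambda_1^{(t+1,n-1)}$ (resp.~$\lambda_1^{(t+1,t)}$ at the diagonal).

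Next I would match the one-step kernels explicitly. Setting $\tilde{x}_n(t):=\lambda_1^{(t,n)}+n$, Lemma~\ref{lem:bulkparticledynamicslambda1} gives
$$
\tilde{x}_n(t+1)-\tilde{x}_n(t)=V+W,
$$
where $V$ is $q$-geometric with parameter determined by the edge specializations meeting at the updated corner, and $W$ is $q$-BetaBinomial with parameters controlled by $\gap_n(t)=\lambda_1^{(t,n)}-\lambda_1^{(t,n-1)}=\tilde{x}_n(t)-\tilde{x}_{n-1}(t)-1$ and by the freshly computed increment $\tilde{x}_{n-1}(t+1)-\tilde{x}_{n-1}(t)=\kappa_1-\mu_1$. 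This is exactly the update rule of Definition~\ref{def:qPushTASEPwithcreation}. At the diagonal, Lemma~\ref{lem:firstpartdynamics}(2) gives $\lambda_1^{(t+1,t+1)}=\lambda_1^{(t+1,t)}+\mathrm{qGeom}(\diagq\,a_{t+1})$, which after the shift $+n$ yields $\tilde{x}_{t+1}(t+1)=\tilde{x}_t(t+1)+1+\mathrm{qGeom}(\diagq\,a_{t+1})$, matching the particle-creation rule.

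Finally, a short induction on $t$ completes the argument. The base case $t=1$ reduces to a single application of $\Udiag$ from $\varnothing$, producing $\tilde{x}_1(1)\sim 1+\mathrm{qGeom}(\diagq\,a_1)$, in agreement with the first creation event in the particle system. The only non-routine point I anticipate is carefully aligning the parameter indexing between the growth procedure (where the specialization on the horizontal edge at abscissa $t+1$ is $a_{t+1}$) and the notation of Definition~\ref{def:qPushTASEPwithcreation} (which writes $a_n a_t$ and $\diagq\,a_t$): once this bookkeeping is settled the two Markov kernels literally coincide, and the proposition follows.
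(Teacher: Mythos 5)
Your proposal is correct and takes essentially the same approach as the paper, whose entire proof is a single sentence invoking Lemmas \ref{lem:firstpartdynamics} and \ref{lem:bulkparticledynamicslambda1} to say the two families share the same one-step dynamics. You simply make explicit the bookkeeping (the shift $x_n(t)=\lambda_1^{(t,n)}+n$, the identification of $\gap_n(t)$, the base case), and you correctly flag a genuine off-by-one in the indexing of Definition \ref{def:qPushTASEPwithcreation} (the growth step from time $t$ to $t+1$ uses $a_{t+1}$, while the Definition writes $a_t$).
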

\begin{proof}
The two families have the same dynamics, according to Lemmas \ref{lem:firstpartdynamics} and \ref{lem:bulkparticledynamicslambda1}. 
\end{proof}
\begin{remark}
The moments of $q^{-x_n(t)}$ are given by \eqref{eq:qmomentslambda1} in Corollary \ref{cor:momentsqWhittaker}. By definition of the dynamics, $x_n(t)$ can be bounded by a sum of $q$-Geometric random variables with parameters $\diagq a_{t'}$ and $a_{n'}a_{t'}$ over some range of $t', n'$.  However, for $X\sim \mathrm{qGeom}(\theta)$, $\EE[q^{-kX}]$ is finite as long as 
$q^k>\theta$. This explains why the moments of $q^{-\lambda_1}$ in Corollary \ref{cor:momentsqWhittaker} can exist only when  $q^{k}> \max\lbrace a_j\rbrace ^2 $ and $q^{k}> \max\{ \alpha_j\} \max\{a_j\}$. It turns out that these conditions are sufficient for the existence of moments. 
\end{remark}

We introduce now the particle system corresponding to the column insertion dynamics (Section \ref{sec:columninsertion}). 
\begin{definition} 
The \emph{Geometric $q$-TASEP with activation} is a discrete-time Markov process on infinite configurations of particles 
 $$ \dots <x_n(t)<\dots<x_1(t)<x_0(t)\equiv +\infty.$$
At time $0$, $x_n(0)=-n$ for all $n\geqslant 1$.
From time $t$ to time $t+1$ the first $t$ particles at locations $x_1(t), \dots, x_t(t)$ are updated in parallel so that the $n$th particle jumps to $x_n(t+1) = x_n(t)+V_n(t)$ where 
 $V_n(t)$ is distributed according to 
 $$
\PP(V_n(t)= j) = \phidist_{q,a_n a_t,0}\big(j | x_{n-1}(t) - x_{n}(t)-1\big).
$$
Additionally, the $(t+1)$th particle jumps to the location $x_{t+1}(t+1) = -t-1 +W(t)$
where $W(t)$ is distributed according to the $q$-inverse Gaussian distribution with parameters $m= x_t(t+1)+t$ and $\theta $ equals $ a_t \diagq $ or $ a_t/\diagq $ depending on the parity of $t$:
$$\PP(W(t)=k) = \big( \diagq^{(-1)^{t-1}}a_{t} \big)^k \frac{\qq{m}}{\qq{k}\qq{m-k}}\frac{1}{Z_m\big( \diagq^{(-1)^{t-1}}a_{t}\big)}.$$
All participating random variables are independent. The particle system is depicted in Figure \ref{fig:GeometricTASEP}.
\label{def:qTASEPwithactivation} 
\end{definition}
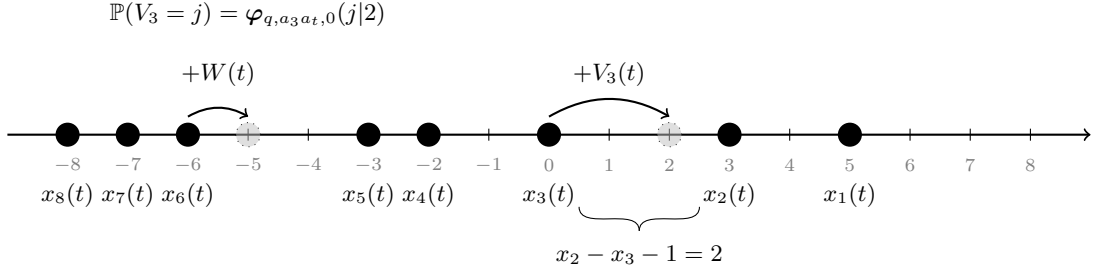
\begin{figure}
\begin{center}
	\begin{tikzpicture}[scale=0.8]
		\draw[thick, ->] (-9,0) -- (9,0);
		\foreach \k in {-8, ..., 8}
		{\draw (\k, -0.1) -- (\k,0.1);
		\draw[gray] (\k, -0.5) node{{\tiny $\k$}};}
		\fill (-8,0) circle(0.2);
		\fill (-7,0) circle(0.2);
		\fill (-6,0) circle(0.2);
		\fill (-3,0) circle(0.2);
		\fill (-2,0) circle(0.2);
		\fill (0,0) circle(0.2);
		\fill (3,0) circle(0.2);
		\fill (5,0) circle(0.2);
		
		\draw[dotted] (-5,0) circle(0.2);
		\fill[black!25, opacity=0.5] (-5,0) circle(0.2);
		
		\draw[dotted] (2,0) circle(0.2);
		\fill[black!25, opacity=0.5] (2,0) circle(0.2);

		\draw (0.5, -1.2) to[in=90, out=-90] (1.5,-1.6);
		\draw (2.5, -1.2) to[in=90, out=-90] (1.5,-1.6);
		\draw (1.5, -2.0) node{\footnotesize{$x_2-x_3-1= 2$}};
			
		\draw (-8,-1) node {\footnotesize{$ x_8(t)$}};
		\draw (-7,-1) node {\footnotesize{$ x_{7}(t)$}};
		\draw (-6,-1) node {\footnotesize{$ x_{6}(t)$}};
		\draw (-3,-1) node {\footnotesize{$ x_{5}(t)$}};
		\draw (-2,-1) node {\footnotesize{$ x_{4}(t)$}};
		\draw (0,-1) node {\footnotesize{$ x_{3}(t)$}};
		\draw (3,-1) node {\footnotesize{$ x_{2}(t)$}};
		\draw (5,-1) node {\footnotesize{$ x_{1}(t)$}};
		
		\draw[thick, ->] node{} (-6,0.3) to[bend left] node{} (-5,0.3);
		\draw (-5.5,1.0) node{ \footnotesize{$+W(t)$}};
		
		\draw[thick, ->] node{} (0,0.3) to[bend left] node{} (2,0.3);
		\draw (1,1.0) node{ \footnotesize{$+V_3(t)$}};
		
		\draw (-5,2) node{{\footnotesize $\PP(V_3=j) = \phidist_{q, a_3a_t, 0}(j \vert 2)$}};
\end{tikzpicture}
\end{center}
\caption{Configuration of particles in the Geometric $q$-TASEP with activation at time $t=5$. We illustrate a possible jump of the particle sitting at $x_3(t)$ and the activation of the 6th particle. }
\label{fig:GeometricTASEP}
\end{figure}
\begin{proposition}
Let $\big(x_n(t)\big)_{1\leqslant n\leqslant t}$ denote the positions of particles in the Geometric $q$-TASEP with activation (Definition \ref{def:qTASEPwithactivation}). 
Let $\big(\lambda^{(t,n)}\big)_{1\leqslant n\leqslant t}$ be a sequence of random partitions distributed according to the Markovian growth  procedure described in Section \ref{sec:Macdyn}, using transition operators $\U_{\textrm{row}}$ and $\Udiag$ , and specializations $\rho_{\circ}$ and $\rho_i$ into single variables $\diagq$ and $a_i$, respectively, for all $i\geqslant 1$. In particular, $\lambda^{(t,n)}$ is distributed according to the half-space $q$-Whittaker measure $\PQWM_{(a_1, \dots, a_n),(\diagq, a_{n+1}, \dots, a_t)}$. Then, for any admissible path $\pathh\in \admpath$, we have 
$$\big(x_n(t)\big)_{(t,n)\in \pathh}  \overset{(d)}{=}\big(\lambda_n^{(t,n)}-n\big)_{(t,n)\in \pathh}.$$ 
\end{proposition}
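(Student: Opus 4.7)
The plan is to mirror the proof of Proposition \ref{prop:qPushTASEPwithcreation}, but with the last-part projection in place of the first-part projection. (The marginal Markov property for the last parts is provided by $\U_{\textrm{col}}$ rather than $\U_{\textrm{row}}$, as recorded in Section \ref{sec:columninsertion}; we read the statement accordingly, in parallel with the row/first-part setup of Proposition \ref{prop:qPushTASEPwithcreation}.) By Lemma \ref{lem:lastpartdynamics}(1) together with the last-part marginal Markov property of $\U_{\textrm{col}}$ (Section \ref{sec:columninsertion}), the projection of the $(2{+}1)$-dimensional dynamics onto the last coordinates $\lambda_n^{(t,n)}$ along any admissible path $\pathh\in\admpath$ is itself a Markov chain. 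Both this chain and the particle chain $(x_n(t))_{(t,n)\in\pathh}$ start from the same deterministic initial condition (a single part / particle at the first non-trivial vertex), so it suffices to verify that their one-step transition kernels agree under the bijection $x_n(t)=\lambda_n^{(t,n)}-n$.

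For a bulk step filling the corner $(i,j{+}1),(i,j),(i{+}1,j)\to(i{+}1,j{+}1)$ in $\pathh$ with $i\geqslant j{+}1$, Lemma \ref{lem:bulkparticledynamicslambdan} gives $\lambda_{k+1}^{(i+1,j+1)}=\lambda_{k+1}^{(i,j+1)}+W$ with $W\sim\phidist_{q,a_ia_{j+1},0}\big(\,\cdot\,\big|\,\lambda_k^{(i,j)}-\lambda_{k+1}^{(i,j+1)}\big)$, where $k{+}1=j{+}1$ is the length of the outgoing partition. Setting $n=j{+}1$ and $t=i{+}1$ and subtracting the shift $n$ from each last part, the gap $\lambda_{n-1}^{(t-1,n-1)}-\lambda_n^{(t-1,n)}$ becomes $x_{n-1}(t{-}1)-x_n(t{-}1)-1$, so the update is exactly $x_n(t)=x_n(t{-}1)+V_n(t{-}1)$ with the $V_n$ of Definition \ref{def:qTASEPwithactivation}. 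For a diagonal step $(i,i)\to(i{+}1,i{+}1)$, Lemma \ref{lem:lastpartdynamics}(2) says that $\lambda_{i+1}^{(i+1,i+1)}$ is $q$-inverse Gaussian with parameters $m=\lambda_i^{(i+1,i)}$ and $\theta=\diagq^{(-1)^i}a_{i+1}$; subtracting $i{+}1$ yields the activation step $x_{i+1}(i{+}1)=-(i{+}1)+W(i)$ with $W(i)$ as in Definition \ref{def:qTASEPwithactivation}, since $m=\lambda_i^{(i+1,i)}=x_i(i{+}1)+i$.

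Once the bulk and boundary one-step kernels are matched, the identity in joint distribution along $\pathh$ follows by an induction over the vertices of $\pathh$, traversed from the base toward $(+\infty,0)$ using the growth moves of Section \ref{sec:Macdyn}. The main bookkeeping obstacle, as in Proposition \ref{prop:qPushTASEPwithcreation}, is index/parity alignment between the partition and particle conventions — in particular verifying that the parity $(-1)^k$ in the qIG parameter from Lemma \ref{lem:lastpartdynamics} matches the parity $(-1)^{t-1}$ in Definition \ref{def:qTASEPwithactivation} at every diagonal step, and that the truncated $q$-Beta-Binomial $\phidist_{q,a_na_t,0}(\cdot\,|\,\cdot)$ indexed by a partition gap translates cleanly to the particle gap $x_{n-1}-x_n-1$. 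No combinatorial input beyond Lemmas \ref{lem:lastpartdynamics} and \ref{lem:bulkparticledynamicslambdan} is required.
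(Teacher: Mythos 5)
Your proof is correct and takes essentially the same route as the paper's (one-line) proof: identify the last-part marginals as a Markov chain and match the bulk and boundary one-step kernels via Lemmas \ref{lem:lastpartdynamics} and \ref{lem:bulkparticledynamicslambdan} under the shift $x_n(t)=\lambda_n^{(t,n)}-n$. Your reading is also the intended one — the appearance of $\U_{\textrm{row}}$ in the proposition statement (and of Lemma \ref{lem:firstpartdynamics} in the paper's proof) are evidently slips for $\U_{\textrm{col}}$ and Lemma \ref{lem:lastpartdynamics}, and the residual parity/index offsets you flag ($(-1)^k$ versus $(-1)^{t-1}$, $a_{t+1}$ versus $a_t$) trace back to the paper's own conventions in Definition \ref{def:qTASEPwithactivation} rather than to any gap in your argument.
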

\begin{proof}
	The two families have the same dynamics, according to Lemmas \ref{lem:firstpartdynamics} and \ref{lem:bulkparticledynamicslambdan}. 
\end{proof}

\thispagestyle{plain}
 \section{Half-space Hall-Littlewood process}
\label{sec:HL}

We assume now that $q=0$ and $t\in (0,1)$. We use again the same notations $P,Q,\ve$ as before. These ($P$ and $Q$) are now called the \emph{Hall-Littlewood symmetric functions}.
We define the \emph{half-space Hall-Littlewood measure} $\PHL_{\rhoup, \rhodiag}$ as the measure on partitions $\la\in \Y$ such that 
$$ \PHL_{\rhoup, \rhodiag}(\lambda) = \frac{P_{\lambda}(\rhoup) \ve_{\lambda}(\rhodiag)}{\Pi(\rhoup;\rhodiag)\Phi(\rhoup)},$$
and denote by $\PHL_{\pathh, \bm\uprho}$, for a sequence of specializations $\bm\uprho$, the \emph{half-space Hall-Littlewood process}, i.e.,  the $q=0$ degeneration of the half-space Macdonald process $\PMP_{\pathh, \bm\uprho}$.

\subsection{Observables and integral formulas}

Consider a Hall-Littlewood measure where $\rhoup=(a_1, \dots, a_n) \in( 0,1)^n$, and $\rhodiag= \rho(\alpha, \beta, \gamma)$ as defined in Section \ref{sec:specializations}. 
We further assume that all the parameters $\alpha_i$ are such that $\max\lbrace a_i\rbrace \max\lbrace \alpha_j\rbrace <1$ so that the measure is well-defined. We will be mostly interested in the distribution of the length of a Hall-Littlewood random partition $\lambda$, denoted $\ell(\la)$ in the following.

\begin{corollary}
	Let $r$ be a positive integer.  We have 
	\begin{multline} 
	\EHL_{(a_1,\ldots, a_n),\rho}\Big[e_r(t^{n-\ell(\la)-1},t^{n-\ell(\la)-2},  \dots, t^0)\Big]  \\  = 
	\frac{1}{ r!} \oint\frac{\mathrm{d}z_1}{2\I\pi}\cdots \oint\frac{\mathrm{d}z_r}{2\I\pi} \det\left[ \frac{1}{tz_k-z_l}\right]_{k,l=1}^r \prod_{1\leqslant i<j\leqslant r} \frac{1-tz_iz_j}{1-z_iz_j} 	\prod_{i=1}^r \frac{1-tz_i^2}{1-z_i^2} \\ \times \prod_{j=1}^{r} \left( \frac{1}{\Pi(z_j ; \rho)} \prod_{i=1}^n \left( \frac{tz_j-a_i}{z_j-a_i}  \frac{1-z_ja_i}{1-tz_ja_i}\right)\right),
	\label{eq:momentsHL}
	\end{multline}
	where the positively oriented contours encircle $\lbrace a_1, \dots, a_n\rbrace$ and no other singularity of the integrand. 
	\label{cor:tmomentsHL}
	\end{corollary}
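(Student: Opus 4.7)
The plan is to derive \eqref{eq:momentsHL} directly as the specialization $q=0$ of Proposition \ref{prop:Dr}. Since that proposition is stated for $q, t \in [0,1)$, the value $q=0$ is admissible, and the only task is to identify what the two sides of its identity become in this limit.

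First, I would analyze the left-hand side. The eigenvalue appearing in \eqref{eq:eigenrelationD} is $e_r(q^{\lambda_1}t^{n-1}, q^{\lambda_2}t^{n-2}, \dots, q^{\lambda_n}t^0)$. At $q=0$, each factor $q^{\lambda_i}$ equals $0$ if $\lambda_i \geqslant 1$ and $1$ if $\lambda_i = 0$. Writing $k = \ell(\lambda)$, this means the first $k$ arguments of $e_r$ vanish while the remaining $n-k$ become $t^{n-k-1}, t^{n-k-2}, \dots, t^0$. Since $e_r$ is unaffected by appending zero arguments, the eigenvalue collapses to $e_r(t^{n-\ell(\lambda)-1}, \dots, t^0)$, producing the LHS of \eqref{eq:momentsHL}.

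Next, I would set $q=0$ in the contour integral on the right-hand side of Proposition \ref{prop:Dr}. Two simplifications occur. First, using the explicit form \eqref{eq:Pigeneral} for $\Pi(\,\cdot\,;\rho)$, one has $\Pi(0;\rho) = 1$, so the ratio $\Pi(qz_j;\rho)/\Pi(z_j;\rho)$ becomes $1/\Pi(z_j;\rho)$. Second, the cross factor $\prod_{i<j}(1-qz_iz_j)/(1-tqz_iz_j)$ trivializes to $1$. All remaining factors (the Cauchy determinant, the $(tz_j-a_i)/(z_j-a_i)(1-z_ja_i)/(1-tz_ja_i)$ products, the $(1-tz_i^2)/(1-z_i^2)$ terms, and the $(1-tz_iz_j)/(1-z_iz_j)$ cross product) are already in the form stated in \eqref{eq:momentsHL}.

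The only point that merits a brief check is that the contour prescription remains valid at $q=0$: the contours in Proposition \ref{prop:Dr} are required to enclose $\{a_1,\dots,a_n\}$ and no other singularity of the integrand. At $q=0$ the integrand loses the $q$-dependent poles, so the same small positively oriented contours around $\{a_1,\dots,a_n\}$ continue to satisfy the hypothesis. Combining these observations identifies both sides of Proposition \ref{prop:Dr} at $q=0$ with the two sides of \eqref{eq:momentsHL}, completing the proof. I do not foresee any genuine obstacle; the only care needed is bookkeeping the $q=0$ evaluations of $\Pi$ and of the cross product.
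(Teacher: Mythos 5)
Your proposal is correct and follows exactly the paper's own proof: the paper also derives \eqref{eq:momentsHL} as the $q=0$ specialization of Proposition \ref{prop:Dr}, noting that $e_r(q^{\lambda_1}t^{n-1},\ldots,q^{\lambda_n}t^0)$ collapses to $e_r(t^{n-\ell(\lambda)-1},\ldots,t^0)$ at $q=0$. You simply spell out a couple of routine simplifications on the right-hand side (that $\Pi(0;\rho)=1$ and that the $q$-dependent cross factor trivializes) that the paper leaves implicit.
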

	\begin{proof}
		This is the $q=0$ degeneration of Proposition \ref{prop:Dr}. Indeed, when $q=0$, 
		$$ e_r\big(q^{\lambda_1}t^{n-1} ,  q^{\lambda_2} t^{n-2} ,  \cdots,  q^{\lambda_n}\big) = e_r(t^{n-\ell(\la)-1}, \dots, t^0).$$
	\end{proof}
Notice that 
$$ \sum_{r=0}^n u^r e_r(t^{n-\ell-1}, \dots, t^0) = \prod_{i=0}^{n-\ell-1} (1+u t^i) = \frac{(-u;t)_{\infty}}{(-ut^{n-\ell};t)_{\infty}}. $$
The coefficient of $u^r$ can be extracted from the R.H.S. using the $q$-binomial theorem  \eqref{eq:qbinomial}:
$$ e_r(t^{n-\ell-1}, \dots, t^0) = (-1)^r(t^{n-\ell})^r\frac{(t^{\ell-n}; t)_r}{(t; t)_r}.$$

\begin{remark}
	We may take a generating series of \eqref{eq:momentsHL} and obtain a formula for $\EHL\left[\frac{(u;t)_{\infty}}{(ut^{n-\ell(\la)};t)_{\infty}}\right]$. However, this is not a convenient observable to study the distribution of $\ell(\la)$ in asymptotic regimes where  $n-\ell(\la)$ tends to $+\infty$, since it would require to scale $u$ to $+\infty$, and the prefactor $(u;t)_{\infty}$ would diverge. A similar issue was encountered in \cite{tracy2009asymptotics} wherein the formula their in Equation (2) is not directly amenable for asymptotic analysis (see also \cite[Theorem 5.5]{borodin2012duality}). In \cite{barraquand2018stochastic}, we show that in the special case where $\rhoup$ is trivial, it is possible to compute $\EHL\left[\frac{1}{(ut^{n-\ell(\la)};t^2)_{\infty}}\right]$ as a multiplicative functional of the Pfaffian Schur process, which can be written explicitly as contour integrals and the resulting formulas are amenable to asymptotic analysis. 
\end{remark}

  We need to slightly change contours so as to obtain an integral formula for a better observable, following similar lines to  \cite{borodin2012duality,borodin2016higher,dimitrov2016kpz}.
  For simplicity, we will focus on the case where the specialization $\rho$ is a pure alpha specialization (otherwise we would need restrictive assumptions on the parameters $\beta_i$) of the form $(\alpha_1, \dots, \alpha_r)$. 
  Define the function 
  $$ \mathcal{G}^t(w) = \prod_{i=1}^{r} (1-\alpha_i w) \prod_{i=1}^n\frac{1-w a_i}{1-w/a_i},$$
  so that (cf. \eqref{eq:momentsHL})
  $$ \frac{\mathcal{G}^t(w)}{\mathcal{G}^t(tw)} =\frac{1}{\Pi(w; \rho)} \prod_{i=1}^n \frac{tw-a_i}{w-a_i} \frac{1-w a_i}{1-t wa_i}.$$
  
\begin{proposition}For any $m\geqslant 1$, 
\begin{multline}
\EHL_{(a_1,\ldots, a_n),\rho}\left[ \left( t^{n-\ell(\la)} \right)^m \right] =   t^{\frac{m(m-1)}{2}} \oint_{C_1} \frac{\mathrm{d}z_1}{2\I\pi} \dots  \oint_{C_m} \frac{\mathrm{d}z_m}{2\I\pi}  \  \prod_{1\leqslant i<j\leqslant m}\frac{z_i-z_j}{z_i- tz_j} \frac{1-tz_iz_j}{1-z_iz_j}  \\ 	\times \prod_{j=1}^m \frac{1}{z_j} \frac{1-tz_j^2}{1-z_j^2}\frac{\mathcal{G}^t(z_j)}{\mathcal{G}^t(tz_j)}
\label{eq:momentsHLnested}
\end{multline}
where the positively oriented contours $C_1, \dots, C_m$ all enclose $0$ and the $a_i$ and are contained in the open disk of radius $1$ around zero, and the contours are nested in such a way that for $i<j$ the contour $C_i$ does not include any part of $t C_j$ (See Figure \ref{fig:nestedcontours6v} for a possible choice of such contours). 
\label{prop:momentsHL}
\end{proposition}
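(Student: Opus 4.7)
My plan is to derive \eqref{eq:momentsHLnested} from the moment formulas for $(\DD^1_n)^m$ provided by Proposition \ref{prop:intfor}. A direct computation with $h(u) = \Pi(u;\rho)$ and $\phi(u) = (tu;q)_\infty/(u;q)_\infty$ shows that, after setting $q = 0$, the integrand arising in Proposition \ref{prop:intfor} coincides (up to an overall factor of $(t-1)^{-m}$) with the integrand in the right-hand side of \eqref{eq:momentsHLnested}. Since the eigenvalue of $\DD^1_n$ on the Hall--Littlewood polynomial $P_\lambda$ at $q=0$ is $e_1(t^{N-1},\ldots,t^0) = [N]_t$ with $N = n-\ell(\lambda)$, Proposition \ref{prop:intfor} yields a small-contour representation for $\EHL[(t^N-1)^m]$, where ``small'' means that all the contours can be taken to encircle only $\{a_1,\ldots,a_n\}$ together with the origin.

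I would then deform these small contours to the nested configuration of \eqref{eq:momentsHLnested}, in which $tC_j$ lies outside $C_i$ for every $i<j$. The only singularities crossed during this deformation are the poles $z_i = tz_j$ of the factor $\prod_{i<j}(z_i-tz_j)^{-1}$. Using the $t$-analogue of Proposition \ref{prop:contourshift} (that is, with the role of $q$ taken by $t$ throughout), the contributions of these poles can be organised as a sum indexed by partitions $\mu \vdash m$, of $\ell(\mu)$-fold integrals in which the remaining integrand is evaluated on $t$-strings $\vec w\circ \mu = (w_1, tw_1,\ldots, t^{\mu_1-1}w_1, w_2,\ldots)$. A direct analogue of Lemma \ref{lem:evaluationstring} with $q$ replaced by $t$ simplifies these evaluations of the pair-product $\prod_{i<j}(1-tz_iz_j)/(1-z_iz_j)\cdot \prod_i(1-tz_i^2)/(1-z_i^2)$ on such strings, and applying the symmetrization identity \eqref{eq:symidentity} with $q$ replaced by $t$ packages these residues into the integral on the right-hand side of \eqref{eq:momentsHLnested}.

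The main obstacle is the combinatorial bookkeeping which identifies the residue sum with $\EHL[t^{mN}]$ rather than $\EHL[(t^N-1)^m]$; concretely, one must check that the residues collected during the contour deformation exactly encode the binomial expansion $t^{mN} = (1+(t^N-1))^m = \sum_{k=0}^m \binom{m}{k}(t^N-1)^k$, with the $k$-fold small-contour integral computing $\EHL[(t^N-1)^k]$ appearing in the term of index $k$. This is the $t$-analogue of the bookkeeping carried out in Proof 2 of Corollary \ref{cor:smallcontoursmomentsn}, where the passage between nested and small-contour formulas for $\EQWM[q^{k\lambda_n}]$ is governed by \eqref{eq:symidentity}; the corresponding computation in the Hall--Littlewood case can be performed by tracking the residues precisely and invoking the $t$-version of the symmetrization identity. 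An alternative route which avoids the residue calculus is to expand $t^{mN}$ in the $e_r$-observables $e_r(t^{N-1},\ldots,t^0) = t^{\binom{r}{2}}\binom{N}{r}_t$, apply Corollary \ref{cor:tmomentsHL} term by term, and then reassemble the resulting sum into a single nested contour integral using Cauchy's residue theorem applied to the poles at $z_i = tz_j$.
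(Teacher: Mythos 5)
Your starting point (Proposition \ref{prop:intfor} for $(\DD^1_n)^m$ at $q=0$) is reasonable, and your observation that the integrand matches that of \eqref{eq:momentsHLnested} up to an overall factor is essentially correct---after a relabeling $w_c\mapsto z_{m+1-c}$ that reverses the ordering and after noting $\prod_{a<b}t = t^{m(m-1)/2}$. However the integrands agreeing does not imply the two integrals agree, and in fact they do not: the contour conventions differ in a decisive way. In the $(\DD^1_n)^m$ formula the cross-poles at $z_j = z_i/t$ ($i<j$) lie \emph{outside} the contours, while in \eqref{eq:momentsHLnested} they lie \emph{inside}. This difference is exactly what promotes $\EHL[([N]_t)^m]$ (with $N=n-\ell(\la)$) to $\EHL[t^{mN}]$, and establishing it is the whole content of the proposition. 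You correctly flag this as "the main obstacle," but pointing at it and saying one "must check" is not a proof; you leave exactly the nontrivial step unresolved.

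The tool you name---the $t$-version of Proposition \ref{prop:contourshift}---is also the wrong one for the job. That proposition decomposes a nested integral into a sum over partitions $\mu\vdash m$ of residues at $t$-strings $\vec w\circ\mu$; the paper uses it for Proposition \ref{prop:momentsHLdet}, which is a different formula. What is needed here instead is Lemma \ref{lem:changecontours} (a slight generalization of \cite[Lemma 4.21]{borodin2012duality}), which precisely relates the nested $m$-fold integral to a $\binomt{m}{k}$-weighted sum of small-contour $k$-fold integrals where the small contour does not encircle the origin. The paper's proof also starts from a different input than yours: Corollary \ref{cor:tmomentsHL} (from $\DD^r_n$, not $(\DD^1_n)^r$) gives a small-contour Cauchy-determinant formula for $E_r = \EHL[e_r(t^{N-1},\ldots,t^0)]$, and the generating function $\EHL[1/(ut^{N};t)_\infty]$ is expanded two ways so that the coefficient of $u^m$ is read off using Lemma \ref{lem:changecontours}. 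Your "alternative route" paragraph gestures at this but neither names the crucial lemma nor supplies the bookkeeping; note also that your small $(\DD^1_n)^m$ contours enclose $0$ (forced by the $\mathrm{d}w_c/w_c$) whereas the small contour $C$ in Lemma \ref{lem:changecontours} must exclude $0$, so your two routes do not dovetail as cleanly as the proposal suggests.
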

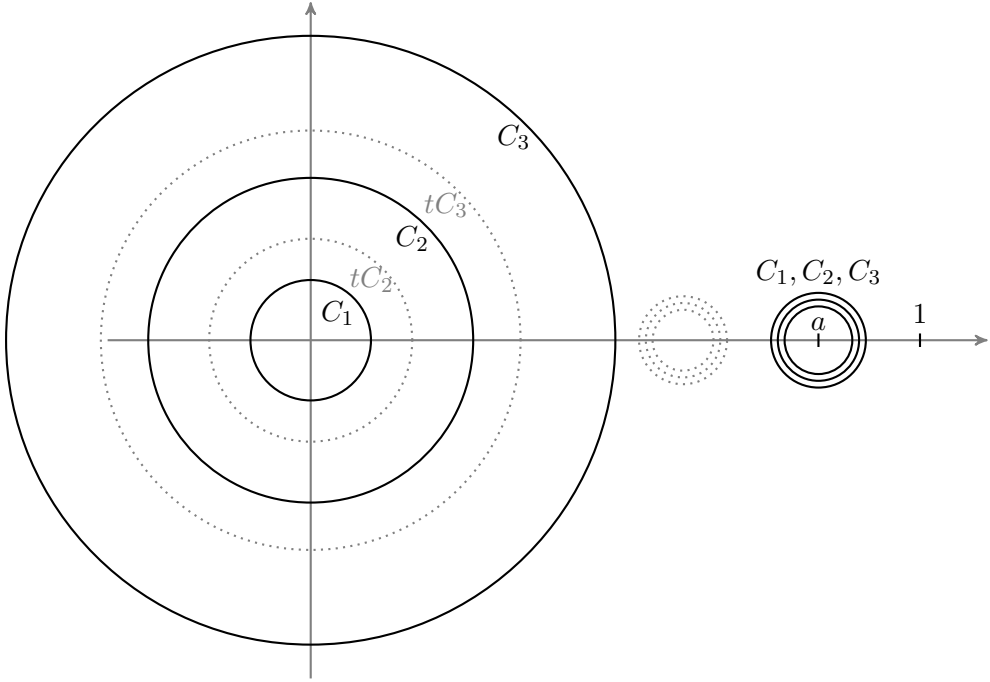
\begin{figure}
\begin{tikzpicture}[scale=0.9]
\draw[axis] (-3,0) -- (10,0);
\draw[axis] (0,-5) -- (0,5);
\draw[thick] (9,-.1) -- (9,.1) node[above]{$1$};
\draw[thick] (7.5,-.1) -- (7.5,.1);
\draw (7.5, 0.25) node{$a$};
\draw[thick] (0,0) circle(.89);
\draw (.4,.4) node{$C_1$};
\draw[thick, dotted, gray] (0,0) circle(1.5);
\draw[gray] (.9,.9) node{$tC_2$};
\draw[thick] (0,0) circle(2.4);
\draw (1.5,1.5) node{$C_2$};
\draw[thick, dotted, gray] (0,0) circle(3.1);
\draw[gray] (2,2) node{$tC_3$};
\draw[thick] (0,0) circle(4.5);
\draw (3,3) node{$C_3$};
\draw[thick] (7.5,0) circle(.5);
\draw[thick] (7.5,0) circle(.6);
\draw[thick] (7.5,0) circle(.7);
\draw[thick, gray, dotted] (5.5,0) circle(.45);
\draw[thick, gray, dotted] (5.5,0) circle(.55);
\draw[thick, gray, dotted] (5.5,0) circle(.65);
\draw (7.5,1) node{$C_1, C_2, C_3$};
\end{tikzpicture}
\caption{Possible choice of nested  contours  $C_1, C_2, C_3$ for Proposition \ref{prop:momentsHL} in a case where $a_1=a_2=a_3= a$. Each contour $C_i$ is the union of a circle around $0$ and a circle around $a$. The dotted contours are the images of $C_i$'s under $z\mapsto tz$.}
\label{fig:nestedcontours6v}
\end{figure}

\begin{proof}
Let $E_r= \EHL\left[ (-1)^r(t^{n-\ell(\la)})^r\frac{(t^{\ell(\la)-n}; t)_r}{(t; t)_r}\right]$. As a formal power series in $u$, we have that 
$$ \EHL\left[\frac{1}{(ut^{n-\ell(\la)};t)_{\infty}}\right] = \sum_{k=0}^{\infty} \frac{u^k}{(t;t)_k} \EHL\left[\frac{(u; t)_{\infty}}{(ut^{n-\ell(\la)};t)_{\infty}}\right] = \sum_{r=0}^n \sum_{k=0}^{\infty} u^{r+k}\frac{(-1)^rE_r}{(t;t)_k} = \sum_{m=0}^{\infty} u^m \sum_{k=0}^m \frac{(-1)^k E_k}{(t;t)_{m-k}}. $$
Using the Cauchy determinant evaluation in Corollary \ref{cor:tmomentsHL}, we may write that for all $k$,
$$ (t;t)_{k} E_k =  \frac{t^{k(k-1)/2} (t;t)_{k}(-1)^k}{ k! (1-t)^k} \oint\frac{\mathrm{d}z_1}{2\I\pi}\cdots \oint\frac{\mathrm{d}z_k}{2\I\pi} \prod_{i\neq j}\frac{z_i-z_j}{tz_i-z_j} \frac{F_k(z_1, \dots, z_k)}{z_1 \dots z_k},  $$
where $F_k$ is a symmetric meromorphic function in $k$ variables such that $F_k$ has no pole in any variable in some disk around zero and for all $n<m$, $F_m(z_1, \dots, z_n, 0, \dots, 0) = F_n(z_1, \dots, z_n)$, and the contours are as in the statement of the corollary. 
Using the symmetrization identity \eqref{eq:symidentity} to de-symmetrize the integrand, 
we have 
\begin{equation*}
(t;t)_{k} E_k =  (-1)^k t^{\frac{k(k-1)}{2}} \oint\frac{\mathrm{d}z_1}{2\I\pi}\cdots \oint\frac{\mathrm{d}z_k}{2\I\pi} \prod_{i<j}\frac{z_i-z_j}{tz_i- z_j} \frac{F_k(z_1, \dots, z_k)}{z_1 \dots z_k}, 
\end{equation*}
so that we can write 
\begin{multline}
\EHL\left[\frac{1}{(ut^{n-\ell(\la)};t)_{\infty}}\right]    \\ =   \sum_{m=0}^{\infty} u^m \frac{t^{m(m-1)/2}}{(t;t)_m}   \sum_{k=0}^m \binomt{m}{k} \frac{t^{k(k-1)/2}}{t^{m(m-1)/2}}   \oint_C\frac{\mathrm{d}z_1}{2\I\pi} \cdots \oint_C\frac{\mathrm{d}z_k}{2\I\pi} \prod_{i<j}\frac{z_i-z_j}{z_i-tz_j} \frac{F_k(z_1, \dots, z_k)}{z_1 \dots z_k},
\label{eq:formalseriesHL}
\end{multline} 
where the contour $C$ encloses $\lbrace a_1, \dots, a_n\rbrace$ and no other singularity. 
\begin{lemma}
For any $k\geqslant 0$, let $F_k$ be a symmetric meromorphic function in $k$ variables such that $F_k$ has no pole in any variable in some disk around zero, and for all $n<m$, $F_m(z_1, \dots, z_n, 0, \dots, 0) = F_n(z_1, \dots, z_n)$. 
Then we have 
\begin{multline*}
 \oint_{C_1} \frac{\mathrm{d}z_1}{2\I\pi} \dots  \oint_{C_m} \frac{\mathrm{d}z_m}{2\I\pi} \prod_{i<j}\frac{z_i-z_j}{z_i-tz_j} \frac{F_m(z_1, \dots, z_m)}{z_1 \dots z_m}  \\  = 
\sum_{k=0}^m \binomt{m}{k} t^{k(k-1)/2-m(m-1)/2} \oint_{C} \frac{\mathrm{d}z_1}{2\I\pi} \dots  \oint_{C} \frac{\mathrm{d}z_k}{2\I\pi} \prod_{i<j}\frac{z_i-z_j}{z_i-tz_j} \frac{F_k(z_1, \dots, z_k)}{z_1 \dots z_k}, 
 \end{multline*} 
 where $C$ is an arbitrary contour not encircling $0$, and the contours $C_1, \dots, C_m$ are defined as follows: Let $C_0$ be a small positively oriented circle around $0$, and let $r>t^{-1}$ be such that $t C$ does not intersect  $r^m C_0$ and $r^m C_0$ does not encircle poles of $F_m$. Then $C_j$ is defined as  the union of $r^j C_0$ and $C$.   
\label{lem:changecontours}
\end{lemma}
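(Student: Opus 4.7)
The plan is to write each contour as $C_j = r^j C_0 \cup C$ and expand the $m$-fold integral as a sum over subsets $I \subset \{1,\dots,m\}$, where $i \in I$ selects the inner piece $r^i C_0$ for the variable $z_i$ while $j \notin I$ keeps $z_j$ on $C$. The integrals over the small circles will then be evaluated by residues, and the resulting combinatorial sum over $I$ of fixed size $m-k$ will collapse into a $t$-binomial coefficient.

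For a fixed $I$, I would first verify that the integrand is holomorphic in the variables $\{z_i\}_{i \in I}$ inside the small contours $\{r^i C_0\}_{i \in I}$, apart from the simple pole at $z_i = 0$ coming from the factor $1/z_i$. The potentially problematic singularities of the cross-ratio $\frac{z_k-z_l}{z_k - t z_l}$ at $z_k = t z_l$ are all avoided thanks to the hypothesis $r > t^{-1}$ (which, for $k<l$ with $k,l \in I$, gives $|z_k|/|tz_l| = r^{k-l}/t < 1$), the disjointness of $tC$ and $r^m C_0$ (which handles the case $k \in I$, $l \notin I$, $k<l$), and the smallness of $z_l$ on $r^l C_0$ compared with $z_k \in C$ (which handles $k \notin I$, $l \in I$, $k<l$). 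The hypothesis that $r^m C_0$ contains no poles of $F_m$ completes the verification that $z_i = 0$ is the unique singularity inside $r^i C_0$.

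The iterated residues are then computed by setting $z_i = 0$ successively for $i \in I$ in increasing order. The restriction and symmetry properties of $F$ collapse $F_m(z_1,\dots,z_m)$ to $F_{|I^c|}$ on the surviving variables. Each cross-ratio $\frac{z_i-z_l}{z_i-tz_l}$ with $i \in I$ and $l > i$ specializes at $z_i = 0$ to $1/t$ (when we set $z_i = 0$, any $z_l$ with $l > i$ is still on its own contour, hence nonzero), while cross-ratios with $k < i$ and $i \in I$ specialize to $1$. Counting pairs $(i,l)$ with $i \in I$ and $i < l \leq m$, the total exponent of $t^{-1}$ produced equals $e(I) := \sum_{i \in I}(m-i)$. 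After relabeling the remaining $k := m - |I|$ variables, which all integrate over $C$ and enter the integrand symmetrically, each term with $|I| = m-k$ yields the same integral as the one appearing on the right-hand side of the lemma.

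The final step is the combinatorial identity $\sum_{|I| = m-k} t^{-e(I)} = t^{k(k-1)/2 - m(m-1)/2}\binomt{m}{k}$. Writing $J := I^c = \{j_1<\cdots<j_k\}$ and $N(J) := |\{(i,j): i<j,\, i \in J,\, j \notin J\}|$, a direct computation gives $N(J) = \sum_{\ell=1}^{k}(m - j_\ell) - k(k-1)/2$, which together with $e(I) = m(m-1)/2 - \sum_{\ell}(m - j_\ell)$ rearranges to $m(m-1)/2 - k(k-1)/2 - e(I) = N(J)$; the standard Gaussian binomial interpretation $\binomt{m}{k} = \sum_{|J|=k} t^{N(J)}$ then concludes the proof. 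The main obstacle is the careful residue analysis of the preceding paragraph, namely verifying that shrinking the inner contours crosses no hidden singularities and correctly accounting for the $t$-factors from specializing the cross-ratios, both of which hinge on the strict inequality $r > t^{-1}$ and the specific nested choice of radii $r^i C_0$.
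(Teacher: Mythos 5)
Your proposal is correct and fills in the argument that the paper only cites (the paper defers to \cite[Lemma 4.21]{borodin2012duality}, stating that the proof there---for $F$ a product of univariate functions---extends verbatim to a symmetric $F$ with the given restriction property; your write-up is precisely that extension). One caveat is worth naming. The opening sentence of your second paragraph, that the integrand is holomorphic in the variables $\{z_i\}_{i\in I}$ inside the small contours apart from the simple poles at $z_i=0$, is not literally true. For $k<l$ both in $I$, the factor $(z_k - tz_l)^{-1}$, viewed as a function of $z_l$ with $z_k$ fixed on $r^k C_0$, has a pole at $z_l = z_k/t$, and the inequality $r^{k-l}/t<1$ that you invoke is precisely the statement that $|z_k/t|$ is smaller than the radius of $r^l C_0$, i.e.\ that this pole sits \emph{inside} $r^l C_0$. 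Your argument nonetheless goes through because of the order you prescribe in the following paragraph: iterating residues in increasing order of $i\in I$ means that by the time you integrate $z_l$, every $z_k$ with $k\in I$, $k<l$, has already been set to zero, so this factor has already collapsed to $1/t$ and no longer depends on $z_l$. The ``no hidden poles'' verification should therefore be phrased inductively, one variable at a time in the order of evaluation, rather than as a one-shot holomorphicity claim about the integrand on the original product of contours. With that small rephrasing, the $t$-power count $t^{-e(I)}$ and the Gaussian-binomial identity $\sum_{|J|=k} t^{N(J)} = \binomt{m}{k}$ close the proof exactly as you describe.
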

\begin{proof}
The statement is the same as \cite[Lemma 4.21]{borodin2012duality} except that the function $F$ was of the form $F_k(z_1, \dots, z_k) = \prod_{i=1}^kf(z_i)$. The proof extends to our case without any modification.  
\end{proof}
Hence, extracting the coefficient of $u^m$ in \eqref{eq:formalseriesHL} and using Lemma \ref{lem:changecontours}, we obtain  
$$ \EHL\left[ \left( t^{n-\ell(\la)} \right)^m \right] =   t^{\frac{m(m-1)}{2}} \oint_{C_1} \frac{\mathrm{d}z_1}{2\I\pi} \dots  \oint_{C_m} \frac{\mathrm{d}z_m}{2\I\pi} \prod_{i<j}\frac{z_i-z_j}{z_i- tz_j} \frac{F_m(z_1, \dots, z_m)}{z_1 \dots z_m},$$
which concludes the proof of Proposition \ref{prop:momentsHL}.
\end{proof}

\begin{proposition}For any $k\geqslant 1$, 
\begin{multline*}
	\EHL_{(a_1,\ldots, a_n),\rho}\left[ \left( t^{n-\ell(\la)} \right)^k \right]  =(t;t)_k \sum_{\mu \vdash k} \frac{1}{m_1!m_2!\dots}  \oint_{\mathcal C}\frac{\mathrm{d}w_1}{2\I\pi} \dots \oint_{\mathcal C}\frac{\mathrm{d}w_{\ell(\mu)}}{2\I\pi} \\  \times 
	\det\left( \frac{1}{w_j - w_it^{\mu_i}}\right)    \prod_{1\leqslant a<b\leqslant \ell(\mu)} \frac{(t^{\mu_a}w_a w_b)_{\infty}(t^{\mu_b}w_a w_b)_{\infty}}{(w_a w_b)_{\infty}(t^{\mu_a+ \mu_b}w_a w_b)_{\infty}} \\ \times 
	\prod_{j=1}^{\ell(\mu)} \frac{(t^{\mu_j}w_j^2)_{\infty}}{(w_j^2)_{\infty}}\frac{(tw_i^2; t^2)_{\infty}}{(w_i^2t^{2\mu_i+1}; t^2)_{\infty}} \frac{\mathcal{G}^t(w_j)}{\mathcal{G}^t(tw_j)},
\end{multline*}
where the positively oriented contour $\mathcal C$ contains $0$, the $a_i$'s and its image by multiplication by $t$, and is contained in the open disk of radius $1$ around zero.
\label{prop:momentsHLdet}
\end{proposition}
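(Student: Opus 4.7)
The proof follows the same route as Proof~2 of Corollary~\ref{cor:smallcontoursmomentsn} in the $q$-Whittaker case, with the nested-contour parameter $q$ there now played by $t$, starting from the nested-contour moment formula of Proposition~\ref{prop:momentsHL}.

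\textbf{Step 1 (nested contour shrinking).} I would first rewrite the integrand of Proposition~\ref{prop:momentsHL} in the form
\[
\prod_{i<j}\frac{z_i-z_j}{z_i-tz_j}\cdot \frac{F(z_1,\ldots,z_k)}{z_1\cdots z_k},
\qquad
F(\vec z)=\prod_{i<j}\frac{1-tz_iz_j}{1-z_iz_j}\prod_{j=1}^{k}\frac{1-tz_j^{2}}{1-z_j^{2}}\frac{\mathcal{G}^{t}(z_j)}{\mathcal{G}^{t}(tz_j)},
\]
where $F$ is symmetric in the $z_i$. I would then apply Proposition~\ref{prop:contourshift} with $q$ replaced by $t$, shrinking all the nested contours $C_1,\ldots,C_k$ to a single small contour $\mathcal{C}$ enclosing $\{0,a_1,\ldots,a_n\}$ and producing a sum indexed by partitions $\mu\vdash k$. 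A minor point to address is that in Proposition~\ref{prop:momentsHL} the innermost contour is $C_1$ while in Proposition~\ref{prop:contourshift} it is $\gamma_k$; this is handled by the relabeling $z_i\leftrightarrow z_{k+1-i}$ (or by re-deriving the residue identity with the opposite nesting convention).

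\textbf{Step 2 (symmetrization and prefactors).} Because $F$ is symmetric, the symmetrization reduces, by the identity~\eqref{eq:symidentity}, to
\[
E^{t}(\vec z)=F(\vec z)\cdot\sum_{\sigma\in\mathcal{S}_{k}}\sigma\!\left(\prod_{A<B}\frac{z_A-tz_B}{z_A-z_B}\right)=F(\vec z)\cdot\frac{(t;t)_{k}}{(1-t)^{k}}.
\]
The factor $(1-t)^k$ cancels the $(t-1)^k$ inside $d\mu_\mu$, and the $t^{k(k-1)/2}$ prefactor in Proposition~\ref{prop:momentsHL} cancels the $t^{-k(k-1)/2}$ inside $d\mu_\mu$. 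Together with the sign from the identity $\det[1/(w_j-t^{\mu_i}w_i)]=(-1)^{\ell(\mu)}\det[1/(t^{\mu_i}w_i-w_j)]$ and the sign from the reversed nesting in Step~1, one obtains the overall prefactor $(t;t)_k/(m_1!m_2!\cdots)$ and the determinant $\det[1/(w_j-w_i t^{\mu_i})]_{i,j=1}^{\ell(\mu)}$ appearing in the statement.

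\textbf{Step 3 (string evaluation).} I would next evaluate $F(\vec w\circ\mu)/(z_1\cdots z_k)$ at the string $\vec z=\vec w\circ\mu$. Cross-string contributions of $\prod_{i<j}(1-tz_iz_j)/(1-z_iz_j)$ telescope, exactly as in Lemma~\ref{lem:evaluationstring}, into
\[
\prod_{1\leqslant a<b\leqslant\ell(\mu)}\frac{(t^{\mu_a}w_aw_b)_\infty(t^{\mu_b}w_aw_b)_\infty}{(w_aw_b)_\infty(t^{\mu_a+\mu_b}w_aw_b)_\infty}.
\]
Within-string contributions of $\prod_{i<j}(1-tz_iz_j)/(1-z_iz_j)$ combined with $\prod_{j}(1-tz_j^{2})/(1-z_j^{2})$ telescope into $\prod_j\prod_{r=0}^{\mu_j-1}(1-t^{r+\mu_j}w_j^2)/(1-t^{2r}w_j^2)$, which becomes, after invoking $(x;t)_\infty=(x;t^{2})_\infty(tx;t^{2})_\infty$,
\[
\prod_{j=1}^{\ell(\mu)}\frac{(t^{\mu_j}w_j^{2})_\infty}{(w_j^{2})_\infty}\,\frac{(tw_j^{2};t^{2})_\infty}{(t^{2\mu_j+1}w_j^{2};t^{2})_\infty}.
\]
The ratio $\prod_j \mathcal{G}^{t}(z_j)/\mathcal{G}^{t}(tz_j)$ telescopes along each string to $\prod_j\mathcal{G}^{t}(w_j)/\mathcal{G}^{t}(t^{\mu_j}w_j)$, and the factor $1/(z_1\cdots z_k)$ together with the Jacobian of the string substitution is absorbed into the determinantal factor $\det[1/(w_j-w_it^{\mu_i})]$ from $d\mu_\mu$.

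\textbf{Main obstacle.} The nontrivial part is Step~3: carefully carrying out the within-string telescoping and the $(x;t)_\infty=(x;t^{2})_\infty(tx;t^{2})_\infty$ rearrangement so as to match the precise form of the Pochhammer symbols in the statement, together with the careful sign and prefactor tracking in Step~2 arising from the reversed nesting of contours relative to Proposition~\ref{prop:contourshift}.
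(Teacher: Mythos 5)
Your proposal matches the paper's proof: both start from the nested-contour formula of Proposition~\ref{prop:momentsHL}, apply a variant of Proposition~\ref{prop:contourshift} to collapse the contours and collect string residues, and then evaluate the symmetric part of the integrand along strings by an adaptation of Lemma~\ref{lem:evaluationstring} (your Step~3 spells out precisely the telescoping and the $(x;t)_\infty=(x;t^2)_\infty(tx;t^2)_\infty$ rearrangement that the paper leaves implicit in equation~\eqref{eq:simplification2}). The paper is in fact just as terse as you are about the reversed-nesting bookkeeping and sign tracking that you flag as the main obstacle, calling it only ``a slight variant of Proposition~\ref{prop:contourshift}''.
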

\begin{proof}
	For
	$$(z_1, \dots, z_k)=(w_1, tw_1, \dots, t^{\lambda_1-1}w_1, w_2, \dots, t^{\la_2-1}w_2, \dots, t^{\la_m-1}w_{m}), $$
	an adaptation of Lemma \ref{lem:evaluationstring} shows that 
	\begin{equation}
	\prod_{1\leqslant a<b\leqslant k}\frac{1 - t z_az_b}{1 - z_az_b} \prod_{i=1}^k \frac{1-tz_i^2}{1-z_i^2} =   \prod_{1\leqslant a<b\leqslant m} \frac{(t^{\la_a}w_a w_b; t)_{\infty}(t^{\la_b}w_a w_b; t)_{\infty}}{(w_a w_b; t)_{\infty}(t^{\la_a+ \la_b}w_a w_b; t)_{\infty}} \prod_{j=1}^{m} \frac{(t^{\lambda_j}w_j^2; t)_{\infty}}{(w_j^2; t)_{\infty}}\frac{(tw_i^2; t^2)_{\infty}}{(w_i^2t^{2\la_i+1}; t^2)_{\infty}},
	\label{eq:simplification2}
	\end{equation}
	and the L.H.S. is symmetric in the $z_i$. Then the formulas follows by expanding the nested contours to $C_{0,a}$,  using (a slight variant of) Proposition \ref{prop:contourshift} to collect the residues -- see also \cite[Proposition 5.2]{borodin2012duality}).
\end{proof}
\begin{remark}
Unlike the $q$-Whittaker case, it is not clear how to take a generating series of the above moment formulas. This is because the term $$\prod_{1\leqslant a<b\leqslant k} \frac{(t^{\la_a}w_a w_b)_{\infty}(t^{\la_b}w_a w_b)_{\infty}}{(w_a w_b)_{\infty}(t^{\la_a+ \la_b}w_a w_b)_{\infty}}$$
is typically of size $e^{c k^2}$ as $k$ grows. In the $q$-Whittaker case, a similar factor is present as well (cf \eqref{eq:expansionNoumiqWhittaker}), but one could argue that only finitely many integral terms are non-zero when taking the moment generating series. This would not be the case here. 
\end{remark}

\subsection{Half-space stochastic six-vertex model}

Recall the definition of the half-space stochastic six vertex model (Definition \ref{def:sixvertex}). The height function in the half-space stochastic six-vertex model is related to length of partitions in the half-space Hall-Littlewood processes in the following way. 
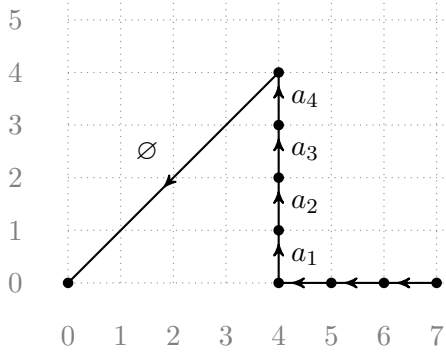
\begin{figure}
\begin{tikzpicture}[scale=0.7]
\foreach \x in {0, ..., 7} {
	\draw[gray] (\x,-1) node{$\x$};
}
\foreach \x in {0, ..., 5} {
	\draw[gray] (-1, \x) node{$\x$};
}
\begin{scope}[decoration={
	markings,
	mark=at position 0.5 with {\arrow{<}}}]
\draw[grille] (0,0) grid(7.2, 5.5);
\draw[fleche] (0,0) -- (4,4);
\draw[fleche] (4,4) -- (4,3);
\draw[fleche] (4,3) -- (4,2);
\draw[fleche] (4,2) -- (4,1);
\draw[fleche] (4,1) -- (4,0);
\draw[fleche] (4,0) -- (5,0);
\draw[fleche] (5,0) -- (6,0);
\draw[fleche] (6,0) -- (7,0);
\draw (4.5, 0.5) node{$a_1$};
\draw (4.5, 1.5) node{$a_2$};
\draw (4.5, 2.5) node{$a_3$};
\draw (4.5, 3.5) node{$a_4$};
\draw (1.5, 2.5) node{$\varnothing$};
\fill (0,0) circle(0.1);
\fill (4,4) circle(0.1);
\fill (4,3) circle(0.1);
\fill (4,2) circle(0.1);
\fill (4,1) circle(0.1);
\fill (4,0) circle(0.1);
\fill (5,0) circle(0.1);
\fill (6,0) circle(0.1);
\fill (7,0) circle(0.1);
\end{scope}
\end{tikzpicture}
\caption{The path $\omega_n$ for $n=4$. }
\label{fig:pathomegan}
\end{figure}
\begin{theorem}[{\cite[Corollary 4.5]{barraquand2018stochastic}}]
	Let $n\geqslant 1$ and  $\omega_n\in \Omega$ be the path in $\Omega$ which travels from $(+\infty, 0)$ to $(n,0)$, goes vertically to $(n,n)$ and diagonally to $(0,0)$ (see Figure \ref{fig:pathomegan}). Let $\uprho_n$ be  a sequence of specializations  such that edges $(n,i-1)\to( n,i)$ are labeled by the single variable specialization into $a_i$ and the diagonal specialization is empty.  Consider  a sequence of partitions $\labold$ distributed according to the half-space Hall-Littlewood process $\PHL_{\omega_n, \uprho_n}$. Then we have 
	$$ \big(\ell( \lambda^{v})\big)_{v\in \omega_n} \overset{(d)}{=} \big(\mathfrak{h}(v)\big)_{v\in \tilde \omega_n},$$
	where $\tilde \omega_n$ the path in $\Z^2$ obtained by reflecting $\omega$ with respect to the quadrant diagonal. 
	
	In particular, For any $1\leqslant x\leqslant y$, $\mathfrak{h}(x,y)$ has the same distribution as $\ell(\lambda)$ where $\lambda$ is distributed according to the half-space Hall-Littlewood measure $\PHL_{(a_1, \dots, a_x), (a_{x+1}, \dots, a_{y})}$. 
	\label{th:HL6V}
\end{theorem}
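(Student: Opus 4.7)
The plan is to realize both sides as Markov chains built up cell-by-cell, and show that the marginal transition probabilities coincide. On the Hall--Littlewood side, the process $\PHL_{\omega_n;\uprho_n}$ admits a natural sequential construction: one grows the path $\omega_n$ from the empty path by applying the bulk and boundary push-block operators of Section \ref{sec:Macdyn}, using the single-variable specializations $a_i$ on edges incident to the vertical portion of $\omega_n$ and the trivial diagonal specialization. The key observation is that because $\rhodiag=\varnothing$, the function $\ve_\lambda(\varnothing)$ is supported only on partitions with dual-even shape, so the diagonal edge effectively reflects the Hall--Littlewood weight back on itself through the Littlewood identity $\sum_\lambda P_\lambda(x)\ve_\lambda(\varnothing)=\Phi(x)$.

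First, I would write the Hall--Littlewood polynomial $P_\lambda(a_1,\ldots,a_n)$ as the partition function of a rank-one (colorless) stochastic vertex model on a rectangular lattice, with vertical rapidity $a_i$ along row $i$ and horizontal rapidity governed by the variable $t$, following the construction of Borodin (as in \cite{borodin2016stochastic, borodin2016between, bufetov2017hall}). In this correspondence, rows carry the ``spin'' variable $\ell(\lambda)$ on the vertical portion of $\omega_n$, and the standard vertex weights collapse to exactly the bulk stochastic six-vertex weights $\frac{1-a_xa_y}{1-ta_xa_y}$, $\frac{(1-t)a_xa_y}{1-ta_xa_y}$, etc., appearing in Definition \ref{def:sixvertex}. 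Combining this with the Littlewood identity $\sum_{\lambda'\text{even}}b^{\rm el}_\lambda P_\lambda(x)=\Phi(x)$ produces a folded vertex model on the half-quadrant, in which the contribution of the diagonal vertex $(x,x)$ reduces to the Boltzmann weights $\mathbb{P}(\pathbrr)=\mathbb{P}(\pathbuu)=1$, $\mathbb{P}(\pathbru)=\mathbb{P}(\pathbur)=0$ prescribed in Definition \ref{def:sixvertex}.

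Next, I would verify the joint distribution by induction along the anti-diagonals $\{(x,y):x+y=n\}$. At each step, conditional on the configuration up to the previous anti-diagonal, the six-vertex model samples outgoing edges via independent Bernoulli choices with the stated stochastic weights, while on the Hall--Littlewood side the push-block/Markov dynamics (bulk and boundary) of Section \ref{sec:Macdyn} sample a new partition $\lambda^{(i+1,i+1)}$ or $\lambda^{(i+1,j+1)}$ at each freshly reached vertex. By taking the projection to $\ell(\lambda^v)$ and using that $\ell$ is Markovian under these dynamics when $q=0$ (this is the heart of the matching and follows from the explicit form of the Hall--Littlewood $P$-functions together with the reflection induced by $\ve_\lambda(\varnothing)$), both the Hall--Littlewood marginal transition for $\ell(\lambda^v)\to\ell(\lambda^{v'})$ and the six-vertex local transition are seen to coincide.

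The main obstacle is handling the diagonal vertex in a probabilistically clean way: a priori the boundary transition operator $\Udiag$ of Section \ref{sec:pushblock} governs an entire new partition $\pi$, not just its length, and one must show that the induced dynamics on $\ell(\pi)$ agree with the deterministic corner rules of the six-vertex model. The cleanest route is to use the refined Littlewood identity (as in \cite{betea2015refined, rains2014multivariate, barraquand2017stochastic}) which, together with the Pieri rule for $P_\lambda$, precisely yields the required matching of the two boundary kernels. A secondary technical point is the identification of rapidities: one must orient the vertex model so that the spectral parameter on the vertical line through $(n,i)$ equals $a_i$, which forces the reflection of $\omega_n$ into $\tilde\omega_n$ appearing in the statement, and this accounts for the apparent swap of horizontal and vertical coordinates between the two sides.
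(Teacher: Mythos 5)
The paper does not itself prove Theorem~\ref{th:HL6V}; it imports it verbatim as Corollary~4.5 of \cite{barraquand2017stochastic}, so there is no in-text proof against which to compare. Judged on its own terms, your sketch has the right skeleton: read the Hall--Littlewood branching coefficients as stochastic six-vertex weights, use the Littlewood identity to fold the model at the diagonal, and match the two Markov chains transition by transition. The Markov property of $\ell(\lambda)$ under the $q=0$ push-block dynamics, which you correctly flag as the crux, is where all the actual work lies and you leave it asserted rather than demonstrated; at $q=0$ it reduces to an explicit computation with the specializations of $\varphi_{\lambda/\mu}$ and $\psi_{\lambda/\mu}$ from \eqref{eq:defphipsi}, and that computation is the proof.

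More importantly, your proposed resolution of the boundary vertex is a misattribution. The refined Littlewood identity of \cite{betea2015refined, rains2014multivariate} is not what produces the deterministic corner weights in Definition~\ref{def:sixvertex}. As the present paper explains in the paragraph just below the theorem, that refined identity enters \cite{barraquand2017stochastic} at a later stage, to derive a Fredholm Pfaffian formula for the Laplace transform of $\mathfrak{h}(x,x)$ on the diagonal; it is not part of the process-level identification you are trying to prove. The corner matching follows already from the elementary fact that $\ve_\lambda(\varnothing) = b^{\textrm{el}}_\lambda\,\mathds{1}_{\lambda'\text{ even}}$ is supported on partitions whose parts come in equal pairs (this is the same observation used in the proof of Proposition~\ref{prop:identityinlaw}), which constrains $\ell(\lambda^{(i,i)})$ along the diagonal in exactly the way that reproduces the deterministic corner rule, together with the branching relation \eqref{eq:branchingve} and the boundary operator $\Udiag$ of Section~\ref{sec:pushblock}. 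Invoking the refined identity here is both unnecessary and misleading about where the genuine difficulty lies, which is the explicit Pieri-coefficient calculation matching the bulk transition kernels.
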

Note that we need to perform a reflection of the path $\omega_n$  simply because, in order to be consistent with earlier  literature,  we have defined our half-space stochastic six-vertex model  in the upper half-quadrant while half-space Macdonald processes are indexed by paths in the lower half-quadrant. Moreover, we expect the result should hold for any admissible path in $\Omega$ -- see \cite[Remark 4.6]{barraquand2018stochastic}.

\begin{corollary} For integers $1\leqslant x \leqslant y$  and $k\geqslant 1$,
\begin{multline}
\EHL\left[ t^{-k\mathfrak{h}(x,y)} \right] =   t^{\frac{k(k-1)}{2}} \oint_{C_1} \frac{\mathrm{d}z_1}{2\I\pi} \dots  \oint_{C_k} \frac{\mathrm{d}z_k}{2\I\pi}  \  \prod_{1\leqslant i<j\leqslant k}\frac{z_i-z_j}{z_i- tz_j} \frac{1-tz_iz_j}{1-z_iz_j}  \\ \times 	\prod_{j=1}^k\left(  \frac{1}{z_j} \frac{1-tz_j^2}{1-z_j^2} \prod_{i=1}^{y} \frac{1-a_iz_j}{1-ta_iz_j} \prod_{i=1}^x \frac{z_j-a_i/t}{z_j-a_i} \right),
\label{eq:moments6vnested}
\end{multline}
where the positively oriented contours $C_1, \dots, C_m$ all enclose $0$ and the $a_i$'s,  and are contained in the open disk of radius $1$ around zero, and the contours are nested in such a way that for $i<j$ the contour $C_i$ does not include any part of $t C_j$. 
\label{cor:moments6v}
\end{corollary}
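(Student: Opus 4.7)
The plan is to deduce the formula from two ingredients that are already in hand: Theorem~\ref{th:HL6V}, which identifies the height function with the length of a half-space Hall-Littlewood partition, and Proposition~\ref{prop:momentsHL}, which provides a nested contour integral for the moments of $t^{n-\ell(\lambda)}$.

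First, I would apply Theorem~\ref{th:HL6V} to rewrite $\mathfrak{h}(x,y)$ in distribution as $\ell(\lambda)$, where $\lambda$ is sampled from the half-space Hall-Littlewood measure $\PHL_{(a_1,\ldots,a_x),(a_{x+1},\ldots,a_y)}$. Writing $t^{-k\mathfrak{h}(x,y)} = t^{-kx}\cdot (t^{x-\ell(\lambda)})^k$, I reduce the claim to evaluating $\EHL[(t^{x-\ell(\lambda)})^k]$. Then I would invoke Proposition~\ref{prop:momentsHL} with $n=x$ and with the diagonal specialization being the pure alpha specialization $\rho=(a_{x+1},\ldots,a_y)$ (so $r=y-x$ and $\alpha_i=a_{x+i}$). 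This immediately yields the prefactor $t^{k(k-1)/2}$ together with the Cauchy-type cross product $\prod_{i<j}\frac{z_i-z_j}{z_i-tz_j}\frac{1-tz_iz_j}{1-z_iz_j}$ and the diagonal factors $\frac{1}{z_j}\frac{1-tz_j^2}{1-z_j^2}$ which already match the target formula.

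What remains is to identify the ratio $\mathcal{G}^t(z)/\mathcal{G}^t(tz)$, once multiplied by the bookkeeping factor $t^{-x}$ that comes from $t^{-kx}=\prod_j t^{-x}$, with the product $\prod_{i=1}^{y}\frac{1-a_iz}{1-ta_iz}\prod_{i=1}^{x}\frac{z-a_i/t}{z-a_i}$ in the statement. This is a purely algebraic simplification: using the definition
\begin{equation*}
\mathcal{G}^t(z)=\prod_{i=x+1}^{y}(1-a_iz)\prod_{i=1}^{x}\frac{1-za_i}{1-z/a_i},
\end{equation*}
the ratio splits into a bulk piece $\prod_{i=x+1}^{y}\frac{1-a_iz}{1-ta_iz}$ and a boundary piece $\prod_{i=1}^{x}\frac{(1-za_i)(1-tz/a_i)}{(1-z/a_i)(1-tza_i)}$, and the identity $\frac{1-tz/a_i}{1-z/a_i}=t\cdot\frac{z-a_i/t}{z-a_i}$ absorbs exactly one factor of $t$ per boundary index, producing the required $t^{x}$ and converting the boundary piece into $\prod_{i=1}^{x}\frac{1-a_iz}{1-ta_iz}\cdot\prod_{i=1}^{x}\frac{z-a_i/t}{z-a_i}$. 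Combined with the bulk piece this gives the full product $\prod_{i=1}^{y}\frac{1-a_iz}{1-ta_iz}\prod_{i=1}^{x}\frac{z-a_i/t}{z-a_i}$ after cancelling $t^{x}$ against $t^{-x}$.

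Finally, I would verify that the contour conditions match: Proposition~\ref{prop:momentsHL} requires nested contours $C_1,\ldots,C_k$ inside the unit disk, each enclosing $0$ and $\{a_1,\ldots,a_n\}=\{a_1,\ldots,a_x\}$, and arranged so that $C_i$ excludes $tC_j$ for $i<j$; in our application the singularities at $a_i$ for $i>x$ introduced through the factor $\frac{1-a_iz}{1-ta_iz}$ only create poles at $1/(ta_i)$, which lie outside the unit disk and hence outside the contours, so no additional care is needed. There is no real obstacle here — the bulk of the argument has already been done in Proposition~\ref{prop:momentsHL}; the only step that requires attention is the bookkeeping of the boundary factor, and ensuring that the power of $t$ from the identity $\frac{1-tz/a_i}{1-z/a_i}=t\cdot\frac{z-a_i/t}{z-a_i}$ matches the $t^{-kx}$ prefactor exactly.
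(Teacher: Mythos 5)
Your proposal is correct and follows exactly the route the paper intends (the paper's proof is a one-line reference to Theorem~\ref{th:HL6V} and Proposition~\ref{prop:momentsHL}); you have simply spelled out the bookkeeping of $t^{-kx}$ against the $t^x$ produced by the boundary factors, and the algebra checks out.
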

\begin{proof}
	This is a direct consequence of Proposition \ref{prop:momentsHL} and Theorem \ref{th:HL6V}. 
\end{proof}

\subsection{Scaling limit to half-line ASEP}
\label{sec:ASEP}
\begin{definition}
	The \emph{half-line ASEP} is an interacting particle system on $\Z_{>0}$ where each site is occupied by at most one particle. It is a continuous time Markov process on the space of particle configurations, such that  each particle jumps by one to the right at rate $\p$ and to the left at rate $\q$, with $\q<\p$, provided the target site is empty. At the origin,  a reservoir of particles injects a particle at site $1$ (whenever it is empty) at rate $\ratealpha$ and removes a particle from site $1$ (whenever it is occupied) at rate $\rategamma$. We will further assume that initially all sites are empty and the parameters are chosen as $\p=1, \q=t$, $\ratealpha=1/2$, $\rategamma=t/2$ -- see Figure \ref{fig:halfASEP} for an illustration.  We refer to \cite{barraquand2018stochastic} for the reasons behind this specific choice of parameters. We define the current at site $x$ by
		$$ N_x(\tau) = \sum_{i=x}^{\infty} \eta_i(\tau),$$
		where $\eta_i(\tau)\in \lbrace 0,1\rbrace $ is the occupation variable at site $i$ and time $\tau$. 
	\label{def:halflineASEP}
\end{definition}

\begin{figure}
	\begin{tikzpicture}[scale=0.7]
	\draw[thick] (-1.2, 0) circle(1.2);
	\draw (-1.2,0) node{reservoir};
	\draw[thick] (0, 0) -- (12.5, 0);
	\foreach \x in {1, ..., 12} {
		\draw[gray] (\x, 0.15) -- (\x, -0.15) node[anchor=north]{\footnotesize $\x$};
	}
	
	\fill[thick] (3, 0) circle(0.2);
	\fill[thick] (6, 0) circle(0.2);
	\fill[thick] (7, 0) circle(0.2);
	\fill[thick] (10, 0) circle(0.2);
	\draw[thick, ->] (3, 0.3)  to[bend left] node[midway, above]{$1$} (4, 0.3);
	\draw[thick, ->] (6, 0.3)  to[bend right] node[midway, above]{$t$} (5, 0.3);
	\draw[thick, ->] (7, 0.3) to[bend left] node[midway, above]{$1$} (8, 0.3);
	\draw[thick, ->] (10, 0.3) to[bend left] node[midway, above]{$1$} (11, 0.3);
	\draw[thick, ->] (10, 0.3) to[bend right] node[midway, above]{$t$} (9, 0.3);
	\draw[thick, ->] (-0.1, 0.5) to[bend left] node[midway, above]{$1/2$} (0.9, 0.4);
	\draw[thick, <-] (0, -0.5) to[bend right] node[midway, below]{$t/2$} (0.9, -0.4);
	\end{tikzpicture}
	\caption{Jump rates in the half-line ASEP. }
	\label{fig:halfASEP}
\end{figure}
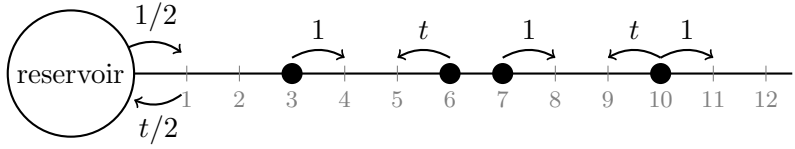

The stochastic six-vertex model in a half quadrant is a discretization of half-line ASEP in the following sense. 
Consider the six-vertex model in a half-quadrant where $a_x\equiv a$, and  scale $a$ as
$$ a=1-\frac{(1-t)\e}{2},\ \ \  \e\xrightarrow[\e>0]{}0,$$
so that to first order in $\e$,
\begin{equation}
\PP\left(\pathrr\right) \approx \e, \ \ \PP\left(\pathru\right) \approx 1-\e,\ \ \PP\left(\pathuu\right) \approx t\e, \ \ \PP\left(\pathur\right) \approx 1-t\e.
\label{eq:limitweights}
\end{equation}
Moreover, we rescale $n$ as $n=\tau \e^{-1}$ with finite $\tau>0$. 
\begin{proposition}[{\cite[Proposition 5.2]{barraquand2018stochastic}}]
	Under the scalings above and the boundary and initial conditions as in Definition \ref{def:halflineASEP}, for any $x\in\lbrace 1, 2, \dots\rbrace$,
	$$ n-x -\mathfrak{h}(n-x,n) \xRightarrow[\e \to 0]{} N_x(\tau),$$
	where $\mathfrak{h}$ is defined Definition \ref{def:sixvertex} and $N_x(\tau)$ is defined in Definition \ref{def:halflineASEP}.
	\label{prop:cv6vtoASEP}
\end{proposition}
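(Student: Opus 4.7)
The plan is to establish convergence through matching one-row transitions of the rescaled six-vertex chain with the infinitesimal generator of half-line ASEP, in the spirit of the full-space convergence argument of \cite{borodin2016stochasticsix}. First, I would view the six-vertex model as a discrete-time Markov chain indexed by the row number $n$, with state $\eta^{(\varepsilon)}(n) \in \{0,1\}^{\mathbb{Z}_{>0}}$ recording which columns carry a vertical path exiting row $n$. The update from row $n$ to row $n+1$ is performed by sweeping left to right through the row with the bulk weights away from the diagonal and with the deterministic corner weights at the single diagonal vertex $(n,n)$. The correspondence implicit in Theorem~\ref{th:HL6V} then identifies $n-x-\mathfrak{h}(n-x,n)$ with a natural current observable for $\eta^{(\varepsilon)}$ that, after the diagonal reflection, matches the half-line ASEP current $N_x(\tau)$; hence it suffices to prove weak convergence of $\eta^{(\varepsilon)}(\lfloor \tau/\varepsilon \rfloor)$ to the half-line ASEP configuration at time $\tau$ started from empty initial data.

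Under the scaling $a = 1 - (1-t)\varepsilon/2$, the bulk weights \eqref{eq:limitweights} yield the following one-row behavior. Propagating a horizontal carrier through the row: at a column containing a particle and with an incoming carrier, the vertex is a crossing (deterministic, carrier continues and particle remains); at an empty column with an incoming carrier, the carrier is ejected up as a particle with probability $1-\varepsilon+O(\varepsilon^2)$ (type $\pathru$) and transits through with probability $\varepsilon + O(\varepsilon^2)$ (type $\pathrr$); at a column containing a particle but with no incoming carrier, the particle exits horizontally with probability $1-t\varepsilon+O(\varepsilon^2)$ (type $\pathur$) and continues upward with probability $t\varepsilon+O(\varepsilon^2)$ (type $\pathuu$). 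A standard bookkeeping argument then shows that, relative to the ballistic reference frame, each nearest-neighbor occupied-empty pair swaps to empty-occupied within one row with probability $\varepsilon + o(\varepsilon)$, each empty-occupied pair swaps to occupied-empty with probability $t\varepsilon + o(\varepsilon)$, and two or more non-trivial events within a single row occur with probability $O(\varepsilon^2)$. With $\varepsilon^{-1}$ rows per unit of $\tau$-time, these produce the bulk ASEP rates $\p = 1$ and $\q = t$.

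Finally, the deterministic corner vertex at $(n,n)$ interleaves with the adjacent bulk vertex to produce the reservoir dynamics. A direct computation shows that, after the reflection in Theorem~\ref{th:HL6V}, the per-row probability of an injection event at the boundary site is $\varepsilon/2 + o(\varepsilon)$ and of a removal event is $t\varepsilon/2 + o(\varepsilon)$, reproducing the ASEP reservoir rates $\ratealpha = 1/2$ and $\rategamma = t/2$ of Definition~\ref{def:halflineASEP}. Combining the bulk and boundary estimates gives convergence of the rescaled generator applied to cylinder functions; standard tightness for locally-interacting particle systems on $\{0,1\}^{\mathbb{Z}_{>0}}$ then promotes this to weak convergence of the rescaled càdlàg processes, from which convergence of $N_x(\tau)$ for each fixed $x$ follows by continuity. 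The principal obstacle is the careful accounting at the corner: one must verify that the combination of deterministic corner weights and stochastic bulk weights yields precisely the rates $1/2$ and $t/2$ (rather than some other constants), and control the cross-row correlations generated near the diagonal that are absent from the bulk analysis.
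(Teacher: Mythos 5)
The paper does not give a proof of this proposition; it is stated as a citation to \cite[Proposition~5.2]{barraquand2017stochastic}, so there is no in-paper argument to compare against. I can therefore only assess your plan on its own merits.

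Your overall strategy --- view the row-to-row dynamics as a Markov chain, change to a frame co-moving with the diagonal, match bulk one-row transitions to the ASEP bulk generator, and handle the boundary separately --- is the natural one and almost certainly the same route the cited reference takes. The bulk part of your sketch is sound: in the $a\to1$ limit the ballistic drift of the six-vertex occupation variables exactly cancels the motion of the diagonal, and the $O(\e)$ corrections from $\pathrr$ (carrier passing an extra site) and $\pathuu$ (particle not jumping) produce, after rescaling time by $\e^{-1}$, nearest-neighbour exchanges at rates $1$ and $t$. Up to bookkeeping conventions, this is right.

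The gap is the boundary rate $\ratealpha=\tfrac12$, $\rategamma=\tfrac t2$, and it is more serious than you acknowledge. You write that ``a direct computation shows'' the per-row injection probability is $\e/2+o(\e)$, but no such computation is direct: the corner vertex weights are \emph{all} $0$ or $1$, so there is no $\e$-dependent weight at the corner from which a rate $\tfrac12$ could simply be read off. The factor $\tfrac12$ can only come from marginalising over a hidden boundary degree of freedom. Concretely, the corner outputs $V(y,y)=1-H(y-1,y)$, and one can check that to leading order this ``ghost'' occupation variable \emph{alternates} deterministically between $0$ and $1$ from one row to the next; the effective ASEP boundary rate is then a time-average over this oscillation, which is where the $\tfrac12$ enters. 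This is not a limit of a vertex weight but a small separate ergodic computation coupling the corner vertex to the adjacent bulk vertex across two consecutive rows, and it has to be done carefully (a naive attempt, depending on how one pairs ``injection/removal'' with $\pathuu$/$\pathrr$, readily produces $t/2$ where one wants $1/2$ or vice versa). Your sketch treats this as a ``verification''; it is in fact the substantive step. Related to this, you never make explicit the coordinate change (ASEP site $x$ corresponds to six-vertex column $n-x$ at row $n$) under which the state space of the discrete chain grows by one site per row --- this is what makes the boundary analysis and the tightness argument nonstandard, and it needs to be set up before the generator computation can even be stated precisely.
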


	It would be interesting to take a limit of the moment formula from Corollary \ref{cor:moments6v} in order to deduce formulas for the current in half-line ASEP. However, the choice of contours above -- including the $a_i$ but excluding $1$ -- does not allow to take this limit directly. One would need to expand the contours in \eqref{eq:moments6vnested} to become a circle with radius larger than $1$. During this deformation of contours, one would encounter the same poles for $z_i=tz_j$ when $i<j$ as in the proof Proposition \ref{prop:momentsHLdet}, but also additional poles when $z_i=\pm 1$ and when $z_i=1/z_j$. It is not clear how to rearrange the contribution of all the corresponding residues and we do not know an analogue of Proposition \ref{prop:contourshift} in that setting.  It is worth mentioning that in the limit when $t$ goes to $1$, an analogue of Proposition \ref{prop:contourshift} was proposed in \cite{borodin2016directed} as Conjecture 5.2 with Pfaffians replacing the determinants.

 In the attempt to prove \cite[Conjecture 5.2]{borodin2016directed} proposed in \cite[Section 7.3]{borodin2016directed}, an essential step is to make use of symmetrization identities over the hyperoctaedral group $BC_k$ instead of the symmetric group $\mathcal{S}_k$ as in the proof of Proposition \ref{prop:contourshift} (see \cite[Section 7]{borodin2016directed} for details).  
	The following identity was  proved in \cite[Eq. (5)]{venkateswaran2015symmetric}, 
	\begin{equation}
	\sum_{\sigma\in BC_k} \sigma\left( \prod_{1\leqslant i<j \leqslant k} \frac{1-t\frac{z_j}{z_i}}{1-\frac{z_j}{z_i}}  \frac{1-t\frac{1}{z_iz_j}}{1-\frac{1}{z_iz_j}} \prod_{j=1}^{k} \frac{\left(1-a \frac{1}{z_j}\right)\left(1-b \frac{1}{z_j}\right)}{1-\frac{1}{z_j^2}} \right)  = \prod_{j=1}^k \frac{(1-t^j)(1-abt^{j-1})}{1-t}. 
	\label{eq:BCsymetrization}
	\end{equation} 
	where $\sigma$ is a signed permutation acting by permutation and inversion of variables $z_i$. Its degeneration as $t$ goes to $1$ was an important tool to arrive at \cite[Conjecture 5.2]{borodin2016directed}. 
	
	Taking $a=-b=\sqrt{t}$, and inverting variables in \eqref{eq:BCsymetrization}, we obtain 
	$$ \sum_{\sigma\in BC_k} \sigma\left( \prod_{1\leqslant i<j \leqslant k} \frac{z_j-tz_i}{z_j-z_i}  \frac{1-t z_iz_j}{1-z_iz_j} \prod_{j=1}^{k} \frac{1-tz_j^2}{1-z_j^2} \right)  =\frac{(t^2, t^2)_k}{(1-t)^k}.$$
	We expect that this BC-type symmetrization identity should play  a role when moving all contours in \eqref{eq:momentsHLnested} to a circle with radius larger than $1$. This further suggests that the appropriate moment generating series to consider is 
	$$ \sum_{k=0}^{+\infty}  \frac{u^k \EHL[t^{k(n-\ell(\la))}]}{(t^2, t^2)_k}  = \EHL\left[ \frac{1}{(ut^{n-\ell(\la)}; t^2)_{\infty}} \right].$$ 
	It was confirmed in  \cite[Proposition 3.3]{barraquand2018stochastic} that this quantity indeed admits a Fredholm Pfaffian representation at least in the special case where the specialisations of the half-space Hall-Littlewood measure are $\rhoup=(a_1, \dots, a_x)$ and $\rhodiag = \varnothing$. Allowing a more general $\rhodiag$ would allow  to study $\mathfrak{h}(x, y)$ for $y>x$, but we cannot presently generalize the results of \cite{barraquand2018stochastic}.

 \thispagestyle{plain}
 \section{Half-space Whittaker processes}
\label{sec:Whittaker}
\subsection{Whittaker functions}

Define the \emph{class-one $\mathfrak{gl}_{n}(\R)$-Whittaker functions} via their integral representations \cite{givental1997stationary} 
$$
\psi_{\lambda}(x)  = \int_{\R^{\frac{n(n-1)}{2}}} \prod_{k=1}^{n-1} \prod_{i=1}^{k}  d x_{k,i}\, \exp\left( {\mathcal{F}_{\lambda}(X)}\right)
$$
where $\lambda = (\lambda_1,\ldots,\lambda_n)\in \C^n$, $x=(x_1,\ldots,x_n)$, $X=(x_{k,i}:1\leqslant i\leqslant k\leqslant n)$, $x_{n,i} = x_i$, and
$$
\mathcal{F}_{\lambda}(X) = \I \sum_{k=1}^{n} \lambda_k \left(\sum_{i=1}^k x_{k,i} - \sum_{i=1}^{k-1} x_{k-1,i}\right) - \sum_{k=1}^{n-1} \sum_{i=1}^{k} \left(e^{x_{k,i}-x_{k+1,i}} + e^{x_{k+1,i+1}-x_{k,i}}\right).
$$
Note that unlike previous sections, $\lambda$ does not denote a partition but a vector in $\C^n$. Furthermore, we will see that  Whittaker functions play a role similar to  $q$-Whittaker and Macdonald functions in previous sections, but in order to be consistent with notations commonly used in the literature, the role of the index $\lambda$ and the variable $x$ are switched (for instance in Proposition \ref{prop:limitqWhittWhittwithplancherel} the index $\lambda$ in $\psi_{\lambda}(x)$ corresponds to the variable $x$ in $P_{\la}(x)$ and vice versa). 
Whittaker functions satisfy the following two integral identities.
\begin{proposition}[{\cite{stade2001mellin}, \cite[Corollaries 3.6 and 3.7]{o2014geometric}}]
Suppose $u>0$ and $\lambda,\nu\in\C^n$ with $\Re(\lambda_i+\nu_j)>0$ for all $1\leqslant i,j\leqslant n$. Then
\begin{equation}
\int_{\R^n} d x\, e^{-u e^{-x_n}} \psi_{\I\lambda}(x)\psi_{\I\nu}(x)  = u^{- \sum_{j=1}^{n} (\lambda_j+\nu_j)} \prod_{1\leqslant i,j\leqslant n} \Gamma( \lambda_i +  \nu_j),
\label{eq:Cauchywhittakern}
\end{equation}
and
\begin{equation}
\int_{\R^n} d x\, e^{-u e^{x_1}} \psi_{-\I\lambda}(x)\psi_{-\I\nu}(x)  = u^{- \sum_{j=1}^{n} (\lambda_j+\nu_j)} \prod_{1\leqslant i,j\leqslant n} \Gamma( \lambda_i +  \nu_j).
\label{eq:Cauchywhittakerone}
\end{equation}
\label{prop:Cauchywhittaker}
\end{proposition}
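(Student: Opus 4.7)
My plan is to establish both identities by induction on $n$, exploiting the fundamental recursive (``branching'') integral representation of class-one Whittaker functions that is built into the Givental formula itself. The base case $n=1$ is immediate: here $\psi_{\I\lambda}(x)=e^{-\lambda x}$, so \eqref{eq:Cauchywhittakern} reduces to
\begin{equation*}
\int_{\R} e^{-u e^{-x}} e^{-(\lambda+\nu)x}\,dx = u^{-(\lambda+\nu)}\Gamma(\lambda+\nu),
\end{equation*}
which is the definition of $\Gamma$ after the substitution $y=e^{-x}$ (valid under the hypothesis $\Re(\lambda+\nu)>0$); identity \eqref{eq:Cauchywhittakerone} follows analogously with $y=e^{x}$.

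For the inductive step, I would use the fact, visible from $\mathcal{F}_\lambda$ by integrating out the top row of the triangular array, that
\begin{equation*}
\psi_{(\lambda_1,\ldots,\lambda_n)}(x_1,\ldots,x_n) = \int_{\R^{n-1}} K_{\lambda_n}(x;y)\,\psi_{(\lambda_1,\ldots,\lambda_{n-1})}(y)\,dy,
\end{equation*}
where $K_{\lambda_n}(x;y) = \exp\!\bigl(\I\lambda_n(\sum_i x_i - \sum_i y_i) - \sum_{i=1}^{n-1}(e^{y_i-x_i}+e^{x_{i+1}-y_i})\bigr)$. Substituting this recursion for both $\psi_{\I\lambda}(x)$ and $\psi_{\I\nu}(x)$ into the LHS of \eqref{eq:Cauchywhittakern}, Fubini (justified by absolute convergence from the hypothesis $\Re(\lambda_i+\nu_j)>0$) lets me perform the $x$-integrals first. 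The integrals over $x_1,\ldots,x_{n-1}$ against the two copies of $K$ factor into one-dimensional integrals of the form
\begin{equation*}
\int_{\R}\exp\!\bigl(\I(\lambda_n+\nu_n)x_i - e^{y_i-x_i}-e^{y'_i-x_i}-e^{x_i-y_{i-1}}-e^{x_i-y'_{i-1}}\bigr)\,dx_i,
\end{equation*}
each of which can be evaluated explicitly via a substitution, and the $x_n$-integral combines with the factor $e^{-u e^{-x_n}}$ in a similar way to produce, together, the factor $u^{-(\lambda_n+\nu_n)}\prod_{j<n}\Gamma(\lambda_n+\nu_j)\Gamma(\lambda_j+\nu_n)\cdot\Gamma(\lambda_n+\nu_n)$ (after carefully pairing the contributions). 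The remaining $(2n-2)$-fold integral in the auxiliary variables $y,y'$ should then reorganize, after a change of variables, into exactly the LHS of \eqref{eq:Cauchywhittakern} at level $n-1$, allowing the inductive hypothesis to produce the product $\prod_{i,j\leqslant n-1}\Gamma(\lambda_i+\nu_j)$ and completing the identity.

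The main obstacle lies in the book-keeping of the exponential kernels after the $x$-integrations: one must check that the quantity emerging in the $y$ and $y'$ variables is precisely an integrand of the $(n-1)$-dimensional Cauchy-type identity with a modified spectral parameter, and that the cross-terms produced by the $e^{x_{i+1}-y_i}$ factors match the structure of the lower-dimensional Whittaker kernel. This is a delicate but purely algebraic verification; the analytic content is the Gamma-function identity used at each one-dimensional step. Finally, identity \eqref{eq:Cauchywhittakerone} follows by an entirely parallel argument using the dual recursion (integrating out the bottom rather than the top row of the Gelfand-Tsetlin pattern), or alternatively by invoking the reflection symmetry $\psi_{-\I\lambda}(x_1,\ldots,x_n) = \psi_{\I\lambda}(-x_n,\ldots,-x_1)$ which follows directly from the Givental integral formula under the change of variables reversing the array. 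As a cross-check, the identities are also consistent with the $q\to 1$ degeneration of the Littlewood-type identity \eqref{eq:CauchyLittlewoodspecialized} proved in Section~\ref{sec:backgroundSym}, though formalizing that route rigorously would itself require the limit theorems developed later in this section.
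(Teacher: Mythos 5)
The paper does not prove this Proposition; it is stated as a cited result from Stade (2001) and O'Connell--Sepp\"al\"ainen--Zygouras, so there is no in-paper proof to compare against. Your proposal attempts a proof from scratch, and I want to flag that it contains a genuine gap.

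The base case $n=1$ is fine, and the reflection symmetry $\psi_{w}(x_1,\ldots,x_n)=\psi_{-w}(-x_n,\ldots,-x_1)$ correctly derives \eqref{eq:Cauchywhittakerone} from \eqref{eq:Cauchywhittakern}. The branching kernel you write down is also the correct Givental recursion. But the inductive step breaks at the point you describe as ``delicate but purely algebraic.'' After substituting the recursion for \emph{both} $\psi_{\I\lambda}$ and $\psi_{\I\nu}$, the integral over $x_i$ for $2\leqslant i\leqslant n$ has the form
\begin{equation*}
\int_{\R}\exp\!\Bigl(-(\lambda_n+\nu_n)x_i -\bigl(e^{y_i}+e^{y_i'}\bigr)e^{-x_i}-\bigl(e^{-y_{i-1}}+e^{-y_{i-1}'}\bigr)e^{x_i}\Bigr)\,dx_i
=2\Bigl(\tfrac{a_i}{b_{i-1}}\Bigr)^{\!\frac{\lambda_n+\nu_n}{2}}K_{\lambda_n+\nu_n}\!\bigl(2\sqrt{a_i b_{i-1}}\bigr),
\end{equation*}
with $a_i=e^{y_i}+e^{y_i'}$ and $b_{i-1}=e^{-y_{i-1}}+e^{-y_{i-1}'}$; for $i=n$, replace $a_n$ by $u$. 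Only the $x_1$-integral collapses to a Gamma function (because it has no $e^{+x_1}$ term); every interior integral produces a modified Bessel $K$-function, not the simple Gamma factors $\Gamma(\lambda_n+\nu_j)\Gamma(\lambda_j+\nu_n)$ your plan asserts. Consequently the residual integrand in $(y,y')$ is a product of Bessel kernels times $\psi_{\I\lambda'}(y)\psi_{\I\nu'}(y')$, and it does not reorganize into the $(n-1)$-dimensional Cauchy integral with a modified spectral parameter. Already at $n=2$ one is left with a two-fold $(y,y')$ integral containing a single $K_{\lambda_2+\nu_2}$ and needing to produce $u^{-\sum}\Gamma(\lambda_1+\nu_1)\Gamma(\lambda_1+\nu_2)\Gamma(\lambda_2+\nu_1)\Gamma(\lambda_2+\nu_2)$; nothing in your outlined steps extracts these from the Bessel structure.

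The induction idea can be made to work, but on the Mellin--Barnes side rather than in $x$-space: Stade's proof replaces the Givental recursion by the recursive Mellin--Barnes representation of $\psi_\lambda$, and the inductive engine becomes Barnes' first lemma, which is precisely what converts the Bessel-type couplings into Gamma-function products. Alternatively, O'Connell--Sepp\"al\"ainen--Zygouras derive it from the geometric RSK correspondence. Either route requires machinery well beyond a substitution $y=e^{\pm x}$. Finally, your closing cross-check is misdirected: \eqref{eq:Cauchywhittakern} is the $q\to1$ limit of the full-space Cauchy identity \eqref{eq:PI} for $q$-Whittaker polynomials (with a pure alpha/Plancherel specialization), not of the Littlewood identity \eqref{eq:CauchyLittlewoodspecialized}; the $q\to1$ limit of the Littlewood identity is what yields Proposition \ref{prop:integratestoone}.
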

Define the \emph{Sklyanin measure} $\mskyl_n(\xi)$ as
\begin{equation}
    \mskyl_n(\xi)=\frac1{(2\pi)^n n!}\prod_{\substack{i,j=1\\i\neq j}}^n
    \frac1{\Gamma(\I \xi_i- \I \xi_j)}.
    \label{eq:defmskyl}
  \end{equation}
We have the following orthogonality relations, to be understood in a weak sense (see \cite[Theorem 2.1]{gerasimov2008baxter}, \cite{semenov1993quantization}). For all $\la, \mu, x,y\in\R^n$,
\begin{equation}
\int_{\R^n} \overline{\psi_{\la}(x)}\psi_{\mu}(x)\mathrm{d}x = \frac{1}{n!\mskyl_n(\la)}\sum_{\sigma\in \mathcal{S}_{n}}\delta\big(\la-\sigma(\mu)\big),
\label{eq:orthogonalityone}
\end{equation}
and
\begin{equation}
\int_{\R^n} \overline{\psi_{\la}(x)}\psi_{\la}(y)\mskyl_n(\la)\mathrm{d}\la = \delta(x-y). 
\label{eq:orthogonality2}
\end{equation} 
Then, Proposition \ref{prop:Cauchywhittaker} yields the following. Let  $\mathbb{H}$ denote the complex upper half plane $\mathbb{H} = \lbrace z\in \C: \Imag[z]>0\rbrace$. 
\begin{proposition}
For $u>0$ and $w\in \C^n$,
$w_i\in \mathbb{H}$, $1\leqslant i\leqslant n$,
\begin{equation}\label{eq:eignwhitN}
e^{-u e^{-x_n}} \psi_{w}(x) = \int_{\R^n}d\xi\, \mskyl_n(\xi)  u^{\I \sum_{k=1}^{n} ( w_i+\xi_i)}\prod_{1\leqslant i,j\leqslant n} \Gamma(-\I \xi_i - \I w_j) \psi_{-\xi}(x),
\end{equation}
and 
\begin{equation}\label{eq:eignwhitone}
e^{-u e^{x_1}} \psi_{-w}(x) = \int_{\R^n} d\xi\, \mskyl_n(\xi)  u^{\I \sum_{k=1}^{n} ( w_i+\xi_i)}\prod_{1\leqslant i,j\leqslant n} \Gamma(-\I \xi_i - \I w_j) \psi_{\xi}(x).
\end{equation}
\end{proposition}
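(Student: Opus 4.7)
The plan is to interpret both equations \eqref{eq:eignwhitN} and \eqref{eq:eignwhitone} as Whittaker Plancherel expansions whose spectral coefficients are given by Proposition \ref{prop:Cauchywhittaker}. First I would record two easy symmetries of the data. From the integral representation of $\psi_\la(x)$ one sees $\overline{\psi_\xi(x)}=\psi_{-\xi}(x)$ for $\xi\in\R^n$ and $x\in\R^n$; and since $\mskyl_n(\xi)$ is the product of $\Gamma(\I\xi_i-\I\xi_j)^{-1}$ over ordered pairs $i\neq j$, the substitution $\xi\to-\xi$ only swaps the two indices in each factor, so $\mskyl_n(-\xi)=\mskyl_n(\xi)$. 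Combining these with the orthogonality relation \eqref{eq:orthogonality2} I obtain the Plancherel inversion in the form
\begin{equation*}
f(x)=\int_{\R^n}\psi_{-\xi}(x)\,\mskyl_n(\xi)\Bigl(\int_{\R^n}f(y)\psi_{\xi}(y)\,dy\Bigr)d\xi
\ =\ \int_{\R^n}\psi_{\xi}(x)\,\mskyl_n(\xi)\Bigl(\int_{\R^n}f(y)\psi_{-\xi}(y)\,dy\Bigr)d\xi,
\end{equation*}
which are the two variants I will apply.

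Next I would feed $f(x)=e^{-u e^{-x_n}}\psi_w(x)$ into the first variant. The inner integral is exactly the one evaluated in \eqref{eq:Cauchywhittakern}: setting $\la=-\I w$ and $\nu=-\I\xi$ gives $\psi_{\I\la}=\psi_w$ and $\psi_{\I\nu}=\psi_{\xi}$, and the convergence hypothesis $\Re(\la_i+\nu_j)>0$ of Proposition \ref{prop:Cauchywhittaker} reduces to $\Im w_i>0$, i.e.\ $w\in\mathbb{H}^n$. Substituting yields
\begin{equation*}
\int_{\R^n}e^{-ue^{-y_n}}\psi_{w}(y)\psi_{\xi}(y)\,dy
=u^{\I\sum_{j}(w_j+\xi_j)}\prod_{i,j=1}^{n}\Gamma(-\I w_i-\I\xi_j),
\end{equation*}
which, plugged back into the Plancherel inversion, produces \eqref{eq:eignwhitN}. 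For \eqref{eq:eignwhitone} I would apply the second variant to $f(x)=e^{-ue^{x_1}}\psi_{-w}(x)$ and evaluate the inner integral using \eqref{eq:Cauchywhittakerone} with $\la=-\I w$ and $\nu=-\I\xi$ (so that $\psi_{-\I\la}=\psi_{-w}$ and $\psi_{-\I\nu}=\psi_{-\xi}$); the computation is word-for-word analogous.

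The main obstacle is analytic, not algebraic. The orthogonality relation \eqref{eq:orthogonality2} is valid only in a distributional sense, so to turn the manipulation above into a genuine identity between continuous functions I have to work inside the Whittaker Plancherel theory (Semenov-Tian-Shansky / Kharchev-Lebedev-Oblezin) and check that $e^{-ue^{-x_n}}\psi_w(x)$ lies in the space to which this theory applies: specifically, I need exponential decay fast enough to make $\int f(y)\psi_\xi(y)\,dy$ absolutely convergent uniformly for $\xi$ in a strip, so that Fubini lets me interchange the $y$-integral with the $\xi$-integral against $\mskyl_n(\xi)$. The factor $e^{-ue^{-x_n}}$ (resp.\ $e^{-ue^{x_1}}$) provides super-exponential decay in the dangerous direction, and in the other direction one uses the known bounds on $\psi_w(x)$ with $w\in\mathbb{H}^n$ together with the Stirling decay of the Gamma product along vertical lines to justify convergence and the interchange. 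Once these estimates are in place, the identities \eqref{eq:eignwhitN} and \eqref{eq:eignwhitone} follow. An alternative route, which I would mention as a cross-check, is to verify \eqref{eq:eignwhitN}--\eqref{eq:eignwhitone} first for $w$ purely imaginary with $\Im w$ large, where both sides are absolutely convergent, and then extend to all $w\in\mathbb{H}^n$ by analytic continuation, using that both sides are holomorphic in $w$ on $\mathbb{H}^n$.
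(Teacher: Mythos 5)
Your proposal is correct and follows exactly the route the paper has in mind (the paper only says the Proposition "yields" from Proposition \ref{prop:Cauchywhittaker}, leaving the Plancherel inversion implicit): you expand $e^{-ue^{-x_n}}\psi_w$ (resp.\ $e^{-ue^{x_1}}\psi_{-w}$) in the Whittaker basis via the orthogonality relation \eqref{eq:orthogonality2}, identify the spectral coefficients with the Stade/OSZ integrals \eqref{eq:Cauchywhittakern}--\eqref{eq:Cauchywhittakerone} under $\lambda=-\I w$, $\nu=-\I\xi$ (which makes the convergence hypothesis $\Re(\lambda_i+\nu_j)>0$ precisely $w\in\mathbb{H}^n$), and observe that $\mskyl_n(-\xi)=\mskyl_n(\xi)$. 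The remarks about justifying the Plancherel step via decay/Fubini, or alternatively by first taking $\Im w_i$ large and continuing analytically, correctly identify the only real technical content.
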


Define an operator $\BBn_n^{u}$ acting on functions $ \mathbb{H}^n \to \C$ by
\begin{equation}
\BBn_n^{u} f(w) =  \int_{\R^n}d\xi\, \mskyl_n(\xi)  u^{\I \sum_{k=1}^{n} ( w_i+\xi_i)}\prod_{1\leqslant i,j\leqslant n} \Gamma(-\I \xi_i - \I w_j) f(-\xi). 
\label{eq:defBBn}
\end{equation}
Define as well a very similar operator $\BBone_n^u$ acting on functions $ (-\mathbb{H})^n \to \C$ by 
\begin{equation}
\BBone_n^{u} f(w) = \int_{\R^n} d\xi\, \mskyl_n(\xi)  u^{\I \sum_{k=1}^{n} ( - w_i+\xi_i)}\prod_{1\leqslant i,j\leqslant n} \Gamma(-\I \xi_i + \I w_j) f(\xi),
\label{eq:defBBone}
\end{equation}
i.e., $\BBone_n^{u}$ is a composition of $\BBn_n^{u}$ and $w\mapsto -w$. 
We may rewrite equations \eqref{eq:eignwhitN} and \eqref{eq:eignwhitone} as $  e^{-u e^{-x_n}} \psi_{w}(x) = \BBn^{u} \psi_{w}(x) $ and $  e^{-u e^{x_1}} \psi_{w}(x) = \BBone^{u}  \psi_{w}(x) $ (dropping the index $n$). The operator $\BBone^{u}$ is referred to as \emph{dual Baxter operator} in \cite{gerasimov2008baxter}. It was shown in \cite{borodin2015classical} that the eigenrelation \eqref{eq:eignwhitN} for $\BBn^{u}$ arises as a $t=0, q\to 1$ limit of the eigenrelation \eqref{eq:Noumieigenrelation} for Noumi's $q$-integral operator $\Noumi^z$. We expect that eigenrelation \eqref{eq:eignwhitone} for $\BBone^u$ similarly arises as the limit of the eigenrelation  for $\MoumiA^z$ from Proposition \ref{prop:analyticeigenrelation}. 

\subsection{Half-space Whittaker process with Plancherel specialization}
\label{sec:rigorousconvwithplancherel}

In this section, we are interested in the limit of the half-space $q$-Whittaker process when $q$ goes to $1$. For applications, it would be natural to focus on the case when specializations are all  pure alpha, that is the setting of Section \ref{sec:rskdynamics}. However, we know from the study of the full-space Whittaker process that some technical difficulties arise with this choice of specializations (see Section 4.2 in \cite{borodin2014macdonald}).  More precisely, some tail estimates about integrals involving Whittaker functions are necessary to justify the convergence of the $q$-Whittaker process, and these seem difficult to establish. However, all these estimates are much easier to prove when one includes a  Plancherel specialization (see Section 4.1 in \cite{borodin2014macdonald}). Thus, following an idea already present in \cite{borodin2015height}, we will consider a half-space $q$-Whittaker process where we add to the specialization $(\diagq, a_{n+1}, \dots , a_t)$ of $\ve_{\lambda}$ some Plancherel component $\gamma$. 

Consider a half-space $q$-Whittaker process indexed by a path $\pathh$ as in Figure \ref{pathWhittaker2}, with the following choice of specializations.  Fix $1\leqslant  n\leqslant t$. For all $i\leqslant t$ and any $j$,  assume that edges $ (i-1, j) \leftarrow (i,j) $ are labeled by single variable specialization $a_i$, for any $i, j$, edges $ (i, j-1) \rightarrow (i,j) $ are labeled by single variable specialization $a_j$, the diagonal edge is labeled by specialization $\diagq$ and -- unlike in Section \ref{sec:rskdynamics} -- edges $(t, j)\leftarrow (t+1, j)$ are labeled by the Plancherel specialization $\gamma$. 

\begin{figure}
\begin{tikzpicture}[scale=1]
\begin{scope}[decoration={
	markings,
	mark=at position 0.5 with {\arrow{<}}}]
\draw[->, >=stealth', gray] (0,0) -- (8.7, 0);
\draw[->, >=stealth', gray] (0,0) -- (0,5.5);
\draw[gray, dotted] (0,0) grid(8.5, 5.5); 
\draw[gray, dotted] (5, 3.5)  -- (5,0) node[anchor = north]{$i$};
\draw[gray, dotted] (0,3) node[anchor= east]{$j$} -- (5.5,3) ;
\draw[gray] (4.5,2) node[anchor=north]{$ a_i$};
\draw[gray] (4,2.5) node[anchor=east]{$ a_j$};
\draw[gray] (5,3) node[anchor =south west]{$(i,j)$};
\fill[gray] (5,3) circle(0.08);
\fill (7,5) circle(0.08);
\fill (8,5) circle(0.08);
\draw[] (7,0) node[anchor = north east]{$t$};
\draw[] (8,0) node[anchor = north]{$t+1$};
\draw[dotted] (0,5) node[anchor= east]{$n$} -- (7,5);
\draw[fleche] (0,0) -- (5,5);
\draw[fleche] (5,5) -- (7,5);
 \draw[fleche] (7,5) -- (8,5);
\draw[fleche] (8,5) -- (8, 0);
\draw (7,5) node[anchor =south east]{$(t,n)$};
\draw (8,5) node[anchor =south west]{$(t+1,n)$};
\draw[gray] (7.5,1) node[anchor=north]{$ \gamma$};
\draw[gray] (7.5,2) node[anchor=north]{$ \gamma$};
\draw[gray] (7.5,3) node[anchor=north]{$ \gamma$};
\draw[gray] (7.5,4) node[anchor=north]{$ \gamma$};
\draw[gray] (7.5,5) node[anchor=north]{$ \gamma$};
\draw[thick, gray, dashed] (7,-1) -- (7,6);
\end{scope} 
\end{tikzpicture}
\caption{The path $\pathh
$ considered in Section \ref{sec:rigorousconvwithplancherel}.}
\label{pathWhittaker2}
\end{figure}

Let us denote for $1\leqslant m\leqslant n$,  $\la^{(m)} :=\la^{(t+1, m)}$. 
The probability of the sequence $\la^{(1)}\prec \dots \prec \la^{(n)}$ is 
$$\PQWP_{\pathh, \bm\uprho}(\la^{(1)},  \dots ,  \la^{(n)}) = \frac{ \ve_{\la^{(n)}}\big((\diagq, a_{n+1}, \dots, a_t), \gamma\big)P_{\la^{(n)}/\la^{(n-1)}}(a_n)\dots P_{\la^{(1)}}(a_1)}{\Pi\big(a_1,\ldots, a_n; (\diagq, a_{n+1}, \dots ,a_{t}), \gamma\big)\Phi(a_1,\ldots, a_n)}.$$
In particular, $ \la^{(n)}$ is distributed according to the half-space $q$-Whittaker measure, 
$$
\PQWM_{(a_1,\ldots, a_n), ((\diagq, a_{n+1}, \dots ,a_{t}), \gamma)}(\la^{(n)}) = \frac{P_{\lambda^{(n)}}(a_1,\ldots, a_n) \ve_{\lambda^{(n)}}\big((\diagq, a_{n+1}, \dots, a_{t}), \gamma\big)}{\Pi\big(a_1,\ldots, a_n;(\diagq, a_{n+1}, \dots, a_t), \gamma\big) \Phi(a_1,\ldots, a_N)}.
$$
To go from $q$-Whittaker to Whittaker processes, we use the following scalings
\begin{equation}
q= e^{-\e},\qquad a_j = e^{-\e \alpha_j},\qquad \diagq = e^{-\e \diag}, \qquad \gamma=\tau\e^{-2}
\label{eq:scalings1}
\end{equation}
\begin{equation} \lambda^{(m)}_j =\tau\e^{-2}+ (t+m+1 - 2j) \e^{-1}\log \e^{-1} + \e^{-1} T^{(m)}_j, \ \ \forall \ 1\leqslant m\leqslant n.
\label{eq:scalings2}
\end{equation}
\begin{remark} Based on the analogy between half-space Macdonald processes and usual  Macdonald processes, our random variable  $T^{(m)}_j$ corresponds to the random variable denoted $T_{m,j} $ in the context of the $\alpha$-Whittaker process  \cite[Section 4.2]{borodin2014macdonald}.
\end{remark}
Let us recall some convergence results from \cite{borodin2014macdonald}.
\begin{lemma}[{\cite[Proposition 4.1.9]{borodin2014macdonald}}]For any $M\in \R$, 
\begin{equation}
\log \big[(q;q)_{\e^{-1}\log\e^{-1} + \e^{-1} y}\big]  = \A(\e) + e^{-y} + o(1)
\label{eq:qqestimate1}
\end{equation}
where
$
\A(\e) = -\e^{-1} \frac{\pi^2}{6} - \frac{1}{2} \log\frac{\e}{2\pi},
$
and for $k>1$,
\begin{equation}
\log \big[(q;q)_{k \e^{-1}\log\e^{-1} + \e^{-1} y}\big] = \A(\e)+ o(1),
\label{eq:qqestimate2}
\end{equation}
where the error $ o(1)$ in  \eqref{eq:qqestimate1} and \eqref{eq:qqestimate2} goes to $0$ uniformly for $y>M$ as $\e\to 0$.
Moreover, for any $y\in \R$ and $k\geqslant 1$, we have the inequality 
\begin{equation}
\log \qq{k\e^{-1}\log(\e^{-1}) + \e^{-1}y } \geqslant \A(\e)+ \e^{-1} e^{-y+k\log(\e)}  - c(\e), 
\label{eq:qqestimateuniform}
\end{equation}
where $c(\e)\to 0$ uniformly in $y$.   
\label{lem:qqestimates}
\end{lemma}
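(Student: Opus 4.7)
The plan is to reduce the finite $q$-Pochhammer to a product of two pieces, each of which can be handled by standard techniques: a modular-type asymptotic for the infinite $q$-Pochhammer, and a Riemann-sum estimate for a tail.

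First I would write
\begin{equation*}
\log\qq{N} \;=\; \log(q;q)_\infty \;-\; \log(q^N;q)_\infty,
\end{equation*}
which is valid since $(q;q)_\infty = (q;q)_N\,(q^N;q)_\infty$. For the first term on the right, I would use the classical modular transformation of the Dedekind eta function (equivalently, Euler–Maclaurin applied to $\sum_{i\geq 1}\log(1-e^{-\e i})$): setting $q=e^{-\e}$ with $\e\to 0^+$,
\begin{equation*}
\log(q;q)_\infty \;=\; -\frac{\pi^2}{6\e} \,-\, \frac{1}{2}\log\!\frac{\e}{2\pi} \,+\, o(1) \;=\; \A(\e) + o(1).
\end{equation*}
This is a standard identity and I would just quote it (or sketch the $\eta$-function derivation).

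Second, under the scaling $N = k\e^{-1}\log\e^{-1} + \e^{-1}y$ one has $q^N = \e^k e^{-y}$, so
\begin{equation*}
-\log(q^N;q)_\infty \;=\; -\sum_{i=0}^{\infty}\log\!\bigl(1 - \e^k e^{-y}\,e^{-\e i}\bigr).
\end{equation*}
For $k=1$, I would approximate this sum by the Riemann integral $\int_0^\infty \e e^{-y} e^{-u}\,du / \e = e^{-y}$ (after expanding $-\log(1-x)=x+O(x^2)$ and checking the $O(x^2)$ remainder is uniformly $o(1)$ on $y>M$), which yields \eqref{eq:qqestimate1}. For $k>1$ the leading Riemann approximation is $\e^{k-1}e^{-y}=o(1)$, again uniformly on $y>M$, giving \eqref{eq:qqestimate2}. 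The main point to check here is uniformity of the error in $y\in(M,\infty)$; but because the summand is monotone decreasing in $y$ and the tail bound $|\log(1-x)-x|\leq x^2$ is explicit for $x\in(0,1/2)$, this is routine.

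For the uniform lower bound \eqref{eq:qqestimateuniform}, I would simply apply $-\log(1-x)\geq x$ termwise in the series above:
\begin{equation*}
-\log(q^N;q)_\infty \;\geq\; \sum_{i=0}^\infty \e^k e^{-y-\e i} \;=\; \frac{\e^k e^{-y}}{1-e^{-\e}} \;\geq\; \e^{k-1} e^{-y},
\end{equation*}
using $\e/(1-e^{-\e})\geq 1$ for $\e>0$. Combined with $\log(q;q)_\infty \geq \A(\e)-c(\e)$ from the first step (with $c(\e)\to 0$ independently of $y$), this gives \eqref{eq:qqestimateuniform}. The only mildly delicate piece is the uniformity of the error terms, but because everything is controlled by $\e^k e^{-y}$ in the tail and by the $\e\to 0$ asymptotic of $(q;q)_\infty$ at the head (which does not depend on $y$), the $y$-uniform constant $c(\e)\to 0$ comes for free. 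No single step here is a genuine obstacle; the only real work is keeping the Riemann-sum error bounds uniform in $y$, which the monotonicity of the summand handles cleanly.
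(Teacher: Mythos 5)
Your two-piece decomposition (split $(q;q)_N$ into a $\log(q;q)_\infty$ head handled by the modular/Euler--Maclaurin asymptotic, plus a tail $(q^\bullet;q)_\infty$ handled by a geometric-series estimate) is the right and, as far as I can tell, the standard route, and it does deliver \eqref{eq:qqestimate1} and \eqref{eq:qqestimate2}. However, there are two points that need to be fixed, and the second one is a genuine gap in your proof of \eqref{eq:qqestimateuniform}. First, the factorization should read $(q;q)_\infty = (q;q)_N\,(q^{N+1};q)_\infty$, not $(q;q)_N\,(q^N;q)_\infty$: the latter double-counts the factor $(1-q^N)$. For the asymptotic statements \eqref{eq:qqestimate1}--\eqref{eq:qqestimate2} this is harmless, since the stray term $\log(1-q^N) = \log(1-\e^k e^{-y})$ is $o(1)$ uniformly on $y>M$, but you should still fix it.

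For the uniform lower bound \eqref{eq:qqestimateuniform} the off-by-one is not harmless. You bound the wrong quantity: $-\log(q^N;q)_\infty$ is strictly larger than the relevant tail $-\log(q^{N+1};q)_\infty = -\log(q^N;q)_\infty + \log(1-q^N)$, so a lower bound on the former does not yield one on the latter. Once the decomposition is corrected, your one-line estimate $-\log(1-x)\geq x$ gives only
\[
-\log(q^{N+1};q)_\infty \;\geq\; \sum_{i\geq 1} q^{N+i} \;=\; \frac{q\,q^N}{1-q} \;=\; \frac{q^N}{e^{\e}-1},
\]
and since $e^{\e}-1>\e$, this sits strictly \emph{below} the target $\e^{-1} q^N = \e^{k-1}e^{-y}$; the deficit $q^N\bigl(\e^{-1}-(e^{\e}-1)^{-1}\bigr)\sim q^N/2$ can be as large as $\approx 1/2$ (e.g.\ for $N=1$, where $q^N = e^{-\e}$), so it cannot be absorbed into a $y$-independent $c(\e)\to 0$. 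Your claim that the uniformity ``comes for free'' is exactly where this breaks. The fix is to retain at least one more term of $-\log(1-x)\geq x + x^2/2$: then
\[
-\log(q^{N+1};q)_\infty \;\geq\; \frac{q^N}{e^{\e}-1} + \frac{q^{2N}}{2(e^{2\e}-1)},
\]
and a short computation (splitting into $q^N\lesssim \e$ and $q^N\gtrsim\e$) shows this exceeds $\e^{-1}q^N - O(\e)$ uniformly over admissible $y$, which gives $c(\e)=O(\e)$. Alternatively one can use the full Lambert-series identity $-\log(q^{N+1};q)_\infty=\sum_{m\geq 1}\frac{1}{m}\frac{q^{m(N+1)}}{1-q^m}$ and the Laurent expansion of $1/(e^x-1)$, but in either case the leading term alone is insufficient and some second-order input is needed.
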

We recall that (with $t=0$) 
$$
\Pi(a_1,\ldots, a_n;b_1, \dots, b_k) = \prod_{i=1}^{n}\prod_{j=1}^k \frac{1}{(a_i b_j;q)_{\infty}}, \qquad \Phi(a_1,\ldots, a_n) = \prod_{1\leqslant i<j\leqslant n}\frac{1}{(a_ia_j;q)_{\infty}}.
$$
Noting that by the convergence of the $q$-Gamma function to the Gamma function (see \eqref{eq:qGamma}), 
$$
(e^{-\e \tilde x};e^{-\e})_{\infty} = \e^{1-\tilde{x}}  \frac{1}{\Gamma(\tilde x)} e^{\mathcal{A}(\e)+o(1)},
$$
so that (see \cite[Lemma 4.9]{borodin2015height}) with the scalings $a_i=q^{\alpha_i},  b_i=q^{\beta_i}$, 
\begin{multline}
\Pi\big(a_1,\ldots, a_n;(b_1, \dots, b_k), \gamma\big) \\ =  \left( e^{\tau t \e^{-2}} e^{-\e^{-1}\tau\sum_{j=1}^n \alpha_j} \prod_{i=1}^k\prod_{j=1}^{n}
\frac{1}{e^{\mathcal{A}(\epsilon)} \epsilon^{1- \beta_i-\alpha_j}}\right) 
e^{\tau \sum_{j=1}^n \alpha_j^2/2 }\prod_{i=1}^k\prod_{j=1}^{n}
\Gamma( \beta_i+\alpha_j)e^{o(1)},
\label{eq:limPi}
\end{multline}
where  the $o(1)$ error goes to zero as $\epsilon\to 0$. Similarly, we have 
\begin{multline}
\Phi(a_1,\ldots, a_n) = \prod_{1\leqslant i<j\leqslant n}\e^{\alpha_i+\alpha_j-1} e^{-\mathcal{A}(\e)} \Gamma(\alpha_i+\alpha_j)e^{o(1)}\\ 
= \e^{(n-1)\sum \alpha_i}\e^{-\frac{n(n-1)}{2}} e^{-\frac{n(n-1)}{2}\mathcal{A}(\e)} \prod_{1\leqslant i<j\leqslant n} \Gamma(\alpha_i+\alpha_j)e^{o(1)}.
\label{eq:limPhi}
\end{multline}
\begin{proposition}[{\cite[Theorem 4.1.7]{borodin2014macdonald}}]
Fix $1\leqslant n\leqslant t$,  the scalings $q=e^{-\e}$, $\la_j = \tau \e^{-2} + (t+n+1 - 2j)\e^{-1}\log \e^{-1} +\e^{-1} x_k$ and $z_j = e^{\I \e \nu_j}$ for $1\leqslant j\leqslant n$, and let
$$
\psi^{\e}_{\nu}(x) = \e^{\frac{n(n-1)}{2}} \e^{t\sum_{k=1}^{n}\I\nu_k}e^{\tau\e^{-1}\sum_{k=1}^{n}\I \nu_k} e^{\frac{n(n-1)}{2} \mathcal{A}(\e)} P_{\la}(z).
$$
Then, for all $\nu\in \C^n$,  as $\e\to 0$, $\psi^{\e}_{\nu}(x)$ converges to $\psi_{\nu}(x)$ uniformly for $x$ in a compact set.
\label{prop:limitqWhittWhittwithplancherel}
\end{proposition}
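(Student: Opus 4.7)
The plan is to follow the strategy of \cite[Theorem 4.1.7]{borodin2014macdonald}: view the rescaled $q$-Whittaker polynomial $\psi^{\e}_{\nu}(x)$ as a Riemann sum over Gelfand-Tsetlin patterns which converges, as $\e \to 0$, to the integral representation $\psi_{\nu}(x) = \int e^{\mathcal{F}_{\nu}(X)}\,dX$ of the class-one Whittaker function. First I would apply the combinatorial formula \eqref{eq:combinatorialP} iteratively in the $n$ variables $z_1, \dots, z_n$ to write
$$P_{\la}(z_1, \ldots, z_n) = \sum_{T} \prod_{k=1}^{n} \psi_{\la^{(k)}/\la^{(k-1)}} \prod_{k=1}^n z_k^{|\la^{(k)}|-|\la^{(k-1)}|},$$
where $T = (\la^{(k)}_i)_{1 \leqslant i \leqslant k \leqslant n}$ ranges over GT patterns with $\la^{(n)} = \la$ and, at $t=0$, the branching coefficient collapses to a product of $q$-binomials,
$$\psi_{\la^{(k)}/\la^{(k-1)}} = \prod_{i=1}^{k-1} \frac{(q;q)_{\la^{(k)}_i - \la^{(k)}_{i+1}}}{(q;q)_{\la^{(k)}_i - \la^{(k-1)}_i}\,(q;q)_{\la^{(k-1)}_i - \la^{(k)}_{i+1}}}.$$
Parametrizing the inner entries by continuous variables $X = (x_{k,i})_{1 \leqslant i \leqslant k \leqslant n-1}$ via \eqref{eq:scalings2}, the outer summation becomes a sum over an $\e$-mesh lattice in $\R^{n(n-1)/2}$, and the prefactor $\e^{n(n-1)/2}$ in $\psi^{\e}_{\nu}$ is exactly the Jacobian turning it into a Riemann sum.

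Second, I would apply Lemma \ref{lem:qqestimates} to every Pochhammer factor. Under the scaling, each index has the form $m\e^{-1}\log\e^{-1} + \e^{-1}y + O(1)$ for an integer $m \geqslant 1$: the case $m=1$ contributes $\mathcal{A}(\e) + e^{-y} + o(1)$ to the log via \eqref{eq:qqestimate1}, while $m \geqslant 2$ contributes only $\mathcal{A}(\e) + o(1)$ via \eqref{eq:qqestimate2}. Scanning the three factors of $\psi_{\la^{(k)}/\la^{(k-1)}}$: the numerator $(q;q)_{\la^{(k)}_i - \la^{(k)}_{i+1}}$ is always multi-log, whereas the two denominators $(q;q)_{\la^{(k)}_i - \la^{(k-1)}_i}$ and $(q;q)_{\la^{(k-1)}_i - \la^{(k)}_{i+1}}$ are single-log with $y = x_{k,i} - x_{k-1,i}$ and $y = x_{k-1,i} - x_{k,i+1}$ respectively. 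Collecting the $-e^{-y}$ contributions over $i$ and $k$ and relabeling $k \mapsto k+1$ reproduces exactly
$$-\sum_{k=1}^{n-1} \sum_{i=1}^{k} \bigl(e^{x_{k,i}-x_{k+1,i}} + e^{x_{k+1,i+1}-x_{k,i}}\bigr),$$
which is the potential appearing in $\mathcal{F}_{\nu}(X)$. The same bookkeeping shows that the net number of $\mathcal{A}(\e)$-constants produced is $-n(n-1)/2$, precisely cancelled by the prefactor $e^{\frac{n(n-1)}{2}\mathcal{A}(\e)}$ in $\psi^{\e}_{\nu}$.

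Third, the monomial factor is computed directly from \eqref{eq:scalings2}: $|\la^{(k)}| - |\la^{(k-1)}| = \tau \e^{-2} + t\e^{-1}\log\e^{-1} + \e^{-1}\bigl(\sum_i x_{k,i} - \sum_i x_{k-1,i}\bigr)$. Multiplying by $\log z_k = \I \e \nu_k$ and summing over $k$, the first two divergent contributions are exactly absorbed by the normalization factors $\e^{t \sum \I \nu_k}$ and $e^{\tau \e^{-1} \sum \I \nu_k}$ in the definition of $\psi^{\e}_{\nu}$, while the finite remainder reproduces the oscillatory prefactor $\exp\bigl(\I\sum_k \nu_k(\sum_i x_{k,i} - \sum_i x_{k-1,i})\bigr)$ of $\mathcal{F}_{\nu}(X)$. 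Combining the three steps, each summand of the rescaled Riemann sum converges pointwise to the Whittaker integrand $e^{\mathcal{F}_{\nu}(X)}$.

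The hard part will be upgrading this pointwise Riemann-sum convergence to the uniform-in-$x$ statement on compact sets claimed in the proposition, which amounts to absolute and uniform integrability of the rescaled summand. For this, the key tool is the uniform lower bound \eqref{eq:qqestimateuniform}, which translates into a uniform-in-$\e$ dominating envelope
$$\Bigl|e^{\frac{n(n-1)}{2}\mathcal{A}(\e)} \prod_{k}\psi_{\la^{(k)}/\la^{(k-1)}}\Bigr| \leqslant C \exp\Bigl(-c \sum_{k,i}\bigl(e^{x_{k-1,i}-x_{k,i}} + e^{x_{k,i+1}-x_{k-1,i}}\bigr)\Bigr),$$
which is integrable on $\R^{n(n-1)/2}$ and bounded for $x$ in any fixed compact set. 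Dominated convergence for Riemann sums then yields the desired uniform convergence. Two subtleties will require care: handling the boundary indices $i = k$ (where some single-log denominators degenerate and the multi-log estimate \eqref{eq:qqestimate2} must take over without spoiling the envelope), and ensuring that the envelope is independent of $\nu$ on compact subsets of $\C^n$, so that the result genuinely holds for all complex spectral parameters and not just real ones.
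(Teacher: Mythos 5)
Your proposal reconstructs exactly the argument that the paper delegates to its citation — the paper's own ``proof'' is a single sentence referring to Step 2 of the proof of Lemma~4.1.25 in \cite{borodin2014macdonald}, and your Riemann-sum-over-Gelfand-Tsetlin-patterns outline, the $q$-Pochhammer bookkeeping via Lemma~\ref{lem:qqestimates}, and the cancellation of the $\mathcal{A}(\e)$ constants against the prefactor $e^{\frac{n(n-1)}{2}\mathcal{A}(\e)}$, are precisely the mechanics of that reference. The combinatorial arithmetic checks out: the numerator $(q;q)_{\la^{(k)}_i-\la^{(k)}_{i+1}}$ has level shift $m=2$ and the two denominators have $m=1$, the monomial computation $|\la^{(k)}|-|\la^{(k-1)}| = \tau\e^{-2}+t\e^{-1}\log\e^{-1}+\e^{-1}(\sum_i x_{k,i}-\sum_i x_{k-1,i})$ is correct, and the Jacobian factor $\e^{n(n-1)/2}$ is the right mesh size for the $n(n-1)/2$ free interior entries.

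One small misplaced worry: in the $t=0$ branching coefficient the product over $i$ runs from $1$ to $k-1$ only (since $\ell(\la^{(k-1)})=k-1$), so there is no ``boundary index $i=k$'' and no degenerate case to handle — every numerator is a $2$-log Pochhammer and every denominator a $1$-log Pochhammer, as you already observed. For the dominating envelope, a slightly safer route than bounding numerator and denominators separately is to note that each factor of $\psi_{\la^{(k)}/\la^{(k-1)}}$ is a $q$-binomial $\binom{a+b}{a}_q$ with $a+b$ equal to the sum of the two single-log denominator indices; since $(q;q)_m$ is decreasing in $m$, $\binom{a+b}{a}_q \leqslant \min\{(q;q)_a^{-1},(q;q)_b^{-1}\}$, and taking the geometric mean with \eqref{eq:qqestimateuniform} gives the envelope with constant $c=1/2$ in front of the potential, which suffices. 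With that adjustment, your outline is sound and matches the cited argument.
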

\begin{proof}
The scalings above are slightly different from the setting of  \cite[Theorem 4.1.7]{borodin2014macdonald}, but the result is obtained using the same arguments as in Step 2 of the proof of \cite[Lemma 4.1.25]{borodin2014macdonald}. 
\end{proof}
\begin{lemma}[{\cite[Lemma 4.8]{borodin2015height}}]
Fix any compact subset $D\subset\R^{n(n+1)/2}$. Then, under the scalings \eqref{eq:scalings1} and \eqref{eq:scalings2}, 
$$  P_{\la^{(n)}/\la^{(n-1)}}(a_n)P_{\la^{(n-1)}/\la^{(n-2)}}(a_{n-1})\dots P_{\la^{(1)}}(a_1)  = e^{-\frac{n(n-1)}{2}\A(\e)}\e^{t\sum_{k=1}^{n}\alpha_k} e^{-\e^{-1}\tau\sum_{j=1}^n \alpha_j} e^{\mathcal{F}_{ \I \alpha_1, \dots, \I \alpha_n}(T)}e^{o(1)},$$
where the error $o(1)$ goes to zero as $\e$ to $0$ uniformly with respect to $ (T_j^{m})_{1\leqslant j\leqslant m\leqslant n} \in D$.
\label{lem:productPtoWhittwithplancherel}
\end{lemma}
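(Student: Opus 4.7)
The statement concerns only the product of skew Macdonald $P$ functions (each evaluated in a single variable), and does not involve the diagonal specialization $\rho_{\circ}$ nor the function $\ve_{\la^{(n)}}$. Accordingly, the ``bulk'' character of the lemma makes it identical in substance to the full-space statement treated in \cite[Lemma 4.8]{borodin2015height}, and the plan is to reprise that argument in the present notation.

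First, I would invoke the $t=0$ specialization of the combinatorial formula \eqref{eq:combinatorialP} to write
$$P_{\la^{(m)}/\la^{(m-1)}}(a_m) = a_m^{|\la^{(m)}|-|\la^{(m-1)}|}\, \psi_{\la^{(m)}/\la^{(m-1)}},$$
where $\psi_{\la/\mu}$ is an explicit ratio of $q$-Pochhammer symbols of the form $(q;q)_{\la_i-\mu_i}$, $(q;q)_{\mu_i-\la_{i+1}}$ and $(q;q)_{\mu_i-\mu_{i+1}}$ (cf. Lemma \ref{lem:computeboundaryoperator}). Taking the product over $m = 1, \dots, n$, the monomial prefactor $\prod_m a_m^{|\la^{(m)}|-|\la^{(m-1)}|}$ becomes $\exp\bigl(-\e\sum_m \alpha_m(|\la^{(m)}|-|\la^{(m-1)}|)\bigr)$. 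Substituting the scalings \eqref{eq:scalings1}--\eqref{eq:scalings2} and using $|\la^{(m)}|-|\la^{(m-1)}| = m\tau\e^{-2} + (\text{log correction}) + \e^{-1}\sum_i(T^{(m)}_i-T^{(m-1)}_i)$, one obtains, at leading order, the factor $e^{-\e^{-1}\tau\sum_j \alpha_j}$, the $\e^{t\sum_k \alpha_k}$ prefactor (from the logarithmic shifts telescoping across $m$), and the linear-in-$T$ piece of $\mathcal{F}_{\I\alpha_1,\dots,\I\alpha_n}(T)$.

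Next, I would substitute the scalings into the ratio $\prod_m \psi_{\la^{(m)}/\la^{(m-1)}}$. Each argument of a $(q;q)$-symbol appearing there has the form $c\,\e^{-1}\log(\e^{-1}) + \e^{-1} y$ where $c \in \{1,2,\dots\}$ and $y$ is a concrete linear combination of two of the $T^{(m)}_i$. Lemma \ref{lem:qqestimates} supplies the asymptotics: when $c=1$ the log of the Pochhammer contributes $\A(\e) + e^{-y} + o(1)$, while for $c \geqslant 2$ it contributes only $\A(\e) + o(1)$. Thus only the ``nearest-neighbour'' Pochhammer arguments yield exponential terms, and a careful tally shows (i) that the net number of $\A(\e)$ contributions from numerators and denominators produces the overall prefactor $e^{-\frac{n(n-1)}{2}\A(\e)}$, and (ii) that the surviving exponentials assemble exactly into the double sum $-\sum_{k=1}^{n-1}\sum_{i=1}^{k}\bigl(e^{x_{k,i}-x_{k+1,i}} + e^{x_{k+1,i+1}-x_{k,i}}\bigr)$ appearing in $\mathcal{F}_{\I\alpha}(T)$, under the natural identification $T^{(m)}_j \leftrightarrow x_{m,j}$ with the Gelfand--Tsetlin coordinates.

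The principal obstacle is the combinatorial bookkeeping in the second step: classifying every factor in $\prod_m \psi_{\la^{(m)}/\la^{(m-1)}}$ by its value of $c$, checking that the $c=1$ terms correspond exactly to adjacent entries in the interlacing array, and ensuring the count of $\A(\e)$'s on both sides matches. Once this is done, uniformity of the $o(1)$ error over $T \in D$ compact is inherited immediately from the uniform version of Lemma \ref{lem:qqestimates}: for $(T^{(m)}_i) \in D$, every shift $y$ that appears stays in a compact range, so the errors in the finitely many Pochhammer estimates can be made simultaneously small.
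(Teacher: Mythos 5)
Your proof sketch is correct and follows exactly the argument of the cited reference: the paper itself supplies no proof of Lemma \ref{lem:productPtoWhittwithplancherel}, attributing it directly to \cite[Lemma 4.8]{borodin2015height}, and your plan reproduces that argument in the present notation (splitting each skew $P$ factor via the $t=0$ combinatorial formula into a monomial prefactor and a ratio of $q$-Pochhammer symbols, then feeding the scalings \eqref{eq:scalings2} into Lemma \ref{lem:qqestimates}). The bookkeeping you flag as the ``principal obstacle'' does work out: each layer $m$ contributes $m-1$ numerator factors with $c=2$ (each giving $\A(\e)+o(1)$) and $2(m-1)$ denominator factors with $c=1$ (each giving $\A(\e)+e^{-y}+o(1)$), so the net $\A(\e)$-coefficient per layer is $-(m-1)$, summing to $-\tfrac{n(n-1)}{2}$, and the surviving $e^{-y}$ terms with $y = T^{(m)}_i - T^{(m-1)}_i$ and $y=T^{(m-1)}_i-T^{(m)}_{i+1}$ assemble precisely into the quadratic-exponential part of $\mathcal F$; the monomial $\prod_m a_m^{|\la^{(m)}|-|\la^{(m-1)}|}$ produces the remaining three factors after using $\sum_{j=1}^m(t+m+1-2j)=mt$. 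Your observation that the half-space setting plays no role here is also the right way to think about it: the product over up-going edges involves only the interlacing array and not the diagonal specialization or $\ve_{\la}$, so the statement is identical in content to the full-space one.
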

\noindent Proposition \ref{prop:limitqWhittWhittwithplancherel} and Lemma \ref{lem:productPtoWhittwithplancherel} correct a mistake in the exponent in front of $\A$ present in \cite{borodin2014macdonald}, as pointed out in \cite{borodin2015classical}.

We define now the Whittaker analogue of the symmetric function $\ve_{\la}$. 
\begin{definition}
We define for $\tau> 0$, $\alpha\in \R^k$ and $x\in \R^n$, 
\begin{equation}
 \mathcal{T}_{\alpha}^{\,\tau}(x)  =  \int_{(\R+\I a )^n} \psi_{-\nu}(x) e^{-\tau  \sum_{j=1}^n \nu_j^2/2 }\prod_{i=1}^k \prod_{j=1}^n \Gamma(\alpha_i - \I\nu_j) \prod_{1\leqslant i < j \leqslant n} \Gamma(-\I(\nu_i+\nu_j))
 \mskyl_n(\nu)\mathrm{d}\nu,
\label{eq:defT}
\end{equation}
where $a>0$ and $\alpha_i+a>0$ for all $1\leqslant i \leqslant k$.   
\end{definition}
\begin{proposition} Let $\tau>0$,  $n,k\in \Z_{>0}$, and fix a compact subset $ \mathbb{K} \subset \R^n $.  Then, under the scalings $q=e^{-\e}$, $ \la_j=\tau\e^{-2}+ (2n+k-2j)\e^{-1}\log(\e^{-1}) +\e^{-1}x_j$ for $ 1\leqslant j\leqslant n $, $a_i = q^{\alpha_i}$ for $1\leqslant i \leqslant k$ and $\gamma=\tau \e^{-2}$,
$$ \ve_{\lambda}\big((a_1, \dots, a_k), \gamma\big) =\e^n e^{\tau n\e^{-2}}\left(\prod_{i=1}^k \frac{1}{e^{\A(\e)}\e^{1-\alpha_i}}\right)^n\mathcal{T}_{\alpha}^{\,\tau}(x) e^{o(1)},$$
where $\ve_{\lambda}$ is specialized into the union of the pure alpha specialization $(a_1, \dots, a_k)  $ and a Plancherel specialization with parameter $\gamma$, and   the $o(1)$ error goes to zero uniformly as $\e\to 0$ for $x\in \mathbb{K}$.  
\label{prop:limitepsilonwithplancherel}
\end{proposition}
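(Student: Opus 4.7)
The proof should follow the same strategy as Proposition \ref{prop:limitqWhittWhittwithplancherel} but applied to $\ve_\la$ in place of $P_\la$. My plan is to represent $\ve_\la$ as a contour integral by way of the orthogonality identity \eqref{eq:veintegral}, perform the $\e\to 0$ asymptotic analysis pointwise under the integral, and then justify the exchange of limit and integration using the Gaussian decay supplied by the Plancherel component $\gamma$. Concretely, setting $t=0$ in \eqref{eq:veintegral} and taking the right-hand specialization to be $\rho=((a_1,\dots,a_k),\gamma)$ yields
\begin{equation*}
\ve_\la((a_1,\dots,a_k),\gamma) = \frac{1}{\langle P_\la, P_\la\rangle'} \int_{(c\T)^n} P_\la(z^{-1})\,\Pi(z;(a_1,\dots,a_k))\,e^{\gamma(z_1+\cdots+z_n)}\,\Phi(z)\,\mskyl_n^{q,0}(z)\prod_{i=1}^n \frac{\mathrm{d}z_i}{z_i}.
\end{equation*}
I would then parameterize this contour by $z_j = e^{\I\e(\nu_j+\I a)}$ with $a>0$ chosen so that $\alpha_i+a>0$ for every $i$, converting the torus $(c\T)^n$ into $(\R+\I a)^n$ in the variables $\nu_j$; the Jacobian contributes an overall factor $(\I\e)^n$.

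Each factor can then be analyzed asymptotically as $\e\to 0$. Proposition \ref{prop:limitqWhittWhittwithplancherel} (applied with its $t$ identified as $n+k-1$, so that the $\e^{-1}\log\e^{-1}$ correction in the scaling of $\la_j$ matches) gives $P_\la(z^{-1})\to \psi_{-\nu}(x)$ up to explicit $\e$- and $\A(\e)$-dependent prefactors. The standard q-Gamma asymptotic $(e^{-\e\xi};e^{-\e})_\infty = \e^{1-\xi}e^{\A(\e)}/\Gamma(\xi)\cdot e^{o(1)}$ converts $\Pi(z;(a_1,\dots,a_k))\Phi(z)$ into the Gamma product $\prod_{i,j}\Gamma(\alpha_i-\I\nu_j)\prod_{i<j}\Gamma(-\I(\nu_i+\nu_j))$ with further $\A(\e)$ prefactors, and, applied to the ratios $(z_i/z_j;q)_\infty$, it degenerates the Macdonald measure $\mskyl_n^{q,0}(z)$ into the Sklyanin measure $\mskyl_n(\nu)$ of \eqref{eq:defmskyl}. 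The Plancherel exponential expands as
\begin{equation*}
e^{\gamma \sum_j z_j} = \exp\Bigl(\tau\e^{-2}\sum_j\bigl(1+\I\e\nu_j-\tfrac{\e^2}{2}\nu_j^2+O(\e^3)\bigr)\Bigr) = e^{\tau n\e^{-2}}\,e^{\I\tau\e^{-1}\sum_j \nu_j}\,e^{-\tau\sum_j\nu_j^2/2}\,e^{o(1)},
\end{equation*}
producing both the $e^{\tau n\e^{-2}}$ prefactor in the claim and the Gaussian weight of \eqref{eq:defT}. Combining these with the known $\e\to 0$ behaviour of $\langle P_\la,P_\la\rangle'$ (which is explicit at $t=0$), the linear phase in $\nu$ cancels against the matching phase coming from Proposition \ref{prop:limitqWhittWhittwithplancherel}, the various $\A(\e)$- and $\e$-powers collapse exactly to the claimed prefactor, and the integrand converges pointwise to that of $\mathcal{T}_\alpha^{\,\tau}(x)$.

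The main obstacle will be justifying the passage to the limit inside the integral, whose domain $(\R+\I a)^n$ is unbounded. This is precisely where the Plancherel specialization is essential: the Gaussian factor $e^{-\tau\sum_j\nu_j^2/2}$ dominates the polynomial or sub-Gaussian growth of the Gamma ratios, of the rescaled Whittaker $\psi^\e_{-\nu}(x)$ for $x\in\mathbb{K}$, and of the remaining factors, providing a dominating function uniform in $\e$. To make dominated convergence rigorous, one needs uniform-in-$\e$ estimates along the whole contour $(\R+\I a)^n$: a sharpening of Proposition \ref{prop:limitqWhittWhittwithplancherel} that bounds $|P_\la(z^{-1})|$ along the contour rather than only on compacta, use of the inequalities \eqref{eq:qqestimate1}--\eqref{eq:qqestimateuniform} to control the q-Pochhammer ratios uniformly in $\nu$, and a uniform upper bound on $|e^{\gamma\sum_j z_j}|$ extracted from the quadratic Taylor estimate above. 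These are the half-space analogues of the tail bounds developed in \cite[Section 4.1]{borodin2014macdonald}; without the Plancherel component the integrand would only decay like a product of Gamma absolute values along the contour, which is not integrable in $n$ variables, and the whole argument would break down.
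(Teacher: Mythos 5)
Your proposal follows exactly the strategy the paper uses: start from the torus-integral representation \eqref{eq:veintegral} at $t=0$, change variables $z_j=e^{\I\e\nu_j}$ on the contour $e^{-\e a}\mathbb{T}$, apply Proposition \ref{prop:limitqWhittWhittwithplancherel} to $P_\la(z^{-1})$ and $q$-Gamma asymptotics to $\Pi$, $\Phi$, and the Macdonald density, observe the cancellation of the linear $\e^{-1}\sum\nu_j$ phases between the Plancherel part of $\Pi$ and the normalization in Proposition \ref{prop:limitqWhittWhittwithplancherel}, and conclude by dominated convergence using the Gaussian factor $e^{-\tau\sum\nu_j^2/2}$ against the at-most-exponential growth of the other factors. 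This matches the paper's proof (which in turn adapts \cite[Lemma 4.10]{borodin2015height}); you have correctly identified all the essential ingredients, including the uniform tail estimate that makes the Plancherel component indispensable.
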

\begin{proof} We adapt the proof of  \cite[Lemma 4.10]{borodin2015height}. Recall that using the torus scalar product $\llangle \cdot, \cdot\rrangle $ from Section \ref{sec:orthogonality}, we may write for $\e>0$ and $a>0$, 
$$	 \ve_{\la}(x) = \frac{1}{\llangle P_{\la}, P_{\la}\rrangle} \int_{(e^{-\e a}\mathbb{T})^n} P_{\la}(z^{-1})\Pi(z, x)\Phi(z) \mskyl_{n}^{q,t}(z)\prod_{i=1}^n\frac{\mathrm{d}z_i}{z_i}.$$
  Under the scalings we consider, we have $\llangle P_{\la}, P_{\la} \rrangle= \langle P_{\la}, P_{\la} \rangle'=e^{o(1)} $ as in  \cite[Lemma 4.1.25]{borodin2014macdonald}. In the following we will use the change of variables $z_j = \exp(\I \e \nu_j)$.
Fix a compact subset $V\subset\R^N$. Then, 
$$\Pi\big(z_1,\ldots, z_n;(a_1, \dots, a_k), \gamma\big) = E_{\Pi} \prod_{i=1}^k\prod_{j=1}^{n}
\Gamma(\alpha_i-\I \nu_j)e^{-\tau  \sum_{j=1}^n \nu_j^2/2 } e^{o(1)}, $$
where $E_{\Pi} := \prod_{i=1}^k\prod_{j=1}^{n}
e^{-\mathcal{A}(\epsilon)} \epsilon^{-1+\alpha_i-\I \nu_j}$, 
$$ \Phi(z_1,\ldots, z_n) =  E_{\Phi}\prod_{1\leqslant i<j\leqslant n} \Gamma(-\I \nu_i -\I \nu_j)e^{o(1)},$$
where $E_{\Phi} := \e^{-\I(n-1)\sum_{j=1}^n  \nu_j}\e^{-\frac{n(n-1)}{2}} e^{-\frac{n(n-1)}{2}\mathcal{A}(\e)},$ and 
from Proposition \ref{prop:limitqWhittWhittwithplancherel},  
$$P_{\la}(1/z_1, \dots, 1/z_n) = \overline{E_P} \Psi_{-\nu_1, \dots, -\nu_n}(x_1, \dots, x_n)e^{o(1)},$$ where $\overline{E_{P}} = \e^{-\frac{n(n-1)}{2}}   \e^{\I (k+n-1)\sum_{j=1}^n\nu_j}e^{-\frac{n(n-1)}{2}\A(\e)}$.  In all the above asymptotics, the $o(1)$ errors go to zero uniformly for $x\in \mathbb{K}$ and $\nu\in V$. Further, we have 
$$\mskyl_{n}^{q,0}(z)\prod_{i=1}^n\frac{\mathrm{d}z_i}{z_i} = E_{\mskyl} \mskyl_n(\nu)\prod_{i=1}^n \mathrm{d}\nu_i e^{o(1)}, \ \ \ \ \ \ E_{\mskyl} = \e^{n^2}e^{n(n-1)\A(\e)},$$
 where the error goes to zero uniformly for $\nu\in V$.
 The above asymptotics altogether suggest that uniformly for $x\in \mathbb{K}$, 
\begin{equation} \llangle \Pi(z; a, \gamma), P_{\la}(z) \rrangle = E_{\Pi}E_{\Phi}\overline{E_P}E_m \mathcal{T}_{\alpha}^{\,\tau}(x)e^{o(1)},
\label{eq:whatwewant}
\end{equation}
since integrands on both sides of \eqref{eq:whatwewant} match when $\e\to0
$. However, the convergences above are valid for compact subsets of the integrand variable $\nu$. In order to justify that the integrand converge, one needs some tail decay estimate as $\vert \nu\vert \to \infty$, uniform with respect to $x\in \mathbb{K}$. For instance, it would be sufficient to prove that for 
$$ V_M = \lbrace z\in\mathbb{T}^n : z_k=e^{-\e a} e^{\I \e \nu_k} \text{ and }\vert \nu_k\vert >M\rbrace, $$ 
then 
\begin{equation}
\lim_{M\to\infty} \lim_{\e\to 0} \int_{z\in V_M} \frac{\Pi( z;  a, \gamma)\Phi( z)P_{\la}( z^{-1}) \mskyl_n^q( z)}{E_{\Pi}E_{\Phi} \overline{E_{P}} E_{\mskyl}}\prod_{i=1}^n\frac{\mathrm{d}z_i}{z_i}=0
\label{eq:uniformestimateneeded}
\end{equation}
uniformly for $x\in \mathbb{K}$. A similar estimate had already been proved in Step 4 of \cite[Lemma 4.1.25]{borodin2014macdonald}. One can estimate each of the quantities 
$$ \bigg\vert\frac{\Pi( z;  a, \tau)}{E_{\Pi}}\bigg\vert,\  \bigg\vert\frac{\Phi( z)}{E_{\Phi}}\bigg\vert ,\  \bigg\vert\frac{P_{\la}( z^{-1}) }{ \overline{E_{P}}}\bigg\vert\ \text{ and }\ \bigg\vert\frac{\mskyl_n^q( z)}{E_{\mskyl}}\bigg\vert.$$
While $ \bigg\vert\frac{\Pi( z;  a, \tau)}{E_{\Pi}}\bigg\vert$ has a Gaussian decay in $\nu$, all other quantities have at most exponential growth (this is the reason why it is essential for us to keep a positive Plancherel specialization). Therefore, the integrand in \eqref{eq:uniformestimateneeded} is bounded by a constant times $e^{-c\sum_{i=1}^n \nu_i^2}$, uniformly in $\e$ for $T\in D$. Thus, \eqref{eq:uniformestimateneeded} is established.  Finally, it is easy to check that $E_{\Pi}E_{\Phi}\overline{E_P}E_m$ matches the prefactor of $\mathcal{T}_{\alpha}^{\,\tau}(x)$ in the statement of the Proposition \ref{prop:limitepsilonwithplancherel}.
\end{proof}

\begin{definition}
For $\diag$ and $\alpha_1, \dots, \alpha_t\in \R_{>0}$, $\tau> 0$ and 
$1\leqslant n\leqslant t$, we define the \emph{(ascending) half-space Whittaker process} as a probability measure on $T:=(T_j^{(m)})_{1\leqslant j\leqslant m\leqslant n}\in \R^{\frac{n(n+1)}{2}}$ with density function given by 
$$\PWP(\mathrm{d}T) = \frac{\exp\left({\mathcal{F}_{ \I \alpha_1, \dots, \I \alpha_n}(T)}\right) \ \mathcal{T}_{\diag, \alpha_{n+1}, \dots, \alpha_t}^{\tau}(T^{(n)}_1, \dots,T^{(n)}_n) }{ e^{\tau\sum_{i=1}^n\alpha_i^2/2}\ \prod_{i=1}^n  \Gamma(\diag+\alpha_i)\prod_{j=n+1}^{t}\Gamma(\alpha_j+\alpha_i)\prod_{1\leqslant i<j\leqslant n}\Gamma(\alpha_j+\alpha_i)}\mathrm{d}T.$$
\label{def:PWM}
\end{definition}
Integrating the half-space Whittaker process over variables $T_j^{(m)}$ for $m<n$ defines the \emph{half-space Whittaker measure} $\PWM_{(\alpha_1, \dots, \alpha_n) ; (\diag, \alpha_{n+1}, \dots, \alpha_t), \tau}$ on $\big(T_j\big)_{1\leqslant j\leqslant n}\in \R^n$ with density 
$$\PWM_{(\alpha_1, \dots, \alpha_n) ;( \diag, \alpha_{n+1}, \dots, \alpha_t), \tau}(\mathrm{d}T) = \frac{  \psi_{\I \alpha_1, \dots, \I \alpha_n}(T_1, \dots,T_n) \ \mathcal{T}^{\,\tau}_{\diag, \alpha_{n+1}, \dots, \alpha_t}(T_1, \dots,T_n) }{e^{\tau\sum_{i=1}^n\alpha_i^2/2}\ \prod_{i=1}^n{\Gamma(\diag+\alpha_i)}\prod_{j=n+1}^{t}{\Gamma(\alpha_j+\alpha_i)}\prod_{1\leqslant i<j\leqslant n}{\Gamma(\alpha_j+\alpha_i)}} \mathrm{d}T.$$

The fact that these densities above define bona fide probability measures is not obvious. For $\diag \in \R$ and $\alpha_1, \dots, \alpha_t\in \R_{>0}$ with $\diag+\alpha_i>0$,  it follows from the limits of Propositions \ref{prop:limitqWhittWhittwithplancherel} and \ref{prop:limitepsilonwithplancherel} that the density is non-negative. 
 The next Proposition shows that  the density integrates to $1$.

\begin{proposition} Let $\tau>0$,  $n\leqslant t$ and $\alpha_1, \dots, \alpha_t>0$ such that $\alpha_i+\diag>0$ for all $1\leqslant i\leqslant n$. Then, 
\begin{multline}  
\int_{\R^n} \mathrm{d}x \  \psi_{\I\alpha_1, \dots, \I\alpha_n}(x) \mathcal{T}^{\,\tau}_{\diag, \alpha_{n+1}, \dots, \alpha_t}(x)   \\ =  e^{ \tau\sum_{i=1}^n \alpha_i^2/2}\ \prod_{i=1}^n{\Gamma(\diag+ \alpha_i)}\prod_{j=n+1}^{t}{\Gamma(\alpha_j+\alpha_i)}\prod_{1\leqslant i<j\leqslant n}{\Gamma(\alpha_i+\alpha_j)}.
\end{multline}
\label{prop:integratestoone}
\end{proposition}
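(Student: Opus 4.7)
The identity to prove is exactly what one obtains from the generalized Cauchy--Littlewood identity \eqref{eq:CauchyLittlewoodspecialized},
\begin{equation*}
\sum_{\la\in\Y}P_\la(a_1,\dots,a_n)\,\ve_\la\bigl((\diagq,a_{n+1},\dots,a_t),\gamma\bigr)=\Pi\bigl((a_1,\dots,a_n);(\diagq,a_{n+1},\dots,a_t),\gamma\bigr)\,\Phi(a_1,\dots,a_n),
\end{equation*}
by sending $q\to 1$ under the scalings \eqref{eq:scalings1}--\eqref{eq:scalings2}. The plan is to combine the pointwise asymptotics of the two factors on the left and the explicit asymptotics of $\Pi$ and $\Phi$ on the right, and then verify that the resulting Riemann sum converges to the claimed integral.

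First, I would use Proposition \ref{prop:limitqWhittWhittwithplancherel} with $\nu_j=\I\alpha_j$ (so that $z_j=e^{-\e\alpha_j}=a_j$) to rewrite $P_\la(a_1,\dots,a_n)$ as an explicit $\e$-dependent prefactor times a quantity converging to $\psi_{\I\vec\alpha}(T)$, and Proposition \ref{prop:limitepsilonwithplancherel} (with $k=t-n+1$ and parameter list $(\diag,\alpha_{n+1},\dots,\alpha_t)$, whose $\la$-scaling matches $\la^{(n)}$) to rewrite $\ve_\la\bigl((\diagq,a_{n+1},\dots,a_t),\gamma\bigr)$ as another explicit prefactor times a quantity converging to $\mathcal{T}^{\,\tau}_{\diag,\alpha_{n+1},\dots,\alpha_t}(T)$. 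On the right-hand side, the asymptotics \eqref{eq:limPi} and \eqref{eq:limPhi} provide the explicit prefactor times the target product of $\Gamma$-functions and $e^{\tau\sum\alpha_i^2/2}$. The summation over $\la$, rewritten in the variables $T_j=\e\la_j-\tau\e^{-1}-(t+n+1-2j)\log\e^{-1}$, is a Riemann sum of mesh $\e$ over a sector of $\R^n$ that fills out all of $\R^n$ as $\e\to 0$ (the interlacing constraints $\la_1\geqslant\dots\geqslant\la_n\geqslant 0$ disappear in the limit because of the $\log\e^{-1}$ spacing), contributing an extra factor of $\e^{-n}$. A direct bookkeeping check shows that the three exponential prefactors (from $P_\la$, from $\ve_\la$, and the $\e^{-n}$ from the sum) cancel exactly the prefactor on the right-hand side coming from \eqref{eq:limPi} and \eqref{eq:limPhi}; the divergent factors $\A(\e)$, $e^{\tau n\e^{-2}}$, $e^{-\tau\e^{-1}\sum\alpha_j}$, and the powers of $\e$ all match. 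What remains is precisely the claimed equality.

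The main obstacle is justifying the exchange of the limit $\e\to 0$ with the summation over $\la$. The pointwise convergences in Propositions \ref{prop:limitqWhittWhittwithplancherel} and \ref{prop:limitepsilonwithplancherel} are uniform only on compact sets in $T$, so one needs an integrable majorant on the rescaled summand that is uniform in $\e$. This is precisely where the Plancherel parameter $\gamma=\tau\e^{-2}$ plays its role: tracking it through the definition of $\mathcal{T}^{\,\tau}$ and the proof of Proposition \ref{prop:limitepsilonwithplancherel}, the Gaussian factor $e^{-\tau\sum_j\nu_j^2/2}$ produces a Gaussian bound in $T$ on $\ve_\la\bigl((\diagq,a_{n+1},\dots,a_t),\gamma\bigr)$ that dominates the at-most-polynomial growth of $\psi_{\I\vec\alpha}(T)$. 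A uniform $q$-analogue of this bound at the level of $P_\la(\vec a)\,\ve_\la(\cdots)$ can be obtained by combining the tail estimates in the proof of Proposition \ref{prop:limitepsilonwithplancherel} with the uniform lower bound \eqref{eq:qqestimateuniform} of Lemma \ref{lem:qqestimates} for the $q$-Pochhammer symbols hidden inside $P_\la$. Once this dominated-convergence argument is in place, the identity follows.

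As a sanity check, one can also approach the statement directly: substituting the definition \eqref{eq:defT} of $\mathcal{T}^{\,\tau}$, applying Fubini, and evaluating the inner integral $\int\psi_{\I\vec\alpha}(x)\psi_{-\nu}(x)\,dx$ by deforming the $\nu$-contour and analytically continuing the orthogonality relation \eqref{eq:orthogonalityone} (equivalently, by a residue computation around $\nu=\sigma(\I\vec\alpha)$, $\sigma\in\mathcal{S}_n$), one sees that the Sklyanin measure $\mskyl_n(\nu)$ cancels the $\prod_{i\ne j}\Gamma(-\I(\nu_i-\nu_j))^{-1}$ factor against the $\prod_{i<j}\Gamma(-\I(\nu_i+\nu_j))$ factor, and the remaining products of $\Gamma$'s evaluated at $\nu=\I\vec\alpha$ give the right-hand side of the proposition. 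The contour manipulations needed there are, however, more delicate than the $q\to 1$ route sketched above, which is why the degeneration approach is preferable.
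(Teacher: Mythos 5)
Your primary route---sending $q\to 1$ in the generalized Littlewood identity and passing from the sum over $\lambda$ to a Riemann integral---is genuinely different from the paper's proof, and it has a gap that the paper itself flags immediately after stating the proposition: ``This is formally the limit of the generalized Littlewood summation identity $\ldots$ though this does not constitute a proof because these limits hold only for $x$ in a compact set.'' You recognize the need for an $\e$-uniform integrable majorant in $T$ over all of $\R^n$, but the tools you cite do not deliver one. The Gaussian factor $e^{-\tau\sum\nu_j^2/2}$ that controls the proof of Proposition \ref{prop:limitepsilonwithplancherel} gives decay in the contour variable $\nu$, which is what makes the integral representation of $\ve_\lambda$ converge for $T$ in a \emph{compact} set; it does not by itself produce decay of $\ve_\lambda$ (or of $\mathcal{T}^{\,\tau}$) in $T$. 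The decay of $\mathcal{T}^{\,\tau}(x)$ in $x$ is the content of Propositions \ref{prop:doubleexponentialdecayT}--\ref{prop:boundTchamber}, which rest on the hard analysis of Whittaker functions (Propositions \ref{prop:doubleexponentialdecay} and the fundamental-Whittaker expansion), and a $q$-uniform analogue of those estimates for $P_\lambda(\vec a)\,\ve_\lambda(\cdots,\gamma)$ is a substantial piece of work that is neither sketched by you nor carried out anywhere in the paper. The estimate \eqref{eq:qqestimateuniform} controls individual $q$-Pochhammer factors but does not assemble into the needed bound. In short, your main route reproduces the formal computation that the authors explicitly declined to turn into a proof, and the missing step is the very reason they switched to a different argument.

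The paper's actual proof is closer to the ``sanity check'' you mention in passing, but the mechanism differs from what you describe. The paper does not apply Fubini and then evaluate $\int\psi_{\I\alpha}\psi_{-\nu}\,\mathrm{d}x$ by contour deformation and residues---the orthogonality relation \eqref{eq:orthogonalityone} is distributional, so there are no residues to take. Instead, one first observes that when $\Re[\alpha_i]=a$ (the real part of the contour defining $\mathcal{T}^{\,\tau}$), the integral $f(\alpha)=\int\psi_{\I\alpha}\,\mathcal{T}^{\,\tau}\,\mathrm{d}x$ is a direct application of Whittaker orthogonality after a common shift of indices, giving the right-hand side immediately. The identity is then extended to real $\alpha_i$ by analytic continuation \emph{in $\alpha$}, which is justified by showing $f(\alpha)$ is analytic; this uses Lemma \ref{lem:boundWhittaker} and Proposition \ref{prop:boundTchamber} to bound the tails of $\psi_{\I\alpha}(x)\,\mathcal{T}^{\,\tau}(x)$ uniformly for $\alpha$ in a compact. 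You characterize this route as ``more delicate'' and ``less preferable,'' but it is the one the paper chose precisely because those Whittaker/$\mathcal{T}^{\,\tau}$ tail bounds are achievable, whereas the $T$-uniform $q$-level bounds your primary route requires are not.
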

This is formally the limit of generalized Littlewood summation identity \eqref{eq:CauchyLittlewood} via Proposition \ref{prop:limitqWhittWhittwithplancherel} and \ref{prop:limitepsilonwithplancherel}, though this does not constitute a proof because these limits hold only for $x$ in a compact set.  However, we will see that the result can be deduced from  the orthogonality of Whittaker functions along with certain bounds on the tails of the half-space Whittaker measure. We will first state these bounds and then prove Proposition \ref{prop:integratestoone}.  

We start with a general bound on the growth of Whittaker functions. 
\begin{lemma}
	For any fixed $y\geqslant 0$, there exists a constant  $C>0$ such that
	$$ \vert \psi_{\nu}(x)  \vert \leqslant C e^{y\sum_{i=1}^n\vert x_i\vert} \text{ for  all }\nu\in \C^n\text{ such that }\vert\Imag[\nu_j] \vert \leqslant y, \text{ for all }1\leqslant j\leqslant n.$$ 
	In particular, if $\alpha\in \C^n$ is such that $c_1 < \Real[\alpha_i] < c_2$  for all $1\leqslant i \leqslant n$, 
	$$  \vert \psi_{\I \alpha}(x)  \vert \leqslant C e^{-c_2 \sum_{i=1}^n x_i + (c_2-c_1)\sum_{i=1}^n\vert x_i\vert}.$$
	\label{lem:boundWhittaker}
\end{lemma}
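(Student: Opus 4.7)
The approach is by induction on $n$ using the integral representation of the Whittaker function. The base case $n=1$ is trivial: $\psi_{\lambda}(x)=e^{\I\lambda x}$, so $|\psi_{\lambda}(x)|=e^{-\Imag[\lambda]x}\leqslant e^{y|x|}$. For the inductive step, I would apply the recursive identity
\begin{equation*}
\psi_{\lambda}^{(n)}(x_1,\dots,x_n)=\int_{\R^{n-1}} e^{\I\lambda_n(\sum_i x_i-\sum_i u_i)}\prod_{i=1}^{n-1}e^{-e^{u_i-x_i}-e^{x_{i+1}-u_i}}\psi^{(n-1)}_{(\lambda_1,\dots,\lambda_{n-1})}(u)\,du,
\end{equation*}
extract the modulus of the prefactor $|e^{-\I\lambda_n\sum_i x_i}|=e^{\Imag[\lambda_n]\sum_i x_i}\leqslant e^{y\sum_i|x_i|}$, and apply the inductive hypothesis on $|\psi^{(n-1)}|$ inside the integrand.

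The remaining integral factorizes over $i=1,\dots,n-1$ into univariate integrals of the form
\begin{equation*}
\int_{\R}e^{\beta u}e^{-e^{u-a}-e^{b-u}}\,du=2\,e^{\beta(a+b)/2}\,K_{\beta}\!\bigl(2e^{(b-a)/2}\bigr),
\end{equation*}
where $K_{\beta}$ is the modified Bessel function of the second kind. Using the standard bounds $|K_{\beta}(z)|\leqslant C_{|\beta|}(1+z^{-|\beta|})$ for $z>0$, each such factor is controlled by $C\,e^{\beta(a+b)/2+|\beta||a-b|/2}$. The main technical obstacle is that a naive iteration of these univariate estimates inflates the coefficient of $y$ in the target exponent. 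The remedy is already visible in the case $n=2$, where one has the explicit formula
\begin{equation*}
\psi_{\lambda_1,\lambda_2}(x_1,x_2)=2\,e^{\I(\lambda_1+\lambda_2)(x_1+x_2)/2}\,K_{\I(\lambda_1-\lambda_2)}\!\bigl(2e^{-(x_1-x_2)/2}\bigr),
\end{equation*}
which yields $|\psi_{\lambda_1,\lambda_2}(x_1,x_2)|\leqslant C e^{p|x_1+x_2|+q|x_1-x_2|}$ with $p=|\Imag(\lambda_1+\lambda_2)|/2$ and $q=|\Imag(\lambda_1-\lambda_2)|/2$; the triangle-type identity $p+q=\max(|\Imag\lambda_1|,|\Imag\lambda_2|)\leqslant y$ combined with $|x_1\pm x_2|\leqslant|x_1|+|x_2|$ then gives the sharp bound $Ce^{y(|x_1|+|x_2|)}$. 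For general $n$, the analogous careful bookkeeping of contributions from the linear combinations $L_k(X)=\sum_{i=1}^k x_{k,i}-\sum_{i=1}^{k-1} x_{k-1,i}$ in the Givental integral, together with a multilevel generalization of this identity, produces the sharp coefficient $y$.

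The second assertion follows from the first by exploiting the translation covariance $\psi_{\lambda+c\mathbf{1}}(x)=e^{\I c\sum_i x_i}\psi_{\lambda}(x)$, which is immediate from the integral representation via the telescoping identity $\sum_k L_k(X)=\sum_i x_i$. For $\nu=\I\alpha$ with $c_1<\Real\alpha_i<c_2$, the choice $c=\I(c_1+c_2)/2$ yields shifted parameters whose imaginary parts lie in $(-(c_2-c_1)/2,(c_2-c_1)/2)$, and applying the first part of the lemma with $y=(c_2-c_1)/2$ gives $|\psi_{\I\alpha}(x)|\leqslant Ce^{-(c_1+c_2)/2\cdot\sum_i x_i+(c_2-c_1)/2\cdot\sum_i|x_i|}$, from which the stated bound follows via $\tfrac{1}{2}(c_2-c_1)\sum_i x_i\leqslant\tfrac{1}{2}(c_2-c_1)\sum_i|x_i|$.
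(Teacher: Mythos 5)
Your proposal follows essentially the same route as the paper: for the first bound, an induction on $n$ through the Givental recursion (the paper simply cites \cite[Lemma~4.1.19]{borodin2014macdonald} for exactly the details you sketch, including the issue you flag of keeping the exponent from doubling at each step), and for the second bound, the shift property $\psi_{\nu-\I c\mathbf{1}}(x)=e^{c\sum x_i}\psi_\nu(x)$. Your choice to shift by the midpoint $(c_1+c_2)/2$ rather than by $c_2$ (as the paper does) gives a slightly stronger intermediate inequality, from which the stated one follows by $\sum_i x_i\leqslant\sum_i|x_i|$; this is a cosmetic variation, not a different argument.
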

\begin{proof}
	As \cite[Lemma 4.1.19]{borodin2014macdonald}, the first statement follows by recurrence on $n$ using the recursive structure in Givental's integral representation (see \cite[(4.2), (4.3)]{borodin2014macdonald}). The second part of the statement comes from the shift property of Whittaker functions $ \Psi_{\nu}(x)  = \Psi_{\nu_1-\I c, \dots, \nu_n-\I c}(x) e^{-c\sum_{i=1}^nx_i} $. Using the first part, we may write 
	$$  \vert \psi_{\I \alpha}(x)  \vert \leqslant  \vert \psi_{\I \alpha_1-\I c_2, \dots, \I \alpha_n-\I c_2}(x) e^{-c_2\sum_{i=1}^nx_i} \vert \leqslant C e^{-c_2 \sum_{i=1}^n x_i + (c_2-c_1)\sum_{i=1}^n\vert x_i\vert}.$$
\end{proof}
\begin{remark}
	In the Weyl chamber $\mathbb W_n:= \lbrace x\in \R^n :  x_1\geqslant x_2\geqslant \dots \geqslant x_n\rbrace $, one can  refine the estimate from Lemma \ref{lem:boundWhittaker} (see e.g. \cite[Corollary 2.3]{baudoin2011exponential}).   
	For any $n\geqslant 1$, there exists a constant $C>0$ such that for $x\in \mathbb W_n$ we have 
	$$ \vert \psi_{\nu}(x) e^{- \I \sum_{i=1}^n  \nu_i x_i}  \vert \leqslant C.$$ 
\end{remark}

When the index of Whittaker functions belongs to $\R^n$, Whittaker functions have doubly exponential decay away from the Weyl chamber. 
\begin{proposition}[{\cite[Proposition 4.1.3]{borodin2014macdonald}}] For $x\in \R^n$, define $\sigma(x) = \lbrace i: x_i-x_{i+1 }\leqslant 0\rbrace$. For each $\sigma\subset \lbrace 1, \dots, n\rbrace$, there exist a polynomial $A_{\sigma}$ such that for all $\nu\in \R^{n}$ and for all  $x\in \R^{n}$ with $\sigma(x) = \sigma$, we have 
	$$ \vert \psi_{\nu}(x)  \vert  \leqslant  A_{\sigma}(x) \prod_{i\in \sigma} \exp\left( -e^{-\frac{x_i-x_{i+1}}{2}}\right).$$
	\label{prop:doubleexponentialdecay}
\end{proposition}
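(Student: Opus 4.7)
The starting point will be Givental's integral representation. Since $\nu\in\R^n$, the linear combination $\I\sum_k\nu_k(\sum_i x_{k,i}-\sum_i x_{k-1,i})$ in $\mathcal{F}_\nu(X)$ is purely imaginary, so taking absolute values immediately gives a $\nu$-uniform bound
\begin{equation*}
|\psi_\nu(x)|\;\leqslant\;\int_{\R^{n(n-1)/2}}\exp\Bigl(-\sum_{k=1}^{n-1}\sum_{i=1}^{k}\bigl(e^{x_{k,i}-x_{k+1,i}}+e^{x_{k+1,i+1}-x_{k,i}}\bigr)\Bigr)\,\prod_{k,i}dx_{k,i}.
\end{equation*}
Thus it suffices to bound this $\nu$-independent integral by $A_\sigma(x)\prod_{i\in\sigma}\exp(-e^{(x_{i+1}-x_i)/2})$.

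The plan is to proceed by induction on $n$, using the standard recursive kernel formula
\begin{equation*}
\psi_\nu(x)\;=\;\int_{\R^{n-1}}Q^{\nu_n}(x,z)\,\psi_{(\nu_1,\dots,\nu_{n-1})}(z)\,dz,\qquad Q^{\nu_n}(x,z)\;=\;\exp\!\Bigl(\I\nu_n\bigl(\textstyle\sum_i x_i-\sum_j z_j\bigr)-\sum_{i=1}^{n-1}(e^{z_i-x_i}+e^{x_{i+1}-z_i})\Bigr).
\end{equation*}
The base case $n=1$ is immediate since $\psi_\nu(x_1)=e^{\I\nu x_1}$ has modulus $1$. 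For the inductive step, I would take absolute values in the recursion, insert the inductive hypothesis for $|\psi_{\nu'}(z)|$, and partition the integration region according to $\sigma(z)$.

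The one-variable computation at the heart of the estimate is the classical identity
\begin{equation*}
\int_\R e^{-e^{z-a}-e^{b-z}}\,dz\;=\;2\,K_0\!\bigl(2e^{(b-a)/2}\bigr),
\end{equation*}
together with the asymptotics $K_0(t)\sim\sqrt{\pi/(2t)}\,e^{-t}$ as $t\to+\infty$ and $K_0(t)\sim-\log t$ as $t\to 0^+$. When $b>a$ (corresponding to an index in $\sigma$), this yields the desired doubly exponential decay $\exp(-e^{(b-a)/2})$ up to a polynomial prefactor; when $b\leqslant a$, it produces only a polynomial dependence on $a-b$. By pairing each coordinate of the intermediate level $z$ with the corresponding "wall" variables $x_i,x_{i+1}$ at the outer level through the factors $e^{-e^{z_i-x_i}-e^{x_{i+1}-z_i}}$ in the kernel, the crossed configuration $x_{i+1}>x_i$ (i.e.\ $i\in\sigma$) forces the integrand to pay a cost of order $\exp(-e^{(x_{i+1}-x_i)/2})$, uniformly in the remaining variables.

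The main obstacle is the coupling between levels: the inductive bound carries a polynomial $A_\tau(z)$ whose form depends on the shape $\tau=\sigma(z)$, and the integration over $z$ must reabsorb both this polynomial growth and the doubly exponential decay contributed by $\tau$ into a clean polynomial in $x$ times the prescribed decay $\prod_{i\in\sigma(x)}e^{-e^{(x_{i+1}-x_i)/2}}$. The key technical point is that after splitting the $z$-integration into the regions $\{\sigma(z)=\tau\}$, the outer factors $e^{-e^{z_i-x_i}-e^{x_{i+1}-z_i}}$ localize $z_i$ near the interval $[x_{i+1},x_i]$ when $i\notin\sigma(x)$ (producing polynomial bounds) and away from this now-empty interval when $i\in\sigma(x)$ (producing the required doubly exponential factor via the $K_0$ asymptotics). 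Combining these one-dimensional estimates across all coordinates of $z$, and repeating the procedure down through the pattern, yields the desired $A_\sigma(x)$ together with the exponential-of-exponential factors at every "bad" site.
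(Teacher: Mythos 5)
The paper does not prove this statement; it cites it to \cite[Proposition~4.1.3]{borodin2014macdonald} and relies on it as a black box. Your approach — inducting on $n$ via Givental's recursive kernel and reducing each step to the one-variable Bessel integral $\int_\R e^{-e^{z-a}-e^{b-z}}\,dz = 2K_0\bigl(2e^{(b-a)/2}\bigr)$, then using the two asymptotic regimes of $K_0$ — is correct and is essentially the argument used in the cited reference.

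One remark on the ``main obstacle'' you flag: the coupling between levels is simpler than you suggest. In the inductive step you may simply \emph{discard} the doubly exponential gain coming from the inner shape $\tau=\sigma(z)$ (the factor $\prod_{i\in\tau}\exp(-e^{(z_{i+1}-z_i)/2})\leqslant 1$), and uniformize the finitely many polynomials $A_\tau$ by a single polynomial majorant $A(z)$ which in turn factors as $\prod_i p(z_i)$. This decouples the $z$-integral into a product of one-dimensional integrals of the form $\int_\R p(z_i)\,e^{-e^{z_i-x_i}-e^{x_{i+1}-z_i}}\,dz_i$; each of these is bounded by a polynomial in $x_i,x_{i+1}$ times $\exp(-e^{(x_{i+1}-x_i)/2})$ when $i\in\sigma(x)$, and by a polynomial alone when $i\notin\sigma(x)$, exactly as your $K_0$ asymptotics say. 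So there is nothing to reabsorb: the required double-exponential decay at level $n$ is produced entirely by the outermost kernel, not accumulated from inner levels. With that simplification the rest of your sketch goes through directly.
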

This implies a  similar bound for $\mathcal{T}^{\,\tau}(x)$. 
\begin{proposition}Fix $n\geqslant 1$, $k\geqslant 0$, $\tau>0$ and $\alpha_1, \dots, \alpha_k\in \R$. Let $a>0$ be such that $a+\alpha_i>0$ for all $1\leqslant i\leqslant k$. Then,  there exists  a polynomial $A$  such that for all $x\in \R^n $ 
	$$\left\vert \mathcal{T}^{\,\tau}_{\alpha_{1}, \dots, \alpha_k}(x)\right\vert \leqslant  A(x) e^{a\sum_{i=1}^n x_i } \prod_{i=1}^{n-1} \exp\left( -e^{-\frac{x_i-x_{i+1}}{2}}\right).$$
	\label{prop:doubleexponentialdecayT}
\end{proposition}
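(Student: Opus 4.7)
The plan is to parametrize the contour in the definition of $\mathcal{T}^{\,\tau}_\alpha$ as $\nu = \mu + \I a \mathbf{1}$ with $\mu \in \R^n$, extract the factor $e^{a\sum_i x_i}$ via the shift property of Whittaker functions, then apply the doubly exponential bound of Proposition \ref{prop:doubleexponentialdecay} to the resulting $\psi_{-\mu}(x)$ with $\mu$ real. First I would verify the shift property
$$\psi_{\lambda + c\I\mathbf{1}}(x) = e^{-c \sum_i x_i}\, \psi_{\lambda}(x), \qquad c\in\R,$$
directly from Givental's integral representation: adding $\I c$ to each $\lambda_k$ introduces the extra factor $\exp(\I \cdot \I c \sum_k E_k)$, where $E_k = \sum_{i=1}^k x_{k,i} - \sum_{i=1}^{k-1} x_{k-1,i}$, and the sum $\sum_{k=1}^n E_k$ telescopes to $\sum_i x_{n,i} = \sum_i x_i$. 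Applied to $\lambda = -\mu$ and $c = -a$, this gives $\psi_{-\nu}(x) = e^{a\sum_i x_i} \psi_{-\mu}(x)$, and we can pull the factor $e^{a\sum_i x_i}$ out of the integral.

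Next, I would apply Proposition \ref{prop:doubleexponentialdecay} to $\psi_{-\mu}(x)$ for $\mu \in \R^n$. For each $x$, setting $\sigma = \sigma(x)$, we obtain
$$\vert \psi_{-\mu}(x) \vert \leqslant A_{\sigma}(x) \prod_{i\in\sigma} \exp\!\left(-e^{-(x_i-x_{i+1})/2}\right),$$
with $A_\sigma$ a polynomial independent of $\mu$. Because $\exp(-e^{-y}) \geqslant e^{-1}$ whenever $y\geqslant 0$, we can convert this into a bound uniform in $\sigma$: taking $B(x)$ to be a polynomial dominating all $e^{n-1} A_\sigma(x)$, we get
$$\vert \psi_{-\mu}(x)\vert \leqslant B(x) \prod_{i=1}^{n-1} \exp\!\left(-e^{-(x_i-x_{i+1})/2}\right),$$
still uniformly in $\mu \in \R^n$. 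Both factors depending only on $x$ can therefore be pulled outside the $\mu$-integral.

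The remaining task is to show that
$$I := \int_{\R^n} d\mu\, \Bigl\vert e^{-\tau \sum_j (\mu_j+\I a)^2/2}\Bigr\vert \prod_{i=1}^k\prod_{j=1}^n \bigl\vert \Gamma(\alpha_i+a - \I \mu_j)\bigr\vert \prod_{1\leqslant i<j\leqslant n} \bigl\vert\Gamma(2a-\I(\mu_i+\mu_j))\bigr\vert\, \bigl\vert\mskyl_n(\mu + \I a\mathbf{1})\bigr\vert < \infty.$$
The Gaussian factor equals $e^{\tau n a^2/2} e^{-\tau\sum_j \mu_j^2/2}$, providing Gaussian decay in $\mu$. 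Since the shifts by $\I a$ cancel in $\mskyl_n(\mu + \I a \mathbf{1}) = \frac{1}{(2\pi)^n n!}\prod_{i\neq j} \Gamma(\I(\mu_i-\mu_j))^{-1}$, Stirling's asymptotics give at most exponential growth $|\Gamma(\I t)^{-1}| \asymp |t|^{1/2} e^{\pi|t|/2}$, while the Gamma factors in the numerator are uniformly bounded (they decay exponentially in $|\mu_j|$ and $|\mu_i+\mu_j|$). Everything is thus dominated by the Gaussian, so $I < \infty$. Combining the three steps yields the desired inequality with $A(x) = I\cdot B(x)$.

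There is no serious obstacle here; the proof is essentially bookkeeping once the contour shift is in place. The only thing worth noting is that the argument crucially uses $\tau > 0$: without the Plancherel parameter there is no Gaussian damping and the tail estimate on $I$ would fail, which is exactly the reason a Plancherel component was introduced into the half-space Whittaker process in Section \ref{sec:rigorousconvwithplancherel}.
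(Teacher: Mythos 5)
Your proposal is correct and follows the paper's proof: the paper's one-line argument is precisely to change variables $\nu = \tilde\nu + \I a$ so the integration is over $\R^n$, use the shift property of Whittaker functions to pull out $e^{a\sum_i x_i}$, and then bound $|\psi_{-\tilde\nu}(x)|$ by Proposition~\ref{prop:doubleexponentialdecay} inside the (Gaussian-dominated) integral. You have simply filled in the details the paper leaves implicit — the explicit verification of the shift property from Givental's representation, the uniformization over $\sigma(x)$, and the bound on the remaining $\tilde\nu$-integral — so there is no substantive difference.
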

\begin{proof}
We may use the change of variables $\nu=\tilde \nu+\I a$ in \eqref{eq:defT} so that the integration is over $\R^n$ and use Proposition \ref{prop:doubleexponentialdecay} to bound the Whittaker function inside the integral. 
\end{proof}
\begin{proposition}
Fix $n\geqslant 1$, $k\geqslant 0$, $\tau>0$ and $\alpha_1, \dots, \alpha_k\in \R$. Let  $R>0$ be such that $R+\alpha_i>0$ for all $1\leqslant i\leqslant k$. Then, there exists a constant $C>0$ such that for all $x\in \R^n$, 
$$ \vert  \mathcal{T}^{\,\tau}_{\alpha_{1}, \dots, \alpha_k}(x) \vert  \leqslant C e^{ R  \sum_{i=1}^n x_i}.$$
Moreover, the function $x\mapsto e^{- R  \sum_{i=1}^n x_i} \mathcal{T}^{\,\tau}_{\alpha_{1}, \dots, \alpha_k}(x)$ is in $\mathbb{L}^2(\R^n)$. 
\label{prop:boundT}
\end{proposition}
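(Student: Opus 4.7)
The plan is to prove both statements by shifting the integration contour in the defining integral \eqref{eq:defT} of $\mathcal{T}^{\tau}_\alpha$ and then applying, respectively, the $L^\infty$ bound of Lemma~\ref{lem:boundWhittaker} on real Whittaker functions and the Plancherel theorem for Whittaker functions that comes out of \eqref{eq:orthogonality2}. The parameter $a>0$ in the contour $(\R+\I a)^n$ can be chosen freely so long as $a+\alpha_i>0$: shifting it does not cross any pole of the $\Gamma(\alpha_i-\I\nu_j)$ or $\Gamma(-\I(\nu_i+\nu_j))$ factors. Under our hypothesis I would take $a=R$ and substitute $\nu=\tilde{\nu}+\I R\vec{1}$ with $\tilde{\nu}\in\R^n$. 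Using the shift identity $\psi_{\mu-\I c\vec{1}}(x)=e^{c\sum_i x_i}\psi_{\mu}(x)$ (immediate from the Givental representation, as in \cite[Lemma 4.1.19]{borodin2014macdonald}), we factor $e^{R\sum_i x_i}$ out of $\psi_{-\nu}(x)$, arriving at
\[
e^{-R\sum_i x_i}\mathcal{T}^{\tau}_\alpha(x) \;=\; \int_{\R^n}\psi_{-\tilde{\nu}}(x)\,h(\tilde{\nu})\,d\tilde{\nu},
\]
where $h(\tilde\nu)$ collects the Gaussian factor $e^{-\tau \sum_j (\tilde{\nu}_j+\I R)^2/2}$, the shifted products of Gamma functions, and $\mathfrak{m}_n(\tilde{\nu}+\I R)$.

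For the pointwise bound, I would apply Lemma~\ref{lem:boundWhittaker} with $y=0$ to get $|\psi_{-\tilde{\nu}}(x)|\leq C_0$ uniformly in $x,\tilde\nu\in\R^n$. The standard Stirling asymptotics $|\Gamma(a+\I b)|\sim C|b|^{a-1/2}e^{-\pi|b|/2}$ show that the Gamma products and the Sklyanin density $|\mathfrak{m}_n(\tilde{\nu}+\I R)|$ grow at most sub-exponentially, so the Gaussian factor $e^{-\tau\sum\tilde{\nu}_j^2/2}$ places $h\in L^1(\R^n,d\tilde{\nu})$. Pulling $\psi_{-\tilde\nu}(x)$ out uniformly then yields $|\mathcal{T}^{\tau}_\alpha(x)|\leq C_0\|h\|_{L^1}\cdot e^{R\sum x_i}$, proving the first assertion. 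For the $L^2$ statement I would use the Plancherel theorem for Whittaker functions (a consequence of \eqref{eq:orthogonality2}), which realises the Whittaker transform $g\mapsto\int\psi_{\lambda}(x)g(\lambda)\mathfrak{m}_n(\lambda)d\lambda$ as an isometry $L^2(\mathfrak{m}_n\,d\lambda)\to L^2(dx)$. Changing variable $\tilde{\nu}=-\lambda$ and using the symmetry $\mathfrak{m}_n(-\lambda)=\mathfrak{m}_n(\lambda)$, the identity above takes the form
\[
e^{-R\sum_i x_i}\mathcal{T}^{\tau}_\alpha(x) \;=\; \int_{\R^n}\psi_{\lambda}(x)\,\frac{h(-\lambda)}{\mathfrak{m}_n(\lambda)}\,\mathfrak{m}_n(\lambda)\,d\lambda,
\]
and Plancherel would give
\[
\bigl\|e^{-R\sum_i x_i}\mathcal{T}^{\tau}_\alpha\bigr\|_{L^2(dx)}^2 \;=\; \int_{\R^n}\frac{|h(\lambda)|^2}{\mathfrak{m}_n(\lambda)}\,d\lambda.
\]
Since $1/|\mathfrak{m}_n(\lambda)|\lesssim\prod_{i\neq j}|\Gamma(\I(\lambda_i-\lambda_j))|\lesssim\prod_{i\neq j}e^{\pi|\lambda_i-\lambda_j|/2}|\lambda_i-\lambda_j|^{1/2}$, and $|h(\lambda)|^2$ carries the squared Gaussian $e^{-\tau\sum\lambda_j^2}$ that dominates every other exponential and polynomial factor, the right-hand side is finite.

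The main obstacle I foresee is not the computation itself but the bookkeeping needed to make the Plancherel step rigorous: before invoking the isometry, one must verify that the symbol $\lambda\mapsto h(-\lambda)/\mathfrak{m}_n(\lambda)$ lies simultaneously in $L^1(\mathfrak{m}_n\,d\lambda)$ (so that the integral defining its inverse Whittaker transform is absolutely convergent and coincides with $e^{-R\sum x_i}\mathcal{T}^{\tau}_\alpha(x)$ pointwise) and in $L^2(\mathfrak{m}_n\,d\lambda)$ (so that the Plancherel identity applies). Both memberships reduce to Stirling-based tail estimates on products of Gamma functions of the type sketched above, but the combinatorics of the $n(n-1)$ factors in $\mathfrak{m}_n$ versus the $\binom{n}{2}$ factors $\Gamma(2R-\I(\tilde\nu_i+\tilde\nu_j))$ must be checked carefully to confirm that the Gaussian indeed dominates after cancellations.
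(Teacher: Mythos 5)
Your approach is essentially the same as the paper's: shift the contour to $a=R$, use the shift identity $\psi_{\nu+\I a\vec1}(x)=e^{-a\sum x_i}\psi_{\nu}(x)$ to pull out the factor $e^{R\sum x_i}$, then invoke Lemma~\ref{lem:boundWhittaker} (with $y=0$) for the pointwise bound and Whittaker orthogonality for the $\mathbb{L}^2$ bound. The paper cites \eqref{eq:orthogonalityone} where you cite \eqref{eq:orthogonality2}; these are dual statements and lead to the same isometry, so this is only a cosmetic difference. Your instinct to be careful about the $L^1$/$L^2$ memberships needed to justify the Plancherel step is a sound point that the paper glosses over.

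One small technical slip worth flagging: the estimate you invoke for the Sklyanin measure has the wrong sign in the exponent. Since $|\Gamma(\I y)|=\sqrt{\pi/(y\sinh(\pi y))}\sim C\,e^{-\pi|y|/2}|y|^{-1/2}$ as $|y|\to\infty$, the quantity $1/|\mskyl_n(\lambda)|=C\prod_{i\neq j}|\Gamma(\I(\lambda_i-\lambda_j))|$ actually decays exponentially; it is $\mskyl_n(\lambda)$ itself that grows like $\prod_{i\neq j} e^{\pi|\lambda_i-\lambda_j|/2}$. Since $h$ contains one factor of $\mskyl_n$, the integrand $|h|^2/\mskyl_n$ carries a single surviving $\mskyl_n$, and it is that factor (not $1/\mskyl_n$) whose exponential growth the Gaussian must beat. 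The conclusion is unaffected because $e^{-\tau\sum\lambda_j^2}$ dominates any expression growing like $e^{c\sum|\lambda_j|}$, but the bookkeeping should track $\mskyl_n$ in the numerator rather than bounding $1/\mskyl_n$.
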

\begin{proof}
	Recall that in the definition of $\mathcal{T}^{\,\tau}_{\alpha_{1}, \dots, \alpha_k}(x)$ in \eqref{eq:defT}, the parameter $a>0$ can be taken arbitrarily as long as $a+\alpha_i>0$.
	 Using the shift property of Whittaker functions  $\psi_{\nu_1+\I a, \dots, \nu_n+\I a} = e^{-a \sum_{i=1}^n x_i}\psi_{\nu_1, \dots, \nu_n}(x)$ and the change of variables $ \nu_i  = \tilde\nu_i+\I a$ for all $1\leqslant i\leqslant n$, we obtain 
	\begin{multline}
	\vert \mathcal{T}^{\,\tau}_{\alpha_{1}, \dots, \alpha_k}(x) \vert   \leqslant  e^{a \sum_{i=1}^{n}x_i }   \\ 
	\times C \int_{\R^n}  \psi_{-\tilde \nu}(x) e^{-\tau  \sum_{j=1}^n \tilde \nu_j^2/2 } \left\vert \prod_{i=1}^k \prod_{j=1}^n \Gamma(\alpha_i - \I\tilde \nu_j+a) \prod_{1\leqslant i < j \leqslant n} \Gamma(-\I(\tilde \nu_i+\tilde \nu_j)+2a )
	\mskyl_n(\nu) \right\vert \mathrm{d}\tilde\nu. 
	\end{multline} 
	Using Lemma \ref{lem:boundWhittaker}, the integral can be bounded by a constant (depending on $a$ but not $x$), which yields the first part of the statement of Proposition \ref{prop:boundT} by choosing $a=R$. The fact that the integral above, seen as a function of $x$,  is bounded in $\mathbb{L}^2(\R^n)$ follows from the orthogonality of Whittaker functions \eqref{eq:orthogonalityone} and the Gaussian decay of the integrand.
\end{proof}
The estimate from Proposition \ref{prop:boundT} will be useful in Section \ref{sec:alternative} but it is not sufficiently sharp  for the proof of Proposition \ref{prop:integratestoone}. The next proposition is  a refinement.  
\begin{proposition} Fix $n\geqslant 2$, $k\geqslant 0$, $\tau>0$, $\alpha_1, \dots, \alpha_k\in \R$. Let  $a>0$ be such that $a+\alpha_i>0$ for all $1\leqslant i\leqslant k$. For any $R>a$, there exists a polynomial $C(x)$ such that for $x\in \R^n$, 
 	$$  \vert  \mathcal{T}^{\,\tau}_{\alpha_{1}, \dots, \alpha_k}(x) \vert  \leqslant C(x) \prod_{i: x_i<0} e^{ R  x_i}\prod_{i:x_i\geqslant 0} e^{a x_i}. $$
\label{prop:boundTchamber}
\end{proposition}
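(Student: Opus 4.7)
The plan is to refine the argument behind Proposition \ref{prop:boundT}, where the uniform contour shift $\Imag\nu_j = a$ is applied to every variable, by instead shifting the contour for each $\nu_j$ to a height that depends on the sign of $x_j$. More precisely, I would deform, in the integral \eqref{eq:defT} defining $\mathcal{T}^{\,\tau}_{\alpha_1,\dots,\alpha_k}(x)$, the contour for $\nu_j$ from $\R+\I a$ to $\R+\I R$ when $x_j<0$, while keeping the original contour $\R+\I a$ for those $j$ with $x_j\geqslant 0$. The formal shift-property of Whittaker functions, used in the proof of Proposition~\ref{prop:boundT}, would then produce (heuristically) the factor $\prod_{j:x_j<0} e^{Rx_j}\prod_{j:x_j\geqslant 0} e^{ax_j}$ appearing on the right-hand side of the claim.

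The first step of the proof would be to justify the contour deformation. The poles of $\Gamma(\alpha_i-\I\nu_j)$ lie on the vertical line $\Imag\nu_j=-\alpha_i-m\leqslant -a<0$ for $m\in\Z_{\geqslant 0}$, and the poles of $\Gamma(-\I(\nu_i+\nu_j))$ would require $\Imag(\nu_i+\nu_j)\leqslant 0$, which is impossible along the deformation since each $\Imag\nu_\ell$ remains in $[a,R]$ with $a>0$. The Gaussian factor $e^{-\tau\sum\nu_j^2/2}$, together with the uniform polynomial behaviour of the $\Gamma$-ratios and of $\mathfrak{m}_n$ on any vertical strip, controls the contributions from $\Re\nu_j\to\pm\infty$ during the shift.

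The main work, and the main obstacle, is the pointwise bound on $|\psi_{-\nu}(x)|$ along the new, non-uniformly shifted contour. Since the standard shift identity $\psi_{\nu+\I c\,\mathbf{1}}(x)=e^{-c\sum x_i}\psi_\nu(x)$ only applies when the imaginary part is added uniformly, one cannot simply factor out $\prod_j e^{b_j x_j}$ with $b_j=\Imag\nu_j$. I would instead combine the Weyl-group invariance $\psi_{-\sigma(\nu)}(x)=\psi_{-\nu}(x)$ with an induction on $n$ based on Givental's integral representation, peeling at each step the remaining index with the smallest value of $\Imag\nu_j$. This choice ensures that the one-dimensional integral
\[
J(D,c)=\int_{\R}e^{cz-e^z-e^{D-z}}\,dz,\qquad c=b_{\sigma(j)}-b_{\sigma(n)}\geqslant 0,
\]
which appears after substituting $z_j=y_j-x_j$ in the Givental kernel, is bounded polynomially in $|D|=|x_{j+1}-x_j|$. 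Iterating, one would obtain an estimate
$|\psi_{-\nu}(x)|\leqslant \widetilde C(x)\exp\bigl(\sum_{j:x_j<0}Rx_j+\sum_{j:x_j\geqslant 0}ax_j\bigr)$ with $\widetilde C$ polynomial.

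Substituting this pointwise bound under the integral and using that the remaining factors on the mixed-shift contour (the Gaussian in $\Re\nu$, the Gamma products, and $\mathfrak{m}_n$) are absolutely integrable uniformly in $x$ then yields the claimed estimate. The delicate point, and the reason why the case $n\geqslant 2$ is separated from $n=1$, is precisely the tracking of the $J(D,c)$ factors during the recursive Whittaker estimate: one has to verify that the freedom afforded by the Weyl symmetry is sufficient to always peel an index of minimal imaginary part at each stage, so that the accumulated polynomial factors in the differences $x_i-x_{j}$ combine into the single polynomial $C(x)$ in the statement.
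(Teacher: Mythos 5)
Your plan differs genuinely from the paper's. The paper does \textbf{not} try to control $\psi_{-\nu}(x)$ directly along a mixed-height contour; instead it rewrites $\mathcal{T}^{\,\tau}_{\alpha}(x)$ (via \cite[Proposition 3]{o2012whittaker}) in terms of the \emph{fundamental Whittaker functions} $m_{-\I\nu}(x)$, which are absolutely convergent series $\sum_{m\in(\Z_{\geqslant 0})^{n-1}} a_{n,m}(\I\nu)\exp(-\sum m_i(x_i-x_{i+1})+\sum\I\nu_i x_i)$. Because each term is a bare exponential $\exp(\sum\I\nu_i x_i)$, shifting the contour of one $\nu_j$ at a time (by $\I(R-a)$ if $x_j<0$, $0$ otherwise) extracts the factor $\prod_{x_i<0}e^{Rx_i}\prod_{x_i\geqslant 0}e^{ax_i}$ trivially — the structural advantage over manipulating $\psi_{-\nu}$ directly. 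The price is that one needs growth control on the coefficients $a_{n,m}(\I\nu)$ for $\nu$ in vertical strips (the paper's Lemma \ref{lem:estimatecoeffs}, refining Hashizume), and the resulting bound still carries an unwanted growth factor $\prod_{i}e^{2Ce^{-(x_i-x_{i+1})/2}}$; this is then killed by an interpolation trick, writing $|\mathcal{T}|^{2C+1}=|\mathcal{T}|^{2C}|\mathcal{T}|$ and bounding the first factor via the doubly-exponential \emph{decay} in Proposition \ref{prop:doubleexponentialdecayT}.

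Your proposal has a genuine gap at the step you yourself flag as the crux. You want to bound $|\psi_{-\nu}(x)|$ on a contour where each $\Imag\nu_j\in\{a,R\}$ depends on $\mathrm{sgn}(x_j)$, obtaining a bound $\leqslant\widetilde C(x)\exp(\sum_{x_j<0}Rx_j+\sum_{x_j\geqslant 0}ax_j)$ with polynomial $\widetilde C$. But in Givental's recursion, the factor controlled by $\Imag\nu_n$ is $e^{-\Imag\nu_n\cdot(\sum_i x_{n,i}-\sum_i x_{n-1,i})}$, involving the full row sums $\sum_i x_{k,i}$ of the triangular pattern, \emph{not} the individual coordinate $x_j$ you are trying to match to the height $\Imag\nu_j$. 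It is not at all clear that peeling indices in order of increasing imaginary part can be made to attach the factor $e^{\Imag\nu_j x_j}$ to the outer coordinate $x_j$: the interior variables $x_{k,i}$ get integrated and their row sums do not track the signs of the individual $x_j$'s. Your one-dimensional integral $J(D,c)=\int e^{cz-e^z-e^{D-z}}dz$ is bounded uniformly for $c\geqslant 0$ (in fact tends to $\Gamma(c)$ as $D\to-\infty$ and decays doubly exponentially as $D\to+\infty$), but a bound on $J$ alone does not produce the sign-dependent exponential factor, and it is precisely the interaction between the row-sum shifts and the positions of the outer $x_j$'s that is missing. You would also need to address the accumulation of factors into a single polynomial $C(x)$, which you acknowledge but do not resolve. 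As written, the argument is a plausible-sounding research direction rather than a proof.
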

\begin{proof} 
 In the proof of Proposition \ref{prop:boundT}, we have seen that the ability to freely shift the integration contours can be translated into a decay bound. 
 In order to obtain a sharper bound we need to shift each contour separately by a distance depending on the sign of $x_i$. Since we may not use the shift property of Whittaker functions anymore,  it would be very convenient to decompose the Whittaker function $\psi_{\nu}(x)$ as linear combinations of $\prod_ie^{\I \nu_i x_i}$. Such a decomposition exists, in terms of the fundamental Whittaker functions introduced in \cite{hashizume1982whittaker}. We will be using notations from  \cite[Section 8]{o2012whittaker}. 
 Using  \cite[Proposition 3]{o2012whittaker} (see also \cite[Corollary 3]{o2012whittaker}) we may write 
  \begin{equation}
  \mathcal{T}^{\,\tau}_{\alpha}(x)  =   \int_{(\R+\I a )^n}    m_{-\I\nu}(x) e^{-\tau  \sum_{j=1}^n \nu_j^2/2 } Z(\alpha, \nu)
  \prod_{i<j} (\I\nu_i-\I\nu_j) \mathrm{d}\nu,
  \label{eq:alternativeT}
  \end{equation}
 where we have used the shorthand notation
 $$Z(\alpha, \nu)  = \prod_{i=1}^k \prod_{j=1}^n \Gamma(\alpha_i - \I\nu_j) \prod_{1\leqslant i < j \leqslant n} \Gamma(-\I(\nu_i+\nu_j)),$$ 
 and  $ m_{-\I \nu}(x)$ are fundamental Whittaker functions defined as follows. For $n\geqslant 2$ and $m\in (\Z_{\geqslant 0})^{n-1} $ we define analytic coefficients $a_{n,m}(\nu)$  by the recurrence \cite[Theorem 15]{ishii2007new} 
 $$ a_{2,m}(\nu) = \frac{1}{m! \Gamma(\nu_1 - \nu_2 + m+1)}, $$
 and for $n>2$, 
 \begin{equation}
 a_{n,m}(\nu)  = \sum_{k\in (\Z_{\geqslant 0})^{n-2}} a_{n-1, k}(\mu)  \prod_{i=1}^{n-1}\frac{1}{(m_i-k_i)!} \frac{1}{\Gamma(\nu_i-\nu_n+m_i-k_{i-1})}, 
 \label{eq:recurrenceanm}
 \end{equation}
 where the sum runs over $k\in (\Z_{\geqslant 0})^{n-2}$ such that $k_i\leqslant m_i$, we adopt the convention that $k_0=k_{n-1}=0$, and $\mu = (\nu_1 + \nu_n/(n-1), \dots, \nu_{n-1}+ \nu_n/(n-1), \nu_n) $.
 Then, fundamental Whittaker functions are defined by the  series 
 $$ m_{\nu}(x)  = \sum_{m\in (\Z_{\geqslant 0})^{n-1}} a_{n,m}( \nu) \exp\left( -\sum_{i=1}^{n-1} m_i(x_i -x_{i+1}) +\sum_{i=1}^{n} \nu_i x_i \right),$$
 which is absolutely convergent \cite[Lemma 4.6]{hashizume1982whittaker}. 
We will need the following estimate. 
\begin{lemma} Fix $a,R>0$ and $n\geqslant 2$. Let us define
	$$A_{N} =  \max_{\substack{m\in (\Z_{\geqslant 0})^{n-1}\\ \sum_{i=1}^{n-1} m_i=N}} \bigg\lbrace  \big\vert a_{n,m}(\I \nu) \big\vert  \bigg\rbrace . $$
There exist a positive constants $C,c, c'$ such that for  $\nu\in \C^n$ where for all $i$,  $\nu_i\in(\R+\I a)^n$ or $\nu_i\in(\R+\I (a+R))^n$, we have 
\begin{equation}
A_{N}  \leqslant   c e^{c' \sum_{i=1}^n\vert \Real[\nu_i]\vert}  \frac{C^N}{(N!)^2}.
\label{eq:estimateanm}
\end{equation}
\label{lem:estimatecoeffs}
\end{lemma}
\begin{proof}
We adapt the proof of \cite[Lemma 4.5]{hashizume1982whittaker} which concerns the case where $\nu$ is restricted to a compact set.  
The coefficients $a_{n,m}(\nu)$ satisfy another recurrence \cite[Theorem 15]{ishii2007new}: 
\begin{equation}
\left(\sum_{i=1}^{n-1} m_i^2 - \sum_{i=1}^{n-2} m_im_{i+1} +\sum_{i=1}^{n-1} (\nu_i-\nu_{i+1})m_i\right) a_{n,m}(\nu) = \sum_{i=1}^{n-1} a_{n, m-e_i}(\nu),
\label{eq:otherrecurrence}
\end{equation}
where $m-e_i = (m_1, \dots, m_i-1, \dots , m_{n-1})$, with $a_{n,m}=0$ if $m\not\in(\Z_{\geqslant 0})^{n-1}$ and 
$$ a_{n,0}(\nu)  = \prod_{i<j} \frac{1}{\Gamma(\nu_i-\nu_j+1)}. $$

We have 
$$ \sum_{i=1}^{n-1} m_i^2 - \sum_{i=1}^{n-2} m_im_{i+1} +\sum_{i=1}^{n-1} (\I\nu_i-\I\nu_{i+1})m_i = \sum_{i=0}^{n-1} \frac{1}{2} (m_i-m_{i+1})^2  +\sum_{i=1}^{n-1} (\I\nu_i-\I\nu_{i+1})m_i,$$
with the convention that $m_0=m_n=0$. 
Note that if $N=m_1+\dots + m_{n-1}$, we may write  $N=\sum_{i=0}^{n-1} (m_i-m_{i+1})(i+1)$, so that using Cauchy-Schwartz inequality, 
$$   N^2 \leqslant \sum_{i=0}^{n-1}  (m_i-m_{i+1})^2 \sum_{i=1}^{n} i^2 \leqslant  n^3 \sum_{i=0}^{n-1}  (m_i-m_{i+1})^2. $$
 Then, using $\vert z\vert \geqslant \vert \Real[z] \vert$, we can write 
$$ \left\vert \sum_{i=1}^{n-1} m_i^2 - \sum_{i=1}^{n-2} m_im_{i+1} +\sum_{i=1}^{n-1} (\I\nu_i-\I\nu_{i+1})m_i \right\vert > \frac{1}{2n^3} N^2 -N R,$$
where $N=m_1+\dots + m_{n-1}$. For $N>4 R n^3$, we have $N^2 -N R> N^2/(4n^3)$. In \eqref{eq:otherrecurrence}, we may  bound each term in the sum in the right hand side by  $A_{N-1}$, so that for $N>4 R n^3$, 
$$ A_{N} \leqslant \frac{4n^4}{N^2}  A_{N-1}.$$

Thus, if \eqref{eq:estimateanm} is true for $N\leqslant 4R n^3$, then it is true for all $N$, by choosing $C$ in \eqref{eq:estimateanm} larger than $4n^4$. Since $R$ and $n$ are fixed,  we may bound $A_N$ for all 
$N  \leqslant 4R n $ by $c e^{c' \sum_{i=1}^n\Real[\nu_i]}$ for some constants $c,c'$. Indeed, using the recurrence formula \eqref{eq:recurrenceanm}, coefficients $a_{n,m}$ are finite sums of finite products of Gamma factors which can be bounded by $c e^{c'  \sum_{i=1}^n\vert \Real[\nu_i]\vert }$. 
\end{proof}

Going back to the proof of Proposition \ref{prop:boundT}, we can rewrite \eqref{eq:alternativeT} as 
 \begin{multline}
  \mathcal{T}^{\,\tau}_{\alpha}(x)  =  \sum_{m\in (\Z_{\geqslant 0})^{n-1}} \int_{(\R+\I a )^n}  a_{n,m}(\I\nu)  \\ \times \exp\left( -\sum_{i=1}^{n-1} m_i(x_i -x_{i+1}) -\sum_{i=1}^{n} \I \nu_i x_i )\right) e^{-\tau  \sum_{j=1}^n \nu_j^2/2 } Z(\alpha, \nu)
  \prod_{i<j} (\I\nu_i-\I\nu_j) \mathrm{d}\nu.
  \label{eq:seriesforT}
 \end{multline}
 For all $i$, if $x_i<0$, we shift the contour by $\I (R-a)$ (that is from $\R+\I a$ to $\R+\I R$) and subsequently use the change of variables $\nu_i  = \tilde \nu_i+\I R$ so that $\tilde \nu_i\in \R$. If $x_i\geqslant 0$, we use the change of variables $\nu_i  = \tilde \nu_i+\I a$ so that $\tilde \nu_i\in \R$.  The factor $\exp\left(  -\sum_{i=1}^{n} \I \tilde \nu_i x_i \right)$ can be simply bounded by $1$.  We obtain that $\mathcal{T}^{\,\tau}_{\alpha}(x)$ is bounded by the product of three terms: 
 \begin{itemize}
 	\item $\prod_{i:x_i<0} e^{R x_i} \prod_{i:x_i\geqslant 0} e^{a x_i}$,
 	\item an absolutely convergent integral over $\tilde \nu\in \R$, which does not depend on $x$, 
 	\item the series  
 	$$ \sum_{m\in (\Z_{\geqslant 0})^{n-1} } \frac{ C^{m_1+\dots+m_{n-1}} \exp\left( -\sum_{i=1}^{n-1} m_i(x_i -x_{i+1})\right) }{((m_1+ \dots m_{n-1})!)^2},$$
 	where $C$ is the constant in Lemma \ref{lem:estimatecoeffs}. 
 \end{itemize}

Using that $(m_1+\dots + m_{n-1})!\geqslant m_1! \dots m_{n-1}!$  and the inequality  $\sum_{k=0}^{\infty} \frac{x^k}{(k!)^2} \leqslant \exp(2\sqrt{x})$ for positive $x$, we may bound the series as 
  	$$ \sum_{m\in (\Z_{\geqslant 0})^{n-1} } \frac{ C^{m_1+\dots+m_{n-1}} \exp\left( -\sum_{i=1}^{n-1} m_i(x_i -x_{i+1})\right) }{((m_1+ \dots m_{n-1})!)^2} \leqslant \prod_{i=1}^{n-1} e^{2C e^{\frac{-(x_i-x_{i+1})}{2}}}.$$
 
Thus we have arrived at 
\begin{equation}
 \vert  \mathcal{T}^{\,\tau}_{\alpha_{1}, \dots, \alpha_k}(x) \vert  \leqslant C'  \prod_{i: x_i<0} e^{ R  x_i}\prod_{i:x_i\geqslant 0} e^{a x_i} \prod_{i=1}^{n-1} e^{2 C e^{\frac{-(x_i-x_{i+1})}{2}}}. 
 \label{eq:intermediatebound}
\end{equation} 
Now we may combine this result with the bound of Proposition \ref{prop:doubleexponentialdecayT}.  Let us write 
$$ \vert  \mathcal{T}^{\,\tau}_{\alpha_{1}, \dots, \alpha_k}(x) \vert^{2C+1} = \vert  \mathcal{T}^{\,\tau}_{\alpha_{1}, \dots, \alpha_k}(x) \vert^{2C}\vert  \mathcal{T}^{\,\tau}_{\alpha_{1}, \dots, \alpha_k}(x) \vert,$$
where $C$ is the same constant as above.  
We bound the first factor with Proposition \ref{prop:doubleexponentialdecayT} and the second factor with \eqref{eq:intermediatebound}. The doubly exponential factors cancel and this concludes the proof of the proposition. 
\end{proof}

\begin{proof}[Proof of Proposition \ref{prop:integratestoone}]
	Let 
	$$ f(\alpha_1, \dots, \alpha_n)  = \int_{\R^n} \mathrm{d}x \  \psi_{\I\alpha_1, \dots, \I\alpha_n}(x) \mathcal{T}^{\,\tau}_{\diag, \alpha_{n+1}, \dots, \alpha_t}(x).$$
	Recall the definition of the function $\mathcal{T}^{\,\tau}$ in  \eqref{eq:defT}. Using orthogonality of Whittaker functions, we deduce that  when $\alpha_1, \dots, \alpha_n$ belong to $\I\R+a$ (where $a$ is the real part of the contour in \eqref{eq:defT}), 
	$$ f(\alpha_1, \dots, \alpha_n)  = e^{ \tau\sum_{i=1}^n \alpha_i^2/2}\ \prod_{i=1}^n{\Gamma(\diag+ \alpha_i)}\prod_{j=n+1}^{t}{\Gamma(\alpha_j+\alpha_i)}\prod_{1\leqslant i<j\leqslant n}{\Gamma(\alpha_i+\alpha_j)}.$$
	Recall that Whittaker functions are analytic functions in their index.  We wish to analytically continue the above identity for all $\alpha$ such that $\Real[\alpha_i]\geqslant a$, for some $a >0$ such that $a>-\diag$. As in the proof of \cite[Proposition 4.1.18]{borodin2014macdonald}, we prove that $f(\alpha_1, \dots, \alpha_n)$ is analytic  by showing  that  for any compact region $A\subset \lbrace z\in \C^n :  \Real[z_i]>a \rbrace$ and any $\e>0$, there exists a compact subset $\mathbb{K}\subset \R^n$ such that for all $\alpha\in A$, 
	\begin{equation}
	\int_{\R^n\setminus \mathbb{K}} \mathrm{d}x \  \left\vert \psi_{\I\alpha_1, \dots, \I\alpha_n}(x) \mathcal{T}^{\,\tau}_{\diag, \alpha_{n+1}, \dots, \alpha_t}(x) \right\vert  <\e.  
	\label{eq:tobound}
	\end{equation}
	Consider a compact set $A\subset \C^n$ such that for all $z\in A$, $\Real[z_i]>a$.   
	Let $\underline{a}>0$ be such that $\max\lbrace 0, -\diag\rbrace < \underline{a}< a$ and let $\bar a$ be such that $\Real[z_i] < \bar a$ for $\alpha\in A$. On the one hand, Proposition \ref{prop:boundTchamber}  shows that   for any $R>0$, there exists a polynomial $C(x)$ such that
	$$ \left\vert \mathcal{T}^{\,\tau}_{\diag, \alpha_{n+1}, \dots, \alpha_t}(x) \right\vert \leqslant  C(x) e^{\underline{a} \sum_{i=1}^n x_i} \prod_{i: x_i<0} e^{ R  x_i}.$$
	On the other hand, Lemma \ref{lem:boundWhittaker} shows that if $ a < \Real[\alpha_i] < \bar a$, 
	$$  \vert \psi_{\I \alpha}(x)  \vert \leqslant C e^{-\bar a \sum_{i=1}^n x_i + (\bar a -a)\sum_{i=1}^n\vert x_i\vert}.$$
	Combining these two bounds and using the fact that for $x<0$, $2x= x-\vert x\vert$, we obtain that for any $R>0$,
	\begin{eqnarray}
	\left\vert \psi_{\I \alpha}(x) \mathcal{T}^{\,\tau}_{\diag, \alpha_{n+1}, \dots, \alpha_t}(x) \right\vert &\leqslant& C(x) e^{-(a-\underline{a}) \sum_{i=1}^n x_i }e^{(\bar a -a)\sum_{i=1}^n \vert x_i\vert - x_i} \prod_{i: x_i<0} e^{ R  x_i},
	\nonumber \\ 
	&\leqslant& C(x) e^{-(a-\underline{a}) \sum_{i=1}^n x_i }e^{(\bar a -a-R/2)\sum_{i=1}^n \vert x_i\vert - x_i}.
	\label{eq:combining}
	\end{eqnarray}
	Let $\mathbb{A}^{(S)} = \lbrace x\in \R^n: \sum_{i=1}^n x_i \leqslant S\rbrace$. We have shown that there exist positive constants $C,c$ such that when $x\not \in (\R_{>-M})^n \cap \mathbb A^{(S)}$, 
	$$ \left\vert \psi_{\I \alpha}(x) \mathcal{T}^{\,\tau}_{\diag, \alpha_{n+1}, \dots, \alpha_t}(x) \right\vert \leqslant Ce^{-cS } + Ce^{-c M}.$$
Thus for any $\e>0$, taking the compact set $\mathbb{K}$ in the left-hand-side of  \eqref{eq:tobound} as   $\mathbb{K}= (\R_{>-M})^n \cap \mathbb A^{(S)}$ for $M,S$ large enough, \eqref{eq:tobound} holds. This concludes the proof of Proposition \ref{prop:integratestoone}. 
\end{proof}
\begin{proposition} Consider the scalings \eqref{eq:scalings1}, \eqref{eq:scalings2}. 
Assume that $\tau>0$ and $\alpha_1, \dots, \alpha_t>0$ are such that $\alpha_i+\diag>0$ for all $1\leqslant i\leqslant n$. Then the half-space $q$-Whittaker measure  $\PQWM_{(a_1,\ldots, a_n), (\diagq, a_{n+1}, \dots ,a_{t}), \gamma}$
 weakly converges to the half-space Whittaker measure 
 $\PWM_{(\alpha_1, \dots, \alpha_n) ,  (\diag, \alpha_{n+1}, \dots, \alpha_t), \tau}$. 
\label{prop:CVtoWhittmeasurewithplancherel}
\end{proposition}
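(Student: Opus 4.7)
The plan is to prove weak convergence via pointwise convergence of densities combined with Scheffé's lemma. View the rescaled random vector $T^{(n)} = (T^{(n)}_1,\ldots,T^{(n)}_n) \in \R^n$ defined through \eqref{eq:scalings2}; its law is concentrated on a lattice of spacing $\e$ and can be represented by a piecewise-constant density
\[
f_\e(T) \;:=\; \e^{-n}\,\PQWM_{(a_1,\ldots,a_n),((\diagq,a_{n+1},\ldots,a_t),\gamma)}\bigl(\lambda^{(n)}(T)\bigr)
\]
against Lebesgue measure on $\R^n$, where $\lambda^{(n)}(T)$ denotes the lattice point in whose cube $T$ lies and the factor $\e^{-n}$ is the Jacobian of the affine change of variables. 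Note that the partition ordering constraints $\lambda^{(n)}_j \geqslant \lambda^{(n)}_{j+1}$ translate into $T^{(n)}_j - T^{(n)}_{j+1} \geqslant -2\log\e^{-1}$, which becomes vacuous in the limit, so $f_\e$ is genuinely a probability density on all of $\R^n$.

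For the pointwise convergence $f_\e \to f$ (where $f$ is the density of Definition \ref{def:PWM}), I would combine the four ingredients already established in the excerpt. Writing
\[
f_\e(T^{(n)}) \;=\; \e^{-n}\,\frac{P_{\lambda^{(n)}}(a_1,\ldots,a_n)\,\ve_{\lambda^{(n)}}\bigl((\diagq,a_{n+1},\ldots,a_t),\gamma\bigr)}{\Pi\bigl(a_1,\ldots,a_n;(\diagq,a_{n+1},\ldots,a_t),\gamma\bigr)\,\Phi(a_1,\ldots,a_n)},
\]
Proposition \ref{prop:limitqWhittWhittwithplancherel} (applied with $z_j=a_j$, i.e.\ $\nu_j = \I\alpha_j$) supplies the asymptotics of $P_{\lambda^{(n)}}$ in terms of $\psi_{\I\alpha_1,\ldots,\I\alpha_n}(T^{(n)})$, Proposition \ref{prop:limitepsilonwithplancherel} (applied with the $k=t-n+1$ specialization parameters $\diag,\alpha_{n+1},\ldots,\alpha_t$) gives the asymptotics of $\ve_{\lambda^{(n)}}$ in terms of $\mathcal{T}^{\,\tau}_{\diag,\alpha_{n+1},\ldots,\alpha_t}(T^{(n)})$, and \eqref{eq:limPi}--\eqref{eq:limPhi} provide the asymptotics of $\Pi$ and $\Phi$. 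The scalings \eqref{eq:scalings1}--\eqref{eq:scalings2} are engineered precisely so that the divergent prefactors---powers of $\e$ and of $\A(\e)$, and exponentials such as $e^{\tau n\e^{-2}}$ and $e^{\pm\tau\e^{-1}\sum_j\alpha_j}$---cancel exactly between numerator and denominator (including the Jacobian $\e^{-n}$), leaving the ratio equal to the density of $\PWM_{(\alpha_1,\ldots,\alpha_n),(\diag,\alpha_{n+1},\ldots,\alpha_t),\tau}$ from Definition \ref{def:PWM}. This convergence is uniform on compact subsets of $\R^n$ by the corresponding uniformity statements in Propositions \ref{prop:limitqWhittWhittwithplancherel} and \ref{prop:limitepsilonwithplancherel}, hence in particular pointwise everywhere.

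The final step is Scheffé's lemma. Both $f_\e$ and the limit density $f$ are probability densities on $\R^n$: the former because $\PQWM$ is a probability measure on partitions, and the latter by Proposition \ref{prop:integratestoone}. Since $f_\e \to f$ pointwise a.e., Scheffé's lemma (using that $(f_\e - f)^{-}\leqslant f$ is dominated, hence $\int(f_\e-f)^{-} \to 0$ by dominated convergence, and then $\int(f_\e-f) = 0$ forces $\int(f_\e-f)^{+} \to 0$) yields $\int_{\R^n}|f_\e - f|\,dT \to 0$, which is convergence in total variation and in particular implies the desired weak convergence. I expect the main technical difficulty to be the prefactor bookkeeping in the pointwise convergence step: the doubly divergent factor $e^{\tau n\e^{-2}}$ must cancel exactly between $\ve_{\lambda^{(n)}}$ and $\Pi$, the linearly divergent factors $e^{\pm\tau\e^{-1}\sum\alpha_j}$ must cancel between $P_{\lambda^{(n)}}$ and $\Pi$, and the $\e^n\e^{-n(t-n+1)}$ and $\e^{n(n-1)/2}e^{\frac{n(n-1)}{2}\A(\e)}$ factors arising from the four pieces must recombine with the Jacobian into a finite limit. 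No tightness argument is required, since Scheffé's lemma sidesteps the need for uniform integrability---it is exactly Proposition \ref{prop:integratestoone}, whose proof rested on the sharp tail bound Proposition \ref{prop:boundTchamber}, that supplies the crucial ingredient that the limit density integrates to one.
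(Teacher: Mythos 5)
Your proposal is correct and follows essentially the same approach as the paper's proof, which likewise combines the uniform-on-compacts convergence of densities supplied by \eqref{eq:limPi}, \eqref{eq:limPhi}, Proposition~\ref{prop:limitqWhittWhittwithplancherel} and Proposition~\ref{prop:limitepsilonwithplancherel} with the fact (Proposition~\ref{prop:integratestoone}) that the limit density integrates to one. The paper leaves the last step implicit, stating only that ``this is sufficient to deduce the weak convergence''; your invocation of Scheff\'e's lemma, together with the histogram-density representation of the lattice measure, correctly and explicitly unpacks that final sentence.
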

\begin{proof}
The limits \eqref{eq:limPi}, \eqref{eq:limPhi},  Proposition \ref{prop:limitqWhittWhittwithplancherel} and  Proposition \ref{prop:limitepsilonwithplancherel} show that the density of the half-space $q$-Whittaker measure converge to the density of the half-space Whittaker measure for $x$ in a compact subset of $\R^n$. Proposition \ref{prop:integratestoone} shows that the half-space Whittaker measure is a probability measure. This is sufficient to deduce the weak convergence.
\end{proof}
\begin{remark}
 More  generally, the half-space $q$-Whittaker process also converges to the  half-space Whittaker process.
\end{remark}

\subsection{Observables and integral formulas}

Define the functions
$$  \mathcal{G}(v) = \frac{e^{-\tau v^2/2}}{\Gamma(\diag +v)}  \dfrac{\prod_{j=1}^{n} \Gamma(v-\alpha_j)   }{\prod_{j=1}^t\Gamma(\alpha_j+v)} \ \   \text{ and } \ \ 
\overline{\mathcal{G}}(v) = \frac{e^{-\tau v^2/2}}{\Gamma(\diag +v)}  \dfrac{\prod_{j=1}^{n} \Gamma(\alpha_j-v)   }{\prod_{j=1}^t\Gamma(\alpha_j+v)}   .$$

\subsubsection{Laplace transforms}
\begin{corollary}
	Let $t\geqslant n\geqslant 1$, and $\tau>0$. Assume that 
	\begin{enumerate}
		\item[(i)] The parameters $\alpha_1, \dots,  \alpha_t>0$, $\diag\in \R$ are chosen so that for all $1\leqslant i,j \leqslant n$, $\alpha_i-\alpha_j < \min_{1\leqslant i\leqslant n}\lbrace \alpha_i\rbrace$, $\alpha_i-\alpha_j<1$ and $\alpha_i+\diag>0$.  
		\item[(ii)] $R\in (0,1)$ is chosen so that $R<\min_{1\leqslant i\leqslant n}\lbrace   \alpha_i, \diag+ \alpha_i\rbrace$ and $R>\max_{1\leqslant i,j \leqslant n}\lbrace \alpha_i-\alpha_j\rbrace$. 
	\end{enumerate}
	Then, under the Whittaker measure $\PWM_{( \alpha_1,\ldots,  \alpha_n), (\diag,  \alpha_{n+1}, \dots,  \alpha_{t}, \tau)}$,
	\begin{multline}
	\EWM[e^{ u e^{ T_1}}] = \sum_{k=0}^{n} \ \frac{1}{k!}\ \int_{\mathcal{D}_{R}}\frac{\mathrm{d}s_1}{2\I\pi} \dots \int_{\mathcal{D}_{R}} \frac{\mathrm{d}s_k}{2\I\pi}\   \oint\frac{\mathrm{d}v_1}{2\I\pi} \dots \oint\frac{\mathrm{d}v_k}{2\I\pi}   \\  \times 
	\prod_{1\leqslant i<j\leqslant k} \frac{(s_i+v_j-s_j-v_i)(v_i-v_j)\Gamma(v_i+v_j)\Gamma(v_i+v_j-s_i-s_j)}{(v_j-s_j-v_i)(v_i-s_i-v_j)\Gamma(v_i+v_j-s_i)\Gamma(v_i+v_j-s_j)}\\ \times 
	\prod_{i=1}^k \left[ \Gamma(-s_i)\Gamma(1+s_i)   \frac{\overline{\mathcal{G}}(v_i)}{\overline{\mathcal{G}}(v_i-s_i)}  \frac{\Gamma(2v_i)}{\Gamma(2v_i-s_i)} \frac{(-u)^{s_i}}{s_i} \right],
	\label{eq:LaplaceWhittaker}
	\end{multline}
	where the contours for each variable $v_i$ are positively oriented circles enclosing the poles $\lbrace \alpha_j\rbrace_{1\leqslant j\leqslant n}$, and no other singularity of the integrand, and $\mathcal{D}_R=R+\I\R$ oriented upwards as before. 
	\label{cor:LaplaceWhittaker}
\end{corollary}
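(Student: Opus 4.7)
The natural strategy is to derive \eqref{eq:LaplaceWhittaker} as the $q\to 1$ degeneration of the $q$-Whittaker Laplace transform formula from Corollary \ref{cor:LaplaceqWhittaker}, applied to the half-space $q$-Whittaker measure $\PQWM_{(a_1,\ldots,a_n),((\diagq,a_{n+1},\ldots,a_t),\gamma)}$ from Section \ref{sec:rigorousconvwithplancherel}. I would impose the Whittaker scalings $q=e^{-\e}$, $a_i=e^{-\e\alpha_i}$, $\diagq=e^{-\e\diag}$, $\gamma=\tau\e^{-2}$, and scale $\lambda_1$ as in \eqref{eq:scalings2}. The free parameter $z$ of the $q$-Laplace transform is to be chosen as $z=z_\e(u):=-u(1-q)e^{-\tau/\e}\,\e^{\,t+n-1}$ (with the correct sign so that $z\in\C\setminus\R_{>0}$), so that $zq^{-\lambda_1}/(1-q)\to u e^{T_1}$ and hence $\frac{1}{(zq^{-\lambda_1};q)_\infty}$ converges pointwise to $e^{u e^{T_1}}$ via the $q$-exponential identity \eqref{eq:qexponential}.

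For the left-hand side, weak convergence of the $q$-Whittaker measure to the Whittaker measure is provided by Proposition~\ref{prop:CVtoWhittmeasurewithplancherel}, and uniform integrability of the observable $(zq^{-\lambda_1};q)_\infty^{-1}$ along the sequence in $\e$ follows from its uniform bound noted in Remark~\ref{rem:observablebounded} (for $z$ away from $\R_{>0}$), together with tail decay of the Whittaker density in the spirit of Propositions \ref{prop:boundT}--\ref{prop:boundTchamber}. This identifies the $\e\to 0$ limit of the LHS with $\EWM[e^{u e^{T_1}}]$.

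For the right-hand side, I would perform the change of variables $w_i=q^{v_i}=e^{-\e v_i}$, so that the small contours around the $a_j$'s become, in the limit, the small contours around the $\alpha_j$'s required in \eqref{eq:LaplaceWhittaker}. Each $q$-Pochhammer ratio in the integrand of \eqref{eq:LaplaceqWhittaker} is then turned into a ratio of classical Gamma functions with a matching power of $(1-q)$ via \eqref{eq:qGamma}: for instance $(q^{-s_i}w_iw_j)_\infty(q^{-s_j}w_iw_j)_\infty/[(q^{-s_i-s_j}w_iw_j)_\infty(w_iw_j)_\infty]$ degenerates to $\Gamma(v_i{+}v_j){-}s_i{-}s_j)\Gamma(v_i{+}v_j)/[\Gamma(v_i{+}v_j{-}s_i)\Gamma(v_i{+}v_j{-}s_j)]$, the $(q^{-s_i}w_i^2)_\infty/(w_i^2)_\infty$ yields the $\Gamma(2v_i)/\Gamma(2v_i-s_i)$ term, the contributions of $\gqwhittun$ and the Plancherel piece recombine into $\overline{\mathcal{G}}$ (with the Gaussian $e^{-\tau v^2/2}$ coming from $\gamma$), and expansions of the type $q^{s_j+v_i}-q^{s_i+v_j}\sim -\e(s_j+v_i-s_i-v_j)$ reproduce the rational prefactor in $v_i,s_i$. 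The Jacobian $dw_i=-\e w_i\,dv_i$ and the remaining explicit powers of $\e$ from each $q$-Pochhammer should cancel term by term after an $\e$-bookkeeping analogous to that in the reductions of \cite{borodin2014macdonald}; this is the most intricate but purely mechanical step.

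The main genuine obstacle is justifying that the limit commutes with the summation over $k$ and, for each $k$, with the $(s_1,\ldots,s_k)$-integrals along the noncompact contours $\mathcal{D}_R$. For the $w$-integrations this is immediate since they are compact contours and the integrand depends continuously on $\e$ there. Along $\mathcal{D}_R$ one needs a bound, uniform in small $\e$, on the $q$-Whittaker integrand to apply dominated convergence: this will rely on the exponential decay of $\Gamma(-s)\Gamma(1+s)=\pi/\sin(\pi s)$ in $|\Im s|$, on the uniform control of $(q^{\beta\pm s};q)_\infty$ when $\Re s=R$ lies in the strip prescribed by hypotheses (i)-(ii) (so that the poles listed in the proof of Theorem \ref{theo:MoumiLaplace} stay bounded away from the contour), and on the convergence $\Gamma_q\to\Gamma$ being uniform on compacta away from poles. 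Once this tail bound is established the term-by-term limit gives exactly the right-hand side of \eqref{eq:LaplaceWhittaker}, completing the proof.
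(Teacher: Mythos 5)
Your top-level strategy matches the paper's exactly: take $q\to1$ in Corollary~\ref{cor:LaplaceqWhittaker} with the scalings \eqref{eq:scalings1}--\eqref{eq:scalings2}, choose $z$ so that $zq^{-\lambda_1}/(1-q)\to u e^{T_1}$, perform the change of variables $w_i=q^{v_i}$, and use $\Gamma_q\to\Gamma$ to pass each Pochhammer ratio to its Gamma counterpart. (Your $z=-u(1-q)e^{-\tau/\e}\e^{t+n-1}$ and the paper's $z=u(1-q)^{t+n}e^{-\e^{-1}\tau}$ agree to leading order since $(1-q)\sim\e$; the sign is only a convention.)

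There is, however, a genuine gap in the domination argument you sketch for the $s$-integrals along $\mathcal{D}_R$. You propose to control the tails using the exponential decay of $\Gamma(-s)\Gamma(1+s)=\pi/\sin(\pi s)$ together with ``uniform control of $(q^{\beta\pm s};q)_\infty$''. Keeping poles away from the contour (your hypothesis (i)-(ii)) guarantees the $\e$-dependent integrand is finite, but not that it decays: the factors $1/\Gamma_q(x+\I y)$ grow like $e^{C|y|}$ uniformly in $q$, and with several such factors the total exponential growth can exceed the $e^{-\pi|\Im s|}$ decay from $\pi/\sin(\pi s)$. What actually saves the integral -- and what the paper singles out as essential -- is the Gaussian decay in $\Im s$ coming from the Plancherel component: after the substitution one has $\bigl|e^{-\tau\e^{-1}s}e^{\gamma q^{-s}(q^{v}-1)}\bigr|\leqslant e^{-C_4|s|^2}$ uniformly for small $\e$, which survives in the limit as the $e^{\tau s^2/2}$ inside $\overline{\mathcal{G}}(v)/\overline{\mathcal{G}}(v-s)$. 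This is the reason the corollary requires $\tau>0$ (the paper remarks that the right-hand side diverges for $\tau=0$). Your writeup mentions the Plancherel Gaussian only for the $v$-dependence, not as the source of the $s$-integrability, which is the crux of the matter. A secondary (but convenient) divergence from the paper's proof is that the paper first evaluates the compact $v$-contour integrals by residues, turning them into a finite sum over maps $p:\{1,\dots,k\}\to\{1,\dots,n\}$, so that the dominated convergence bound only needs to be carried out for the $s$-variables; you treat the $v$-integral and $s$-integral together, which is correct in principle but obscures where the Gaussian bound must come from. Also note that the summation over $k$ is finite ($0\leqslant k\leqslant n$), so no uniform control of a tail of a series is needed there, contrary to your phrasing.
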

It should be noted that the assumption that the Plancherel component $\tau$ is positive is essential here. The R.H.S of \eqref{eq:LaplaceWhittaker} would simply diverge for $\tau=0$. 
\begin{proof}
	This is the $q\to 1$ limit of Corollary \ref{cor:LaplaceqWhittaker}. Consider the scalings \eqref{eq:scalings1},  \eqref{eq:scalings2}.  Recall the $q$-Gamma and the $q$-exponential functions from Section \ref{sec:qanalogues}. 
	Setting $z = u (1-q)^{t+n}\exp(-\e^{-1} \tau)$ in Corollary \ref{cor:LaplaceqWhittaker} and using the change of variables $w_i=q^{v_i}$ yields 
	\begin{multline}
	\EWM\left[e_q\left( u e^{ T_1(q)} \right)\right] = \sum_{k=0}^{n} \ \frac{1}{k!}\ \int_{\mathcal{D}_{R}}\frac{\mathrm{d}s_1}{2\I\pi} \dots \int_{\mathcal{D}_{R}} \frac{\mathrm{d}s_k}{2\I\pi}\   \oint\frac{\mathrm{d}v_1}{2\I\pi} \dots \oint\frac{\mathrm{d}v_k}{2\I\pi}    \\   \times 
	\prod_{1\leqslant i<j\leqslant k} \frac{(s_i+v_j-s_j-v_i)(v_i-v_j)\Gamma_q(v_i+v_j)\Gamma_q(v_i+v_j-s_i-s_j)}{(v_j-s_j-v_i)(v_i-s_i-v_j)\Gamma_q(v_i+v_j-s_i)\Gamma_q(v_i+v_j-s_j)}\\ \times 
	\prod_{i=1}^k \left[ \Gamma(-s_i)\Gamma(1+s_i) \prod_{j=1}^{n} \left( \frac{\Gamma_q(\alpha_j-v_i)\Gamma_q(\alpha_j+v_i-s_i)}{\Gamma_q(\alpha_j+v_i)\Gamma_q(\alpha_j+s_i-v_i)}\right)  \frac{\Gamma_q(2v_i)}{\Gamma_q(2v_i-s_i)} \right. \\ \left. \times  \frac{\Gamma_q(\diag +v_i-s_i)}{\Gamma_q(\diag +v_i)}  \prod_{j=n+1}^t \left( \frac{\Gamma_q(\alpha_j+v_i-s_i)}{\Gamma_q(\alpha_j+v_i)} \right)\frac{(-u)^{s_i}e^{-\tau \e^{-1} s}e^{\gamma q^{v_-}(q^{-s_i}-1)} }{(1-q^{s_i})q^{v_i}} \right].
	\label{eq:LaplaceqWhittaker2}
	\end{multline}
	Indeed, consider first the limit of the L.H.S.  
	Using Proposition \ref{prop:CVtoWhittmeasurewithplancherel} and the convergence of the $q$-exponential function to the exponential on compacts, $\EWM[e_q( u e^{ T_1(q)})]$ converges to the L.H.S. of \eqref{eq:LaplaceWhittaker}. Now we consider the R.H.S.
	Note that for any $x\in \C \setminus \R_{<0}$, $ \Gamma_q(x) \xrightarrow[q\to 1]{} \Gamma(x),$
	and with $\gamma=\tau \e^{-2}$ and $w=q^v$, we have 
	$$ e^{-\tau \e^{-1} s}e^{\gamma(q^{-s}w-w)} \xrightarrow[\e \to 0]{} e^{-\tau v s +\tau s^2/2} = e^{\tau(v-s)^2/2 - \tau  v^2/2 }.$$
	This shows that the integrand in the R.H.S. in \eqref{eq:LaplaceqWhittaker2} converges pointwise to the integrand in the R.H.S. of \eqref{eq:LaplaceWhittaker}. To conclude using dominated convergence, we need to show that the integrands are uniformly (with respect to $q$) integrable.  We may first evaluate the integrals over the $v_i$.  The residues occur when $z_i = \alpha_{p(i)}$ for some choice of $p:\lbrace 1, \dots, k\rbrace \to \lbrace 1, \dots, n\rbrace$. Hence, 
	\begin{multline}
	\EWM\left[e_q\left( u e^{ T_1(q)} \right)\right] = \sum_{k=0}^{n} \ \frac{1}{k!}\ \int_{\mathcal{D}_{R}}\frac{\mathrm{d}s_1}{2\I\pi} \dots \int_{\mathcal{D}_{R}} \frac{\mathrm{d}s_k}{2\I\pi}\   \\ \sum_{p:\lbrace 1, \dots, k\rbrace \to \lbrace 1, \dots, n\rbrace }     
	\prod_{1\leqslant i<j\leqslant k} \frac{(s_i+\alpha_{p(j)}-s_j-\alpha_{p(i)})(\alpha_{p(i)}-\alpha_{p(j)})\Gamma_q(\alpha_{p(i)}+\alpha_{p(j)})\Gamma_q(\alpha_{p(i)}+\alpha_{p(j)}-s_i-s_j)}{(\alpha_{p(j)}-s_j-\alpha_{p(i)})(\alpha_{p(i)}-s_i-\alpha_{p(j)})\Gamma_q(\alpha_{p(i)}+\alpha_{p(j)}-s_i)\Gamma_q(\alpha_{p(i)}+\alpha_{p(j)}-s_j)}\\ \times 
	\prod_{i=1}^k \left[ \Gamma(-s_i)\Gamma(1+s_i) \prod_{j=1}^{n} \left( \frac{\Res{z=0}\lbrace \Gamma_q(z)\rbrace\Gamma_q(\alpha_j+\alpha_{p(i)}-s_i)}{\Gamma_q(\alpha_j+\alpha_{p(i)})\Gamma_q(\alpha_j+s_i-\alpha_{p(i)})}\right)  \frac{\Gamma_q(2\alpha_{p(i)})}{\Gamma_q(2\alpha_{p(i)}-s_i)} \right. \\ \left. \times \frac{\Gamma_q(\diag +\alpha_{p(i)}-s_i)}{\Gamma_q(\diag +\alpha_{p(i)})}  \prod_{j=n+1}^t \left( \frac{\Gamma_q(\alpha_j+\alpha_{p(i)}-s_i)}{\Gamma_q(\alpha_j+\alpha_{p(i)})} \right)\frac{(-u)^{s_i}e^{-\tau \e^{-1} s}e^{\gamma q^{-s}(q^{\alpha_{p(i)}}-1)} }{(1-q^{s_i})q^{\alpha_{p(i)}}} \right].
	\label{eq:LaplaceqWhittaker3}
	\end{multline}
	For any fixed $a,b>0$,  there exists a constant $C_1>0$ such that for any $y\in \R$ and $q\in (\frac 1 2,1)$, we have (see Lemma 2.7  in \cite{barraquand2017random}),
	\begin{equation}
	 \left| \frac{\Gamma_q(a+i y)}{\Gamma_q(b+iy)}\right|\leqslant C_1 \Big(\big\vert  y\big\vert^{\vert b-a\vert+1}+1\Big). 
	 \label{eq:estimateqGamma}
	 \end{equation}
	Moreover (see Lemma 5.11 in \cite{borodin2015height}), there exist constants $C_2, C_3$ such that for $x\in (a,b)$ and $y\in \R$, 
	$$ \left\vert  \Gamma_q(x+\I y) \right\vert <C_2 , \ \ \  \left\vert \frac{1}{\Gamma_q(x+ \I y)} \right\vert < e^{C_3 \vert y\vert}. $$ 
	By a Taylor approximation in $\e$, for $\e$ small enough, there exist a constant $C_4$ such that 
	$$ \vert e^{-\tau \e^{-1} s}e^{\gamma q^{\alpha_{i}}(q^{-s}-1)} \vert < e^{-C_4 \vert s\vert^2}, \;\; s\in\mathcal{D}_{R}.$$ 
	Using these estimates, each integrand in \eqref{eq:LaplaceqWhittaker3} can be bounded uniformly in $q\in (1/2,1)$ by 
	$$ \prod_{i=1}^k C_5 e^{C_6\vert s_i\vert - C_7 \vert s_i\vert^2}, $$
	for some constants $C_5,C_6, C_7>0$, 
	which is integrable over $\vec{s}\in \left(\mathcal{D}_{R}\right)^k$. Finally, the sum over $k$ is finite, so that dominated convergence can be applied. 
\end{proof}

\begin{corollary}
		Let $t\geqslant n\geqslant 1$, $\alpha_1, \dots,  \alpha_t>0$, $\diag\in \R$ such that $\alpha_i+\diag>0$ and $\tau>0$. Assume that the parameters  $ \alpha_1, \dots, \alpha_n$ are chosen such that for all $1\leqslant i,j \leqslant n $,  $\alpha_i-\alpha_j<1$,  and let $R\in (0,1)$ be such that $R>\alpha_i - \alpha_j$ for all $1\leqslant i,j \leqslant n$. 
		Under the Whittaker measure $\PWM_{( \alpha_1,\ldots,  \alpha_n), (\diag,  \alpha_{n+1}, \dots,  \alpha_{t}, \tau)}$,
		\begin{multline}
		\EWM\left[  e^{ u e^{- T_n}}\right]= \sum_{k=0}^{n} \frac{1}{k!} \int_{\mathcal{D}_{R}}\frac{\mathrm{d}s_1}{2\I\pi} \dots \int_{\mathcal{D}_{R}} \frac{\mathrm{d}s_k}{2\I\pi}\  
		\oint\frac{\mathrm{d}v_1}{2\I\pi} \dots \oint\frac{\mathrm{d}v_k}{2\I\pi}  \\  \times
		\prod_{1\leqslant i<j\leqslant k} \frac{(s_j+v_j-s_i-v_i)(v_i-v_j)\Gamma(v_i+v_j)\Gamma(s_i+s_j+v_i+v_j)}{(s_i+v_i-v_j)(s_j+v_j-v_i)\Gamma(s_i+v_i+v_j)\Gamma(s_j+v_i+v_j)} \\ 
		\times	\prod_{i=1}^k \left[  \Gamma(-s_i)\Gamma(1+s_i) \frac{\mathcal{G}(v_i)}{\mathcal{G}(v_i+s_i)}  \frac{\Gamma(2v_i)}{\Gamma(s_i+2v_i)} \frac{(-u)^{s_i}}{-s_i} \right],
		\label{eq:expansionNoumiWhittaker}
		\end{multline}
		where the contours for each variable $v_i$ are positively oriented circles enclosing the poles $\lbrace \alpha_j\rbrace_{1\leqslant j\leqslant n}$, and no other singularity of the integrand.
		\label{cor:LaplacecorollaryTn}
\end{corollary}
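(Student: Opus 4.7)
The plan is to obtain Corollary \ref{cor:LaplacecorollaryTn} as the $q\to 1$ degeneration of Corollary \ref{cor:LaplaceqWhittakerlambdan}, in direct parallel with the proof of Corollary \ref{cor:LaplaceWhittaker} (which degenerated Corollary \ref{cor:LaplaceqWhittaker} to produce the $T_1$-statement). Since $T_n$ is the rescaled version of $\lambda_n$ rather than $\lambda_1$, the natural starting point is the Noumi-operator Laplace transform for $\frac{1}{(q^{\la_n}z;q)_\infty}$; this also accounts for the weaker hypotheses here, since there is no moment problem for $q^{\la_n}$ analogous to the one discussed in Remarks \ref{rem:whymoredifficult} and \ref{rem:subtleties}.

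First I would apply the scalings \eqref{eq:scalings1}--\eqref{eq:scalings2} with $m=j=n$, giving $\la^{(n)}_n=\tau\e^{-2}+(t-n+1)\e^{-1}\log\e^{-1}+\e^{-1}T_n$, and set $z=u(1-q)^{n-t}e^{\tau\e^{-1}}$. A direct computation yields $\frac{q^{\la_n}z}{1-q}\to u e^{-T_n}$ as $\e\to 0$, so combining $\frac{1}{(q^{\la_n}z;q)_\infty}=e_q\!\left(\frac{q^{\la_n}z}{1-q}\right)$ with Proposition \ref{prop:CVtoWhittmeasurewithplancherel} and the uniform convergence $e_q\to\exp$ on compacts shows that the left-hand side of \eqref{eq:expansionNoumiqWhittaker} converges to $\EWM[e^{ue^{-T_n}}]$.

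On the right-hand side I would perform the change of variables $w_i=q^{v_i}$, so that the small contour around $a_i=q^{\alpha_i}$ in $w_i$ becomes a small positively-oriented contour around $\alpha_i$ in $v_i$ (the orientation reversal from $\log(q)<0$ supplying the sign in the $1/(-s_i)$ factor of \eqref{eq:expansionNoumiWhittaker}), and convert every $q$-Pochhammer into a $q$-Gamma via $(q^x;q)_\infty=(q;q)_\infty(1-q)^{1-x}/\Gamma_q(x)$. The two key cancellations are: (i) the tally of $(1-q)$-powers coming from $\gqwhittn(w_i)/\gqwhittn(q^{s_i}w_i)$, from $z^{s_i}$, and from the $(q^{s_i}w_i^2;q)_\infty/(w_i^2;q)_\infty$ factor sums to $0$ after the choice $z=u(1-q)^{n-t}e^{\tau\e^{-1}}$; and (ii) the singular exponential $e^{\gamma q^{v_i}(q^{s_i}-1)}$ expands, using $\gamma=\tau\e^{-2}$, to $e^{-\tau\e^{-1}s_i+\tau v_is_i+\tau s_i^2/2+O(\e)}$, whose divergent piece $e^{-\tau\e^{-1}s_i}$ is exactly absorbed by the $e^{\tau\e^{-1}s_i}$ from $z^{s_i}$, while the remaining factor produces the Gaussian contribution $e^{-\tau v_i^2/2}/e^{-\tau(v_i+s_i)^2/2}$ appearing in $\mathcal{G}(v_i)/\mathcal{G}(v_i+s_i)$. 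Together with the pointwise convergence $\Gamma_q\to\Gamma$, this identifies the pointwise limit of the integrand with that of \eqref{eq:expansionNoumiWhittaker}.

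The main obstacle, exactly as in the proof of Corollary \ref{cor:LaplaceWhittaker}, is upgrading pointwise convergence of the integrand to convergence of the integrals by dominated convergence. After first evaluating the $w$-residues at $w_i=\alpha_{p(i)}$ as in \eqref{eq:LaplaceqWhittaker3}, each of the finitely many $k\leqslant n$ summands reduces to a finite sum over $p\colon\{1,\dots,k\}\to\{1,\dots,n\}$ of $k$-fold integrals over $\vec s\in\left(\mathcal{D}_R\right)^k$. I would bound every $q$-Gamma ratio in the integrand by a polynomial in $|\Imag s_i|$ using the uniform estimate \eqref{eq:estimateqGamma}, and rely on the Gaussian factor $\exp(-C|s_i|^2)$ arising from $e^{\gamma(q^{s_i}-1)-\tau\e^{-1}s_i}$ — available uniformly in $q$ precisely because the Plancherel parameter $\tau$ is strictly positive — to dominate the remaining polynomial and exponential-in-$|\Imag s_i|$ growth and to yield an integrable majorant. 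Dominated convergence then upgrades the pointwise identification to \eqref{eq:expansionNoumiWhittaker}.
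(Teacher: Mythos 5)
Your proposal reproduces the paper's (very terse) proof exactly: the paper says only that the result is obtained from Corollary~\ref{cor:LaplaceqWhittakerlambdan} by degenerating to $q\to 1$ ``similarly to the proof of Corollary~\ref{cor:LaplaceWhittaker},'' and you have correctly identified the right precursor (the Noumi-operator formula for $\tfrac{1}{(q^{\la_n}z)_\infty}$, which indeed avoids the moment-problem subtleties), the right scaling $z=u(1-q)^{n-t}e^{\tau\e^{-1}}$, the cancellation of the $(1-q)$-powers and of the divergent exponentials, the emergence of the Gaussian factor $e^{-\tau v^2/2}$ in $\mathcal{G}$, and the role of the strictly positive Plancherel parameter $\tau$ in producing a $q$-uniform Gaussian majorant that legitimizes dominated convergence over the $s$-contours after taking residues in the $v$-variables. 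The only quibble is phrasing: a holomorphic change of variables cannot reverse the orientation of a contour, so the sign bookkeeping you attribute to ``orientation reversal'' actually comes from tracking the Jacobian $\mathrm{d}w_i=(\log q)\,q^{v_i}\,\mathrm{d}v_i$ with $\log q<0$ together with the factor $q^{s_i}-1\approx -\e s_i$; the contours for the $v_i$ remain positively oriented.
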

\begin{proof}
	The formula can be obtained from Corollary \ref{cor:LaplaceqWhittakerlambdan} similarly to the proof of Corollary \ref{cor:LaplaceWhittaker}.
\end{proof}

\subsubsection{Moment formulas}

\begin{proposition} Let $t\geqslant n\geqslant 1$, $\alpha_1, \dots,  \alpha_t>0$ and $\diag>0$ so that $\alpha_i+\diag>0$ for all $1\leqslant i \leqslant n$. 
	Under the Whittaker measure $\PWM_{( \alpha_1,\ldots,  \alpha_n), (\diag,  \alpha_{n+1}, \dots,  \alpha_{t}, \tau)}$, we have the following moment formulas. For any $k\in\Z_{>0}$, 
	\begin{equation}
	\EWM[e^{-k T_n}] = e^{k\tau/2} \oint\frac{\mathrm{d}w_1}{2\I\pi}\cdots \oint\frac{\mathrm{d}w_k}{2\I\pi} \prod_{1\leqslant a<b\leqslant k}\frac{w_a-w_b}{w_a-w_b-1}\,  \frac{1+ w_a+w_b}{w_a+w_b}
	\prod_{m=1}^{k} \frac{\mathcal{G}(w_m)}{ \mathcal{G}(w_m+1)}\frac{1}{2w_m},
	\label{eq:momentsTn}
	\end{equation}
	where the positively oriented contours are such that for all $1\leqslant c\leqslant k$, the contour for $w_c$ encloses $\lbrace  \alpha_j\rbrace_{1\leqslant j\leqslant n}$  and $\lbrace w_{c+1}+1, \dots, w_k+1\rbrace$, and excludes the pole of the integrand at $0$.

	For all $k\in\Z_{>0}$ such that $k< 2 \min\lbrace \alpha_i\rbrace$ and $k< \diag+ \min\lbrace \alpha_i\rbrace$,
	\begin{equation}
	\EWM[e^{k T_1}] =e^{k\tau/2}\oint\frac{\mathrm{d}w_1}{2\I\pi}\cdots \oint\frac{\mathrm{d}w_k}{2\I\pi} \prod_{1\leqslant a<b\leqslant k} \frac{w_a-w_b}{w_a-w_b-1}\, \frac{1+w_a+w_b}{2+ w_a+w_b}\\
	\prod_{m=1}^{k}  \frac{\overline{\mathcal{G}}(-w)}{\overline{\mathcal{G}}(-w-1)} (1+2w_m),
	\label{eq:momentsT1}
	\end{equation}
	where the contours are such that for all $1\leqslant c\leqslant k$, the contour for $w_c$ encloses $\lbrace - \alpha_j\rbrace_{1\leqslant j\leqslant n}$  and $\lbrace w_{c+1}+1, \dots, w_k+1\rbrace$, and excludes the poles of the integrand at $ \diag- 1$ and $ \alpha_j-1$ (for $1\leqslant j\leqslant t$). 
	\label{prop:momentsWhittaker}
\end{proposition}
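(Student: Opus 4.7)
The plan is to obtain both formulas as the $q\to 1$ degeneration of the $q$-Whittaker moment formulas \eqref{eq:qmomentslambdan} and \eqref{eq:qmomentslambda1} from Corollary~\ref{cor:momentsqWhittaker}, under the scalings \eqref{eq:scalings1}, \eqref{eq:scalings2}. From those scalings we read off that $q^{k\lambda_n}$ differs from $e^{-kT_n}$ only by the deterministic factor $e^{-k\tau\epsilon^{-1}}\epsilon^{k(t-n+1)}$, and similarly $q^{-k\lambda_1}$ corresponds to $e^{kT_1}$ multiplied by $e^{-k\tau\epsilon^{-1}}\epsilon^{-k(t+n-1)}$. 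Proposition~\ref{prop:CVtoWhittmeasurewithplancherel} already gives weak convergence of $\PQWM$ to $\PWM$; what we need is convergence of the specific exponential observables, which will be established after rewriting each side of \eqref{eq:qmomentslambdan}--\eqref{eq:qmomentslambda1} via the change of variables $w_i = e^{-\epsilon \tilde{w}_i}$.

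First I would handle the contour integrals. Under $w_i=e^{-\epsilon\tilde w_i}$, each small contour around the $a_j=e^{-\epsilon\alpha_j}$ (resp.\ around $1/a_j$) becomes, after rescaling, a small positively oriented contour around $\alpha_j$ (resp.\ $-\alpha_j$); the nesting condition ``$C_c$ encloses $q\,C_{c+1}$'' translates to ``$\tilde w_c$ encloses $\tilde w_{c+1}+1$'', as required. Taylor-expanding $1-e^{-\epsilon x}=\epsilon x+O(\epsilon^2)$ one obtains
\[
\frac{w_a-w_b}{q^{-1}w_a-w_b}\longrightarrow \frac{\tilde w_a-\tilde w_b}{\tilde w_a-\tilde w_b-1},\qquad
\frac{1-qw_aw_b}{1-w_aw_b}\longrightarrow \frac{1+\tilde w_a+\tilde w_b}{\tilde w_a+\tilde w_b},\qquad
\frac{1}{1-w_m^2}\sim\frac{1}{2\epsilon \tilde w_m},
\]
together with analogous rational limits for the second formula. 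For the kernels $\gqwhittn(w)/\gqwhittn(qw)$ and $\gqwhittun(w^{-1})/\gqwhittun(q^{-1}w^{-1})$, I would use the $q$-Gamma to Gamma convergence together with the Plancherel-exponential computation $e^{\gamma(q-1)w}\to e^{-\tau \tilde w}$ already exploited in the proof of Corollary~\ref{cor:LaplaceWhittaker}; the outcome is that these ratios converge to $\mathcal G(\tilde w)/\mathcal G(\tilde w+1)$ and $\overline{\mathcal G}(-\tilde w)/\overline{\mathcal G}(-\tilde w-1)$ respectively, picking up the Gaussian factor $e^{-\tau v^2/2}$ that defines $\mathcal G,\overline{\mathcal G}$. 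A careful bookkeeping of the powers of $\epsilon$ and of $e^{\tau/\epsilon}$ (from the $w_m/w_m^{-1}$ factors, from $1-w_m^2$, and from the Plancherel-Gaussian exponents) shows that they combine with the deterministic prefactor coming from $q^{k\lambda_n}\leftrightarrow e^{-kT_n}$ to produce exactly the stated constant $e^{k\tau/2}$.

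Next I would justify the interchange of $\lim_{\epsilon\downarrow 0}$ with integration and expectation. For the contour integrals, because the $\tilde w$-contours are small loops around fixed points, the domain is compact and it suffices to bound the integrand uniformly in $\epsilon\in(0,\epsilon_0)$. Such a uniform bound follows from the $q$-Gamma estimate \eqref{eq:estimateqGamma} of \cite{barraquand2017random} and the Gaussian damping supplied by the positive Plancherel parameter $\tau>0$ --- exactly as in the dominated-convergence argument closing the proof of Corollary~\ref{cor:LaplaceWhittaker}. On the expectation side, one needs $\epsilon^{-k(t-n+1)}e^{k\tau/\epsilon}\EQWM[q^{k\lambda_n}]\to \EWM[e^{-kT_n}]$ (and its analogue for $T_1$); this amounts to upgrading the weak convergence of $\PQWM$ to uniform integrability of the rescaled observables. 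For $e^{-kT_n}$, any moment exists and the bound is routine. For $e^{kT_1}$, the restriction $k<\min\{2\alpha_i,\diag+\alpha_i\}$ is the same as the admissibility condition for \eqref{eq:qmomentslambda1}, and the contour formula itself --- being uniformly bounded in $\epsilon$ --- provides the required uniform moment bound for the pre-limit random variables, whence standard uniform-integrability yields convergence of expectations.

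The main obstacle is this last point: ensuring uniform integrability precisely at the boundary of the admissible range in \eqref{eq:momentsT1}. The rest of the argument is a systematic $q\to 1$ degeneration, but near the threshold $k=2\min\alpha_i$ or $k=\diag+\min\alpha_i$ the contour in \eqref{eq:qmomentslambda1} is pinched between poles (the nested $q\cdot w_{c+1}$ poles and the unwanted poles at $a_j/q$), and the estimates on $\gqwhittun(w^{-1})/\gqwhittun(q^{-1}w^{-1})$ must be carried out with care to retain a bound integrable on the $\tilde w$-contours as $\epsilon\downarrow 0$. Once this is accomplished, dominated convergence applied to both sides of \eqref{eq:qmomentslambdan} and \eqref{eq:qmomentslambda1} yields \eqref{eq:momentsTn} and \eqref{eq:momentsT1}.
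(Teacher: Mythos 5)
Your proposal takes a genuinely different route from the paper, and unfortunately the route you chose is one the paper explicitly warns against. Immediately after Proposition~\ref{prop:momentsWhittaker} there is a remark stating: ``Proposition~\ref{prop:momentsWhittaker} corresponds to the $q\to 1$ limit of analogous formulas in the $q$-Whittaker case stated as Corollary~\ref{cor:momentsqWhittaker}. However, since weak convergence does not imply convergence of moments in general, Proposition~\ref{prop:momentsWhittaker} cannot be deduced from Corollary~\ref{cor:momentsqWhittaker}.'' The reason the same degeneration strategy works for Corollary~\ref{cor:LaplaceWhittaker} but not here is precisely that the $q$-Laplace observable $1/(zq^{-\lambda_1})_\infty$ is bounded, so weak convergence of the measures suffices; the moment observables $q^{\pm k\lambda_{1,n}}$ are unbounded, and so additional uniform-integrability input is genuinely required.

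Your attempt to supply that input is the gap. You argue that ``the contour formula itself --- being uniformly bounded in $\epsilon$ --- provides the required uniform moment bound for the pre-limit random variables, whence standard uniform-integrability yields convergence of expectations.'' But boundedness of $\sup_\epsilon \EQWM[q^{-k\lambda_1}]$ does \emph{not} give uniform integrability of the family $\{q^{-k\lambda_1}\}_\epsilon$. To conclude $\EQWM[q^{-k\lambda_1}]\to\EWM[e^{kT_1}]$ from weak convergence, you need a de la Vall\'ee Poussin--type condition, e.g.\ a uniform bound on $\EQWM[q^{-k'\lambda_1}]$ for some $k'>k$. But the admissibility window $k<\min\{2\alpha_i,\diag+\alpha_i\}$ does not guarantee that $k+1$ (or any $k'>k$ with an available contour formula) is admissible, and Corollary~\ref{cor:momentsqWhittaker} only covers integer moments. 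So for $k$ close to the threshold your argument has no handle. The same issue, though less dangerous, affects the $e^{-kT_n}$ case.

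The paper's actual proof bypasses the $q\to 1$ limit entirely and works directly at the Whittaker level. It uses the Whittaker degeneration \eqref{eq:diffoperatorWhitt} of the Macdonald difference operator, namely
\[
\sum_{i=1}^n \prod_{j\neq i} \frac{1}{\alpha_i-\alpha_j}\,\Tshift_i\, \psi_{\I\alpha}(x) = e^{-x_n}\psi_{\I\alpha}(x),
\]
together with the variant for $e^{x_1}$. Iterating this $k$ times, applying it to the Whittaker Littlewood-type normalization from Proposition~\ref{prop:integratestoone} (whose validity has already been established for the needed range of shifted $\alpha_i$'s), and rewriting the resulting sum over residues as a nested contour integral --- exactly as in the Macdonald-level Proposition~\ref{prop:momentsMacdonald} --- yields \eqref{eq:momentsTn} and \eqref{eq:momentsT1}. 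Your computation of the limits of the contour integrands in the $q\to1$ degeneration is consistent with these formulas and would be a useful cross-check, but it cannot by itself serve as a proof without a substantially sharper uniform-integrability argument than you provide.
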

\begin{remark}
Proposition \ref{prop:momentsWhittaker} corresponds to the $q\to 1$ limit of analogous formulas in the $q$-Whittaker case  stated as Corollary \ref{cor:momentsqWhittaker}. However, since weak convergence does not imply convergence of moments in general, Proposition  \ref{prop:momentsWhittaker} cannot be deduced from Corollary \ref{cor:momentsqWhittaker}. 
\end{remark}
\begin{proof}
	Observe that 
	$$\frac{\mathcal{G}(w)}{\mathcal{G}(w+1)} =  e^{\tau w} (w + \diag) \prod_{i=1}^t \left(w+\alpha_i\right) \prod_{j=1}^{n}\left( \frac{1}{w - \alpha_j}\right)$$
	and 
	$$ \frac{\overline{\mathcal{G}}(-w)}{\overline{\mathcal{G}}(-w-1)} = \frac{e^{\tau w}}{1+w-\diag}\prod_{i=1}^t \left( \frac{1}{\alpha_i-w-1} \right) \prod_{j=1}^{n} \left(\frac{1}{w + \alpha_j}\right).$$

We use a similar approach to the proof of Proposition \ref{prop:momentsMacdonald} using operators diagonalized by Whittaker functions. 
The eigenrelation \eqref{eq:eigenrelationD}  for Macdonald difference operator $\DD_n^1$ becomes the following in the Whittaker limit (see \cite[Lemma 4.1.36]{borodin2014macdonald}). 
\begin{equation}
\sum_{i=1}^n \prod_{j\neq i} \frac{1}{\alpha_i-\alpha_j} \Tshift_i \psi_{\I\alpha_1, \dots, \I\alpha_n}(x) = e^{-x_n} \psi_{\I\alpha_1, \dots, \I\alpha_n}(x),
\label{eq:diffoperatorWhitt}
\end{equation}
where $\Tshift_i$ acts on functions in the variables $\alpha_1, \dots , \alpha_n$ by shifting the $i$th coordinate by $1$. Using that $\psi_{w}(x_1, \dots, x_n) = \psi_{-w}(-x_{n}, \dots, -x_1)$, we obtain that 
\begin{equation}
\sum_{i=1}^n \prod_{j\neq i} \frac{1}{\alpha_j-\alpha_i} \Tshift^{-1}_i \psi_{\I\alpha_1, \dots, \I\alpha_n}(x) = e^{x_1} \psi_{\I\alpha_1, \dots, \I\alpha_n}(x),
\label{eq:diffoperatorWhittbis}
\end{equation}
where $\Tshift^{-1}_i$ acts on functions in the variables $\alpha_1, \dots , \alpha_n$ by shifting the $i$th coordinate by $-1$.

Let us iterate the relation \eqref{eq:diffoperatorWhitt} $k$ times, and use Proposition \ref{prop:integratestoone}. We obtain 
\begin{equation}
\EWM[e^{-k T_n}]  = \frac{1}{Z(\alpha_1, \dots, \alpha_n)} \sum_{i_1, \dots, i_k=1}^n \prod_{\ell=1}^k \left( \prod_{j\neq {i_{\ell}}} \frac{1}{\alpha_{i_{\ell}}-\alpha_j}\right) \Tshift_{i_{\ell}}   Z(\alpha_1, \dots, \alpha_n).
\label{eq:actionoperatrorTn}
\end{equation}
where 
$$	Z(\alpha_1, \dots, \alpha_n) = e^{ \tau\sum_{i=1}^n \alpha_i^2/2}\ \prod_{i=1}^n{\Gamma(\diag+ \alpha_i)}\prod_{j=n+1}^{t}{\Gamma(\alpha_j+\alpha_i)}\prod_{1\leqslant i<j\leqslant n}{\Gamma(\alpha_i+\alpha_j)}$$
	
Similarly, if  $\alpha_i>2k$ and $\alpha_i+\diag >k$ for all $1\leqslant i\leqslant n$, we may iterate the relation \eqref{eq:diffoperatorWhittbis} $k$ times and use Proposition \ref{prop:integratestoone}. We obtain,    	
\begin{equation}
\EWM[e^{k T_1}]  = \frac{1}{Z(\alpha_1, \dots, \alpha_n)} \sum_{i_1, \dots, i_k=1}^n \prod_{\ell=1}^k \left( \prod_{j\neq {i_{\ell}}} \frac{1}{\alpha_j - \alpha_{i_{\ell}}}\right) \Tshift^{-1}_{i_{\ell}}   Z(\alpha_1, \dots, \alpha_n).
\label{eq:actionoperatrorTone}
\end{equation}
The assumptions on the $\alpha_i$'s ensure that one can apply Proposition \ref{prop:integratestoone} in the summand after applying \eqref{eq:diffoperatorWhittbis} $k$ times. To conclude the proof of the proposition, we need to rewrite \eqref{eq:actionoperatrorTn} and \eqref{eq:actionoperatrorTone} as contour integrals, which can be done very similarly to Proposition \ref{prop:momentsMacdonald}. 
\end{proof}

\subsection{Limits of $q$-Whittaker dynamics}
\label{sec:Whittakerdynamics}

We may represent the ascending half-space Whittaker process $T$ from Definition \ref{def:PWM} as an array 
\begin{center}
	\begin{tikzpicture}[scale=0.4]
	\draw (0,0) node{$ T_1^{(1)}$};
	\draw (-2,2) node{$ T_2^{(2)}$};
	\draw (2,2) node{$ T_1^{(2)}$};
	\draw (-4, 4) node[rotate=-45]{$ \ldots$};
	\draw (4,4) node[rotate=45]{$ \ldots$};
	\draw (-7,7) node{$ T_{n-1}^{(n-1)}$};
	\draw (-3,7) node{$  T_{n-2}^{(n-1)}$};
	\draw (0,7)  node{$\ldots$};
	\draw (3,7) node{$  T_{2}^{(n-1)}$};
	\draw (7,7) node{$ T_{1}^{(n-1)}$};
	\draw (-9,9) node{$ T_{n}^{(n)}$};
	\draw (-5,9) node{$ T_{n-1}^{(n)}$};
	\draw (0,9)  node{$\ldots$};
	\draw (5,9) node{$  T_{2}^{(n)}$};
	\draw (9,9) node{$  T_{1}^{(n)}$};
	\end{tikzpicture} 
\end{center}
This triangular array is the (scaling) limit of the sequence of random $q$-Whittaker partitions $\la^{(t,1)}\prec \dots \prec \la^{(t,n)}$. From Sections \ref{sec:rowinsertion} and \ref{sec:columninsertion}, there exist (multivariate) Markov dynamics which map $\la^{(t,1)}\prec \dots \prec \la^{(t,n)}$ to $\la^{(t+1,1)}\prec \dots \prec \la^{(t+1,n)}$. In the $q\to 1$ limit, the triangular array is not interlacing anymore, but 
there should exist multivariate Markov dynamics transporting $T\in \R^{n(n+1)/2}$ distributed as an ascending half-space Whittaker process to  $T'\in \R^{n(n+1)/2}$ distributed as an ascending half-space Whittaker process with updated specializations, in such a way that the left edge and the right edge of the triangular array are both marginally Markov and correspond respectively to the limit of the dynamics from Sections \ref{sec:rowinsertion} and \ref{sec:columninsertion}.

In this section, we describe these limiting marginal dynamics. Connecting rigorously the resulting model to the law of $T_1^{(n)}$ and $T_{n}^{(n)}$ under the half-space Whittaker measure presents some technicalities (related to the fact that we must keep a positive Plancherel component when we work with half-space Whittaker measures) that we discuss in more details in Section \ref{sec:relationWhittakerpolymer}. 

Throughout this Section, we use the same scalings of parameters as in Section \ref{sec:rigorousconvwithplancherel}.
We will often use the letter $\theta$ to denote the parameter of $q$-deformed probability distributions, and we will always scale this parameter as $ \theta=e^{-\e \tilde{\theta}}$.
Consider a half-space $q$-Whittaker process indexed by a path $\pathh$ as in Figure \ref{pathWhittaker}, where the sequence of specializations $\bm\uprho$ is chosen as in Section \ref{sec:rskdynamics} (that is, edges $ (i-1, j) \leftarrow (i,j) $ are labeled by single variable specialization $a_i$, edges $ (i, j-1) \rightarrow (i,j) $ are labeled by single variable specialization $a_j$, and the diagonal edge is labeled by specialization $\diagq$). 
\begin{figure}
	\begin{tikzpicture}[scale=1]
	\begin{scope}[decoration={
		markings,
		mark=at position 0.5 with {\arrow{<}}}]
	\draw[->, >=stealth', gray] (0,0) -- (8.5, 0);
	\draw[->, >=stealth', gray] (0,0) -- (0,5.5);
	\draw[gray, dotted] (0,0) grid(8.5, 5.5); 
	\draw[gray, dotted] (5, 3.5)  -- (5,0) node[anchor = north]{$i$};
	\draw[gray, dotted] (0,3) node[anchor= east]{$j$} -- (5.5,3) ;
	\draw[gray] (4.5,2) node[anchor=north]{$ a_i$};
	\draw[gray] (4,2.5) node[anchor=east]{$ a_j$};
	\draw[gray] (5,3) node[anchor =south west]{$(i,j)$};
	\fill[gray] (5,3) circle(0.08);
	\fill (7,5) circle(0.08);
	\draw[] (7,0) node[anchor = north]{$t$};
	\draw[dashed] (0,5) node[anchor= east]{$n$} -- (7,5);
	\draw[fleche] (0,0) -- (5,5) ;
	\draw[fleche] (5,5) -- (7,5);
	\draw[fleche] (7,5) -- (7,0);
	\draw (7,5) node[anchor =south west]{$(t,n)$};
	\end{scope} 
	\end{tikzpicture}
	\caption{The path $\pathh \in \admpath$ considered in Section \ref{sec:Whittakerdynamics}}
	\label{pathWhittaker}
\end{figure}

\subsubsection{Right edge dynamics}
\label{sec:Whittakerdynamicsright}
We are interested here in the scaling (as in \cite[Theorem 4.2.4]{borodin2014macdonald} and \cite[Definition 8.4]{matveev2015q})

\begin{equation} \lambda_j^{(t,n)} = (t+n+1-2j)\e^{-1}\log \e^{-1} + \e^{-1}\log(R_j^{\e}(t,n)),
\label{eq:scalingrightedge}
\end{equation}
where $\lambda^{(t,n)}$ is distributed according to the half-space $q$-Whittaker measure $ \PQWM_{(a_1, \dots, a_n), (\diagq, a_{n+1}, \dots, a_t)} $. We will show that under the scaling of parameters  \eqref{eq:scalings1}, the  random variable $\big( R^{\e}_j(t,n)\big)_{1\leqslant j\leqslant n}$ weakly converges to some random variable $\big(R_j(t,n)\big)_{1\leqslant j\leqslant n}$.

\begin{definition}
The Gamma distribution with (shape) parameter $\tilde\theta$ (and scale parameter $1$), denoted $ \mathrm{Gamma}(\tilde\theta) $,  is the continuous probability distribution on $\R_{>0}$ with density 
$$ \frac{1}{\Gamma(\tilde\theta)}x^{\tilde\theta -1}e^{-x}.$$
Thus, if $X\sim \mathrm{Gamma}(\tilde\theta)$ then $X^{-1}$ has the inverse Gamma distribution, denoted $ \mathrm{Gamma}^{-1}(\tilde\theta) $,  and density 
$$ \frac{1}{\Gamma(\tilde\theta)}x^{-\tilde\theta -1}e^{-1/x}.$$
\label{def:Gammadist}
\end{definition}

Assume that we have sampled partitions $\la^{(v)}$ for $v$ belonging to or sitting below the path $\pathh$ of Figure \ref{pathWhittaker}, according to the rules of Section \ref{sec:rskdynamics} and using operators $\Udiag$ and $\U_{\rm row}$. 
For $1\leqslant m\leqslant s\leqslant t$ and $m \leqslant n$,  we have set 
$ \lambda_j^{(s,m)} = (s+m+1-2j)\e^{-1}\log \e^{-1} + \e^{-1}\log(R_j^{\e}(s,m)).$
Theorem 8.7 in \cite{matveev2015q} shows that in the situation of the $q$-Whittaker process in the full space case, the array $R^{\e}$ converges to some explicit limit $R$. In the course of proving this result, \cite{matveev2015q} shows that the dynamics $\U_{\rm row}$ have a limit in the following sense. 
\begin{lemma}[\cite{matveev2015q}]
	Assume that $\lambda_j^{(s-1,m)}, \lambda_j^{(s-1,m-1)}, \lambda_j^{(s,m-1)}$ are such that 
	there exist real random variables $R_j(s-1,m), R_j(s-1,m-1)$ and $R_j(s,m-1)$ so that we have the weak convergences
	\begin{align*}
	R_j^{\e}(s-1,m-1) &\xRightarrow[\e\to 0]{}R_j(s-1,m-1),\\
	R_j^{\e}(s-1,m)&\xRightarrow[\e\to 0]{}R_j(s-1,m),\\
	R_j^{\e}(s,m-1)&\xRightarrow[\e\to 0]{}R_j(s,m-1).
	\end{align*} 
	 If $\lambda^{(s,m)}$ is sampled according to the dynamics  $\U_{\rm row}(\lambda^{(s,m)} \vert \lambda^{(s,m-1)}, \lambda^{(s-1,m-1)}, \lambda^{(s-1,m)})$, then the sequence $R^{\e}_j(s,m)$ ($1\leqslant j\leqslant m$) converges weakly as $\e$ goes to zero to some sequence $\big(R_j(s,m)\big)_{1\leqslant j\leqslant m}$ whose distribution is explicit. In particular, $R_1(s,m)$ depends only on $R_1(s-1,m), R_1(s-1,m-1), R_1(s,m-1)$, and we have 
	$$ R_1(s,m) = d_{sm} \big( R_1(s, m-1) + R_1(s-1, m)\big)$$
	where $d_{sm}$ is a $\mathrm{Gamma}^{-1}(\alpha_m + \alpha_s)$ random variable, the random variables  $\big(d_{sm}\big)_{s,m}$ are independent, and we adopt the convention that $R_1(0,j)=R_1(t,0)=0$.
	\label{lem:limitqRSK}
\end{lemma}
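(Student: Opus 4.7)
The plan is to follow the strategy of Matveev–Petrov in \cite{matveev2015q}, adapting it to the half-space setting. The argument factorizes: since the bulk transition operator $\U_{\rm row}$ is defined identically in the full-space and half-space cases, the only novelty compared to \cite{matveev2015q} is that some of the inputs $R_j^\e(\cdot,\cdot)$ come from boundary dynamics (via $\Udiag$). But those enter the argument only through their weak limits, which are assumed in the hypothesis. So it is enough to show that, conditional on the scaling assumption for the three input corners, the output $R_j^\e(s,m)$ has an explicit limit of the claimed form.

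The starting point is Lemma~\ref{lem:bulkparticledynamicslambda1}, which gives a closed-form description of the first-part marginal of $\U_{\rm row}$: $\pi_1=\nu_1+V+W$ with $V\sim \mathrm{qGeom}(a_sa_m)$ and $W\sim \phidist_{q^{-1},q^{\nu_1-\mu_1},0}(\,\cdot\,|\kappa_1-\mu_1)$ independent. I would substitute $\lambda_1^{(u,v)}=(u+v-1)\e^{-1}\log\e^{-1}+\e^{-1}\log R_1^\e(u,v)$ and rescale everything by $\e$. For the $q$-geometric factor $V$, using the density $\PP(V=k)=(a_sa_m)^k(a_sa_m;q)_\infty/(q;q)_k$ together with the asymptotics in Lemma~\ref{lem:qqestimates} and the $q$-Gamma convergence $(e^{-\e\tilde\theta};q)_\infty\sim \e^{1-\tilde\theta}e^{\A(\e)}/\Gamma(\tilde\theta)$, a direct computation under the change of variables $k=\e^{-1}\log\e^{-1}+\e^{-1}y$ yields that the density of $\e V-\log\e^{-1}$ converges to $\Gamma(\alpha_s+\alpha_m)^{-1}e^{-(\alpha_s+\alpha_m)y-e^{-y}}\,\mathrm{d}y$, i.e., $\e V-\log\e^{-1}\Rightarrow \log d_{sm}$ with $d_{sm}\sim \mathrm{Gamma}^{-1}(\alpha_s+\alpha_m)$.

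The main obstacle is the convergence of $W$, whose law $\phidist_{q^{-1},q^{\nu_1-\mu_1},0}(\,\cdot\,|\kappa_1-\mu_1)$ is a truncated $q$-deformed distribution whose parameters themselves are random and of order $\e^{-1}\log\e^{-1}$. Writing $a=R_1(s-1,m)/R_1(s-1,m-1)$ and $b=R_1(s,m-1)/R_1(s-1,m-1)$, the parameters scale as $q^{\nu_1-\mu_1}\approx\e/a$ and $\kappa_1-\mu_1\approx\e^{-1}\log(\e^{-1}b)$. The claim I would prove is that $\e W\Rightarrow \log(1+b/a)$. To do this, I would re-express $\phidist_{q^{-1},\xi,0}(s|y)$ in terms of $q$-Pochhammer ratios, plug in the Whittaker scaling, and again apply Lemma~\ref{lem:qqestimates} term by term (treating separately the regime $s=O(\e^{-1})$, where $\e s$ converges to a continuous parameter, and $s=O(\e^{-1}\log\e^{-1})$, where $y-s$ is of the smaller order). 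The Gaussian-type concentration of the $\phidist$-weights then localizes $\e W$ on the deterministic value $\log(1+b/a)$. A uniform tail estimate coming from the inequality \eqref{eq:qqestimateuniform} provides the tightness needed to turn pointwise density convergence into weak convergence, exactly as in the analogous lemmas of \cite{matveev2015q}.

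Combining the two pieces, $\e(V+W)-\log\e^{-1}\Rightarrow \log d_{sm}+\log(1+R_1(s,m-1)/R_1(s-1,m))$, and rewriting via
\[
\log R_1(s,m)=\log R_1(s-1,m)+\bigl(\e(V+W)-\log\e^{-1}\bigr),
\]
gives $R_1(s,m)=d_{sm}\bigl(R_1(s,m-1)+R_1(s-1,m)\bigr)$, independent of the other $d$'s by the independence structure of $\U_{\rm row}$ across different bulk vertices. The convergence of the full vector $(R_j^\e(s,m))_{1\le j\le m}$ requires iterating this sort of argument on the multivariate description of $\U_{\rm row}$ in \cite[Sec.~6.2]{matveev2015q}, but the marginal statement for $R_1$, which is all that is needed for the rest of the paper, follows from the computation above.
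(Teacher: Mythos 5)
The paper itself gives no proof of this lemma; it cites \cite{matveev2015q} directly (see the sentence immediately before the statement, which points to Theorem~8.7 there). Your proposal is therefore a reconstruction rather than something to be matched against the paper's text. The scheme you set up is the right one: using Lemma~\ref{lem:bulkparticledynamicslambda1} to write $\pi_1=\nu_1+V+W$, recognizing that the convergence of $V$ is precisely Lemma~\ref{lem:limitqgeom}, and checking that the deterministic limit $\e W\Rightarrow\log(1+b/a)$ (with $a=R_1(s-1,m)/R_1(s-1,m-1)$, $b=R_1(s,m-1)/R_1(s-1,m-1)$) exactly produces $R_1(s,m)=d_{sm}(R_1(s,m-1)+R_1(s-1,m))$ --- that algebra is correct, and is the same route taken in \cite{matveev2015q}.

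The genuine gap is the claimed convergence of the $W$-factor, which is the whole content of the lemma and which you only outline. Two concrete issues. First, the description ``Gaussian-type concentration'' is not right; what has to be shown is that the weights $\phidist_{q^{-1},q^{\nu_1-\mu_1},0}(k\,|\,\kappa_1-\mu_1)$ develop a sharp mode at $k\approx\e^{-1}\log(1+b/a)$ with rapid decay on \emph{both} sides (down to $k=0$ and up to $k=\kappa_1-\mu_1$), and the uniform tail bound \eqref{eq:qqestimateuniform} must be invoked in both directions to get tightness; you do not carry out the $q$-Pochhammer bookkeeping that delivers this, and since $\phidist_{q^{-1},\cdot,\cdot}$ involves a sign-alternating rewriting of the $q$-factorials, this step is genuinely delicate rather than routine. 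Second, the parameters $\nu_1-\mu_1$ and $\kappa_1-\mu_1$ are random and only converge after rescaling, so the pointwise asymptotics of the $\phidist$-weights have to be promoted to a statement about the conditional law given a Skorokhod coupling of the three input corners; you gesture at this (``conditional on the scaling assumption'') but never actually formulate it. Similarly, the independence of $d_{sm}$ from the earlier $R$-values, while true, is asserted rather than derived from the Markov structure of $\U_{\rm row}$. None of these issues suggests the approach would fail --- they are all resolved in \cite[Section~8]{matveev2015q} --- but as written the proposal does not close the decisive estimate.
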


In order to have a recurrence characterizing  the law of $R(t,j)$ completely,  we examine the degeneration of the boundary operator $\Udiag$ as $\e\to0$. We will only focus on the projection to the first coordinate coordinate $\lambda_1$ (i.e.,  $R_1$ in the limit), which is marginally Markov (by Lemma \ref{lem:firstpartdynamics}). 
\begin{lemma}[Lemma 8.15 \cite{matveev2015q}]
Let $X_\e$ be a $\Z_{\geqslant 0}$-family of random variables with $\mathrm{qGeom}(\theta)$ distribution and set 
$$X_\e = \e^{-1}\log(\Gamma_\e) + \e^{-1}\log(\e^{-1}).$$
Then, if $\theta=e^{-\e \tilde\theta}$, $\Gamma_{\e}$ weakly converges to a $\mathrm{Gamma}^{-1}(\tilde{\theta})$ random variable. 
\label{lem:limitqgeom}
\end{lemma}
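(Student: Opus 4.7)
The plan is a direct local-limit calculation: write out the atomic density of $\Gamma_\e = \e\, e^{\e X_\e}$, extract the leading $\e$-asymptotics of each factor, and verify that the result matches the $\mathrm{Gamma}^{-1}(\tilde\theta)$ density at every fixed $x>0$. Weak convergence then follows by a Scheff\'e-type argument.

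First, I would fix $x>0$ and consider the integer $k=k_\e(x)$ such that $\e e^{\e k} \leqslant x < \e e^{\e (k+1)}$; equivalently, $\e k = \log x + \log\e^{-1} + O(\e)$, which is of the form $\e^{-1}\log\e^{-1} + \e^{-1} y$ with $y=\log x$. The atomic weight of $\Gamma_\e$ at the point $\e e^{\e k}$ is $\PP(X_\e=k)=\theta^k(\theta;q)_\infty/(q;q)_k$, and the spacing between consecutive atoms is $\e e^{\e k}(e^\e-1)\sim \e\,x$, so the effective density is
\begin{equation*}
p_\e(x)\;=\;\frac{1}{\e\,x}\cdot\frac{\theta^{k_\e(x)}(\theta;q)_\infty}{(q;q)_{k_\e(x)}}.
\end{equation*}

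Next, I would estimate each of the three factors. With $\theta=e^{-\e\tilde\theta}$ and $\e k=\log(x/\e)+o(1)$, one has $\theta^k=\exp(-\tilde\theta(\log x-\log\e)+o(1))=\e^{\tilde\theta}x^{-\tilde\theta}(1+o(1))$. For the denominator, Lemma \ref{lem:qqestimates} (applied with $y=\log x$) gives $\log(q;q)_k=\mathcal{A}(\e)+e^{-\log x}+o(1)=\mathcal{A}(\e)+1/x+o(1)$. For $(\theta;q)_\infty$, I would use the definition of the $q$-Gamma function in \eqref{eq:qGamma} to write $(\theta;q)_\infty=(q;q)_\infty(1-q)^{1-\tilde\theta}/\Gamma_q(\tilde\theta)$; combining $\log(q;q)_\infty=\mathcal{A}(\e)+o(1)$, $(1-q)^{1-\tilde\theta}=\e^{1-\tilde\theta}(1+o(1))$ and $\Gamma_q(\tilde\theta)\to\Gamma(\tilde\theta)$ (valid since $\tilde\theta>0$ ensures $\tilde\theta$ is not a non-positive integer), this yields $(\theta;q)_\infty=\e^{1-\tilde\theta}e^{\mathcal{A}(\e)}\,\Gamma(\tilde\theta)^{-1}(1+o(1))$. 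Plugging back into $p_\e(x)$, the factors $\e^{\tilde\theta}\e^{1-\tilde\theta}$ combine to $\e$, the two $e^{\mathcal{A}(\e)}$ cancel, and one is left with
\begin{equation*}
p_\e(x)\;\longrightarrow\;\frac{1}{\Gamma(\tilde\theta)}\,x^{-\tilde\theta-1}e^{-1/x},
\end{equation*}
which is exactly the density of $\mathrm{Gamma}^{-1}(\tilde\theta)$.

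The final step is to upgrade this pointwise local convergence to weak convergence. I would realise $\Gamma_\e$ as a random variable with density $\hat p_\e(x):=p_\e(x)\cdot\mathbf{1}_{x\in[\e e^{\e k},\e e^{\e(k+1)})}$ with respect to Lebesgue measure, so that each $\hat p_\e$ integrates to $1$; the pointwise limit $\hat p_\e(x)\to p(x)$ a.e., together with $\int p=1$, gives $\int|\hat p_\e-p|\,dx\to 0$ by Scheff\'e's lemma, and weak convergence follows. The only delicate point is to justify that the estimates in Lemma \ref{lem:qqestimates} give the needed uniformity on compact $x$-intervals and a bound that controls tails near $x=0$ and $x=\infty$; for this I would invoke the uniform lower bound \eqref{eq:qqestimateuniform} on $(q;q)_k$ and the elementary bound $(\theta;q)_\infty/(q;q)_k\leqslant(\theta;q)_\infty$ to dominate $\PP(X_\e=k)$ by a geometric tail, which, translated through the change of variables, decays fast enough at both ends of $(0,\infty)$ to apply dominated convergence. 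This is the only step that goes beyond routine asymptotics; everything else is bookkeeping with the explicit expansions recalled above.
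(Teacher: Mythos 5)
Your proof is correct and runs on exactly the ingredients the paper uses to prove the parallel Lemma \ref{lem:convqinversegaussian}: compute the atomic probability at a fixed scaled location, apply the $(q;q)_k$ estimates of Lemma \ref{lem:qqestimates} together with the $q$-Gamma convergence \eqref{eq:qGamma} (valid here since $\tilde\theta>0$), observe that the $e^{\A(\e)}$ and $\e$-powers cancel to leave the $\mathrm{Gamma}^{-1}(\tilde\theta)$ density, and then upgrade to weak convergence --- where you invoke Scheff\'e rather than the tightness bound \eqref{eq:qqestimateuniform} the paper uses in Lemma \ref{lem:convqinversegaussian}, a valid and in fact slightly leaner variant (once you use Scheff\'e, the closing remarks about dominated convergence are redundant). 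The paper itself defers to \cite{matveev2015q} for this particular statement, but your argument matches the method it uses in the neighboring, more elaborate boundary case.
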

Lemma \ref{lem:limitqgeom} applied to the result of Lemma \ref{lem:firstpartdynamics} implies that in the $ \e \to 0 $  limit, for $t\geqslant 2$
$$ R_1(t,t) = d_{tt} R_1(t,t-1)$$ 
where $d_{tt}$ is a $\mathrm{Gamma}^{-1}(\diag+\alpha_t)$ random variable, and $R(1,1)= d_{11}$.

\begin{proposition}
For any path $\pathh\in \admpath$, the sequence $ \left(R^{\e}_1(t,n) \right)_{(t,n)\in \pathh}$ defined by \eqref{eq:scalingrightedge} converges in distribution as $\e$ goes to zero to a sequence $\left(R(t,n) \right)_{(t,n)\in \pathh}$ such that 
$$ \left( Z(t,n) \right)_{(t,n)\in \pathh} \overset{(d)}{=} \left(R(t,n) \right)_{(t,n)\in \pathh},$$
where $ Z(t,n)$ is the partition function of the half-space log-gamma directed polymer (Definition \ref{def:LogGammapolymer}). 
In particular, under the notations above, 
$$ (1-q)^{t+n-1} q^{-\la_1^{(t,n)}} \xRightarrow[q\to 1]{} Z(t,n). $$
\label{prop:LogGammaqWhittakerlimit}
\end{proposition}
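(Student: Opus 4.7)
The plan is a straightforward induction along the growth of the path $\pathh$, using the Markov update rules from Section~\ref{sec:rskdynamics} whose one--step scaling limits have already been identified.

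Fix an enumeration $v_1,v_2,\ldots$ of the vertices of $\pathh$ compatible with the growth procedure of Section~\ref{sec:Macdyn}, i.e.\ each $v_k$ is obtained from the partial path at step $k-1$ by either a bulk move (to a vertex $(s,m)$ with $s>m$) governed by $\U_{\mathrm{row}}$ or a boundary move (to a diagonal vertex $(m,m)$) governed by $\Udiag$. I would prove by induction on $k$ the joint weak convergence of $\bigl(R^{\e}_1(v_j)\bigr)_{j\leqslant k}$ to a limit $\bigl(R_1(v_j)\bigr)_{j\leqslant k}$ and identify this limit with $\bigl(Z(v_j)\bigr)_{j\leqslant k}$. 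The base case is the vertex $(1,1)$: by Lemma~\ref{lem:firstpartdynamics} the first part $\lambda^{(1,1)}_1$ is $\mathrm{qGeom}(\diagq a_1)$, and Lemma~\ref{lem:limitqgeom} with $\diagq a_1 = e^{-\e(\diag+\alpha_1)}$ gives $R^\e_1(1,1)\Rightarrow d_{11}\sim \mathrm{Gamma}^{-1}(\diag+\alpha_1)$, which matches the single-box partition function $Z(1,1)=w_{1,1}$.

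For the induction step there are two cases. If $v_{k+1}=(s,m)$ is a bulk vertex then, conditionally on the partitions already sampled, $\lambda^{(s,m)}$ is drawn from $\U_{\mathrm{row}}\bigl(\cdot\,\big|\,\lambda^{(s,m-1)},\lambda^{(s-1,m-1)},\lambda^{(s-1,m)}\bigr)$ using fresh independent randomness. The inductive hypothesis provides joint convergence of $R^\e_1$ at the three input vertices; Lemma~\ref{lem:limitqRSK} then upgrades this to joint convergence at $v_{k+1}$ and furnishes the recursion
\[
R_1(s,m)=d_{sm}\bigl(R_1(s,m-1)+R_1(s-1,m)\bigr),\qquad d_{sm}\sim\mathrm{Gamma}^{-1}(\alpha_s+\alpha_m),
\]
with $d_{sm}$ independent of everything sampled earlier (this independence is automatic since the bulk update of Lemma~\ref{lem:limitqRSK} only uses the fresh $\mathrm{qGeom}(a_sa_m)$ part of the $q$--RSK randomness, and $a_sa_m=e^{-\e(\alpha_s+\alpha_m)}$). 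If instead $v_{k+1}=(m,m)$ is a diagonal vertex, then Lemma~\ref{lem:firstpartdynamics} gives $\lambda^{(m,m)}_1=\lambda^{(m,m-1)}_1+\mathrm{qGeom}(\diagq a_m)$, so Lemma~\ref{lem:limitqgeom} together with the inductive hypothesis yields
\[
R_1(m,m)=d_{mm}\,R_1(m,m-1),\qquad d_{mm}\sim \mathrm{Gamma}^{-1}(\diag+\alpha_m),
\]
again independent of the previously constructed variables. This completes the induction and gives joint convergence along $\pathh$.

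It remains to identify the limit as the half-space log-gamma partition function. Writing $\tilde Z(t,n)$ for the random variables defined by the recurrences above with the initial condition $\tilde Z(t,0)=\tilde Z(0,n)\equiv 0$, a straightforward induction on $t+n$ expands $\tilde Z(t,n)$ into a sum over all admissible up-right lattice paths from $(1,1)$ to $(t,n)$, each contributing the product of the $d_{ij}$ visited; the marginal laws of $d_{ij}$ prescribed above are exactly the weights $w_{i,j}$ of Definition~\ref{def:LogGammapolymer}, so the recurrence matches $Z(t,n)=w_{t,n}\bigl(Z(t-1,n)+Z(t,n-1)\bigr)$. Since the joint laws are determined by the common recurrence together with the common distribution and independence of the weights, one concludes $(R_1(v))_{v\in\pathh}\stackrel{(d)}{=}(Z(v))_{v\in\pathh}$. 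The final display in the statement follows by rewriting the scaling \eqref{eq:scalingrightedge} with $j=1$, $t$ and $n$ and using $e^{\log\e^{-1}}=\e^{-1}\sim(1-q)^{-1}$ as $q\to 1$.

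The one step that requires a little care is the propagation of independence: I expect the main obstacle to be checking that the successive $\mathrm{qGeom}$ random variables feeding Lemmas~\ref{lem:limitqRSK} and \ref{lem:firstpartdynamics} really are jointly independent across all vertices, so that in the limit the $d_{sm}$ inherit full mutual independence as required for the log-gamma weights. This is however built into the construction of the multivariate Markov dynamics of Section~\ref{sec:rskdynamics}, and once phrased as a filtration/conditioning statement the induction goes through cleanly.
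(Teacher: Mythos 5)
Your proof takes essentially the same approach as the paper's, which is far terser: the paper simply invokes Lemmas~\ref{lem:limitqRSK} and \ref{lem:limitqgeom} for the weak convergence and then observes that $Z$ and $R$ satisfy the same recurrence in law. Your expansion into an explicit induction along the growth of $\pathh$ is correct, and you helpfully flag the independence-propagation point that the paper leaves implicit (it is in fact absorbed into the statement of Lemma~\ref{lem:limitqRSK}, which asserts joint independence of the $(d_{sm})$, and into the fact that the boundary moves use fresh $\mathrm{qGeom}(\diagq a_m)$ randomness). One small imprecision: the boundary convention you state, $\tilde Z(t,0)=\tilde Z(0,n)\equiv 0$, would force $\tilde Z(1,1)=0$ if applied to the recursion at $(1,1)$; this is harmless since you handle $(1,1)$ separately as the base case, but it would be cleaner to set $\tilde Z(1,0)=1$ (or simply treat $\tilde Z(1,1)=d_{11}$ as the seed, as the paper does in Section~\ref{sec:Whittakerdynamicsright}).
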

\begin{proof}
The convergence in distribution is proved by Lemmas \ref{lem:limitqRSK} and \ref{lem:limitqgeom}. 
Moreover, $Z(n,m)$ and $R(n,m)$ satisfy the same recurrence relation in law, and hence have the same distribution.
\end{proof}

\subsubsection{Left-edge dynamics}
\label{sec:Whittakerdynamicsleft}
We consider now the scaling 
$$ \lambda_{t}^{(t,j)} = (t-j+1)\e^{-1}\log \e^{-1} - \e^{-1}\log(L^{\e}(t,j)),$$
where  $\lambda^{(t,j)}$ is distributed according to the half-space $q$-Whittaker measure $ \PQWM_{(a_1, \dots, a_j), (\diagq, a_{j+1}, \dots, a_t)} $.  
We can show in a way similar to the right-edge case that  $L^{\e}(t,j)$ weakly converges to some $L(t,j)$, where the family of random variables $\lbrace L(t,j)\rbrace_{1\leqslant j\leqslant t}$ has an explicit recursive description. 
In a way similar to the right-edge dynamics, 
the proof of Theorem 8.8 in \cite{matveev2015q} implies that
when $ t>j$, 
$$ L(t,j) = L(t-1, j-1) + g_{t,j} L(t-1,j),$$
where $g_{t,j}$ is a $\mathrm{Gamma}(\alpha_t+\alpha_j)$ random variable and the family $ \big(g_{t,j}\big)$ consists of independent members. 

The limit of the boundary dynamics $\Udiag$ is more complicated. 
\begin{lemma}
\label{lem:convqinversegaussian}
Let $X_{\e}$ be a q-inverse Gaussian random variable with parameters $m$ and $\theta$ (see Definition \ref{def:qinverseGaussian}). Assume that $\theta=e^{-\e \tilde\theta}$ and 
$ m= 2\e^{-1}\log(\e^{-1}) - \e^{-1}\log(L)$.
Then, letting $ X_{\e} = \e^{-1}\log(\e^{-1}) - \e^{-1}\log(Y_\e)$, the random variable $Y_{\e}$ weakly converges to the continuous random variable on $\R_{>0}$ with density 
$$\frac{1}{2L^{\tilde\theta/2} K_{\theta}(2\sqrt{L})} x^{\tilde\theta -1}e^{-x-L/x},$$
where  $K_{\theta}$ is the modified Bessel function of the second kind\footnote{Sometimes $K_{\theta}$ are called Macdonald functions, although they  are completely different from Macdonald symmetric functions.}, 
which is a particular case of the generalized inverse Gaussian distribution. 
\end{lemma}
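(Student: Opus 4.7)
The plan is to establish the weak convergence by proving pointwise convergence of the (discrete) density of $Y_\e$ to the generalized inverse Gaussian density, coupled with a tightness argument that controls the tails near $0$ and $\infty$. Setting $Y_\e = \e^{-1} q^{X_\e}$, the atom of $Y_\e$ at $y = \e^{-1}q^k$ is $\PP(X_\e = k)$, and since consecutive atoms are spaced by $\e^{-1}q^k(1-q) \sim \e y$, the natural candidate for the limiting density at $y$ is $\e^{-1} y^{-1} \PP(X_\e = k_\e(y))$ where $k_\e(y) = \e^{-1}\log(\e^{-1}) - \e^{-1}\log(y)$.

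The first main step is to evaluate each factor in
\[
\PP(X_\e = k) = \theta^k \frac{(q;q)_m}{(q;q)_k (q;q)_{m-k}} \frac{1}{Z_m(\theta)}
\]
asymptotically for $k = k_\e(y)$ with $y$ in an arbitrary compact set of $\R_{>0}$. Using $\theta = e^{-\e\tilde\theta}$, one has $\theta^k = \e^{\tilde\theta} y^{\tilde\theta}$ exactly. For the three $(q;q)_\bullet$ factors, Lemma \ref{lem:qqestimates} applies: $m = 2\e^{-1}\log(\e^{-1}) + \e^{-1}(-\log L)$ gives $\log(q;q)_m = \A(\e) + o(1)$ by \eqref{eq:qqestimate2}; $k = \e^{-1}\log(\e^{-1}) + \e^{-1}(-\log y)$ gives $\log(q;q)_k = \A(\e) + y + o(1)$ by \eqref{eq:qqestimate1}; and $m-k = \e^{-1}\log(\e^{-1}) + \e^{-1}\log(y/L)$ gives $\log(q;q)_{m-k} = \A(\e) + L/y + o(1)$. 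Combining these,
\[
\PP(X_\e = k_\e(y)) = \e^{\tilde\theta} y^{\tilde\theta} e^{-\A(\e)} e^{-y - L/y}\, \frac{1 + o(1)}{Z_m(\theta)},
\]
so the approximate density $\e^{-1}y^{-1}\PP(X_\e = k_\e(y))$ is a constant (in $y$, depending on $\e$) times $y^{\tilde\theta - 1} e^{-y - L/y}$.

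Pointwise convergence therefore forces the prefactor $\e^{\tilde\theta - 1} e^{-\A(\e)}/Z_m(\theta)$ to converge to some $C(L,\tilde\theta)$. Rather than extracting this constant by a direct Rogers--Szeg\H o asymptotic, I will identify it by normalization: once tightness is established, Scheff\'e's lemma shows that the limiting measure is a probability measure on $\R_{>0}$ with density proportional to $y^{\tilde\theta - 1} e^{-y - L/y}$, and the integral $\int_0^\infty y^{\tilde\theta - 1} e^{-y - L/y}\, dy = 2 L^{\tilde\theta/2} K_{\tilde\theta}(2\sqrt{L})$ is the standard representation of the modified Bessel function of the second kind, yielding the stated density.

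The main obstacle is establishing tightness, i.e., ruling out escape of mass toward $y \to 0$ (which corresponds to $k$ close to $m$) and $y \to \infty$ (which corresponds to $k$ close to $0$). For this I will use the \emph{uniform} lower bound \eqref{eq:qqestimateuniform}: it yields upper bounds on each $\PP(X_\e = k)/(\e y)$ of the form $C\, y^{\tilde\theta -1} \exp(-c y - cL/y)$ valid for all $y$ in $\R_{>0}$ (with $c>0$ independent of $\e$), once $\e$ is small enough, by writing $(q;q)_k \geqslant \exp(\A(\e) + \e^{-1}e^{-y_k +\log \e})$ etc.\ and tracking how the resulting exponent dominates the factors coming from $\theta^k$ and $(q;q)_m$. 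The dominating function is integrable, so dominated convergence applied to $\sum_k g(\e^{-1}q^k) \PP(X_\e = k)$ (a Riemann sum with step $\sim \e y$) for continuous bounded $g$ finishes the weak convergence and simultaneously forces the prefactor to equal $\big(2L^{\tilde\theta/2} K_{\tilde\theta}(2\sqrt{L})\big)^{-1}$. The only subtlety is tracking whether the bound \eqref{eq:qqestimateuniform} with $k=2$ (needed for $(q;q)_m$) is sharp enough; if not, one can reduce to the cases $y$ bounded away from $0$ and $\infty$ and handle the very small/very large $y$ regimes with a crude exponential tail bound extracted directly from the ratio form of $\PP(X_\e = k)$ and the monotonicity of $k \mapsto \theta^k(q;q)_m/((q;q)_k(q;q)_{m-k})$ near the boundaries.
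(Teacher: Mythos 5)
Your argument matches the paper's proof: both extract the pointwise asymptotics of the mass function via Lemma \ref{lem:qqestimates}, establish tightness with the uniform bound \eqref{eq:qqestimateuniform}, and identify the normalizing constant through the Bessel-function integral (the paper simply works with $\log Y_\e$ first and changes variables $x=e^y$ at the end). Your closing worry about $(q;q)_m$ is a non-issue, since $m$ depends only on $L$ and $\e$, not on $y$, so the non-uniform estimate \eqref{eq:qqestimate2} already controls that factor (which moreover sits in the numerator and so needs an upper bound, not the lower bound \eqref{eq:qqestimateuniform} supplies); the uniform bound is needed only for $(q;q)_k$ and $(q;q)_{m-k}$.
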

\begin{proof}
By definition, for $y$ such that $\e^{-1}\log(\e^{-1}) -\e^{-1}y\in\lbrace 0, 1, \dots, m\rbrace$,
\begin{align*}
 \PP\big(\log(Y_{\e}) = y\big) &= \PP\Big(X_\e = \e^{-1}\log(\e^{-1}) -\e^{-1}y\Big)\\
  &= \frac{1}{Z_m(\theta)}\theta^{\e^{-1}\log(\e^{-1}) -\e^{-1}y}\frac{\qq{2\e^{-1}\log(\e^{-1}) - \e^{-1}\log(L)}}{\qq{\e^{-1}\log(\e^{-1}) -\e^{-1}y} \qq{\e^{-1}\log(\e^{-1}) -\e^{-1}(\log(L)- y)}}\\
  &=\frac{ \e^{\tilde\theta}}{Z_m(\theta)}\exp\big(y\tilde\theta  -e^y - L/e^y - \A(\e)  +o(1)\big)
\end{align*}
where we have used the estimates \eqref{eq:qqestimate1} and \eqref{eq:qqestimate2} from Lemma \ref{lem:qqestimates} in the last equality (the error goes to zero uniformly for $y$ in a compact set). 
In order to conclude that $\log(Y_{\e})$ converges to the random variable with density proportional to $\exp\big(y\tilde\theta  -e^y - L/e^y\big)$, we need to prove some tail decay estimate when $\vert y\vert \to \infty$, uniformly in $\epsilon$, showing that the sequence $\log(Y_{\e})$ is tight. Using the inequality \eqref{eq:qqestimateuniform} in Lemma \ref{lem:qqestimates}, we have that for any $y\in \R$ and any fixed $L>0$, 
\begin{equation*}
 \PP\big(\log(Y_{\e}) = y\big) \leqslant \frac{ \e^{\tilde\theta}}{Z_m(\theta)}\exp\big(y\tilde\theta  -e^y - L/e^y - \A(\e)  +o(1)\big)
\end{equation*}
where the error $o(1)$ now goes to zero as $\e\to 0$ uniformly for any $y\in \R$. Thus, $\log(Y_\e)$ weakly converges to the distribution on $\R$ with density  
$$\frac{1}{C} e^{y\tilde\theta}e^{-e^y-L/e^y},$$
where $C$ is a normalizing constant so that the density integrates to $1$. By the change of variables $x=e^y$ it implies that $Y_\e$ converges to the continuous random variable on $\R$ with density $$
\frac{1}{C} x^{\tilde\theta -1}e^{-x-L/x}.$$
This distribution is known as a particular case of the generalized inverse Gaussian distribution, and it is well-known that the normalizing constant $C$ equals $2L^{\tilde\theta/2} K_{\theta}(2\sqrt{L})$ \cite{jorgensen2012statistical}.
\end{proof}

Lemma \ref{lem:convqinversegaussian} applied to the result of Lemma \ref{lem:lastpartdynamics}  implies that in the $ \e \to 0 $  limit, for $t\geqslant 2$, 
$ L(t,t)$ has the  generalized inverse Gaussian distribution with parameters  $L(t,t-1)$ and $(-1)^{t+1}\diag + \alpha_t$ as defined in Lemma \ref{lem:convqinversegaussian}. 

\begin{remark}
	One could define, as in Section \ref{sec:Whittakerdynamicsright}, a sort of directed polymer model such that its partition function satisfies the recurrence
	$$ \begin{cases}
	L(t,j) = L(t-1, j-1) + g_{t,j} L(t-1,j) & \text{ for }t>j,\\
	L(t,t) = h_t\left( L(t,t-1),(-1)^{t-1} \diag + \alpha_t \right) & \text{ for }t \geqslant 2,
	\end{cases} $$
where $\lbrace  g_{i,j} \rbrace_{i>j}$ is a family of independent  $\mathrm{Gamma}(\alpha_i+\alpha_j)$ distributed random variables, and $h_t(L, \alpha)$ denotes an inverse Gaussian random variable (independent for each $t$) with parameters $L$ and $\alpha$ as defined in Lemma \ref{lem:convqinversegaussian}. 
\end{remark} 

We may also  deduce the following asymptotics for Rogers-Szeg\H o polynomials defined in \eqref{eq:RogersSzego}: 
\begin{corollary}
Consider the $m$th Rogers-Szeg\H o polynomial $Z_m(\theta)$ under the scalings   $\theta=q^{\tilde\theta}$ and 
$q^m = L (1-q)^2$ for some fixed $L>0$. Then, we have that 
$$ Z_m(\theta)  \frac{(q;q)_{\infty}}{(1-q)^{\tilde \theta -1} } \xrightarrow[q \to 1]{} 2L^{\tilde \theta/2} K_{\tilde \theta}(2\sqrt{L}), $$
where  $K_{\theta}$ is the modified Bessel function of the second kind. 
\end{corollary}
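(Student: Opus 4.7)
The plan is to extract the corollary directly from the normalization of the $q$-inverse Gaussian distribution whose asymptotics were analyzed in the proof of Lemma \ref{lem:convqinversegaussian}. In that proof, under the scalings $\theta = e^{-\e\tilde\theta}$ and $m = 2\e^{-1}\log(\e^{-1}) - \e^{-1}\log(L)$, it was shown that
$$ \PP(\log(Y_\e) = y) = \frac{\e^{\tilde\theta}}{Z_m(\theta)}\exp\bigl(y\tilde\theta - e^y - L/e^y - \A(\e) + o(1)\bigr), $$
with the $o(1)$ uniform on compact sets in $y$, and the uniform estimate \eqref{eq:qqestimateuniform} providing exponential tail control that ensures tightness of $\log(Y_\e)$.

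First I would exploit the fact that the values of $\log(Y_\e)$ lie on a lattice of spacing $\e$, so the total mass relation $\sum_y \PP(\log Y_\e = y) = 1$ becomes, after multiplying and dividing by $\e$, a Riemann sum of step $\e$ for $\int_\R e^{y\tilde\theta - e^y - L/e^y}\,dy$. Thanks to the tightness coming from \eqref{eq:qqestimateuniform}, one can interchange the limit $\e\to 0$ with the summation and obtain
$$ \frac{\e^{\tilde\theta - 1}}{Z_m(\theta)\, e^{\A(\e)}}\int_\R e^{y\tilde\theta - e^y - L/e^y}\,dy \xrightarrow[\e\to 0]{} 1. $$
The change of variables $x = e^y$ converts the integral into $\int_0^\infty x^{\tilde\theta - 1}e^{-x-L/x}\,dx$, which by the classical integral representation $K_\nu(2\sqrt{L}) = \tfrac{1}{2} L^{-\nu/2}\int_0^\infty t^{\nu-1}e^{-t-L/t}\,dt$ equals $2L^{\tilde\theta/2}K_{\tilde\theta}(2\sqrt{L})$. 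Hence
$$ Z_m(\theta)\, e^{\A(\e)}\, \e^{1 - \tilde\theta} \xrightarrow[\e\to 0]{} 2L^{\tilde\theta/2}K_{\tilde\theta}(2\sqrt{L}). $$

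Finally I would translate the prefactor $e^{\A(\e)}\,\e^{1-\tilde\theta}$ into the form stated in the corollary. The classical modular asymptotic $(q;q)_\infty \sim \sqrt{2\pi/\e}\,e^{-\pi^2/(6\e)}$ (which is exactly the assertion $\log(q;q)_\infty = \A(\e) + o(1)$, recoverable from \eqref{eq:qqestimate1}) gives $e^{\A(\e)} = (q;q)_\infty (1+o(1))$, while $(1-q) = \e(1+o(1))$ yields $\e^{1-\tilde\theta} = (1-q)^{1-\tilde\theta}(1+o(1))$. Substituting these identities into the displayed limit produces
$$ Z_m(\theta)\,\frac{(q;q)_\infty}{(1-q)^{\tilde\theta - 1}} \xrightarrow[q\to 1]{} 2L^{\tilde\theta/2}K_{\tilde\theta}(2\sqrt{L}), $$
which is the claim. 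No step is really an obstacle: the only mildly delicate point is the interchange of the $\e\to 0$ limit with the summation in the Riemann sum, but the necessary uniform tail bound was already established via \eqref{eq:qqestimateuniform} in the proof of Lemma \ref{lem:convqinversegaussian}, so we may invoke it directly.
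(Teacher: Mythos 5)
Your proof is correct and takes exactly the route the paper uses: it simply spells out what "matching the normalizing constants in the proof of Lemma \ref{lem:convqinversegaussian}" means, namely summing the lattice probability masses into a Riemann sum, using the uniform tail bound from \eqref{eq:qqestimateuniform} to pass to the limit, recognizing the generalized inverse Gaussian integral as $2L^{\tilde\theta/2}K_{\tilde\theta}(2\sqrt L)$, and identifying $e^{\A(\e)}\e^{1-\tilde\theta}$ with $(q;q)_\infty/(1-q)^{\tilde\theta-1}$ via the modular asymptotics. No gaps.
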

\begin{proof}
The result is obtained by matching the normalizing constants in the proof of Lemma \ref{lem:convqinversegaussian}.
\end{proof}

\subsection{Whittaker measure and geometric RSK} 
\label{sec:gRSK}
Let us examine in more details the limit of the  half-space $q$-Whittaker measure as $q\to 1$,  when $\ve_{\la}$ is specialized into a single variable and the Plancherel component is set to $\gamma=0$. In that case we can define the  $\tau=0$ analogue of $\mathcal{T}^{\,\tau}_{\diag}$, which we will naturally denote $\mathcal{T}^{\,0}_{\diag}$. 

Assuming for simplicity that $n$ is even,
$$
\ve_{\lambda_1, \dots, \la_n}(\diagq) = \diagq^{\sum \lambda_{2i-1}-\lambda_{2i}} \prod_{i=1}^{n} \frac{1}{(q;q)_{\lambda_i-\lambda_{i+1}}} = \e^{n\alpha} e^{-\diag \sum_{i=1}^{n/2} T_{2i-1}-T_{2i}} \prod_{i=1}^{n} \frac{1}{(q;q)_{\lambda_i-\lambda_{i+1}}}.
$$
Thus, we have
$$
\ve_{\lambda}(\diagq) = \e^{n\diag } e^{-\diag \sum_{i=1}^{n/2} T_{2i-1}-T_{2i}} e^{-n\mathcal{A}(\e)} e^{-e^{-T_n}}.
$$
Taking into account a Jacobian of $\e^{-n}$, we arrive at
$$ \mathcal{T}^{\,0}_{\diag}(T) =  e^{-\diag \sum_{i=1}^{n/2} T_{2i-1}-T_{2i}} e^{-e^{-T_n}},$$
and  the half-space Whittaker measure can be extended to $\tau=0$ with 
$$\PWM(\mathrm{d}T) =\mathrm{d}T \,\frac{\psi_{\I\alpha_1, \dots, \I\alpha_n}(T) e^{-\diag \sum_{i=1}^{n/2} T_{2i-1}-T_{2i}} e^{-e^{-T_n}}}{\prod_{i=1}^{n} \Gamma(\alpha_i+\diag) \prod_{1\leqslant i<j\leqslant n} \Gamma(\alpha_i+\alpha_j)}.
$$
\begin{remark}
Using the identity 
$$ \ve_{\la}(a) = \frac{1}{\Phi(a)} \sum_{\mu} \ve_{\mu}(0) P_{\mu/\la}(a),$$
it should be possible to make sense of $\mathcal{T}^{\,\tau}_{\alpha}$ for $\tau=0$ and arbitrary $\alpha_1, \dots, \alpha_k>0$.
\label{rem:tauequalzero} 
\end{remark}

The following result from  \cite{o2014geometric} shows that $\PWM$ is a well-defined probability  measure.
\begin{theorem}[{\cite[Corollary 5.4]{o2014geometric}}]
For $\Real(\diag+\alpha_i)>0$ and $\Real(\alpha_i+\alpha_j)>0$ for all $i,j$, and $n$ even
	$$ \int_{\R^n}\psi_{\I\alpha_1, \dots, \I\alpha_n}(T) e^{-\diag \sum_{i=1}^{n/2} T_{2i-1}-T_{2i}} e^{-e^{-T_n}}\mathrm{d}T =  \prod_{i=1}^{n} \Gamma(\alpha_i+\diag) \prod_{1\leqslant i<j\leqslant n} \Gamma(\alpha_i+\alpha_j).$$
	\label{th:OSZ}
\end{theorem}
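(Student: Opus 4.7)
The plan is to prove this identity via the geometric RSK (gRSK) correspondence applied to symmetric positive real matrices, following O'Connell-Sepp\"al\"ainen-Zygouras. The strategy is to construct an explicit probability measure whose total mass is $1$, and compute that mass in two different ways: the direct computation gives the right-hand side of the identity, while pushing forward via gRSK and integrating out the upper levels of the resulting triangular array gives the left-hand side.

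First, I would consider a random $n\times n$ symmetric matrix $W=(w_{ij})$ with independent entries on and above the diagonal, chosen so that $w_{ij}\sim\mathrm{Gamma}^{-1}(\alpha_i+\alpha_j)$ for $i<j$ and each $w_{ii}$ is distributed with a density involving $\diag+\alpha_i$ (designed so that, after gRSK, the diagonal entries produce the alternating-sign factor $e^{-\diag\sum_i(T_{2i-1}-T_{2i})}$). By construction, the total mass of the joint density is $1$, with normalizing constant exactly $\prod_{i<j}\Gamma(\alpha_i+\alpha_j)\prod_i\Gamma(\diag+\alpha_i)$, matching the right-hand side of the theorem.

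Next, I would apply the gRSK bijection. Because $W$ is symmetric, the two output triangular arrays produced by gRSK coincide, so the map reduces to a bijection from symmetric matrices to a single Gelfand-Tsetlin-like array $t=(t_{k,j})_{1\leqslant j\leqslant k\leqslant n}$. Using the explicit Jacobian of gRSK on the symmetric locus together with the exponential form of the inverse Gamma densities, the pushforward factors as
\[
\exp\bigl(\mathcal{F}_{\I\alpha}(t)\bigr)\cdot\bigl(\text{diagonal boundary term in }\diag\text{ and }t_{n,\cdot}\bigr)\prod\mathrm{d}t_{k,j},
\]
where $\mathcal{F}_{\I\alpha}$ is precisely Givental's potential appearing in the definition of $\psi_{\I\alpha}$. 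Integrating out all $t_{k,j}$ with $k<n$ while keeping the bottom row $(t_{n,1},\ldots,t_{n,n})$ fixed then produces, by Givental's integral representation, the Whittaker function $\psi_{\I\alpha_1,\ldots,\I\alpha_n}(t_{n,1},\ldots,t_{n,n})$. Identifying $T_j:=t_{n,j}$ and using that the total mass of the pushforward equals $1$ gives exactly the claimed identity.

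The main obstacle is verifying that the diagonal contribution of gRSK produces \emph{exactly} the boundary factor $e^{-\diag\sum_{i=1}^{n/2}(T_{2i-1}-T_{2i})}e^{-e^{-T_n}}$. The alternating sign pattern $T_{2i-1}-T_{2i}$ reflects the even-dual partition condition defining $\ve_\lambda$ at the symmetric-function level, and matching it against Kirillov's tropical RSK on symmetric matrices is the most delicate bookkeeping step; the factor $e^{-e^{-T_n}}$ then emerges from the final corner weight $w_{nn}$. As an alternative route, one could attempt to take a $\tau\to 0$ limit of Proposition~\ref{prop:integratestoone} together with a degeneration of the Plancherel specialization into the single-variable specialization $\diag$, but justifying this limit is harder than the gRSK approach since the Plancherel regulator provided the Gaussian decay used crucially there.
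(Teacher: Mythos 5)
Your proposal reconstructs the geometric-RSK proof from O'Connell--Sepp\"al\"ainen--Zygouras, which is precisely the source the paper cites; the paper does not reprove this identity but imports it, and summarizes exactly the connection you describe in Section~\ref{sec:gRSK}. Your outline is faithful to that proof, up to the $\log 2$ shift accounting for the scale parameter $1/2$ on the diagonal inverse-Gamma weights (which the paper spells out explicitly), the even/odd parity bookkeeping in the boundary factor, and the final analytic continuation from real positive $\alpha_i,\diag$ to complex parameters with positive real parts.
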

 Using \cite{o2014geometric} (in particular Equation (2.2)  and Corollary 5.3 therein), the measure $\PWM$ corresponds (modulo a  shift of variables by $\log(2)$) to the push-forward via the \emph{geometric RSK algorithm} of a symmetric matrix of weights $\lbrace \tilde w_{i,j}\rbrace_{1\leqslant i,j\leqslant n}$ having the following distribution: For $1\leqslant i<j\leqslant n$, $\tilde w_{i,j}$ is distributed as an inverse-Gamma random variable with shape parameter $ \alpha_i+\alpha_j$ and scale\footnote{We use  the convention that if $X$ is an inverse-Gamma random variable with scale parameter $1$, $kX$ is an inverse-Gamma random variable with scale parameter $k$ and the same shape parameter.} parameter $1$,  $\tilde w_{i,i}$ is distributed as an inverse-Gamma random variable with shape parameter $\alpha_i+\diag$ and scale parameter $1/2$, and the $\tilde w_{i,j}$ are independent modulo the symmetry constraint that for all $i,j$ we have $\tilde w_{i,j}=\tilde w_{j,i}$.

Using properties of the geometric RSK algorithm, \cite{o2014geometric} shows that $t_{n,n}$ (one of the components of the image of $\tilde w$ by the geometric RSK algorithm) has the law of the partition function $\tilde Z(n,n)$ of a polymer model with symmetrized weights $\lbrace w_{i,j}\rbrace_{1\leqslant i,j\leqslant n}$ as above. The change of scale parameter on the diagonal  compensates the fact that there are more paths in the symmetric model than in the half-space models. Counting the paths carefully,  we arrive at $2 \tilde{Z}(n,n) = Z(n,n)$ where $Z(n,n)$ is the partition function of the log-gamma polymer in a half-quadrant (Definition \ref{def:LogGammapolymer}). This also shows that 
$$ T_1  \overset{(d)}{=} \log Z(n,n),$$

The relation between our half-space Whittaker measure and the Whittaker measure from \cite{o2014geometric} holds not only for the marginal $T_1$. Indeed,  under the Whittaker measure, 
$$ \EWM[f(T)] = \dfrac{\int_{\R^n} f(e^{T_1}, \dots, e^{T_{n}}) \psi_{\I\alpha}(T) e^{-\diag \sum_{i=1}^{n/2} T_{2i-1}-T_{2i}} e^{-e^{-T_n}}\mathrm{d}T}{\prod_{i=1}^{n} \Gamma(\alpha_i+\diag) \prod_{1\leqslant i<j\leqslant n} \Gamma(\alpha_i+\alpha_j)  } .$$
Making the change of variables $T_i=\tilde T_i +\log(2)$ for all $i$, we obtain that 
$$ \EWM[f(T)]  = \dfrac{\int_{\R^n} f(2e^{T_1}, \dots, 2e^{T_{n}})\psi_{\I\alpha}(T) e^{-\diag \sum_{i=1}^{n/2} T_{2i-1}-T_{2i}} e^{-\frac{1}{2}e^{-T_n}}\mathrm{d}T}{2^{\sum_i \alpha_i} \prod_{i=1}^{n} \Gamma(\alpha_i+\diag) \prod_{1\leqslant i<j\leqslant n} \Gamma(\alpha_i+\alpha_j)}  =  \int f(2t_{1n}, \dots, 2t_{nn}) \tilde \nu_{\alpha, \zeta}(\mathrm{d}w) ,$$
where the measure $\tilde \nu_{\alpha, \zeta}(\mathrm{d}w)$ is defined in \cite[(5.10)]{o2014geometric}. 

We will see in the next section another way to relate the Whittaker measure with the distribution of directed polymers.

\subsection{Half-space directed polymers and Whittaker measure}
\label{sec:relationWhittakerpolymer}

In Section \ref{sec:Whittakerdynamics}, we have studied the limit of $q$-Whittaker dynamics in the scaling in which the $q$-Whittaker measure becomes the Whittaker measure. In particular we found that the limiting dynamics of $q^{-\lambda_1}$ are the same as that of the partition function of the log-gamma polymer. 
Hence, it is natural to expect that the marginal $T_1$, for $T$ distributed according to some Whittaker measure has the same distribution as $\log(Z(t,n))$ where $Z(t,n)$ is the log-gamma partition function (Definition \ref{def:LogGammapolymer}). 
However, we have proved the convergence of the $q$-Whittaker measure to the Whittaker measure only when the Plancherel component of the specialization is positive (Proposition \ref{prop:CVtoWhittmeasurewithplancherel}). Further, we have not defined the Whittaker measure corresponding to $Z(t,n)$ when $\tau=0$ (outside of the special case of Section \ref{sec:gRSK}) and it is not clear how to take a limit of \eqref{eq:defT} as $\tau \to 0$. 

There are two cases where it is possible to relate rigorously the Whittaker measure with the limit of $q$-Whittaker Markov dynamics as $q$ goes to $1$.

\subsubsection{Partition function on the boundary}

When $n=t$, we consider the half-space $q$-Whittaker measure $\PQWM_{(a_1, \dots, a_n), (\diagq)}$ and the Whittaker measure $\PWM_{(\alpha_1, \dots, \alpha_n),(\diag)}$, which takes a simple form as in Section \ref{sec:gRSK}. In that particular case, we know from Theorem \ref{th:OSZ} that the Whittaker measure is a well defined probability measure, and since the density of the $q$-Whittaker measure converges to the density of the Whittaker measure, we have the convergence
$$ \PQWM_{(a_1, \dots, a_n), (\diagq)} \xRightarrow[q\to 1]{} \PWM_{(\alpha_1, \dots, \alpha_n),(\diag)}.$$
Hence, using Proposition \ref{prop:LogGammaqWhittakerlimit}, we recover that for $T$ distributed according to $\PWM_{(\alpha_1, \dots, \alpha_n),(\diag)}$, we have that $ T_1  \overset{(d)}{=} \log Z(n,n),$
where $Z(n,n)$ is the partition function of the log-gamma polymer at point $(n,n)$.  
\subsubsection{Limits of $q$-Whittaker dynamics with additional Plancherel specialization}
\label{sec:WhittdynamicswithPlancherel}

Another approach is to extend the results of Section \ref{sec:Whittakerdynamics} to $q$-Whittaker processes including a Plancherel component. More precisely, we consider a half-space $q$-Whittaker process indexed by a path $\pathh$ as in Figure \ref{pathWhittaker2}.  For all $i\leqslant t$ and any $j$,  edges $ (i-1, j) \leftarrow (i,j) $ are labeled by single variable specialization $a_i$, for any $i, j$, edges $ (i, j-1) \rightarrow (i,j) $ are labeled by single variable specialization $a_j$, the diagonal edge is labeled by specialization $\diagq$ and now, edges $(t, j)\leftarrow (t+1, j)$ are labeled by the Plancherel specialization $\tau$.  We consider only the right-edge dynamics (the last parts of partitions). 

\begin{figure}
\begin{tikzpicture}[scale=0.8]
\draw[->, thick, >=stealth', gray] (0,0) -- (14.5, 0);
\draw[->, thick, >=stealth', gray] (0,0) -- (0, 6.2);
\draw[thick, gray]  (0,0) -- (6,6);
\draw (-.5, -.2) node{{\footnotesize $(1,1)$}};
\draw (9.6, 6.3) node{{\footnotesize $(t,n)$}};
\draw[->, >=stealth'] (4,-1) node[anchor=north]{{\footnotesize $w_{9,3} \sim \mathrm{Gamma}^{-1}(\alpha_9 + \alpha_3) $}} to[bend right] (8,2);
\draw[->, >=stealth'] (1,4) node[anchor=south]{{\footnotesize $w_{4,4} \sim \mathrm{Gamma}^{-1}(\diag + \alpha_4) $}} to[bend right] (3,3);

\draw[ultra thick] (0,0) -- (2,0)  -- (2,2) -- (5,2) -- (5,3) -- (9,3);
\draw[thick, gray, dashed] (9,-1) -- (9,7);
\draw[gray, thick] (9,0) -- (14,0);
\draw[gray, thick] (9,1) -- (14,1);
\draw[gray, thick] (9,2) -- (14,2);
\draw[gray, thick] (9,3) -- (14,3);
\draw[gray, thick] (9,4) -- (14,4);
\draw[gray, thick] (9,5) -- (14,5);
\draw[gray, thick] (9,6) -- (14,6);

\draw[ultra thick] (9,3) -- (10.7, 3) -- (10.7, 4) -- (13, 4) -- (13, 5) -- (13.3, 5) --(13.3, 6) -- (14,6);

\draw[thick, gray, dashed] (14,-1) -- (14,7);
\clip (0,0) -- (9,0) -- (9,6) -- (6, 6) -- (0,0);
\draw[dotted, gray] (0,0) grid (9,6); 
\end{tikzpicture}
\caption{An admissible path in the hybrid half space log-gamma /semi-discrete directed polymer.}
\label{halfspaceloggamma+plancherel}
\end{figure} 
\begin{definition}
	Let $(w_{i,j})_{1\leqslant i\leqslant j}$ be a family of inverse Gamma random variables as in Definition \ref{def:LogGammapolymer}. Let $(B^i(s))_{i\geqslant 1,s>0 }$ be a family of independent standard Brownian motions with drift  $b_i$ respectively. 
	
	Let $t\geqslant n\geqslant 1$. We define the \emph{hybrid half-space log-gamma/semi-discrete directed polymer} as a measure on the concatenation of two paths $\pi, \phi$ as follows: The path $\pi $ progress from $(1,1)$ by unit up-right steps in the half quadrant until a certain point $(t, n_0)$. The path $\phi$, depending on a parameter $\tau>0$, is a semi-discrete path  encoded by a set $t=\tau_0 <\tau_1< \dots <\tau_{n-n_0} <\tau_{n-n_0+1}=t+\tau$. The path progresses horizontally  from $(t, n_0)$ to $(\tau_1, n_0)$, jumps to $(\tau_1, n_0+1)$ and then progresses to $(\tau_2, n_0+1)$, etc. until it reaches $(\tau+t, n)$. 
	
The probability density of such a hybrid path $\pi, \phi$ is given by 
$$ 
\frac{1	}{Z(t,n,\tau)} \prod_{(i,j)\in \pi} w_{i,j}\ \ \exp\left(\int_0^{\tau} \mathrm{d} B^{\phi(t+s)}(s)\right), $$
where the partition function $Z(t,n,\tau)$ is defined by 
\begin{equation}
 Z(t,n,\tau)  = \sum_{n_0=1}^n \sum_{\pi:(1,1)\to (t,n_0)}  \prod_{(i,j)\in \pi} w_{i,j} \underset{{t<\tau_1 < \dots <\tau_{n-n_0-1} <t+\tau}}{\int} \exp\left(\int_0^{\tau} \mathrm{d} B^{\phi(t+s)}(s)\right) \mathrm{d}\tau_1 \dots \mathrm{d}\tau_{n-n_0-1}.
\label{eq:hybridpartition}
\end{equation}
\label{def:hybridpolymer}
\end{definition}
Note that $Z(t,n,\tau)$ can be constructed in a recursive way by constructing first $Z(t,m,0)$ for all $1\leqslant m\leqslant n$ as in Section \ref{sec:Whittakerdynamicsright}. Then \cite{o2012directed} showed that the vector of free energies  $\left(\log(Z(t,m,\tau))\right)_{1\leqslant m\leqslant n}$ is the solution  to the following system of stochastic differential equation with time parameter $\tau$  and initial data $\left(\log(Z(t,m,0))\right)_{1\leqslant m\leqslant n}$: 
\begin{equation}
\begin{cases}
\mathrm{d}T_1^{(1)} = \mathrm{d}B^1_{\tau}  \\ 
\mathrm{d}T_1^{(m)} = \mathrm{d}B^m_{\tau} + e^{T_1^{(m-1)}-T_1^{(m)}}\mathrm{d}\tau  \ \ \ \text{for all }2\leqslant m\leqslant n.
\end{cases}
\label{eq:SDE}
\end{equation}

The degeneration of the RSK dynamics $\U_{\mathrm{row}}$ from Section \ref{sec:Macdyn} where the horizontal  specialization becomes a Plancherel specialization were studied in \cite{borodin2013nearest}. Some of the arguments presented in \cite[Section 8.4]{borodin2013nearest} are only heuristics, and the convergence of $q$-Whittaker dynamics to SDE is not proved for the whole triangular array. However, the convergence is shown in \cite[Section 8.4.4]{borodin2013nearest} for the first coordinate marginal $T^{(k)}_1$.  More precisely, for any fixed $n$, under the  Whittaker dynamics which correspond to the $q\to 1$ limit of $\U_{\mathrm{row}}$, $\left( T_{1}^{(k)}\right)_{1\leqslant k\leqslant n}$ satisfies the SDE \eqref{eq:SDE} with the Brownian motion drifts $b_i$ chosen as $-\alpha_i$.  Thus, we arrive at the following. 
\begin{proposition}
	Let $T$ be distributed according to $\PWM_{(\alpha_1, \dots, \alpha_n),(\diag, \alpha_{n+1}, \dots, \alpha_t, \tau)}$ with $\tau>0$. Let $Z(t,n, \tau)$ be the partition function of the hybrid half-space log-gamma/semi-discrete directed polymer as in Definition \ref{def:hybridpolymer}, where the drifts in the hybrid log-gamma/semi-discrete model are chosen as $b_i=-\alpha_i$.  Then, we  have the distributional equality
	\begin{equation}
	T_1^{(n)}  \overset{(d)}{=} \log Z(t,n, \tau).
	\label{eq:equalityhybridWhittaker}
	\end{equation}
	\label{prop:equalityZwhittaker}
\end{proposition}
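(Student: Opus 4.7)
The plan is to identify both sides of \eqref{eq:equalityhybridWhittaker} as the $q\to 1$ limit of the same marginal of a half-space $q$-Whittaker process, sampled via the Markov dynamics of Section \ref{sec:rskdynamics} followed by the Plancherel degeneration of \cite{borodin2013nearest}. Consider the half-space $q$-Whittaker process indexed by the path $\pathh$ of Figure \ref{pathWhittaker2}: single variable specializations $a_i$ on the horizontal edges at abscissa $i\leqslant t$ and on the vertical edges at ordinate $j$, the diagonal specialization $\diagq$, and a Plancherel specialization with parameter $\gamma=\tau\e^{-2}$ on the horizontal edges at abscissa $t+1$. By construction the marginal $\lambda^{(t+1,n)}$ is distributed according to $\PQWM_{(a_1,\dots,a_n),((\diagq,a_{n+1},\dots,a_t),\gamma)}$, and Proposition \ref{prop:CVtoWhittmeasurewithplancherel} shows that under the scalings \eqref{eq:scalings1}--\eqref{eq:scalings2} its density converges weakly to the half-space Whittaker measure $\PWM_{(\alpha_1,\dots,\alpha_n),(\diag,\alpha_{n+1},\dots,\alpha_t,\tau)}$.

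First I would construct $\lambda^{(t+1,n)}$ dynamically in two stages. In the first stage, build the column of partitions $\lambda^{(t,1)}\prec\dots\prec\lambda^{(t,n)}$ along the path of Figure \ref{pathWhittaker} using the operators $\U_{\mathrm{row}}$ and $\Udiag$ of Section \ref{sec:rskdynamics} (which use only single-variable specializations). By Proposition \ref{prop:LogGammaqWhittakerlimit}, under the scaling \eqref{eq:scalingrightedge} the vector $(R_1^\e(t,k))_{1\leqslant k\leqslant n}$ converges jointly to $(Z(t,k))_{1\leqslant k\leqslant n}=(Z(t,k,0))_{1\leqslant k\leqslant n}$, the log-gamma partition functions on the boundary of the half-quadrant. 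In the second stage, evolve the vector $(\lambda^{(t+1,k)})_{1\leqslant k\leqslant n}$ from $(\lambda^{(t,k)})_{1\leqslant k\leqslant n}$ by applying $n$ consecutive bulk operators $\U_{\mathrm{row}}$ whose horizontal specialization is the Plancherel one with parameter $\gamma$. Under the scaling $\gamma=\tau\e^{-2}$ and $\lambda^{(t+1,k)}_1 = (t+k)\e^{-1}\log\e^{-1} + \e^{-1} T_1^{(k)}(\tau)$, the convergence results for the first-coordinate marginal of the $q$-Whittaker dynamics with Plancherel horizontal specialization established in \cite[Section 8.4.4]{borodin2013nearest} show that $(T_1^{(k)}(\tau))_{1\leqslant k\leqslant n}$ satisfies the O'Connell--Yor system of stochastic differential equations \eqref{eq:SDE} with drifts $b_k=-\alpha_k$ and initial data $T_1^{(k)}(0)=\log Z(t,k,0)$.

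To close the argument I would invoke O'Connell's representation \cite{o2012directed}: the unique solution of the SDE system \eqref{eq:SDE} started from $T_1^{(k)}(0)=\log Z(t,k,0)$ satisfies $T_1^{(n)}(\tau)=\log Z(t,n,\tau)$, where $Z(t,n,\tau)$ is the hybrid partition function \eqref{eq:hybridpartition}. Indeed, unrolling the variation-of-constants formula for \eqref{eq:SDE} produces precisely the sum over break-points $t<\tau_1<\dots<\tau_{n-n_0-1}<t+\tau$ and path endpoints $n_0$ appearing in Definition \ref{def:hybridpolymer}. Combining the two stages we conclude that $T_1^{(n)}$, distributed as the first coordinate of $\PWM_{(\alpha_1,\dots,\alpha_n),(\diag,\alpha_{n+1},\dots,\alpha_t,\tau)}$, has the same law as $\log Z(t,n,\tau)$.

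The main obstacle is the second stage: the convergence of the multilevel $q$-Whittaker dynamics with Plancherel specialization to the solution of the O'Connell--Yor SDE system \eqref{eq:SDE} is only rigorously established for the first-coordinate marginal in \cite{borodin2013nearest}, and one must check that the joint distribution of $(T_1^{(k)}(\tau))_{1\leqslant k\leqslant n}$ at $\tau=0$ together with the subsequent Brownian drive indeed couples correctly with the outputs of Stage 1 (i.e.\ the independence structure between the inverse-Gamma weights in Stage 1 and the driving Brownian motions in Stage 2 is inherited from the independence of the random variables used to sample the $q$-Whittaker dynamics). A secondary technical point, already identified in the discussion preceding the proposition, is that weak convergence of one-dimensional marginals of $\PQWM$ to $\PWM$ (Proposition \ref{prop:CVtoWhittmeasurewithplancherel}) must be upgraded to the joint convergence needed to identify $T_1^{(n)}$ with the limit produced by the dynamics, but this follows by combining the continuous mapping theorem with tightness coming from the a priori bounds of Propositions \ref{prop:doubleexponentialdecayT}--\ref{prop:boundTchamber}.
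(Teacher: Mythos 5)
Your proposal reproduces the paper's argument exactly: the paper does not give a separate proof block but rather the text preceding the proposition, which realizes the $q$-Whittaker marginal $\PQWM_{(a_1,\dots,a_n),((\diagq,a_{n+1},\dots,a_t),\gamma)}$ dynamically via the same two-stage scheme (boundary column built with $\U_{\mathrm{row}}$, $\Udiag$ then Plancherel horizontal updates), invokes Proposition \ref{prop:LogGammaqWhittakerlimit}, \cite[Section 8.4.4]{borodin2013nearest}, and \cite{o2012directed} to identify the scaled first-coordinate limit with $\log Z(t,n,\tau)$, and closes via Proposition \ref{prop:CVtoWhittmeasurewithplancherel}. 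Your final paragraph on the coupling between the two stages and on upgrading to joint convergence spells out a technicality the paper leaves implicit, but does not alter the route.
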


\begin{remark}
	We expect that \eqref{eq:equalityhybridWhittaker} should hold as well when $\tau = 0$ in some sense (see Remark \ref{rem:tauequalzero} about the case $\tau=0$). While it is clear from the definition of the polymer partition function that $Z(t,n, \tau)$ converges to $Z(t,n, 0)$,  additional arguments are necessary to control the tails of the half-space Whittaker measure as $\tau\to 0$. 
	\label{rem:connectWhittakerLogGamma}
\end{remark}

\begin{corollary} Let $t\geqslant n\geqslant 1$, $\alpha_1, \dots,  \alpha_t>0$ and $\diag>0$.  For all $k\in\Z_{>0}$ such that $k< \min\lbrace 2\alpha_i, \diag+\alpha_i\rbrace$,
		\begin{multline}
		\EWM[Z(t,n)^k] =\oint\frac{\mathrm{d}w_1}{2\I\pi}\cdots \oint\frac{\mathrm{d}w_k}{2\I\pi} \prod_{1\leqslant a<b\leqslant k} \frac{w_a-w_b}{w_a-w_b-1}\, \frac{1+w_a+w_b}{2+ w_a+w_b}\\ \times 
		\prod_{m=1}^{k}  \frac{1+2w_m}{1+w_m-\diag}\prod_{i=1}^t \left( \frac{1}{\alpha_i-w_m-1} \right) \prod_{j=1}^{n} \left(\frac{1}{w_m + \alpha_j}\right), 
		\label{eq:momentsZ}
		\end{multline}
		where the positively oriented contours are such that for all $1\leqslant c\leqslant k$, the contour for $w_c$ encloses $\lbrace - \alpha_j\rbrace_{1\leqslant j\leqslant n}$  and $\lbrace w_{c+1}+1, \dots, w_k+1\rbrace$, and excludes the poles of the integrand at $ \diag- 1$ and $ \alpha_j-1$ (for $1\leqslant j\leqslant t$). 
		\label{cor:momentsZ}
\end{corollary}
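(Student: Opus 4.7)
The strategy is to degenerate the Whittaker moment formula \eqref{eq:momentsT1} of Proposition \ref{prop:momentsWhittaker} by sending the Plancherel parameter $\tau$ to zero, and then use the identification between the Whittaker marginal and the hybrid polymer partition function provided by Proposition \ref{prop:equalityZwhittaker}. Concretely, set $\beta_i = -\alpha_i$ in the hybrid model of Definition \ref{def:hybridpolymer}, so that for $T$ distributed according to $\PWM_{(\alpha_1,\dots,\alpha_n),(\diag,\alpha_{n+1},\dots,\alpha_t,\tau)}$ we have $e^{T_1^{(n)}} \overset{(d)}{=} Z(t,n,\tau)$. Combining this with \eqref{eq:momentsT1} gives, for each admissible $k$ and each $\tau>0$,
\begin{equation*}
\EWM[Z(t,n,\tau)^k] = e^{k\tau/2}\oint\!\cdots\!\oint \prod_{a<b}\frac{w_a-w_b}{w_a-w_b-1}\frac{1+w_a+w_b}{2+w_a+w_b}\prod_{m=1}^{k}\frac{\overline{\mathcal{G}}(-w_m)}{\overline{\mathcal{G}}(-w_m-1)}(1+2w_m),
\end{equation*}
with contours as specified there. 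Since $\overline{\mathcal{G}}(-w)/\overline{\mathcal{G}}(-w-1)=\tfrac{e^{\tau w}}{1+w-\diag}\prod_i\tfrac{1}{\alpha_i-w-1}\prod_j\tfrac{1}{w+\alpha_j}$, the integrand at $\tau=0$ is precisely the one appearing in \eqref{eq:momentsZ}.

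The first step is to pass to the limit on the right-hand side. The contours are compact (small nested loops around $\{-\alpha_j\}$ and around the previous integration variables shifted by $1$), the integrand is continuous in $\tau\in[0,1]$ and $(w_1,\dots,w_k)$ on these contours, and on this compact set $|e^{\tau w_m}|$ is bounded uniformly in $\tau\in[0,1]$. Hence dominated convergence (on a finite-dimensional compact domain) gives that the right-hand side converges as $\tau\to 0$ to the formula claimed in \eqref{eq:momentsZ}.

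The second and more delicate step is to pass to the limit on the left-hand side, i.e.\ to show
\begin{equation*}
\lim_{\tau\to 0^+}\EWM[Z(t,n,\tau)^k]=\EWM[Z(t,n)^k].
\end{equation*}
From the recursive representation of $Z(t,n,\tau)$ via the SDE \eqref{eq:SDE} (or equivalently from its definition \eqref{eq:hybridpartition} as a sum over concatenations of a discrete half-space path and a semi-discrete continuous-time portion of length $\tau$), one sees that $Z(t,n,\tau)\to Z(t,n,0)=Z(t,n)$ almost surely as $\tau\to 0^+$. To upgrade this to convergence of $k$-th moments it suffices to establish that $\{Z(t,n,\tau)^k\}_{\tau\in(0,1]}$ is uniformly integrable, which reduces to a bound on $\EWM[Z(t,n,\tau)^{k+\delta}]$ for some $\delta>0$. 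Under the assumption $k<\min\{2\alpha_i,\diag+\alpha_i\}$ one can take $\delta>0$ small enough that $k+\delta$ still lies in the range where the moment formula \eqref{eq:momentsT1} is valid, and the resulting contour integral is finite and bounded uniformly in $\tau\in[0,1]$ by the same dominated convergence argument as above. This gives the required uniform integrability, hence convergence of moments.

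The main obstacle, and the only point requiring care, is the interchange of limit and expectation in the second step. Once uniform integrability is secured, the corollary follows immediately from \eqref{eq:momentsT1}, Proposition \ref{prop:equalityZwhittaker}, and the explicit form of $\overline{\mathcal{G}}(-w)/\overline{\mathcal{G}}(-w-1)$. The moment condition $k<\min\{2\alpha_i,\diag+\alpha_i\}$ is sharp: if it fails, then even for $\tau=1$ the integrand of \eqref{eq:momentsT1} develops singularities that the contour cannot avoid while still enclosing $\{-\alpha_j\}$ and the shifted variables, which reflects the fact that boundary and bulk inverse-Gamma weights themselves fail to have $k$-th moments in that regime.
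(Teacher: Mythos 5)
Your overall plan matches the paper's: combine Proposition \ref{prop:momentsWhittaker} with Proposition \ref{prop:equalityZwhittaker} to obtain the moment formula for the hybrid partition function $Z(t,n,\tau)$, pass to the limit $\tau\to 0^+$ on the contour integral (which is straightforward on compact contours), and then argue that $\EWM[Z(t,n,\tau)^k]\to\EWM[Z(t,n)^k]$. The first two steps are exactly as in the paper, and you state them correctly (modulo writing $\beta_i$ where the paper's Proposition \ref{prop:equalityZwhittaker} calls the drifts $b_i$).

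The gap is in the uniform integrability argument. You propose to bound $\EWM[Z(t,n,\tau)^{k+\delta}]$ uniformly in $\tau$ by invoking the moment formula \eqref{eq:momentsT1} for the non-integer exponent $k+\delta$. But Proposition \ref{prop:momentsWhittaker} is proved by $k$-fold iteration of the first-order Whittaker difference operator (the $q\to1$ limit of $\DD_n^1$), so \eqref{eq:momentsT1} is established only for $k\in\Z_{>0}$. There is no contour-integral formula for fractional moments in the paper, and one cannot simply ``take $\delta$ small'' and still apply that proposition. (Taking $\delta$ to the next integer would not help either: $k<\min\{2\alpha_i,\diag+\alpha_i\}$ does not imply $k+1<\min\{2\alpha_i,\diag+\alpha_i\}$.) So the uniform-integrability step, as written, is not justified.

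The paper's route around this is more concrete and avoids fractional moments entirely. Using the SDE \eqref{eq:SDE} for the semi-discrete part of the hybrid model, one obtains for fixed $\tau^*>0$ and all $0\leqslant\tau<\tau^*$ the pathwise comparison
\begin{equation*}
0\leqslant Z(t,n,\tau)\leqslant Z(t,n,\tau^*)\exp\Bigl(\sup_{0\leqslant s\leqslant\tau^*}B_s^{(n)}-B_{\tau^*}^{(n)}\Bigr),
\end{equation*}
because the drift term $\int_\tau^{\tau^*}Z(t,n-1,s)/Z(t,n,s)\,\mathrm{d}s$ is nonnegative. The right-hand side is a single $\tau$-independent random variable whose $k$-th moment is finite (the $\exp$ factor has moments of all orders, and $\EWM[Z(t,n,\tau^*)^k]<\infty$ by the integer moment formula at $\tau=\tau^*$), so dominated convergence applies directly to $Z(t,n,\tau)^k$ without any need for a $(k+\delta)$-th moment. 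If you want to salvage your UI approach, you would need an independent argument for a uniform-in-$\tau$ bound on a moment of order strictly larger than $k$; the paper's SDE bound is both more elementary and exactly what is required.
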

\begin{proof}
Combining the moment formula for $e^{T_1}$ from Proposition \ref{prop:momentsWhittaker} with Proposition  \ref{prop:equalityZwhittaker} yields a formula for $\EWM[Z(t,n, \tau)^k]$. Then, we may take the limit when $\tau$ goes to zero in the formulas, but we need to prove that the moments converge as well as $\tau\to 0$. The hybrid partition function $Z(t,n,\tau)$ converges almost surely as $\tau$ goes to zero to  the partition function of the half-space log-gamma polymer $Z(t,n)=Z(t,n,0)$ (if $Z(t,n,\tau)$ and $Z(t,n)$ are defined with the same variables $w_{i,j}$).  Let $\tau^* >0$ be an arbitrary positive real number. Using the system of SDE \eqref{eq:SDE}, we obtain that for $0\leqslant \tau <\tau^*$,
 \begin{equation}
 \log(Z(t,n, \tau^*))  - \log(Z(t,n, \tau)) = B^{(n)}_{\tau^*} -  B^{(n)}_{\tau} + \int_{\tau}^{\tau^*} \frac{Z(t,n-1,s)}{Z(t,n,s)} \mathrm{d}s.
 \label{eq:csqSDE}
 \end{equation}
Hence, since  $\frac{Z(t,n-1,s)}{Z(t,n,s)}\geqslant 0$, we have that for $\tau^*>0$ fixed and any $0\leqslant \tau <\tau^*$, 
\begin{equation}
0\leqslant Z(t,n,\tau)  \leqslant Z(t,n,\tau^*)\exp\left( \sup_{0\leqslant s\leqslant \tau^*}\lbrace B_s^{(n)}  \rbrace   - B^{(n)}_{\tau^*} \right).
\label{eq:inequality2}
\end{equation}
 For $k< \min\lbrace 2\alpha_i, \diag+\alpha_i\rbrace$,  we know from Proposition \ref{prop:momentsWhittaker} that $\EE[Z(t,n,\tau^*)]^k<\infty$. The random variable $ \exp\left( \sup_{0\leqslant s\leqslant \tau^*}\lbrace B_s^{(n)}  \rbrace  - B^{(n)}_{\tau^*}  \right)$ has finite moments of all orders, since $\sup_{0\leqslant s\leqslant \tau^*}\lbrace B_s^{(n)}  \rbrace  - B^{(n)}_{\tau^*} $ is distributed as the absolute value of a Gaussian (\cite[Chapter III,  Ex. 3.14]{revuz2013continuous}). Thus, the R.H.S. of \eqref{eq:inequality2} does not depend on $\tau$ and has finite moment of order $k$ as long as $k< \min\lbrace 2\alpha_i, \diag+\alpha_i\rbrace$. Dominated convergence implies that 
$$  \EE[Z(t,n,\tau)^k] \xrightarrow[\tau\to 0]{} \EE[Z(t,n,0)^k],$$
which concludes the proof. 
\end{proof}

\begin{remark}
The moment formula \eqref{eq:momentsZ} is reminiscent of nested contour integral solutions of the delta Bose gas in a half space, see for instance  \cite[Lemma 4.2]{borodin2016directed} and Proposition \ref{cor:momentsKPZ} in the present paper. We believe that the formula above could as well be obtained by solving the system of difference equations satisfied by the function 
$$ (n_1, \dots, n_k) \mapsto  \EWM[Z(t, n_1) \dots Z(t,n_k)]. $$
\end{remark}

\subsection{Alternative derivation of Laplace transform integral formulas}
 \label{sec:alternative}
We can also  use our Whittaker eigenrelations for $\BBn$ and $\BBone$ (defined  in \eqref{eq:defBBn} and  \eqref{eq:defBBone})  to directly compute Laplace transforms. The principle is to act  with $\BBn$ and $\BBone$ on both sides of the Whittaker analogue of the generalized Littlewood identity from  Proposition \ref{prop:integratestoone}, We will need to justify that one can exchange the action of the operators with the integrations so that we may use our eigenrelations \eqref{eq:eignwhitN} and \eqref{eq:eignwhitone}. For the latter, it is essential to keep a positive Plancherel specialization (we could not adapt the same approach using the identity from Theorem \ref{th:OSZ}, which would correspond to the $\tau=0$ case).

\begin{theorem} Fix $n\leqslant t$, $\tau >0$ and $\alpha_1, \dots, \alpha_t>0$ and $\diag\in \R$ such that $\diag+\alpha_i>0$ for $1\leqslant i\leqslant n$.  Under the Whittaker measure $\PWM_{(\alpha_1, \dots, \alpha_n), (\diag, \alpha_{n+1}, \dots, \alpha_t, \tau)}$, we have 
\begin{multline}
\EWM[e^{-u e^{T_1}}] = \int_{(\mathcal{D}_{-r})^n} \frac{\mathrm{d}z}{(2\I\pi)^n}  \mskyl_n(\I z)(2\pi)^n  \   \prod_{i=1}^{n}\left( u^{ \alpha_i + z_i} e^{\tau ( z_i^2-\alpha_i^2)/2}  \frac{\Gamma(\diag-z_i)}{\Gamma(\diag + \alpha_i)} \prod_{j=n+1}^t \frac{\Gamma(\alpha_j-z_i)}{\Gamma(\alpha_j+ \alpha_i)}\right) \\ \times
\prod_{i,j=1}^{n} \Gamma(-z_i - \alpha_j)
   \prod_{1\leqslant i<j\leqslant n} \frac{\Gamma(-z_i-z_j)}{\Gamma(\alpha_i+\alpha_j)},
\label{eq:nfoldwhit}
\end{multline}
 where $\mskyl_n$ is defined in \eqref{eq:defmskyl}, and  $r>0$ is such that  $r+\diag >0$ and   $r>\alpha_i$ for all $1\leqslant i \leqslant n$.
 
We also have, 
\begin{multline}
\EWM[e^{-u e^{-T_n}}]= 
\int_{(\mathcal{D}_{r})^n} \frac{\mathrm{d}z}{(2\I\pi)^n}  \mskyl_n(\I z)(2\pi)^n   \    \prod_{i=1}^{n} \left( u^{-\ a_i -  z_i} e^{\tau ( z_i^2-\alpha_i^2)/2}\frac{\Gamma(z_i+\diag)}{\Gamma(\alpha_i + \diag)} \prod_{j=n+1}^t\frac{\Gamma(z_i+\alpha_j)}{\Gamma(\alpha_i + \alpha_j)} \right) \\ \times 
 \prod_{i,j=1}^{n} \Gamma(-z_i + \alpha_j)
\prod_{1\leqslant i<j\leqslant n} \frac{\Gamma(z_i+z_j)}{\Gamma(\alpha_i+\alpha_j)}.
\label{eq:nfoldwhitn}
\end{multline}
where $r>0$ is such that $r+\diag >0$ and $r<\alpha_i$ for all $1\leqslant i\leqslant n$.
\label{th:proofOSZ}
\end{theorem}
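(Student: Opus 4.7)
The plan is to compute the Laplace transform directly using the Whittaker eigenrelation \eqref{eq:eignwhitone} rather than by limiting from the $q$-Whittaker case. Starting from the definition of the half-space Whittaker measure,
\[
\EWM[e^{-u e^{T_1}}] = \frac{1}{Z(\alpha)}\int_{\R^n} e^{-u e^{x_1}}\psi_{\I\alpha}(x)\,\mathcal{T}^{\,\tau}_{\diag,\alpha_{n+1},\ldots,\alpha_t}(x)\,dx,
\]
where $Z(\alpha)$ is the normalization from Proposition \ref{prop:integratestoone}, I would substitute the integral representation of $e^{-u e^{x_1}}\psi_{\I\alpha}(x)$ furnished by \eqref{eq:eignwhitone} (with $w=-\I\alpha$) and interchange the resulting $\xi$- and $x$-integrals by Fubini. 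After this exchange, the $x$-integration is absorbed into the auxiliary integral
\[
I(\xi) := \int_{\R^n}\psi_{\xi}(x)\,\mathcal{T}^{\,\tau}_{\diag,\alpha_{n+1},\ldots,\alpha_t}(x)\,dx,
\]
which is the only non-obvious ingredient.

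To evaluate $I(\xi)$, I would substitute the defining formula \eqref{eq:defT} for $\mathcal{T}^{\,\tau}$, shift the inner $\nu$-contour from $\R^n+\I a$ down to $\R^n$ (no poles of $\Gamma(\diag-\I\nu)$ or $\Gamma(\alpha_j-\I\nu)$ for $j>n$ are crossed, since all $\diag+\alpha_i$ are positive), and apply the Whittaker orthogonality relation \eqref{eq:orthogonalityone}. The resulting sum of delta functions collapses the $\nu$-integration, and the symmetry of the integrand together with the invariance $\mskyl_n(\xi)=\mskyl_n(-\xi)$ reduces $n!$ permutation terms to a single value, yielding
\[
I(\xi) = e^{-\tau\sum_j\xi_j^2/2}\prod_{k=1}^n\Gamma(\diag-\I\xi_k)\prod_{j=n+1}^t\prod_{k=1}^n\Gamma(\alpha_j-\I\xi_k)\prod_{1\leqslant i<j\leqslant n}\Gamma(-\I(\xi_i+\xi_j)).
\]

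Inserting $I(\xi)$ back, making the change of variables $z_i = \I\xi_i$ (which produces the Jacobian factor $(2\pi)^n/(2\I\pi)^n$ and transforms $\R^n$ into the imaginary-axis contour $(\mathcal{D}_0)^n$), and then shifting the contour to $(\mathcal{D}_{-r})^n$ produces the right-hand side of \eqref{eq:nfoldwhit} up to division by $Z(\alpha)$; the assumptions $r>\alpha_i$ and $r+\diag>0$ ensure no poles of $\Gamma(\diag-z_i)$, $\Gamma(\alpha_j-z_i)$, $\Gamma(-z_i-\alpha_j)$, or $\Gamma(-z_i-z_j)$ are swept during this deformation. Dividing through by $Z(\alpha)=e^{\tau\sum\alpha_i^2/2}\prod\Gamma(\diag+\alpha_i)\prod_{j>n,i}\Gamma(\alpha_j+\alpha_i)\prod_{i<j}\Gamma(\alpha_i+\alpha_j)$ distributes its factors across the product and matches $e^{\tau z_i^2/2}\cdot e^{-\tau\alpha_i^2/2}$ into $e^{\tau(z_i^2-\alpha_i^2)/2}$, giving \eqref{eq:nfoldwhit} verbatim. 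The formula \eqref{eq:nfoldwhitn} is derived by the identical strategy, with \eqref{eq:eignwhitN} (and $w=\I\alpha$) in place of \eqref{eq:eignwhitone}, and with the change of variables $z_i=-\I\xi_i$ followed by a contour shift to $(\mathcal{D}_r)^n$.

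The main obstacle will be justifying the Fubini interchange and the final $z$-contour shift, both of which rely essentially on the strict positivity of the Plancherel parameter $\tau>0$. The Gaussian factor $e^{-\tau\sum\xi_j^2/2}$ produced by $\mathcal{T}^{\,\tau}$ dominates the exponential growth coming from the products of Gamma functions and from the Whittaker function $\psi_\xi$ (cf.\ Lemma \ref{lem:boundWhittaker}), providing the absolute integrability needed in $\xi$ and, via the bounds of Propositions \ref{prop:boundT}, \ref{prop:doubleexponentialdecayT}, and \ref{prop:boundTchamber}, in $x$. This is precisely the reason the argument collapses at $\tau=0$, and it motivates the Plancherel regularization introduced throughout this section.
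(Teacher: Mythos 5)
Your strategy---invoke the Baxter-operator eigenrelation for $e^{-ue^{x_1}}\psi_{\I\alpha}(x)$, exchange the $\xi$- and $x$-integrals, and evaluate the inner $x$-integral by Whittaker orthogonality---is in essence the paper's strategy, and the final bookkeeping (change of variables $z_i=\I\xi_i$, contour shift, division by the normalizing constant) is the same. But there is a genuine gap at the Fubini step, which the paper closes with a move you have omitted.

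The problem is that the inner integral $I(\xi)=\int_{\R^n}\psi_\xi(x)\,\mathcal{T}^{\,\tau}_{\diag,\alpha_{n+1},\ldots,\alpha_t}(x)\,\mathrm{d}x$ is not absolutely convergent for real $\xi$. Lemma \ref{lem:boundWhittaker} at $y=0$ only gives $|\psi_\xi(x)|\leqslant C$ (so no decay inside the Weyl chamber), while Propositions \ref{prop:boundT} and \ref{prop:boundTchamber} show $\mathcal{T}^{\,\tau}(x)$ grows like $e^{a\sum_i x_i}$ for $x$ deep in the positive chamber (with $a>0$). The product is therefore not Lebesgue-integrable over $\R^n$, so there is no absolutely convergent iterated integral to which Fubini could apply, and the subsequent appeal to \eqref{eq:orthogonalityone} cannot be made pointwise in $\xi$: that identity is only distributional. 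You would be manipulating $I(\xi)$ purely formally.

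The paper closes this exactly by first substituting $w\mapsto w+\I r$ in Proposition \ref{prop:integratestoone} and using the shift property $\psi_{w+\I r}(x)=e^{-r\sum x_i}\psi_w(x)$. This introduces the damping factor $e^{-r\sum x_i}$ inside the $x$-integral, so that by Proposition \ref{prop:boundT} the function $e^{-r\sum x_i}\mathcal{T}^{\,\tau}(x)$ lies in $\mathbb{L}^2(\R^n)$. Only then is $\BBone_n^u$ applied (in $w$) to both sides; the interchange of the $\xi$- and $x$-integrals is now legitimate because both $e^{-ue^{x_1}}\psi_w(x)$ (cf.\ \cite[Corollary 3.8]{o2014geometric}) and $e^{-r\sum x_i}\mathcal{T}^{\,\tau}(x)$ are in $\mathbb{L}^2(\R^n)$, so the Plancherel/Parseval-type pairing converges absolutely. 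The $\I r$-shift is undone at the very end. Without this preliminary damping, the $\mathbb{L}^2$ bounds you invoke do not apply to the integrals you actually write.

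A secondary point: your claim that shifting the $\nu$-contour in \eqref{eq:defT} from $\R^n+\I a$ to $\R^n$ crosses no poles ``since all $\diag+\alpha_i$ are positive'' does not hold. The positivity of $\diag+\alpha_i$ is irrelevant to those poles; for $\diag<0$ the factor $\Gamma(\diag-\I\nu_j)$ has a pole at $\nu_j=-\I\diag$, i.e.\ at $\Imag[\nu_j]=-\diag\in(0,a)$, which is crossed when lowering the contour from height $a>-\diag$ to height $0$. This difficulty is another symptom of trying to use orthogonality pointwise at real $\xi$ rather than working throughout with the shifted, $\mathbb{L}^2$ formulation.
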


\begin{proof}
Recall the statement of  Proposition \ref{prop:integratestoone}: for $w\in \mathbb{H}^n$, 
\begin{multline*}  
\int_{\R^n} \mathrm{d}x  \psi_{w}(x) \mathcal{T}^{\,\tau}_{\diag, \alpha_{n+1}, \dots, \alpha_t}(x) \\ =   e^{- \tau\sum_{i=1}^n w_i^2/2}\ \prod_{i=1}^n{\Gamma(\diag-\I w_i)}\prod_{j=n+1}^{t}{\Gamma(\alpha_j-\I w_i)}\prod_{1\leqslant i<j\leqslant n}{\Gamma(-\I (w_i+w_j))}.
\end{multline*}
We may substitute $w_i+\I r$ in place of $w_i$ above,  for $r>0$. The identity remains true as long as  $w_i+\I r\in \mathbb{H}$ for all $i$ and $\Re[\diag-\I w_i+r]>0$, but $w$ does not necessarily belong to $\mathbb{H}^n$ anymore. Using the shift property $\psi_{w+\I r}(x) = \psi_{w}(x) e^{ -r \sum x_i}$ we get
\begin{multline}  
\int_{\R^n} \mathrm{d}x  \psi_{w}(x) e^{- r \sum x_i} \mathcal{T}^{\,\tau}_{\diag, \alpha_{n+1}, \dots, \alpha_t}(x) \\ =   e^{- \tau\sum_{i=1}^n (w_i+\I r)^2/2}\ \prod_{i=1}^n \left({\Gamma(\diag-\I w_i + r)}\prod_{j=n+1}^{t}{\Gamma(\alpha_j-\I w_i+r)} \right)\prod_{1\leqslant i<j\leqslant n}{\Gamma(-\I (w_i+w_j)+2 r)}.
\label{eq:regularizedCauchyLitllewoodWhit}
\end{multline}
When $w\in (-\mathbb{H})^n$, we may act on both sides of  \eqref{eq:regularizedCauchyLitllewoodWhit}  with $\BBone_n^u$. 
Acting on the R.H.S. yields
\begin{multline*}
\int_{\R^n} \mathrm{d}\xi  \mskyl_n(\xi) u^{\I\sum_{i=1}^n- w_i + \xi_i} \prod_{i,j} \Gamma(-\I\xi_i + \I w_j) \\  \times 
  e^{- \tau\sum_{i=1}^n (\xi_i + \I r)^2/2}\ \prod_{i=1}^n\left( {\Gamma(\diag-\I\xi_i +r)}\prod_{j=n+1}^{t}{\Gamma(\alpha_j-\I\xi_i+r)}\right) \prod_{1\leqslant i<j\leqslant n}{\Gamma(-\I(\xi_i+\xi_j)+2r)}.
\end{multline*}
Acting on the L.H.S. with $\BBone_n^u$ yields 
$$ \int_{\R^n} \mathrm{d}x   e^{-u e^{x_1}} \psi_{w}(x) e^{ - r \sum x_i} \mathcal{T}^{\,\tau}_{\diag, \alpha_{n+1}, \dots, \alpha_t}(x).$$
We have exchanged the action of $\BBone_n^u$ and the integration above using Fubini theorem. It can be applied here because both $e^{-u e^{x_1}} \psi_{w}(x)$ and $e^{- r \sum x_i}\mathcal{T}^{\,\tau}_{\diag, \alpha_{n+1}, \dots, \alpha_t}(x)$ are bounded in $\mathbb{L}^2(\R^n)$, which has been proved respectively in \cite[Corollary 3.8]{o2014geometric} and Proposition \ref{prop:boundT} of the present paper.
We may substitute $w_i-\I r$ in place of $w_i$, and after a change of variables $z_i=\I\xi_i-r$, we obtain that 
\begin{multline}
\int_{\R^n} \mathrm{d}x   e^{-u e^{x_1}} \psi_{w}(x) \mathcal{T}^{\,\tau}_{\diag, \alpha_{n+1}, \dots, \alpha_t}(x) =
\int_{(\mathcal{D}_{-r})^n} -\mathrm{d}\I z \mskyl_n(\I z) u^{\sum_{i=1}^{n} -\I w_i + \sum_{i=1}^{n} z_i} \\  \times 
e^{\tau \sum_{i=1}^n z_i^2}\prod_{i,j=1}^{n} \Gamma(-z_i +\I w_j) \prod_{i=1}^{n}\left( \Gamma(\diag-z_i)\prod_{j=n+1}^t \Gamma(\alpha_j-z_i) \right)\prod_{1\leqslant i<j\leqslant n} \Gamma(-z_i-z_j).  
\label{eq:tocontinue}
\end{multline}
The expectation $\EWM[e^{-u e^{T_1}}]$ under the Whittaker measure $\PWM_{(\alpha_1, \dots, \alpha_n), (\diag, \alpha_{n+1}, \dots, \alpha_t, \tau)}$, corresponds to the left-hand side of \eqref{eq:tocontinue} for $w=(\I\alpha_1, \dots, \I\alpha_n)$, divided by the normalization constant, so that we have established \eqref{eq:nfoldwhit}. For this choice of $w$, we need that  $r>\alpha_i$ so that $w_i+\I r\in \mathbb{H}$ for all $i$, hence the hypothesis in the statement of Theorem \ref{th:OSZ}.

We turn now to the proof of \eqref{eq:nfoldwhitn}.  Again we start from \eqref{eq:regularizedCauchyLitllewoodWhit} where now $w\in \mathbb{H}^n$.  
Acting on both sides with $\BBn_n^u$ (the interchange of integration with respect to $x$ and action of $\BBn_n^u$ can be justified as in the previous case) yields for all $w\in \mathbb{H}^n$
\begin{multline*}
 \int_{\R^n} \mathrm{d}x   e^{-u e^{-x_n}} \psi_{w}(x) e^{- r \sum x_i} \mathcal{T}^{\,\tau}_{\diag, \alpha_{n+1}, \dots, \alpha_t}(x) = \int_{\R^n} \mathrm{d}\xi  \mskyl_n(\xi) u^{\I\sum_{i=1}^n w_i + \xi_i} \prod_{i,j} \Gamma(-\I\xi_i - \I w_j) \\ 
 \times e^{- \tau\sum_{i=1}^n (-\xi_i + \I r)^2/2}\ \prod_{i=1}^n\left( {\Gamma(\diag+ \I\xi_i +r)}\prod_{j=n+1}^{t}{\Gamma(\alpha_j+\I\xi_i+r)}\right) \prod_{1\leqslant i<j\leqslant n}{\Gamma(\I(\xi_i+\xi_j)+2r)}.
\end{multline*}
Now we may substitute back $w_i$ by $w_i-\I r$ (with $r$ not too large so that $w_i-\I r\in \mathbb{H}$), evaluate for $w=(\I\alpha_1, \dots, \I\alpha_n)$, and after a change of variables $z_i=\I\xi_i+r$, we obtain 
\begin{multline*}
\int_{\R^n} \mathrm{d}x   e^{-u e^{-x_n}} \psi_{\I\alpha_1, \dots, \I\alpha_n}(x) \mathcal{T}^{\,\tau}_{\diag, \alpha_{n+1}, \dots, \alpha_t}(x) = 
\int_{(\mathcal{D}_{r})^n} \mathrm{d} z \frac{\mskyl_n(\I z)}{\I} u^{\sum_{i=1}^{n} \alpha_i + \sum_{i=1}^{n} z_i} \\  \times 
e^{\tau \sum_{i=1}^n z_i^2}\prod_{i,j=1}^{n} \Gamma(-z_i + \alpha_j) \prod_{i=1}^{n}\left( \Gamma(\diag+z_i)\prod_{j=n+1}^t \Gamma(\alpha_j+z_i) \right)\prod_{1\leqslant i<j\leqslant n} \Gamma(z_i+z_j).  
\end{multline*}
We obtain \eqref{eq:nfoldwhitn} after dividing by the normalization constant. 
\end{proof}

\begin{remark}
We expect that one could deduce Theorem \ref{th:proofOSZ} from Corollaries \ref{cor:LaplaceWhittaker} and \ref{cor:LaplacecorollaryTn}. The correspondence between the two types of formulas should follow the same lines as  in \cite[Section 3]{borodin2013log}, where the equivalence between an $n$ fold contour integral such as \eqref{eq:nfoldwhit} and series expansions as in Corollary \ref{cor:LaplaceWhittaker} is explained in the context of the full-space log-gamma polymer.  Alternatively, we may prove $n$-fold contour integral formulas for the $(q, t)$-Laplace transform of general Macdonald measures by keeping the contour as $\mathcal{D}_{-\e}$ in the proof of Proposition \ref{prop:actionMoumiAonZ}. Such formulas would   degenerate to Theorem \ref{th:proofOSZ} in the Whittaker limit. 
\end{remark}

Note that in the $n$-fold Laplace transform formulas \eqref{eq:nfoldwhit} and \eqref{eq:nfoldwhitn}, we may let $\tau$ go to zero without encountering any singularity. As a consequence, we can prove a Laplace transform formula for the partition function of the half-space log-gamma polymer (without Plancherel specialization). 
\begin{corollary}
 Fix $n\leqslant t$,  $\alpha_1, \dots, \alpha_t>0$ and $\diag\in \R$ such that $\diag+\alpha_i>0$ for $1\leqslant i\leqslant n$. Let  $r>0$ such that  for all $1\leqslant i \leqslant n$,  $r>\alpha_i>0$. The partition function $Z(t,n)$ of the log-gamma polymer in a half-quadrant (Definition \ref{def:LogGammapolymer}) is characterized by the following. For any $u>0$, 
 \begin{multline}
\EWM[e^{-u Z(t,n)}] = \frac{1}{n!} \int_{\mathcal{D}_r}\frac{\mathrm{d}z_1}{2\I\pi} \dots \int_{\mathcal{D}_r}\frac{\mathrm{d}z_n}{2\I\pi} \prod_{i\neq j} \frac{1}{\Gamma(z_i-z_j)} \prod_{1\leqslant i<j\leqslant n} \frac{\Gamma(z_i+z_j)}{\Gamma(\alpha_i+\alpha_j)}
\prod_{i,j=1}^{n} \Gamma(z_i - \alpha_j)\\ \times 
\prod_{i=1}^{n}\left(u^{ \alpha_i -z_i} \frac{\Gamma(\diag+z_i)}{\Gamma(\diag + \alpha_i)} \prod_{j=n+1}^t \frac{\Gamma(\alpha_j+z_i)}{\Gamma(\alpha_j+ \alpha_i)}\right).
 \label{eq:nfoldwhitZ}
 \end{multline}
 \label{cor:OSZproof}
\end{corollary}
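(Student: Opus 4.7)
The plan is to deduce Corollary \ref{cor:OSZproof} from Theorem \ref{th:proofOSZ} by sending the Plancherel parameter $\tau$ to zero, using Proposition \ref{prop:equalityZwhittaker} to connect the Whittaker marginal $T_1^{(n)}$ to the hybrid polymer partition function. Specifically, for $\tau > 0$, Proposition \ref{prop:equalityZwhittaker} gives the distributional identity $e^{T_1^{(n)}} \stackrel{(d)}{=} Z(t,n,\tau)$ under the half-space Whittaker measure $\PWM_{(\alpha_1, \dots, \alpha_n), (\diag, \alpha_{n+1}, \dots, \alpha_t, \tau)}$, so equation \eqref{eq:nfoldwhit} furnishes an $n$-fold contour integral representation of $\EE[e^{-u Z(t,n,\tau)}]$. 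The change of variables $z_i \mapsto -z_i$ maps each contour $\mathcal{D}_{-r}$ onto $\mathcal{D}_r$ (the two sign changes from $dz_i$ and from the reversal of orientation cancel), matches each Gamma factor in \eqref{eq:nfoldwhit} with its counterpart in \eqref{eq:nfoldwhitZ}, and leaves both the Gaussian factor $\prod_i e^{\tau(z_i^2 - \alpha_i^2)/2}$ and the Sklyanin prefactor $\mskyl_n(\I z)(2\pi)^n = \frac{1}{n!}\prod_{i \neq j}\Gamma(z_i - z_j)^{-1}$ invariant.

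It then suffices to pass $\tau \downarrow 0$ on both sides. On the left, the SDE description \eqref{eq:SDE} already used in Corollary \ref{cor:momentsZ} gives $Z(t,n,\tau) \to Z(t,n)$ almost surely as $\tau \to 0$, and the uniform bound $e^{-u Z(t,n,\tau)} \leq 1$ delivers $\EE[e^{-u Z(t,n,\tau)}] \to \EE[e^{-u Z(t,n)}]$ via bounded convergence. On the right, writing $z_i = r + \I y_i$, the Gaussian factor satisfies $|e^{\tau(z_i^2 - \alpha_i^2)/2}| = e^{\tau(r^2 - \alpha_i^2 - y_i^2)/2} \leq e^{\tau r^2 / 2}$, so for $\tau \in [0,1]$ the $\tau$-dependent integrand is pointwise dominated on $(\mathcal{D}_r)^n$ by $e^{nr^2/2}$ times the $\tau = 0$ integrand in absolute value. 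Dominated convergence then reduces the problem to verifying that the $\tau = 0$ integrand in \eqref{eq:nfoldwhitZ} is absolutely integrable on $(\mathcal{D}_r)^n$.

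The main obstacle is this absolute-integrability check. By Stirling's formula $|\Gamma(x + \I y)| \sim \sqrt{2\pi}\,|y|^{x-1/2} e^{-\pi|y|/2}$ as $|y| \to \infty$, the modulus of the integrand has leading exponential behavior
\begin{equation*}
\exp\!\left(\pi \sum_{i<j}|y_i - y_j| \,-\, \tfrac{\pi}{2}\sum_{i<j}|y_i + y_j| \,-\, \tfrac{\pi(t+1)}{2}\sum_i |y_i|\right),
\end{equation*}
up to polynomially bounded prefactors. Running through the $2^n$ possible sign patterns of the $y_i$'s, the worst direction corresponds to alternating signs with equal magnitudes $|y_i| = R$, for which the exponent evaluates to $\tfrac{\pi n(n - 2t)}{4} R$, strictly negative under the hypothesis $t \geq n$. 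A routine patching over sign regions combined with the polynomial prefactors yields absolute integrability of the $\tau = 0$ integrand, and the proof is then concluded by dominated convergence.
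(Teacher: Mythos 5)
Your proof is correct and follows essentially the same route as the paper: combine Theorem~\ref{th:proofOSZ} with Proposition~\ref{prop:equalityZwhittaker}, change variables $z_i\mapsto -z_i$, and send $\tau\to 0$. The paper leaves the $\tau\to 0$ passage on the integral side implicit, whereas you supply the missing justification — bounded convergence on the left using $Z(t,n,\tau)\to Z(t,n)$ a.s., and dominated convergence on the right backed by a Stirling estimate showing absolute integrability of the $\tau=0$ integrand under $t\geqslant n$ — which is a welcome completion of the argument rather than a deviation from it.
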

\begin{proof}
 We can compute $\EWM[e^{-u Z(t,n, \tau)}]$, where $Z(t,n, \tau)$ is the hybrid partition function from Definition \ref{def:hybridpolymer}, using the Laplace transform formula from Theorem \ref{th:proofOSZ} and the identity in law from Proposition \ref{prop:equalityZwhittaker}. Then, using the weak convergence $Z(t,n, \tau) \Rightarrow Z(t,n)$ as $\tau\to 0$, we obtain  
 \begin{multline}
 \EWM[e^{-u Z(t,n)}] = \int_{(\mathcal{D}_{-r})^n} \frac{\mathrm{d}z}{(2\I\pi)^n}  \mskyl_n(\I z)(2\pi)^n   u^{\sum_{i=1}^{n} \alpha_i + \sum_{i=1}^{n} z_i}
 \prod_{i,j=1}^{n} \Gamma(-z_i - \alpha_j)\\ \times 
 \prod_{i=1}^{n}\left(\frac{\Gamma(\diag-z_i)}{\Gamma(\diag + \alpha_i)} \prod_{j=n+1}^t \frac{\Gamma(\alpha_j-z_i)}{\Gamma(\alpha_j+ \alpha_i)}\right)  \prod_{1\leqslant i<j\leqslant n} \frac{\Gamma(-z_i-z_j)}{\Gamma(\alpha_i+\alpha_j)},
 \end{multline}
 which can be written equivalently as in the statement of the Proposition.  
\end{proof}

\subsection{Plancherel theory and comparison with O'Connell-Sepp\"al\"ainen-Zygouras's results}
\label{sec:relationwithOSZ} 

In this section, we shall compare our results, in particular Corollary \ref{cor:OSZproof},  with formal computations in \cite{o2014geometric}. 
The integral transform 
$$ \widehat f(\xi) = \int_{\R^n} f(x) \psi_{\xi}(x)\mathrm{d}x$$
defines an isometry from $\mathbb{L}^2(\R^n, \mathrm{d}x)$ to 	$\mathbb{L}^2(\R^n, \mskyl_n(\xi)\mathrm{d}\xi)$ restricted to symmetric functions. The associated Plancherel theorem \cite[Theorem 51]{semenov1993quantization} allows to compute the Laplace transform of observables of the Whittaker measure \cite{o2014geometric}. Indeed, we are interested in computing 
\begin{equation}
 \EWM[e^{-u Z(n,n)}] = \frac{1}{C}\int_{\R^n} e^{-ue^{T_1}} \psi_{\I\alpha_1, \dots, \I\alpha_n}(T) e^{-\diag \sum_{i=1}^{n/2} T_{2i-1}-T_{2i}} e^{-e^{-T_n}}\mathrm{d}T,
\label{eq:defLaplace}
\end{equation}
where $C$ is the normalizing constant and we assume that  $n$ is even for simplicity. Suppose that the integrand above can be written as $\overline{f(T)}g(T)$ where $f$ and $g$ are two functions in $\mathbb{L}^2(\R^n, \mathrm{d}x)$ such that  we are able to compute the integral transforms $\widehat f, \widehat g$. Then the Plancherel theorem yields 
$$\EWM[e^{-u Z(n,n)}] = \frac{1}{C} \int_{\R^n} \overline{\widehat f(\xi)} \widehat g(\xi) \mskyl_n(\xi)\mathrm{d}\xi.$$
However, it is not clear how to find such a decomposition of the integrand in \eqref{eq:defLaplace}. 
In \cite[Section 5]{o2014geometric}, in a remark titled ``A formal computation'', the authors propose to apply formally this scheme to functions that are not square integrable, in order to derive explicit integral formulas for $\EWM[e^{-u Z(n,n)}]$.  There may be several ways to decompose the integrand in \eqref{eq:defLaplace} into a product of functions whose integral transforms can be computed. We examine below two possibilities, the first one yields our formula from Corollary \ref{cor:OSZproof}, while the second one yields \cite[(5.15)]{o2014geometric}. Although the argument is formal, both approaches lead to the correct answer (though the form of the answer is different and we match them in Corollary \ref{cor:OSZproof2}).  

\subsubsection{Case 1: equivalent to using the operator $\BBone_n^u$}

Let us choose $f$ and $g$ as 
$$ f(T) = e^{-ue^{T_1}} \psi_{\I\alpha_1, \dots, \I\alpha_n}(T),  \ \ g(T) = e^{-\diag \sum_{i=1}^{n/2} T_{2i-1}-T_{2i}} e^{-e^{-T_n}}.$$
Then, using  \eqref{eq:Cauchywhittakerone}, 
$$ \widehat f(\xi) = u^{\sum_{j=1}^n\alpha_j-\I \xi_j}\prod_{1\leqslant i,j\leqslant n} \Gamma(\I \xi_j -\alpha_i),\ \ \ \  \Real[\I\xi_i]>\alpha_j \text{ for all } 1\leqslant i,j\leqslant n,$$
and using Theorem \ref{th:OSZ} (that is \cite[Corollary 5.4]{o2014geometric}),  
$$ \widehat g(\xi)  = \prod_{j=1}^n \Gamma(\diag - \I \xi_j) \prod_{1\leqslant i<j\leqslant n} \Gamma(-\I \xi_i-\I \xi_j) ,\ \ \ \  \Real[-\I\xi_i]>\min\lbrace 0, -\diag\rbrace  \text{ for all } 1\leqslant i \leqslant n.$$
Thus a formal application of the Plancherel theorem suggests that, 
\begin{equation}
\EWM[e^{-u Z(n,n)}] =  \int u^{\sum_{j=1}^n\alpha_j-\I \xi_j}\prod_{1\leqslant i,j\leqslant n} \Gamma(\I \xi_j -\alpha_i) \prod_{j=1}^n \frac{ \Gamma(\diag + \I \xi_j)}{\Gamma{\diag+\alpha_j}} \prod_{1\leqslant i<j\leqslant n} \frac{\Gamma(\I \xi_i+\I \xi_j)}{\Gamma{(\alpha_i+\alpha_j)}} \mskyl_n(\xi)\mathrm{d}\xi,
\label{eq:planchereltheorycorrect}
\end{equation}
where the integration contour is $\R-\I r$ with $r>\alpha_i$ for all   $1\leqslant i \leqslant n$. This formula is, in fact, correct as we have proved it in Corollary \ref{cor:OSZproof} (in the special case $t=n$), itself coming from Theorem \ref{th:proofOSZ}. The choice of contours can be justified more precisely by shifting variables as in the proof of Theorem \ref{th:proofOSZ}, however the derivation above is not rigorous due to the fact that the functions $f$ and  $g$ do not belong to $\mathbb{L}^2(\R^n, \mathrm{d}x)$.  This is why the presence of the Plancherel component in the proof of Theorem \ref{th:proofOSZ} is crucial. 
 
\subsubsection{Case 2: O'Connell-Sepp\"al\"ainen-Zygouras's approach}

Let us now choose $f$ and $g$ as
$$ f(T) = e^{-e^{-T_n}} \psi_{\I\alpha_1, \dots, \I\alpha_n}(T),  \ \ g(T) = e^{-\diag \sum_{i=1}^{n/2} T_{2i-1}-T_{2i}} e^{-ue^{T_1}}.$$
Then, using  \eqref{eq:Cauchywhittakern}, 
$$ \widehat f(\xi) = \prod_{1\leqslant i,j\leqslant n} \Gamma(\I \xi_j -\alpha_i), \ \ \ \  \Real[\I\xi_j]>\alpha_i  \text{ for all } 1\leqslant i,j \leqslant n.$$
Using $\psi_{z}(T_1, \dots, T_n) = \psi_{-z}(-T_n, \dots, -T_1)$ we have that for $n$ even, 
$$ \widehat g(\xi)  = \int_{\R^n} e^{-\diag \sum_{i=1}^{n/2} T_{2i-1}-T_{2i}} e^{-u e^{-T_n}}\psi_{-\xi}(T) \mathrm{d}T,$$
so that using  \cite[Corollary 5.4]{o2014geometric}),  
$$ \widehat g(\xi)  = \prod_{j=1}^n \Gamma(\diag + \I \xi_j) \prod_{1\leqslant i<j\leqslant n} \Gamma(\I \xi_i+\I \xi_j) u^{-\sum_{j=1}^n \I \xi_j}, \ \ \ \Real[\I \xi_j]>\min\lbrace 0, -\diag\rbrace \text{ for all } 1\leqslant j \leqslant n.$$
Thus the Plancherel theorem suggests that 
\begin{equation}
\EWM[e^{-u Z(n,n)}] =  \int_{\R^n} u^{-\sum_{j=1}^n\I \xi_j}\prod_{1\leqslant i,j\leqslant n}\Gamma(\I \xi_j +\alpha_i) \prod_{j=1}^n \frac{\Gamma(\diag + \I \xi_j)}{\Gamma(\diag+\alpha_j)} \prod_{1\leqslant i<j\leqslant n} \frac{\Gamma(\I \xi_i+\I \xi_j)}{\Gamma(\alpha_i+\alpha_j)} \mskyl_n(\xi)\mathrm{d}\xi,
\label{eq:OSZother}
\end{equation}
where the contour is  $\R-\I r$ with $r>\min\lbrace 0, -\diag\rbrace $. Again, the choice of contour could be justified more precisely by shifting variables, but the derivation above is not rigorous because the function  $g$ does not belong to $\mathbb{L}^2(\R^n, \mathrm{d}x)$. Since our partition function $Z(n,n)$ has the same distribution as the random variable $2 t_{nn}$ under the measure $\tilde \nu_{\alpha, \zeta}(\mathrm{d}w)$ as defined in \cite[(5.10)]{o2014geometric}, the expression \eqref{eq:OSZother} is exactly equivalent to \cite[(5.15)]{o2014geometric} (after making a change of variables $z_i=\I\xi_i$ and identify $u=r/2$). A similar argument can be adapted in the case where $n$ is odd.

Note that  \eqref{eq:planchereltheorycorrect} and \eqref{eq:OSZother} seem to be  quite different. There is an additional factor $u^{\sum_{j=1}^n\alpha_j}$ in \eqref{eq:planchereltheorycorrect}, and the factor $\prod_{1\leqslant i,j\leqslant n} \Gamma(\I \xi_j -\alpha_i)$ in \eqref{eq:planchereltheorycorrect} becomes  $\prod_{1\leqslant i,j\leqslant n} \Gamma(\I \xi_j +\alpha_i)$ in \eqref{eq:OSZother}.
The next corollary  shows that \eqref{eq:OSZother} (or equivalently  \eqref{eq:OSZ515} below after a change of variables) can be proved as well from our Corollary \ref{cor:OSZproof}. 
\begin{corollary}
 Fix  $\alpha_1, \dots, \alpha_n>0$ and $\diag\in \R$ such that $\diag+\alpha_i>0$ for $1\leqslant i\leqslant n$. Then, when $n$ is even,  for any $u>0$, 
\begin{multline}
\EWM[e^{-u Z(n,n)}] \\ 
 = \frac{1}{n!} \int_{\mathcal{D}_r}\frac{\mathrm{d}z_1}{2\I\pi} \dots \int_{\mathcal{D}_r}\frac{\mathrm{d}z_n}{2\I\pi} \prod_{i\neq j} \frac{1}{\Gamma(z_i-z_j)} \prod_{1\leqslant i<j\leqslant n}\frac{\Gamma(z_i+z_j)}{\Gamma(\alpha_i+\alpha_j)}
 \prod_{i,j=1}^{n} \Gamma(z_i + \alpha_j)
 \prod_{i=1}^{n}u^{ -z_i} \frac{\Gamma(\diag+z_i)}{\Gamma(\diag+\alpha_i)}, 
 \label{eq:OSZ515}
\end{multline}
where $r>0$ is such that $r+\diag>0$.

When $n$ is odd, for any $u>0$,  
\begin{multline}
\EWM[e^{-u Z(n,n)}] \\ 
=  \frac{ u^{\diag}}{n!} \int_{\mathcal{D}_r}\frac{\mathrm{d}z_1}{2\I\pi} \dots \int_{\mathcal{D}_r} \frac{\mathrm{d}z_n}{2\I\pi} \prod_{i\neq j} \frac{1}{\Gamma(z_i-z_j)} \prod_{1\leqslant i<j\leqslant n}\Gamma(z_i+z_j)
\prod_{i,j=1}^{n} \Gamma(z_i + \alpha_j)
\prod_{i=1}^{n}u^{-z_i} \Gamma(z_i-\diag),  
 \label{eq:OSZ516}
\end{multline}
where $r>0$ is such that $r-\diag>0$.
\label{cor:OSZproof2}
\end{corollary}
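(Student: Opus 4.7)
The plan is to rigorify the ``Case~2'' formal Plancherel computation from Section~\ref{sec:relationwithOSZ}, by applying the operator $\BBn^u$ (rather than $\BBone^u$ as in Theorem~\ref{th:proofOSZ}) to the regularized Littlewood--Whittaker identity~\eqref{eq:regularizedCauchyLitllewoodWhit}, but with the two factors of the integrand regrouped. This will produce a second, independent integral representation of $\EWM[e^{-uZ(t,n,\tau)}]$, which, by Proposition~\ref{prop:equalityZwhittaker}, gives another representation of $\EWM[e^{-uZ(n,n)}]$ after letting $\tau\to 0$.

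I would first work at positive Plancherel parameter $\tau>0$, where all Fubini/operator-interchange arguments of the proof of Theorem~\ref{th:proofOSZ} apply by the $L^2$ bounds of Section~\ref{sec:alternative}. The key observation is that for $n$ even, the reversal $T\mapsto T'=(-T_n,\dots,-T_1)$ leaves both $\psi_{\I\alpha}(T)$ (which is real when $\alpha\in\R^n$, by Givental's integral representation) and the exponent $\sum_{i=1}^{n/2}(T_{2i-1}-T_{2i})$ invariant, while converting $e^{-u e^{T_1}}$ into $e^{-u e^{-T'_n}}$. Under this reversal the computation of $\EWM[e^{-u e^{T_1}}]$ becomes one in which the pinning sits at $T'_1$ and the Laplace probe at $T'_n$; acting by $\BBn^u$ and using Stade's identity~\eqref{eq:Cauchywhittakern} exactly as in the proof of~\eqref{eq:nfoldwhitn} in Theorem~\ref{th:proofOSZ} then yields~\eqref{eq:OSZ515}. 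For $n$ odd, the exponent of $\mathcal{T}^{\,0}_\diag$ carries an unpaired $-\diag T_n$ term which, under the reversal, becomes $\diag T'_1$; this conversion is responsible for the extra prefactor $u^\diag$ and for the shift $\Gamma(\diag+z_i)\mapsto \Gamma(z_i-\diag)$ distinguishing \eqref{eq:OSZ516} from \eqref{eq:OSZ515}.

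The main obstacle is the $\tau\to 0$ limit in the second representation. For $\tau>0$ the Gaussian factor $e^{-\tau z_i^2/2}$ on the contours ensures absolute convergence, and the resulting identity with Corollary~\ref{cor:OSZproof} at fixed $\tau$ follows from the same estimates used to prove Theorem~\ref{th:proofOSZ}. At $\tau=0$, however, the Gamma products on the right-hand sides of \eqref{eq:OSZ515}/\eqref{eq:OSZ516} decay only polynomially on vertical lines, so passing to the limit requires careful tail control using the Stirling estimate $|\Gamma(x+\I y)|\sim \sqrt{2\pi}\,|y|^{x-1/2}e^{-\pi|y|/2}$ together with the hypothesis $r+\diag>0$ (respectively $r>\max(0,\diag)$) to guarantee uniform-in-$\tau$ integrability of the $n$-fold contour integral, combined with the weak convergence $Z(t,n,\tau)\Rightarrow Z(t,n)$ from the proof of Corollary~\ref{cor:OSZproof} on the probabilistic side.
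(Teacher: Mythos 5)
Your proposal is a genuinely different route from the paper's, and it contains a gap that I believe is fatal in the form stated.

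\textbf{What the paper does.} The paper's proof of Corollary~\ref{cor:OSZproof2} does not revisit the Baxter operator machinery at all. It starts from the already-established Corollary~\ref{cor:OSZproof} (which has $\tau=0$ built in), defines $f_{\diag}(\alpha)$ as the unnormalized Laplace transform and $g_{\diag}(\alpha)$ as the $n$-fold contour integral, and then proves two things: (a) $g_{\diag}$ is manifestly analytic in each $\alpha_i$ on all of~$\R$; (b) $f_{\diag}$ is also analytic on all of~$\R^n$, proved by performing the geometric RSK change of variables so that $f_{\diag}$ becomes an absolutely convergent integral over $\R^{n(n+1)/2}$, integrable uniformly for $\alpha$ in compacts without any positivity restriction. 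Combined with the elementary functional equations~\eqref{eq:functionalequation} and~\eqref{eq:functionalequation2}, obtained from the change of variables $T_i=-\log(u)-\tilde T_{n-i+1}$, this yields~\eqref{eq:OSZ515} and~\eqref{eq:OSZ516} by substituting $\alpha\mapsto-\alpha$ (and $\diag\mapsto-\diag$ when $n$ is odd) and analytically continuing.

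\textbf{Where your proposal goes wrong.} The reversal symmetry you invoke does \emph{not} interact well with the $\tau>0$ regularization that you rely on for the Fubini/operator-interchange arguments. The crux is the fate of $\mathcal{T}^{\,\tau}_{\diag}$ under $T\mapsto T'=(-T_n,\dots,-T_1)$. At $\tau=0$ the density factors into elementary exponentials of $T$ and the reversal simply exchanges the role of the pinning $e^{-e^{-T_n}}$ and the probe $e^{-ue^{T_1}}$, which is exactly what makes the ``Case~2'' Plancherel calculation look so clean. But for $\tau>0$, $\mathcal{T}^{\,\tau}_{\diag}(T)$ is defined by the $\nu$-integral~\eqref{eq:defT} and is not a product of exponentials in $T$. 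Using $\psi_{-\nu}(T')=\psi_{\nu}(T)$, the reversed $\mathcal{T}^{\,\tau}_{\diag}(T')$ becomes a $\nu$-integral with the Gamma factors $\Gamma(\diag+\I\nu_j)$ and $\Gamma(\I(\nu_i+\nu_j))$ and with the contour reflected to $\R-\I a$; this is \emph{not} recognizable as a ``pinning at $T'_1$'' in the regularized setting, and there is no analogue of Proposition~\ref{prop:integratestoone} for that object. In short: the single step that makes the $\tau>0$ Baxter machinery rigorous (namely, $\mathcal{T}^{\,\tau}$ is in $\mathbb{L}^2$ after a shift, by Propositions~\ref{prop:boundT}--\ref{prop:boundTchamber}) is precisely the step whose structure your reversal destroys, because the relevant exchange of pinning and probe only occurs at $\tau=0$. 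This is a missing idea, not a missing estimate; acting by $\BBn^u$ after the reversal has nothing clean to act on.

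\textbf{One further point.} Your focus on controlling the $\tau\to 0$ limit is aimed at the wrong bottleneck. In the paper's chain of implications, the $\tau\to 0$ limit is disposed of once and for all in the proof of Corollary~\ref{cor:OSZproof}, where the Gamma factors in the $n$-fold integral already decay super-polynomially on vertical lines and the probabilistic side uses $Z(t,n,\tau)\Rightarrow Z(t,n)$. Corollary~\ref{cor:OSZproof2} lives entirely at $\tau=0$ and is obtained from Corollary~\ref{cor:OSZproof} by analytic continuation and the functional equation, with no further $\tau$-limits. You would save yourself substantial work by taking Corollary~\ref{cor:OSZproof} as a black box and deploying the change of variables $T_i=-\log(u)-\tilde T_{n-i+1}$ directly at $\tau=0$, together with a direct proof of analyticity of $f_\diag$ in $\alpha$ (the geometric RSK representation is the right tool: it renders $f_\diag$ as a manifestly convergent integral against a fixed positive density).
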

\begin{remark}
Equation \eqref{eq:OSZ515} corresponds exactly to \cite[(5.15)]{o2014geometric} given that  $ 2 t_{nn}$ in \cite[(5.15)]{o2014geometric} has the same distribution as our partition function $Z(n,n)$. In the case when $n$ is odd, our formula \eqref{eq:OSZ516} is slightly different than \cite[(5.16)]{o2014geometric}, we find an extra factor $u^{\diag}$. We believe that this is due to a typo in \cite[(5.16)]{o2014geometric} (otherwise the case $n=1$ would be in contradiction with \cite[Corollary 3.9]{o2014geometric}). 
\end{remark}
\begin{proof}
Consider the functions 
\begin{equation}
f_{\diag}(\alpha_1, \dots, \alpha_n) = \int_{\R^n} e^{-u e^{T_1}} \psi_{\I \alpha}(T) e^{-\diag \sum_{i=1}^{n/2} (T_{2i-1}- T_{2i})} e^{-e^{-T_n}}\mathrm{d}T,
\label{eq:deffalpha}
\end{equation}
and 
 \begin{multline}
g_{\diag}(\alpha_1, \dots, \alpha_n) =  \\ \frac{1}{n!} \int_{\mathcal{D}_r}\frac{\mathrm{d}z_1}{2\I\pi} \dots \int_{\mathcal{D}_r}\frac{\mathrm{d}z_n}{2\I\pi} \prod_{i\neq j} \frac{1}{\Gamma(z_i-z_j)} \prod_{1\leqslant i<j\leqslant n}\Gamma(z_i+z_j)
 \prod_{i,j=1}^{n} \Gamma(z_i - \alpha_j)
 \prod_{i=1}^{n}u^{ \alpha_i -z_i} \Gamma(\diag+z_i),
 \label{eq:integralforf}
 \end{multline}
where the contour $\mathcal{D}_r$ is such that $r>\alpha_i$ for all $1\leqslant i\leqslant n$ (the contour can be freely shifted to the right, so that $g_{\diag}(\alpha_1, \dots, \alpha_n)$ does not depend on $r$). We know from Corollary \ref{cor:OSZproof}  that for $\alpha\in (\R_{> 0})^n$ with $\alpha_i+\diag>0$, 
$$f_{\diag}(\alpha_1, \dots, \alpha_n)=g_{\diag}(\alpha_1, \dots, \alpha_n).$$

We will need to analytically continue this relation to negative values of the $\alpha_i$. The integral in \eqref{eq:integralforf} is analytic in each variable $\alpha_i$ as long as $\Re[\alpha_i]<r$, and $r$ can be taken arbitrarily large so that $g_{\diag}(\alpha_1, \dots, \alpha_n)$ is analytic in each variable $\alpha_i$ on $\R$. Before proving that $f_{\diag}(\alpha_1, \dots, \alpha_n)$ is analytic as well, let us see how this implies Corollary \ref{cor:OSZproof2}.

Using the change of variables $T_i=-\log(u) - \tilde T_{n-i+1}$ for all $1\leqslant i\leqslant n$, we obtain that when $n$ is even 
\begin{align}
f_{\diag}(\alpha_1, \dots, \alpha_n) &= \int_{\R^n} e^{-e^{-\widetilde T_n}} \psi_{-\I \alpha}(\widetilde T + \log(u)) e^{-\diag \sum_{i=1}^{n/2} \widetilde T_{2i-1}- \widetilde T_{2i}} e^{-u e^{\widetilde T_1}}\mathrm{d}\widetilde T\nonumber\\ &= u^{\sum_{i=1}^n \alpha_i}f_{\diag}(-\alpha_1, \dots, -\alpha_n).
\label{eq:functionalequation}
\end{align}
Similarly, when $n$ is odd, 
\begin{align}
f_{\diag}(\alpha_1, \dots, \alpha_n) &= \int_{\R^n} e^{-e^{-\widetilde T_n}} \psi_{-\I \alpha}(\widetilde T + \log(u)) e^{+\diag \sum_{i=1}^{n/2} \widetilde T_{2i-1}- \widetilde T_{2i}} e^{\diag \log(u)} e^{-u e^{\widetilde T_1}} \mathrm{d}\widetilde T \nonumber\\ &= u^{\diag + \sum_{i=1}^n \alpha_i} f_{-\diag}(-\alpha_1, \dots, -\alpha_n).
\label{eq:functionalequation2}
\end{align}

Using \eqref{eq:functionalequation} and the equality $ f_{\diag}(-\alpha_1, \dots, -\alpha_n)=g_{\diag}(-\alpha_1, \dots, -\alpha_n)$  for $\alpha_i\in \R$, we find when $n$ is even
\begin{multline*}
f_{\diag}(\alpha_1, \dots, \alpha_n) \\ = \frac{1}{n!} \int_{\mathcal{D}_r}\frac{\mathrm{d}z_1}{2\I\pi} \dots \int_{\mathcal{D}_r}\frac{\mathrm{d}z_n}{2\I\pi} \prod_{i\neq j} \frac{1}{\Gamma(z_i-z_j)} \prod_{1\leqslant i<j\leqslant n}\Gamma(z_i+z_j)
\prod_{i,j=1}^{n} \Gamma(z_i + \alpha_j)
\prod_{i=1}^{n}u^{-z_i} \Gamma(\diag+z_i).
\end{multline*}
We can finally freely move the contour as long as the real part stays positive to arrive at the statement of Corollary \ref{cor:OSZproof2}. Similarly, using \eqref{eq:functionalequation2} when $n$ is odd, 
\begin{multline*}
f_{\diag}(\alpha_1, \dots, \alpha_n)  \\ = u^{\diag}  \frac{1}{n!} \int_{\mathcal{D}_r}\frac{\mathrm{d}z_1}{2\I\pi} \dots \int_{\mathcal{D}_r} \frac{\mathrm{d}z_n}{2\I\pi} \prod_{i\neq j} \frac{1}{\Gamma(z_i-z_j)} \prod_{1\leqslant i<j\leqslant n}\Gamma(z_i+z_j)
\prod_{i,j=1}^{n} \Gamma(z_i + \alpha_j)
\prod_{i=1}^{n}u^{-z_i} \Gamma(-\diag+z_i), 
\end{multline*}
where the contour $\mathcal{D}_r$ is such that $r-\diag>0$. 

Now we turn to proving analyticity of $f_{\diag}(\alpha_1, \dots, \alpha_n)$ via the next two lemmas.  
\begin{lemma}
For any $\diag, \alpha_1, \dots, \alpha_n \in \R$, $u>0$, we have 
\begin{equation}
f_{\diag}(\alpha_1, \dots, \alpha_n)  = \int_{\R^{\frac{n(n+1)}{2}}} e^{-u\sum_{\pi} \prod_{(i,j)\in \pi} e^{x_{ij}}} \prod_{1\leqslant i<j \leqslant n} e^{-(\alpha_i+\alpha_j)x_{ij}}e^{-e^{-x_{ij}}}  \prod_{1\leqslant i\leqslant n} e^{-(\alpha_i+\diag )x_{ii}}e^{-e^{-x_{ii}}}\mathrm{d}x,
\label{eq:falternative}
\end{equation}
where the sum over $\pi$ in the first exponential term is a sum over all up-right paths from $(1,1)$ to $(n,n)$ in the lower half quadrant.
\end{lemma}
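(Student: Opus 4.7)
The plan is to combine Givental's integral representation of the Whittaker function $\psi_{\I\alpha}$ with the geometric RSK correspondence for symmetric matrices, in the spirit of \cite[Section 3]{o2014geometric}.

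First, I would substitute Givental's formula into the definition \eqref{eq:deffalpha} of $f_\diag(\alpha)$ to obtain an integral over a triangular array $X = (x_{k,i})_{1\leqslant i\leqslant k\leqslant n} \in \R^{n(n+1)/2}$ with top row $T = (x_{n,1},\dots,x_{n,n})$. The resulting integrand collects the Laplace factor $e^{-u e^{x_{n,1}}}$, the Givental energy $\exp(\mathcal{F}_{\I\alpha}(X))$ (which splits naturally into a linear part in the $\alpha_k$'s and a double-exponential nearest-neighbour part), and the boundary factors $e^{-\diag\sum_{i}(x_{n,2i-1}-x_{n,2i})}$ and $e^{-e^{-x_{n,n}}}$.

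Second, I would change variables from the triangular array $X$ to the $n(n+1)/2$ new variables $\{y_{ij}\}$, indexed by off-diagonal pairs and diagonal indices, via the logarithmic form of the geometric RSK bijection for symmetric input, originating in Kirillov's geometric lifting of RSK and used in \cite[Theorem 3.4 and Corollary 3.5]{o2014geometric}. The bijection has three key properties. (i) Its Jacobian is identically one. (ii) The top component $x_{n,1}$ equals $\log\!\bigl(\sum_\pi \prod_{(i,j)\in\pi} e^{y_{ij}}\bigr)$, where the sum runs over up-right paths from $(1,1)$ to $(n,n)$ confined to the half-quadrant -- the geometric analog of Schensted's theorem. (iii) The linear combination $\sum_k\alpha_k(\sum_i x_{k,i}-\sum_i x_{k-1,i})$ together with the boundary factor $\diag\sum_i(x_{n,2i-1}-x_{n,2i})$ transforms into $\sum(\alpha_i+\alpha_j)y_{ij}$ summed over off-diagonal pairs plus $\sum(\alpha_i+\diag)y_{ii}$ summed over diagonal indices, while the nearest-neighbour energy of $\mathcal{F}_{\I\alpha}$ together with the boundary term $e^{-x_{n,n}}$ becomes $\sum_{i,j}e^{-y_{ij}}$. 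Combining (i)--(iii) yields precisely the right-hand side of \eqref{eq:falternative}.

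The main obstacle will be verifying property (iii). This requires tracing through the GRSK algorithm row by row and checking that the linear coefficients and the double-exponential energy terms rearrange to match the symmetric-matrix weights; the parity-dependent sign pattern of the $\diag$-factor on the top row is precisely what is needed to match the alternation produced by the diagonal GRSK moves. These computations are of the same nature as those carried out in the proof of \cite[Corollary 5.3]{o2014geometric} for the case $u=0$; inclusion of the factor $e^{-ue^{x_{n,1}}}$ is automatic by property (ii) and introduces no new difficulty. Since the argument reduces to a single change of variables on $\R^{n(n+1)/2}$, the identity holds for arbitrary real $\alpha_1,\dots,\alpha_n$ and $\diag$, convergence on both sides being ensured by the double-exponential factors and, on the right-hand side, by the Laplace factor.
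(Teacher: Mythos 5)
Your proposal matches the paper's own argument: both unpack the Whittaker function via Givental's integral over the triangular array and then perform the volume-preserving geometric RSK change of variables for symmetric input, citing \cite{o2014geometric} for the Jacobian, the Schensted/Greene property, and the transformation of the linear and double-exponential energy terms. The paper's proof is simply a terser version of yours, deferring exactly the same properties (i)--(iii) to the appropriate OSZ statements.
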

\begin{proof}
Notice that for $\alpha\in (\R_{>0})^n$,  $ f_{\diag}(\alpha_1, \dots, \alpha_n)$ is the unnormalized Laplace transform of $e^{T_1}$ under the half-space Whittaker measure, it is also the unnormalized Laplace transform of the polymer partition function which can be written as  in \eqref{eq:falternative}. More generally, for any $\diag, \alpha_1, \dots, \alpha_n \in \R$,  \eqref{eq:falternative} is obtained from \eqref{eq:deffalpha} via a change of variables corresponding to the geometric RSK map. This map is volume preserving \cite[Theorem 3.1]{o2014geometric}. It is shown in \cite[(3.8) and Corollary 3.3]{o2014geometric} how the integrand in \eqref{eq:deffalpha} becomes the integrand in \eqref{eq:falternative} via this change of variables. 
\end{proof}
\begin{lemma}
	For any $\diag\in \R$ and $u>0$, $f_{\diag}(\alpha_1, \dots, \alpha_n)$ is analytic in each variable $\alpha_i$ on $\R$. 
\end{lemma}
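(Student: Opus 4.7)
The plan is a standard Morera-type argument. Fix $\alpha_i^0\in\R$ and a compact neighbourhood $K\subset\C$ of $\alpha_i^0$; I propose to show that the integrand of \eqref{eq:falternative} is dominated in absolute value, uniformly for $\alpha_i\in K$ (and the other $\alpha_j$ fixed in $\R$), by an integrable function of $x$. Since the integrand is entire in $\alpha_i$ pointwise in $x$, Fubini together with Cauchy's theorem then yields $\oint_\gamma f_{\diag}\,d\alpha_i=\int_{\R^{n(n+1)/2}}\!\bigl(\oint_\gamma I(x,\alpha)\,d\alpha_i\bigr)dx=0$ for any closed contour $\gamma\subset K$, so Morera's theorem gives analyticity on $K$, and hence on $\R$ since $\alpha_i^0$ is arbitrary.

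The dominating function can be taken to be
\[
h(x)=e^{-uZ(x)}\prod_{1\leqslant i\leqslant j\leqslant n} e^{C|x_{ij}|}e^{-e^{-x_{ij}}},\qquad Z(x)=\sum_{\pi}\prod_{(k,l)\in\pi}e^{x_{kl}},
\]
with $C$ any constant majorising $2\max_j\sup_{\alpha_i\in K}|\Re\alpha_j|+|\diag|$. The crux of the argument is to verify that $h\in L^1(\R^{n(n+1)/2})$. Two decay mechanisms act independently: the factor $e^{-e^{-x_{ij}}}$ gives super-exponential decay as $x_{ij}\to-\infty$, dominating any exponential growth from $e^{C|x_{ij}|}$; and for each site $(i_0,j_0)$ there exists an up-right path $\pi_0$ through $(i_0,j_0)$, so the one-term bound $Z(x)\geqslant\prod_{(k,l)\in\pi_0}e^{x_{kl}}$ yields super-exponential decay of $e^{-uZ(x)}$ as $x_{i_0j_0}\to+\infty$ whenever the remaining $x_{kl}$ on $\pi_0$ are bounded below.

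The main obstacle is the coupling of these mechanisms in mixed-sign regions, where some $x_{ij}$ are very positive and others very negative, because the simple one-path lower bound on $Z(x)$ can be weak when $\pi_0$ passes through a site with $x_{kl}\to-\infty$. To handle this, I would partition $\R^{n(n+1)/2}$ according to the sign pattern of $(x_{ij})$ and treat each of the finitely many orthants separately. In the all-non-negative orthant, every factor on every path is $\geqslant 1$, so $Z(x)\geqslant e^{x_{ij}}$ holds for each $(i,j)$, giving $Z(x)\geqslant N^{-1}\sum_{ij}e^{x_{ij}}$ (with $N=n(n+1)/2$) and reducing the bound to a product of one-dimensional integrals $\int_0^\infty e^{Ct-ue^{t}/N}dt<\infty$. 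In any orthant containing negative coordinates, one first integrates out the negative $x_{ij}$'s using the crude bound $e^{-uZ(x)}\leqslant 1$ and the estimate $\int_{-\infty}^0 e^{-Ct}e^{-e^{-t}}dt<\infty$ (a gamma-type integral via $s=e^{-t}$), which produces a finite constant. The remaining integral is over the non-negative sub-orthant, where one again exploits the polymer bound along a path $\pi_0$ passing only through positive sites (extended along the diagonal when no such path exists, in which case the negative sites already force a uniformly tiny $e^{-1/y_{kl}}$ factor that absorbs the polynomial growth). Summing the finitely many orthant contributions concludes that $h\in L^1$.

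Once $h\in L^1$ is established, the Morera--Fubini argument sketched in the first paragraph concludes the proof; equivalently, differentiation under the integral is valid and $\partial_{\alpha_i}f_{\diag}$ is given by the convergent integral with an extra factor of $-\sum_{j\geqslant i}x_{ij}-\sum_{j<i}x_{ji}$, showing analyticity directly. The main technical labour is entirely in the orthant decomposition of step two; the rest is routine.
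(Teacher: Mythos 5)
Your framework---establishing a uniformly integrable dominating function $h$ over $\alpha$ in a compact set and then differentiating under the integral (or Morera plus Fubini)---matches the paper's strategy, and your treatment of the all-non-negative orthant and the one-dimensional Gamma-type estimate $\int_{-\infty}^0 e^{-Ct}e^{-e^{-t}}dt<\infty$ are both sound. The gap is exactly at the step you flag as ``the main obstacle.'' Once you bound $e^{-uZ(x)}\leqslant 1$ in order to decouple and integrate out the negative coordinates, the remaining integrand on the non-negative sub-orthant is $\prod e^{Cx_{ij}}e^{-e^{-x_{ij}}}$, whose integral \emph{diverges} since $e^{-e^{-x_{ij}}}\to 1$ as $x_{ij}\to+\infty$; you cannot subsequently re-invoke the polymer bound on $e^{-uZ}$ that you have just discarded. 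The fallback you sketch---a path $\pi_0$ through only non-negative sites---need not exist: already for $n=2$ there is a \emph{unique} up-right path $(1,1)\to(2,1)\to(2,2)$ in the half-quadrant, so if $x_{21}<0$ every admissible path meets a negative site. The parenthetical ``extended along the diagonal'' is not a polymer path, and the ``uniformly tiny $e^{-1/y_{kl}}$ factor'' has already been spent in integrating out the negative variables. The structural reason a sign-based decomposition cannot work is that $Z(x)$ couples positive and negative coordinates multiplicatively along each path, so the decay from $e^{-uZ}$ is not available orthant-by-orthant.

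The paper instead works directly with the quantity $u\sum_\pi e^{\sum_{(i,j)\in\pi}x_{ij}}+\sum_{(i,j)}e^{-x_{ij}}$ appearing (negated) in the exponent and shows it is $\geqslant k e^{k'M}$ whenever $x\notin[-M,M]^{n(n+1)/2}$, which beats the prefactor $e^{c\sum|x_{ij}|}$ (only linear in $M$ on such an annulus). The dichotomy that resolves the coupling is by \emph{scale}, not sign: fix $\epsilon\in(0,1/(2n))$; if some $x_{ij}<-\epsilon M$ then $\sum e^{-x_{ij}}>e^{\epsilon M}$; otherwise all coordinates exceed $-\epsilon M$ while some $x_{i_0 j_0}>M$, so any path $\pi$ through $(i_0,j_0)$ has $\sum_{(i,j)\in\pi}x_{ij}>M-(2n-2)\epsilon M>0$, giving $u e^{\sum_\pi x}>u e^{cM}$. (The paper's own phrasing of this dichotomy is somewhat over-simplified, but this is what repairs it.) If you want to keep your Morera scaffold, replace the orthant step by this scale-based argument.
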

\begin{proof}
	We show that for $\alpha=(\alpha_1, \dots, \alpha_n)$ in a compact subset of $\R^n$, the integral in \eqref{eq:falternative} is absolutely convergent uniformly in $\alpha$. For $\alpha$ in a compact set, we may rewrite \eqref{eq:falternative} as 
	$$  f_{\diag}(\alpha_1, \dots, \alpha_n)  =\int_{\R^{\frac{n(n+1)}{2}}} C_{\alpha}(x) \exp\left( c\sum_{i,j} \vert x_{i,j}\vert  - \sum_{\pi} \left( u e^{\sum_{(i,j)\in \pi} x_{i,j}} + \sum_{(i,j)\in \pi}c_{i,j} e^{-x_{i,j}} \right)  \right)\mathrm{d}x,$$
	where we choose the constant $c$ large enough so that the prefactor $C_{\alpha}(x)$ is an integrable function over $\R^{\frac{n(n+1)}{2}}$ (uniformly for $\alpha$ in a compact),  and for each $i,j$, the constant $c_{i,j}$ is the inverse of the number of paths containing the vertex $(i,j)$. In order to prove uniform integrability, it is enough to show that each term in the sum over paths $\pi$  grows exponentially, thus compensating the term $c\sum_{i,j} \vert x_{i,j}\vert$. More precisely, we need to show that there exist some constants $k, k'$ such that for $x\not\in[-M,M]^{\frac{n(n+1)}{2}}$ and any path $\pi$ of fixed length, 
	$$   u e^{\sum_{(i,j)\in \pi} x_{i,j}} + \sum_{(i,j)\in \pi}c_{i,j} e^{-x_{i,j}} \geqslant   k e^{k' M}.$$
In order to prove this, we show that for any constant $c'>0$,  the auxiliary  function $g(x_1, \dots, x_m) = c' e^{\sum_{i=1}^m  x_i} + \sum_{i=1}^m e^{-x_i}$ satisfies the same bound. Indeed, take $x\in \R^m\setminus [-M,M]^m$, either there exists some $i$ for which $x_i<-M$ or  we have $x_i> M$ for all $i$. In the first case, $g(x_1, \dots, x_m) > e^M$ and in the second case $g(x_1, \dots, x_m) > c'e^{mM}$. This concludes the proof of the lemma. 
\end{proof} 
\end{proof}

\thispagestyle{plain}
 \section{KPZ equation on the half-line}
 \label{sec:KPZ}

 \subsection{KPZ equation on $\R_{\geqslant 0}$}
 
 The KPZ equation on $\R_{\geqslant 0}$ with Neumann boundary condition is the  a priori ill-posed stochastic partial differential equation
 \begin{equation}
 \begin{cases}
 \partial_{T} \mathscr H = \frac{1}{2}\Delta  \mathscr H + \frac{1}{2}\big(\partial_X \mathscr H\big)^2 + \dot{W},\\
 \partial_X \mathscr H (T,X)\big\vert_{X=0} = A \qquad (\forall T > 0),
 \end{cases}
 \label{eq:KPZequation}
 \end{equation}
 where $W$ is a space-time white noise. Following \cite{corwin2016open},  we say that $\mathscr H$ solves this equation in the Cole-Hopf sense with narrow-wedge initial condition when $\mathscr H= \log \mathscr Z$ and $\mathscr Z$ is a mild solution to the multiplicative stochastic heat equation with Robin boundary condition
 \begin{equation} \label{eq:mSHE}
 \begin{cases}
 \partial_{T} \mathscr Z = \tfrac12 \Delta \mathscr Z + \mathscr Z \dot W,\\
 \partial_X \mathscr Z (T,X)\big\vert_{X=0} = A \mathscr Z(T, 0)  \qquad (\forall \tau > 0),
 \end{cases}
 \end{equation}
 with delta initial condition. More precisely, a mild solution to \eqref{eq:mSHE} solves 
 \begin{equation}\label{e:SHE-mild}
 \mathscr Z(T,X) = \mathscr P^R_T(X,0)	 + \int_0^T \!\!\! \int_0^\infty \!\!\!  \mathscr P^R_{T-S} (X,Y) \,\mathscr Z(S,Y) \, \mathrm{d}W_S(\mathrm{d}Y),
 \end{equation}
 understood as an It\^o integral, where $\mathscr Z(T, \cdot)$ is adapted to the filtration $\sigma \{\mathscr Z(0,\cdot), W|_{[0,T]}\}$, 
 and $ \mathscr P^R$ is the  heat kernel satisfying the Robin boundary condition for all $T,Y>0$ 
 \begin{equation}
 \partial_X \mathscr P^R_T (X,Y)\Big\vert_{X=0} =A  \mathscr P^R_T (0,Y) \;.
 \end{equation}
 
 The KPZ equation and the multiplicative SHE arise as a limit of several stochastic processes in the KPZ universality class which can be divided into two classes: (1) Systems with a tunable asymmetry, such as ASEP, for which the exponential of the height function is expected to converge to the multiplicative SHE when time and space are rescaled and the asymmetry vanishes simultaneously. For the half-line ASEP, this was proved in \cite{corwin2016open, parekh2017kpz}.  
  (2) Systems with a temperature, or at least a parameter controlling the strengh of the noise, such as directed polymers. The partition function is expected to converge in general to the multiplicative SHE when time and space are rescaled and the temperature is sent to infinity simultaneously. In the full space, convergence of directed polymer partition functions to the multiplicative SHE is proved in \cite{alberts2014intermediate}. A half-space analogue is in preparation   \cite{wu2018intermediate}.

  In the following, we will see how some of our moment formulas obtained above degenerate in the scaling leading to the solution to the KPZ equation. We will focus on the log-gamma directed polymer in a half-quadrant as the parameters of the Gamma random variables go to infinity.
 
 \subsection{Log-gamma polymer at high temperature}
 \label{sec:KPZloggamma}
 Consider the half-space log-gamma polymer partition function $Z(t,n)$ and 
 scale parameters as $t=Tn+n^{1/2}X$,  $\alpha_i=n^{1/2}$,  $\diag \in \R$ stays unscaled. Define the rescaled partition function 
 \begin{equation}
 \mathcal Z_n(T,X) =C(T,X,n)   Z\left(\frac{T}{2}n+n^{1/2}X, \frac{T}{2}n\right),
 \label{eq:scalingsZKPZ}
 \end{equation}
 where the normalization factor is 
 $$  C(T,X,n) = \exp\left( \frac{T n \log(n) - (T-X\log(n))n^{1/2}}{2} - \frac{T}{8} - \frac{X}{2}\right).$$
 
 \begin{proposition} 
 	Let $T>0$, $X\geqslant 0$ and $\diag\in \R$. For all $k\in\Z_{>0}$, 
 	\begin{multline}
 	\lim_{n\to \infty} \EWM[\mathcal Z_n(T,X)^k] = 2^k \int_{r_1-\I\infty}^{r_1+\I\infty}\frac{\mathrm{d}z_1}{2\I\pi}\cdots \int_{r_k-\I\infty}^{r_k+\I\infty} \frac{\mathrm{d}z_k}{2\I\pi} \prod_{1\leqslant a<b\leqslant k} \frac{z_a-z_b}{z_a-z_b-1}\, \frac{z_a+z_b}{ z_a+z_b-1}\\ \times 
 	\prod_{m=1}^{k}  \frac{z_m}{z_m+\diag-1/2}\exp\left( \frac{T}{2}z_m^2 -z_m x\right), 
 	\label{eq:momentsKPZ}
 	\end{multline}
 	where 
 	$ r_1 > r_2+1 > \dots> r_k+k-1 $ and $r_k>-\diag+1/2$. 
 	\label{cor:momentsKPZ}
 \end{proposition}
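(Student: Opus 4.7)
\textbf{Proof plan for Proposition \ref{cor:momentsKPZ}.}

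The starting point is Corollary \ref{cor:momentsZ}, which gives the $k$th moment of $Z(t,n)$ as a nested contour integral. Under the scalings $t = \tfrac{T}{2}n + n^{1/2}X$, $n \leftrightarrow \tfrac{T}{2}n$, $\alpha_i \equiv n^{1/2}$, the admissibility condition $k < \min\{2\alpha_i, \alpha_i+\diag\}$ is satisfied for all $n$ large enough. The first step is to perform the linear change of variables $w_m = -z_m - \tfrac12$, which (i) turns the factor $\frac{1+2w_m}{1+w_m-\diag}$ into $\frac{2z_m}{z_m + \diag - 1/2}$, producing the claimed prefactor $2^k$, and (ii) maps the $w_c$ contour (a positively oriented loop enclosing $-\alpha_j = -n^{1/2}$ together with $w_{c+1}+1, \ldots, w_k+1$) into a contour enclosing $n^{1/2}-\tfrac12$ and the points $z_{c+1}-1, \ldots, z_k-1$ (with an orientation flip compensated by $dw_m = -dz_m$).

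The key asymptotic input is the Taylor expansion, uniformly for $z_m$ in a vertical strip, of
\[
 -\sum_{i=1}^{t}\log(\alpha_i - w_m - 1) - \sum_{j=1}^{n'}\log(w_m + \alpha_j)
 = -\big(\tfrac{T}{2}n + n^{1/2}X\big)\log(n^{1/2}+z_m-\tfrac12) - \tfrac{T}{2}n\, \log(n^{1/2}-z_m-\tfrac12).
\]
Expanding each logarithm as $\tfrac12\log n + \log\!\bigl(1 \pm \tfrac{z_m \mp 1/2}{n^{1/2}}\bigr)$ and retaining terms through order $1$, the divergent contributions of order $n\log n$, $n^{1/2}\log n$, and $n^{1/2}$ assemble into the $n$-dependent $X,T,k$ constants that are precisely canceled by $C(T,X,n)^k$. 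The surviving finite contribution is $\tfrac{T}{2}z_m^2 - X z_m$ (plus the additive constants $\tfrac{T}{8}+\tfrac{X}{2}$ that match those inside $C$), producing the exponential factor in \eqref{eq:momentsKPZ}. The remaining factors $\frac{w_a-w_b}{w_a-w_b-1}\cdot\frac{1+w_a+w_b}{2+w_a+w_b}$ have no $n$-dependence and pass to their $z$-variable counterparts immediately.

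Once the pointwise limit of the integrand is identified, the next step is to deform the nested $z$-contours, which are approximately circles of growing radius $n^{1/2}-\tfrac12$, into the vertical lines $\mathcal{D}_{r_m}$ with $r_1 > r_2+1 > \cdots > r_k+k-1$ and $r_k > -\diag+\tfrac12$. The spacing conditions are inherited from the original nested structure $w_c \supset \{w_{c+1}+1, \ldots\}$, while the constraint $r_k > -\diag+\tfrac12$ prevents crossing the pole of $\frac{z_m}{z_m + \diag - 1/2}$. The deformations cross no singularities for $n$ sufficiently large (the poles at $\alpha_i - 1$ and $\diag - 1$ are pushed to $+\infty$ as $n\to\infty$, while the pole at $-\alpha_j$ corresponds to the encircled point $n^{1/2}-\tfrac12$, which the vertical contour still leaves ``to its right'').

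The main technical obstacle is the justification of exchanging the limit $n \to \infty$ with the contour integrations via dominated convergence. On the vertical contour $\Re[z_m] = r_m$, the dominant real part of the exponent is $\tfrac{T}{2}(r_m^2 - \Im[z_m]^2) - X r_m$, giving Gaussian decay in $\Im[z_m]$ in the limit; the task is to promote this to a uniform bound in $n$. The expansion of $\log(n^{1/2}(1\pm (z-1/2)/n^{1/2}))$ must be controlled with a remainder that is $o(1)$ in $n$ but not so large in $\Im[z_m]$ as to swamp the Gaussian; this requires the standard dyadic split $|\Im z_m| \leqslant n^{1/4}$ versus $|\Im z_m| > n^{1/4}$, with the small-$|\Im z|$ region handled by Taylor remainder estimates and the large-$|\Im z|$ region handled by direct $|\log(1+u)| \leqslant \log(1+|u|)$ bounds together with the inequality $\log|n^{1/2}\pm i\,\Im z_m| \geqslant \tfrac12\log(n + \Im z_m^2)$, which produces super-polynomial decay. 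Together with the finiteness of the $k$-fold sum and the trivial uniform bound on the factors of the $\mathrm{R.H.S}$ independent of $n$, this yields the required dominating function and completes the proof.
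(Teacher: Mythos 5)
Your proof follows essentially the same strategy as the paper's: start from Corollary~\ref{cor:momentsZ}, perform a linear change of variables to re-center around the saddle, Taylor-expand the logarithms, cancel the divergent terms against $C(T,X,n)^k$, deform to vertical contours, and pass to the limit by dominated convergence. The skeleton is correct, and the finite contribution $\tfrac{T}{2}z_m^2 - Xz_m$ (plus $\tfrac{T}{8}+\tfrac{X}{2}$) is computed correctly.

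There is, however, a bookkeeping slip you should fix. The single substitution $w_m = -z_m - \tfrac12$ does turn $\frac{1+2w_m}{1+w_m-\diag}$ into $\frac{2z_m}{z_m+\diag-1/2}$ and $\frac{1+w_a+w_b}{2+w_a+w_b}$ into $\frac{z_a+z_b}{z_a+z_b-1}$, but it turns $\frac{w_a-w_b}{w_a-w_b-1}$ into $\frac{z_a-z_b}{z_a-z_b+1}$ (the sign of the shift in the denominator flips), \emph{not} $\frac{z_a-z_b}{z_a-z_b-1}$ as you state, and correspondingly the nesting you inherit from the original contours would read $r_k > r_{k-1}+1 > \cdots > r_1 + (k-1)$ with $r_1 > -\diag + 1/2$ — a mirror image of the statement. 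This is not a real obstruction (the formula is restored by the final relabeling $z_j \leftrightarrow z_{k-j+1}$), but you should say so explicitly; the paper builds this reversal directly into its change of variables, first shifting $w_m \mapsto w_m - \tfrac12$ and then setting $w_i = -z_{k-i+1}$, so the index reversal is not optional. A second small misstatement: the pole at $w_m = \diag - 1$ is \emph{not} pushed to $+\infty$ as $n \to \infty$; it is fixed, and after your substitution it sits at $z_m = -\diag + \tfrac12$. It is precisely the constraint $r_k > -\diag + \tfrac12$ that keeps it on the correct side. Only the pole coming from $\alpha_i - 1 = n^{1/2}-1$ escapes to infinity. Finally, your dominated-convergence step differs in style from the paper's: you propose a dyadic split $|\Im z_m| \lessgtr n^{1/4}$, whereas the paper reduces the uniformity to the claim that $(1-z/n)^{-n} \to e^z$ uniformly on half-planes $\{\Re z < a\}$, applied with $z$ proportional to $w$ and $w^2$, using that on the vertical contours the real parts of these quantities are bounded above. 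Both arguments operate at comparable levels of detail; the paper's reduction is shorter.
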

 \begin{remark}
 	The R.H.S. of \eqref{eq:momentsKPZ} coincides with  moment formulas for the continuous directed polymer in a half-space  obtained\footnote{It was proved only assuming the uniqueness of solutions for the half-space delta Bose gas evolution equations.} in \cite[Eq. (2)]{borodin2016directed} using Bethe ansatz. Hence, we deduce that the parameter $\diag$ in the half-space log-gamma polymer is related to the boundary parameter $A$ in the stochastic PDEs \eqref{eq:KPZequation} and \eqref{eq:mSHE} via $\diag = A+1/2$. 
 \end{remark}
 \begin{proof}
 	Fix $k\in\Z_{>0}$. Then for $\alpha_1, \dots,  \alpha_t>0$ sufficiently large, \eqref{eq:momentsZ} yields (we have shifted all variables by $1/2$), 
 	\begin{multline*}
 	\EWM[Z(t,n)^k] =\oint\frac{\mathrm{d}w_1}{2\I\pi}\cdots \oint\frac{\mathrm{d}w_k}{2\I\pi} \prod_{1\leqslant a<b\leqslant k} \frac{w_a-w_b}{w_a-w_b-1}\, \frac{w_a+w_b}{ w_a+w_b+1}\\ \times 
 	\prod_{m=1}^{k}  \frac{2w_m}{w_m+ 1/2-\diag}\prod_{i=1}^t \left( \frac{1}{\alpha_i-w_m-1/2} \right) \prod_{j=1}^{n} \left(\frac{1}{w_m -1/2 + \alpha_j}\right), 
 	\end{multline*}
 	where the contours are such that for all $1\leqslant c\leqslant k$, the contour for $w_c$ encloses $\lbrace - \alpha_j+1/2\rbrace_{1\leqslant j\leqslant n}$  and $\lbrace w_{c+1}+1, \dots, w_k+1\rbrace$, and excludes the poles of the integrand at $ \diag- 1/2$ and $ \alpha_j-1/2$ (for $1\leqslant j\leqslant t$). Furthermore, if $t\leqslant n $ are large, there is no pole at infinity and we may assume that the contour for $w_i$ is the vertical line from  $r_i-\I\infty$ to $r_i+\I\infty$ where the $r_i$ are chosen so that 
 	$$ r_k+k-1 < \dots< r_2+1 < r_1 < \diag -1/2. $$
 	For $\alpha_i\equiv n^{1/2}$, 
 	\begin{multline*}
 	\prod_{i=1}^{Tn/2 + n^{1/2}X} \left( \frac{1}{\alpha_i-w-1/2} \right) \prod_{j=1}^{Tn/2} \left(\frac{1}{w -1/2 + \alpha_j}\right) \\ =  \exp\left(\frac{-T n \log(n) + (T-X\log(n))n^{1/2}}{2} + \frac{T}{8} + \frac{X}{2} + \frac{T}{2}w^2 +w x + o(1) \right).
 	\end{multline*} 
 	Moreover, the convergence holds uniformly in $w$. This is because for any fixed  $a\in \R$,  the convergence 
 	$$ \left(  \frac{1}{1-z/n} \right)^n \xrightarrow[n\to\infty]{} e^z $$
 	holds uniformly for $z$ in the set $\lbrace z\in \C: \Re[z]< a\rbrace$. It then suffices to apply this convergence twice, for $z=\frac{T}{2} w^2$ and $z=w x$, and observe that along the contours that we have chosen, the real part of $w_i$ and $w_i^2$ stays  bounded. Using the scalings \eqref{eq:scalingsZKPZ}, we obtain that the limit of $	\EWM[\mathcal Z_n(T,X)^k] $ is 
 	\begin{multline*} 2^k \int_{r_1-\I\infty}^{r_1+\I\infty}\frac{\mathrm{d}w_1}{2\I\pi}\cdots \int_{r_k-\I\infty}^{r_k+\I\infty} \frac{\mathrm{d}w_k}{2\I\pi} \prod_{1\leqslant a<b\leqslant k} \frac{w_a-w_b}{w_a-w_b-1}\, \frac{w_a+w_b}{ w_a+w_b+1}\\\times 
 	\prod_{m=1}^{k}  \frac{w_m}{w_m+ 1/2-\diag}\exp\left( \frac{T}{2}w_m^2 +w_m x\right).
 	\end{multline*}
 	The final result is obtained by applying the change of variables $w_i = -z_{k-i+1}$ for all $i$.  
 \end{proof}
 
 Even if one could identify \eqref{eq:momentsKPZ} with the moments of $\mathscr Z(T,X)$ (the solution to \eqref{eq:mSHE}), this would not determine completely the distribution since the moments grow too fast. In order to fully characterize the distribution of $\mathscr Z(T,X)$, we would need to analyze the limit of the Laplace transform formula from Corollary \ref{cor:LaplaceWhittaker} under the scalings \eqref{eq:scalingsZKPZ}. It is not obvious how to take this limit rigorously and we leave this for future consideration. 
 
 Note that in the special case where $A=-1/2$, the distribution of $\mathscr Z(T,0)$ if fully characterized in \cite[Theorem B]{barraquand2018stochastic} (see also \cite[Corollary 1.3]{parekh2017kpz}), and it would be interesting to compare those formulas with the limit of Corollary \ref{cor:LaplaceWhittaker} in the case $\diag=0$.

 \thispagestyle{plain}
 \section{Tracy-Widom asymptotics for the log-gamma polymer partition function}
\label{sec:asymptotics}
The aim of this section is to explain how to manipulate our Laplace transform formulas in order to derive limit theorems for the partition function of the half-space log-gamma polymer $Z(n,m)$ in various ranges of parameters. In this derivation we will perform several non-rigorous steps. Most of them can be made rigorous with some additional technical arguments. However, there is one important obstruction to rigor that we cannot presently overcome: we are unable to show that the infinite series that we manipulate are uniformly summable as $n,m \to \infty$, which suggests that we miss structural cancellations hidden in the formulas.

It is reasonable to expect that the fluctuations of the free energy $\log(Z(n,m))$ are of the same nature as the fluctuations of last-passage percolation in a half-quadrant, studied in \cite{baik2001asymptotics, baik2018pfaffian, betea2018free}, which corresponds to the zero temperature limit. In particular, the fluctuations of $\log(Z(n,m))$ should be the same as in the full-space case when $n \gg m\gg 1$, i.e., converge to the Tracy-Widom GUE distribution. We will focus below on the more interesting case of fluctuations close to the boundary for which we expect a phase transition as the boundary parameter varies.

Consider the half-space log-gamma polymer partition function $Z(n,n)$ as in Definition \ref{def:LogGammapolymer} with $\alpha_1=\dots=\alpha_n=\alpha>0$. 
Let us scale $u=-e^{-nf-n^{1/3}\sigma x}$, where $f, \sigma$ are constants to be determined later. 
If we have the pointwise convergence for every $x\in \R$ 
$$ \EWM[e^{ u Z(n,n)}]  \xrightarrow[n\to \infty]{} F(x),$$
where $F(x)$ is the distribution function of a certain probability distribution, then it follows (see \cite[Lemma 4.1.39]{borodin2014macdonald}) that 
$$ \lim_{n\to \infty} \PP\left(\frac{\log(Z(n,n)) - fn}{\sigma n^{1/3}} \leqslant x\right)  = F(x). $$
It is not clear how to take asymptotics from the formula for the  Laplace transform of $Z(n,n)$ in Corollary \ref{cor:OSZproof} or \ref{cor:OSZproof2}. However, we have a formula for the Laplace transform of  $Z(n,n,\tau)$ from Corollary \ref{cor:LaplaceWhittaker}, which seems more adapted to asymptotic analysis. 
Moreover, $Z(n,n,\tau)$ should be close to $Z(n,n, 0) = Z(n,n)$ for fixed $\tau$ in the sense that $n^{-1/3} \left\vert \log(Z(n,n, \tau))  - \log(Z(n,n, 0)) \right\vert   $ converges to zero in probability as $n$ goes to infinity.

Thus, we are left with studying the asymptotics of  $\log(Z(n,n,\tau))$ as $n$ goes to infinity for fixed $\tau$. 
Using the change of variables $z_i=s_i-v_i$ in the statement of Corollary \ref{cor:LaplaceWhittaker}, we have 
\begin{multline}
\EWM[e^{ u Z(n,n, \tau)}] = \sum_{k=0}^{n} \ \frac{1}{k!}\ \int_{\mathcal{D}_{R}}\frac{\mathrm{d}z_1}{2\I\pi} \dots \int_{\mathcal{D}_R} \frac{\mathrm{d}z_k}{2\I\pi}\   \oint\frac{\mathrm{d}v_1}{2\I\pi} \dots \oint\frac{\mathrm{d}v_k}{2\I\pi}   \\  \times 
\prod_{1\leqslant i<j\leqslant k} \frac{(z_i-z_j)(v_i-v_j)\Gamma(v_i+v_j)\Gamma(-z_i-z_j)}{(-v_i-z_j)(-v_j-z_i)\Gamma(v_j-z_i)\Gamma(v_i-z_j)}\prod_{i=1}^k \frac{\Gamma(2v_i)}{\Gamma(v_i-z_i)}  \\ \times 
\prod_{i=1}^k \left[ \frac{\pi}{\sin(\pi(v_i+z_i))} e^{n\left(G(v_i)+G(z_i)\right)- n^{1/3} x \sigma (v_i+z_i) }   \frac{\Gamma(\diag -z_i)}{\Gamma(\diag +v_i)}  \frac{e^{\tau z_i^2/2 - \tau v_i^2/2}}{v_i+z_i} \right],
\label{eq:LaplaceZ2}
\end{multline}
where the contour for the variables $v_i$ is a small positively oriented circle around $\alpha$,  the contour $\mathcal{D}_R=R+\I\R $ is such that 
$-\alpha < R < \min\lbrace 0, \diag, 1-\alpha \rbrace$ and 
\begin{equation}
G(z) = \log(\Gamma(\alpha-z)) - \log(\Gamma(\alpha+z)) - f z.
\label{eq:defGG}
\end{equation}
We may deform the integration contours as long as we do not cross poles. In particular, we must ensure that  $ \Real[z_i+v_i] \in(0,1)$ and that the poles when $z_i=\diag$ and when $z_i=-z_j$ lie to the right of the integration contour of $z_i$. 

The asymptotic behavior as $n$ goes to infinity of expansions such as \eqref{eq:LaplaceZ2} is usually analyzed using the saddle-point method. One needs to study the function $G(z)$, deform the contours so that they go through a critical point of $G$, and justify that the main asymptotic contributions of the integral is localized in a neighborhood of this critical point where one may use straightforward approximations. 

Notice that 
$$ G'(z) = -f-\Psi(\alpha-z) - \Psi(\alpha+z), \ \ G''(z) = \Psi_1(\alpha-z)-\Psi_1(\alpha+z),$$
where 
$$ \Psi(z) = \frac{\rm d}{\mathrm{d} z} \log(\Gamma(z)), \ \   \Psi_n(z) = \frac{\mathrm{d}^n}{\mathrm{d}z^n} \Psi(z).$$ 
If we set $f=-2\Psi(\alpha)$, then $G'(0)=G''(0)=0$ and we can use the Taylor expansion around zero
\begin{eqnarray}
G(z) = \sigma^3 z^3/3+\mathcal{O}(z^4),
\label{eq:Taylor}
\end{eqnarray} 
where $\sigma^3= G'''(0)/2$. Hence we find that $G$ has a double critical point at zero, which means that when $\diag\geqslant 0$, we may use a saddle point method around this critical point. For the Laplace's method to work, one also need to control the decay of $\Real [G(z)]$ along the contours.  However when $\diag<0$, this is not possible and the asymptotic behaviour of \eqref{eq:LaplaceZ2} will be quite different.

We will first provide probabilistic heuristics explaining why we should expect a phase transition at $\diag=0$. Then we show that a formal asymptotic analysis  of \eqref{eq:LaplaceZ2} --  following ideas similar to \cite{borodin2012free, borodin2013log, krishnan2018tracy} though these works are rigorous -- confirms all these heuristics.  

\subsection{The Baik-Rains phase transition for directed polymers}
\label{sec:BaikRainsexplained}

Consider the partition function of the half-space log-gamma polymer with weights $w_{i,j}\sim \mathrm{Gamma}^{-1}(2\alpha)$ for $i>j$ and $w_{i,i}\sim \mathrm{Gamma}^{-1}(\diag+ \alpha)$.
In order to understand why the asymptotic behavior of $Z(n,n)$ is different whether $\diag\geqslant 0$ or $\diag<0$, let us examine the asymptotics of the free energy 
$$  \log(Z(n,n))  =    \log\left(\sum_{\text{path } \pi}\prod_{(i,j)\in \pi}w_{i,j}\right) . $$
By KPZ universality, we expect that this quantity has a deterministic first order linear in $n$ and fluctuations on the scale $n^{1/3}$.  
Only a small fraction of admissible paths contribute to the first order of the free energy. Let $H(\pi) = \sum_{(i,j)\in \pi} \log(w_{i,j})$ be the energy of a path. The paths contributing to the first order are those with (close to) maximal energy. The expectation of the energy of a path depends on the number of times the path hits the boundary. It is reasonable to expect that if $\diag\geqslant \alpha$, the weights on the diagonal will be not larger than the bulk weights (in average) and will not influence much the limiting behavior. However, if $\diag$ is very small, the weights on the boundary will be typically very large, so that the path with maximal energy will typically take $\mathcal{O}(n)$ weights on the boundary. This will increase  the first order of the free energy and the Gaussian fluctuations of those boundary weights will imply Gaussian fluctuations for the free energy.   

By analogy with the zero temperature case (see \cite[Section 6.1]{baik2018pfaffian}), we expect that\footnote{This may also be derived, at least formally, from saddle point asymptotics of \eqref{eq:LaplaceZ2}.} for homogeneous weights, the first order of the free energy is the same in a half or a full quadrant. That is, for $\diag=\alpha$  (and consequently $\diag\geqslant \alpha$), we expect that (using \cite[Theorem 2.4]{seppalainen2012scaling}), 
\begin{equation}
\frac{ \log(Z(ns,nt))}{n} \xrightarrow[n\to \infty]{} -s \Psi(\theta) -t\Psi(2\alpha-\theta).\label{eq:homogeneouscase}
\end{equation} 
where $\theta$ is such that 
$$ \frac{s}{t} = \frac{\Psi_1(2\alpha-\theta)}{\Psi_1(\theta)}.$$

However, for $\diag$ sufficiently small,  the situation may be different. For instance, when $\diag$ is close to $-\alpha$, the boundary weights become huge and will dominate the asymptotic behaviour of the partition function. 
In order to predict the precise value of $\diag$ where the transition arises, 
the following proposition will be useful. A similar argument is presented in the zero temperature limit (exponential last passage percolation in a half-quadrant) in \cite[Section 6.1]{baik2018pfaffian}. 
\begin{proposition} Let $a,b>0$. 
Let $Z_1(n,n)$ be the partition function of the half-space log-gamma polymer as in Definition \ref{def:LogGammapolymer}, where the parameters are chosen so that  $\alpha_1=\dots  = \alpha_n=a/2>0$ and $\diag=b-a/2$ is arbitrary. Let $Z_2(n,n)$ be the partition function of the half-space log-gamma polymer where the parameters are chosen as $\alpha_1=b-a/2$, $\alpha_2 = \dots =\alpha_n=a/2$ and $\diag = a/2$. Then we have 
$$ Z_1(n,n) \overset{(d)}{=} Z_2(n,n).$$
\label{prop:identitypartitions}
\end{proposition}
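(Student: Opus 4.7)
The plan is to derive the result from Proposition~\ref{prop:identityinlaw} at the Macdonald level, followed by the Whittaker degeneration of Proposition~\ref{prop:LogGammaqWhittakerlimit}. The key observation is that the auxiliary Macdonald measure $\PMM_{\rho',0}$ appearing on the right-hand side of Proposition~\ref{prop:identityinlaw} depends only on the multiset of specializations comprising $\rho'$, because $P_{\lambda}$ is symmetric in its arguments. Applying the proposition twice, with the diagonal specialization chosen in two different ways, will show that swapping $\diagq$ with any one of the bulk $a_i$'s leaves the distribution of $\mu_1$ invariant.

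Concretely, fix Macdonald nonnegative specializations $(a_1,\dots,a_n)$ and $\diagq$. Applying Proposition~\ref{prop:identityinlaw} with $\rho = (a_1,\dots,a_n)$ and $\alpha = \diagq$ gives
\[
\mu_1 \text{ under } \PMM_{(a_1,\dots,a_n),\diagq} \overset{(d)}{=} \lambda_1 \text{ under } \PMM_{(a_1,\dots,a_n,\diagq),0},
\]
and applying it with $\rho = (a_2,\dots,a_n,\diagq)$ and $\alpha = a_1$ gives
\[
\mu_1' \text{ under } \PMM_{(a_2,\dots,a_n,\diagq),a_1} \overset{(d)}{=} \lambda_1' \text{ under } \PMM_{(a_2,\dots,a_n,\diagq,a_1),0}.
\]
Since $P_{\lambda}$ is symmetric in its arguments, $\PMM_{(a_1,\dots,a_n,\diagq),0}$ and $\PMM_{(a_2,\dots,a_n,\diagq,a_1),0}$ coincide as probability measures, so $\lambda_1 \overset{(d)}{=} \lambda_1'$ and therefore $\mu_1 \overset{(d)}{=} \mu_1'$.

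To finish, I will specialize to the parameters of the proposition. Setting $a_1 = \dots = a_n = e^{-\e a/2}$ and $\diagq = e^{-\e(b-a/2)}$ realizes $Z_1$ in the $q$-Whittaker (i.e. $t=0$) limit; the swapped configuration has bulk multiset consisting of $n-1$ copies of $e^{-\e a/2}$ together with one $e^{-\e(b-a/2)}$, and diagonal $e^{-\e a/2}$, which, after reordering the bulk via the symmetry of $P_{\lambda}$, matches the parameters of $Z_2$ exactly. The Macdonald-level identity $\mu_1 \overset{(d)}{=} \mu_1'$ then transports through the $q\to 1$ scaling of Proposition~\ref{prop:LogGammaqWhittakerlimit}, in which $(1-q)^{2n-1}q^{-\mu_1}$ converges weakly to $Z(n,n)$, to yield the desired equality $Z_1(n,n) \overset{(d)}{=} Z_2(n,n)$. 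The main point to verify is that the weak limit preserves the distributional equality, which is routine since the scaling constant $(1-q)^{2n-1}$ is the same for both parameter configurations and both limits are governed by Proposition~\ref{prop:LogGammaqWhittakerlimit}; no genuinely hard step remains.
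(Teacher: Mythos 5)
Your proof is correct, and it takes a genuinely cleaner route than the paper's. Where the paper applies Proposition~\ref{prop:identityinlaw} once (to relate $\lambda_1$ under $\PQWM_{(q^{a/2})^n, q^{b-a/2}}$ to $\pi_1$ under $\PQWM_{(q^{b-a/2},(q^{a/2})^n),0}$) and then completes the argument by coupling $\pi$ to $\kappa$ via the boundary transition operator $\Udiag$ and invoking Lemma~\ref{lem:firstpartdynamics} to conclude $\pi_1 = \kappa_1 + \mathrm{qGeom}(0) = \kappa_1$, you instead apply Proposition~\ref{prop:identityinlaw} a second time in the reverse direction, peeling off $a_1$ as the new single-variable diagonal. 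The two middle measures $\PMM_{(a_1,\dots,a_n,\diagq),0}$ and $\PMM_{(a_2,\dots,a_n,\diagq,a_1),0}$ coincide by the symmetry of $P_\lambda$ and $\Phi$ in the usual variables, which gives the distributional identity $\mu_1 \overset{(d)}{=} \mu_1'$ directly. This avoids any reliance on the Markov dynamics of Section~\ref{sec:rskdynamics}; what the paper's coupling buys instead is an almost-sure equality $\pi_1=\kappa_1$ rather than merely equality in law, but that is not needed here. Your verification of the $q\to 1$ transport (same scaling exponent $(1-q)^{2n-1}$ in both cases because both measures have $n$ up variables and a single diagonal variable, so both correspond to $Z(n,n)$ with $t=n$) is the right thing to check and is as routine as you say.
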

This proposition means that for the half-space log-gamma polymer, $Z(n,n)$  has the same distribution whether the boundary weights are $\mathrm{Gamma}^{-1}(b)$ and the bulk weights are $\mathrm{Gamma}^{-1}(a)$; or when the weights on the first row are $\mathrm{Gamma}^{-1}(b)$ while all other weights are $\mathrm{Gamma}^{-1}(a)$. This is a consequence of the more general identity in law in Proposition \ref{prop:identityinlaw}. 
\begin{proof}
Let $\lambda$ be distributed as 	$\PQWM_{(q^{a/2}, \dots, q^{a/2}), q^{b-a/2}}$ and $\pi$ be distributed as $\PQWM_{(q^{b-a/2}, q^{a/2}, \dots, q^{a/2}), 0}$, where $q^{a/2}$ appears $n$ times in both measures. From Proposition \ref{prop:identityinlaw}, we know that the distributions of $\lambda_1$ and $\pi_1$ are the same. We will exploit this fact in the $q \to 1$ limit. 

By Proposition \ref{prop:LogGammaqWhittakerlimit}, the law of $(1-q)^{2n-1} q^{-\lambda_1}$  under $\PQWM_{(q^{a/2}, \dots, q^{a/2}), q^{b-a/2}}$ converges to that of $Z_1(n,n)$. Let $\kappa$ be distributed as $\PQWM_{(q^{b-a/2}, q^{a/2}, \dots, q^{a/2}), q^{a/2}}$, where $q^{a/2}$ appears $n-1$ times in the specialization  $(q^{b-a/2}, q^{a/2}, \dots, q^{a/2})$. One can couple $\pi$ and $\kappa$, by sampling first $\kappa$ and then $\pi$ according to the transition operator $\Udiag(\pi\vert \kappa)$ from Section \ref{sec:rskdynamics}. By Lemma \ref{lem:firstpartdynamics}, we know that $\pi_1 = \kappa_1+\mathrm{qGeom}(0) = \kappa_1$. Moreover by Proposition \ref{prop:LogGammaqWhittakerlimit}, the law of $(1-q)^{2n-1} q^{-\kappa_1}$  under $\PQWM_{(q^{a/2}, \dots, q^{a/2}), q^{b-a/2}}$ converges to that of $Z_2(n,n)$. Since $(1-q)^{2n-1} q^{-\lambda_1} \overset{(d)}{=} (1-q)^{2n-1} q^{-\kappa_1},$ it suffices to let $q$ tend to $1$ on both sides to conclude the proof. 
\end{proof}

Consider now the partition function $Z_2(n,n)$ of the half-space Log-gamma polymer with weights $w_{i,j}\sim \mathrm{Gamma}^{-1}(2\alpha)$ for $i\geqslant j>1$ and $w_{i,1}\sim \mathrm{Gamma}^{-1}(\diag+ \alpha)$. By Proposition \ref{prop:identitypartitions}, $ Z(n,n) \overset{(d)}{=} Z_2(n,n)$. In the latter model,  the energy of a path can be conveniently decomposed as the energy collected along the first row plus the energy collected after the path has departed the first row, that is  
$$ H(\pi)  = \sum_{i=1}^k \log (w_{i1})  + \sum_{(i,j)\in \pi; j>1}\log (w_{i,j})$$
Assuming that $k$ (the number of steps in the first row) is roughly the same for all paths which contribute to the first order of the free energy, we get
$$ \log(Z_2(n,n))  \approx \sum_{i=1}^k \log(w_{i1}) + \log\left(  \sum_{\pi':(k,2)\to (n,n)} \prod_{(i,j)\in \pi'}w_{i,j} \right).$$
By the previous discussion on the case with homogeneous weights \eqref{eq:homogeneouscase}, the second term can be approximated  by 
$$  \frac{1}{n}\log\left(  \sum_{\pi':(k,2)\to (n,n)} \prod_{(i,j)\in \pi'}w_{i,j} \right) \approx - (1-k/n) \Psi(\theta) -\Psi(2\alpha-\theta) $$
where $\theta$ is such that $1-k/n = \frac{\Psi_1(2\alpha-\theta)}{\Psi_1(\theta)}$. Hence, since $\EE[\log(w_i,1)] = \Psi(\diag+\alpha)$, 
$$\frac{ \EE \log(Z(n,n)) }{n} \approx \max_k\left\lbrace  -k/n \Psi(\diag+\alpha)  - (1-k/n) \Psi(\theta) -\Psi(2\alpha-\theta) \right\rbrace.$$
A quick study of the function 
$$ x \mapsto  -x \Psi(\diag+\alpha)  - (1-x) \Psi(\theta) -\Psi(2\alpha-\theta) \ \text{under the constraint }  1-x = \frac{\Psi_1(2\alpha-\theta)}{\Psi_1(\theta)}$$
when $x\in [0,1]$ shows that when $\diag\geqslant 0$, the maximum arises for $x=0$ while for  $\diag<0$, the maximum arises for $x$ such that $\theta=\alpha+\diag$.

Thus, we expect that when $\diag\geqslant 0$,  
$$\log(Z(n,n))/n \xrightarrow[n\to\infty]{} -2 \Psi(\alpha)$$
 with $n^{-2/3}$ fluctuations. However, when $\diag<0$, we expect that 
 $$\log(Z(n,n))/n \xrightarrow[n\to\infty]{} - \Psi(\alpha +\diag) - \Psi(\alpha-\diag),$$ with Gaussian fluctuations on the scale $n^{-1/2}$. Since $\mathrm{Var}[\log\left( \mathrm{Gamma}^{-1}(\theta) \right)] = \Psi_1(\theta)$, one can even predict that the variance of the Gaussian should be 
$$ x \Psi_1(\alpha+\diag) = \left(1-\frac{\Psi_1(2\alpha-\theta)}{\Psi_1(\theta)}\Big\vert_{\theta=\alpha+\diag} \right)\Psi_1(\alpha+\diag) = \Psi_1(\alpha+\diag) - \Psi_1(\alpha- \diag). $$  

 \subsection{Fredholm Pfaffians and Tracy-Widom distributions}
 \label{sec:pfaffians}
 
 The Pfaffian of a skew-symmetric  $2k\times 2k$ matrix $A$ is defined by
 \begin{equation}
 \Pf(A) = \frac{1}{2^k k!} \sum_{\sigma\in\mathcal{S}_{2k}} \mathrm{sgn}(\sigma) a_{\sigma(1)\sigma(2)}a_{\sigma(3)\sigma(4)} \dots a_{\sigma(2k-1)\sigma(2k)},
 \label{eq:defPfaffian}
 \end{equation}
 where $\mathrm{sgn}(\sigma)$ is the signature of the permutation $\sigma$. Schur's Pfaffian identity states that for any $x_1, x_2, \dots, x_{2n}$, 
 \begin{equation}
 \Pf\left( \frac{x_i-x_j}{x_i+x_j} \right)  =  \prod_{i<j} \frac{x_i-x_j}{x_i+x_j}.
 \label{eq:SchurPfaffian}
 \end{equation}
 Let $(\mathbb{X}, \mu) $ be a measure space. 
 For a $2\times 2$-matrix valued  skew-symmetric kernel,
 $$ \kernel(x,y) = \begin{pmatrix}
 \kernel_{11}(x,y) & \kernel_{12}(x,y)\\
 \kernel_{21}(x, y) & \kernel_{22}(x,y)
 \end{pmatrix},\ \ x,y\in \mathbb{X},$$
 we define the Fredholm Pfaffian (introduced in \cite[Section 8]{rains2000correlation}) by
 \begin{equation}
 \Pf\big[\mathsf{J}+\kernel\big]_{L^2(\mathbb{X}, \mu)} = 1+\sum_{k=1}^{\infty} \frac{1}{k!}
 \int_{\mathbb{X}} \dots \int_{\mathbb{X}}  \Pf\Big( \kernel(x_i, x_j)\Big)_{i,j=1}^k \mathrm{d}\mu(x_1) \dots \mathrm{d}\mu(x_k),
 \label{eq:defFredholmPfaffian}
 \end{equation}
 provided the series converges. The kernel $\mathsf{J}$ is defined by
 $$ \mathsf{J}(x,y)  = \delta_{x=y}
 \begin{pmatrix}
 0 & 1\\-1 &0
 \end{pmatrix} .$$
 In what follows, $\mu$ is the Lebesgue measure and we write $\Pf\big[\mathsf{J}+K\big]_{\mathbb{L}^2(\mathbb{X})}$.

  For $a\in \R$ and $\theta\in (0\pi/2)$, let  $\mathcal{C}_a^{\theta}$ be the contour formed by the union of two semi-infinite rays departing $a$ with angles $\theta$ and $-\theta$, oriented from $a+\infty e^{-\I\theta}$ to $a+\infty e^{\I\theta}$. 
 \begin{definition}
 	The GSE Tracy-Widom distribution $\mathcal{L}^{\rm GSE}$ is a continuous probability distribution on the real line such that for $X\sim \mathcal{L}^{\rm GSE}$, 
 	$ F_{\rm GSE}\left( x\right) :=\PP(X\leqslant x)  = \Pf\big( \mathsf{J}- \kernel^{\rm GSE}\big)_{\mathbb{L}^2(x, \infty)} ,$
 	where $\kernel^{\rm GSE}$ is a $2\times 2$-matrix valued kernel
 	defined by
 	\begin{align*}
 	\kernel_{11}^{\rm GSE}(x,y) &=  \int_{\mathcal{C}_{1}^{\pi/3}}\frac{\mathrm{d}z}{2\I\pi}\int_{\mathcal{C}_{1}^{\pi/3}}\frac{\mathrm{d}w}{2\I\pi} \frac{z-w}{4zw(z+w)} e^{z^3/3 + w^3/3 - xz -yw} ,\\
 	\kernel_{12}^{\rm GSE}(x,y) &= -\kernel_{21}^{\rm GSE}(y,x) =  \int_{\mathcal{C}_{1}^{\pi/3}}\frac{\mathrm{d}z}{2\I\pi}\int_{\mathcal{C}_{1}^{\pi/3}}\frac{\mathrm{d}w}{2\I\pi} \frac{z-w}{4z(z+w)} e^{z^3/3 + w^3/3 - xz -yw} ,\\
 	\kernel_{22}^{\rm GSE}(x,y) &= 
 	\int_{\mathcal{C}_{1}^{\pi/3}}\frac{\mathrm{d}z}{2\I\pi}\int_{\mathcal{C}_{1}^{\pi/3}}\frac{\mathrm{d}w}{2\I\pi} \frac{z-w}{4(z+w)} e^{z^3/3 + w^3/3 - xz -yw} .
 	\end{align*}
 	\label{def:GSEdistribution}
 \end{definition}
 
 Notice that the kernel $\kernel^{\rm GSE}$ has the form
 $$ \kernel^{\rm GSE}(x,y)  = \begin{pmatrix}
 A(x,y) & -\partial_y A(x, y) \\
 -\partial_x A(x,y) & \partial_x\partial_y A(x,y)
 \end{pmatrix} $$
 where $A(x,y)$ is the smooth and antisymmetric kernel $\kernel^{\rm GSE}_{11}(x,y)$. We define another kernel 
 $$ \kernel^{\infty}(x,y) = \begin{pmatrix}
 A(x,y) & -2\partial_y A(x, y) \\
 -2\partial_x A(x,y) & 4\partial_x\partial_y A(x,y) + \updelta'(x,y)
 \end{pmatrix} $$
 where $\updelta'$ is a distribution on $\R^2$ such that
 \begin{equation} \int \int f(x,y)\updelta'(x,y)\mathrm{d}x\mathrm{d}y =  \int  \left.\big(\partial_y f(x,y) - \partial_x f(x,y)\big)\right|_{y=x}\mathrm{d}x,\label{eq:defdeltaprime}
 \end{equation}
 for smooth and compactly supported test functions $f$. It is shown in \cite[Section 5]{baik2018pfaffian} that the Fredholm Pfaffian  of $\kernel^{\infty}$ is well defined and we have 
 \begin{equation}
 \Pf[\mathsf{J}-\kernel^{\rm GSE}]_{\mathbb{L}^2(x, +\infty)}= \Pf[\mathsf{J}-\kernel^{\infty}]_{\mathbb{L}^2(x, +\infty)}.
 \label{eq:equivalentpfaffians}
 \end{equation}
 
 Moreover, if a sequence of kernels $\kernel_n$ converges to $\kernel^{\infty}$ in a certain sense (see \cite[Proposition 5.7]{baik2018pfaffian}), then 
 $$ \lim_{n\to\infty} \Pf[\mathsf{J}-\kernel_n]_{\mathbb{L}^2(x, +\infty)} = \Pf[\mathsf{J}-\kernel^{\rm GSE}]_{\mathbb{L}^2(x, +\infty)}.$$

 \begin{definition}
 	The GOE Tracy-Widom distribution $\mathcal{L}^{\rm GOE}$ is a continuous probability distribution on the real line such that for $X\sim \mathcal{L}_{\rm GOE}$, 
 	$ F_{\rm GOE}(x):=\PP(X\leqslant x) =  \Pf\big( \mathsf{J}- \kernel^{\rm GOE}\big)_{\mathbb{L}^2(x, \infty)},$
 	where $\kernel^{\rm GOE}$ is the $2\times 2$ matrix valued kernel defined by
 	\begin{align*}
 	\kernel_{11}^{\rm GOE}(x,y) &=  \int_{\mathcal{C}_{1}^{\pi/3}}\frac{\mathrm{d}z}{2\I\pi}\int_{\mathcal{C}_{1}^{\pi/3}}\frac{\mathrm{d}w}{2\I\pi} \frac{z-w}{z+w} e^{z^3/3 + w^3/3 - xz -yw},\\
 	\kernel_{12}^{\rm GOE}(x,y) &= -\kernel_{21}^{\rm GOE}(y,x) = \int_{\mathcal{C}_{1}^{\pi/3}}\frac{\mathrm{d}z}{2\I\pi}\int_{\mathcal{C}_{-1/2}^{\pi/3}}\frac{\mathrm{d}w}{2\I\pi} \frac{w-z}{2w(z+w)} e^{z^3/3 + w^3/3 - xz -yw} ,\\
 	\kernel_{22}^{\rm GOE}(x,y) &=  \int_{\mathcal{C}_{1}^{\pi/3}}\frac{\mathrm{d}z}{2\I\pi}\int_{\mathcal{C}_{1}^{\pi/3}}\frac{\mathrm{d}w}{2\I\pi}  \frac{z-w}{4zw(z+w)} e^{z^3/3 + w^3/3 - xz -yw} \\
 	& + \int_{\mathcal{C}_{1}^{\pi/3}}\frac{\mathrm{d}z}{2\I\pi} \frac{e^{z^3/3-zx}}{4z} -   \int_{\mathcal{C}_{1}^{\pi/3}}\frac{\mathrm{d}z}{2\I\pi}  \frac{e^{z^3/3-zy}}{4z} -\frac{\sgn{(x-y)}}{4},
 	\end{align*}
 	where we adopt the convention that
 	$$\sgn(x-y) = \mathds{1}_{x>y} - \mathds{1}_{x<y}. $$
 	\label{def:GOEdistribution}
 \end{definition}

\subsection{Formal saddle-point asymptotics leading to the Baik-Rains transition}
In this section we explain how a non rigorous asymptotic analysis of \eqref{eq:LaplaceZ2} leads to the following:
\begin{formal*} Let $Z(n,n)$ be the half-space log-gamma partition function (Definition \ref{def:LogGammapolymer}) with $\alpha_1=\dots=\alpha_n=\alpha>0$ and $\diag\in \R$. Let us define
	$$f =  -2\Psi(\alpha), \ \ \ \sigma = \sqrt[3]{\Psi_2(\alpha)}.$$
Modulo several non-rigorous steps in the (attempted) proof presented below, we have the following weak limits:
\begin{itemize}
	\item When $\diag>0$, 
	 $$ \lim_{n\to \infty} \PP\left(\frac{\log(Z(n,n)) -f n}{\sigma n^{1/3}} \leqslant x\right)  = F_{\rm GSE}(x).$$
	\item When $\diag=0$, we have 
	$$ \lim_{n\to \infty} \PP\left(\frac{\log(Z(n,n)) -f n}{\sigma n^{1/3}} \leqslant x\right)  = F_{\rm GOE}(x).$$
	\item When $\diag<0$, 
	$$ \lim_{n\to \infty} \PP\left(\frac{\log(Z(n,n)) -f_{\alpha} n}{\sigma_{\alpha} n^{1/2}} \leqslant x\right)  = \int_{-\infty}^x \frac{e^{-t^2/2}}{\sqrt{2\pi}}\mathrm{d}t, $$
	where $f_{\diag} = -\Psi(\alpha-\diag) -\Psi(\alpha+\diag)$ and $\sigma_{\alpha} = \sqrt{\Psi_1(\alpha+\diag) -\Psi_1(\alpha-\diag)}$.
\end{itemize}

\end{formal*}

We use Laplace's method and rescale variables around the critical point of the function $G$ in \eqref{eq:LaplaceZ2}. Note that the same function $G$ as in \eqref{eq:LaplaceZ2} also appears in the asymptotic analysis of the full-quadrant log-gamma polymer \cite{borodin2013log, krishnan2018tracy} and can be controlled along certain contours. Since the asymptotic analysis performed in this section is not rigorous anyway, we will not  write the relevant decay estimates of $\Real [G(z)]$.  Thus, let us rescale the variables around zero as $z_i = \sigma^{-1} n^{-1/3}\tilde z_i$ and $w_i = \sigma^{-1} n^{-1/3}\tilde w_i$ in \eqref{eq:LaplaceZ2} (we will drop the tildes in the following formulas).  
 There are now three cases to consider. If $\diag > 0$, then 
 $$ \frac{\Gamma(\diag -n^{-1/3}\sigma^{-1} z_i)}{\Gamma(\diag +n^{-1/3 }\sigma^{-1} v_i)} \xrightarrow[]{} 1.$$
 If $\diag = 0$, then 
 $$ \frac{\Gamma(\diag -n^{-1/3}\sigma^{-1} z_i)}{\Gamma(\diag +n^{-1/3 }\sigma^{-1} v_i)} \xrightarrow[]{} \frac{-v_i}{z_i}.$$
 If $\diag<0$, then there is a pole at $-\diag$ for the variable $z$ that prevents from using the saddle point method around $0$, the scalings will be different, and we will use the saddle point method around $\diag$.

\subsubsection{Case $\diag>0$}
\label{sec:heuristicsGSE}
Using the fact that 
$$ \Gamma(z) \approx \frac{1}{z}, \ \ \ \ \ \ \frac{\pi}{\sin(\pi z)}\approx \frac 1 z$$
for $z$ close to zero, 
the integrand in \eqref{eq:LaplaceZ2} becomes, as $n$ goes to infinity, 
\begin{multline*}
\prod_{1\leqslant i<j\leqslant k} \frac{(z_i-z_j)(v_i-v_j)(z_i-v_j)(v_i-z_j)}{(v_i+z_j)(v_j+z_i)(v_i+v_j)(z_i+z_j)}\prod_{i=1}^k \frac{v_i-z_i}{v_i+z_i}  
\prod_{i=1}^k \left[  e^{v_i^3/3+z_i^3/3 - x z_i-x v_i}  \frac{1}{v_i+z_i} \frac{1}{2v_i} \right].
\end{multline*}
Notice that for $(u_1, \dots, u_{2k}) = (v_1, z_1, \dots, v_k, z_k)$, and using Schur's Pfaffian identity \eqref{eq:SchurPfaffian}, 
$$ \prod_{1\leqslant i<j\leqslant k} \frac{(z_i-z_j)(v_i-v_j)(z_i-v_j)(v_i-z_j)}{(v_i+z_j)(v_j+z_i)(v_i+v_j)(z_i+z_j)}\prod_{i=1}^k \frac{v_i-z_i}{v_i+z_i}  = (-1)^k\Pf\left( \frac{u_i-u_j}{u_i+u_j} \right).$$
 We use that for $\Real[z_i+v_i]>0$,
 $$ \frac{1	}{z_i+v_i} = \int_0^{\infty} e^{-r_i (z_i+v_i)}\mathrm{d}r_i.$$
 We may bring the integrations inside the Pfaffian to
 find that 
 \begin{equation*}
 \lim_{n\to \infty} \EWM[e^{ u Z(t,n)}] = \sum_{k=0}^{\infty} \frac{(-1)^k}{k!} \int_x^{\infty} \mathrm{d}r_1 
 \dots \int_x^{\infty} \mathrm{d}r_k \Pf\left(K(r_i,r_j)\right)_{i,j=1}^k  = \Pf[\mathsf{J}-\kernel ]_{\mathbb{L}^2(x, \infty)},
 \end{equation*}
where $\kernel$ is the $2\times 2$ matrix antisymmetric kernel 
\begin{subequations}
\begin{eqnarray}
\kernel_{11}(r,s) &=& \frac{1}{4} \int_{\mathcal{D}_1} \frac{\mathrm{d}v}{2\I\pi}  \int_{\mathcal{D}_1} \frac{\mathrm{d}v'}{2\I\pi} \frac{v-v'	}{v v'(v+v')} e^{v^3/3 +v'^3/3  -rv -sv'},\\
\kernel_{12}(r,s) &=& \frac{1}{2} \int_{\mathcal{D}_1} \frac{\mathrm{d}v}{2\I\pi}  \int_{\mathcal{D}_{-1/2}} \frac{\mathrm{d}z}{2\I\pi} \frac{v-z	}{v(v+z)} e^{v^3/3 +z^3/3 -rv -sz} ,\label{eq:K12limit}\\ 
\kernel_{22} (r,s) &=&  \int_{\mathcal{D}_{-1}} \frac{\mathrm{d}z}{2\I\pi}  \int_{\mathcal{D}_{-1}} \frac{\mathrm{d}z'}{2\I\pi} \frac{z-z'	}{z+z'} e^{z^3/3 +z'^3/3 -rz -sz'}.\label{eq:K22limit}
\end{eqnarray}
\end{subequations}
Note that the formulas above do not make sense because the integrand in \eqref{eq:K12limit} and \eqref{eq:K22limit} is not absolutely convergent on ${\mathcal{D}_{-1}}$ and  ${\mathcal{D}_{-1/2}}$ respectively. The integration over ${\mathcal{D}_{-1/2}}$ in \eqref{eq:K12limit} is not a real issue since the contour could have been freely deformed earlier to ${\mathcal{D}_{1}}$ (the only pole in $z$ is at $z=-v$ which lies on the left of ${\mathcal{D}_{-1/2}}$). However, the integrations over ${\mathcal{D}_{-1}}$ in \eqref{eq:K22limit}  are a real issue because the contours cannot be deformed due to the pole at $z=-z'$. However, formally, one can write 
$$ \kernel_{22} (x,y) = \int_{0}^{\infty} \big(\Ai(x-u)\Ai'(y-u) -  \Ai'(x-u)\Ai(y-u)\big)\mathrm{d}u,$$
and using the formal identity 
$$ \int_{\R} \Ai(x+u)\Ai(y+u) = \delta(x-y),$$
one can write $\kernel_{22}$ as 
$$ \kernel_{22} (x,y) =   \int_{\mathcal{D}_{1}} \frac{\mathrm{d}z}{2\I\pi}  \int_{\mathcal{D}_{1}} \frac{\mathrm{d}z'}{2\I\pi} \frac{z-z'	}{z+z'} e^{z^3/3 +z'^3/3 -xz -yz'} \ \ + \ \ \updelta'(x,y),$$
where $\updelta'$ is a distribution on $\R^2$ defined by \eqref{eq:defdeltaprime}. 
Then it follows from \eqref{eq:equivalentpfaffians} in Section \ref{sec:pfaffians}  that 
$$ \Pf[\mathsf{J}-\kernel ]_{\mathbb{L}^2(x, \infty)} = F_{\rm GSE}(x).$$
Finally, we obtain -- modulo several non-rigorous steps above -- that when $\diag>0$ and $\tau>0$, 
$$ \lim_{n\to \infty} \PP\left(\frac{\log(Z(n,n, \tau)) -f n}{\sigma n^{1/3}} \leqslant x\right)  = F_{\rm GSE}(x), $$
where $f=-2\Psi(\alpha)$ and $\sigma = \sqrt[3]{\Psi_2(\alpha)}$.

\subsubsection{Case $\diag=0$}

Following the same steps as in Section \ref{sec:heuristicsGSE}, we obtain when $\diag=0$ that 
 \begin{equation*}
 \lim_{n\to \infty} \EWM[e^{ u Z(t,n)}] = \sum_{k=0}^{\infty} \frac{(-1)^k}{k!} \int_x^{\infty} \mathrm{d}r_1 
 \dots \int_x^{\infty} \mathrm{d}r_k \Pf\left(\kernel(r_i,r_j)\right)_{i,j=1}^k  = \Pf[J+\kernel ]_{\mathbb{L}^2(x, \infty)}.
 \end{equation*}
 where $K$ is the $2\times 2$ matrix antisymmetric kernel 
 \begin{subequations}
 	\begin{eqnarray}
 	\kernel_{11}(r,s) &=& \frac{1}{4} \int_{\mathcal{D}_1} \frac{\mathrm{d}v}{2\I\pi}  \int_{\mathcal{D}_1} \frac{\mathrm{d}v'}{2\I\pi} \frac{v-v'	}{(v+v')} e^{v^3/3 +v'^3/3  -rv -sv'},\\
 	\kernel_{12}(r,s) &=& \frac{1}{2} \int_{\mathcal{D}_1} \frac{\mathrm{d}v}{2\I\pi}  \int_{\mathcal{D}_{-1/2}} \frac{\mathrm{d}z}{2\I\pi} \frac{v-z	}{z(v+z)} e^{v^3/3 +z^3/3 -rv -sz} ,\label{eq:K12limitGOE}\\ 
 	\kernel_{22} (r,s) &=&  \int_{\mathcal{D}_{-1}} \frac{\mathrm{d}z}{2\I\pi}  \int_{\mathcal{D}_{-1}} \frac{\mathrm{d}z'}{2\I\pi} \frac{z-z'	}{z z' (z+z')} e^{z^3/3 +z'^3/3 -rz -sz'}.\label{eq:K22limitGOE}
 	\end{eqnarray}
 \end{subequations}
The formula for $\kernel_{22}$ in \eqref{eq:K22limitGOE} does not make sense but if one shifts the vertical contours to the right so that they become $\mathcal{D}_1$ and compute the residues, then the resulting formula does make sense. Moreover,  $\Pf[J+\kernel ]_{\mathbb{L}^2(x, \infty)} = \Pf[\mathsf{J}-\tilde \kernel]_{\mathbb{L}^2(x, \infty)}$, where $\kernel_{11}=\tilde \kernel_{11}$, $\kernel_{22}=\tilde \kernel_{22}$, $\kernel_{12}=-\tilde \kernel_{12}$ and $\kernel_{21}=-\tilde \kernel_{21}$, and it can be shown (see \cite[Lemma 2.6]{baik2018pfaffian}) that  
$$ \Pf[\mathsf{J}-\tilde \kernel]_{\mathbb{L}^2(x, \infty)} = F_{\rm GOE}(x).$$
Thus when $\diag =0$ and $\tau>0$ -- modulo several non-rigorous steps --   
$$ \lim_{n\to \infty} \PP\left(\frac{\log(Z(n,n, \tau)) -f n}{\sigma n^{1/3}} \leqslant x\right)  = F_{\rm GOE}(x), $$
where $f=-2\Psi(\alpha)$ and $\sigma = \sqrt[3]{\Psi_2(\alpha)}$. 

\begin{remark}
If we scale $\diag$ close to the critical point as $\diag = n^{-1/3}\sigma^{-1} \varpi$, the limiting distribution $F_{\rm GOE}$ would be replaced by a crossover distribution $F(x; \varpi)$, originally introduced in \cite[Definition 4]{baik2001asymptotics} in the context of half-space last passage percolation with geometric weights. This distribution is such that  $F(x; 0) = F_{\rm GOE}(x)$ and $\lim_{\varpi\to\infty} F(x; \varpi) = F_{\rm GSE}(x)$.  The cumulative  probability distribution $F(x; \varpi)$ can be written as the Fredholm Pfaffian of a crossover kernel introduced  in \cite[(1.14)]{forrester2006correlation}, which is also a special case of the more general crossover kernel $\kernel^{\rm cross}$ in \cite[Theorem 1.7 and Definition 2.9]{baik2018pfaffian}. 
\label{rem:crossover}
\end{remark}

\subsubsection{Case $\diag<0$}

In that case one cannot deform the contours so that they go through a neighborhood of zero as in Sections \ref{sec:heuristicsGSE}, because there is a pole in \eqref{eq:LaplaceZ2} at $z_i=\diag <0$. Instead, we will scale $u$ in a different way and apply the saddle point method in a neighborhood of $\diag$. 

Let $f_{\diag} = -\Psi(\alpha-\diag) -\Psi(\alpha+\diag). $
Letting $u=-e^{-nf_{\diag}-n^{1/2}\sigma_{\diag} x}$ (where $\sigma_{\diag}$ is a constant to determine later), we may write 
\begin{multline}
\EWM[e^{ u Z(n,n)}] = \sum_{k=0}^{n} \ \frac{1}{k!}\ \int_{\mathcal{D}_{R}}\frac{\mathrm{d}z_1}{2\I\pi} \dots \int_{\mathcal{D}_R} \frac{\mathrm{d}z_k}{2\I\pi}\   \oint\frac{\mathrm{d}v_1}{2\I\pi} \dots \oint\frac{\mathrm{d}v_k}{2\I\pi}   \\  \times 
\prod_{1\leqslant i<j\leqslant k} \frac{(z_i-z_j)(v_i-v_j)\Gamma(v_i+v_j)\Gamma(-z_i-z_j)}{(-v_i-z_j)(-v_j-z_i)\Gamma(v_j-z_i)\Gamma(v_i-z_j)}\prod_{i=1}^k \frac{\Gamma(2v_i)}{\Gamma(v_i-z_i)}  \\ \times 
\prod_{i=1}^k \left[ \frac{\pi}{\sin(\pi(v_i+z_i))} e^{n\left(G_{\diag}(v_i)+G_{\diag}(z_i)\right)- n^{1/2} x \sigma (v_i+z_i) }   \frac{\Gamma(\diag -z_i)}{\Gamma(\diag +v_i)}  \frac{1}{v_i+z_i} \right],
\label{eq:LaplaceZGaussian}
\end{multline}
where 
$$ G_{\diag}(z) = \log(\Gamma(\alpha-z)) - \log(\Gamma(\alpha+z)) - f_{\diag} z $$
satisfies $G_{\diag}'(\diag) = 0$ and $G_{\diag}''(\diag) = \Psi_1(\alpha-\diag) -\Psi_1(\alpha+\diag) =:-\sigma_{\alpha}^2$. Assuming that the saddle point method would work without any issue, we rescale variables $z_i$ as $z_i=\diag +\sigma_{\alpha} n^{-1/2}\tilde z_i$ and variables $v_i$ as $v_i = -\diag+\sigma_{\alpha} n^{-1/2}\tilde v_i$, and obtain 
 \begin{multline}
  \EWM[e^{ u Z(n,n)}] \approx \sum_{k=0}^{n} \ \frac{1}{k!}\ \int_{\mathcal{D}_{-1/2}}\frac{\mathrm{d}z_1}{2\I\pi} \dots \int_{\mathcal{D}_{-1/2}} \frac{\mathrm{d}z_k}{2\I\pi}\   \int_{\mathcal{D}_1}\frac{\mathrm{d}v_1}{2\I\pi} \dots \int_{\mathcal{D}_1}\frac{\mathrm{d}v_k}{2\I\pi}  \\   \times 
  \prod_{1\leqslant i<j\leqslant k} \frac{(z_i-z_j)(v_i-v_j)}{(-v_i-z_j)(-v_j-z_i)} 
  \prod_{i=1}^k \left[ \frac{1}{v_i+z_i} e^{-(z_i^2/2 -w_i^2/2)- x (v_i+z_i) } \frac{-v_i}{z_i} \frac{1}{v_i+z_i} \right].
  \label{eq:LaplaceZGaussian2}
  \end{multline}
  Then using the Cauchy determinant formula $$\prod_{1\leqslant i<j\leqslant k} \frac{(z_i-z_j)(v_i-v_j)}{(-v_i-z_j)(-v_j-z_i)} = \det\left( \frac{1}{z_i+v_i} \right)_{i,j=1}^k $$ 
  we may recognize that 
  $ \eqref{eq:LaplaceZGaussian2} \approx \det(I -\kernel^G)_{\mathbb{L}^2(x, +\infty)},$
  where 
 $$  \kernel^G(r,s)  = \int_{\mathcal{D}_{-1/2}}\frac{\mathrm{d}z}{2\I\pi} \int_{\mathcal{D}_1}\frac{\mathrm{d}v}{2\I\pi}  \frac{v}{z(z+v)} e^{-z^2/2+v^2/2 -rv -s z}  = \frac{1}{\sqrt{2\pi}} e^{-r^2/4 -s^2/4}, $$ 
so that
we would obtain 
$$ \lim_{n\to \infty} \PP\left(\frac{\log(Z(n,n, \tau)) -f_{\alpha} n}{\sigma_{\alpha} n^{1/2}} \leqslant x\right)  = \int_{-\infty}^x \frac{e^{-t^2/2}}{\sqrt{2\pi}}\mathrm{d}t, $$
where $f_{\diag} = -\Psi(\alpha-\diag) -\Psi(\alpha+\diag)$ and $\sigma_{\alpha} = \sqrt{\Psi_1(\alpha+\diag) -\Psi_1(\alpha-\diag)}$.

\renewcommand{\emph}[1]{\textit{#1}}
\bibliography{mainbiblio.bib}
\bibliographystyle{mystyle-amsalpha}

\end{document}